\documentclass[a4paper]{amsart}
\usepackage{amsmath,amsthm,amssymb,mathtools,mathrsfs}
\usepackage{bm}
\usepackage{tikz-cd}
\usepackage[all]{xy}
\usepackage{hyperref}
\usepackage{soul}
  \definecolor{urlcolor}{rgb}{0,0,0}
  \definecolor{linkcolor}{rgb}{.7,0.10,0.2}
  \definecolor{citecolor}{rgb}{.12,.54,.11}
\hypersetup{
   breaklinks=true, 
   colorlinks=true,
   urlcolor=urlcolor,
   linkcolor=linkcolor,
   citecolor=citecolor,
}
\usepackage{tikz-cd}

\usepackage{setspace}
\onehalfspacing

\usepackage{xcolor}
\usepackage[shortlabels]{enumitem}
\usepackage[utf8]{inputenc}
\numberwithin{equation}{section}
\usepackage[top=2.3cm,
bottom=2.3cm,
left=2.3cm,
right=2.3cm]{geometry}


\usepackage[
style=alphabetic,
doi=false,
isbn=false,
url=false,
eprint=false,
block = space,
sorting=anyt,
maxnames=50]{biblatex}

\addbibresource{references.bib}

\newtheorem{theorem}{Theorem}[section]
\newtheorem{corollary}[theorem]{Corollary}
\newtheorem{proposition}[theorem]{Proposition}
\newtheorem{proposition-definition}[theorem]{Proposition-Definition}
\newtheorem{lemma}[theorem]{Lemma}

\newtheorem{question}[theorem]{Question}
\newtheorem{conjecture}[theorem]{Conjecture}

\newenvironment{assumption}[1]
 {\innercustomthm}
 {\endinnercustomthm}
\theoremstyle{definition} 
\newtheorem{definition}[theorem]{Definition}

\newtheorem{theorem-definition}[theorem]{Theorem-Definition}

\theoremstyle{remark} 
\newtheorem{remark}[theorem]{Remark}


\newcommand{\longto}{\longrightarrow}

\newcommand{\into}{\hookrightarrow}

\newcommand{\abs}[1]{\lvert #1 \rvert}
\renewcommand{\bar}{\overline}

\renewcommand{\geq}{\geqslant}
\renewcommand{\leq}{\leqslant}
\renewcommand{\subset}{\subseteq}
\renewcommand{\setminus}{\smallsetminus}
\renewcommand{\tilde}{\widetilde}
\renewcommand{\vec}{\mathbf}


\newcommand{\BA}{\mathbb{A}}

\newcommand{\BC}{\mathbb{C}}
\newcommand{\BD}{\mathbb{D}}

\newcommand{\BG}{\mathbb{G}}

\newcommand{\BoA}{\mathbf{A}}
\newcommand{\BoC}{\mathbf{C}}
\newcommand{\BoF}{\mathbf{F}}

\newcommand{\BoL}{\mathbf{L}}
\newcommand{\BoN}{\mathbf{N}}
\newcommand{\BoQ}{\mathbf{Q}}

\newcommand{\BoZ}{\mathbf{Z}}

\newcommand{\ulBoQ}{\ul{\BoQ}}

\newcommand{\CA}{\mathcal{A}}
\newcommand{\CB}{\mathcal{B}}
\newcommand{\CC}{\mathcal{C}}
\newcommand{\CD}{\mathcal{D}}

\newcommand{\CF}{\mathcal{F}}

\newcommand{\CH}{\mathcal{H}}
\newcommand{\CI}{\mathcal{I}}

\newcommand{\CK}{\mathcal{K}}

\newcommand{\CM}{\mathcal{M}}
\newcommand{\CN}{\mathcal{N}}

\newcommand{\CT}{\mathcal{T}}
\newcommand{\CU}{\mathcal{U}}

\newcommand{\CX}{\mathcal{X}}

\newcommand{\Fg}{\mathfrak{g}}
\newcommand{\Fh}{\mathfrak{h}}
\newcommand{\Fn}{\mathfrak{n}}

\newcommand{\FM}{\mathfrak{M}}
\newcommand{\FN}{\mathfrak{N}}

\newcommand{\FX}{\mathfrak{X}}

\newcommand{\SA}{\mathscr{A}}

\newcommand{\SC}{\mathscr{C}}

\newcommand{\SF}{\mathscr{F}}
\newcommand{\SG}{\mathscr{G}}
\newcommand{\SH}{\mathscr{H}}

\newcommand{\SQ}{\mathscr{Q}}
\newcommand{\SR}{\mathscr{R}}


\newcommand{\Higgs}{\mathbf{Higgs}}

\DeclareMathOperator{\Tr}{Tr}
\DeclareMathOperator{\rat}{\mathbf{rat}}

\DeclareMathOperator{\BoMo}{BM}
\DeclareMathOperator{\ICA}{IH^*}

\DeclareMathOperator{\sstab}{sst}
\DeclareMathOperator{\B}{B\!}
\DeclareMathOperator{\stab}{s}
\DeclareMathOperator{\K}{K}
\let\ul\underline

\newcommand{\lazy}{\bm{e}}
\newcommand{\phip}[1]{{}^{\mathfrak{p}}\!\phi_{#1}}
\newcommand\mydots{\hbox to 0.8em{.\hss.\hss.}}

\newcommand{\ttBPS}{\mathtt{\aBPS}}

\newcommand{\rmabs}{\mathrm{abs}}
\newcommand{\ad}{\operatorname{ad}}
\newcommand{\Alg}{\mathrm{Alg}}
\newcommand{\arr}{\mathrm{ar}}
\DeclareMathOperator{\sAlg}{-Alg}
\DeclareMathOperator{\Aut}{Aut}
\DeclareMathOperator{\sLie}{-Lie}
\DeclareMathOperator{\snil}{-nil}

\newcommand{\rmBPS}{\mathrm{BPS}}
\newcommand{\D}{\mathcal{D}}

\newcommand{\ulBPS}{\underline{\BPS}}

\newcommand{\ch}{\mathrm{ch}}
\newcommand{\cl}{\mathrm{cl}}
\newcommand{\coker}{\operatorname{coker}}

\newcommand{\cone}{\mathrm{cone}}
\newcommand{\cusp}{\mathrm{cusp}}
\newcommand{\dd}{\mathbf{d}}
\newcommand{\ee}{\mathbf{e}}
\newcommand{\ff}{\mathbf{f}}
\newcommand{\mm}{\mathbf{m}}

\newcommand{\End}{\operatorname{End}}

\newcommand{\Ext}{\operatorname{Ext}}
\newcommand{\ext}{\operatorname{ext}}
\newcommand{\Free}{\operatorname{Free}}
\newcommand{\GL}{\mathrm{GL}}

\newcommand{\hyp}{\mathrm{hyp}}

\newcommand{\Hom}{\operatorname{Hom}}
\newcommand{\IC}{\mathcal{IC}}
\newcommand{\IP}{\mathrm{IP}}

\newcommand{\iso}{\mathrm{iso}}
\newcommand{\id}{\operatorname{id}}

\DeclareMathOperator{\JH}{\mathtt{JH}}

\newcommand{\Lie}{\mathrm{Lie}}

\newcommand{\MHM}{\mathrm{MHM}}
\newcommand{\MMHM}{\mathrm{MMHM}}

\newcommand{\Msp}{\CM} 
\newcommand{\Mst}{\FM} 
\newcommand{\mult}{\mathfrak{m}} 

\newcommand{\nil}{\mathrm{nil}}

\newcommand{\Perv}{\operatorname{Perv}}

\newcommand{\ptau}{{^\mathfrak{p}\tau}}
\newcommand{\QuivDim}{\mathrm{QuivDim}}

\newcommand{\relCoHA}{\mathscr{A}}
\newcommand{\ulrelCoHA}{\underline{\mathscr{A}}}

\newcommand{\uleqrelBPSalg}[2]{\underline{\BPS}^{#2}_{#1,\Alg}}

\newcommand{\Rep}{\operatorname{Rep}}

\newcommand{\real}{\mathrm{real}}
\newcommand{\im}{\mathrm{im}}

\newcommand{\PBW}{\mathrm{PBW}}
\DeclareMathOperator{\prim}{prim}
\DeclareMathOperator{\T}{T}
\newcommand{\red}{\mathrm{red}}

\newcommand{\cdotsh}{\!\cdot\!}

\newcommand{\RHom}{\operatorname{RHom}}
\newcommand{\rmb}{\mathrm{b}}
\newcommand{\td}{\mathtt{3d}}
\newcommand{\rmc}{\mathrm{c}}
\newcommand{\Spec}{\operatorname{Spec}}
\newcommand{\SSN}{\mathcal{SSN}}
\newcommand{\SemiN}{\mathcal{SN}}
\newcommand{\simp}{\mathrm{simp}}

\newcommand{\sm}{\mathrm{sm}}

\newcommand{\supp}{\operatorname{supp}}

\newcommand{\Tan}{\mathrm{T}}
\newcommand{\Tot}{\operatorname{Tot}}

\DeclareMathOperator{\UEA}{\mathbf{U}}

\newcommand{\Vect}{\mathrm{Vect}}
\newcommand{\vir}{\mathrm{vir}}

\DeclareMathOperator{\Exp}{Exp}
\DeclareMathOperator{\Id}{Id}
\DeclareMathOperator{\Hilb}{Hilb}

\newcommand{\MO}{\mathtt{MO}}
\newcommand{\BPS}{\mathcal{BPS}} 
\DeclareMathOperator{\aBPS}{BPS} 
\DeclareMathOperator{\Sym}{Sym}
\DeclareMathOperator{\DT}{DT}
\newcommand{\pt}{\mathrm{pt}}
\DeclareMathOperator{\HO}{H}
\newcommand{\cms}{/\!\!/}

\newcommand{\bs}[1]{\boldsymbol{#1}}

\newcommand{\bsE}{\bs{\mathfrak{E}}}


\title{BPS algebras and generalised Kac--Moody algebras from 2-Calabi--Yau categories}
\date{\today}

\author{Ben Davison}
\address{School of Mathematics, University of Edinburgh, Edinburgh, UK}
\email{Ben.Davison@ed.ac.uk}

\author{Lucien Hennecart}
\address{Laboratoire Ami\'enois de Math\'ematique Fondamentale et Appliqu\'ee, CNRS UMR 7352, Universit\'e de Picardie Jules Verne, 33 rue Saint Leu, 80000 Amiens, France}
\email{Lucien.Hennecart@u-picardie.fr}

\author{Sebastian Schlegel Mejia}
\address{Ecole Polytechnique Fédérale de Lausanne (EPFL), CH-1015 Lausanne, Switzerland}
\email{sebastian.schlegelmejia@epfl.ch}

\begin{document}

\begin{abstract}
We determine the structure of the BPS algebra of $2$-Calabi--Yau Abelian categories for which the stack of objects admits a good moduli space. We prove that this algebra is isomorphic to the positive part of the enveloping algebra of a generalised Kac--Moody Lie algebra generated by the intersection cohomology of certain connected components (corresponding to roots) of the good moduli space. Some major examples include the BPS algebras of (1) the category of semistable coherent sheaves of given slope on a K3 surface or, more generally, quasiprojective symplectic surface, (2) semistable Higgs bundles on a smooth projective curve, (3) preprojective algebras of quivers, (4) multiplicative preprojective algebras and (5) fundamental groups of (quiver) Riemann surfaces. We define the BPS Lie algebras of $2$-Calabi--Yau categories and prove that they coincide with the ones obtained by dimensional reduction from the critical cohomological Hall algebra in the case in which the 2-Calabi--Yau category is the category of representations of a preprojective algebra. Consequences include (1) A proof in full generality of the Bozec--Schiffmann positivity conjecture for absolutely cuspidal polynomials, a strengthening of the Kac positivity conjecture (2) A proof of the cohomological integrality conjecture for the category of semistable coherent sheaves on local K3 surfaces (3) A description of the cohomology (in all degrees) of Nakajima quiver varieties as direct sums of irreducible lowest weight representations over the BPS Lie algebra.
\end{abstract}

\maketitle

\setcounter{tocdepth}{1}

\tableofcontents

\section{Introduction}
\label{section:introduction}
In this paper, we study connections between generalised Kac--Moody (also known as Borcherds) Lie algebras \cite{kac1990infinite,borcherds1988generalized}, and the Borel--Moore homology of stacks of objects of $2$-Calabi--Yau (2CY) Abelian categories. 2CY categories feature prominently in geometric representation theory, mathematical physics, algebraic geometry and nonabelian Hodge theory \cite{hitchin1987self,donaldson1987twisted,corlette1988flat,simpson1992higgs}. Concrete examples of such categories include representations of various flavours of preprojective algebras \cite{schiffmann2020cohomological,crawley2022deformed,kaplan2019multiplicative,bozec2023calabi}, categories of coherent sheaves on K3 and Abelian surfaces \cite{huybrechts2010geometry}, and categories of Higgs bundles over smooth projective curves \cite{simpson1992higgs}. We prove our results via the BPS algebras defined in \cite{davison2022BPS}, following the sequence of papers \cite{harvey1996algebras,kontsevich2010cohomological,davison2020cohomological}.

In the original paper \cite{harvey1996algebras} on BPS algebras of Harvey and Moore, a physical argument for the appearance of generalised Kac--Moody (GKM) algebras in algebras of BPS states is given.  In the study of operators via correspondences on Hilbert schemes of points on $\BoA^2$, it was shown by Nakajima and Grojnowski \cite{nakajima1997heisenberg,grojnowski1996instantons} that an infinite-dimensional Heisenberg algebra, which is a very particular generalised Kac--Moody Lie algebra, plays a central role.  These papers in turn were inspired by the pioneering work of Nakajima \cite{nakajima1994instantons}, in which the deep connection between (usual) Kac--Moody Lie algebras and algebras of correspondences on quiver varieties was first uncovered.  In this paper we explain how \emph{every} suitably geometric 2CY category gives rise to a (generalised) Kac--Moody Lie algebra.  More precisely, we show that the BPS algebra of a 2CY category $\CA$ with good moduli space of objects is the universal enveloping algebra of one half of a generalised Kac--Moody Lie algebra, and that the Cartan datum of this Lie algebra is explicitly given in terms of intersection cohomology of certain coarse moduli spaces of objects in $\CA$.  Special cases of (the doubled versions of) these BPS Lie algebras are identified with algebras of correspondences appearing in \cite{nakajima1994instantons,nakajima1997heisenberg,nakajima1998quiver,grojnowski1996instantons}; indeed part of our motivation was to find a unified explanation for all of these results.  On the other hand, our results apply to very general 2CY categories.  Reflecting the range of 2CY categories in mathematics, we can then use our structural results to prove conjectures in combinatorics (positivity of cuspidal polynomials), representation theory (decomposition of the cohomology of Nakajima quiver varieties), and algebraic geometry (cohomological integrality for K3 surfaces).

For the combinatorial applications, we consider certain functions counting isomorphism classes of representations of quivers over finite fields. A consequence of Theorem \ref{corollary:absBor} applied to the category $\Rep\Pi_Q$ of representations of the preprojective algebra of a quiver is a proof in full generality of the Bozec--Schiffmann positivity conjecture for (absolutely) cuspidal polynomials of quivers \cite{bozec2019counting}. This is a strengthening of the positivity of Kac polynomials conjectured in \cite{kac1983root}. As in the proofs \cite{hausel2013positivity,davison2018purity,dobrovolska2016moduli} of the Kac positivity conjecture, we prove the positivity of absolutely cuspidal polynomials by identifying their coefficients with dimensions of certain vector spaces, more specifically by identifying these polynomials with intersection Poincar\'e polynomials of singular Nakajima varieties. We also define and prove the positivity of nilpotent and $1$-nilpotent absolutely cuspidal polynomials, which are associated to nilpotent and $1$-nilpotent Kac polynomials counting representations of quivers over finite fields satisfying various nilpotency conditions \cite{bozec2020number}.

In geometric representation theory, we apply our main result to the cohomology of Nakajima quiver varieties, considered as representations of the BPS Lie algebra. Applying the main theorem to the category of $\Pi_{Q_{\ff}}$-modules, where $Q_{\ff}$ is a framed quiver, we describe a decomposition of the \emph{entire} cohomology of Nakajima quiver varieties (i.e. in all cohomological degrees) into irreducible lowest weight modules for the (doubled) BPS Lie algebra. Since this Lie algebra contains the usual Kac--Moody Lie algebra associated to the unframed quiver $Q$, this answers a longstanding question of Nakajima regarding the possibility of extending his famous decomposition of (part of) the cohomology of quiver varieties into irreducible lowest weight modules. This provides a unified construction of the action of Kac--Moody Lie algebras and Heisenberg algebras on cohomology of various quiver varieties \cite{nakajima1997heisenberg,nakajima1998quiver}, as well as cohomology of more general moduli spaces of objects in 2CY categories.

Within enumerative algebraic geometry, and more specifically cohomological Donaldson--Thomas theory \cite{davison2020cohomological,szendroi2014cohomological}, we use our results to prove the cohomological integrality theorem for general 2CY categories and their undeformed 3CY completions. The 2CY version of this conjecture states that there is an isomorphism 
\[
\HO^{\BoMo}(\FM_{\CA},\underline{\BoQ}^{\vir})\cong \Sym\left(\bigoplus_{a\in \K(\CA)}\underline{\CF}_a\otimes \HO(\B\BoC^*,\BoQ)\right)
\]
where $\FM_{\CA}$ is the stack of objects in a fixed 2CY category $\CA$ and $\K(\CA)$ is a version of a numerical Grothendieck group for $\CA$. Up to a cohomological shift, the left-hand side is the Borel--Moore homology of $\FM_{\CA}$, and each $\underline{\CF}_a$ is a bounded complex of mixed Hodge structures. We prove this conjecture by proving a stronger relative version (Theorem~\ref{theorem:relPBW}), in terms of the tensor category of mixed Hodge modules on the coarse moduli space $\CM_{\CA}$. The above isomorphism is an isomorphism of Poincar\'e--Birkhoff--Witt (PBW) type, and indeed we prove it by producing a (relative) PBW isomorphism for the cohomological Hall algebra (CoHA) associated to $\CA$. Via dimensional reduction \cite{kinjo2022dimensional}, the theorem implies the cohomological integrality conjecture regarding the 3CY completions of 2CY categories; a concrete consequence is the (refined) integrality theorem for the DT partition function of local K3 surfaces. In the rest of the introduction we explain, connect and motivate these results in more depth.

\subsection{Generalised Kac--Moody Lie algebras}
\label{subsection:mainresults}
Our main structural theorems in \S \ref{subsubsection:BPSalg2CY} are expressed in terms of generalised Kac--Moody Lie algebras, and their positive halves, which we briefly introduce in this section following \cite{kac1990infinite,borcherds1988generalized}.
\subsubsection{Generalised Kac--Moody Lie algebras associated to a monoid with bilinear form and weight function}

\label{subsubsection:pospartGKM}
Let $(M,+)$ be a monoid. Often, we will take $M$ such that the natural map $M\rightarrow \widehat{M}$ to the groupification is an inclusion into a lattice (which becomes convenient when considering Weyl group actions). Let $(-,-)\colon M\times M\rightarrow \BoZ$ be a symmetric bilinear form. In \S\ref{subsection:roots}, we define the sets of \emph{primitive positive roots} $\Sigma_{M,(-,-)}$, \emph{simple positive roots} $\Phi^+_{M,(-,-)}$ and the Cartan matrix $A_{M,(-,-)}=(a_{m,n}=(m,n))_{m,n\in\Phi^+_{M,(-,-)}}$. We assume that $a_{m,m}\in 2\BoZ_{\leq 1}$ and that $a_{m,n}\leq 0$ if $a_{m,m}=2$. For a function
\[
 P\colon \Phi_{M,(-,-)}^+\rightarrow \BoN[t^{\pm 1/2}];\quad\quad
 m  \mapsto P_m(t)
\]
such that $P_m(t)=1$ if $a_{m,m}=2$, we define a $\widehat{M}\times\BoZ$-graded generalised Kac--Moody algebra $\mathfrak{g}_{M,(-,-),P}$ (in the sense of Borcherds \cite{borcherds1988generalized}, see \cite{bozec2019counting} for the graded version) associated to this datum, by generators and relations. We refer to such a function $P$ as a \emph{weight function}. The Lie algebra $\mathfrak{g}_{M,(-,-),P}$ has a triangular decomposition $\mathfrak{g}_{M,(-,-),P}=\mathfrak{n}^-_{M,(-,-),P}\oplus\mathfrak{h}_{M,(-,-)}\oplus \mathfrak{n}^+_{M,(-,-),P}$. We also have a description by generators and relations (see \S \ref{subsection:GKMmonoids}) and a triangular decomposition of its enveloping algebra
\[
\UEA(\mathfrak{g}_{M,(-,-),P})\cong \UEA(\mathfrak{n}^-_{M,(-,-),P})\otimes \UEA(\mathfrak{h}_{M,(-,-)})\otimes \UEA(\mathfrak{n}^+_{M,(-,-),P}).
\]
Both $\Fn^+_{M,(-,-),P}$ and $\UEA(\Fn_{M,(-,-),P}^+)$ are $M\times\BoZ$-graded.

In concrete situations, the polynomial $P_m(t)$ is the graded dimension of some given $\BoZ$-graded vector space $V_m$ (the intersection cohomology of some algebraic variety). The above algebra is the unique (up to isomorphism, and some choice regarding the Cartan subalgebra) cohomologically graded generalised Kac--Moody Lie algebra with generators in the $m$th root weight space identified with $V_m$.

\subsubsection{Positive halves of generalised Kac--Moody (Lie) algebras in symmetric tensor categories}
\label{subsubsection:pospartGKMtensorcategories}
Let $(\CM,\oplus)$ be a commutative monoid in the category of complex schemes, for which we have a decomposition $\CM=\coprod_{m\in M}\CM_m$ into finite type unions of finite type connected components $\CM_m$ of $\CM$ (i.e. $\CM_m$ might be disconnected but it is a finite type scheme) for some monoid $(M,+)$ as in \S\ref{subsubsection:pospartGKM}. We allow $\CM$ to have infinitely many connected components. We assume that $\oplus$ respects this decomposition. In this case, we do not have a groupification $\widehat{\CM}$ with which to define the full generalised Kac--Moody Lie algebra with generators for positive roots given by some selection of sheaves on $\CM$, but we nonetheless can define the \emph{positive half} of such a Lie algebra.

We let $(-,-)\colon M\times M\rightarrow \BoZ$ be a bilinear form, $\Sigma_{M,(-,-)}$ be the set of primitive positive roots, $\Phi_{M,(-,-)}^+$ be the set of simple positive roots and $A_{M,(-,-)}$ be the Cartan matrix as in \S\ref{subsubsection:pospartGKM}. We assume that $\CM_{m}=\pt$ for any $m\in\Phi_{M,(-,-)}^{+,\real}$ and that the sum map $+\colon M\times M\rightarrow M$ is finite.

Let $\underline{\SG}_m\in\MHM(\CM_m)$ be a set of mixed Hodge modules indexed by $m\in\Phi_{M,(-,-)}^+$. We assume that $\underline{\SG}_m=\underline{\BoQ}_{\CM_m}=\underline{\BoQ}_{\pt}$ is the constant mixed Hodge module if $m\in\Phi_{M,(-,-)}^{+,\real}$. We let $P$ be the weight function given by the Poincar\'e polynomials of these $\underline{\SG}_m$:
 \[
 \begin{matrix}
  P\colon &\Phi_{M,(-,-)}^+&\rightarrow&\BoN[t^{\pm 1/2}]&\\
      &m   &\mapsto  &P(\underline{\SG}_m)&=\sum_{j\in\BoZ}\dim\HO^j(\underline{\SG}_m)t^{j/2}.
 \end{matrix}
 \]
Given the above data, we define $\mathfrak{n}^+_{\CM,\underline{\SG}}$ to be the free Lie algebra object generated by $\underline{\SG}=\bigoplus_{m\in M} \underline{\SG}_m$ in the symmetric tensor category $\MHM(\CM)$, quotiented by the Lie ideal generated by generalised Serre relations (see \S \ref{subsection:GKMmonoids} for the precise construction). The Lie algebra obtained by taking derived global sections of $\mathfrak{n}^+_{\CM,\underline{\SG}}$ and forgetting the mixed Hodge structures is isomorphic to $\mathfrak{n}^+_{M,(-,-),P}$, (half) the generalised Kac--Moody Lie algebra defined as in \S\ref{subsubsection:pospartGKM}.

Similarly we define $\UEA(\mathfrak{n}^+_{\CM,\underline{\SG}})$ to be the free unital associative algebra generated by $\underline{\SG}$, quotiented by the two-sided ideal generated by the same relations. The algebra obtained by taking derived global sections and forgetting the MHS of $\UEA(\mathfrak{n}^+_{\CM,\underline{\SG}})$ is isomorphic to $\UEA(\mathfrak{n}^+_{M,(-,-),P})$.

\subsection{The BPS algebra of a 2-Calabi--Yau category}
\label{subsubsection:BPSalg2CY}
We let $\CA$ be an Abelian 2-Calabi--Yau category in the sense of \cite{davison2022BPS}. It is an Abelian subcategory of the homotopy category of some ambient dg category $\SC$. We briefly recall in \S\ref{section:modulistack2dcats} the assumptions on $\CA$. In particular, we assume that it has a moduli stack of objects, $\FM_{\CA}$ which is an Artin stack with a good moduli space $\JH_{\CA}\colon\FM_{\CA}\rightarrow\CM_{\CA}$ (as defined in \cite{alper2013good}). The good moduli space $\CM_{\CA}$ is assumed to be a scheme and it has the monoid structure $\oplus\colon\CM_{\CA}\times\CM_{\CA} \to \CM_{\CA}$ induced by the direct sum $\oplus\colon\FM_{\CA}\times\FM_{\CA}\rightarrow\FM_{\CA}$. The complex of mixed Hodge modules $\underline{\SA}_{\CA}\coloneqq(\JH_{\CA})_*\BD\underline{\BoQ}_{\FM_{\CA}}^{\vir}\in\CD^+(\MHM(\CM_{\CA}))$ is equipped with a multiplication morphism
\[
\mult\colon \underline{\SA}_{\CA}\boxdot\underline{\SA}_{\CA}\rightarrow\underline{\SA}_{\CA}
\]
and called the \emph{the relative cohomological Hall algebra} \cite{davison2022BPS}. 

The superscript $\vir$ indicates a twist by a certain power (half the virtual dimension of the stack) of the Tate twist which we make precise in \S\ref{subsection:theCoHA}. The complex of mixed Hodge modules $\ulrelCoHA_{\CA}$ is concentrated in nonnegative cohomological degrees (Lemma~\ref{lemma:nonnegativeperverse}) and the degree $0$ cohomology mixed Hodge module, $\underline{\BPS}_{\CA,\Alg}\coloneqq\CH^0(\underline{\SA}_{\CA})$, carries the induced multiplication $\CH^{0}(\mult)$. It is called the \emph{BPS algebra sheaf}.

All the CoHAs we consider in this paper are graded by $M=\pi_0(\CM_{\CA})$, the monoid of connected components of the moduli space of objects under consideration.  Given a $M$-graded algebra object $\mathscr{A}$, with multiplication denoted $\mult$, and a morphism of monoids $\psi\colon M^{\times 2}\rightarrow \BoZ/2\BoZ$ we denote by $\mathscr{A}^{\psi}$ the sign-twisted multiplication obtained by setting $\mult^{\psi}_{a,b}=(-1)^{\psi(a,b)}\mult_{a,b}$ for any $a,b\in M$.  Our main results are most cleanly stated in terms of a certain sign-twisted multiplication, which is defined in \S \ref{subsubsection:twistmultiplication}.

We let $\Sigma_{\K(\CA)}\coloneqq \Sigma_{\K(\CA),(-,-)_{\SC}}$ and $\Phi^+_{\K(\CA)}\coloneqq \Phi^+_{\K(\CA),(-,-)_{\SC}}$ be the sets of primitive positive roots and simple positive roots respectively (for the monoid with bilinear form $(\K(\CA),(-,-)_{\SC})$), as in \S\ref{subsection:roots}. For $m\in\Phi_{\K(\CA)}^{+,\iso}$, we write $m=lm'$ for $m'\in\Sigma_{\K(\CA)}$ and $l\in\BoN$. This decomposition of $m$ is unique and we have a natural closed immersion
\[
\begin{matrix}
 u_{m}&\colon& \CM_{\CA,m'}&\rightarrow& \CM_{\CA,m}\\
   &   & x   &\mapsto  &x^{\oplus l}.
\end{matrix}
\]
The (connected components of the) complex scheme $\CM_{\CA,m'}$ are of dimension $2$ (Proposition~\ref{proposition:geometrygoodmodspace}).  We let $\underline{\SG'}_m=(u_m)_*(\underline{\IC}(\CM_{\CA,m'}))$. Then for $m\in \Phi^+_{\K(\CA)}$ we define
\[
\underline{\SG}_m\coloneqq\begin{cases}\ul{\IC}(\CM_{\CA,m})&\textrm{if }m\in \Phi^+_{\K(\CA)}\setminus\Phi_{\K(\CA)}^{+,\iso}\\
(u_m)_*(\underline{\IC}(\CM_{\CA,m'}))&\textrm{if }m=lm'\in\Phi_{\K(\CA)}^{+,\iso},\;m'\in \Sigma_{\K(\CA)}.\end{cases}
\]
The definitions of the intersection complexes $\ul{\IC}(-)$ are recalled in \S \ref{notations_subsec}. For every $m$, $\underline{\SG}_m$ is a semisimple mixed Hodge module on $\CM_{\CA,m}$, and in the isotropic case it is supported on the \emph{small diagonal} of $\CM_{\CA,m}$ which we define to be the image of $u_m$. Its restriction to each of the connected components of $\CM_{\CA,m}$ is simple.

\subsubsection{BPS algebras as GKM algebras}
We let $\underline{\SG}=\bigoplus_{m\in \Phi_{\K(\CA)}^+}\underline{\SG}_m$ be as defined above. Note that $\underline{\SG}_m\cong\underline{\BoQ}_{\pt}$ if $m\in\Phi_{\K(\CA)}^{+,\real}$. We can finally state our first main theorem, proved in \S \ref{subsection:mainproof}.
\begin{theorem}
\label{theorem:BPSalgisenvBorcherds}
There exists a canonical isomorphism of algebra objects
 \[
 \UEA(\mathfrak{n}^+_{\CM_{\CA},(-,-)_{\SC},\underline{\SG}})\xrightarrow{\cong}\ul{\BPS}_{\CA,\Alg}^{\psi}
 \]
 in $\MHM(\CM_{\CA})$.
\end{theorem}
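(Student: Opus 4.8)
The plan is to deduce Theorem~\ref{theorem:BPSalgisenvBorcherds} from the relative PBW isomorphism for the cohomological Hall algebra established earlier in the paper. That isomorphism presents $\ulrelCoHA_\CA^\psi$ as a symmetric algebra on a ``BPS object'', and as a formal consequence it identifies $\ul{\BPS}_{\CA,\Alg}^\psi=\CH^0(\ulrelCoHA_\CA^\psi)$ with the universal enveloping algebra $\UEA(\ul{\BPS}_{\CA,\Lie}^\psi)$ of the BPS Lie algebra object $\ul{\BPS}_{\CA,\Lie}^\psi\subseteq\ul{\BPS}_{\CA,\Alg}^\psi$ (the $\psi$-twist ensuring the relevant (super)commutativity, so that the CoHA bracket defines a Lie algebra object to which $\UEA$ applies). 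Since, by construction, $\UEA(\mathfrak{n}^+_{\CM_\CA,(-,-)_\SC,\underline{\SG}})$ is the enveloping algebra of the Lie algebra object $\mathfrak{n}^+_{\CM_\CA,(-,-)_\SC,\underline{\SG}}$, and $\UEA(-)$ is functorial, it suffices to produce a canonical isomorphism of Lie algebra objects
\[
\mathfrak{n}^+_{\CM_\CA,(-,-)_\SC,\underline{\SG}}\xrightarrow{\ \cong\ }\ul{\BPS}_{\CA,\Lie}^\psi
\]
in $\MHM(\CM_\CA)$ and then apply $\UEA$.

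To build this map I would first show that for every simple positive root $m\in\Phi^+_{\K(\CA)}$ the degree-$m$ part of $\ul{\BPS}_{\CA,\Lie}^\psi$ is canonically $\underline{\SG}_m$. The point is that $m$ being simple means $\CM_{\CA,m}$ carries no contribution from decompositions into lower roots, so the BPS sheaf in this degree is ``primitive''; the étale-local structure theory of $2$CY categories — near a point of $\CM_\CA$ the stack $\FM_\CA$ is modelled on the stack of representations of the preprojective algebra of a local quiver, compatibly with the CoHA and with the support and purity of the BPS sheaf — then computes it as $\ul{\IC}(\CM_{\CA,m})$ when $m$ is non-isotropic, and as $(u_m)_*\ul{\IC}(\CM_{\CA,m'})$ supported on the small diagonal when $m=lm'$ is isotropic (the latter being the ``Heisenberg'' phenomenon familiar from Hilbert schemes of surfaces). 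By the universal property of the free Lie algebra on $\underline{\SG}=\bigoplus_m\underline{\SG}_m$, these identifications, composed with the inclusions into $\ul{\BPS}_{\CA,\Lie}^\psi$ and the CoHA bracket, induce a morphism from the free Lie algebra; one then checks the defining relations of $\mathfrak{n}^+_{\CM_\CA,(-,-)_\SC,\underline{\SG}}$ hold in the target. The (generalised) Serre relations are local: restricting to the sub-monoid generated by two simple roots $m,n$, they reduce to the corresponding relations in the BPS Lie algebra of a rank-$\leq 2$ preprojective algebra, which are known (equivalently, to the Bozec--Schiffmann presentation of the spherical/cuspidal generators). This yields a morphism $\mathfrak{n}^+_{\CM_\CA,(-,-)_\SC,\underline{\SG}}\to\ul{\BPS}_{\CA,\Lie}^\psi$, surjective because the $\underline{\SG}_m$ ($m$ simple) generate $\ul{\BPS}_{\CA,\Lie}^\psi$ under the bracket — again checkable on the local preprojective models.

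It remains to prove this surjection is an isomorphism, i.e. that there are no relations beyond the (generalised) Serre ones. I would pass to derived global sections, forget the mixed Hodge structures, and argue in each fixed $\K(\CA)\times\BoZ$-bidegree; all bigraded pieces are finite-dimensional, so it is enough to match Poincar\'e series. On the source, the bigraded dimensions are computed by the graded PBW/character theorem for generalised Kac--Moody algebras of Borcherds \cite{borcherds1988generalized} (graded version from \cite{bozec2019counting}), which expresses them through the weight function $P$ — and $P_m$ was defined as the Poincar\'e polynomial of $\underline{\SG}_m$. On the target, the character of $\ul{\BPS}_{\CA,\Lie}^\psi$ is determined by the cohomological integrality theorem proved earlier, which expresses the character of $\ulrelCoHA_\CA$, hence of all $\ul{\BPS}_{\CA,a}$, through the classes of $\ul{\IC}(\CM_{\CA,m})$ for $m$ running over \emph{all} positive roots. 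Equality of the two series thus becomes a purely combinatorial identity between the GKM root-multiplicity (``denominator'') expansion and the Donaldson--Thomas-type integrality/wall-crossing expansion, both built from the same form $(-,-)_\SC$ and the same notions of primitive and simple root.

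The \emph{main obstacle} I anticipate is controlling the BPS sheaf $\ul{\BPS}_{\CA,m}$ for a \emph{non-simple} root $m$ and matching it against the GKM side uniformly over all roots: via the étale-local preprojective model one must know the sheaf in degree $m$ contains exactly what brackets of lower generators produce and nothing more, and that the combinatorics of the integrality theorem reproduces the GKM denominator identity — this is the step that forces injectivity of the comparison map, and it is where the local-to-global transfer of structure (compatibility of the local models with the CoHA multiplication, with supports, and with purity) must be made fully rigorous.
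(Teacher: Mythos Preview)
Your approach has a fundamental circularity. You propose to deduce Theorem~\ref{theorem:BPSalgisenvBorcherds} from ``the relative PBW isomorphism for the cohomological Hall algebra established earlier in the paper'', but in fact the paper's PBW isomorphism (Theorem~\ref{theorem:relPBW}) is proven \emph{after} Theorem~\ref{theorem:BPSalgisenvBorcherds}, and its proof (\S\ref{subsubsection:fullPBW}) relies on Proposition~\ref{proposition:BPS3dcoincide}, which in turn invokes Theorem~\ref{theorem:BPSalgisenvBorcherds}. Likewise, the ``cohomological integrality theorem'' you appeal to for the character of the target is Corollary~\ref{corollary:PBW}, again a consequence of the theorem you are trying to prove. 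There is no independently-defined BPS Lie algebra object $\ul{\BPS}_{\CA,\Lie}^\psi$ for a general 2CY category $\CA$ prior to this theorem: the paper \emph{defines} $\ul{\BPS}_{\CA,\Lie}$ to be $\mathfrak{n}^+_{\CM_\CA,(-,-)_\SC,\underline{\SG}}$ (Definition~\ref{def:BPSheafalg}), motivated precisely by Theorem~\ref{theorem:BPSalgisenvBorcherds}. So the isomorphism you propose to construct is a tautology, and the inputs you want to feed into it are not yet available.

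The paper's actual argument runs in the opposite logical direction. One constructs directly the algebra morphism $\Upsilon_\CA\colon\Free_{\boxdot\sAlg}(\underline{\SG}_\CA)\to\ul{\BPS}_{\CA,\Alg}^\psi$ from the canonical inclusions of the generators (Lemma~\ref{lemma:inclusionICBPSalg}, Corollary~\ref{corollary:isoprimsummand}), and then shows (i) it annihilates the Serre ideal and (ii) the induced map $\overline{\Upsilon}_\CA$ is an isomorphism. Both steps use the same local-to-global scheme: the neighbourhood theorem reduces to preprojective algebras, and for $\Pi_Q$ one inducts over the poset $\QuivDim$ (Lemma~\ref{lemma:supportnotkernel}, Proposition~\ref{proposition:factorisationthroughquotient}, \S\ref{subsubsection:BPSalgebra2CYcats}). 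The decisive input at the base of the induction is \emph{not} a PBW or character identity, but the explicit identification of the degree-zero strictly semi-nilpotent CoHA $\HO^0\!\SA_{\Pi_Q}^{\SSN,\psi}$ with the enveloping algebra of the Borcherds--Bozec Lie algebra (Theorem~\ref{theorem:degree0SSN}, Corollaries~\ref{corollary:degree0SSNvanish} and \ref{corollary:degree0SSN}). This is what forces any putative simple summand of the kernel or cokernel of $\overline{\Upsilon}_{\Pi_Q,\dd}$ --- which by $\BG_a^{\overline{L}}$-equivariance and the inductive hypothesis must be supported on the single orbit $\BG_a^{\overline{L}}\cdot 0_{\dd}$ --- to vanish after applying $\HO^0(\imath_{\SSN}^!-)$. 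Your dimension-matching strategy, by contrast, would require an independent computation of the BPS sheaf at every non-simple root, which is exactly the content of the theorem.
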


Recall that $\underline{\rmBPS}_{\CA,\Alg}^{\psi}\coloneqq\HO^*(\ul{\BPS}_{\CA,\Alg}^{\psi})$ is the algebra formed by taking the derived global sections of the BPS algebra sheaf. It is an ordinary $\K(\CA)\times \BoZ$-graded algebra (where the copy of $\BoZ$ encodes the cohomological grading) and it is called \emph{the BPS algebra}. By construction, it has a lift to an algebra object in the category of mixed Hodge structures.

If $\imath_{\triangle}\colon \CM_{\CA}^{\triangle}\hookrightarrow \CM_{\CA}$ is a locally closed inclusion of a saturated submonoid (for example, $\triangle$ may denote various versions of nilpotency in the case $\CA=\mathrm{rep}(\Pi_Q)$), the complex $\imath^!_{\triangle}\ul{\BPS}_{\CA,\Alg}^{\psi}$ inherits the structure of an algebra object in the derived category of mixed Hodge modules on $\CM_{\CA}^{\triangle}$ via restriction, and we similarly define the \emph{$\triangle$ BPS algebra} by taking derived global sections:
\[
\underline{\rmBPS}_{\CA,\Alg}^{\triangle,\psi}\coloneqq\HO^*(\CM_{\CA}^{\triangle},\imath_{\triangle}^!\ul{\BPS}_{\CA,\Alg}^{\psi}).
\]

Applying the functor $\imath_{\triangle}^!$ and taking derived global sections in Theorem~\ref{theorem:BPSalgisenvBorcherds} implies our main result on BPS algebras:

\begin{theorem}
\label{corollary:absBor}
There is a canonical isomorphism of algebras
\[
\UEA(\mathfrak{n}^+_{\K(\CA),(-,-)_{\SC},\HO^*(\imath_{\triangle}^!\SG)})\xrightarrow{\cong}\rmBPS_{\CA,\Alg}^{\triangle,\psi}.
\]
and thus an isomorphism of algebras $\UEA(\mathfrak{n}^+_{\K(\CA),(-,-)_{\SC},\IP^{\triangle}_{\CA}})\xrightarrow{\cong}\rmBPS_{\CA,\Alg}^{\triangle,\psi}$, where $\IP^{\triangle}_{\CA}$ is the weight function
\[
\begin{matrix}
 \IP^{\triangle}_{\CA}\colon &\Phi^+_{\K(\CA)}&\rightarrow&\BoN[t^{\pm 1/2}]&\\
      &m      &\mapsto  &\ch^{\BoZ}(\HO^*(\imath_{\triangle}^!\SG_m))&\coloneqq \sum_{j\in\BoZ}\dim \HO^j(\imath_{\triangle}^!\SG_m)t^{j/2}.
\end{matrix}
\]
In particular, there is an embedding $\rmBPS_{\CA,\Alg}^{\triangle,\psi}\subset \UEA(\Fg_{\K(\CA),(-,-)_{\SC},\IP^{\triangle}_{\CA}})$
 identifying the BPS algebra as the positive half of a cohomologically graded generalised Kac--Moody algebra.
\end{theorem}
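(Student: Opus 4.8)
The plan is to deduce everything from Theorem~\ref{theorem:BPSalgisenvBorcherds} by applying the functor $\imath_{\triangle}^!$ and then derived global sections $\HO^*(\CM_{\CA}^{\triangle},-)$, spending the only real effort on matching the two sides. First note that $\imath_{\triangle}^!$ is lax monoidal for the convolution product $\boxdot$ on $\CM_{\CA}$ --- this is exactly what endows $\imath_{\triangle}^!\ul{\BPS}_{\CA,\Alg}^{\psi}$ with its algebra structure above --- so applying it to the isomorphism of algebra objects of Theorem~\ref{theorem:BPSalgisenvBorcherds} gives an isomorphism of algebra objects $\imath_{\triangle}^!\UEA(\mathfrak{n}^+_{\CM_{\CA},(-,-)_{\SC},\underline{\SG}})\cong\imath_{\triangle}^!\ul{\BPS}_{\CA,\Alg}^{\psi}$ in $\CD^+(\MHM(\CM_{\CA}^{\triangle}))$. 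Taking $\HO^*(\CM_{\CA}^{\triangle},-)$, which is again lax monoidal, then yields an isomorphism of $\K(\CA)\times\BoZ$-graded algebras
\[
\HO^*\!\bigl(\CM_{\CA}^{\triangle},\;\imath_{\triangle}^!\UEA(\mathfrak{n}^+_{\CM_{\CA},(-,-)_{\SC},\underline{\SG}})\bigr)\;\xrightarrow{\;\cong\;}\;\rmBPS_{\CA,\Alg}^{\triangle,\psi},
\]
the right-hand side being $\HO^*(\CM_{\CA}^{\triangle},\imath_{\triangle}^!\ul{\BPS}_{\CA,\Alg}^{\psi})$ by definition; the whole construction is canonical because Theorem~\ref{theorem:BPSalgisenvBorcherds} is.

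\textbf{Identifying the left-hand side.} It remains to identify the source of this map with $\UEA(\mathfrak{n}^+_{\K(\CA),(-,-)_{\SC},\HO^*(\imath_{\triangle}^!\SG)})$. Here I would use that, by the construction of \S\ref{subsection:GKMmonoids}, $\UEA(\mathfrak{n}^+_{\CM_{\CA},(-,-)_{\SC},\underline{\SG}})$ is the free unital associative algebra object on $\underline{\SG}=\bigoplus_{m}\underline{\SG}_m$ in $\MHM(\CM_{\CA})$ modulo the two-sided ideal generated by the generalised Serre relations, together with the fact recorded in \S\ref{subsubsection:pospartGKMtensorcategories} that $\HO^*(\CM_{\CA},-)$ carries it to $\UEA(\mathfrak{n}^+_{\K(\CA),(-,-)_{\SC},P(\underline{\SG})})$. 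The same argument applies with the monoidal functor $\HO^*(\CM_{\CA}^{\triangle},\imath_{\triangle}^!(-))$ in place of $\HO^*(\CM_{\CA},-)$: the generating objects become $\imath_{\triangle}^!\underline{\SG}_m$, the generating graded vector spaces become $\HO^*(\CM_{\CA}^{\triangle},\imath_{\triangle}^!\underline{\SG}_m)$, and since the sets $\Sigma_{\K(\CA)}$, $\Phi^+_{\K(\CA)}$, $\Phi^{+,\iso}_{\K(\CA)}$, $\Phi^{+,\real}_{\K(\CA)}$ and the Cartan matrix depend only on $(\K(\CA),(-,-)_{\SC})$ and not on $\triangle$, the Serre relations are sent to the Serre relations; moreover for $m\in\Phi_{\K(\CA)}^{+,\real}$ the point $\CM_{\CA,m}$ lies in $\CM_{\CA}^{\triangle}$, so $\imath_{\triangle}^!\underline{\SG}_m=\underline{\BoQ}_{\pt}$ and $\HO^*(\imath_{\triangle}^!\SG)$ is a legitimate weight function. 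The one non-formal point, which I expect to be the main (and essentially the only) obstacle, is that $\HO^*\circ\imath_{\triangle}^!$ is not exact, so it need not a priori commute with the quotient by the Serre ideal; I would circumvent this exactly as in the proof behind \S\ref{subsubsection:pospartGKMtensorcategories}, passing first to the PBW-type decomposition of $\UEA(\mathfrak{n}^+_{\CM_{\CA},\underline{\SG}})$ as an object of $\MHM(\CM_{\CA})$ --- a direct sum over PBW monomials of tensor products of symmetric powers of the root-space objects --- which is preserved by the strong monoidal functor $\HO^*\circ\imath_{\triangle}^!$. (Strong monoidality of $\imath_{\triangle}^!$ for $\boxdot$ comes from saturatedness of $\CM_{\CA}^{\triangle}$, which makes the square relating $\oplus$ on $\CM_{\CA}$ to $\oplus$ on $\CM_{\CA}^{\triangle}$ Cartesian, so that base change along the finite map $\oplus$ applies; strong monoidality of $\HO^*$ is the Künneth formula.) Applying $\HO^*\circ\imath_{\triangle}^!$ to this decomposition produces the PBW decomposition of $\UEA(\mathfrak{n}^+_{\K(\CA),(-,-)_{\SC},\HO^*(\imath_{\triangle}^!\SG)})$, compatibly with the algebra structures, giving the first displayed isomorphism of the theorem.

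\textbf{The remaining assertions.} These are formal. For the second isomorphism, by \S\ref{subsubsection:pospartGKM} the algebra $\UEA(\mathfrak{n}^+_{\K(\CA),(-,-)_{\SC},V})$ depends on a family $V=(V_m)_{m\in\Phi^+_{\K(\CA)}}$ of graded vector spaces only through the weight function $m\mapsto\sum_{j}\dim V_m^j\,t^{j/2}$; taking $V_m=\HO^*(\CM_{\CA}^{\triangle},\imath_{\triangle}^!\underline{\SG}_m)$, whose associated polynomial is $\IP^{\triangle}_{\CA}(m)$ by definition, replaces $\HO^*(\imath_{\triangle}^!\SG)$ by $\IP^{\triangle}_{\CA}$. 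Finally, since $\K(\CA)$ is a submonoid of a lattice it has a groupification, so the full generalised Kac--Moody algebra $\Fg_{\K(\CA),(-,-)_{\SC},\IP^{\triangle}_{\CA}}$ is defined; the triangular decomposition of its enveloping algebra recalled in \S\ref{subsubsection:pospartGKM} exhibits $\UEA(\mathfrak{n}^+_{\K(\CA),(-,-)_{\SC},\IP^{\triangle}_{\CA}})$ as a subalgebra, and composing with the isomorphism just established gives the claimed embedding $\rmBPS_{\CA,\Alg}^{\triangle,\psi}\subset\UEA(\Fg_{\K(\CA),(-,-)_{\SC},\IP^{\triangle}_{\CA}})$.
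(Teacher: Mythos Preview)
Your approach is the paper's: apply $\imath_{\triangle}^!$ (strictly monoidal by saturation and base change) and then $\HO^*$ (strictly monoidal by K\"unneth) to the isomorphism of Theorem~\ref{theorem:BPSalgisenvBorcherds}, and read off the result. The paper treats this as a one-line deduction, and your expanded discussion is largely correct. Two points, however, deserve comment.

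\emph{The PBW workaround is circular.} You correctly flag that $\HO^*\circ\imath_{\triangle}^!$ is not exact, so it is not automatic that it carries the quotient $\Free_{\boxdot\sAlg}(\underline{\SG})/\underline{\CI}_{\Alg}$ to $\Free(\HO^*\imath_{\triangle}^!\underline{\SG})/\HO^*\imath_{\triangle}^!\underline{\CI}_{\Alg}$. But your fix via the PBW decomposition only pushes the problem down a level: the PBW isomorphism is $\Sym_{\boxdot}(\Fn^+)\cong\UEA(\Fn^+)$, and to identify $\HO^*\imath_{\triangle}^!\Fn^+$ with $\Fn^+_{\HO^*\imath_{\triangle}^!\SG}$ you face the same quotient-commutation issue for the Lie Serre ideal. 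The clean resolution is purity: every $\underline{\SG}_m$ is a pure weight-zero MHM, $\boxdot$ preserves purity (finiteness of $\oplus$), so $\Free_{\boxdot\sAlg}(\underline{\SG})$, the Serre subobjects $\underline{\SR}_{m,n}$, the ideal $\underline{\CI}_{\Alg}$, and the quotient are all semisimple. Hence every short exact sequence and every image factorisation in the construction splits, and any additive monoidal functor (in particular $\HO^*\circ\imath_{\triangle}^!$) preserves them. This is the mechanism implicitly used in the proof of the unnumbered Proposition in \S\ref{subsection:globsectGKM}.

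\emph{The real-root claim is not part of the hypotheses.} You assert that for $m\in\Phi_{\K(\CA)}^{+,\real}$ the point $\CM_{\CA,m}$ lies in $\CM_{\CA}^{\triangle}$. Nothing in the setup of the theorem guarantees this; it is an extra assumption added only later (see \S\ref{subsection:weylinvariance}). If it fails, $\imath_{\triangle}^!\underline{\SG}_m=0$ and $\IP^{\triangle}_{\CA}(m)=0$, which is harmless for the algebra (no generator at $m$, and all Serre relations involving $m$ become vacuous) but technically violates the convention $P_m=1$ for real roots in \S\ref{subsection:GKMmonoids}. In the applications (\S\ref{section:Cuspidalpolsquivers}--\S\ref{section:lowestweightmodules}) the condition holds, so this is a cosmetic point rather than a gap.
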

Explicitly, for $m\in\Sigma_{\K(\CA)}\subset\Phi_{\K(\CA)}^+$, $\IP^{\triangle}_{\CA}(m)$ is the Poincar\'e polynomial of $\HO^*\!(\CM^{\triangle}_{\CA,m},\imath_{\triangle}^!\IC(\CM_{\CA,m}))$, and for $m=lm'$ with $m'\in\Sigma_{\CA}$ isotropic, $\IP^{\triangle}_{\CA}(m)$ is the Poincar\'e polynomial of $\HO^*\!(\CM^{\triangle}_{\CA,m'},\imath_{\triangle}^!\IC(\CM_{\CA,m'}))$, so that spaces of simple roots of $\Fg_{\CA}^{\ttBPS, \triangle}$ defined below are identified with $\imath_{\triangle}$-restrictions of intersection complexes on moduli spaces of objects of $\CA$.

\label{subsubsection:cohintegrality2CY}
We let $\CA$, $\FM_{\CA}$, $\CM_{\CA}$ be as in \S\ref{subsubsection:BPSalg2CY}. Motivated by Theorem~\ref{theorem:BPSalgisenvBorcherds}, we define the \emph{relative BPS Lie algebra} of $\CA$ as 
\begin{align}
\label{equation:BPS_LA_def}
\ul{\BPS}_{\CA,\Lie}\coloneqq\mathfrak{n}^+_{\CM_{\CA},(-,-)_{\SC},\underline{\SG}},
\end{align}
we define
\begin{equation}
\label{equation:BPS_ALA_defMHM}
\ul{\Fn}^{\ttBPS,\triangle,+}_{\CA}\coloneqq \HO^*(\CM^{\triangle}_{\CA},\imath_{\triangle}^!\ulBPS_{\CA,\Lie})
\end{equation}
so that (forgetting about Hodge structures)
\begin{align}
\label{equation:BPS_ALA_def}
\Fn^{\ttBPS,\triangle,+}_{\CA}\coloneqq & \HO^*(\CM^{\triangle}_{\CA},\imath_{\triangle}^!\BPS_{\CA,\Lie})\\
\nonumber
\cong &\Fn^+_{\K(\CA),(-,-)_{\SC},\HO^*(\CM^{\triangle}_{\CA},\imath_{\triangle}^!\SG)}.
\end{align}
We define the \emph{full} BPS Lie algebra:
\begin{equation}
\label{definition:fullBPSLA}
\Fg^{\ttBPS,\triangle}_{\CA}\coloneqq \Fg_{\K(\CA),(-,-)_{\SC},\HO^*(\CM^{\triangle}_{\CA},\imath_{\triangle}^!\SG)}.
\end{equation}
By construction, the full BPS Lie algebra is a generalised Kac--Moody Lie algebra containing the BPS Lie algebra $\HO^*(\CM^{\triangle}_{\CA},\imath_{\triangle}^!\BPS_{\CA,\Lie})$ as its strictly positive half.

In the rest of the introduction we describe some of the consequences of Theorems \ref{theorem:BPSalgisenvBorcherds} and \ref{corollary:absBor}.
\subsection{Positivity for cuspidal polynomials}
\label{subsection:positivitycusppolsintro}
Cuspidal polynomials $C_{Q,\dd}(q)$ of a quiver $Q$ were introduced in \cite{bozec2019counting}, and defined as the polynomials counting the dimensions of the graded components of the space of primitive elements of the constructible Hall algebra $H_{Q,\BoF_q}$ over a finite field. The primitive elements are defined using Green's coproduct on $H_{Q,\BoF_q}$ \cite{green1995hall} and they form a minimal subspace generating the Hall algebra \cite{sevenhant2001relation}. See \cite{schiffmann2018kac} for motivation and background regarding Hall algebras and cuspidal polynomials.

Bozec and Schiffmann also defined a closely related family of polynomials, the absolutely cuspidal polynomials $C_{Q,\dd}^{\rmabs}(q)$. They are defined in a combinatorial way from cuspidal polynomials and coincide with the latter for \emph{hyperbolic} dimension vectors. More precisely, $C_{Q,\dd}(q)=C_{Q,\dd}^{\rmabs}(q)$ if $(\dd,\dd)_Q<0$, $C_{Q,\dd}(q)=C_{Q,\dd}^{\rmabs}(q)=q^{g_i}$ if $\dd=1_i\in\BoN^{Q_0}$ is a basis vector and the vertex $i$ carries $g_i$ loops, and, if $(\dd,\dd)_Q=0$ with $\dd$ indivisible, then the families $(C_{Q,l\dd}(q))_{l\geq 1}$ and $(C_{Q,l\dd}^{\rmabs}(q))_{l\geq 1}$ are related by the formula utilising plethystic exponentials (see for example \cite[\S1.5]{bozec2019counting} for a definition of this combinatorial operation)
\[
 \Exp_{z}\left(\sum_{l\geq 1}C_{Q,l\dd}(q)z^{l\dd}\right)=\Exp_{z,q}\left(\sum_{l\geq 1}C_{Q,l\dd}^{\rmabs}(q)z^{l\dd}\right).
\]

Bozec and Schiffmann formulated the following positivity conjecture for the family $(C_{Q,\dd}^{\rmabs}(q))_{\dd\in\BoN^{Q_0}}$:
\begin{conjecture}[{\cite{bozec2019counting}}]
\label{conjecture:positivity}
 For $\dd\in\BoN^{Q_0}$, the polynomial $C^{\rmabs}_{Q,\dd}(q)$ has nonnegative coefficients.
\end{conjecture}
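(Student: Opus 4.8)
The plan is to deduce Conjecture~\ref{conjecture:positivity} from Theorem~\ref{corollary:absBor} applied to the $2$-Calabi--Yau category $\CA=\Rep\Pi_Q$ of representations of the preprojective algebra of $Q$. The key point is to match the combinatorial definition of the absolutely cuspidal polynomials $C^{\rmabs}_{Q,\dd}(q)$ with the weight function $\IP^{\triangle}_{\CA}$ appearing in the main theorem, and then observe that the latter is manifestly a sum of dimensions of cohomology groups, hence has nonnegative coefficients.

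First I would set up the dictionary on the combinatorial side. Work over the stack $\FM_{\CA}=\FM_{\Pi_Q}$ of representations of $\Pi_Q$, whose good moduli space $\CM_{\Pi_Q}$ is (up to normalisation of shifts) the classical singular analogue of the Nakajima quiver variety; for $\dd$ indivisible the connected component $\CM_{\Pi_Q,\dd}$ has dimension $2$ exactly when $\dd$ lies in $\Phi^+_{\K(\CA)}$, and the relevant $\K(\CA)$ is $\BoN^{Q_0}$ with the symmetrised Euler form $(-,-)_Q$. The identification of primitive positive roots $\Sigma_{\K(\CA)}$ with indivisible $\dd$ satisfying $(\dd,\dd)_Q\le 0$, of real simple roots with the loopless basis vectors, and of isotropic roots with multiples $l\dd$ of $(\dd,\dd)_Q=0$ indivisible classes, reproduces exactly the three-case split in the statement of Conjecture~\ref{conjecture:positivity}. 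Then the plan is to recall (from \cite{bozec2019counting} and the purity/Kac-positivity circle \cite{davison2018purity,hausel2013positivity}) that $C_{Q,\dd}(q)$ equals the intersection Poincar\'e polynomial of $\CM_{\Pi_Q,\dd}$, i.e. $\sum_j \dim\HO^j(\CM_{\Pi_Q,\dd},\IC(\CM_{\Pi_Q,\dd}))\,q^{j/2}$ up to a normalising power of $q$ — this is the statement that cuspidal polynomials count intersection cohomology of singular Nakajima varieties. Given that, the plethystic relation $\Exp_z(\sum_l C_{Q,l\dd}(q)z^{l\dd})=\Exp_{z,q}(\sum_l C^{\rmabs}_{Q,l\dd}(q)z^{l\dd})$ in the isotropic case is precisely the relation between $\IC(\CM_{\Pi_Q,l\dd})$ and the pushforward $\underline{\SG}_{l\dd}=(u_{l\dd})_*\IC(\CM_{\Pi_Q,\dd})$ along the small-diagonal immersion $x\mapsto x^{\oplus l}$: taking $\triangle=\CM_{\CA}$ (the trivial saturated submonoid) in Theorem~\ref{corollary:absBor}, the weight function $\IP_{\CA}(l\dd)$ is the Poincar\'e polynomial of $\HO^*(\CM_{\Pi_Q,\dd},\IC(\CM_{\Pi_Q,\dd}))$, which is $C^{\rmabs}_{Q,l\dd}(q)$ after the shift bookkeeping. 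In the hyperbolic case $(\dd,\dd)_Q<0$ one has $C_{Q,\dd}=C^{\rmabs}_{Q,\dd}=\IP_{\CA}(\dd)$ directly, and for a basis vector $1_i$ with $g_i$ loops, $\CM_{\Pi_Q,1_i}\cong\Sym^{g_i}$ of a point's worth of loop data so $\IP_{\CA}(1_i)=q^{g_i}$, matching the normalisation in the conjecture.

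Once this matching is in place the conclusion is immediate: Theorem~\ref{corollary:absBor} identifies $\rmBPS^{\psi}_{\CA,\Alg}$ with $\UEA(\Fn^+_{\K(\CA),(-,-)_{\SC},\IP_{\CA}})$, and in particular the weight function $\IP_{\CA}(m)=\sum_j \dim\HO^j(\imath^!_{\triangle}\SG_m)\,t^{j/2}$ is, by its very definition, a polynomial (in $t^{\pm 1/2}$, which after the parity/normalisation check lies in $\BoN[q^{\pm 1}]$ with the substitution $t=q$) whose coefficients are dimensions of graded pieces of an (intersection) cohomology group, hence nonnegative. Therefore $C^{\rmabs}_{Q,\dd}(q)\in\BoN[q]$ for all $\dd\in\BoN^{Q_0}$.

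\textbf{Main obstacle.} The substantive work is not in invoking Theorem~\ref{corollary:absBor} but in establishing the two dictionary facts: (i) that $C_{Q,\dd}(q)$, defined via Green's coproduct and primitive elements of the Hall algebra $H_{Q,\BoF_q}$, really is the $\IC$ Poincar\'e polynomial of the singular Nakajima variety $\CM_{\Pi_Q,\dd}$ — this requires relating Bozec--Schiffmann's cuspidal polynomials to the BPS cohomology of $\Rep\Pi_Q$ via the cohomological integrality / PBW theorem for the preprojective CoHA and a point-count (purity) argument in the spirit of \cite{davison2018purity,bozec2020number}; and (ii) that the combinatorial plethystic passage from $C_{Q,\dd}$ to $C^{\rmabs}_{Q,\dd}$ matches, term by term with the correct Tate twists, the geometric passage from $\IC(\CM_{\Pi_Q,l\dd})$ to $(u_{l\dd})_*\IC(\CM_{\Pi_Q,\dd})$ encoded in the definition of $\underline{\SG}_m$ in the isotropic case. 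The delicate part of (ii) is keeping track of the cohomological shift by which the symmetric-power operation $\Exp$ in the variable $q$ arises from the small diagonal, and checking it is compatible with the normalisation of intersection complexes recalled in \S\ref{notations_subsec}; the delicate part of (i) is ensuring the identification of \emph{absolutely} cuspidal (rather than cuspidal) polynomials with the \emph{primitive part} of the BPS Lie algebra, i.e.\ with the generators $\underline{\SG}_m$ rather than all of $\BPS_{\CA,\Lie}$. Both are representation-theoretic bookkeeping rather than deep new input once Theorem~\ref{theorem:BPSalgisenvBorcherds} is available.
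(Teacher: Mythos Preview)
Your endpoint is right — apply Theorem~\ref{corollary:absBor} to $\CA=\Rep\Pi_Q$ and then match the weight function $\IP_{\CA}$ with $C^{\rmabs}_{Q,\dd}$ — but your proposed route to the matching is circular. Your ``dictionary fact (i)'', that $C_{Q,\dd}(q)$ equals the intersection Poincar\'e polynomial of $\CM_{\Pi_Q,\dd}$, is not a known input from \cite{bozec2019counting} or the Kac-positivity literature; it is the \emph{conclusion} of the argument (Theorem~\ref{theorem:ICNQV}). You cannot assume it and then read off positivity.

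The paper avoids this circularity by passing through \emph{Kac polynomials} rather than cuspidal polynomials. The chain is: (a) by \cite{davison2020bps} (equation \eqref{equation:charBPS}), the character of the BPS Lie algebra $\Fn^{\ttBPS,+}_{\Pi_Q}\cong\Fg_{\tilde{Q},\tilde{W}}$ is $\sum_{\dd}A_{Q,\dd}(q^{-1})z^{\dd}$; (b) Theorem~\ref{corollary:absBor} exhibits this Lie algebra as the positive half of a GKM whose weight function is the intersection Poincar\'e polynomials $\IP(\CM_{\Pi_Q,\dd})$; (c) Proposition~\ref{proposition:Weylcharacter} (Bozec--Schiffmann) says that the weight function of \emph{any} GKM with character $\sum A_{Q,\dd}z^{\dd}$ is forced to be $C^{\rmabs}_{Q,\dd}$. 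Comparing (b) and (c) gives $C^{\rmabs}_{Q,\dd}(q^{-2})=\IP(\CM_{\Pi_Q,\dd},q)$, whence positivity. The step you are missing is the bridge (a) together with the uniqueness statement (c); once you have those, your ``dictionary fact (ii)'' about the isotropic plethystic passage is not a separate verification but is subsumed in (c). A couple of minor slips: $\dim\CM_{\Pi_Q,\dd}=2-(\dd,\dd)_Q$ for $\dd\in\Sigma_{\Pi_Q}$, not $2$ in general; and for a vertex $i$ with $g_i$ loops, $\CM_{\Pi_Q,1_i}\cong\BoA^{2g_i}$, not a symmetric product.
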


By taking the Serre subcategories of the category $\Rep(Q)$ of finite dimensional representations of $Q$, of nilpotent representations, $\Rep^{\nil}(Q)$, and of $1$-nilpotent representations, $\Rep^{1\snil}(Q)$ (\S\ref{subsubsection:seminilpotentrepquivers}), one can define the associated constructible Hall algebras $H_{Q,\BoF_q}^{\nil}$ and $H_{Q,\BoF_q}^{1\snil}$. We have inclusions of bialgebras $H_{Q,\BoF_{Q}}^{\nil}\subset H_{Q,\BoF_q}^{1\snil}\subset H_{Q,\BoF_q}$. As in \cite{bozec2019counting}, we define the nilpotent cuspidal polynomials $C_{Q,\dd}^{\nil}(q)$ and $1$-nilpotent cuspidal polynomials $C_{Q,\dd}^{1\snil}(q)$ as the dimension counts of primitive elements of the twisted bialgebras $H_{Q,\BoF_{q}}^{\nil}$ and $H_{Q,\BoF}^{1\snil}$ respectively, as well as their absolute versions $C_{Q,\dd}^{\nil,\rmabs}(q)$ and $C_{Q,\dd}^{1\snil,\rmabs}(q)$. We may therefore formulate in the same way the positivity conjecture for these new families of polynomials.

We prove all of these conjectures in this paper.

\begin{theorem}[Theorems \ref{theorem:ICNQV} and \ref{theorem:positivitycusppols}]
Absolutely cuspidal polynomials have nonnegative coefficients (Conjecture~\ref{conjecture:positivity} is true). Additionally, absolutely cuspidal nilpotent and $1$-nilpotent polynomials have nonnegative coefficients.
\end{theorem}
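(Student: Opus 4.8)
The plan is to deduce all three statements from Theorem~\ref{corollary:absBor}, applied to the $2$-Calabi--Yau category $\CA=\Rep\Pi_Q$ of finite-dimensional representations of the preprojective algebra of $Q$ and to its nilpotent and $1$-nilpotent variants. For $\CA=\Rep\Pi_Q$ one has $\K(\CA)=\BoN^{Q_0}$, the form $(-,-)_{\SC}$ is the symmetrised Euler form $(-,-)_Q$ of $Q$, and the good moduli space $\CM_{\CA,\dd}$ is the affine (singular) Nakajima quiver variety $\mathfrak{M}_0(\dd)$. Theorem~\ref{corollary:absBor} then presents the $\psi$-twisted BPS algebra of $\Pi_Q$ as $\UEA(\Fn^+_{\K(\CA),(-,-)_Q,\IP_{\CA}})$, the positive half of the enveloping algebra of a generalised Kac--Moody algebra whose space of generators attached to a simple positive root $m$ is $\HO^*(\CM_{\CA,m},\IC(\CM_{\CA,m}))$ when $m\notin\Phi^{+,\iso}_{\K(\CA)}$, and $\HO^*(\CM_{\CA,m'},\IC(\CM_{\CA,m'}))$ when $m=lm'$ with $m'\in\Sigma_{\K(\CA)}$ isotropic. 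In particular every simple-root multiplicity $\IP_{\CA}(m)$ is, tautologically, a Poincar\'e polynomial of the intersection cohomology of a (singular) Nakajima quiver variety, hence lies in $\BoN[t^{\pm 1/2}]$.

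It remains to identify $\IP_{\CA}(\dd)$ with the absolutely cuspidal polynomial $C^{\rmabs}_{Q,\dd}(q)$, which is Theorem~\ref{theorem:ICNQV}; Theorem~\ref{theorem:positivitycusppols} (Conjecture~\ref{conjecture:positivity}) is then immediate from the previous paragraph. On the combinatorial side, the Bozec--Schiffmann structure theorem realises the constructible Hall algebra $H_{Q,\BoF_q}$ as the enveloping algebra of a generalised Kac--Moody algebra whose simple-root multiplicities are precisely the $C^{\rmabs}_{Q,\dd}(q)$, with the graded dimension of its positive part given by the cuspidal polynomials $C_{Q,\dd}(q)$. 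The input I would invoke is the numerical comparison, available from earlier work on cohomological Hall algebras of preprojective algebras (and from the dimensional-reduction statements already recalled in the paper), between the $2$CY CoHA of $\Pi_Q$ --- equivalently, after the $\psi$-twist, the BPS algebra of the present paper --- and $H_{Q,\BoF_q}$: purity of the relevant moduli stacks turns the Poincar\'e series of the BPS algebra into a point count of $H_{Q,\BoF_q}$, and the CoHA comultiplication matches Green's coproduct, so that the two generalised Kac--Moody algebras in play share their Cartan datum and their graded simple-root multiplicities. Combined with the first paragraph this gives $\IP_{\CA}(\dd)=C^{\rmabs}_{Q,\dd}(q)$. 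In the isotropic direction one must match the Bozec--Schiffmann plethystic identity
\[
\Exp_{z}\left(\sum_{l\geq 1}C_{Q,l\dd}(q)z^{l\dd}\right)=\Exp_{z,q}\left(\sum_{l\geq 1}C^{\rmabs}_{Q,l\dd}(q)z^{l\dd}\right)
\]
against the structure of $\underline{\SG}_{l\dd}=(u_{l\dd})_*\IC(\CM_{\CA,\dd})$ and the generalised-Kac--Moody PBW theorem at imaginary simple roots.

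For the nilpotent and $1$-nilpotent strengthenings I would run the same argument with $\Rep^{\nil}$ and $\Rep^{1\snil}$ in place of $\Rep$: the corresponding categories of (semi)nilpotent $\Pi_Q$-modules are again $2$-Calabi--Yau in the sense of \cite{davison2022BPS} --- or one applies instead the $\imath^!_{\triangle}$-restricted form of Theorem~\ref{corollary:absBor} along the locally closed saturated submonoid $\CM^{\triangle}_{\CA}\subset\CM_{\CA}$ cut out by the (semi)nilpotency condition --- while on the combinatorial side one has the bialgebra inclusions $H^{\nil}_{Q,\BoF_q}\subset H^{1\snil}_{Q,\BoF_q}\subset H_{Q,\BoF_q}$. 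The generators of the resulting generalised Kac--Moody algebras are then Poincar\'e polynomials of $\imath^!_{\triangle}$-restricted intersection complexes, again manifestly in $\BoN[t^{\pm1/2}]$, and the same point-count bookkeeping identifies them with $C^{\nil,\rmabs}_{Q,\dd}$ and $C^{1\snil,\rmabs}_{Q,\dd}$.

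The main obstacle is this comparison step. One must match Green's coproduct on the finite-field Hall algebra with the comultiplication on the $2$CY CoHA precisely enough that the primitive parts correspond, so that the cuspidal and absolutely cuspidal polynomials really are the graded dimension and the simple-root multiplicities of the (cohomological) BPS Lie algebra; and one must treat the isotropic imaginary roots with care, since there $\underline{\SG}_m$ is supported on the small diagonal via $u_m$ and the $\Exp_{z,q}$-versus-$\Exp_z$ correction has to be reconciled with the internal cohomological structure of $\HO^*(\IC(\CM_{\CA,\dd}))$. Verifying that the full $2$CY formalism of \cite{davison2022BPS} --- existence of good moduli spaces and the relevant $2$-Calabi--Yau structure --- genuinely applies to the nilpotent and $1$-nilpotent categories is a further, more technical, point.
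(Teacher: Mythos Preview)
Your overall architecture is right, but the comparison step you flag as ``the main obstacle'' is both unnecessary and, as you describe it, unworkable. The paper does \emph{not} compare coproducts, and no direct identification of $H_{Q,\BoF_q}$ with the BPS algebra is attempted (indeed $H_{Q,\BoF_q}$ is a \emph{quantised} enveloping algebra at $\sqrt{q}$, while $\rmBPS^\psi_{\Pi_Q,\Alg}$ is a genuine enveloping algebra, so they cannot literally coincide, and the simple-root multiplicities of the Sevenhant--Van den Bergh structure are $C_{Q,\dd}$, not $C^{\rmabs}_{Q,\dd}$).

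The missing bridge is the \emph{Kac polynomial}. From \cite{davison2020bps} (equation~\eqref{equation:charBPS} here) the BPS Lie algebra $\Fn^{\ttBPS,+}_{\Pi_Q}\cong\Fg_{\tilde{Q},\tilde{W}}$ has character $\sum_{\dd}A_{Q,\dd}(q^{-1})z^{\dd}$. Now invoke Proposition~\ref{proposition:Weylcharacter}: the map $W_Q$ sending a weight function to the character of the associated positive half of a GKM is \emph{injective}, and Bozec--Schiffmann show that the unique preimage of the Kac-polynomial generating series is $\dd\mapsto C^{\rmabs}_{Q,\dd}$. Since Theorem~\ref{corollary:absBor} exhibits $\Fn^{\ttBPS,+}_{\Pi_Q}$ as a GKM positive half with weight function $\dd\mapsto\IP(\CM_{\Pi_Q,\dd})$, comparing via $W_Q$ yields $C^{\rmabs}_{Q,\dd}(q^{-1})=\IP(\CM_{\Pi_Q,\dd})(q^{1/2})$. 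Positivity is immediate. The isotropic case requires no separate plethystic bookkeeping: it is already absorbed into the definitions of $\underline{\SG}_{l\delta}$ and of $C^{\rmabs}$, and $W_Q$ handles it uniformly.

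For the nilpotent versions you have the wrong Serre subcategories. One does not work with nilpotent or $1$-nilpotent $\Pi_Q$-modules directly; rather one restricts along $\imath_{\SemiN}$ and $\imath_{\SSN}$ to the \emph{semi-nilpotent} and \emph{strictly semi-nilpotent} loci of $\CM_{\Pi_Q}$. The identities~\eqref{equation:characterSSSN} from \cite{davison2020bps} give $\ch^{\BoZ}(\Fn^{\SemiN,+}_{\Pi_Q})=\sum A^{\nil}_{Q,\dd}(q)z^{\dd}$ and $\ch^{\BoZ}(\Fn^{\SSN,+}_{\Pi_Q})=\sum A^{1\snil}_{Q,\dd}(q)z^{\dd}$, and then the same $W_Q$-injectivity argument (Proposition~\ref{proposition:interpretationcusppols}) identifies the weight functions with $C^{\nil,\rmabs}_{Q,\dd}$ and $C^{1\snil,\rmabs}_{Q,\dd}$.
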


A key fact in the proof of the positivity conjectures is the interpretation of cuspidal polynomials as graded multiplicities of simple roots. Namely, for the monoid $M=\BoN^{Q_0}$ with the symmetrised Euler form and a weight function $P$ as in \S\ref{subsubsection:pospartGKM}, there is a combinatorial formula giving the character of the positive part of the GKM Lie algebra $\Fg_{M,(-,-),P}$ (a generalization of the Weyl character formula). In the graded context, it is described in detail in \cite{bozec2019counting}. By letting $P$ vary, we obtain a function
\[
 W_Q:\{P\colon\Phi_{M,(-,-)}^+\rightarrow\BoN[q^{\pm1/2}]\}\rightarrow\BoN[q^{\pm1/2}][\![M]\!].
\]
sending a weight function to the character of the corresponding positive part of the generalised Kac--Moody Lie algebra.

\begin{proposition}[{\cite{bozec2019counting}}]
\label{proposition:Weylcharacter}
 The map $W_Q$ is injective and if $\Exp_{z,t}\left(\sum_{\dd\in\BoN^{Q_0}}A_{Q,\dd}(t)z^{\dd}\right)$ is in its image, then it is the image under $W_Q$ of the weight function $\dd\mapsto C_{Q,\dd}^{\rmabs}(t)$ (we denote by $A_{Q,\dd}(q)$ the Kac polynomials of $Q$ counting absolutely indecomposable $\dd$-dimensional representations of $Q$ over $\BoF_q$).
\end{proposition}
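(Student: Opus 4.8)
The plan is to prove the two assertions separately. Injectivity of $W_Q$ is formal, coming from the triangular shape of the defining presentation of $\Fn^+_{M,(-,-),P}$; the identification of the preimage is the content of \cite{bozec2019counting}, whose argument I would recall.

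\smallskip
\textbf{Injectivity.} Order $\BoN^{Q_0}$ by the coordinatewise partial order, so every $\dd$ has only finitely many predecessors. By construction $\Fn^+_{M,(-,-),P}$ is the quotient of the free Lie algebra $\mathcal{L}$ on the $\BoN^{Q_0}\times\BoZ$-graded vector space $\bigoplus_{m\in\Phi^+_{M,(-,-)}}V_m$, with $V_m$ placed in degrees $\{m\}\times\BoZ$ and of graded dimension $P_m(t)$, by the Lie ideal $I$ generated by the generalised Serre relators. Each such relator is a homogeneous Lie word of length at least two in the generators, so its multidegree is a sum of at least two simple roots. Hence, writing $\mathcal{L}'_\dd$ for the span of the length $\geq 2$ Lie words of multidegree $\dd$, one has $\mathcal{L}_\dd=\mathcal{L}'_\dd\oplus V_\dd$ when $\dd$ is a simple root (and $\mathcal{L}_\dd=\mathcal{L}'_\dd$ otherwise), with $I_\dd\subseteq\mathcal{L}'_\dd$ in both cases, and $\mathcal{L}'_\dd$ and $I_\dd$ involve only those generators $V_m$ with $m$ coordinatewise strictly below $\dd$. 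Consequently $\bigl(\Fn^+_{M,(-,-),P}\bigr)_\dd$ is the direct sum of $\mathcal{L}'_\dd/I_\dd$ and a copy of the generator space $V_\dd$ whenever $\dd$ is a simple root, and the graded dimension of $\mathcal{L}'_\dd/I_\dd$ is a fixed function of the Cartan matrix and of the values $P_m(t)$ for $m$ strictly below $\dd$. Reading off the $z^\dd$-coefficient of $W_Q(P)$ then yields $P_\dd(t)$ (when $\dd\in\Phi^+_{M,(-,-)}$) as that coefficient minus this correction term, so $P$ is reconstructed from $W_Q(P)$ by induction on $\dd$; equivalently, one sees the same triangularity directly in the graded Weyl--Kac character formula. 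The argument applies verbatim to the natural extension $\widetilde{W}_Q$ of $W_Q$ to arbitrary multiplicity functions $\Phi^+_{M,(-,-)}\to\BoQ(q^{1/2})$ obtained by replacing $\dim_q$ with formal graded ranks (equivalently, by reading the Weyl--Kac formula as a definition), so $\widetilde{W}_Q$ is injective as well.

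\smallskip
\textbf{Identification of the preimage.} The crucial input is the identity
\[
\widetilde{W}_Q\bigl(\dd\mapsto C^{\rmabs}_{Q,\dd}(t)\bigr)\;=\;\Exp_{z,t}\Bigl(\sum_{\dd\in\BoN^{Q_0}}A_{Q,\dd}(t)\,z^{\dd}\Bigr),
\]
valid formally (as an identity in $\BoQ(q^{1/2})[\![z]\!]$, regardless of the signs of the coefficients of $C^{\rmabs}$) and proved in \cite{bozec2019counting}. Its proof runs as follows: Green's coproduct \cite{green1995hall} endows $H_{Q,\BoF_q}$ with a twisted bialgebra structure, and Sevenhant--Van den Bergh \cite{sevenhant2001relation} identify $H_{Q,\BoF_q}$ (after adjoining a torus) with the positive part of the quantised enveloping algebra of a generalised Kac--Moody algebra whose space of simple generators in degree $\dd$ has graded dimension the cuspidal polynomial $C_{Q,\dd}(q)$; Hua's formula expresses the graded dimensions of $H_{Q,\BoF_q}$ through the Kac polynomials $A_{Q,\dd}$; and the absolutely cuspidal polynomials are defined precisely by the plethystic relation converting the cuspidal data --- redundant along the isotropic directions, where $(\dd,\dd)_Q=0$ --- into genuine simple-root multiplicities for the ``classical'' generalised Kac--Moody algebra attached to $(\BoN^{Q_0},(-,-)_Q)$. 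Matching the two resulting expressions for the graded character gives the identity. Granting it, the proposition follows at once: if $\Exp_{z,t}(\sum_\dd A_{Q,\dd}(t)z^{\dd})=W_Q(P)$ for some weight function $P$, then also $=\widetilde{W}_Q(P)$, whence $\widetilde{W}_Q(P)=\widetilde{W}_Q(\dd\mapsto C^{\rmabs}_{Q,\dd}(t))$, and injectivity of $\widetilde{W}_Q$ forces $P_\dd(t)=C^{\rmabs}_{Q,\dd}(t)$ for every $\dd$; in particular $C^{\rmabs}_{Q,\dd}(t)\in\BoN[t^{\pm 1/2}]$.

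\smallskip
\textbf{Main obstacle.} The substantial step is the displayed identity, and within it the bookkeeping along the isotropic directions: there the generalised Kac--Moody algebra behaves like a Heisenberg algebra on the half-line $\BoN\dd_0$ with $\dd_0$ primitive isotropic, and one must reconcile this with the passage from $\Exp_z$ to $\Exp_{z,t}$ built into the definition of the absolutely cuspidal polynomials. The injectivity of $W_Q$, by contrast, is purely formal, using only that the defining relations of a generalised Kac--Moody algebra are homogeneous Lie words of length at least two.
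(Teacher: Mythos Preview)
Your proposal is correct and follows essentially the same approach the paper indicates: the paper's entire proof is the sentence ``This proposition is proven in [loc.\ cit.] by an iterative inversion of the Weyl character formula for graded GKM algebras,'' and your triangularity argument for injectivity is precisely that iterative inversion made explicit. Your device of extending $W_Q$ to a formal map $\widetilde{W}_Q$ on $\BoQ(q^{1/2})$-valued multiplicity functions is a clean way to state what ``formal inversion'' means and to deduce the conditional second assertion from injectivity of $\widetilde{W}_Q$.

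One minor comment: your sketch of why the displayed identity holds, routed through the Hall algebra and Sevenhant--Van den Bergh, is more heuristic than the actual argument in \cite{bozec2019counting}. The Hall algebra lives at a fixed numerical $q$ and produces the \emph{cuspidal} numbers $C_{Q,\dd}(q)$, not the graded absolutely cuspidal polynomials; the passage to the graded identity $\widetilde{W}_Q(C^{\rmabs})=\Exp_{z,t}(\sum A_{Q,\dd}z^{\dd})$ in \cite{bozec2019counting} is really carried out combinatorially, by running the graded Borcherds character/denominator formula backwards (your own triangular inversion, again) with the Kac polynomials as input and \emph{defining} the $C^{\rmabs}_{Q,\dd}$ to be the output, with the isotropic correction $\Exp_z\leftrightarrow\Exp_{z,t}$ built in. So the ``crucial input'' you invoke is essentially tautological from the definitions plus the graded Weyl--Borcherds formula, rather than a separate theorem proved via Hall algebras. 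This does not affect the correctness of your argument, only the attribution of where the work lies.
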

This proposition is proven in [loc. cit.] by an iterative inversion of the Weyl character formula for graded GKM algebras. There is an analogue of Proposition~\ref{proposition:Weylcharacter} for the nilpotent and $1$-nilpotent Kac and cuspidal polynomials. Proposition~\ref{proposition:Weylcharacter} implies that the positivity of cuspidal polynomials is a strengthening of the positivity of Kac polynomials of quivers proven in \cite{hausel2013positivity}. Simultaneously, our results prove a strengthening of the Kac constant term conjecture \cite{kac1983root}, proved originally in \cite{hausel2010kac}. This conjecture states that the constant terms of the Kac polynomials are equal to the root multiplicities of a certain Kac--Moody Lie algebra $\Fg_{\Gamma}$; we show that the \emph{entire} Kac polynomial is the characteristic function of a larger cohomologically graded (generalised) Kac--Moody Lie algebra containing $\Fg_{\Gamma}$ as its (cohomological) degree zero piece.

Our proof of the positivity of cuspidal polynomials also gives a cohomological interpretation of their coefficients.

\begin{theorem}[Theorem~\ref{theorem:ICNQV}]
\label{theorem:IPPNQV}
 Let $\dd\in\Sigma_{\Pi_Q}$. Then, $C_{Q,\dd}(q)=\IP(\CM_{\Pi_Q,\dd})(q^{-1/2})$.
\end{theorem}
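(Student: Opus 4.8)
The plan is to read the statement off from the structural description of the BPS algebra of $\Rep\Pi_Q$ given by Theorem~\ref{corollary:absBor}, combined with the Bozec--Schiffmann inversion of the graded Weyl character formula (Proposition~\ref{proposition:Weylcharacter}); in essence this is the numerical shadow of Theorem~\ref{theorem:ICNQV}. First I would apply Theorem~\ref{corollary:absBor} to the $2$-Calabi--Yau category $\CA=\Rep\Pi_Q$, taking $\K(\CA)=\BoN^{Q_0}$, $(-,-)_{\SC}$ the symmetrised Euler form of $\Pi_Q$ (which coincides with the symmetrised Cartan form of $Q$), and $\CM^{\triangle}_{\CA}=\CM_{\Pi_Q}$, so that $\imath_{\triangle}=\id$. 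Then $\Sigma_{\Pi_Q}$ and $\Phi^+_{\Pi_Q}$ are exactly the primitive and the simple positive roots of this Cartan datum, and the weight function $\IP^{\triangle}_{\CA}$ sends $m\in\Phi^+_{\Pi_Q}$ to $\ch^{\BoZ}\HO^*(\underline{\IC}(\CM_{\Pi_Q,m}))$ when $m$ is non-isotropic (this is $1$ when $m\in\Phi^{+,\real}_{\Pi_Q}$, where $\CM_{\Pi_Q,m}=\pt$) and to $\ch^{\BoZ}\HO^*(\underline{\IC}(\CM_{\Pi_Q,m'}))$ when $m=lm'$ with $m'$ isotropic. Theorem~\ref{corollary:absBor} gives $\rmBPS^{\triangle,\psi}_{\Pi_Q,\Alg}\cong\UEA(\Fn^+_{\BoN^{Q_0},(-,-)_{\SC},\IP^{\triangle}_{\CA}})$, so, passing to cohomologically graded dimensions (which the sign twist $\psi$ leaves unchanged), the $\BoN^{Q_0}$-graded character of $\rmBPS^{\triangle,\psi}_{\Pi_Q,\Alg}$ equals $W_Q(\IP^{\triangle}_{\CA})$ in the notation of \S\ref{subsection:positivitycusppolsintro}.

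Next I would compute that character a second way. By the cohomological integrality theorem for $\Rep\Pi_Q$, the identification (proved elsewhere in the paper) of its BPS Lie algebra with the one obtained by dimensional reduction from the critical cohomological Hall algebra of $Q$, and the known identification of the resulting BPS invariants with the Kac polynomials of $Q$ (which underlies the proof of the Kac positivity conjecture; cf.\ \cite{hausel2013positivity}), the cohomologically graded root multiplicities of the full BPS Lie algebra $\Fg^{\ttBPS,\triangle}_{\Pi_Q}$ are the Kac polynomials $A_{Q,\dd}$; equivalently, the character of $\rmBPS^{\triangle,\psi}_{\Pi_Q,\Alg}$ is $\Exp_{z,q}\!\big(\sum_{\dd\in\BoN^{Q_0}}A_{Q,\dd}(q)z^{\dd}\big)$, where the point-counting variable $q$ is matched with the cohomological grading via the $\vir$-shift and the Frobenius-weight normalisation. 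Since $\Exp_{z,q}\!\big(\sum_{\dd}A_{Q,\dd}(q)z^{\dd}\big)=W_Q(\IP^{\triangle}_{\CA})$ lies in the image of $W_Q$, Proposition~\ref{proposition:Weylcharacter} identifies it with $W_Q$ of the weight function $\dd\mapsto C^{\rmabs}_{Q,\dd}(q)$, whence, by the injectivity of $W_Q$, $\IP^{\triangle}_{\CA}(m)=C^{\rmabs}_{Q,m}(q)$ for every $m\in\Phi^+_{\Pi_Q}$.

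To conclude I would specialise to $\dd\in\Sigma_{\Pi_Q}\subset\Phi^+_{\Pi_Q}$. As $\dd$ is primitive, $\underline{\SG}_{\dd}=\underline{\IC}(\CM_{\Pi_Q,\dd})$, so $\IP^{\triangle}_{\CA}(\dd)=\ch^{\BoZ}\HO^*(\underline{\IC}(\CM_{\Pi_Q,\dd}))$, which under the matching of variables above is $\IP(\CM_{\Pi_Q,\dd})(q^{-1/2})$. Moreover $\dd$ is indivisible, so $C_{Q,\dd}=C^{\rmabs}_{Q,\dd}$: this is given when $(\dd,\dd)_{\SC}<0$, it follows from the $z^{\dd}$-coefficient of the plethystic identity relating the two families when $(\dd,\dd)_{\SC}=0$, and both sides equal $q^{g_i}$ when $\dd=1_i$ is a basis vector. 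Combining, $C_{Q,\dd}(q)=C^{\rmabs}_{Q,\dd}(q)=\IP^{\triangle}_{\CA}(\dd)=\IP(\CM_{\Pi_Q,\dd})(q^{-1/2})$.

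The main obstacle is the second step, namely aligning on the nose the three normalisations in play — the cohomological/Tate-twist grading on $\rmBPS^{\triangle,\psi}_{\Pi_Q,\Alg}$ coming from the $\vir$-shift, the variable in the formula expressing the BPS invariants of $\Pi_Q$ through Kac polynomials (which carries a Verdier-duality flip $q\leftrightarrow q^{-1}$ between point counts and Poincar\'e series), and the $q$-variable of the graded generalised Kac--Moody character machinery of \cite{bozec2019counting} — so that ``$\Exp_{z,q}(\sum_{\dd}A_{Q,\dd}z^{\dd})$ lies in the image of $W_Q$'' holds literally rather than only up to substitution; this is what licenses the use of Proposition~\ref{proposition:Weylcharacter} and is the source of the exponent $-1/2$. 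One must also check that the choice $\imath_{\triangle}=\id$ yields precisely the absolute BPS algebra whose character is controlled by $W_Q$. The remainder is formal manipulation with the Cartan datum $(\BoN^{Q_0},(-,-)_{\SC})$ and with plethystic exponentials.
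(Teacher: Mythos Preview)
Your proposal is correct and follows essentially the same approach as the paper's proof of Theorem~\ref{theorem:ICNQV}: identify $\Fn^{\ttBPS,+}_{\Pi_Q}$ as a GKM Lie algebra with generators the intersection cohomology complexes (Theorem~\ref{corollary:absBor}), use the identification with the 3d BPS Lie algebra (Proposition~\ref{proposition:BPS3dcoincide}) and \cite{davison2020bps} to compute its character as Kac polynomials (this is the paper's equation~\eqref{equation:charBPS}), and then invoke the Bozec--Schiffmann inversion to match the weight function with $C^{\rmabs}_{Q,\dd}$.

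One small slip: you assert ``$\dd$ is indivisible'' for $\dd\in\Sigma_{\Pi_Q}$, but this is false for hyperbolic roots (e.g.\ $\dd=2$ for the $g$-loop quiver with $g\geq 2$). Fortunately your case analysis does not actually use indivisibility in the hyperbolic case, since there $C^{\rmabs}_{Q,\dd}=C_{Q,\dd}$ by definition regardless; and for isotropic $\dd\in\Sigma_{\Pi_Q}$ indivisibility does hold, so your $z^{\dd}$-coefficient argument is valid. Your flagged obstacle about the variable normalisation is exactly the point encoded in the paper's change of variables $q\leftrightarrow q^{-2}$ between equations~\eqref{equation:charnQN} and~\eqref{equation:charBPS}.
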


This interpretation gives a way to calculate the intersection Poincar\'e polynomials of quiver varieties. The same way that Hua's formula \cite{hua2000counting} gives a way to calculate Kac polynomials by giving an effective formula for the plethystic exponential of their generating series, the above results enable us to recover the intersection Poincar\'e polynomials from the same series.

The intersection Poincar\'e polynomial of any quiver variety can be recovered using Theorem~\ref{theorem:IPPNQV}, the canonical decomposition of dimension vectors of quivers \cite[Theorem 1.1]{crawley2002decomposition} which induces a decomposition of Nakajima quiver varieties \cite{crawley2002decomposition}, and the formula for the intersection Poincar\'e polynomial of symmetric products (see Corollary \ref{corollary:IPallNQV} for the general result). Since the canonical decomposition of a dimension vector can be determined algorithmically, Theorem~\ref{theorem:IPPNQV} implies that intersection cohomology of any (singular) quiver variety can be too.

\subsection{Cohomological integrality and PBW theorems in Donaldson--Thomas theory}

The BPS algebra $\aBPS^{\psi}_{\CA,\Alg}$ is defined to be the zeroeth part of the perverse filtration on the cohomological Hall algebra $\HO^*\!\relCoHA_{\CA}$ associated to the morphism $\JH_{\CA}$ \cite{davison2022BPS}. In particular, it is a subalgebra of $\HO^*\!\relCoHA_{\CA}$. Theorem~\ref{corollary:absBor} completely determines the structure of this algebra, up to determining the intersection Poincar\'e polynomials of some singular varieties associated to simple roots (and in the case $\CA=\Rep\Pi_Q$ this can be done algorithmically using absolutely cuspidal polynomials, as explained in \S\ref{subsection:positivitycusppolsintro}).

Although Theorem~\ref{corollary:absBor} only concerns a subalgebra of the CoHA, we can use the same techniques to write down an explicit PBW isomorphism for the \emph{entire} CoHA. To start with, in \S \ref{subsubsection:fullPBW} we prove a stronger, ``relative'' PBW isomorphism in the category of complexes of mixed Hodge modules on the coarse moduli space $\CM_{\CA}$:
\begin{theorem}[Relative PBW isomorphism]
\label{theorem:relPBW}
 Let $\CA$ be a $2$-Calabi--Yau Abelian category satisfying the assumptions of \S\ref{section:modulistack2dcats}.  After choosing a positive determinant line bundle $\det\colon \FM_{\CA}\rightarrow \B\BoC^*$, we have a canonical isomorphism
 \[
 \Psi^{\psi}_{\CA}\colon \Sym_{\boxdot}\left(\ul{\BPS}_{\CA,\Lie}^{}\otimes \HO^*(\B\BoC^*,\ul{\BoQ})\right)\xrightarrow{\cong}\ulrelCoHA_{\CA}^{\psi}
 \]
 of complexes of mixed Hodge modules in $\CD^+(\MHM(\CM_{\CA}))$, with $\ul{\BPS}_{\CA,\Lie}$ defined as in \eqref{equation:BPS_LA_def}. The isomorphism is obtained by evaluating symmetric tensors in $\ul{\BPS}_{\CA,\Lie}^{}\otimes \HO^*(\B\BoC^*,\ul{\BoQ})$ using the ($\psi$-twisted) Hall algebra structure on the target.
\end{theorem}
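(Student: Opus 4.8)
The plan is to construct $\Psi^{\psi}_{\CA}$ explicitly out of the CoHA multiplication, and then prove it is an isomorphism by reducing to a purity statement which in turn is established étale-locally on $\CM_{\CA}$. For the construction: Theorem~\ref{theorem:BPSalgisenvBorcherds} identifies $\ul{\BPS}_{\CA,\Alg}^{\psi}=\CH^{0}(\ulrelCoHA_{\CA})$ with the enveloping algebra $\UEA(\Fn^{+}_{\CM_{\CA},(-,-)_{\SC},\underline{\SG}})$, so the Lie algebra object $\ul{\BPS}_{\CA,\Lie}$ of \eqref{equation:BPS_LA_def} embeds canonically into $\CH^{0}(\ulrelCoHA_{\CA})$. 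Since $\ulrelCoHA_{\CA}$ lives in nonnegative perverse degrees, $\CH^{0}(\ulrelCoHA_{\CA})=\tau^{\leq 0}\ulrelCoHA_{\CA}$ carries a canonical map to $\ulrelCoHA_{\CA}$, hence an embedding $\ul{\BPS}_{\CA,\Lie}\hookrightarrow\ulrelCoHA_{\CA}$. The chosen positive determinant line bundle $\det\colon\FM_{\CA}\to\B\BoC^{*}$ makes $\ulrelCoHA_{\CA}$ a module over $\HO^{*}(\B\BoC^{*},\underline{\BoQ})$ by cup product with the pulled-back first Chern class, yielding a morphism $\ul{\BPS}_{\CA,\Lie}\otimes\HO^{*}(\B\BoC^{*},\underline{\BoQ})\to\ulrelCoHA_{\CA}$; over $\BoQ$-coefficients, sending a symmetric tensor to the symmetrised $\psi$-twisted product of the images of its factors is well defined, factors through $\Sym_{\boxdot}$, and defines $\Psi^{\psi}_{\CA}$. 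This map is canonical given $\det$, being assembled solely from the multiplication and the BPS subobject.

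To prove it is an isomorphism I would work with the perverse filtration $P_{\bullet}$ on $\ulrelCoHA_{\CA}$ associated to $\JH_{\CA}$ (the filtration by perverse truncations $\tau^{\leq n}\ulrelCoHA_{\CA}$): this is a filtration by subalgebra objects with $\operatorname{gr}^{P}_{0}\ulrelCoHA_{\CA}=\ul{\BPS}_{\CA,\Alg}^{\psi}$ by definition of the BPS algebra sheaf. Equipping the source with the adapted PBW-type filtration (tensor-degree in $\Sym_{\boxdot}$ refined by the cohomological degree in the $\HO^{*}(\B\BoC^{*})$-factor), one checks $\Psi^{\psi}_{\CA}$ is filtered, and that on associated graded it factors through the classical PBW comparison $\Sym_{\boxdot}(\ul{\BPS}_{\CA,\Lie})\xrightarrow{\cong}\ul{\BPS}_{\CA,\Alg}^{\psi}$ (valid for Lie algebra objects in the $\BoQ$-linear symmetric tensor category $\MHM(\CM_{\CA})$, and identified with $\operatorname{gr}^{P}_{0}$ via Theorem~\ref{theorem:BPSalgisenvBorcherds}) together with the part recording the $u$-action on BPS generators. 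Everything then reduces to the assertion that $\ulrelCoHA_{\CA}$, as an object of $\CD^{+}(\MHM(\CM_{\CA}))$, splits as $\Sym_{\boxdot}(\ul{\BPS}_{\CA,\Lie}\otimes\HO^{*}(\B\BoC^{*},\underline{\BoQ}))$ — the relative cohomological integrality isomorphism — so in particular to purity: that $\ulrelCoHA_{\CA}$ is a direct sum of shifted semisimple mixed Hodge modules, whence the decomposition theorem for $\JH_{\CA}$ applies and $P_{\bullet}$ splits.

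Purity, together with the precise shape of the IC-decomposition, I would deduce from the étale-local structure of the 2CY moduli stack. Around a polystable object, $\FM_{\CA}$ is modelled (formally, and then étale-locally on $\CM_{\CA}$) on the moduli stack of representations of the preprojective algebra of the Ext-quiver built from the self-$\Ext^{1}$-groups and their 2CY pairing. In that setting purity of $\HBM$ and the relative cohomological integrality isomorphism are available — via approximation by smooth Nakajima quiver varieties for purity, and via dimensional reduction from the tripled-quiver-with-cubic-potential CoHA for the integrality decomposition. Because $\Psi^{\psi}_{\CA}$ is canonical and commutes with restriction to such charts, the local isomorphisms glue into the global one; the isotropic simple roots $m=lm'$, where $\underline{\SG}_{m}=(u_{m})_{*}\ul{\IC}(\CM_{\CA,m'})$ is supported on the small diagonal, are matched against the local picture of symmetric products of the surface $\CM_{\CA,m'}$.

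I expect the main obstacle to be twofold: establishing purity and the explicit IC-decomposition of the relative CoHA at this generality — which forces one through the 2CY local-model theorem and the input from the preprojective/quiver-with-potential case — and then verifying that the globally defined, multiplication-built map $\Psi^{\psi}_{\CA}$ genuinely agrees with the local PBW isomorphisms, i.e. that the gluing is legitimate. A subsidiary technical point is pinning down the PBW-type filtration on the source and checking the symmetrisation map induces the claimed isomorphism on associated graded, invoking Theorem~\ref{theorem:BPSalgisenvBorcherds} only through the identification of $\operatorname{gr}^{P}_{0}$.
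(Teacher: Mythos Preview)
Your construction of $\Psi^{\psi}_{\CA}$ is correct and matches the paper's. The divergence is in how you argue it is an isomorphism.

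The filtration detour in your second paragraph does not close the argument. You observe correctly that on $\operatorname{gr}^{P}_{0}$ the map reduces to the PBW isomorphism $\Sym_{\boxdot}(\ul{\BPS}_{\CA,\Lie})\cong\ul{\BPS}_{\CA,\Alg}^{\psi}$ coming from Theorem~\ref{theorem:BPSalgisenvBorcherds}, but for $\operatorname{gr}^{P}_{n}$ with $n>0$ you fall back on ``the relative cohomological integrality isomorphism'', which is precisely the statement under proof. Purity alone yields a non-canonical splitting of the perverse filtration but does not identify the higher perverse cohomology sheaves, so the filtration step is circular and buys nothing.

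The paper's argument is more direct and bypasses filtrations entirely. Having built the global morphism $\Psi^{\psi}_{\CA}$, set $\ul{\CC}=\cone(\Psi^{\psi}_{\CA})$. If $\ul{\CC}\neq 0$ there is a closed point $x\in\CM_{\CA}$ with $\imath_{x}^{!}\ul{\CC}\neq 0$; taking the $\Sigma$-collection $\underline{\CF}$ of simple summands at $x$, the restriction $\imath_{\underline{\CF}}^{!}\Psi^{\psi}_{\CA}$ fails to be an isomorphism. The compatibility of the CoHA multiplication and of the BPS Lie algebra sheaf with the restrictions $\imath_{\underline{\CF}}^{!}$ and $\imath_{\CN}^{!}$ (set up in \S\ref{subsection:locneighbSigmacoll}--\S\ref{subsection:compatibilityserrerelationslocal} via the local neighbourhood theorem) gives $\imath_{\underline{\CF}}^{!}\Psi^{\psi}_{\CA}\cong\imath_{\CN}^{!}\Psi^{\psi}_{\Pi_{Q_{\underline{\CF}}}}$. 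Hence $\Psi^{\psi}_{\Pi_{Q_{\underline{\CF}}}}$ is not an isomorphism, contradicting the PBW theorem for preprojective algebras (known via dimensional reduction from the tripled quiver with potential, together with Proposition~\ref{proposition:BPS3dcoincide} identifying the two candidate BPS Lie algebra sheaves).

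Your final paragraph has the right ingredients, but ``gluing local isomorphisms'' is the wrong framing: there is nothing to glue, since $\Psi^{\psi}_{\CA}$ is already a global morphism. The correct logic is that a morphism in $\CD^{+}(\MHM(\CM_{\CA}))$ is an isomorphism if and only if its cone has vanishing $\imath_{x}^{!}$ at every closed point, and those restrictions are controlled by the preprojective local model. No separate purity or splitting argument is required.
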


See \S \ref{subsection:determinant} for the definition of a positive determinant bundle. Applying the restriction functor $\imath_{\triangle}^!$ and taking derived global sections, we obtain the following corollary.
\begin{corollary}[PBW isomorphism]
\label{corollary:PBW}
 Let $\CA$ be a $2$-Calabi--Yau Abelian category satisfying the assumptions of \S\ref{section:modulistack2dcats}. Let $\CA^{\triangle}\subset \CA$ be a Serre subcategory. After choosing a positive determinant line bundle, we have a canonical PBW isomorphism
\[
 \HO^*(\imath_{\triangle}^!\Psi^{\psi}_{\CA})\colon \Sym\!\left(\ul{\Fn}^{\ttBPS,\triangle,+}_{\CA}\otimes\HO^*(\B\BoC^*,\ul{\BoQ})\right)\xrightarrow{\cong}\HO^*\!\ulrelCoHA_{\CA}^{\triangle,\psi}=\HO^{\BoMo}(\FM_{\CA}^{\triangle},\ul{\BoQ}^{\vir})
\]
of $\K(\CA)\times \BoZ$-graded mixed Hodge structures, with $\ul{\Fn}^{\ttBPS,\triangle,+}_{\CA}$ defined as in \eqref{equation:BPS_ALA_defMHM}. This isomorphism is obtained via the inclusion of cohomologically graded mixed Hodge structures $\ul{\Fn}^{\ttBPS,\triangle,+}_{\CA}\otimes\HO^*(\B\BoC^*,\BoQ)\hookrightarrow \HO^*\!\ulrelCoHA_{\CA}^{\triangle,\psi}$ determined by our choice of determinant line bundle, and the evaluation of symmetric tensors using the ($\psi$-twisted) cohomological Hall algebra structure on the target.
\end{corollary}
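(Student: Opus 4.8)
The plan is to obtain Corollary~\ref{corollary:PBW} from the relative PBW isomorphism of Theorem~\ref{theorem:relPBW} by applying the functor $\HO^{*}\bigl(\CM^{\triangle}_{\CA},\imath_{\triangle}^{!}(-)\bigr)$ to $\Psi^{\psi}_{\CA}$ and rewriting each side. Since $\imath_{\triangle}^{!}$ and derived global sections preserve isomorphisms, $\HO^{*}(\imath_{\triangle}^{!}\Psi^{\psi}_{\CA})$ is automatically an isomorphism, and the work lies in identifying its source and target with the objects in the statement; concretely, one must check that restriction to the saturated submonoid $\CM^{\triangle}_{\CA}$ and then taking global sections are both compatible with the symmetric-algebra constructions, turning $\Sym_{\boxdot}$ into the ordinary graded-symmetric algebra $\Sym$.

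First I would record that, because $\CA^{\triangle}\subseteq\CA$ is a Serre subcategory, a direct sum of objects of $\CA$ lies in $\CA^{\triangle}$ if and only if both summands do; hence $\CM^{\triangle}_{\CA}$ is a saturated submonoid of $\CM_{\CA}$, i.e.\ $\oplus^{-1}(\CM^{\triangle}_{\CA})=\CM^{\triangle}_{\CA}\times\CM^{\triangle}_{\CA}$, so that the square
\[
\begin{tikzcd}
\CM^{\triangle}_{\CA}\times\CM^{\triangle}_{\CA}\ar[r,"\oplus"]\ar[d,"\imath_{\triangle}\times\imath_{\triangle}"']&\CM^{\triangle}_{\CA}\ar[d,"\imath_{\triangle}"]\\
\CM_{\CA}\times\CM_{\CA}\ar[r,"\oplus"]&\CM_{\CA}
\end{tikzcd}
\]
is cartesian. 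Since the sum map $\oplus$ is finite, $\oplus_{*}=\oplus_{!}$, so base change gives $\imath_{\triangle}^{!}\oplus_{*}\cong\oplus_{*}(\imath_{\triangle}\times\imath_{\triangle})^{!}$; together with the Künneth compatibility $(\imath_{\triangle}\times\imath_{\triangle})^{!}(\CF\boxtimes\CG)\cong\imath_{\triangle}^{!}\CF\boxtimes\imath_{\triangle}^{!}\CG$ for exceptional inverse images of external products, this shows that $\imath_{\triangle}^{!}\colon\CD^{+}(\MHM(\CM_{\CA}))\to\CD^{+}(\MHM(\CM^{\triangle}_{\CA}))$ is symmetric monoidal for $\boxdot$. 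Being additive and $\BoQ$-linear, $\imath_{\triangle}^{!}$ commutes with the symmetrising idempotents $\tfrac1{n!}\sum_{\sigma\in S_{n}}\sigma$ defining $\Sym_{\boxdot}$, and it commutes with the direct sums involved (working in a fixed $\K(\CA)$-degree, all such sums are finite, as $+$ on $\K(\CA)$ has finite fibres). Hence $\imath_{\triangle}^{!}\Sym_{\boxdot}(-)\cong\Sym_{\boxdot}(\imath_{\triangle}^{!}(-))$, and, applied to $\ul{\BPS}_{\CA,\Lie}\otimes\HO^{*}(\B\BoC^{*},\ul{\BoQ})$ (through whose fixed graded tensor factor $\imath_{\triangle}^{!}$ passes), the source of $\imath_{\triangle}^{!}\Psi^{\psi}_{\CA}$ becomes $\Sym_{\boxdot}\!\bigl(\imath_{\triangle}^{!}\ul{\BPS}_{\CA,\Lie}\otimes\HO^{*}(\B\BoC^{*},\ul{\BoQ})\bigr)$ while its target is $\imath_{\triangle}^{!}\ulrelCoHA^{\psi}_{\CA}$ by definition.

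Next I would apply $R\Gamma=\HO^{*}(\CM^{\triangle}_{\CA},-)$. This functor, from $(\CD^{+}(\MHM(\CM^{\triangle}_{\CA})),\boxdot)$ to $\K(\CA)\times\BoZ$-graded mixed Hodge structures with $\otimes$, is symmetric monoidal: for $\CF,\CG$ on $\CM^{\triangle}_{\CA}$ one has $\HO^{*}(\CM^{\triangle}_{\CA},\CF\boxdot\CG)=\HO^{*}(\CM^{\triangle}_{\CA}\times\CM^{\triangle}_{\CA},\CF\boxtimes\CG)\cong\HO^{*}(\CM^{\triangle}_{\CA},\CF)\otimes\HO^{*}(\CM^{\triangle}_{\CA},\CG)$ by Künneth, compatibly with the Koszul-signed braidings, and it commutes with the relevant (degreewise finite) direct sums. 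A symmetric monoidal functor sends the free graded-commutative algebra $\Sym_{\boxdot}$ to the free graded-commutative algebra $\Sym$, so
\[
\HO^{*}\!\bigl(\CM^{\triangle}_{\CA},\Sym_{\boxdot}(\imath_{\triangle}^{!}\ul{\BPS}_{\CA,\Lie}\otimes\HO^{*}(\B\BoC^{*},\ul{\BoQ}))\bigr)\cong\Sym\!\bigl(\HO^{*}(\CM^{\triangle}_{\CA},\imath_{\triangle}^{!}\ul{\BPS}_{\CA,\Lie})\otimes\HO^{*}(\B\BoC^{*},\ul{\BoQ})\bigr),
\]
which is $\Sym\!\bigl(\ul{\Fn}^{\ttBPS,\triangle,+}_{\CA}\otimes\HO^{*}(\B\BoC^{*},\ul{\BoQ})\bigr)$ by \eqref{equation:BPS_ALA_defMHM}; meanwhile $\HO^{*}(\CM^{\triangle}_{\CA},\imath_{\triangle}^{!}\ulrelCoHA^{\psi}_{\CA})=\HO^{*}\!\ulrelCoHA^{\triangle,\psi}_{\CA}$, and this equals $\HO^{\BoMo}(\FM^{\triangle}_{\CA},\ul{\BoQ}^{\vir})$ by base change for the good moduli space morphism along the cartesian square $\FM^{\triangle}_{\CA}\cong\FM_{\CA}\times_{\CM_{\CA}}\CM^{\triangle}_{\CA}$. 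The fact that the resulting isomorphism is the one asserted — evaluation of symmetric tensors using the $\psi$-twisted CoHA product via the inclusion $\ul{\Fn}^{\ttBPS,\triangle,+}_{\CA}\otimes\HO^{*}(\B\BoC^{*},\ul{\BoQ})\hookrightarrow\HO^{*}\!\ulrelCoHA^{\triangle,\psi}_{\CA}$ determined by the chosen determinant line bundle — follows from the corresponding description of $\Psi^{\psi}_{\CA}$ in Theorem~\ref{theorem:relPBW} and the compatibility of the Hall product with $\imath_{\triangle}^{!}$ and with $R\Gamma$.

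The step I expect to be the main obstacle is the first one: showing cleanly that $\imath_{\triangle}^{!}$ commutes with $\Sym_{\boxdot}$. This is exactly where the hypotheses enter — saturation of the Serre submonoid yields the cartesian square; finiteness of $\oplus$ converts the a priori non-proper pushforward $\oplus_{*}$ into $\oplus_{!}$ so that base change applies; $\BoQ$-linearity and additivity of $\imath_{\triangle}^{!}$ let it pass through the symmetric powers; and finite-fibredness of $+$ on $\K(\CA)$ reduces all infinite coproducts to finite ones in each fixed degree. Once this commutation and the analogous (easier) monoidality of $R\Gamma$ are in hand, everything else is formal bookkeeping: restriction to a saturated submonoid and derived global sections are symmetric-monoidal operations, hence transport the relative PBW isomorphism of Theorem~\ref{theorem:relPBW} to the isomorphism claimed in Corollary~\ref{corollary:PBW}.
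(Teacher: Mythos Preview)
Your proposal is correct and follows exactly the approach the paper takes: the paper derives Corollary~\ref{corollary:PBW} from Theorem~\ref{theorem:relPBW} by the one-line justification ``applying the restriction functor $\imath_{\triangle}^!$ and taking derived global sections,'' and the strict monoidality of $\imath_{\triangle}^!$ (via saturation and base change along the finite map $\oplus$) and of $\HO^*$ (via K\"unneth) that you spell out are precisely the facts established elsewhere in the paper (\S\ref{subsection:theCoHA}, \S\ref{subsection:globsectGKM}) that make this deduction work.
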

As usual, we emphasize that the isomorphisms of Theorem~\ref{theorem:relPBW} and Corollary~\ref{corollary:PBW} are \emph{not} algebra morphisms, although both sides have their own algebra structures. The left-hand sides have the algebra structures given by the $\Sym$ construction (i.e. they are supersymmetric algebras) while the right-hand sides have the cohomological Hall algebra structure. However, the isomorphism $\Psi^{\psi}_{\CA}$ \emph{is} constructed using the algebra structure on $\ulrelCoHA_{\CA}^{\psi}$. This is similar to the usual PBW theorem for enveloping algebras, which gives an isomorphism, after passing to underlying vector spaces, between a polynomial algebra (i.e. the symmetric algebra over a vector space) and the enveloping algebra of a Lie algebra.

If we choose $\CA$ to be the 2-Calabi--Yau category of $\Pi_Q$-representations, the existence of \textit{some} cohomological integrality isomorphism as in Theorem~\ref{theorem:relPBW} and Corollary~\ref{corollary:PBW} is (via dimensional reduction \cite{davison2017critical}) a particular case of the PBW isomorphism of Davison--Meinhardt (see \cite{davison2020cohomological}, then \cite{davison2016integrality}), and purity of the BPS sheaf is one of the main results of \cite{davison2020bps}. However, even in this case, the precise determination of the IC complexes making up the BPS sheaf is a new result.

\subsubsection{Donaldson--Thomas theory}
\label{section:DTcrashcourse}
The main application of Corollary~\ref{corollary:PBW} in enumerative geometry concerns the cohomological Donaldson--Thomas (DT) theory of local K3 surfaces, which we explain here. To start with some background, let $X$ be a projective 3CY variety, meaning that $X$ is smooth, three-dimensional, and has trivial canonical bundle, $\mathscr{O}_X\cong K_X$. Donaldson--Thomas theory, as introduced by Richard Thomas in \cite{thomas2000holomorphic}, assigns numbers to certain (fine) moduli schemes $\CM$ of coherent sheaves on $X$, using perfect obstruction theories to produce a degree zero virtual fundamental class of $\CM$. Thomas then defines $\#_{\DT}(\CM)$ to be the degree of this class. This theory was subsequently generalised in multiple directions. Firstly it was proved by Behrend \cite{behrend2009donaldson} that the count $\#_{\DT}(\CM)$ could be calculated as a weighted Euler characteristic of a certain constructible function $\nu_{\CM}$ (the Behrend function), so that it makes sense to define this number even if $\CM$ is not projective. In particular, we can replace $X$ with a local K3 surface $S\times\BoA^1$. Secondly, via the papers of Joyce--Song \cite{joyce2012theory} and Kontsevich--Soibelman \cite{kontsevich2008stability}, a theory was developed that removed the need to study only fine moduli spaces $\CM$, i.e. stacks $\FM$, possibly containing strictly semistable objects, are handled by their theory. Finally, an effective theory of DT mixed Hodge modules on stacks of objects in 3CY categories was developed in the sequence of papers \cite{brav2015symmetries,brav2019darboux,ben2015darboux}; associated to a moduli stack $\FM$ of objects in a 3CY category $\CA$, equipped with an orientation in the sense of \cite{joyce2015classical}, they define the mixed Hodge module $\phi_{\DT}$, the pointwise Euler characteristic of which is equal to the Behrend function. We recover the weighted Euler characteristics (where they make sense) by taking actual Euler characteristics of derived global sections of $\phi_{\DT}$. 

Let $\CA$ be a 2CY category with a good moduli space, satisfying the assumptions of \S \ref{section:modulistack2dcats}. A motivating example for us comes from setting $\CA$ to be the category of Gieseker semistable coherent sheaves, of fixed normalised Hilbert polynomial, on a polarised K3 surface $S$. In order to obtain a 3CY category, we take the 3CY completion $\tilde{\CA}=\Pi_3(\CA)$ in the sense of \cite{keller2011deformed}. It is shown in \cite{bozec2020relative} that the shifted cotangent stack $\bs{\tilde{\FM}}_m=\T^*[-1]\bs{\FM}_m$ is equivalent to a substack of the (derived) stack of objects in $\tilde{\CA}$. Being the classical truncation of a $(-1)$-shifted cotangent stack, $\tilde{\FM}_{m}=t_0(\bs{\tilde{\FM}}_m)$ carries a canonical orientation, and indeed a canonical d-critical structure, determining the mixed Hodge module $\phi_{\DT}$ on it; see \cite{kinjo2022dimensional} for the details. In [loc. cit] it is moreover proved that there is a canonical \emph{dimensional reduction} isomorphism, for each $m\in\K(\CA)$:
\[
\HO(\tilde{\FM}_m,\phi_{\DT})\cong \HO^{\BoMo}(\FM_{m},\BoQ^{\vir})
\]
where the $\vir$ superscript denotes a system of Tate twists that we do not make explicit in this introduction (see \S\ref{subsection:theCoHA} for details).

A foundational/definitional conjecture in Donaldson--Thomas theory is the \emph{integrality conjecture}. In the enumerative version of the theory this is the claim that certain partition functions with coefficients given by DT invariants are obtained by taking plethystic exponentials of partition functions involving simpler invariants, called BPS invariants: see \cite{kontsevich2008stability} for an exact formulation in the enumerative setup. This enumerative version is implied by a stronger \emph{cohomological} version of the integrality theorem:
\begin{conjecture}
\label{conjecture:3CYIntegrality}
Let $\CC$ be a 3CY Abelian category on which the Euler form vanishes and $\cl\colon \pi_0(\FM_{\CC})\rightarrow \K(\CC)$ is a surjective morphism to a monoid. Then there is some isomorphism of $\K(\CC)$-graded, cohomologically graded mixed Hodge structures 
\[
\bigoplus_{m\in \K(\CC)}\HO(\FM_{\CC,m},\underline{\phi}_{\DT})\cong \Sym\left(\bigoplus_{0\neq m\in\K(\CC)} \ul{\aBPS}_{\CC,m}\otimes\HO(\B \BoC^*,\ul{\BoQ})\right)
\]
where each $\ul{\aBPS}_{\CC,m}$ has bounded cohomological amplitude, and finite total dimension.
\end{conjecture}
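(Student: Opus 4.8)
The plan is to deduce Conjecture~\ref{conjecture:3CYIntegrality} from the relative PBW isomorphism of Corollary~\ref{corollary:PBW} for those $3$CY categories arising as undeformed $3$CY completions $\CC=\Pi_3(\CA)$ of a $2$CY category $\CA$ satisfying the assumptions of \S\ref{section:modulistack2dcats} --- a class which includes the motivating examples, in particular categories of semistable sheaves on local K3 surfaces. First I would fix $m\in\K(\CC)=\K(\CA)$ and use the description of \cite{bozec2020relative}: the relevant moduli stack $\FM_{\CC,m}$ is the classical truncation $\tilde\FM_m=t_0(\T^*[-1]\bs{\FM}_m)$ of the $(-1)$-shifted cotangent stack of $\bs{\FM}_m$, which therefore carries its canonical d-critical structure and orientation and hence a well-defined $\underline{\phi}_{\DT}$. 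The dimensional reduction theorem of \cite{kinjo2022dimensional} then supplies a natural isomorphism $\HO(\tilde\FM_m,\underline{\phi}_{\DT})\cong\HO^{\BoMo}(\FM_{\CA,m},\ul{\BoQ}^{\vir})$ of cohomologically graded mixed Hodge structures, compatibly with the $\K$-grading and with the explicit system of Tate twists encoded by $\vir$. Summing over $m$ identifies the left-hand side of the conjectured isomorphism with $\HO^{\BoMo}(\FM_{\CA},\ul{\BoQ}^{\vir})=\HO^*\!\ulrelCoHA_{\CA}^{\psi}$, i.e. with $\HO^*\!\ulrelCoHA_{\CA}^{\triangle,\psi}$ for the choice $\CA^{\triangle}=\CA$ in the notation of Corollary~\ref{corollary:PBW}.

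Next I would apply Corollary~\ref{corollary:PBW} with $\CA^{\triangle}=\CA$, so that $\imath_{\triangle}=\id$ and $\CM_{\CA}^{\triangle}=\CM_{\CA}$: after choosing a positive determinant line bundle on $\FM_{\CA}$, this yields a canonical isomorphism of $\K(\CA)\times\BoZ$-graded mixed Hodge structures
\[
\Sym\!\left(\ul{\Fn}^{\ttBPS,+}_{\CA}\otimes\HO^*(\B\BoC^*,\ul{\BoQ})\right)\xrightarrow{\cong}\HO^{\BoMo}(\FM_{\CA},\ul{\BoQ}^{\vir}),
\qquad \ul{\Fn}^{\ttBPS,+}_{\CA}=\HO^*(\CM_{\CA},\ul{\BPS}_{\CA,\Lie}).
\]
Composing with the first step and declaring $\ul{\aBPS}_{\CC,m}$ to be the $\K$-degree-$m$ graded piece $(\ul{\Fn}^{\ttBPS,+}_{\CA})_m$ produces precisely the shape of isomorphism demanded by the conjecture. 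Since the BPS Lie algebra is the \emph{strictly} positive half of a generalised Kac--Moody algebra, it is supported in nonzero $\K$-degrees, matching the indexing by $0\neq m\in\K(\CC)$ in the statement; and the classifying-stack factor $\HO^*(\B\BoC^*,\ul{\BoQ})$ is exactly the one appearing in the conjecture.

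The remaining point is the finiteness clause: each $\ul{\aBPS}_{\CC,m}$ should have bounded cohomological amplitude and finite total dimension. For this I would invoke Theorem~\ref{corollary:absBor} (equivalently the identification \eqref{equation:BPS_ALA_def}), which presents $\ul{\Fn}^{\ttBPS,+}_{\CA}$ as the positive half of a cohomologically graded generalised Kac--Moody Lie algebra generated by the spaces $\HO^*(\CM_{\CA,m},\IC(\CM_{\CA,m}))$ (and their isotropic variants $\HO^*(\CM_{\CA,m'},\IC(\CM_{\CA,m'}))$) for $m\in\Phi^+_{\K(\CA)}$. Because each $\CM_{\CA,m}$ is of finite type, these intersection cohomology spaces are finite-dimensional and concentrated in a bounded range of degrees; and because the sum map $+\colon M\times M\to M$ is finite (equivalently $\cl$ has finite fibres), a fixed $\K$-degree $m$ is reached by only finitely many iterated brackets, so $(\ul{\Fn}^{\ttBPS,+}_{\CA})_m$ is a finite-dimensional subquotient of a finite iterated tensor product of the generating spaces --- hence has bounded amplitude and finite total dimension. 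The enumerative integrality statement, e.g.\ for the DT partition function of a local K3 surface, then follows by passing to Euler characteristics / weight polynomials.

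I expect the genuine difficulty to lie entirely upstream, in the inputs rather than in this assembly: the construction of the relative PBW isomorphism $\Psi^{\psi}_{\CA}$ (Theorem~\ref{theorem:relPBW}) and the identification of the BPS algebra sheaf with an enveloping algebra (Theorem~\ref{theorem:BPSalgisenvBorcherds}), the latter being what pins down the IC summands of $\ul{\BPS}_{\CA,\Alg}$ and makes the finiteness step effective. Granting these, the main things to check carefully are the bookkeeping of the three gradings --- the $\K(\CC)$-grading, the cohomological grading, and the virtual Tate twists --- across the dimensional reduction isomorphism, and the compatibility of that isomorphism with direct sum decompositions into the $\K$-graded components. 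A conceptual caveat worth recording in the statement is that this route establishes the conjecture only for $\CC=\Pi_3(\CA)$: a general $3$CY category with vanishing Euler form need not arise as a $3$CY completion, so the conjecture in that generality would require an input beyond the $2$CY PBW theorem proved here.
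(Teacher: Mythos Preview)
Your proposal is correct and matches the paper's approach: the paper does not prove the conjecture in full generality but only for $\CC=\Pi_3(\CA)$ with $\CA$ a 2CY category satisfying the assumptions of \S\ref{section:modulistack2dcats}, and its proof is the one-line observation ``via dimensional reduction, we obtain the following from Theorem~\ref{theorem:relPBW}'', which is exactly the route you describe (dimensional reduction \cite{kinjo2022dimensional} to identify the DT cohomology with $\HO^{\BoMo}(\FM_{\CA},\ul{\BoQ}^{\vir})$, followed by the PBW isomorphism). Your explicit verification of the finiteness clause and your closing caveat that the argument does not touch general 3CY categories are welcome elaborations but do not deviate from the paper's strategy.
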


Via dimensional reduction, we obtain from Theorem~\ref{theorem:relPBW} our main result in enumerative geometry:
\begin{theorem}
Conjecture~\ref{conjecture:3CYIntegrality} is true for 3CY categories of the form $\tilde{\CA}=\Pi_3(\CA)$ for $\CA$ a 2-Calabi--Yau category satisfying the assumptions of \S\ref{section:modulistack2dcats}. In particular, if $X=\Tot(K_S)\cong S\times \BoA^1$ for $S$ a smooth quasiprojective surface satisfying $\K_S\cong \mathscr{O}_S$, the cohomological integrality conjecture holds for the category of Gieseker semistable\footnote{With respect to a polarisation of a compactification of $S$.} compactly supported coherent sheaves of fixed normalised Hilbert polynomial on $X$.
\end{theorem}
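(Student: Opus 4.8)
The plan is to read off both parts from the relative PBW isomorphism of Theorem~\ref{theorem:relPBW} (in the form of Corollary~\ref{corollary:PBW}), the dimensional reduction isomorphism of \cite{kinjo2022dimensional}, and the elementary observation that the Euler form of a 3CY completion of a 2CY category vanishes; the input of \cite{bozec2020relative} then converts the resulting abstract statement into the one about sheaves on $X=\Tot(K_S)$. In particular no new homological algebra is required beyond what is already in place, and the content is entirely in matching up the hypotheses of these inputs.

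I would first treat the general assertion. Let $\CA$ be a 2CY Abelian category satisfying the assumptions of \S\ref{section:modulistack2dcats}, put $\tilde{\CA}=\Pi_3(\CA)$, and take $\K(\tilde{\CA})=\K(\CA)$ together with the morphism $\cl$ of $\CA$. Since $\CA$ is 2CY its Euler form $\chi_{\CA}$ is symmetric, so the 3CY completion satisfies $\chi_{\tilde{\CA}}(E,F)=\chi_{\CA}(E,F)-\chi_{\CA}(F,E)=0$; hence $\tilde{\CA}$ is an admissible input for Conjecture~\ref{conjecture:3CYIntegrality}. By \cite{bozec2020relative} the classical moduli stack $\tilde{\FM}_{\CA,m}=t_0(\T^*[-1]\bs{\FM}_{\CA,m})$ carries the canonical d-critical structure and orientation of \cite{kinjo2022dimensional}, so that $\underline{\phi}_{\DT}$ is defined on it, and the dimensional reduction isomorphism of [loc.\ cit.] gives $\HO(\tilde{\FM}_{\CA,m},\underline{\phi}_{\DT})\cong\HO^{\BoMo}(\FM_{\CA,m},\ul{\BoQ}^{\vir})$ for every $m\in\K(\CA)$. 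Summing over $m$ and composing with Corollary~\ref{corollary:PBW} taken with $\CA^{\triangle}=\CA$ (so that $\imath_{\triangle}=\id$; recall the $\psi$-twist changes only the multiplication, not the underlying graded mixed Hodge structure) produces isomorphisms of $\K(\CA)\times\BoZ$-graded mixed Hodge structures
\[
\bigoplus_{m\in\K(\CA)}\HO(\tilde{\FM}_{\CA,m},\underline{\phi}_{\DT})\;\cong\;\HO^{\BoMo}(\FM_{\CA},\ul{\BoQ}^{\vir})\;\cong\;\Sym\!\left(\ul{\Fn}^{\ttBPS,\triangle,+}_{\CA}\otimes\HO^*(\B\BoC^*,\ul{\BoQ})\right).
\]
Taking $\ul{\aBPS}_{\tilde{\CA},m}$ to be the $\K(\CA)$-degree-$m$ graded piece of $\ul{\Fn}^{\ttBPS,\triangle,+}_{\CA}$, which by \eqref{equation:BPS_ALA_def} is $\HO^*(\CM_{\CA,m},\ulBPS_{\CA,\Lie})$, puts this isomorphism in the form demanded by Conjecture~\ref{conjecture:3CYIntegrality}. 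It remains to check that each $\ul{\aBPS}_{\tilde{\CA},m}$ has bounded cohomological amplitude and finite total dimension; for this I would use that $\ul{\Fn}^{\ttBPS,\triangle,+}_{\CA}$ is, by \eqref{equation:BPS_ALA_def}, a subquotient of the free Lie algebra on the generators $\HO^*(\CM_{\CA,m'},\underline{\SG}_{m'})$, $m'\in\Phi^+_{\K(\CA)}$, each of which is finite dimensional of bounded amplitude (intersection cohomology of a finite type scheme, up to shift), together with the fact that, $\K(\CA)$ being embedded in a lattice, only finitely many iterated brackets of these generators land in any fixed degree $m$.

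For the statement about $X=\Tot(K_S)\cong S\times\BoA^1$ with $\K_S\cong\mathscr{O}_S$, I would take $\CA$ to be the 2CY category of Gieseker semistable coherent sheaves of a fixed slope on $S$ with respect to a polarisation of a smooth compactification of $S$, as in the motivating example of \S\ref{section:DTcrashcourse}: Serre duality on $S$ together with $\K_S\cong\mathscr{O}_S$ gives $\Ext^2_{\CA}(E,F)\cong\Hom_{\CA}(F,E)^{\vee}$, so $\CA$ is 2CY, and it satisfies the assumptions of \S\ref{section:modulistack2dcats}, its good moduli space being the Gieseker moduli scheme. By \cite{bozec2020relative} the shifted cotangent stack $\T^*[-1]\bs{\FM}_{\CA,m}$ is an open substack of the derived moduli stack of objects of $\Pi_3(\CA)$, and this is identified --- compatibly with good moduli spaces, d-critical structures, orientations, and hence with $\underline{\phi}_{\DT}$ --- with the moduli stack of compactly supported Gieseker semistable sheaves of fixed slope on $X$. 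Applying the general assertion to this $\CA$ then yields the cohomological integrality isomorphism for the latter category.

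I expect the real difficulty to lie not in chaining these isomorphisms together but in the compatibility of the d-critical structures and, above all, of the \emph{orientations} underlying the first two links: one must know that the orientation on $\tilde{\FM}_{\CA,m}=t_0(\T^*[-1]\bs{\FM}_{\CA,m})$ that powers the dimensional reduction of \cite{kinjo2022dimensional} agrees with the orientation obtained by regarding objects of $\Pi_3(\CA)$, respectively of the category of sheaves on $\Tot(K_S)$, as coherent sheaves on a 3CY variety. Since orientations form a $\mu_2$-torsor, this matching is the genuinely delicate point, and for it I would rely on the comparison results already established in \cite{bozec2020relative} and \cite{kinjo2022dimensional}.
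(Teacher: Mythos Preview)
Your proposal is correct and follows essentially the same approach as the paper, which simply states that the theorem is obtained ``via dimensional reduction'' from Theorem~\ref{theorem:relPBW}. You have correctly unpacked this: the dimensional reduction isomorphism of \cite{kinjo2022dimensional} identifies $\HO(\tilde{\FM}_{\CA,m},\underline{\phi}_{\DT})$ with $\HO^{\BoMo}(\FM_{\CA,m},\ul{\BoQ}^{\vir})$, Corollary~\ref{corollary:PBW} then provides the $\Sym$ decomposition, and the finiteness of $\ul{\aBPS}_{\tilde{\CA},m}$ follows from the structure of the GKM Lie algebra and the finite-type hypothesis on $\CM_{\CA}$; your identification of the sheaf case via \cite{bozec2020relative} and your caveat about matching orientations are both appropriate, the latter being exactly the sort of compatibility the paper defers to \cite{kinjo2022dimensional}.
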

In the case $\CA=\Higgs_{\theta}(C)$, the category of semistable Higgs sheaves of fixed slope $\theta$ on a smooth projective curve $C$, $\tilde{\CA}=\Pi_3(\CA)$ can be identified with the category of semistable coherent sheaves of pure dimension one and slope $\theta$ on $\Tot_C(\mathscr{O}_C\oplus K_C)$, and Conjecture~\ref{conjecture:3CYIntegrality} is proved in \cite{kinjo2021cohomological}, building on \cite{kinjo2021global,maulik2020cohomological}. In the case in which $g(C)\geq 2$, the precise determination of $\ul{\aBPS}_{\tilde{\CA}}$ in terms of intersection cohomology was obtained in our earlier work \cite{davison2022BPS}, while for $g(C)\leq 1$ it can be found in \cite{davison2021nonabelian}.
\begin{remark}
We may alternatively \emph{define} $\ul{\aBPS}_{\CC,m}\coloneqq \HO(\CM_{\CC,m},\CH^{1}((\FM_{\CC}\rightarrow \CM_{\CC})_*\ul{\phi}_{\DT}))$ and make the stronger conjecture that for this choice of mixed Hodge structure, there is an isomorphism as in Conjecture~\ref{conjecture:3CYIntegrality}. The local version of this conjecture (for Jacobi algebras) is the main result of \cite{davison2020cohomological}; see \S \ref{subsec_3dcomp} for this special case.
\end{remark}

\subsection{Lowest weight modules from Nakajima quiver varieties}
Part of the inspiration for constructing generalised Kac--Moody Lie algebras out of 2CY categories was the famous work of Nakajima on irreducible highest/lowest weight representations constructed out of cohomology of quiver varieties, and work of Nakajima and Grojnowski on irreducible highest/lowest weight representations of Heisenberg algebras on cohomology of Hilbert schemes. In this section we explain how we recover and extend this theory.

Given a quiver $Q$ without loops, and a framing dimension vector $\ff\in\BoN^{Q_0}$, Nakajima constructs in \cite{nakajima1998quiver} an action of the Kac--Moody Lie algebra $\Fg_{\Gamma}$ on $\bigoplus_{\dd\in\BoN^{Q_0}}\HO^{0}(N(Q,\dd,\ff),\BoQ^{\vir})$ where $\Gamma$ is the underlying graph of $Q$, $\Fg_{\Gamma}$ is the associated Kac--Moody Lie algebra, and $N(Q,\dd,\ff)$ are Nakajima quiver varieties (we recall the definition in \S\ref{NQVs_def}, along with the cohomological shifts bundled into the superscript $\vir$). Furthermore, this representation is shown to be the irreducible lowest weight representation of lowest weight $-\sum_{i\in Q_0} f_i\delta_i$, where the $\delta_i$s denote the fundamental weights. Note that the same correspondences considered in \cite{nakajima1998quiver} act on the entire cohomology $\mathbb{M}_{\ff}(Q)\coloneqq\bigoplus_{\dd\in\BoN^{Q_0}}\HO^*(N(Q,\dd,\ff),\BoQ^{\vir})$, suggesting the following question:
\begin{question}
\label{question1}
For fixed $Q$ and $\ff\in \BoN^{Q_0}$, what is the decomposition of the \emph{entire} cohomology $\mathbb{M}_{\ff}(Q)$ into irreducible lowest weight $\Fg_{\Gamma}$-modules?
\end{question}
Next, let $Q$ be instead the Jordan quiver; this is the quiver with one vertex and one loop. We fix the framing vector $f=1$; then there is an isomorphism $N(Q,d,1)\cong \Hilb_d(\BoA^2)$. Again, $\mathbb{M}_{1}(Q)$ is acted on by an algebra of correspondences (Nakajima's raising and lowering operators), and it is proved in \cite{nakajima1997heisenberg,nakajima1999lectures} that the resulting Lie algebra of correspondences $\Fg$ is an infinite-dimensional Heisenberg algebra. Moreover, it is proven that $\mathbb{M}_{1}(Q)$ is an irreducible lowest weight module for this Lie algebra. Note that this is a long way from being a special case of the result in the paragraph above Question \ref{question1}, since $Q$ \textit{only} contains a loop. On the other hand, the algebra $\Fg$ is a (very degenerate example of a) \emph{generalised} Kac--Moody Lie algebra, containing \emph{only} imaginary simple roots.

These two central results in the geometric representation theory of quiver varieties motivate the following more ambitious question:
\begin{question}
\label{question2}
For fixed $Q$, is there a natural generalised Kac--Moody Lie algebra $\tilde{\Fg}_Q$ of correspondences, containing the Kac--Moody Lie algebra associated to the real subquiver (see \S \ref{subsection:QandR}), for which, for fixed $\ff\in\BoN^{Q_0}$, the module $\mathbb{M}_{\ff}(Q)$ admits a concrete decomposition into irreducible lowest weight modules?
\end{question}
Although Question \ref{question2} is more subjective, we still venture to answer it with a ``yes'' in this paper.  Firstly, we propose the (full) BPS Lie algebra $\Fg^{\ttBPS}_{\Pi_Q}$ introduced at \eqref{definition:fullBPSLA} to be the Lie algebra $\tilde{\Fg}_Q$. Then the decomposition into irreducible lowest weight representations, with lowest weight spaces identified with intersection cohomology of certain singular moduli spaces of framed representations, is provided by the following theorem, a more general version of which is proved as Theorem~\ref{theorem:mainRTthm}:
\begin{theorem}
\label{rep_thy_thm}
The $\Fn_{\Pi_Q}^{\ttBPS,+}$-action on $\mathbb{M}_{\ff}(Q)$ extends to a $\Fg_{\Pi_Q}^{\ttBPS}$-action, for which $\mathbb{M}_{\ff}(Q)$ decomposes as a direct sum of cohomologically shifted simple lowest weight representations, with lowest weight spaces identified with intersection cohomology of singular quiver varieties:
\begin{equation*}
\mathbb{M}_{\ff}(Q)\cong \bigoplus_{(\dd,1)\in\Phi_{Q_{\ff}}^+}\ICA\!(\CM_{\Pi_{Q_{\ff}},(\dd,1)})\otimes L_{((\dd,1),(-,0))_{Q_{\ff}}}.
\end{equation*}
Here, $\ICA$ is the derived global section functor applied to the perverse IC complex, and $L_{\lambda}$, with $\lambda$ a linear form on $\BoZ^{Q_0}$, denotes the irreducible lowest weight representation of $\Fg_{\Pi_Q}^{\ttBPS}$ of lowest weight $\lambda$ (see \S\ref{subsection:lowestweightmodule}).
\end{theorem}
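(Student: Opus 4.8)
The plan is to reduce to the framed preprojective algebra $\Pi_{Q_\ff}$ --- $Q_\ff$ being $Q$ with an extra vertex $\infty$ and $f_i$ arrows $\infty\to i$ --- to exploit the grading of every object in sight by $n:=\dim V_\infty$, and to feed the structural results already available (Theorem~\ref{corollary:absBor} and the relative PBW isomorphism of Theorem~\ref{theorem:relPBW}) into a decomposition-theorem/integrable-module argument. The $n=0$ part recovers the unframed data: $\FM_{\Pi_{Q_\ff},(\dd,0)}=\FM_{\Pi_Q,\dd}$ and $\CM_{\Pi_{Q_\ff},(\dd,0)}=\CM_{\Pi_Q,\dd}$ compatibly with $\oplus$, so the $n=0$ truncations of the CoHA, the BPS sheaf, $\ul{\BPS}_{\Pi_{Q_\ff},\Lie}$ and $\Fg^{\ttBPS}_{\Pi_{Q_\ff}}$ are those of $\Pi_Q$. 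First I would set up the geometric dictionary in $n=1$: for a generic stability parameter in the Nakajima chamber one has $\text{semistable}=\text{stable}$ in dimension $(\dd,1)$, the stable stack is a trivial $\B\BoC^*$-gerbe over the smooth Nakajima variety $N(Q,\dd,\ff)$ so that $\bigoplus_\dd\HO^{\BoMo}\!\big(\FM^{\mathrm{st}}_{\Pi_{Q_\ff},(\dd,1)},\BoQ^{\vir}\big)\cong\mathbb{M}_\ff(Q)\otimes\HO^*(\B\BoC^*,\BoQ)$, the good moduli space of the \emph{full} stack in dimension $(\dd,1)$ is the affine Nakajima variety $\CM_{\Pi_{Q_\ff},(\dd,1)}=N_0(Q,\dd,\ff)$, and $\pi\colon N(Q,\dd,\ff)\to \CM_{\Pi_{Q_\ff},(\dd,1)}$ is proper and semismall.

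\textbf{Generators and the $\Fn^{\ttBPS,+}_{\Pi_Q}$-action.} Nakajima's Hecke correspondences --- extending an $n=1$ object on top by an $n=0$ (i.e.\ $\Pi_Q$-) object --- make $\mathbb{M}_\ff(Q)$ a module over the absolute CoHA $\HO^*\!\relCoHA_{\Pi_Q}$, and by Theorem~\ref{corollary:absBor} its perverse-degree-zero subalgebra is $\UEA(\Fn^{\ttBPS,+}_{\Pi_Q})$, which supplies the given $\Fn^{\ttBPS,+}_{\Pi_Q}$-action. To pin down the would-be lowest-weight vectors I would run the relative decomposition theorem for $\pi$ over $\CM_{\Pi_{Q_\ff},(\bullet,1)}$: a semisimple $\Pi_{Q_\ff}$-representation of $\infty$-degree one is $T\oplus U$ with $T$ simple of dimension $(\dd',1)$ and $U$ a semisimple $\Pi_Q$-representation, so the strata of $\CM_{\Pi_{Q_\ff},(\ee,1)}$ are products of the stable locus of $\CM_{\Pi_{Q_\ff},(\dd',1)}$ with symmetric products of unframed moduli, indexed exactly by the $(\dd',1)$ carrying a simple $\Pi_{Q_\ff}$-representation --- that is, by $(\dd',1)\in\Phi^+_{Q_\ff}$, the Crawley-Boevey/$\Sigma$-condition \emph{computed inside} $\Rep\Pi_{Q_\ff}$. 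The closure of the top stratum of $\CM_{\Pi_{Q_\ff},(\dd',1)}$ is all of it, so its summand is $\ul{\IC}(\CM_{\Pi_{Q_\ff},(\dd',1)})=\ul{\SG}_{(\dd',1)}$ --- the isotropic/small-diagonal alternative in the definition of $\ul{\SG}_m$ cannot occur since $(\dd',1)$ is primitive --- contributing $\ICA(\CM_{\Pi_{Q_\ff},(\dd',1)})$ after taking global sections. This identification of the IC summands is exactly what Theorem~\ref{corollary:absBor} (the description of the BPS sheaf in terms of $\ul{\IC}$-complexes) is good for, and is the one genuinely new ingredient here.

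\textbf{Promoting to $\Fg^{\ttBPS}_{\Pi_Q}$ and splitting.} It remains to extend the action to $\Fg^{\ttBPS}_{\Pi_Q}=\Fn^-\oplus\Fh\oplus\Fn^+$ and to deduce the decomposition. The lowering operators come from the transpose Hecke correspondences; together with the raising operators and the Cartan they assemble into an action of $\Fg^{\ttBPS}_{\Pi_Q}$ (equivalently, of the relevant part of the Drinfeld double of $\HO^*\!\relCoHA_{\Pi_Q}$), the generalised Kac--Moody relations being those of $\UEA(\Fn^{\ttBPS,+}_{\Pi_Q})$ from Theorem~\ref{corollary:absBor} together with their transposes. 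Because the operators strictly change $\dd$ and each $\HO^*(N(Q,\dd,\ff),\BoQ^{\vir})$ is finite-dimensional, $\mathbb{M}_\ff(Q)$ is an integrable module lying in category $\mathcal{O}$, and its space of lowest-weight vectors is precisely the generating space $\bigoplus_{(\dd,1)\in\Phi^+_{Q_\ff}}\ICA(\CM_{\Pi_{Q_\ff},(\dd,1)})$ found above, with the $(\dd,1)$-summand of lowest weight $((\dd,1),(-,0))_{Q_\ff}$. Complete reducibility of integrable category-$\mathcal{O}$ modules over a generalised Kac--Moody algebra then yields
\[
\mathbb{M}_\ff(Q)\;\cong\;\bigoplus_{(\dd,1)\in\Phi_{Q_\ff}^+}\ICA(\CM_{\Pi_{Q_\ff},(\dd,1)})\otimes L_{((\dd,1),(-,0))_{Q_\ff}},
\]
the consistency of which with the (independently computable, e.g.\ via Hua's formula) character of $\mathbb{M}_\ff(Q)$ is an instance of the Weyl--Kac character formula underlying Proposition~\ref{proposition:Weylcharacter}.

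\textbf{Main obstacle.} The hard part is the construction and control of the \emph{full} $\Fg^{\ttBPS}_{\Pi_Q}$-action: producing the lowering operators, verifying the generalised Kac--Moody relations and their transposes on the nose, and establishing the integrability needed to pass from ``a module with the correct character and correct lowest-weight spaces'' to an honest direct sum of \emph{irreducible} lowest-weight modules. A more elementary but crucial bookkeeping point is that the index set $\Phi^+_{Q_\ff}$ must be read off relative to the ambient category $\Rep\Pi_{Q_\ff}$: a dimension vector $(\dd,1)$ that decomposes in $\Rep\Pi_{Q_\ff}$ (hence is not a simple root) still looks ``simple'' inside any fixed-slope Serre subcategory of $\Rep\Pi_{Q_\ff}$, and getting the summation range in the theorem right is exactly keeping track of this.
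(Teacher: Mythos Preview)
Your setup with the framed quiver $Q_\ff$ and the $n$-grading is exactly right, and you correctly identify the $n=0$ piece of everything with the unframed data. But you miss the key move, and as a result your ``main obstacle'' becomes a genuine gap rather than just a difficulty.

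The paper does \emph{not} construct lowering operators via transpose Hecke correspondences or a Drinfeld double. Instead, it proves (Proposition~\ref{proposition:NQVtoBPS}, via wall-crossing for BPS sheaves and dimensional reduction) that
\[
\Fn^{\ttBPS,+}_{\Pi_{Q_\ff},(\dd,1)}\;\cong\;\HO^*\!\big(N(Q,\dd,\ff),\BoQ^{\vir}\big),
\]
so that $\mathbb{M}_\ff(Q)$ \emph{is literally} the $(-,1)$-graded piece of the BPS Lie algebra of $\Pi_{Q_\ff}$. Once this is in hand, the extension to a $\Fg^{\ttBPS}_{\Pi_Q}$-action is immediate: one identifies $\Fg^{\ttBPS}_{\Pi_Q}$ with the $(-,0)$-graded piece of the full GKM algebra $\Fg^{\ttBPS}_{\Pi_{Q_\ff}}$, and then the action on $\mathbb{M}_\ff(Q)$ is simply the restriction of the \emph{adjoint} action of $\Fg^{\ttBPS}_{\Pi_{Q_\ff}}$ on itself. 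All the GKM relations, including those mixing $e$'s and $f$'s, hold automatically because they already hold in $\Fg^{\ttBPS}_{\Pi_{Q_\ff}}$ by its very definition. Your route --- building $f$'s by hand and checking $[e,f]=h$ directly on cohomology for all imaginary generators --- is not carried out and would require substantial new work beyond anything in the paper.

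The semisimplicity step is likewise simpler than your category-$\mathcal{O}$/integrability argument: it is the purely algebraic Lemma~\ref{gkm_res_lemma}, which says that for any GKM algebra graded by $M'\times\BoN$, the $(-,1)$-piece decomposes under the $(-,0)$-subalgebra as $\bigoplus V_{(m',1)}\otimes L_{\lambda_{m'}}$, the lowest-weight spaces being the spaces of Chevalley generators. Semisimplicity comes from Borcherds' contravariant form (positive definite on the relevant piece), not from integrability. Your decomposition-theorem analysis of $\pi$ then becomes unnecessary: the identification of the lowest-weight spaces with $\ICA(\CM_{\Pi_{Q_\ff},(\dd,1)})$ is already built into Theorem~\ref{corollary:absBor} applied to $\Pi_{Q_\ff}$, since those are precisely the generators $\HO^*\ul\SG_{(\dd,1)}$.
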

We explain how to recover the above results of \cite{nakajima1997heisenberg,nakajima1998quiver,grojnowski1996instantons} from Theorem~\ref{theorem:mainRTthm} in \S \ref{subsec:GN}.
\begin{remark}
A consequence of our results is that $\Fg_{\Pi_Q}^{\ttBPS}$ acts faithfully on the direct sum of modules $\mathbb{M}_{\ff}(Q)$ provided by taking the cohomology of Nakajima quiver varieties for various framing vectors $\ff$. In \cite{maulik2019quantum} Maulik and Okounkov associate to each quiver $Q$ a Lie algebra $\Fg_{\MO,Q}$ (after extending scalars from $\BoQ$ to an equivariant cohomology ring). By construction, their Lie algebra acts faithfully on cohomology of Nakajima quiver varieties, and admits a $\BoZ^{Q_0}$-graded weight space decomposition. Furthermore, $\Fg_{\MO,Q}$ is conjectured to be a generalised Kac--Moody Lie algebra, is conjectured to be already defined over $\BoQ$, and is known to contain the usual Kac--Moody Lie algebra associated to $Q$. The main results of this paper, then, should be taken as strong evidence for the conjectures (in \cite{maulik2019quantum} and \cite{davison2020bps} respectively) that $\Fg_{\MO,Q}$ can be defined over $\BoQ$, and (after extending scalars) there is an isomorphism $\Fn^{\ttBPS,+}_{\Pi_Q}\cong\Fn^+_{\MO,Q}$.
\end{remark}
\begin{remark}
These conjectures have now been proved in \cite{botta2023okounkov}. The proof uses in an essential way the description of the BPS Lie algebra as a generalised Kac--Moody Lie algebra which forms Theorem \ref{corollary:absBor}, as well as the description of its representation theory contained in Theorem \ref{rep_thy_thm}.  These conjectures in turn imply Okounkov's conjecture equating the graded dimensions of $\Fg_{\MO,Q}$ with coefficients of Kac polynomials, see \cite{botta2023okounkov, SVMOproof}. The conjecture in \cite{maulik2019quantum} that the Maulik--Okounkov Lie algebra is defined over $\BoQ$ is reduced to our Corollary \ref{triv_ext_BPS} by \cite{botta2023okounkov} and the present paper.
\end{remark}

\subsection{Comparison with the 3d BPS Lie algebra}
\label{subsec_3dcomp}
If $\CA=\Rep\Pi_Q$, a definition of the BPS Lie algebra sheaf associated to the vanishing cycle cohomology of the stack of $\Pi_Q[x]$-modules is given in \cite{davison2020bps} using the fact that $\Pi_Q[x]$ is the $3$-Calabi--Yau Jacobi algebra associated to a quiver with potential. CoHAs for vanishing cycle cohomology of Jacobi algebras were defined and studied in \cite{kontsevich2010cohomological}. We denote this BPS Lie algebra sheaf by $\BPS_{\Pi_Q,\Lie}^{\td,\psi}\in\Perv(\CM_{\Pi_Q})$. The superscript $\td$ indicates that the definition involves dimensional reduction from the stack of representations of a Jacobi algebra, which we think of as a kind of noncommutative threefold. This sheaf is the direct image of the BPS sheaf defined in the context of quivers with potential in \cite{davison2020cohomological}, and this approach to defining BPS sheaves is very much inspired by the theory recalled in \S \ref{section:DTcrashcourse}. 

Although the BPS Lie algebra sheaf (for an arbitrary 2CY category) in this paper is defined very differently, for the category of $\Pi_Q$-modules we nonetheless prove the following in \S \ref{subsection:2d3d} (Proposition~\ref{proposition:BPS3dcoincide}):
\begin{theorem}
\label{thm:comp}
 Let $Q$ be a quiver. The relative BPS Lie algebras $\BPS_{\Pi_Q,\Lie}$ and $\BPS_{\Pi_Q,\Lie}^{\td,\psi}$ are canonically isomorphic.
\end{theorem}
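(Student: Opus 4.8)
The plan is to deduce the statement from a single structural input -- a \emph{relative} dimensional reduction isomorphism of cohomological Hall algebras -- and then to extract the two BPS Lie algebra sheaves from the two CoHAs by one and the same intrinsic recipe. Recall that $\Pi_Q[x]$ is the $3$-Calabi--Yau Jacobi algebra of the tripled quiver $\tilde{Q}$ of $Q$ with its canonical cubic potential $\tilde{W}$, that a $\Pi_Q[x]$-module is a $\Pi_Q$-module equipped with an endomorphism, and that dimensional reduction along the extra loops is precisely what relates the critical (vanishing-cycle) cohomology of the stack of $(\tilde{Q},\tilde{W})$-representations to the Borel--Moore homology of $\FM_{\Pi_Q}$. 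The first step is to upgrade this to a canonical isomorphism
\[
\Theta\colon\ \ulrelCoHA_{\Pi_Q}^{\psi}\ \xrightarrow{\ \cong\ }\ \ulrelCoHA_{\Pi_Q}^{\td,\psi}
\]
of algebra objects in $\CD^+(\MHM(\CM_{\Pi_Q}))$, where the right-hand side is the relative critical CoHA of $(\tilde{Q},\tilde{W})$ of \cite{kontsevich2010cohomological,davison2020cohomological} pushed to the (common) good moduli space $\CM_{\Pi_Q}$. At the level of underlying complexes, compatibly with the good moduli space morphisms and with the monoidal structure $\boxdot$ on $\CM_{\Pi_Q}$, this is the relative dimensional reduction theorem of \cite{davison2017critical,kinjo2022dimensional}; what must be checked in addition is that $\Theta$ intertwines the two Hall multiplications and the two diagonal-restriction coproducts, matches the $\vir$-Tate twists (the two normalisations differ exactly by the rank of the collapsed linear factor, hence agree), and respects the sign twist $\psi$. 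I expect this to be the main obstacle: promoting the known dimensional-reduction equivalence of \emph{complexes}, and of \emph{absolute} CoHAs, to an equivalence of \emph{relative bialgebra objects} over $\CM_{\Pi_Q}$ with all normalisations aligned.

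Given $\Theta$, the rest is bookkeeping. Dimensional reduction is perverse $t$-exact for the natural normalisations, so $\Theta$ induces isomorphisms of perverse cohomology sheaves; concretely it matches perverse degree $0$ on the $2$CY side with perverse degree $1$ in the Donaldson--Thomas normalisation on the $\td$ side, the shift being exactly the one relating $\ul{\BoQ}^{\vir}_{\FM_{\Pi_Q}}$ to $\ul{\phi}_{\DT}$. Hence $\Theta$ restricts to an isomorphism of algebra objects in $\MHM(\CM_{\Pi_Q})$ between the BPS algebra sheaf $\ul{\BPS}_{\Pi_Q,\Alg}^{\psi}=\CH^0(\ulrelCoHA_{\Pi_Q}^{\psi})$ of Theorem~\ref{theorem:BPSalgisenvBorcherds} and the degree-one cohomology sheaf $\CH^1\bigl((\FM\to\CM_{\Pi_Q})_*\ul{\phi}_{\DT}\bigr)$ -- the $\td$ BPS algebra sheaf of \cite{davison2020bps}.

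Finally one passes from the BPS algebra sheaf to the BPS Lie algebra sheaf in the same way on both sides. On the $2$CY side this is how $\ul{\BPS}_{\Pi_Q,\Lie}$ of \eqref{equation:BPS_LA_def} sits inside $\ul{\BPS}_{\Pi_Q,\Alg}^{\psi}$: it is the Lie subalgebra object generated by the minimal generating sub-object $\ul{\SG}$, equivalently the sub-object of primitives for the coproduct, equivalently the Lie-algebra factor in the cohomological integrality / PBW decomposition $\ulrelCoHA_{\Pi_Q}^{\psi}\cong\Sym_{\boxdot}\bigl(\ul{\BPS}_{\Pi_Q,\Lie}\otimes\HO^*(\B\BoC^*,\ul{\BoQ})\bigr)$ of Theorem~\ref{theorem:relPBW}; on the $\td$ side, $\BPS_{\Pi_Q,\Lie}^{\td,\psi}$ is characterised identically inside the $\td$ CoHA by \cite{davison2020bps} (Davison--Meinhardt integrality), and such a Lie factor is unique. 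Since $\Theta$ is an isomorphism of bialgebra objects it carries primitives to primitives and commutators to commutators, hence restricts to a canonical isomorphism $\BPS_{\Pi_Q,\Lie}\xrightarrow{\cong}\BPS_{\Pi_Q,\Lie}^{\td,\psi}$ of Lie algebra objects in $\Perv(\CM_{\Pi_Q})$, which is the assertion. As a by-product one may then transport the intersection-complex description of $\ul{\SG}$ furnished by Theorem~\ref{theorem:BPSalgisenvBorcherds} to the $\td$ side, sharpening the purity theorem of \cite{davison2020bps}; but this is a consequence of, not an ingredient in, the comparison.
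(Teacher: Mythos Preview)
Your proposal misidentifies where the difficulty lies and, in the step you treat as routine, has a genuine gap. The algebra isomorphism $\Theta$ you spend the first paragraph on is already in the paper: it is the dimensional reduction isomorphism $\Lambda\colon h_*\relCoHA_{\tilde{Q},\tilde{W}}\cong\relCoHA_{\Pi_Q}$, known to respect the Hall multiplication. The real issue is your final step, where you assert that both BPS Lie algebra sheaves are picked out by ``one and the same intrinsic recipe'' inside the BPS algebra. They are not: $\BPS_{\Pi_Q,\Lie}^{\td}$ is \emph{defined} as $h_*\bigl(\ptau^{\leq 1}\relCoHA_{\tilde{Q},\tilde{W}}\bigr)$, i.e.\ a truncation on $\CM_{\tilde{Q}}$ followed by pushforward along $h$, and since $h_*$ is not $t$-exact this is a priori different from any truncation of $\relCoHA_{\Pi_Q}$ on $\CM_{\Pi_Q}$. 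Your appeal to ``primitives'' requires a coproduct on the relative CoHA as an object of $\CD^+(\MHM(\CM_{\Pi_Q}))$, which is neither constructed in the paper nor available in the cited literature, and without such a bialgebra structure the claim that ``the Lie factor in PBW is unique'' is simply false. Note also that you invoke Theorem~\ref{theorem:relPBW} to characterise $\BPS_{\Pi_Q,\Lie}$, but in the paper the proof of Theorem~\ref{theorem:relPBW} \emph{uses} Proposition~\ref{proposition:BPS3dcoincide} (the detailed form of the present theorem), so this is circular.

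The paper's argument avoids all of this by a sandwich. Both Lie algebra sheaves are already embedded in the \emph{same} algebra object $\BPS_{\Pi_Q,\Alg}^{\psi}$. One first shows the inclusion $\BPS_{\Pi_Q,\Lie}\hookrightarrow\BPS_{\Pi_Q,\Lie}^{\td,\psi}$: since $\BPS_{\Pi_Q,\Lie}$ is generated as a Lie algebra object by the simple summands $\SG_{\dd}$, it suffices to know that each $\SG_{\dd}$ lies in $\BPS_{\Pi_Q,\Lie}^{\td}$, and this is established by the support arguments of \cite[\S7]{davison2020bps}. Then one applies $\Sym_{\boxdot}$: Theorem~\ref{theorem:BPSalgisenvBorcherds} gives $\Sym_{\boxdot}(\BPS_{\Pi_Q,\Lie})\cong\BPS_{\Pi_Q,\Alg}^{\psi}$, while the 3d PBW of \cite{davison2020bps} gives $\Sym_{\boxdot}(\BPS_{\Pi_Q,\Lie}^{\td})\cong\BPS_{\Pi_Q,\Alg}^{\psi}$, forcing the inclusion to be an equality. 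The key nontrivial input you are missing is the containment of the generators $\SG_{\dd}$ in the 3d Lie algebra sheaf.
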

In particular, the BPS Lie algebra sheaf $\BPS_{\Pi_Q,\Lie}^{\td,\psi}$ is (up to isomorphism) independent of the chosen $\psi$-twist.

\subsection{Acknowledgements}
All three authors were supported by the European Research Council starter grant “Categorified Donaldson--Thomas theory” No. 759967. BD was in addition supported by a Royal Society University Research Fellowship. SSM was in addition supported by the Max Planck Institute for Mathematics and the Swiss National Science Foundation [no. 218340]. We would like to thank Tasuki Kinjo and Davesh Maulik for useful and enlightening discussions. 

\subsection{Standard notations and conventions}
\label{notations_subsec}
\begin{enumerate}[$\bullet$]
 \item If $M$ is a monoid, we denote by $\widehat{M}$ its groupification. For $(l,m)\in \BoN\times M$, we denote by $lm$ the sum of $l$ copies of $m$.
 \item We denote by $1_i$ the $i$th basis vector of the monoid $\BoN^{Q_0}$, where $i\in Q_0$.
 \item We denote by $\HO^*_{\BoC^*}$ the $\BoC^*$-equivariant cohomology ring of the point $\pt$ with coefficients in $\BoQ$: $\HO^*_{\BoC^*}\coloneqq \HO^*_{\BoC^*}(\pt,\BoQ)\cong \HO(\B\BoC^*,\BoQ)$. We denote $\ul{\HO}^*_{\BoC^*}\coloneqq \HO^*_{\BoC^*}(\pt,\ul{\BoQ})$ the corresponding pure mixed Hodge structure.
 \item Stacks are usually denoted using fraktur letters: $\mathfrak{X}$. If a stack $\FX$ has a good moduli space \cite{alper2013good}, it will be denoted using calligraphic letters: e.g. $\FX\rightarrow \CX$ denotes the morphism to the good moduli space.
 \item If $\SF$ is a complex of constructible sheaves on a space $X$ we will frequently abbreviate $\HO^*\!\SF\coloneqq \HO^*(X,\SF)$.
\item If $X$ is a smooth scheme or stack, we set $\BoQ_{X}^{\vir}=\bigoplus_{X'\in \pi_0(X)}\BoQ_{X'}[\dim X']$, the constant perverse sheaf. If each connected component of $X$ has even dimension, we set $\ulBoQ_X^{\vir}=\bigoplus_{X'\in \pi_0(X)}\ulBoQ_{X'}\otimes \BoL^{-\dim(X')/2}$, where $\BoL=\HO_{\mathrm{c}}(\BoA^1,\BoQ)$. This is a mixed Hodge module of pure weight zero.
 \item If $X$ is an irreducible complex algebraic variety of even dimension, the pure weight zero mixed Hodge module $\ul{\BoQ}_{X^{\sm}}^{\vir}=\ul{\BoQ}_{X^{\sm}}\otimes\BoL^{-\dim(X)/2}$ on $X^{\sm}$ has an essentially unique extension to a pure weight zero \emph{simple} mixed Hodge module on $X$. We denote it by $\underline{\IC}(X)$.
 \item If $X$ is a complex, separated, locally finite type scheme, we denote by $\Perv(X)$, $\CD^{\rmb}_{\rmc}(X)$, $\MHM(X)$, $\CD^{\rmb}(\MHM(X))$ the categories of perverse sheaves, mixed Hodge modules and their derived categories on the underlying reduced scheme $X_{\red}$. We let $\CD^{+}_{\rmc}(X)$ and $\CD^+(\MHM(X))$ denote the categories of locally bounded below complexes.
 \item We have the natural truncation functor $\tau^{\leq 0}\colon \CD^+(\MHM(X))\rightarrow \CD^{+}(\MHM(X))$. 
 \item For a complex of mixed Hodge modules $\SF\in\CD^+(\MHM(X))$, we denote by $\CH^i(\SF)$ its $i$th cohomology. It is a mixed Hodge module on $\CM$.
 \item If $M$ is a monoid, $A$ a $M$-graded algebra and $\psi\colon M\times M\rightarrow \BoZ/2\BoZ$ a bilinear form, we let $A^{\psi}$ be the $\psi$-twisted algebra. As $M$-graded vector spaces, $A=A^{\psi}$ and if $m\colon A\otimes A\rightarrow A$ is the multiplication of $A$, then the restriction of the multiplication on $A^{\psi}$ is $m_{a,b}^{\psi}\coloneqq(-1)^{\psi(a,b)}m_{a,b}\colon A_a\otimes A_b\rightarrow A_{a+b}$ for any $a,b\in M$.
 \item If a statement concerning a $\psi$-twisted algebra $A^{\psi}$ in some monoidal category does not actually depend on the algebra structure (and therefore, the $\psi$-twist does not play any role), we write $A^{(\psi)}$. For example, the $M$-graded character of $A$ or the purity (if $A$ is a mixed Hodge module or mixed Hodge structure) are properties independent of the multiplication.
 \item If $V$ is a $M\times \BoZ$ graded vector space (i.e. for each $m\in M$, the vector space $V_m$ is cohomologically graded), we denote by $\ch^{\BoZ}(V)$ its character: $\ch^{\BoZ}(V)=\sum_{n\in\BoZ}\sum_{m\in M}\dim(V_m^n)q^{n/2}z^{m}\in\BoN[[M\times\BoZ]]$.
 \item For $l\in\BoZ_{\geq 1}$, we let $S^l$ be the symmetric group on a set with $l$ elements. 
 \item If $X$ is an algebraic variety with an action of an algebraic group $G$, the stack quotient is simply denoted by $X/G$ (in the literature, the notation $[X/G]$ is also used).
 \item If $X$ is a scheme, we let $\Gamma(X)$ be the ring of global functions on $X$.
 \item If $A$ is a ring with an action of a group $G$ by automorphisms of ring, we let $A^G$ be the ring of invariants.
\end{enumerate}

\section{Quivers and representations}
\label{subsection:QandR}
A quiver is a directed graph, i.e. a pair of sets $Q_1$ and $Q_0$ of arrows and vertices, respectively, along with a pair of maps $s,t\colon Q_1\rightarrow Q_0$ taking an arrow to its source and target, respectively. We always assume that $Q_1$ and $Q_0$ are finite.

We allow our quivers to have loops (i.e. 1-cycles). Let $Q_0^{\im}\subset Q_0$ be the subset of vertices supporting loops, which we call \emph{imaginary vertices}. We denote by $Q^{\real}\subset Q$ the quiver obtained by replacing $Q_0$ with $Q_0\setminus Q_0^{\im}$, and removing all arrows from $Q_1$ that have source or target in $Q_0^{\im}$.

We denote by 
\begin{align*}
\chi_Q(- ,-)\colon &\BoZ^{Q_0}\times\BoZ^{Q_0}\rightarrow \BoZ\\
&(\dd',\dd'')\mapsto \sum_{i\in Q_0} d'_{i}d''_{i}-\sum_{a\in Q_1} d'_{s(a)}d''_{t(a)}
\end{align*}
the Euler form of $Q$, and define $(-,-)_Q$ to be its symmetrisation. For $\dd',\dd''\in\BoN^{Q_0}$, we define the \emph{dot product} by $\dd'\cdot\dd''=\sum_{i\in Q_0}d'_id''_i$.

Given a quiver $Q$ we denote by $\BoC Q$ the \emph{free path algebra}, i.e. the algebra having a $\BoC$-basis given by paths in $Q$, including paths $\lazy_i$ of length zero at each of the vertices $i$. The multiplication, with respect to this basis, is given by concatenation of paths.

If $A=\BoC Q/R$ is presented as the quotient of a free path algebra by a two sided ideal generated by linear combinations of paths of length of at least one, and $\rho$ is an $A$-module, we define the \emph{dimension vector} 
\[
\dim_{Q_0}(\rho)\coloneqq(\dim(\lazy_i\cdotsh \rho))_{i\in Q_0}\in \BoN^{Q_0}.
\]
For a quiver $Q$ and dimension vector $\dd\in\BoN^{Q_0}$ we define 
\[
\BoA_{Q,\dd}\coloneqq \prod_{a\in Q_1}\Hom(\BoC^{d_{s(a)}},\BoC^{d_{t(a)}})
\]
which is acted on by the gauge group
\[
\GL_{\dd}\coloneqq\prod_{i\in Q_0}\GL_{d_i}(\BoC)
\]
by change of basis. We denote by $\FM_{Q,\dd}$ the stack of $\dd$-dimensional $\BoC Q$-modules, and by $\JH_{\dd}\colon \FM_{Q,\dd}\rightarrow \CM_{Q,\dd}$ the canonical morphism to the affinization. There is an equivalence of stacks, and an isomorphism of affine varieties, respectively
\[
\FM_{Q,\dd}\simeq \BoA_{Q,\dd}/\GL_{\dd};\quad\quad\CM_{Q,\dd}\cong \Spec(\Gamma(\BoA_{Q,\dd})^{\GL_{\dd}}).
\]
The variety $\CM_{Q,\dd}$ is the coarse moduli space of $\dd$-dimensional $\BoC Q$-modules, and its points correspond to semisimple $\dd$-dimensional modules. We set $\FM_Q\coloneqq \coprod_{\dd\in\BoN^{Q_0}}\FM_{Q,\dd}$ and $\CM_Q\coloneqq \coprod_{\dd\in\BoN^{Q_0}}\CM_{Q,\dd}$. There is a direct sum operation
\[
\oplus\colon \CM_{Q}\times \CM_{Q}\rightarrow \CM_Q
\]
making $\CM_Q$ into a commutative monoid in schemes. This morphism is finite \cite[Lemma 2.1]{meinhardt2019donaldson}. 

It will often be convenient to consider some submonoid $\CM_{Q}^{\triangle}\subset \CM_{Q}$, which we will assume is saturated, meaning that the following diagram is Cartesian
\[
 \begin{tikzcd}
	\CM_Q^{\triangle}\times \CM_Q^{\triangle}& \CM_Q^{\triangle} \\
	\CM_Q\times \CM_Q& \CM_Q
	\arrow["\imath_{\triangle}\times \imath_{\triangle}"',hook,from=1-1, to=2-1]
	\arrow["i_{\triangle}",hook,from=1-2, to=2-2]
	\arrow["\oplus",from=1-1, to=1-2]
	\arrow["\oplus",from=2-1,to=2-2]
	\arrow["\lrcorner"{anchor=center, pos=0.125}, draw=none, from=1-1, to=2-2]
\end{tikzcd}
\]
where $\imath_{\triangle}\colon \CM^{\triangle}_{Q}\hookrightarrow \CM_Q$ is the inclusion, and we denote again by $\oplus$ the restriction of $\oplus$ to $\CM^{\triangle}_{Q}\times \CM^{\triangle}_{Q}$.

The connected components $\pi_0(\CM^{\triangle}_Q)$ form a monoid, and we fix a morphism of monoids $\cl\colon \pi_0(\CM^{\triangle}_Q)\rightarrow \K$ where $\K$ is a finitely generated torsion-free, commutative, cancellative monoid; equivalently, the natural map $\K\rightarrow \widehat{\K}$ to the groupification of $\K$ is injective, and the target is a lattice. We assume also that the Euler form $\chi_Q(-,-)$, which automatically descends to a bilinear form on $\pi_0(\CM^{\triangle}_Q)$, descends to a bilinear form on $\K$. The standard choice of $\K$ is $\BoN^{Q_0}$, with $\cl$ taking a connected component $\CM\subset \CM_{Q,\dd}^{\triangle}$ to $\dd$.

Given a quiver $Q$ we denote by $\overline{Q}$ the \emph{doubled quiver}, i.e. the quiver obtained by adding a new arrow $a^*$ for each arrow $a\in Q_1$, with $s(a^*)=t(a)$ and $t(a^*)=s(a)$. We define the \emph{preprojective algebra}
\[
\Pi_Q\coloneqq \BoC \overline{Q}/\big\langle \sum_{a\in Q_1}[a,a^*]\big\rangle.
\]

We denote by $\tilde{Q}$ the \emph{tripled quiver} obtained from $\overline{Q}$ by adding a loop $\omega_i$ based at $i$, for each vertex $i\in Q_0$. 
We denote by $\SG_2(\BoC Q)$ the \emph{derived preprojective algebra}, defined as follows. We give $\BoC \tilde{Q}$ a cohomological grading, by putting $a$ and $a^*$ in cohomological degree $0$, and each $\omega_i$ in cohomological degree $-1$. We define a differential on $\BoC\tilde{Q}$ by setting $d\omega_i=\lazy_i\cdotsh\sum_{a\in Q_1}[a,a^*]\cdotsh \lazy_i$ and extending via the Leibniz rule. Then $\HO^0(\SG_2(\BoC Q))\cong \Pi_Q$ and we identify the category of $\Pi_Q$-modules with the full subcategory of $\SG_2(\BoC Q)$-modules concentrated in cohomological degree $0$. For $M$ and $N$ two finite-dimensional $\Pi_Q$-modules there is an equality
\[
(M,N)_{\SG_2(\BoC Q)}=(\dim_{Q_0}(M),\dim_{Q_0}(N))_Q
\]
between the Euler form in the category of $\SG_2(\BoC Q)$-modules, and the symmetrised Euler form.

We define $\mathfrak{gl}_{\dd}\coloneqq\prod_{i\in Q_0} \mathfrak{gl}_{d_{i}}(\BoC)$, the Lie algebra of $\GL_{\dd}$. We identify $\BoA_{\overline{Q},\dd}=\mathrm{T}^*\BoA_{Q,\dd}$ via the trace pairing. Then the $\GL_{\dd}$-action is Hamiltonian, with (co)moment map
\begin{equation}
\label{mmap}
\begin{matrix}
\mu_{Q,\dd}\colon &\BoA_{\overline{Q},\dd}&\rightarrow& \mathfrak{gl}_{\dd}\\
&(\rho(a),\rho(a^*))_{a\in Q_1}&\mapsto& \sum_{a\in Q_1}[\rho(a),\rho(a^*)].
\end{matrix}
\end{equation}
The zero set $\mu_{Q,\dd}^{-1}(0)\subset \BoA_{\overline{Q},\dd}$ is identified with the subspace of $\Pi_Q$-modules. There is an equivalence
\[
\FM_{\Pi_Q,\dd}\simeq \mu_{Q,\dd}^{-1}(0)/\GL_{\dd}
\]
between the stack of $\dd$-dimensional $\Pi_Q$-modules and the stack-theoretic quotient of the zero-locus of the moment map.

\section{Generalized Kac--Moody algebras}
\label{section:GKM}
In this section, we give the constructions of the generalised Kac--Moody algebra associated to a monoid with bilinear form and a weight function following \cite{borcherds1988generalized}, and of generalised half Kac--Moody algebras in symmetric tensor categories of mixed Hodge modules/perverse sheaves. We work with monoids with bilinear forms to allow sufficient flexibility when investigating the BPS algebras of $2$-Calabi--Yau categories.

\subsection{Roots}
\label{subsection:roots}

\subsubsection{Primitive and positive roots}

Let $(M,+)$ be a monoid and $M\times M\rightarrow\BoZ$ be a bilinear form. We let
\begin{multline}
 \Sigma_{M,(-,-)}=\left\{m\in M\mid 2-(m,m)\geq 0 \text{ and } 2-(m,m)>\sum_{j=1}^s(2-(m_j,m_j))\right.\\ \left.\text{ for any nontrivial decomposition $m=\sum_{j=1}^sm_j$ with $m_j\in M\setminus \{0\}$}\right\}
\end{multline}
be the set of \emph{primitive positive roots}, and 
\[
 \Phi_{M,(-,-)}^+=\Sigma_{M,(-,-)}\cup\{lm\mid l\in\BoN_{\geq 1}, m\in\Sigma_{M,(-,-)}\text{ and }(m,m)=0\}
\]
be the set of \emph{simple positive roots}. We assume that $(m,m)\in 2\BoZ_{\leq 1}$ for $m\in\Phi_{M,(-,-)}^+$. We write the decomposition of $\Phi_{M,(-,-)}^+$ into real and imaginary roots, $\Phi_{M,(-,-)}^+=\Phi_{M,(-,-)}^{+,\real}\sqcup \Phi_{M,(-,-)}^{+,\im}$, where $\Phi_{M,(-,-)}^{+,\real}\coloneqq\{m\in\Phi_{M,(-,-)}^+\mid (m,m)=2\}$ and $\Phi_{M,(-,-)}^{+,\im}\coloneqq\{m\in\Phi_{M,(-,-)}^+\mid (m,m)\leq 0\}$. We can further decompose $\Phi_{M,(-,-)}^{+,\im}=\Phi_{M,(-,-)}^{+,\iso}\sqcup\Phi_{M,(-,-)}^{+,\hyp}$ where 
\begin{align*}
\Phi_{M,(-,-)}^{+,\iso}\coloneqq\{m\in\Phi_{M,(-,-)}^+\mid (m,m)=0\}\,;\quad
\Phi_{M,(-,-)}^{+,\hyp}\coloneqq\{m\in\Phi_{M,(-,-)}^+\mid (m,m)<0\}
\end{align*}
are the sets of isotropic and hyperbolic roots, respectively. For $m\in \Phi_{M,(-,-)}^{+,\iso}\cap \Sigma_{M,(-,-)}$ we call $m$ an \emph{indivisible isotropic root}. 

\begin{remark}
When considering Weyl group actions we assume that $M$ is finitely generated, commutative, torsion-free, and cancellative; equivalently, we require that the natural map $M\rightarrow \widehat{M}$ to the groupification is injective, and $\widehat{M}$ is a lattice. All definitions of roots make sense without these assumptions (which may not be satisfied by the monoid of connected components of the good moduli space of 2CY categories).
\end{remark}

We fix an Abelian category $\CA$, and an ambient dg subcategory $\SC$ which contains $\CA$ as a full subcategory of its homotopy category, with stack of objects $\FM_{\CA}$, such that the Euler form $(-,-)_{\SC}$ is locally constant on $\FM_{\CA}\times\FM_{\CA}$. We assume that we have a chosen monoid $\K(\CA)$ and a fixed surjective morphism of monoids
\[
\pi_0(\FM_{\CA})\rightarrow \K(\CA).
\]
As in \S \ref{subsubsection:pospartGKM} we assume that the natural morphism to the groupification $\K(\CA)\rightarrow \widehat{\K(\CA)}$ is injective, $\widehat{\K(\CA)}$ is a lattice, and the Euler form $(-,-)_{\SC}$ descends to a bilinear form on $\K(\CA)$ (and thus induces a bilinear form on $\widehat{\K(\CA)}$). We denote by $\Sigma_{\K(\CA)}$ and $\Phi_{\K(\CA)}^+$ the sets of primitive positive roots and simple positive roots, respectively, associated to the monoid $\K(\CA)$ endowed with the Euler form.

\begin{remark}
 The hypotheses on $(m,m)$ and the definition of $\Phi_{M,(-,-)}^{+,\real}$ are designed for the study of the BPS algebras of $2$-Calabi--Yau categories. It is certainly possible to weaken or generalise them to combinatorially define a broader class of GKM algebras. A possible different choice would be
\begin{align}
 \Phi^{+,\real}_{M,(-,-)}\coloneqq \{m\in M\mid (m,m)>0\}\\
 \frac{2(m,n)}{(m,m)}\in\BoZ_{\leq 0} \text{ if $m\in\Phi^{+,\real}_{M,(-,-)}$ and $n\in \Phi^+_{M,(-,-)}$}.
\end{align}
This would be useful if we had positive roots $m$ with $(m,m)>2$ but this is not the case for 2CY categories.
\end{remark}

\subsection{Generalised Kac--Moody algebras for monoids with bilinear form}
\label{subsection:GKMmonoids}
Let $M$ be a monoid with bilinear form $(-,-)$. Recall the definitions of the sets of primitive positive roots and simple positive roots from \S\ref{subsection:roots}. We let $A_{M,(-,-)}=(a_{m,n}=(m,n))_{m,n\in\Phi_{M,(-,-)}^+}$ be the \emph{Cartan matrix}. We assume that $A_{M,(-,-)}$ is symmetric, $a_{m,n}\leq 0$ if $m\neq n$ and $a_{m,m}=2$ if $a_{m,m}>0$. We let
\[
\begin{matrix}
 P&\colon& \Phi_{M,(-,-)}^+&\rightarrow& \BoN[t^{\pm 1/2}]\\
 &   & m   &\mapsto  & P_m(t^{1/2})
\end{matrix}
\]
be a weight function: we assume that $P_m(t)=1$ if $a_{m,m}=2$. We write $P_m(t^{1/2})=\sum_{j\in\BoZ}p_{m,j}t^{j/2}$, with $p_{m,j}\in\BoN$.

We let $\mathfrak{g}_{M,(-,-),P}$ be the super Lie algebra generated by $e_{m,j,l}, f_{m,j,l}, h_m$, with $m\in\Phi_{M,(-,-)}^+$, $j\in\BoZ$, and $1\leq l\leq p_{m,j}$, with the relations\footnote{Since we assume that the only strictly positive diagonal entries in the Cartan matrix are $2$, the exponents in the generalised Serre relations simplify compared to the definition for Cartan matrices under weaker conditions.}
\[
 \begin{aligned}
h_{m+n}&=h_{m}+h_{n}\\
 [h_m,h_n]&=0\\
 [h_m,e_{n,j,l}]&=a_{m,n}e_{n,j,l}&\\
 [h_m,f_{n,j,l}]&=-a_{m,n}f_{n,j,l}&\\
 [e_{m,j,l},f_{n,j',l'}]&=\delta_{m,n}\delta_{j,j'}\delta_{l,l'}h_n&\\
 \ad(e_{m,j,l})^{1-a_{m,n}}(e_{n,j',l'})&=0&\quad \text{ if $a_{m,m}=2$}\\
 \ad(f_{m,j,l})^{1-a_{m,n}}(f_{n,j',l'})&=0&\quad \text{ if $a_{m,m}=2$}\\
 [e_{m,j,l},e_{n,j',l'}]=[f_{m,j,l},f_{n,j',l'}]&=0 &\quad \text{ if $a_{m,n}=0$}
 \end{aligned}
\]
where $\delta$ denotes the Kronecker delta function. We place each $e_{m,j,l}$ in cohomological degree $j$, each $f_{m,j,l}$ in cohomological degree $-j$, and $h_m$ in cohomological degree $0$. Then clearly the above relations are homogeneous, so that the quotient Lie algebra inherits a cohomological grading. Note that because we allow generators of odd cohomological degree, signs appear due to the Koszul sign rule. For example the antisymmetry and Jacobi relations become
\[
[\alpha,\beta]+(-1)^{ab}[\beta,\alpha]=0;\quad\quad [\alpha,[\beta,\gamma]]+(-1)^{ab+ac} [\beta,[\gamma,\alpha]]+(-1)^{ac+bc}[\gamma,[\alpha,\beta]]
\]
respectively, where $\alpha,\beta$, and $\gamma$ are homogeneous of cohomological degree $a,b$ and $c$ respectively.  Note that the BPS Lie algebra associated to the category of representations of a preprojective algebra is concentrated in even cohomological degrees, so the Koszul sign rule will not appear for this set of examples.

We let $\mathfrak{n}^+_{M,(-,-),P}$ be the Lie subalgebra generated by $e_{m,j,l}$, for $m\in \Phi_{M,(-,-)}^+$, $j\in\BoZ$, $1\leq l\leq p_{m,j}$, $\mathfrak{n}^-_{M,(-,-),P}$ be the sub-Lie algebra generated by $f_{m,j,l}$, for $m\in \Phi_{M,(-,-)}^+$, $j\in\BoZ$, $1\leq l\leq p_{m,j}$, and $\mathfrak{h}_M$ be the sub-Lie algebra generated by $h_m$, $m\in M$. We have the triangular decomposition \cite{borcherds1988generalized}
\[
 \mathfrak{g}_{M,(-,-),P}=\mathfrak{n}^-_{M,(-,-),P}\oplus \mathfrak{h}_M\oplus \mathfrak{n}^+_{M,(-,-),P}.
\]
By considering the associative algebra with generators $e_{m,j,l}, f_{m,j,l}, h_m$ with the same relations, we obtain the enveloping algebra $\UEA(\mathfrak{g}_{M,(-,-),P})$ of $\mathfrak{g}_{M,(-,-),P}$. It has the triangular decomposition
\[
\UEA(\mathfrak{g}_{M,(-,-),P})\cong \UEA(\mathfrak{n}^-_{M,(-,-),P})\otimes \UEA(\mathfrak{h}_M)\otimes \UEA(\mathfrak{n}^+_{M,(-,-),P}).
\]

If the function $P$ is given by the graded dimensions of a family of $\BoZ$-graded vector spaces $V=(V_m)_{m\in\Phi_{M,(-,-)}^+}$ such that for $m\in\Phi_{M,(-,-)}^{+,\real}$, $V_m\cong\BoQ[0]=\BoQ e_{m,0,1}$ is a one-dimensional vector space in cohomological degree $0$, and $V_m=0$ for $m\notin \Phi^+_{M,(-,-)}$, the relations
\[
\begin{aligned}
\ad(e_{m,j,l})^{1-a_{m,n}}(e_{n,j',l'})&=0&\quad \text{ if $a_{m,m}=2$}\\
[e_{m,j,l},e_{n,j',l'}]&=0&\quad \text{ if $a_{m,n}=0$}
\end{aligned}
\]
for some graded bases $e_{m,j,l}$ of $V_m$ are called \emph{Serre relations associated to $V$}. We let $R_{m,n}$ be the subspace of the free associative algebra $\Free(V)$ generated by $V$, linearly generated by $\ad(e_{m,j,l})^{1-a_{m,n}}(e_{n,j',l'})$ ($j,j'\in\BoZ$, $1\leq l\leq p_{m,j}$, $1\leq l'\leq p_{n,j'}$) if $a_{m,m}=2$ and $[e_{m,j,l},e_{n,j',l'}]$ ($j,j'\in\BoZ$, $1\leq l\leq p_{m,j}$, $1\leq l'\leq p_{n,j'}$) if $a_{m,n}=0$. It is clear that $R_{m,n}$ does not depend on the choice of the graded bases of $V_m$ and $V_n$. If $I_{M,V,\Alg}$ is the two-sided ideal of $\Free(V)$ generated by $R_{m,n}$, $m,n\in\Phi_{(M,(-,-))}^+$, we have
\[
 \Free(V)/I_{M,V,\Alg}\cong \UEA(\mathfrak{n}^+_{M,(-,-),P}).
\]
Similarly, if $I_{M,V,\Lie}$ is the Lie ideal of $\Free_{\Lie}(V)$ generated by $R_{m,n}$, we have $ \Free_{\Lie}(V)/I_{M,V,\Lie}\cong\mathfrak{n}^+_{M,(-,-),P}$.  We define
\[
\Fn^+_{M,(-,-),V}\coloneqq \Free_{\Lie}(V)/I_{M,V,\Lie}.
\]
As in \S \ref{subsection:GKMtensorcategories}, one shows that $\UEA(\Fn^+_{M,(-,-),V})\cong \Free(V)/I_{M,V,\Alg}$.

\subsection{Lowest weight modules}
\label{subsection:lowestweightmodule}

Let $(M,(-,-))$ be a monoid with bilinear form. We assume that the natural map $M\rightarrow\widehat{M}$ is an embedding into a lattice. We let $P\colon\Phi^+_{M,(-,-)}\rightarrow\BoN[t^{\pm 1/2}]$ be a weight function. We define $\mathfrak{g}\coloneqq \mathfrak{g}_{M,(-,-),P}$ and $\mathfrak{b}^-\coloneqq \mathfrak{n}^-_{M,(-,-),P}\oplus\mathfrak{h}_M$. For $\mathbf{f}\in\Hom(\widehat{M},\BoZ)$, we define the \emph{Verma module} $V_{\mathfrak{g},\mathbf{f}}$ as follows:
\[
 V_{\Fg,\mathbf{f}}\coloneqq \UEA(\Fg)\otimes_{\UEA(\mathfrak{b}^-)}\BoC,
\]
where $\BoC$ is a $\UEA(\mathfrak{b}^-)$-module via the quotient $\UEA(\mathfrak{b}^-)\rightarrow\UEA(\mathfrak{h})$ and the action of $\mathfrak{h}$ on $\BoC$ is given by $h_m\cdot 1\coloneqq \mathbf{f}(m)$ for $m\in M$. We let $v\coloneqq 1\otimes 1\in V_{\mathfrak{g},\mathbf{f}}$. There is a unique maximal submodule of $V_{\mathfrak{g},\mathbf{f}}$ not containing $v$. We denote by $L_{\mathfrak{g},\mathbf{f}}$ (or $L_{\mathbf{f}}$ for short) the quotient of $V_{\mathfrak{g},\mathbf{f}}$ by this submodule. It is a simple $\mathfrak{g}$-module. It is called the \emph{simple lowest weight} $\mathfrak{g}$-module of weight $\mathbf{f}$.

\subsection{Cartan involution}\label{CI_ssec}
\label{subsection:Chevalleyinvolution}
The \emph{Cartan involution} of $\Fg_{M,(-,-),P}$ is the unique automorphism of Lie algebras $w\colon\Fg_{M,(-,-),P}\rightarrow \Fg_{M,(-,-),P}$ such that for any $m\in\Phi^+_{M,(-,-)}$, $j\in\BoZ$, $1\leq l\leq p_{m,j}$,
\[
 w(e_{m,j,l})=-f_{m,j,l};\quad\quad
 w(f_{m,j,l})=-e_{m,j,l};\quad\quad
 w(h_m)=-h_m.
\]
It is described for Kac--Moody algebras in \cite[\S1.3]{kac1990infinite}. For generalised Kac--Moody algebras, its definition appears in \cite[\S1]{borcherds1988generalized}.

\subsection{Generalised half Kac--Moody (Lie) algebras in tensor categories of mixed Hodge modules or perverse sheaves}
\label{subsection:GKMtensorcategories}
We let $(\CM,\oplus)$ be a commutative monoid in the category of complex schemes. We allow $\CM$ to have infinitely many connected components. We assume that each connected component of $\CM$ is a finite type separated scheme and that the sum $\oplus \colon \CM\times\CM\rightarrow\CM$ is a finite morphism. We let $\CM_0\subset \CM$ be the neutral component of $\CM$.  The formula $\SF \boxdot \SG \coloneqq \oplus_*(\SF \boxtimes \SG)$ defines symmetric monoidal structures on $\CD^+_{\rmc}(\CM)$ and $\D^+(\MHM(\CM))$, and by $t$-exactness of $\oplus_*$, also on $\Perv(\CM)$ and $\MHM(\CM)$. We let $T_{\SF,\SG}\colon \SF\boxdot\SG\xrightarrow{\cong}\SG\boxdot\SF$ be the natural isomorphisms for objects $\SF, \SG$ of $\CD^{\rmb}_\rmc(\CM)$ or $\CD^+(\MHM(\CM))$. See \cite{maxim2011symmetric} for details. In this section, we define (positive halves of) generalised Kac--Moody Lie algebras in the tensor category $\MHM(\CM)$. The same formulas provide a definition for generalised half Kac--Moody Lie algebras in $\Perv(\CM)$; we do not make them explicit (just apply the functor $\mathbf{rat}\colon\MHM(\CM)\rightarrow\Perv(\CM)$).

\subsubsection{Free algebras and free Lie algebras}
\label{subsubsection:freealgebras}
Let $\pi_0(\CM)$ be the monoid of connected components of $\CM$. We fix a morphism of monoids $\cl\colon \pi_0(\CM)\rightarrow M$ with finite fibres, where $M$ satisfies the condition that the natural map $M\rightarrow \widehat{M}$ is an injection into a lattice. For $m\in M$ we write $\CM_m\subset \CM$ for the union of connected components in $\cl^{-1}(m)$. Let $(-,-)\colon M\times M\rightarrow \BoZ$ be a bilinear form. The sets of primitive positive roots $\Sigma_{M,(-,-)}$ and simple positive roots $\Phi_{M,(-,-)}^+$ are defined in \S\ref{subsection:roots}. We let $A_{M,(-,-)}$ be the Cartan matrix, as defined in \S\ref{subsection:GKMmonoids}. We assume that it satisfies the assumptions of \S\ref{subsection:GKMmonoids}: $A_{M,(-,-)}$ is symmetric, $a_{m,m}\in 2\BoZ_{\leq 1}$ for all $m$, and $a_{m,n}\leq 0$ if $a_{m,m}=2$. We also assume that the fibres of the sum map $+\colon M\times M\rightarrow M$ are finite. This condition will ensure that the restriction to various connected components of $\CM$ of the constructions are actual complexes of mixed Hodge modules and not infinite direct sums of such (which would require some completion of the derived category of mixed Hodge modules).

We assume that $\CM_m\cong \pt$ for $m\in\Phi_{M,(-,-)}^{+,\real}$. Let $\underline{\SG}_m\in\MHM(\CM_m)$ for $m\in\Phi_{M,(-,-)}^+$. We assume that $\underline{\SG}_m=\underline{\BoQ}_{\CM_m}$ for $m\in\Phi_{M,(-,-)}^{+,\real}$. We let $\ul{\SG}=\bigoplus_{m\in\Phi^+_{M,(-,-)}}\ul{\SG}_m$ and consider
\[
\begin{aligned}
 \Free_{\boxdot\sAlg}(\ul{\SG})&\coloneqq \Free_{\boxdot\sAlg}\left(\bigoplus_{m\in\Phi_{M,(-,-)}^+}\ul{\SG}_m\right)\\ 
 &=\bigoplus_{r\geq 0}\bigoplus_{m_1,\hdots,m_r\in\Phi_{M,(-,-)}^+}\oplus_*(\ul{\SG}_{m_1}\boxtimes\hdots\boxtimes\ul{\SG}_{m_r})\\
           &=\bigoplus_{r\geq 0}\oplus_*(\ul{\SG}^{\boxtimes r}).
\end{aligned}
\]
The finiteness of $\oplus$ ensures that $\Free_{\boxdot\sAlg}(\ul{\SG})$ is a mixed Hodge module on $\CM$, i.e. the restriction to each connected component of $\CM$ is a genuine mixed Hodge module. The mixed Hodge module $\Free_{\boxdot\sAlg}(\ul{\SG})$ is an algebra object, with the multiplication $\mult\colon\Free_{\boxdot\sAlg}(\ul{\SG})\boxdot\Free_{\boxdot\sAlg}(\ul{\SG})\rightarrow\Free_{\boxdot\sAlg}(\ul{\SG})$ obtained via the canonical identifications
\[
 \oplus_*\!\left((\oplus_*\ul{\SG}^{\boxtimes r})\boxtimes(\oplus_*\ul{\SG}^{\boxtimes s})\right)\simeq \oplus_*\ul{\SG}^{\boxtimes (r+s)}.
\]
For $m\in\Phi_{M,(-,-)}^+$, we have by definition a natural monomorphism $\ul{\SG}_m\rightarrow \Free_{\boxdot\sAlg}(\ul{\SG})$. Moreover, the commutator $\mult-\mult\circ T_{\Free_{\boxdot\sAlg}(\ul{\SG}),\Free_{\boxdot\sAlg}(\ul{\SG})}$ gives $\Free_{\boxdot\sAlg}(\ul{\SG})$ a Lie algebra structure. We let $\Free_{\boxdot\sLie}(\ul{\SG})$ be the Lie subalgebra of $\Free_{\boxdot\sAlg}(\ul{\SG})$ generated by $\ul{\SG}$. We denote by $[-,-]$ its Lie bracket. It is referred to as \emph{the free Lie algebra generated by $\ul{\SG}$}.

For morphisms of mixed Hodge modules $\ul{\mathscr{F}}\rightarrow \Free_{\boxdot\sLie}(\ul{\SG})$ and $\ul{\mathscr{H}}\rightarrow \Free_{\boxdot\sLie}(\ul{\SG})$, we have a morphism $\ul{\SF}\boxtimes\ul{\SH}\rightarrow\Free_{\boxdot\sLie}(\ul{\SG})\boxtimes\Free_{\boxdot\sLie}(\ul{\SG})$ and hence, by applying $\oplus_*$, a morphism $\ul{\SF}\boxdot\ul{\SH}\rightarrow\Free_{\boxdot\sLie}(\ul{\SG})\boxdot\Free_{\boxdot\sLie}(\ul{\SG})$. We denote by $\ad(\ul{\SF})(\ul{\SH})\subset \Free_{\boxdot\sLie}(\ul{\SG})$ the image of the composition of this morphism with the Lie bracket $[-,-]$. We can also see $\ad(\ul{\SF})(\ul{\SH})$ as a subobject of $\Free_{\boxdot\sAlg}(\ul{\SG})$ thanks to the inclusion $\Free_{\boxdot\sLie}(\ul{\SG})\rightarrow\Free_{\boxdot\sAlg}(\ul{\SG})$. We can iterate this construction, therefore defining for any $k\geq 0$ the subobject $\ad(\ul{\SF})^k(\ul{\SH})\subset \Free_{\boxdot\sLie}(\ul{\SG})\subset \Free_{\boxdot\sAlg}(\ul{\SG})$.

\subsubsection{Serre relations and Serre ideal in the category of mixed Hodge modules}
\label{subsubsection:Serresrelationsandideal}
\begin{enumerate}
\item \label{item:rel1}For $m,n\in\Phi_{M,(-,-)}^+$ such that $a_{m,n}=0$, we set $\underline{\SR}_{m,n}=\ad(\underline{\SG}_m)(\underline{\SG}_n)\subset\Free_{\boxdot\sLie}(\underline{\SG})$,
\item \label{item:rel2}For $m,n\in\Phi_{M,(-,-)}^+$ such that $a_{m,m}=2$, we set $\underline{\SR}_{m,n}=\ad(\underline{\SG}_m)^{1-a_{m,n}}(\underline{\SG}_n)\subset \Free_{\boxdot\sLie}(\underline{\SG})$,
\item For all other $m,n\in\Phi_{M,(-,-)}^+$, we set $\underline{\SR}_{m,n}=0$.
\end{enumerate}
We note that if $a_{m,n}=0$ and $a_{m,m}=2$, both \eqref{item:rel1} and \eqref{item:rel2} give the same definition of $\underline{\SR}_{m,n}$.

We let $\CI_{\CM,\underline{\SG},\Alg}$ be the two-sided ideal of $\Free_{\boxdot\sAlg}(\underline{\SG})$ generated by $\underline{\SR}_{m,n}$, $m,n\in\Phi_{M,(-,-)}^+$. This is the two-sided ideal of $\Free_{\boxdot\sAlg}(\underline{\SG})$ defined as the image of the morphism
\[
\Free_{\boxdot\sAlg}(\underline{\SG})\boxdot \left(\left(\bigoplus_{m,n\in\Phi_{M,(-,-)}^+}\underline{\SR}_{m,n}\right)\boxdot \Free_{\boxdot\sAlg}(\underline{\SG})\right)\rightarrow\Free_{\boxdot\sAlg}(\underline{\SG})
\]
induced by $\mult\circ (\id\boxdot \mult)$. It is called the \emph{Serre ideal}.

We let $\CI_{\CM,\SG,\Lie}$ be the Lie ideal of $\Free_{\boxdot\sLie}(\underline{\SG})$ generated by $\underline{\SR}_{m,n}$, $m,n\in\Phi_{M,(-,-)}^+$. It is defined as the sum of the images of the morphisms
\[
 \left(\left(\bigoplus_{m,n\in\Phi_{M,(-,-)}^+}\underline{\SR}_{m,n}\right)\boxdot (\Free_{\boxdot\sLie}(\underline{\SG}))^{\boxdot t}\right)\rightarrow\Free_{\boxdot\sLie}(\underline{\SG}).
\]
induced by the iterated Lie bracket $[\mydots[-,-],-],\mydots ]$, for $t\geq 0$. It is also called the Serre (Lie) ideal, and the context will determine which of $\CI_{\CM,\SG,\Alg}$ and $\CI_{\CM,\SG,\Lie}$ is considered.

\subsubsection{Enveloping algebras}
\label{subsubsection:envelopingalgebra}
If $\underline{\mathscr{L}}$ is a Lie algebra object in $\MHM(\CM)$ with $\underline{\mathscr{L}}_{\CM_0}=0$, we let $\Free_{\boxdot\sAlg}(\underline{\mathscr{L}})$ be the free associative algebra generated by the underlying mixed Hodge module of $\underline{\mathscr{L}}$ defined in \S \ref{subsubsection:freealgebras}. We let $\UEA(\underline{\mathscr{L}})$ be the quotient of $\Free_{\boxdot\sAlg}(\underline{\mathscr{L}})$ by the two-sided ideal generated by the image of the morphism
\begin{equation}
\label{equation:envalgideal}
 C\coloneqq[-,-]\oplus (T_{\underline{\mathscr{L}},\underline{\mathscr{L}}}-\id_{\underline{\mathscr{L}}\boxdot\underline{\mathscr{L}}})\colon \underline{\mathscr{L}}\boxdot\underline{\mathscr{L}}\rightarrow \underline{\mathscr{L}}\oplus (\underline{\mathscr{L}}\boxdot\underline{\mathscr{L}})\subset \Free_{\boxdot\sAlg}(\underline{\mathscr{L}}).
\end{equation}
We have a canonical monomorphism of Lie algebra objects $\underline{\mathscr{L}}\rightarrow\UEA(\underline{\mathscr{L}})$ induced by the canonical inclusion $\underline{\mathscr{L}}\rightarrow\Free_{\boxdot\sAlg}(\underline{\mathscr{L}})$, considering the target as a Lie algebra via the commutator bracket.

\begin{proposition}[Universal property]
\label{proposition:univprop}
 For any associative algebra $\underline{\SA}$ in $(\MHM(\CM),\boxdot)$ and any morphism of Lie algebras $\underline{\mathscr{L}}\rightarrow\underline{\SA}$, there exists a unique morphism of algebras $\UEA(\underline{\mathscr{L}})\rightarrow\underline{\SA}$ such that the following diagram commutes:
 \[
 \begin{tikzcd}
	{\underline{\mathscr{L}}} & {\UEA(\underline{\mathscr{L}})} \\
	& \underline{\SA}
	\arrow[from=1-1, to=2-2]
	\arrow[from=1-2, to=2-2]
	\arrow[from=1-1, to=1-2]
\end{tikzcd}
 \]
\end{proposition}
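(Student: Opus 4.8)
The plan is to deduce the universal property directly from the presentation of $\UEA(\underline{\mathscr{L}})$ as a quotient of the free associative algebra, together with the (easier) universal property of the latter. First I would record that universal property of $\Free_{\boxdot\sAlg}$: for any associative algebra object $\underline{\SA}$ in $(\MHM(\CM),\boxdot)$ and any morphism of mixed Hodge modules $g\colon\underline{\SG}\to\underline{\SA}$, there is a unique morphism of unital algebras $\tilde g\colon\Free_{\boxdot\sAlg}(\underline{\SG})\to\underline{\SA}$ extending $g$. Using the explicit description $\Free_{\boxdot\sAlg}(\underline{\SG})=\bigoplus_{r\geq 0}\oplus_*(\underline{\SG}^{\boxtimes r})$ and the monoidality of $\oplus_*$, the map $\tilde g$ is forced to be the $r$-fold multiplication $\mult_{\underline{\SA}}^{(r)}\circ g^{\boxdot r}$ on the degree-$r$ summand (and the unit of $\underline{\SA}$ on the $r=0$ summand); the finiteness of the iterated sum maps assumed in \S\ref{subsubsection:freealgebras} guarantees this is a genuine morphism of mixed Hodge modules on each connected component of $\CM$, and a short diagram chase with the associativity of $\mult_{\underline{\SA}}$ shows it is the unique algebra morphism extending $g$.

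Next, given a morphism of Lie algebra objects $f\colon\underline{\mathscr{L}}\to\underline{\SA}$, where $\underline{\SA}$ carries its commutator bracket $[-,-]_{\underline{\SA}}=\mult_{\underline{\SA}}-\mult_{\underline{\SA}}\circ T_{\underline{\SA},\underline{\SA}}$, apply the previous step to the underlying morphism of mixed Hodge modules to obtain an algebra morphism $\phi\colon\Free_{\boxdot\sAlg}(\underline{\mathscr{L}})\to\underline{\SA}$. The heart of the argument is to check that $\phi$ kills the two-sided ideal defining $\UEA(\underline{\mathscr{L}})$, namely the one generated by the image of $C=[-,-]_{\underline{\mathscr{L}}}\oplus(T_{\underline{\mathscr{L}},\underline{\mathscr{L}}}-\id)\colon\underline{\mathscr{L}}\boxdot\underline{\mathscr{L}}\to\underline{\mathscr{L}}\oplus(\underline{\mathscr{L}}\boxdot\underline{\mathscr{L}})\subset\Free_{\boxdot\sAlg}(\underline{\mathscr{L}})$. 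Since $\phi$ restricts to $f$ on the degree-$1$ piece and to $\mult_{\underline{\SA}}\circ(f\boxdot f)$ on the degree-$2$ piece, we get $\phi\circ C=f\circ[-,-]_{\underline{\mathscr{L}}}+\mult_{\underline{\SA}}\circ(f\boxdot f)\circ(T_{\underline{\mathscr{L}},\underline{\mathscr{L}}}-\id)$; functoriality of the symmetry, $(f\boxdot f)\circ T_{\underline{\mathscr{L}},\underline{\mathscr{L}}}=T_{\underline{\SA},\underline{\SA}}\circ(f\boxdot f)$, rewrites the second term as $\mult_{\underline{\SA}}\circ(T_{\underline{\SA},\underline{\SA}}-\id)\circ(f\boxdot f)=-[-,-]_{\underline{\SA}}\circ(f\boxdot f)$, and the hypothesis that $f$ is a Lie morphism, $[-,-]_{\underline{\SA}}\circ(f\boxdot f)=f\circ[-,-]_{\underline{\mathscr{L}}}$, forces $\phi\circ C=0$. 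As the ideal generated by $\image(C)$ is, by the same recipe used for the Serre ideal in \S\ref{subsubsection:Serresrelationsandideal}, the image of $\mult\circ(\id\boxdot\mult)\colon\Free_{\boxdot\sAlg}(\underline{\mathscr{L}})\boxdot\bigl(\image(C)\boxdot\Free_{\boxdot\sAlg}(\underline{\mathscr{L}})\bigr)\to\Free_{\boxdot\sAlg}(\underline{\mathscr{L}})$, the fact that $\phi$ is an algebra morphism together with functoriality of $\boxdot$ and $\phi\circ C=0$ yields that $\phi$ annihilates the entire ideal. Hence $\phi$ factors through the quotient as an algebra morphism $\bar\phi\colon\UEA(\underline{\mathscr{L}})\to\underline{\SA}$, and the triangle in the statement commutes because the canonical map $\underline{\mathscr{L}}\to\UEA(\underline{\mathscr{L}})$ is the composite of $\underline{\mathscr{L}}\hookrightarrow\Free_{\boxdot\sAlg}(\underline{\mathscr{L}})$ with the quotient map, and $\phi$ already extends $f$.

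For uniqueness I would observe that $\UEA(\underline{\mathscr{L}})$ is generated as an algebra object by the image of $\underline{\mathscr{L}}$: the quotient map $\Free_{\boxdot\sAlg}(\underline{\mathscr{L}})\to\UEA(\underline{\mathscr{L}})$ is an epimorphism of algebras and $\Free_{\boxdot\sAlg}(\underline{\mathscr{L}})$ is generated by $\underline{\mathscr{L}}$ by construction; therefore any algebra morphism out of $\UEA(\underline{\mathscr{L}})$ is determined by its precomposition with $\underline{\mathscr{L}}\to\UEA(\underline{\mathscr{L}})$, which is prescribed to be $f$, so $\bar\phi$ is unique. I do not expect a genuine obstacle here: the argument is formal once the symmetric monoidal category $(\MHM(\CM),\boxdot)$ is in place. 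The only points requiring care are bookkeeping the Koszul signs encoded in the symmetry constraints $T$ (so that the commutator bracket really is a Lie bracket and $C$ cuts out the correct relation even when $\underline{\mathscr{L}}$ has summands in odd cohomological degree), and the repeated appeal to the finiteness of $+\colon M\times M\to M$ to ensure that the free objects, and the ideals inside them, are honest mixed Hodge modules on each connected component of $\CM$ rather than infinite direct sums; both were already arranged in \S\S\ref{subsubsection:freealgebras}--\ref{subsubsection:envelopingalgebra}.
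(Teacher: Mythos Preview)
Your proposal is correct and follows exactly the same route as the paper's proof: extend $f$ to an algebra morphism out of $\Free_{\boxdot\sAlg}(\underline{\mathscr{L}})$ by the universal property of the free algebra, verify that this morphism annihilates the image of $C$ (hence the ideal it generates), factor through the quotient, and deduce uniqueness from the fact that $\underline{\mathscr{L}}$ generates $\UEA(\underline{\mathscr{L}})$ as an algebra. You have simply written out in full the details the paper leaves implicit.
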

\begin{proof}
 The proof follows exactly the same steps as the proof of the analogous statement for Lie algebras and their enveloping algebras in the category of vector spaces. In brief, the morphism $\underline{\mathscr{L}}\rightarrow \underline{\SA}$ induces a morphism $\Free_{\boxdot\sAlg}(\underline{\mathscr{L}})\rightarrow \underline{\SA}$. The image of $C$ \eqref{equation:envalgideal} is clearly in the kernel of this morphism. We then obtain the factorisation $\UEA(\underline{\mathscr{L}})\rightarrow \underline{\SA}$ as in the proposition. The uniqueness is immediate, as $\underline{\mathscr{L}}$ generates the algebra object $\UEA(\underline{\mathscr{L}})$.
\end{proof}

\begin{proposition}[PBW isomorphism]
Let $\underline{\mathscr{L}}\in\MHM(\CM)$ be a Lie algebra object such that $\underline{\mathscr{L}}_{\CM_0}=0$. Then the morphism $\Sym_{\boxdot}(\underline{\mathscr{L}})\rightarrow \UEA(\underline{\mathscr{L}})$ induced by the multiplication is an isomorphism.
\end{proposition}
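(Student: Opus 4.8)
The plan is to run the classical Poincar\'e--Birkhoff--Witt argument internally to the symmetric monoidal category $(\MHM(\CM),\boxdot)$; all the structural inputs needed for the usual proof are available here. Indeed, $\MHM(\CM)$ is $\BoQ$-linear abelian, and $\boxdot=\oplus_*(-\boxtimes-)$ is exact in each variable (external product of mixed Hodge modules is exact in each variable, and $\oplus_*$ is $t$-exact since $\oplus$ is finite), so that the functors $\Free_{\boxdot\sAlg}$, $\Sym_{\boxdot}$ and $\UEA$ behave, on each connected component of $\CM$ (where only finitely many tensor powers of $\underline{\mathscr{L}}$ contribute, by the finiteness hypotheses of \S\ref{subsubsection:freealgebras}), exactly as their counterparts in graded vector spaces. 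Since the morphism in question is a morphism of $\pi_0(\CM)$-graded objects of $\MHM(\CM)$, it suffices to check that it is an isomorphism after restriction to each connected component.

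First I would equip $\UEA(\underline{\mathscr{L}})$ with its PBW filtration $F_\bullet$, taking $F_n$ to be the image of $\bigoplus_{0\le k\le n}\underline{\mathscr{L}}^{\boxdot k}$ under the quotient map from $\Free_{\boxdot\sAlg}(\underline{\mathscr{L}})$; these are genuine subobjects, $F_i\boxdot F_j$ lands in $F_{i+j}$, and --- using the defining relation of $\UEA(\underline{\mathscr{L}})$ (the image of $C$ from \eqref{equation:envalgideal}) --- the commutator of an element of $F_i$ with an element of $F_j$ lands in $F_{i+j-1}$. Hence the associated graded algebra object $\operatorname{gr}^F\UEA(\underline{\mathscr{L}})$ is commutative, so the surjection $\Free_{\boxdot\sAlg}(\underline{\mathscr{L}})\twoheadrightarrow\operatorname{gr}^F\UEA(\underline{\mathscr{L}})$ factors through $\Sym_{\boxdot}(\underline{\mathscr{L}})$, giving a surjection of graded algebra objects $\Sym_{\boxdot}(\underline{\mathscr{L}})\twoheadrightarrow\operatorname{gr}^F\UEA(\underline{\mathscr{L}})$. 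The PBW morphism of the proposition (the symmetrization map, where $\BoQ$-linearity is used) is filtered and induces precisely this surjection on associated gradeds, so the whole problem reduces to showing this last surjection is injective.

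The hard part will be producing the inverse, and this is the one place the Jacobi identity for $\underline{\mathscr{L}}$ enters. Following the Bourbaki/Dixmier route, I would construct a left $\UEA(\underline{\mathscr{L}})$-module structure on $\Sym_{\boxdot}(\underline{\mathscr{L}})$ in which an element of $\underline{\mathscr{L}}$ acts as ``multiplication in $\Sym_{\boxdot}$ plus a filtration-lowering correction through the bracket''. The nontrivial content is that this action descends from $\Free_{\boxdot\sAlg}(\underline{\mathscr{L}})$ through the relations defining $\UEA(\underline{\mathscr{L}})$, which, after expanding the formulas, comes down to the Jacobi identity and antisymmetry of $[-,-]$ together with bookkeeping of the symmetry constraints $T_{-,-}$. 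Every step of this verification is a commuting diagram of morphisms built solely from $\boxdot$, the $T_{-,-}$, the bracket and finite direct sums, so it transfers verbatim from the classical computation; the only genuine care needed is to track the signs contributed by $T_{-,-}$ in $\MHM(\CM)$. Granting the module structure, evaluation at the unit summand $\underline{\mathscr{L}}^{\boxdot 0}\subset\Sym_{\boxdot}(\underline{\mathscr{L}})$ produces a filtered morphism $\UEA(\underline{\mathscr{L}})\to\Sym_{\boxdot}(\underline{\mathscr{L}})$ which, on associated gradeds, splits the surjection above; therefore that surjection --- hence the original PBW morphism --- is an isomorphism. (As a byproduct one recovers that $\underline{\mathscr{L}}\to\UEA(\underline{\mathscr{L}})$ is a monomorphism.)

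Alternatively, one could sidestep the explicit module construction by invoking the fact that PBW holds in any $\BoQ$-linear symmetric monoidal abelian category with exact tensor product commuting with the relevant colimits, of which $(\MHM(\CM),\boxdot)$ is an instance; the statement in $\Perv(\CM)$ then follows by applying $\rat$. I would choose whichever of these two presentations is shorter given the conventions already set up in \S\ref{subsection:GKMtensorcategories}.
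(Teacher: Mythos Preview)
Your proposal is correct and follows essentially the same approach as the paper: define the PBW filtration on $\UEA(\underline{\mathscr{L}})$ by images of $\bigoplus_{m\le n}\underline{\mathscr{L}}^{\boxdot m}$, identify the associated graded with $\Sym_{\boxdot}(\underline{\mathscr{L}})$, and transport the classical argument to the $\BoQ$-linear symmetric monoidal abelian category $(\MHM(\CM),\boxdot)$. The paper's proof is just a two-sentence sketch to this effect, whereas you have spelled out the Bourbaki/Dixmier module construction and the categorical justification in more detail.
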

\begin{proof}
 The proof is identical to that for ordinary Lie algebras. Namely, we can define a filtration of $\UEA(\underline{\mathscr{L}})$ by letting $\UEA(\underline{\mathscr{L}})_n$ be the image under the canonical surjection $\Free_{\boxdot\sAlg}(\underline{\mathscr{L}})\rightarrow\UEA(\underline{\mathscr{L}})$ of the sub-mixed Hodge module $\bigoplus_{m\leq n}\underline{\mathscr{L}}^{\boxdot m}$. The corresponding associated graded algebra object is isomorphic to $\Sym_{\boxdot}(\underline{\mathscr{L}})$.
\end{proof}

We now give a formal proposition whose proof follows from the universal property of the universal enveloping algebra.

\begin{proposition}
\label{proposition:comparisonLiealggeneral}
Let $\underline{\SG}$ be an object of the symmetric monoidal category $\MHM(\CM)$ such that $\underline{\SG}_{\CM_0}=0$.
Let $\Free_{\boxdot\sLie}(\underline{\SG})$ be the free Lie algebra generated by $\underline{\SG}$ and $\underline{\SR}\subset \Free_{\boxdot\sLie}(\underline{\SG})$ be a suboject. Consider the Lie ideal $\CI_{\underline{\SR},\Lie} \subset\Free_{\boxdot\sLie}(\underline{\SG})$ generated by $\underline{\SR}$. Consider also the two-sided ideal $\CI_{\underline{\SR},\Alg} \subset \Free_{\boxdot\sAlg}(\underline{\SG})$ generated by $\underline{\SR}$. 
Then the universal enveloping algebra $\UEA(\Free_{\boxdot\sLie}(\underline{\SG})/\CI_{\underline{\SR},\Lie})$ is naturally isomorphic to the algebra $\Free_{\boxdot\sAlg}(\underline{\SG})/\CI_{\underline{\SR},\Alg}$.
Moreover, the Lie subalgebra $\underline{\mathscr{L}} \subset \UEA(\Free_{\boxdot\sLie}(\underline{\SG})/\CI_{\underline{\SR},\Lie})$ generated by $\underline{\SG} \subset \UEA(\Free_{\boxdot\sLie}(\underline{\SG})/\CI_{\underline{\SR},\Lie})$ coincides with $\Free_{\boxdot\sLie}(\underline{\SG})/\CI_{\underline{\SR},\Lie}$.
\end{proposition}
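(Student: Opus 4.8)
The plan is to argue purely formally, exactly as one does for ordinary Lie algebras and their enveloping algebras, transporting the argument to the abelian symmetric monoidal category $(\MHM(\CM),\boxdot)$. The facts I would use are: $\boxdot=\oplus_*(-\boxtimes-)$ is exact in each variable (since $\oplus$ is finite, $\oplus_*$ is exact, and $\boxtimes$ is exact), and in particular preserves monomorphisms; the kernel of a morphism of algebra (resp.\ Lie algebra) objects is a two-sided (resp.\ Lie) ideal; and the two-sided (resp.\ Lie) ideal generated by a subobject $\underline{\SR}$, as built in \S\ref{subsubsection:Serresrelationsandideal}, is the smallest such ideal containing $\underline{\SR}$. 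The latter point has two halves: $\underline{\SR}$ is a subobject of the ideal it generates (read off the defining morphisms of \S\ref{subsubsection:Serresrelationsandideal} by taking the free tensor factors to be the unit), and a homomorphism annihilating $\underline{\SR}$ annihilates the generated ideal (its kernel is an ideal containing $\underline{\SR}$, and since $\boxdot$ preserves monos one can push this containment through the defining morphisms of the ideal).

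Granting these, the first isomorphism follows from Yoneda. For an associative algebra object $\underline{\SA}$ in $(\MHM(\CM),\boxdot)$, on the one hand $\Hom_{\Alg}(\Free_{\boxdot\sAlg}(\underline{\SG})/\CI_{\underline{\SR},\Alg},\underline{\SA})$ is, by the universal property of $\Free_{\boxdot\sAlg}$ together with the ideal discussion, the set of morphisms $f\colon\underline{\SG}\to\underline{\SA}$ whose induced algebra morphism $\tilde f\colon\Free_{\boxdot\sAlg}(\underline{\SG})\to\underline{\SA}$ kills $\underline{\SR}$. On the other hand, Proposition~\ref{proposition:univprop} identifies $\Hom_{\Alg}(\UEA(\Free_{\boxdot\sLie}(\underline{\SG})/\CI_{\underline{\SR},\Lie}),\underline{\SA})$ with $\Hom_{\Lie}(\Free_{\boxdot\sLie}(\underline{\SG})/\CI_{\underline{\SR},\Lie},\underline{\SA})$, which by the ideal discussion and the universal property of $\Free_{\boxdot\sLie}$ is the set of $f\colon\underline{\SG}\to\underline{\SA}$ whose unique Lie extension $\Free_{\boxdot\sLie}(\underline{\SG})\to\underline{\SA}$ kills $\underline{\SR}$. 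The observation that closes the loop is that for fixed $f$ this Lie extension is precisely the restriction of $\tilde f$ to the free Lie subalgebra $\Free_{\boxdot\sLie}(\underline{\SG})\subseteq\Free_{\boxdot\sAlg}(\underline{\SG})$ — both are Lie morphisms extending $f$, and $\tilde f$ restricted there is a Lie morphism because $\tilde f$ is multiplicative — so ``kills $\underline{\SR}$'' is literally the same condition. Both $\Hom$-functors are therefore naturally isomorphic to $\underline{\SA}\mapsto\{f\colon\underline{\SG}\to\underline{\SA}\mid \tilde f|_{\underline{\SR}}=0\}$, and Yoneda gives the natural algebra isomorphism $\UEA(\Free_{\boxdot\sLie}(\underline{\SG})/\CI_{\underline{\SR},\Lie})\xrightarrow{\ \cong\ }\Free_{\boxdot\sAlg}(\underline{\SG})/\CI_{\underline{\SR},\Alg}$.

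For the last assertion, set $\underline{\mathscr{L}}'\coloneqq\Free_{\boxdot\sLie}(\underline{\SG})/\CI_{\underline{\SR},\Lie}$. The canonical monomorphism $\underline{\mathscr{L}}'\hookrightarrow\UEA(\underline{\mathscr{L}}')$ of \S\ref{subsubsection:envelopingalgebra} (injective thanks to the PBW isomorphism) exhibits $\underline{\mathscr{L}}'$ as a Lie subalgebra object of $\UEA(\underline{\mathscr{L}}')$ containing the image of $\underline{\SG}$; hence the Lie subalgebra generated by $\underline{\SG}$ is contained in $\underline{\mathscr{L}}'$. Conversely, $\Free_{\boxdot\sLie}(\underline{\SG})$ is by construction generated by $\underline{\SG}$, and generation passes to quotients, so $\underline{\mathscr{L}}'$ is generated by the image of $\underline{\SG}$ and thus contained in the Lie subalgebra it generates; equality follows.

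I do not expect a genuine obstacle: the only non-formal inputs are that $(\MHM(\CM),\boxdot)$ supports all these constructions — existence of free algebra and free Lie algebra objects, exactness of $\boxdot$, and the PBW isomorphism — all of which is established in \S\ref{subsubsection:freealgebras}--\S\ref{subsubsection:envelopingalgebra}. The one place to be careful is keeping the Lie-ideal and associative-ideal versions of ``the ideal generated by $\underline{\SR}$'' straight and checking that restricting an algebra homomorphism to the free Lie subalgebra behaves as expected; beyond that the argument is the classical one verbatim, which is presumably why the statement is advertised as ``formal''.
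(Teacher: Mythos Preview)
Your proof is correct and follows essentially the same approach as the paper's: both arguments reduce to the universal properties of the free associative algebra, the free Lie algebra, and the enveloping algebra (Proposition~\ref{proposition:univprop}), together with the observation that $\CI_{\underline{\SR},\Lie}\subset\CI_{\underline{\SR},\Alg}$ and $\underline{\SR}\subset\CI_{\underline{\SR},\Lie}$. The only cosmetic difference is that the paper constructs the two algebra morphisms $\UEA(\underline{\Fn})\leftrightarrows\Free_{\boxdot\sAlg}(\underline{\SG})/\CI_{\underline{\SR},\Alg}$ explicitly and checks they are mutually inverse on the generating subobject $\underline{\SG}$, whereas you package the same content as a natural isomorphism of represented functors and invoke Yoneda; your treatment of the second assertion is also a bit more detailed than the paper's one-line remark that both Lie algebras are generated by $\underline{\SG}$.
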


\begin{proof}
Let $\underline{\SA}=\Free_{\boxdot\sAlg}(\underline{\SG})/\CI_{\underline{\SR},\Alg}$, $\underline{\Fn} = \Free_{\boxdot\sLie}(\underline{\SG})/\CI_{\underline{\SR},\Lie}$ and $\underline{\mathscr{B}} = \UEA(\underline{\Fn})$. Since $\CI_{\underline{\SR},\Lie} \subset \CI_{\underline{\SR},\Alg}$ we have a morphism of Lie algebras $\underline{\Fn} \to \underline{\SA}$. By Proposition~\ref{proposition:univprop}, it extends uniquely to a morphism of algebras $\underline{\mathscr{B}}\rightarrow \underline{\SA}$. Conversely, we have a morphism of algebras $\Free_{\boxdot\sAlg}(\SG)\rightarrow \underline{\mathscr{B}}$ induced by the natural map $\underline{\SG}\rightarrow \underline{\mathscr{B}}$. The generating subobject $\underline{\SR}$ of the ideal $\CI_{\underline{\SR},\Alg}$ is sent to zero by this morphism since $\underline{\SR} \subset \CI_{\underline{\SR},\Lie}$. We then obtain the morphism $\underline{\SA}\rightarrow \underline{\mathscr{B}}$ by quotienting the source by $\CI_{\underline{\SR},\Alg}$. The morphisms $\underline{\SA}\rightarrow \underline{\mathscr{B}}$ and $\underline{\mathscr{B}}\rightarrow \underline{\SA}$ are inverse of each other (as their composition is the identity on the generating subobject $\underline{\SG}$). 

The second part of the statement follows from the first since both Lie algebras are generated by $\underline{\SG}$.
\end{proof}

\begin{definition}[Positive part of the enveloping algebra and generalised Kac--Moody Lie algebra]
For $\CM$ and $\underline{\SG}$ as in \S\ref{subsubsection:freealgebras}, we define $\mathfrak{n}^+_{\CM,(-,-),\underline{\SG}}\coloneqq \Free_{\boxdot\sLie}(\underline{\SG})/\CI_{\CM,\underline{\SG},\Lie}$. It has an induced Lie algebra structure in $\MHM(\CM)$. We let $\UEA(\mathfrak{n}^+_{\CM,(-,-),\underline{\SG}})$ be its universal enveloping algebra in $\MHM(\CM)$.
\end{definition}

The MHM $\UEA(\mathfrak{n}^+_{\CM,(-,-),\underline{\SG}})$ is an algebra object in $\MHM(\CM)$ and it is isomorphic to $\Free_{\boxdot\sAlg}(\underline{\SG})/\CI_{\CM,\underline{\SG},\Alg}$ by Proposition~\ref{proposition:comparisonLiealggeneral}. By the same proposition, $\Fn^+_{\CM,(-,-),\underline{\SG}}$ is the Lie subalgebra object in $\UEA(\mathfrak{n}^+_{\CM,(-,-),\underline{\SG}})$ generated by $\underline{\SG}$.

\subsection{Global sections and restriction of generalised Kac--Moody algebras}
\label{subsection:globsectGKM}

Recall the following elementary lemma, an analogue of Proposition~\ref{proposition:comparisonLiealggeneral}.

\begin{lemma}
\label{lemma:envelopingLiealg}
 Let $A\coloneqq F/I$ be an associative algebra described as the quotient of the free associative algebra $F$ on a set of generators $E$ modulo a two-sided ideal $I\subset F$ generated by a subspace $V\subset F$. We let $\overline{E}$ be the image of $E$ in $A$. We let $\mathfrak{g}$ be the Lie subalgebra of $A$ generated by $\overline{E}$. We assume that $V\subset \Free_{\Lie}(E)\subset F$. Then, there is a canonical isomorphism $A\cong \UEA(\mathfrak{g})$. 
\end{lemma}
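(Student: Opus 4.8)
The plan is to follow the proof of Proposition~\ref{proposition:comparisonLiealggeneral} almost verbatim, replacing the tensor category $\MHM(\CM)$ by the category of vector spaces: I would construct mutually inverse algebra homomorphisms $\alpha\colon A\to\UEA(\mathfrak{g})$ and $\beta\colon\UEA(\mathfrak{g})\to A$ purely from universal properties, so that no PBW input is needed beyond the standard identification of $\Free_{\Lie}(E)$ with the Lie subalgebra of $F$ generated by $E$.

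The map $\beta$ is immediate: $\mathfrak{g}$ is by construction a Lie subalgebra of $A$ for the commutator bracket, so the inclusion $\mathfrak{g}\hookrightarrow A$ is a morphism of Lie algebras into an associative algebra, and the universal property of the enveloping algebra (Proposition~\ref{proposition:univprop}, or its classical counterpart) extends it uniquely to an algebra morphism $\beta\colon\UEA(\mathfrak{g})\to A$. For $\alpha$, I would first use the universal property of the free associative algebra $F$ on $E$ to obtain an algebra morphism $\tilde\alpha\colon F\to\UEA(\mathfrak{g})$ sending each $e\in E$ to the image of $\overline e\in\mathfrak{g}$ under the canonical inclusion $\mathfrak{g}\hookrightarrow\UEA(\mathfrak{g})$. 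The key point is that $\tilde\alpha$ annihilates $V$, hence the two-sided ideal $I$ it generates, so that $\tilde\alpha$ descends to $\alpha\colon A\to\UEA(\mathfrak{g})$. To see this, note that the restriction of $\tilde\alpha$ to $\Free_{\Lie}(E)\subset F$ is a Lie algebra morphism (any algebra morphism respects commutators, and the bracket on $\Free_{\Lie}(E)$ is the commutator bracket of $F$); on the generators $E$ it agrees with the composite $\Free_{\Lie}(E)\twoheadrightarrow\mathfrak{g}\hookrightarrow\UEA(\mathfrak{g})$, where the first arrow is the Lie morphism furnished by the universal property of the free Lie algebra applied to $E\to\overline E$; since a Lie morphism out of $\Free_{\Lie}(E)$ is determined by the images of $E$, the two coincide. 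As $V\subset\Free_{\Lie}(E)$ and $V$ already dies in $\mathfrak{g}$ (being contained in $I$, it maps to $0$ in $A$), it follows that $\tilde\alpha(V)=0$, as desired.

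It then remains to check $\beta\alpha=\id_A$ and $\alpha\beta=\id_{\UEA(\mathfrak{g})}$, which is a formal diagram chase. Both $\beta\alpha$ and $\id_A$ are algebra endomorphisms of $A$ fixing the algebra generators $\overline E$, hence equal; and the restriction of $\alpha\beta$ to $\mathfrak{g}\subset\UEA(\mathfrak{g})$ is a Lie morphism agreeing with the canonical inclusion on the Lie generators $\overline E$, hence equal to it, so that $\alpha\beta=\id$ because $\mathfrak{g}$ generates $\UEA(\mathfrak{g})$ as an algebra. The resulting isomorphism is visibly canonical, i.e.\ compatible with $E\mapsto\overline E$.

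The one step requiring care is the vanishing $\tilde\alpha(V)=0$: this is precisely where the hypothesis $V\subset\Free_{\Lie}(E)$ is used, together with the (standard) fact that the Lie subalgebra of the free associative algebra generated by a set $E$ is the free Lie algebra on $E$, which legitimises comparing $\tilde\alpha|_{\Free_{\Lie}(E)}$ with the map coming from the universal property of the free Lie algebra. Everything else is exactly parallel to Proposition~\ref{proposition:comparisonLiealggeneral}, so I would expect the write-up to be short.
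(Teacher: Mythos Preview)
Your proposal is correct and matches the paper's approach exactly: the paper does not give an explicit proof of this lemma, but simply remarks that it is the analogue of Proposition~\ref{proposition:comparisonLiealggeneral} for genuine Lie algebras in the category of vector spaces, which is precisely the argument you have spelled out.
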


\begin{proposition}
We let $\cl\colon \pi_0(\CM)\rightarrow M$ be a morphism of monoids, where $M$ is finitely generated, commutative and cancellative and set
\[
 \begin{matrix}
  P\colon& \Phi_{M,(-,-)}^+&\rightarrow&\BoN[t^{\pm 1/2}]&\\
    &   m&\mapsto  &\ch^{\BoZ}(\HO^*(\SG_m))&\coloneqq\sum_{j\in\BoZ}\dim\HO^j(\SG_m)t^{j/2}.
 \end{matrix}
 \]
\begin{enumerate}
 \item The derived global sections $\HO^*(\UEA(\mathfrak{n}^+_{\CM,(-,-),\SG}))$ is an algebra, isomorphic to the positive part $\UEA(\mathfrak{n}^+_{M,(-,-),P})$ of the enveloping algebra $\UEA(\mathfrak{g}_{M,(-,-),P})$.
 \item The derived global sections $\HO^*(\mathfrak{n}^+_{\CM,(-,-),\SG})$ is a Lie algebra, isomorphic to the positive part $\mathfrak{n}^+_{M,(-,-),P}$ of the generalised Kac--Moody Lie algebra $\mathfrak{g}_{M,(-,-),P}$.
\end{enumerate}
\end{proposition}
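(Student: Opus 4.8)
The plan is to show that the functor of derived global sections $\HO^*(\CM,-)\colon\CD^+(\MHM(\CM))\to\CD^+(\MHM(\pt))$, followed by the forgetful functor to $\BoZ$-graded vector spaces, is a \emph{strong symmetric monoidal} functor (with the Koszul-signed symmetry on the target) which, restricted to the objects built in \S\ref{subsection:GKMtensorcategories}, is moreover exact and commutes with the relevant direct sums; granting this, the statement becomes a formal consequence of the constructions of \S\ref{subsection:GKMmonoids} and \S\ref{subsection:GKMtensorcategories}. For the monoidality I would argue as follows: writing $a_X\colon X\to\pt$ for the structure morphism and $\oplus^{(r)}\colon\CM^{r}\to\CM$ for the $r$-fold sum, the identity $a_{\CM}\circ\oplus^{(r)}=a_{\CM^{r}}$ gives $\HO^*(\CM,\oplus^{(r)}_*\SK)=\HO^*(\CM^{r},\SK)$, and the Künneth isomorphism identifies $\HO^*(\CM^{r},\SF_{1}\boxtimes\cdots\boxtimes\SF_{r})$ with $\HO^*(\SF_{1})\otimes\cdots\otimes\HO^*(\SF_{r})$. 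One then checks, as in \cite{maxim2011symmetric}, that this identification intertwines the symmetry constraint $T$ of $(\CD^+(\MHM(\CM)),\boxdot)$ with the Koszul-signed swap of graded vector spaces. Since the $\underline{\SG}_m$ may have odd cohomology --- e.g.\ the odd intersection cohomology of the surfaces $\CM_{\CA,m'}$ in the isotropic case --- keeping these signs straight is precisely what will match the output with the sign conventions built into the relations of \S\ref{subsection:GKMmonoids} (cf.\ the identity $[\alpha,\beta]+(-1)^{ab}[\beta,\alpha]=0$ recorded there).

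Second, I would observe that every mixed Hodge module entering \S\ref{subsection:GKMtensorcategories} is \emph{semisimple}: each $\underline{\SG}_m$ is $\underline{\IC}(\CM_m)$, a pushforward of an intersection complex along the finite morphism $u_m$, or the constant sheaf on a point; hence each external power $\underline{\SG}_{m_1}\boxtimes\cdots\boxtimes\underline{\SG}_{m_r}$ is semisimple and pure, and since $\oplus^{(r)}$ is finite (using that $\oplus$ and $+$ are finite) the decomposition theorem makes $\oplus^{(r)}_*(\underline{\SG}^{\boxtimes r})$ semisimple. Thus $\Free_{\boxdot\sAlg}(\underline{\SG})=\bigoplus_{r\geq 0}\oplus^{(r)}_*(\underline{\SG}^{\boxtimes r})$ is a locally finite direct sum of semisimple mixed Hodge modules, and so are its subobjects $\underline{\SR}_{m,n}$ ($m,n\in\Phi_{M,(-,-)}^+$), the Serre ideals $\CI_{\CM,\underline{\SG},\Alg}$ and $\CI_{\CM,\underline{\SG},\Lie}$, and the quotients $\mathfrak{n}^+_{\CM,(-,-),\underline{\SG}}$ and $\UEA(\mathfrak{n}^+_{\CM,(-,-),\underline{\SG}})$. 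Since any short exact sequence with semisimple terms splits, $\HO^*(\CM,-)$ carries the exact sequences among these objects to (split) exact sequences of $M\times\BoZ$-graded vector spaces; in particular it preserves images and cokernels of morphisms between these objects and commutes with the direct sums in play.

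With these two facts the rest is bookkeeping. Setting $V\coloneqq\HO^*(\SG)=\bigoplus_{m}V_m$ with $V_m=\HO^*(\SG_m)$, the monoidality and direct-sum-compatibility give an algebra isomorphism $\HO^*(\Free_{\boxdot\sAlg}(\underline{\SG}))\cong\Free(V)$; passing to the Lie subalgebra generated by $\underline{\SG}$ --- which is a direct summand, by semisimplicity or by the $\boxdot$-PBW isomorphism of \S\ref{subsection:GKMtensorcategories} --- gives $\HO^*(\Free_{\boxdot\sLie}(\underline{\SG}))\cong\Free_{\Lie}(V)$ as Lie algebras for the Koszul bracket. As $\underline{\SR}_{m,n}$ is by construction the image of an iterated-bracket morphism (\S\ref{subsubsection:Serresrelationsandideal}) and $\HO^*(\CM,-)$ sends the bracket of $\Free_{\boxdot\sLie}(\underline{\SG})$ to the Koszul-signed commutator of $\Free_{\Lie}(V)$, we obtain $\HO^*(\underline{\SR}_{m,n})=R_{m,n}$, the Serre relation spaces of \S\ref{subsection:GKMmonoids} attached to $V$; since $\HO^*(\CM,-)$ preserves the images defining the Serre ideals, $\HO^*(\CI_{\CM,\underline{\SG},\Lie})=I_{M,V,\Lie}$ and $\HO^*(\CI_{\CM,\underline{\SG},\Alg})=I_{M,V,\Alg}$. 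Hence
\[
\HO^*\!\big(\mathfrak{n}^+_{\CM,(-,-),\underline{\SG}}\big)\cong\Free_{\Lie}(V)/I_{M,V,\Lie}=\mathfrak{n}^+_{M,(-,-),V}=\mathfrak{n}^+_{M,(-,-),P},
\]
the last equality because $P=\ch^{\BoZ}(\HO^*(\SG_{\bullet}))$ and $\mathfrak{n}^+_{M,(-,-),V}$ depends only on the graded dimensions of the $V_m$; this proves (2). For (1) one applies $\HO^*(\CM,-)$ to $\UEA(\mathfrak{n}^+_{\CM,(-,-),\underline{\SG}})\cong\Free_{\boxdot\sAlg}(\underline{\SG})/\CI_{\CM,\underline{\SG},\Alg}$ (Proposition~\ref{proposition:comparisonLiealggeneral}) and compares with $\UEA(\mathfrak{n}^+_{M,(-,-),P})\cong\Free(V)/I_{M,V,\Alg}$ from \S\ref{subsection:GKMmonoids}; equivalently, (1) follows from (2) together with Lemma~\ref{lemma:envelopingLiealg} and compatibility of $\HO^*(\CM,-)$ with the $\UEA$ construction.

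The genuinely non-formal ingredients are (i) strong symmetric monoidality of $\HO^*(\CM,-)$ and, above all, the compatibility of its symmetry constraint with the Koszul sign rule --- this is what forces the output onto the \emph{signed} Serre relations of \S\ref{subsection:GKMmonoids} --- and (ii) the exactness of $\HO^*(\CM,-)$ on the objects at hand, which is false for derived global sections in general and which I would derive from semisimplicity, i.e.\ from the decomposition theorem applied to the finite morphism $\oplus$. I expect (ii) to be the main obstacle to phrase cleanly; the rest is formal manipulation inside a symmetric monoidal category.
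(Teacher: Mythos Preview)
Your approach is essentially the same as the paper's: K\"unneth gives strong symmetric monoidality of $\HO^*$, hence $\HO^*(\Free_{\boxdot\sAlg}(\underline{\SG}))\cong\Free(\HO^*(\underline{\SG}))$; one then identifies $\HO^*(\underline{\SR}_{m,n})$ with the classical $R_{m,n}$ (using crucially that $\CM_m=\pt$ and $\underline{\SG}_m=\underline{\BoQ}_{\pt}$ for real $m$), and finishes with Lemma~\ref{lemma:envelopingLiealg}. The paper proves (1) first and deduces (2); you go in the other direction, but this is immaterial.

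There is one point on which you are more careful than the paper, yet your justification overshoots the stated hypotheses. You correctly flag that passing from $\Free_{\boxdot\sAlg}(\underline{\SG})/\CI_{\CM,\underline{\SG},\Alg}$ to $\Free(\HO^*(\underline{\SG}))/\HO^*(\CI_{\CM,\underline{\SG},\Alg})$ requires $\HO^*$ to be exact on the relevant short exact sequence in $\MHM(\CM)$, and you propose to deduce this from semisimplicity via the decomposition theorem. However, at the level of generality of \S\ref{subsection:GKMtensorcategories} (which is where this proposition lives), the $\underline{\SG}_m$ are \emph{arbitrary} mixed Hodge modules on $\CM_m$, subject only to $\underline{\SG}_m=\underline{\BoQ}_{\pt}$ when $m$ is real; they are not assumed to be intersection complexes or pure. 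Your sentence ``each $\underline{\SG}_m$ is $\underline{\IC}(\CM_m)$, a pushforward of an intersection complex along the finite morphism $u_m$, or the constant sheaf on a point'' imports the specific choice made later in \S\ref{subsection:generators}, which is not available here. So as written your exactness step is a gap for the general proposition, though it is exactly right for every instance the paper actually uses. The paper's own proof simply asserts the quotient identity and does not address exactness explicitly; your observation that something like purity or semisimplicity is doing work here is a genuine clarification, just misattributed to the general hypotheses.
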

\begin{proof}
 By the K\"unneth formula, $\HO^*$ is a monoidal functor. Therefore, $\HO^*(\mathfrak{n}^+_{\CM,(-,-),\SG})$ is the Lie subalgebra of $\HO^*(\UEA(\mathfrak{n}^+_{\CM,(-,-),\SG}))$ generated by $\HO^*(\SG_m)$, $m\in\Phi_{M,(-,-)}^+$. Moreover,
 \[
 \HO^*(\UEA(\mathfrak{n}^+_{\CM,(-,-),\SG}))=\Free\left(\bigoplus_{m\in\Phi_{M,(-,-)}^+}\HO^*(\SG_m)\right)/\HO^*(\CI_{\CM,\SG,\Alg}) 
 \]
and $\HO^*(\CI_{\CM,\SG,\Alg})$ is the two-sided ideal of $\Free\left(\bigoplus_{m\in\Phi_{M,(-,-)}^+}\HO^*(\SG_m)\right)$ generated by $\HO^*(\SR_{m,n})$ for $m,n\in\Phi_{M,(-,-)}^+$. By the K\"unneth formula again, using $\CM_m=\pt$ and $\SG_m=\underline{\BoQ}_{\pt}$ if $m\in\Phi_{M,(-,-)}^{+,\real}$, $\HO^*(\SR_{m,n})$ is exactly the set of relations $R_{m,n}$ associated to the $\BoZ$-graded vector spaces $\HO^*(\SG_m)$, $m\in\Phi^+_{M,(-,-)}$ (see the end of \S\ref{subsection:GKMmonoids}). This proves (1). Since $R_{m,n}\subset \Free_{\Lie}(\bigoplus_{m\in\Phi_{M,(-,-)}^+}\HO^*(\SG_m))$, (2) follows thanks to Lemma~\ref{lemma:envelopingLiealg}.
\end{proof}

\begin{proposition}
\label{proposition:discreterestriction}
Assume that $M=\pi_0(\CM)$ (so that $\pi_0(\CM)$ is a finitely generated and cancellative monoid). We assume also that there is a saturated inclusion of monoids $\imath\colon\pi_0(\CM)\rightarrow \CM$, right inverse to the projection $p\colon\CM\rightarrow\pi_0(\CM)$ (sending all points of $\CM_m$ to $m$ for $m\in\pi_0(\CM)$), and such that for any $m\in\pi_0(\CM)$, there is an action of $\BoC^*$ on $\CM_m$ such that $\iota(m)$ is the unique fixed point and the action contracts $\CM_m$ onto $\iota(m)$.
We set
\[
 \begin{matrix}
 P\colon& \Phi_{M,(-,-)}^+&\rightarrow &\BoN[t^{\pm 1/2}]&\\
     &m   &\mapsto   &\dim \HO^*(\BD\SG_m)^*&=\sum_{j\in\BoZ}\dim\HO^j(\BD\SG_m)t^{-j/2}. 
 \end{matrix}
\]
\begin{enumerate}
\item The algebra $\imath^!\UEA(\mathfrak{n}^+_{\CM,(-,-),\SG})$ is isomorphic to the universal enveloping algebra $\UEA(\mathfrak{n}^+_{\pi_0(\CM),(-,-),P})$,
 \item The Lie algebra $\imath^!\mathfrak{n}^+_{\CM,(-,-),\SG}$ is isomorphic to the positive part $\mathfrak{n}^+_{\pi_0(\CM),(-,-),P}$ of the generalised Kac--Moody Lie algebra $\mathfrak{g}_{\pi_0(\CM),(-,-),P}$.
\end{enumerate}
\end{proposition}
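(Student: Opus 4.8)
The plan is to run the argument of the preceding proposition with the derived global sections functor $\HO^*(\CM,-)$ replaced by $\mathsf{R}\coloneqq\HO^*(\imath^!(-))$: here $\imath^!$ lands in $\CD^+(\MHM(\pi_0(\CM)))$ and $\HO^*$ takes cohomology componentwise, so $\mathsf{R}$ takes values in $\pi_0(\CM)\times\BoZ$-graded (mixed Hodge) vector spaces. Two ingredients are genuinely new: the monoidality of $\imath^!$, and the identification of the complexes $\imath^!\SG_m$ in terms of the contracting $\BoC^*$-actions; the rest --- the passage from $\Free_{\boxdot\sAlg}(\SG)$ and its Serre ideal to the free algebra and Serre ideal on graded vector spaces, and the deduction of (2) from (1) via Lemma~\ref{lemma:envelopingLiealg} and Proposition~\ref{proposition:comparisonLiealggeneral} --- is formally identical.

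First I would prove that $\imath^!$ is a symmetric monoidal functor $(\CD^+(\MHM(\CM)),\boxdot)\to(\CD^+(\MHM(\pi_0(\CM))),\boxdot)$. Because $\imath$ is a monoid homomorphism splitting $p\colon\CM\to\pi_0(\CM)$ and is saturated, the square with horizontal arrows the sum maps $+,\oplus$ and vertical arrows $\imath\times\imath,\imath$ is Cartesian; since $\oplus$ is finite, hence proper, base change yields $\imath^!\oplus_*\cong +_*(\imath\times\imath)^!$, and combining this with the K\"unneth isomorphism $(\imath\times\imath)^!(\SF\boxtimes\SG)\cong\imath^!\SF\boxtimes\imath^!\SG$ gives a natural isomorphism $\imath^!(\SF\boxdot\SG)\cong\imath^!\SF\boxdot\imath^!\SG$; coherence with associativity and symmetry is routine. (Equivalently, write $\imath^!=\BD\,\imath^*\,\BD$ and use that $\imath^*$ is monoidal by proper base change and $\BD$ is monoidal for $\boxdot$.) Iterating over $r$-fold sums --- all finite, with the relevant direct sums finite in each bidegree by the finiteness of $+$ --- one concludes that $\imath^!$ carries $\Free_{\boxdot\sAlg}(\SG)$, $\Free_{\boxdot\sLie}(\SG)$, the Serre relation subobjects $\SR_{m,n}$ of \S\ref{subsubsection:Serresrelationsandideal} and the Serre ideals to the corresponding objects built from $\imath^!\SG=\bigoplus_m\imath^!\SG_m$. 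Applying $\HO^*$, which over the discrete base $\pi_0(\CM)$ is monoidal by K\"unneth ($+_*$ being exact), and redoing the computation of the preceding proposition, one finds that $\mathsf{R}(\UEA(\mathfrak{n}^+_{\CM,(-,-),\SG}))$ is the free associative algebra on $\bigoplus_m\mathsf{R}(\SG_m)$ modulo the two-sided ideal generated by the Serre relations attached to the $\BoZ$-graded vector spaces $\mathsf{R}(\SG_m)$.

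Next I would compute $\imath^!\SG_m$. Let $x_m\coloneqq\imath(m)$ be the unique $\BoC^*$-fixed point of $\CM_m$, with point inclusion $i_{x_m}\colon\{x_m\}\hookrightarrow\CM_m$. Since the action contracts $\CM_m$ onto $x_m$, we have $\HO^*(\CM_m,\SF)\cong i_{x_m}^*\SF$ for every $\BoC^*$-equivariant complex $\SF$ on $\CM_m$; applying this to $\BD\SG_m$ and using $i_{x_m}^!=\BD\,i_{x_m}^*\,\BD$ together with $\BD\,\HO^*(\CM_m,\BD(-))\cong\HO^*_{\rmc}(\CM_m,-)$ gives $\imath^!\SG_m\cong\HO^*_{\rmc}(\CM_m,\SG_m)$ (equivalently, this is Braden's hyperbolic localisation in the extreme case where the attracting locus is all of $\CM_m$). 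By Verdier duality $\dim\HO^j_{\rmc}(\CM_m,\SG_m)=\dim\HO^{-j}(\CM_m,\BD\SG_m)$, so $\mathsf{R}(\SG_m)$ has $t^{1/2}$-Poincar\'e polynomial $\sum_j\dim\HO^j(\BD\SG_m)t^{-j/2}=P(m)$; and for a real root $m$, where $\CM_m=\pt$ and $\SG_m=\ulBoQ_\pt$, this gives $\mathsf{R}(\SG_m)=\ulBoQ_\pt$ in cohomological degree $0$ and $P(m)=1$, as needed for the Serre relations. Thus $(\mathsf{R}(\SG_m))_m$ has graded dimensions $P$, and the previous paragraph identifies $\mathsf{R}(\UEA(\mathfrak{n}^+_{\CM,(-,-),\SG}))$ with $\Free\!\big(\bigoplus_m\mathsf{R}(\SG_m)\big)/I_{\pi_0(\CM),(\mathsf{R}(\SG_m))_m,\Alg}\cong\UEA(\mathfrak{n}^+_{\pi_0(\CM),(-,-),P})$, proving (1). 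Then (2) follows as in the preceding proposition: the Serre relations lie in the free Lie algebra, so Lemma~\ref{lemma:envelopingLiealg} presents the algebra of (1) as the enveloping algebra of the Lie subalgebra generated by $\bigoplus_m\mathsf{R}(\SG_m)$, and by monoidality of $\imath^!$ and Proposition~\ref{proposition:comparisonLiealggeneral} that subalgebra is $\mathsf{R}(\mathfrak{n}^+_{\CM,(-,-),\SG})$, which coincides with $\mathfrak{n}^+_{\pi_0(\CM),(-,-),P}$.

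The step I expect to cost the most effort --- as already in the preceding proposition --- is checking that $\mathsf{R}$ is exact enough on the relevant objects to send the defining surjection $\Free_{\boxdot\sAlg}(\SG)\to\UEA(\mathfrak{n}^+_{\CM,(-,-),\SG})$ to the analogous surjection of graded algebras; for this one uses that $\Free_{\boxdot\sAlg}(\SG)$ is a genuine mixed Hodge module (since $\oplus_*$ is $t$-exact) and that over the discrete base $\pi_0(\CM)$ every complex of mixed Hodge modules is formal. What remains is the routine verification of the coherence isomorphisms making $\imath^!$ symmetric monoidal.
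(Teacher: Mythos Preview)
Your proof is correct and follows essentially the same route as the paper's own argument: establish that $\imath^!$ is (strictly) monoidal via base change along the saturated Cartesian square, identify $\imath^!\SG_m$ using the contracting $\BoC^*$-action, and then conclude as in the preceding proposition via Lemma~\ref{lemma:envelopingLiealg}. The only cosmetic difference is that the paper packages your hyperbolic-localisation computation into the single identification $\imath^!\cong p_!$ (citing \cite{davison2020bps}), from which $\imath^!\SG_m\cong p_!\SG_m\cong\HO^{-*}(\BD\SG_m)^*$ follows immediately; your unpacking of this via $i_{x_m}^!=\BD\, i_{x_m}^*\,\BD$ and the contraction isomorphism $i_{x_m}^*\cong\HO^*(\CM_m,-)$ is exactly the content of that citation. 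Your final caveat about exactness is well taken --- the paper glosses over why $\imath^!$ sends the defining surjection to a surjection --- but as you note, working in $\MHM(\CM)$ and passing to the discrete base where complexes split resolves it.
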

\begin{proof}
By base change in the diagram
\[
 \begin{tikzcd}
	{\pi_0(\CM)\times\pi_0(\CM)} & {\pi_0(\CM)} \\
	\CM\times\CM & \CM
	\arrow["\oplus", from=1-1, to=1-2]
	\arrow["\oplus", from=2-1, to=2-2]
	\arrow["\imath\times\imath"', from=1-1, to=2-1]
	\arrow["\imath", from=1-2, to=2-2]
	\arrow["\lrcorner"{anchor=center, pos=0.125}, draw=none, from=1-1, to=2-2]
\end{tikzcd}
\]
the functor $\imath^!\colon \CD^+(\MHM(\CM))\rightarrow\CD^+(\MHM(\pi_0(\CM)))$ is strictly monoidal.

 By the assumptions on $\CM$ and $\iota$, if $p\colon\CM\rightarrow \pi_0(\CM)$ is the natural projection mapping a point $x\in\CM$ to the connected component of $\CM$ it belongs to, $\iota^!\cong p_!$ (see \cite[Proof of Lemma 6.10]{davison2020bps}). By strict monoidality of $\iota^!$, $\iota^!\UEA(\mathfrak{n}^+_{\CM,(-,-),\SG})\cong \iota^!\Free_{\Alg}(\SG)/\iota^!\CI_{\CM,\SG,\Alg}\cong \Free(\bigoplus_{m\in\Phi_{M,(-,-)}^+}\iota^!\SG_m)/\iota^!\CI_{\CM,\SG,\Alg}$. Now, $\iota^!\SG_m\cong p_!\SG_m\cong \HO^{-*}(\BD\SG_m)^*$ as $\BoZ$-graded vector spaces and since for $m\in\Phi_{M,(-,-)}^{+,\real}$, $\CM_m=\pt$ and $\SG_m=\underline{\BoQ}_{\pt}$, $\iota^!(\SG_m)=\BoQ e_m$ and $\iota^!\SR_{m,n}$ is the Serre relation $R_{m,n}$ associated to $\HO^*(\BD\SG_m)^*$, $m\in\Phi_{M,(-,-)}^+$ (see the end of \S\ref{subsection:GKMmonoids}). This proves part (1). Part (2) follows using Lemma~\ref{lemma:envelopingLiealg}. 
\end{proof}

\section{Moduli stacks of objects in $2$-dimensional categories}
\label{section:modulistack2dcats}
We recall the categorical and geometric framework of \cite{davison2022BPS}. 

\subsection{Moduli stacks of objects}
\label{subsection:gsetup}
We let $\mathscr{C}$ be a $\BoC$-linear dg-category, and let $\bm{\mathfrak{M}}\subset \bm{\mathfrak{M}}_{\mathscr{C}}$ be a $1$-Artin open substack of the stack of objects of $\mathscr{C}$ (in the sense of \cite{toen2007moduli}). We denote by $\mathfrak{M}= t_0(\bm{\mathfrak{M}})$ its classical truncation. We assume that $\mathfrak{M}$ parametrises objects of an admissible (in the sense of \cite[\S 1.2]{beilinson2018faisceaux}) finite length Abelian subcategory $\mathcal{A}$ of the homotopy category $\HO^0(\mathscr{C})$. For $X=\Spec(A)$, $X$-points of $\bm{\mathfrak{M}}_{\mathscr{C}}$ correspond to pseudo-perfect $\mathscr{C}\otimes_{\BoC} A$-modules $N$, i.e. bimodules $N$ that are perfect as $A$-modules. Given a pair of such modules we obtain the dg $A$-module $\RHom_{\mathscr{C}\otimes_{\BoC} A}(N,N')$. This defines the RHom complex on $\bm{\mathfrak{M}}_{\mathscr{C}}^{\times 2}$. We define the $\RHom$ complex $\CH^{\bullet}$ on $\mathfrak{M}^{\times 2}$ by restriction.

\subsection{Assumptions on the stack of objects}
\label{subsection:assumptionsCoHA}
For the construction of relative CoHAs in \cite{davison2022BPS}, various assumptions are made on the stack of objects in $\CA$, which we now recall. In concrete applications they are all known to hold; see \cite{davison2022BPS} for a more thorough treatment of these, as well as the assumptions \ref{gms_assumption}--\ref{BPS_cat_assumption} that follow. The first three concern the stack of objects $\FM$ from \S \ref{subsection:gsetup}. Assumptions \ref{p_assumption}, \ref{q_assumption1} and \ref{ass:associativity} are necessary for the construction of the CoHA product. Assumptions \ref{gms_assumption} and \ref{BPS_cat_assumption} allow us to use the neighbourhood theorem of \cite{davison2021purity}. Assumption \ref{ds_fin} is used to defined the BPS algebra.
\begin{assumption}{1}
[Assumption for the construction of the pushforward by $p$]
\label{p_assumption}
Let $\mathfrak{Exact}$ be the (classical truncation of the) stack of short exact sequences in $\mathcal{A}$. We let $p\colon\mathfrak{Exact}\rightarrow \mathfrak{M}$ be the morphism forgetting the extreme terms of the short exact sequence. We assume that $p$ is a proper and representable morphism of stacks.
\end{assumption}

Let $\CC^{\bullet}=\CH^{\bullet}[1]$ be a shift of the RHom complex defined in \S\ref{subsection:gsetup}. The stack $\bsE=\Tot_{\FM\times\FM}(\CC^{\bullet})$ carries a universal bundle of (shifted) homomorphisms of objects in $\mathscr{C}$, so that we have a natural identification $t_0(\bsE)=\mathfrak{Exact}$, and a morphism $\bs{p}\colon \bsE\rightarrow \FM$ given by sending the morphism $f\colon \rho\rightarrow \rho'$ to $\cone(f)[1]$. Then $p=t_0(\bs{p})$ is the morphism sending a short exact sequence to its middle term.
\begin{assumption}{2}
[Assumption for the construction of the virtual pullback by $\bs{q}$]
\label{q_assumption1}
We assume that $\FM$ has the resolution property, i.e. every coherent sheaf on $\FM$ is a quotient of a vector bundle. For all $\CM_a,\CM_b\in\pi_0(\CM)$ we assume that the complex $\CC^{\bullet}$ is quasi-isomorphic to a $3$-term complex of vector bundles over $\FM_a\times\FM_b$.
\end{assumption}
We note that, for the second part of Assumption~\ref{q_assumption1} to be satisfied, it is enough for the RHom complex to be globally presented by a three-term complex of vector bundles, which is generally easy to check in examples. There are general criteria ensuring the resolution property \cite{totaro2004resolution,gross2017tensor}.

\subsection{Assumptions on the good moduli space}
\label{subsec:spaceassumptions}
The construction of the BPS algebra in \S \ref{subsection:BPSalg} requires the existence of a good moduli space, satisfying various extra assumptions. Again, in concrete applications, they are all well-known to hold; see \cite{davison2022BPS} for a leisurely discussion.

\begin{assumption}{3}
\label{gms_assumption}
The stack $\FM$ has a good moduli space $\FM\xrightarrow{\JH}\CM$ in the sense of \cite{alper2013good}, and $\CM$ is a separated scheme with finite type connected components.
\end{assumption}
The scheme $\mathcal{M}$ is endowed with a monoid structure given by the direct sum:
\begin{align*}
 \oplus\colon\mathcal{M}\times\mathcal{M}\longrightarrow\mathcal{M};\quad\quad  (x,y)\longmapsto x\oplus y
\end{align*}
obtained from the direct sum $\oplus\colon\mathfrak{M}\times\mathfrak{M}\rightarrow\mathfrak{M}$ and universality of the good moduli space $\JH\colon\mathfrak{M}\rightarrow\mathcal{M}$. Note that the following diagram commutes:
\[
 \begin{tikzcd}
	{\mathfrak{M}\times\mathfrak{M}} & {\mathfrak{Exact}} & {\mathfrak{M}} \\
	{\mathcal{M}\times\mathcal{M}} && {\mathcal{M}.}
	\arrow["\JH\times\JH"', from=1-1, to=2-1]
	\arrow["q"', from=1-2, to=1-1]
	\arrow["p", from=1-2, to=1-3]
	\arrow["\JH", from=1-3, to=2-3]
	\arrow["\oplus", from=2-1, to=2-3]
\end{tikzcd}
\]
If $\CM_a$ is a connected component of $\CM$, we define $\FM_a\coloneqq \JH^{-1}(\CM_a)$, and for $a\in \K(\CA)$, where $\cl\colon \pi_0(\CM)\rightarrow\K(\CA)$ is a morphism of monoids, we define $\FM_a$ similarly (thus $\FM_a=\bigsqcup_{a'\in \cl^{-1}(a)}\FM_{a'}$).

\begin{assumption}{4}
\label{ds_fin}
We assume that the morphism $\oplus$ is finite.
\end{assumption}
Given that objects in $\CA$ have finite length, the morphism $\oplus$ is easily seen to be quasi-finite, so Assumption \ref{ds_fin} amounts to requesting that $\oplus$ is a proper morphism.

To apply the local neighbourhood theorem (Theorem~\ref{theorem:neighbourhood}) on closed points of the moduli stacks $\Mst$ we make the following additional Assumption~\ref{BPS_cat_assumption}.

\begin{assumption}{5}
	\label{BPS_cat_assumption}
	Suppose we are in the situation of \S \ref{subsection:gsetup} and Assumption \ref{gms_assumption} is satisfied. Given a collection of points $x_1,\ldots,x_r\in\CM$ parametrising simple objects $\underline{\CF}=\{\CF_1,\ldots,\CF_r\}$ of $\CA$, We assume that the full dg subcategory of $\mathscr{C}$ containing $\underline{\CF}$ carries a right 2-Calabi--Yau (2CY) structure in the sense of \cite[\S 3.1]{brav2019relative}.
\end{assumption}

Again, this somewhat technical assumption is known to hold in examples we encounter, since by \cite[Theorem 3.1]{brav2019relative}, the existence of the required right 2CY structure will follow from the existence of a left 2CY structure on the category $\SC$. The existence of a left 2CY structure for certain categories of coherent sheaves can be found in [loc. cit], for derived preprojective algebras was shown in Keller's paper \cite{keller2011deformed}, see
\cite[Appendix~C]{davison2022BPS} for a summary.

The 2CY assumption (Assumption~\ref{BPS_cat_assumption}) then plays a crucial role in the definition of the BPS algebra appearing in \S \ref{subsection:BPSalg}.

\subsection{Serre subcategories}
\label{subsubsection:serre}
Let $\FM\xrightarrow{\JH}\CM$ be as in Assumption~\ref{gms_assumption} a good moduli space for the stack of objects $\FM$ as in \S \ref{subsection:gsetup}. Fix a locally closed submonoid $\mathcal{M}'\subset \mathcal{M}$ that is stable under direct summands, i.e. saturated (if $x\oplus y\in \mathcal{M}'$, then $x,y\in\mathcal{M}'$). We define $\mathcal{B}$ to be the full Abelian subcategory of $\mathcal{A}$ generated by the objects represented by the closed points of $\mathcal{M}'$ under taking extensions. Typically, $\mathcal{B}$ will be defined to be the full subcategory of objects of $\mathcal{A}$ satisfying some support or nilpotency condition. Since $\CM$ parametrises semisimple objects in $\CA$, $\CB$ is a Serre subcategory (an Abelian subcategory stable under extensions, subobjects and quotients).

Let $\mathcal{B}$ be a Serre subcategory of $\mathcal{A}$ corresponding to a locally closed submonoid $\mathcal{M}_{\mathcal{B}}\subset \mathcal{M}$. We define $\mathfrak{M}_{\mathcal{B}}\subset \mathfrak{M}$ to be the substack parametrising objects of $\mathcal{B}$, which we \emph{define} via the following Cartesian square
\[
\begin{tikzcd}
	{\mathfrak{M}_{\mathcal{B}}} & {\mathfrak{M}} \\
	{\mathcal{M}_{\mathcal{B}}} & {\mathcal{M}}.
	\arrow[from=1-1, to=1-2]
	\arrow["\JH", from=1-2, to=2-2]
	\arrow[from=2-1, to=2-2]
	\arrow[from=1-1, to=2-1]
	\arrow["\lrcorner"{anchor=center, pos=0.125}, draw=none, from=1-1, to=2-2]
\end{tikzcd}
\]
The morphism $\FM_{\CB} \to \CM_{\CB}$ is a good moduli space by \cite[Proposition~4.7 (i)]{alper2013good}. Let $\pi_0(\mathcal{M}_{\mathcal{B}})$ be the monoid of connected components of $\mathcal{M}_{\mathcal{B}}$. The monoid structure $\oplus: \pi_0(\Msp_{\CB})\times \pi_0(\Msp_{\CB})\rightarrow \pi_0(\Msp_\CB)$ is induced by the direct sum. The monoid of connected components of the stack $\mathfrak{M}_{\mathcal{B}}$ coincides with $\pi_0(\Msp_{\CB})$, since the fibres of $\JH$ are connected, by \cite[Theorem 4.16 (vii)]{alper2013good}. We choose a monoid $\K(\CB)$ and a surjective morphism of monoids $\cl\colon \pi_0(\mathcal{M}_{\mathcal{B}})\rightarrow \K(\CB)$ with finite fibres, where the target is a monoid for which the morphism $\K(\CB)\rightarrow \widehat{\K(\CB)}$ is an inclusion in a lattice

 For $a\in\K(\CB)$, we denote by $\Mst_{\CB,a}$ and $\Msp_{\CB,a}$ the corresponding (finite) unions of connected components.

For any object $\mathcal{F}$ of $\mathcal{B}$ (resp. $\BoC$-point $x$ of $\mathfrak{M}_{\mathcal{B}}$ or $\mathcal{M}_{\mathcal{B}}$), we let $[\mathcal{F}]$ (resp. $[x]$) be $\cl$ applied to the connected component of $x$. We will refer to $[x]$ as \emph{the class} of $x$. The Euler form automatically factors through $\pi_0(\Msp_{\CB})$ to define a bilinear form:
\begin{equation*}
(-,-)_{\mathscr{C}}\colon \pi_0(\Msp_{\CB})\times\pi_0(\Msp_{\CB})\longrightarrow\BoZ.
\end{equation*}
We assume that it moreover factors through $\cl$. The pair $(\K(\CB),(-,-)_{\SC})$ will provide the combinatorial input required in \S \ref{subsection:roots}.

\subsection{The local neighbourhood theorem}
\label{subsubsection:setuplocal}
Let $\mathscr{C}$, $\mathfrak{M}$ and $\CM$ be as above, satisfying Assumptions \ref{gms_assumption} and \ref{BPS_cat_assumption}. Then the closed points of $\mathcal{M}$ are in bijection with semisimple objects of the category $\mathcal{A}$ and the map $\JH$ sends a $\BoC$-point of $\mathfrak{M}$ corresponding to an object $\mathcal{F}$ of $\mathcal{A}$ to the $\BoC$-point of $\mathcal{M}$ corresponding to the associated graded object of $\mathcal{F}$ with respect to some Jordan--H\"older filtration. Let $\cl\colon \pi_0(\CM_\CA) \to \K(\CA)$ be a surjective morphism of monoids. 

\subsubsection{$\Sigma$-collections}
\label{subsubsection:extquiver}
Let $\mathcal{F}$ be an object in a $\BoC$-linear Abelian category $\mathcal{A}$, which we will assume is realised as a full subcategory of the homotopy category of some dg category $\SC$. We say that $\mathcal{F}$ is a \emph{$\Sigma$-object} if $\Ext^*_{\SC}(\CF,\CF)\cong \HO^*(S,\BoC)$ for $S$ a closed Riemann surface (of some genus $g\geq 0$). If $\underline{\mathcal{F}}=\{\mathcal{F}_1,\hdots,\mathcal{F}_r\}$ is a collection of objects of $\mathcal{A}$, we say that $\underline{\mathcal{F}}$ is a \emph{$\Sigma$-collection} if each $\mathcal{F}_t$ is a $\Sigma$-object and for $m\neq n$, $\Hom(\mathcal{F}_m,\mathcal{F}_n)=0$.

Let $\underline{\mathcal{F}}=\{\mathcal{F}_1,\hdots,\mathcal{F}_r\}$ be a $\Sigma$-collection of objects of $\mathcal{A}$. We define the Ext-quiver of $\underline{\mathcal{F}}$ as the quiver $\overline{Q}_{\underline{\mathcal{F}}}$ having as set of vertices $\underline{\mathcal{F}}$, and having $\ext^1(\mathcal{F}_i,\mathcal{F}_j)\coloneqq \dim(\Ext^1(\mathcal{F}_i,\mathcal{F}_j))$ arrows from $i$ to $j$. When $\ext^1(\mathcal{F}_i,\mathcal{F}_j)=\ext^1(\mathcal{F}_j,\mathcal{F}_i)$ for any $1\leq i,j\leq r$ and this number is even if $i=j$, then $\overline{Q}_{\underline{\mathcal{F}}}$ is the double of some (usually non-unique) quiver $Q_{\underline{\mathcal{F}}}$. This condition is for example satisfied for right $2$-Calabi--Yau categories as in \S\ref{subsection:gsetup}. Accordingly, we make Assumption \ref{BPS_cat_assumption}. We will refer to $Q_{\underline{\mathcal{F}}}$ as a \emph{half of the Ext-quiver of $\underline{\mathcal{F}}$}.

We have a morphism of monoids
\begin{equation}
\label{lambdaFdef}
\lambda_{\underline{\CF}}\colon \BoN^{\underline{\CF}} \longrightarrow \K(\CA);\quad\quad\mm\longmapsto \sum_{i=1}^rm_i[\CF_i].
\end{equation}
We also have a morphism of monoids
\begin{equation}
\label{equation:morphismmonoids}
 \imath_{\underline{\mathcal{F}}}\colon \BoN^{\underline{\mathcal{F}}}\longrightarrow \mathcal{M}
\end{equation}
mapping $\mm\in\BoN^{\underline{\mathcal{F}}}$ to the point of $\mathcal{M}$ corresponding to the object $\bigoplus_{i=1}^r\mathcal{F}_i^{\oplus m_i}$, i.e. $1_{\CF_i}\mapsto x_i$ if we denote by $x_i\in \CM$ the point corresponding to $\CF_i$.

If $x\in\CM_{\CA}$, it corresponds to a semisimple object $\CF=\bigoplus_{i=1}^r\CF_i^{\oplus d_i}$. We define $(Q_x,\dd_x)$ to be the pair formed by a half of the Ext-quiver of $\underline{\CF}\coloneqq\{\CF_1,\hdots,\CF_r\}$ and the dimension vector $\dd_x=(d_1,\hdots,d_r)$. Although this involves making an irrelevant choice for $Q_x$, the doubled quiver $\overline{Q_x}$ only depends on the simple direct summands of $\CF$.

\subsubsection{The local neighbourhood theorem}
We recall here the local neighbourhood theorem for $2$-Calabi--Yau categories \cite[Theorem 5.11]{davison2021purity}, which we will repeatedly use, starting in the next section to investigate the geometry of components of moduli spaces corresponding to roots for general 2CY categories.
\begin{theorem}
\label{theorem:neighbourhood}
 Let $\JH\colon\mathfrak{M}\rightarrow\mathcal{M}$ be a good moduli space, locally of finite type with reductive stabiliser groups. We assume that $\mathfrak{M}=t_0(\bm{\mathfrak{M}})$ is an Artin stack, where $\bm{\mathfrak{M}}\subset\bm{\mathfrak{M}}_{\mathscr{C}}$ is an open substack of the stack of objects of a dg-category $\mathscr{C}$. Let $x$ be a closed $\BoC$-valued point of $\mathfrak{M}$ corresponding to an object $\mathcal{F}=\bigoplus_{1\leq i\leq r}\mathcal{F}_i^{\oplus m_i}$ and assume that the full dg-subcategory of $\mathscr{C}$ containing $\mathcal{F}_i$, $1\leq i\leq r$ carries a right 2CY structure and $\underline{\mathcal{F}}=\{\mathcal{F}_i\}_{1\leq i\leq r}$ is a $\Sigma$-collection. We let $Q$ be a half of the Ext-quiver of the collection $\underline{\mathcal{F}}=\{\mathcal{F}_i:1\leq i\leq r\}$. Then, there is a pointed affine $\GL_{\mm}$-variety $(H,y)$, and a $\GL_{\mm}$-invariant analytic neighbourhood $U\subset H$ of $y$ such that if $\mathcal{U}_x$ (resp. $\mathfrak{U}_x$) denotes the image of $U$ by the canonical map $H\rightarrow H\cms\GL_{\mm}$ (resp. $H\rightarrow H/\GL_{\mm}$), there is a commutative diagram
\[
\begin{tikzcd}
	{(\mathfrak{M}_{\Pi_Q,\mm},0_{\mm})} && {(\mathfrak{U}_{x},y)} && {(\mathfrak{M},x)} \\
	\\
	{(\mathcal{M}_{\Pi_Q,\mm},0_{\mm})} && {(\mathcal{U}_{x},p(y))} && {(\mathcal{M},\JH(x))}
	\arrow["{\JH_{\mm}}"', from=1-1, to=3-1]
	\arrow["p_{\mm}", from=1-3, to=3-3]
	\arrow["\JH", from=1-5, to=3-5]
	\arrow["{\tilde{\jmath}_x}", from=1-3, to=1-5]
	\arrow["{\jmath_x}", from=3-3, to=3-5]
	\arrow["{\jmath_{0_{\mm}}}"', from=3-3, to=3-1]
	\arrow["{\tilde{\jmath}_{0_{\mm}}}"', from=1-3, to=1-1]
	\arrow["\lrcorner"{anchor=center, pos=0.125, rotate=-90}, draw=none, from=1-3, to=3-1]
	\arrow["\lrcorner"{anchor=center, pos=0.125}, draw=none, from=1-3, to=3-5]
\end{tikzcd}
\]
in which the horizontal morphisms are analytic open immersions and the two squares are Cartesian.

\end{theorem}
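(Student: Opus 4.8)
The plan is to compare the derived local structure of $\FM$ at $x$ with that of $\FM_{\Pi_Q,\mm}$ at $0_{\mm}$, using the right $2$CY structure to pin down the local model; this is carried out in \cite[Theorem~5.11]{davison2021purity}. First I would pass to a $\GL_{\mm}$-equivariant affine slice: since $\underline{\CF}$ is a $\Sigma$-collection the $\CF_i$ are pairwise non-isomorphic simples, so $\Aut(\CF)=\GL_{\mm}$ is reductive, and the \'etale-local structure theorem for algebraic stacks with good moduli spaces produces a pointed affine derived $\GL_{\mm}$-scheme $(\bm{H},y)$ --- with $y$ having closed orbit and stabiliser $\GL_{\mm}$ --- together with an \'etale morphism $[t_0(\bm{H})/\GL_{\mm}]\to\FM$ compatible with $\JH$ and with the induced \'etale map $t_0(\bm{H})\cms\GL_{\mm}\to\CM$; restricting to a sufficiently small analytic neighbourhood $U\subset t_0(\bm{H})$ of $y$ turns these into analytic open immersions $[U/\GL_{\mm}]\hookrightarrow\FM$ and $U\cms\GL_{\mm}\hookrightarrow\CM$ sending $y\mapsto x$. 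By construction the tangent complex of $\bm{H}$ at $y$ is the cone of the map $\gl_{\mm}[1]\to\RHom_{\SC}(\CF,\CF)[1]$ classifying the infinitesimal $\GL_{\mm}$-action.

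Next I would identify this complex. Since $\underline{\CF}$ is a $\Sigma$-collection, $\RHom_{\SC}(\CF,\CF)$ is concentrated in cohomological degrees $0,1,2$ with $\Ext^0(\CF,\CF)=\gl_{\mm}$, and (by the right $2$CY structure) the Ext-quiver $\overline{Q}_{\underline{\CF}}$ is the double $\overline{Q}$ of a quiver $Q$, whence $\Ext^1(\CF,\CF)\cong\BoA_{\overline{Q},\mm}$. The $2$CY structure also provides perfect pairings $\Ext^k(\CF,\CF)\otimes\Ext^{2-k}(\CF,\CF)\to\BoC$: for $k=1$ a $\GL_{\mm}$-invariant symplectic form identifying $\BoA_{\overline{Q},\mm}\cong\mathrm{T}^{*}\BoA_{Q,\mm}$, for $k=0$ an isomorphism $\Ext^2(\CF,\CF)\cong\Ext^0(\CF,\CF)^{\vee}=\gl_{\mm}^{\vee}$, and $\Ext^{\geq 3}(\CF,\CF)=0$. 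Hence $\bm{H}$ is quasi-smooth with tangent complex $[\BoA_{\overline{Q},\mm}\to\gl_{\mm}^{\vee}]$ in degrees $0,1$: after trivialising bundles near $y$ and identifying $\gl_{\mm}^{\vee}\cong\gl_{\mm}$ via the trace form, $\bm{H}$ is the derived zero locus of a $\GL_{\mm}$-equivariant map $\kappa\colon\mathrm{T}^{*}\BoA_{Q,\mm}\to\gl_{\mm}$ with $\kappa(0)=0$ and $d\kappa|_{0}=0$, whose quadratic part is forced by the symplectic pairing to be the moment map $\mu_{Q,\mm}$ of \S\ref{subsection:QandR}.

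The heart of the argument is to kill the higher-order corrections $\kappa-\mu_{Q,\mm}$. The $A_\infty$-operations $m_{k}\colon\Ext^1(\CF,\CF)^{\otimes k}\to\Ext^2(\CF,\CF)$, $k\geq 2$, on a minimal model of $\RHom_{\SC}(\CF,\CF)$ assemble, via the right $2$CY structure, into a \emph{cyclic} $A_\infty$-structure --- equivalently a formal potential whose cyclic derivative is the map $\kappa$ --- and one shows, using the cyclic structure together with the vanishing $\Ext^{\geq 3}(\CF,\CF)=0$, that it is $\GL_{\mm}$-equivariantly gauge-equivalent to its quadratic truncation. This is a Darboux-type normal form for $0$-shifted symplectic local models, in the spirit of the shifted Darboux theorem, and amounts, in the language of deformed $2$CY completions, to the statement that the relevant deformed $2$CY completion of $\BoC Q$ is equivalent to the undeformed one (cf. \cite{keller2011deformed}). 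It yields a $\GL_{\mm}$-equivariant formal isomorphism between $\widehat{\bm{H}}_{y}$ and the formal completion of $\mu_{Q,\mm}^{-1}(0)$ at $0$, which $\GL_{\mm}$-equivariant Artin approximation upgrades to an isomorphism of an \'etale --- hence, over $\BoC$, analytic --- neighbourhood of $y$ with one of $0_{\mm}$; shrinking $U$ gives the analytic open immersion $(\mathfrak{U}_x,y)\hookrightarrow(\FM_{\Pi_Q,\mm},0_{\mm})$ of the statement.

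Finally I would descend to the good moduli spaces. Good moduli space morphisms are stable under base change along the maps constructed above and commute with passing to the slice \cite{alper2013good}, so the analytic open immersions of stacks induce analytic open immersions $\mathcal{U}_x\hookrightarrow\CM_{\Pi_Q,\mm}$ and $\mathcal{U}_x\hookrightarrow\CM$; the two squares are Cartesian, being so at the level of the slice since $[\mu_{Q,\mm}^{-1}(0)/\GL_{\mm}]\to\CM_{\Pi_Q,\mm}$ and $[U/\GL_{\mm}]\to\mathcal{U}_x$ are good moduli spaces; and the diagram commutes by construction. I expect the main obstacle to be the ``cyclic Darboux''/formality step: proving that the local model is cut out by the \emph{undeformed} preprojective relation $\sum_{a\in Q_1}[a,a^{*}]=0$, and that the resulting formal identification can be made analytic in a $\GL_{\mm}$-equivariant way. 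A secondary subtlety, and the reason Assumption~\ref{BPS_cat_assumption} is imposed, is that only the dg-subcategory of $\SC$ generated by $\underline{\CF}$ is assumed to carry a right $2$CY structure, so the cyclic structure and all comparisons must be carried out within that subcategory.
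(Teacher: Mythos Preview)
The paper does not give its own proof of this theorem: it is explicitly recalled from \cite[Theorem~5.11]{davison2021purity}, with only the remark that the original \'etale version can be upgraded to an analytic statement. Your sketch follows the architecture of that reference --- \'etale slice theorem for stacks with good moduli spaces, identification of the tangent complex via the $\Sigma$-collection hypothesis, cyclic $A_\infty$/Darboux normalisation using the right 2CY structure, equivariant Artin approximation, and base change for good moduli spaces --- and correctly isolates the formality/Darboux step as the substantive part; so there is nothing to compare against in the present paper, and your outline is consistent with the cited source.
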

The original version of this theorem gives an \'etale local description of the map $(\mathfrak{M},x)\rightarrow (\mathcal{M},\JH(x))$. For applications to calculating perverse sheaves (as opposed to objects in the derived category of perverse sheaves), working in the analytic topology has some advantages. Under the assumptions given in \S\ref{subsubsection:setuplocal} Assumption \ref{gms_assumption} and \ref{BPS_cat_assumption} guarantee that the theorem applies for any $\BoC$-valued closed point $x$ of $\mathfrak{M}$.

\subsection{Determinant line bundle}
\label{subsection:determinant}

\begin{assumption}{6}
	\label{determinant_assumption}
	Suppose $\FM$ satisfies Assumptions \ref{gms_assumption} and \ref{BPS_cat_assumption} so that the local neighbourhood theorem (Theorem~\ref{theorem:neighbourhood})
	applies to  $\FM$.
	For every closed point $x \in \FM$ we denote by $i_x\colon \B \BoC^{*} \into \FM$ the inclusion induced by the scaling automorphisms  $\BoC^{*}\subset \Aut_{\FM}(x)$ at $x$.
	
	We assume there is a line bundle $\det\colon \FM \to \B \BC^{*}$, called a \emph{positive determinant line bundle} on $\FM$ such that $i_x^*\det$ has a non-zero degree, i.e., for all closed points 
	$x \in \FM$ we have $(i_x \circ \det)^*u\neq 0 \in \HO^2(\B \BoC^{*})$
	where $u \in \HO^2(\B \BoC^{*})$ is a generator of the cohomology ring $\HO^*(\B \BoC^{*})$.
\end{assumption}

Again this assumption is known to hold in the relevant examples, see \cite[Appendices~B and C]{davison2022BPS}.

\section{Roots and the geometry of the good moduli space}
\subsection{Roots for preprojective algebras}
\label{subsection:rootsforPi}
Let $Q$ be a quiver and let $\Pi_Q$ be its preprojective algebra (see \S\ref{subsection:QandR}). We let $(-,-)_{\SG_2(\BoC Q)}\colon\BoN^{Q_0}\times\BoN^{Q_0}\rightarrow\BoZ$ be the Euler form of the derived preprojective algebra $\SG_{2}(\BoC Q)$. It is the symmetrization of the Euler form of $Q$ and, when $Q$ is not Dynkin, it coincides with the Euler form of $\Pi_Q$ itself. We let
\begin{multline*}
 \Sigma_{\Pi_Q}\coloneqq \left\{\dd\in\BoN^{Q_0}\setminus\{0\}\mid \text{ for any nontrivial decomposition }\dd=\sum_{j=1}^r\dd_j, \dd_j\in\BoN^{Q_0}\setminus\{0\}\right. \\ \left.\text{ one has }(2-(\dd,\dd)_{\SG_2(\BoC Q)})>\sum_{j=1}^r(2-(\dd_j,\dd_j)_{\SG_2(\BoC Q)})\right\}.
\end{multline*}
We let $Q_0^{\real}$ be the set of \emph{real} vertices (loop-free vertices of $Q$). For $i\in Q_0^{\real}$, we have a reflection
\[
 \begin{matrix}
 s_i&\colon&\BoZ^{Q_0}&\rightarrow&\BoZ^{Q_0}\\
   &   &\dd    &\mapsto  &\dd-(1_i,\dd)_{\SG_2(\BoC Q)}1_i.
 \end{matrix}
\]
These reflections generate the \emph{Weyl group} $W_Q$ of $Q$.
The fundamental cone is 
\[
 C_{\Pi_Q}\coloneqq \{\dd\in\BoN^{Q_0}\setminus\{0\}\mid \supp(\dd) \text{ is connected and }(\dd,1_i)_{\SG_2(\BoC Q)}\leq 0\text{ for every $i\in Q_0^{\real}$}\},
\]
where the support $\supp(\dd)$ of the dimension vector $\dd\in\BoN^{Q_0}$ is defined as the full subquiver containing the vertices $i\in Q_0$ such that $d_i\neq 0$.
In the definition of the fundamental cone, one can replace $Q_0^{\real}$ by $Q_0$ since if $i$ is a vertex with at least one loop, $(1_i,\dd)\leq 0$ automatically for any $\dd\in\BoN^{Q_0}$. We embed $Q_0^{\real}$ inside $\BoN^{Q_0}$ by sending $i\mapsto 1_i$. Following \cite{crawley2001geometry}, we let
\[
 R_{\Pi_Q}\coloneqq W_Q\cdot Q_0^{\real}\cup (\pm W_Q\cdot C_{\Pi_Q})
\]
be the set of roots of the quiver $Q$. We let $R_{\Pi_Q}^+\coloneqq R_{\Pi_Q}\cap \BoN^{Q_0}$ be the set of positive roots. As in [loc. cit] we define the set
\begin{multline*}
 \Sigma'_{\Pi_Q}\coloneqq \left\{\dd\in R_{\Pi_Q}^+\mid \text{ for any nontrivial decomposition }\dd=\sum_{j=1}^r\dd_j, \dd_j\in R_{\Pi_Q}^+\setminus\{0\}\right. \\ \left.\text{ one has }(2-(\dd,\dd)_{\SG_2(\BoC Q)})>\sum_{j=1}^r(2-(\dd_j,\dd_j)_{\SG_2(\BoC Q)})\right\}.
\end{multline*}

We now prove that the set $\Sigma'_{\Pi_Q}$ (which by \cite[Theorem 1.2]{crawley2001geometry} characterises dimension vectors for which the preprojective algebra $\Pi_Q$ admits simple representations) coincides with $\Sigma_{\Pi_Q}$ (whose definition does not require the knowledge of roots of the quiver). This is a refinement of the equivalence $(3)\iff (4)$ of \cite[Theorem~1.1]{crawley2001geometry}
\begin{lemma}
 The sets $\Sigma_{\Pi_Q}$ and $\Sigma_{\Pi_Q}'$ coincide.
\end{lemma}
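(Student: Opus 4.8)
The plan is to bridge the purely combinatorial definition of $\Sigma_{\Pi_Q}$ and the root-theoretic one of $\Sigma'_{\Pi_Q}$ by means of Kac's canonical decomposition of dimension vectors. Abbreviate $(\dd,\ee)\coloneqq(\dd,\ee)_{\SG_2(\BoC Q)}$ (which on $\BoN^{Q_0}$ coincides with $(\dd,\ee)_Q$) and set $q(\dd)\coloneqq 2-(\dd,\dd)$. For any decomposition $\dd=\sum_{j=1}^{r}\dd_j$ in $\BoN^{Q_0}$, bilinearity of $(-,-)$ gives the identity
\[
q(\dd)=\sum_{j=1}^{r}q(\dd_j)-2(r-1)-2\!\!\sum_{1\leq j<k\leq r}\!\!(\dd_j,\dd_k).
\]
The single external input I would invoke is classical (Kac, Schofield): every $\dd\in\BoN^{Q_0}\setminus\{0\}$ has a canonical decomposition $\dd=\sum_{i=1}^{s}\beta_i$ into positive roots $\beta_i\in R^+_{\Pi_Q}$, which has $s\geq 2$ whenever $\dd\notin R^+_{\Pi_Q}$ (a single-term canonical decomposition identifies $\dd$ with a positive root), and whose constituents satisfy $(\beta_i,\beta_j)\geq 0$ for all $i\neq j$, since the generic $\Ext^1$ between the corresponding generic representations vanishes. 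Feeding this into the identity — once for $\dd$ itself, and once inside each block when a decomposition is refined — yields the two consequences I will use: first, $\sum_{i}q(\beta_i)\geq q(\dd)$, with strict inequality when $s\geq 2$; and second, if a decomposition $\dd=\sum_j\dd_j$ is refined to positive roots via the canonical decompositions of the $\dd_j$, the refined root-decomposition $\dd=\sum_{j,k}\beta_{jk}$ satisfies $\sum_{j,k}q(\beta_{jk})\geq\sum_j q(\dd_j)$.

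Granting this, both inclusions become formal. For $\Sigma'_{\Pi_Q}\subseteq\Sigma_{\Pi_Q}$: take $\dd\in\Sigma'_{\Pi_Q}$ and an arbitrary nontrivial decomposition $\dd=\sum_{j=1}^{r}\dd_j$ into nonzero vectors; refining each $\dd_j$ by its canonical decomposition yields a decomposition of $\dd$ into positive roots with at least $r\geq 2$ terms, hence nontrivial, and the defining inequality of $\Sigma'_{\Pi_Q}$ together with the second consequence above gives $q(\dd)>\sum_{j,k}q(\beta_{jk})\geq\sum_j q(\dd_j)$, so $\dd\in\Sigma_{\Pi_Q}$. For $\Sigma_{\Pi_Q}\subseteq\Sigma'_{\Pi_Q}$: take $\dd\in\Sigma_{\Pi_Q}$. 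If $\dd$ were not a positive root, its canonical decomposition would have $s\geq 2$ nonzero terms $\beta_i$ with $\sum_i q(\beta_i)>q(\dd)$ by the first consequence; this is a nontrivial decomposition into nonzero vectors violating the defining inequality of $\Sigma_{\Pi_Q}$, a contradiction, so $\dd\in R^+_{\Pi_Q}$. The only remaining requirement for membership in $\Sigma'_{\Pi_Q}$ — a strict inequality over decompositions into positive roots — is a special case of the one $\dd$ already satisfies, so $\dd\in\Sigma'_{\Pi_Q}$.

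The main (and essentially only) obstacle is thus to cite the canonical decomposition in a form valid for the quivers considered here, those carrying loops included: one needs that it exists, that it has at least two terms whenever $\dd$ fails to be a positive root, and that the vanishing of generic $\Ext^1$ between distinct constituents forces $(\beta_i,\beta_j)\geq 0$. A minor point to record is that a root can reappear in the canonical decomposition only when it is real or isotropic, so that $(\beta_i,\beta_i)\geq 0$ holds even when $\beta_i=\beta_j$ occupy different blocks. Everything else is the formal bookkeeping with $q$ set up above, so I anticipate no further difficulty.
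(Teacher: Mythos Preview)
Your argument is correct. Both your proof and the paper's hinge on refining an arbitrary decomposition $\dd=\sum_j\dd_j$ into one by positive roots and using the subadditivity $q(\dd_j)\leq\sum_k q(\beta_{jk})$; the paper obtains this by citing Crawley-Boevey's canonical decomposition for $\Pi_Q$ (specifically \cite[Lemma~4.5]{crawley2001geometry}), whereas you derive it from the Kac--Schofield canonical decomposition for $\BoC Q$ together with the observation that vanishing generic $\Ext^1$ forces $(\beta_i,\beta_j)\geq 0$, and that only real or isotropic Schur roots can repeat. The one place the two proofs genuinely diverge is in showing that $\dd\in\Sigma_{\Pi_Q}$ forces $\dd\in R^+_{\Pi_Q}$: the paper does this by a short direct computation placing $\dd$ in the fundamental cone (checking connected support and $(\dd,1_i)\leq 0$ from the decomposition $\dd=1_i+(\dd-1_i)$), while you reuse the canonical decomposition to produce a violating $\geq 2$-term decomposition when $\dd$ is not a root. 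Your route is more uniform---one tool for both inclusions---at the cost of importing slightly more from the theory of general representations (the symmetric $\text{ext}$-vanishing in Schofield's characterisation, and the non-repetition of hyperbolic Schur roots); the paper's route is more self-contained for the $\Sigma\subseteq R^+$ step but introduces two auxiliary sets $\Sigma'',\Sigma'''$ to organise the inclusions.
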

\begin{proof}
By changing the conditions on the dimension vectors and on the decompositions allowed, we define a third set
\begin{multline*}
 \Sigma''_{\Pi_Q}\coloneqq \left\{\dd\in \BoN^{Q_0}\setminus\{0\}\mid \text{ for any nontrivial decomposition }\dd=\sum_{j=1}^r\dd_j, \dd_j\in R_{\Pi_Q}^+\setminus\{0\}\right. \\ \left.\text{ one has }(2-(\dd,\dd)_{\SG_2(\BoC Q)})>\sum_{j=1}^r(2-(\dd_j,\dd_j)_{\SG_2(\BoC Q)})\right\}
\end{multline*}
and a fourth one
 \begin{multline*}
 \Sigma'''_{\Pi_Q}\coloneqq \left\{\dd\in R^+_{\Pi_Q}\mid \text{ for any nontrivial decomposition }\dd=\sum_{j=1}^r\dd_j, \dd_j\in \BoN^{Q_0}\setminus\{0\}\right. \\ \left.\text{ one has }(2-(\dd,\dd)_{\SG_2(\BoC Q)})>\sum_{j=1}^r(2-(\dd_j,\dd_j)_{\SG_2(\BoC Q)})\right\}.
\end{multline*}

Clearly, $\Sigma'_{\Pi_Q}\subset \Sigma''_{\Pi_Q}\supseteq \Sigma_{\Pi_Q}\supseteq \Sigma'''_{\Pi_Q}$ and $\Sigma'''_{\Pi_Q}\subset \Sigma'_{\Pi_Q}$. We eventually prove that these sets all coincide.
 
We prove that $\Sigma_{\Pi_Q}=\Sigma''_{\Pi_Q}$. We let $\dd\in\Sigma''_{\Pi_Q}$. Let $\dd=\sum_{j=1}^r\dd_j$, $\dd_j\in\BoN^{Q_0}$ be a nontrivial decomposition of $\dd$. For $1\leq j\leq r$, we let $\dd_j=\sum_{u=1}^{s_{j}}\dd_{j,u}$ be the canonical decomposition of $\dd_j$ (see \cite{crawley2001geometry} for the definition of this decomposition). Then, by definition, $\dd_{j,u}\in R^+_{\Pi_Q}$ and by \cite[Lemma 4.5]{crawley2001geometry}
\[
 2-(\dd_j,\dd_j)_{\SG_2(\BoC Q)}\leq \sum_{u=1}^{s_{j}}(2-(\dd_{j,u}\dd_{j,u})_{\SG_2(\BoC Q)})
\]
for $1\leq j\leq r$. The decomposition $\dd=\sum_{j,u}\dd_{j,u}$ is such that $\dd_{j,u}\in\Sigma_{\Pi_Q}''\subset R^+_{\Pi_Q}$, and is nontrivial. So by definition of $\Sigma''_{\Pi_Q}$, we have
\[
 2-(\dd,\dd)_{\SG_2(\BoC Q)}>\sum_{j,u}(2-(\dd_{j,u}\dd_{j,u})_{\SG_2(\BoC Q)})\geq \sum_{j=1}^r(2-(\dd_j,\dd_j)_{\SG_2(\BoC Q)}).
\]
Therefore, $\dd\in\Sigma_{\Pi_Q}$.

Now, we prove the equality $\Sigma'''_{\Pi_Q}=\Sigma_{\Pi_Q}$. Let $\dd\in \Sigma_{\Pi_Q}$. All what we have to prove is $\dd\in R^+_{\Pi_Q}$. We will actually prove that $\dd$ is in the fundamental cone $C_{\Pi_Q}$. First, the support of $\dd$ has to be connected, as otherwise we can write $\dd=\dd_1+\dd_2$ with $\dd_1,\dd_2\neq 0$ and $(\dd_1,\dd_2)_{\SG_2(\BoC Q)}=0$. We then obtain
\[
 2-(\dd,\dd)_{\SG_2(\BoC Q)}=2-(\dd_1,\dd_1)_{\SG_2(\BoC Q)}-(\dd_2,\dd_2)_{\SG_2(\BoC Q)}<2-(\dd_1,\dd_1)_{\SG_2(\BoC Q)}+2-(\dd_2,\dd_2)_{\SG_2(\BoC Q)},
\]
contradicting the condition defining $\Sigma_{\Pi_Q}$.
Let $i\in Q_0^{\real}$. If $d_i=0$, then clearly, $(\dd,1_i)_{\SG_2(\BoC Q)}\leq 0$. If $\dd=d_i1_i$, then since $(1_i,1_i)_{\SG_2(\BoC Q)}=2$, we must have $d_i=1$ and clearly, $1_i\in\Sigma'''_{\Pi_Q}$. Otherwise, we write $\dd=1_i+\dd'$, with $\dd'=\dd-1_i\in\BoN^{Q_0}\setminus\{0\}$. By definition of $\Sigma_{\Pi_Q}$, we have
\[
 2-(\dd,\dd)_{\SG_2(\BoC Q)}>2-(1_i,1_i)_{\SG_2(\BoC Q)}+2-(\dd',\dd')_{\SG_2(\BoC Q)}
\]
and this simplifies to
\[
 (1_i,\dd')_{\SG_2(\BoC Q)}<-1.
\]
From this, we obtain $(\dd,1_i)_{\SG_2(\BoC Q)}=2+(1_i,\dd')_{\SG_2(\BoC Q)}\leq 0.$
We conclude that $\dd$ is in the fundamental cone.

We deduce that $\Sigma'_{\Pi_Q}\subset\Sigma'''_{\Pi_Q}$ and the obvious inclusion $\Sigma'''_{\Pi_Q}\subset\Sigma'_{\Pi_Q}$ allows us to conclude that all four sets $\Sigma_{\Pi_Q}, \Sigma'_{\Pi_Q}, \Sigma''_{\Pi_Q}$ and $\Sigma'''_{\Pi_Q}$ coincide.
\end{proof}

\subsection{Ext-quivers and roots}
Let $\CA$ be a 2-Calabi--Yau Abelian category. We let $\underline{\CF}=\{\CF_1,\hdots,\CF_r\}$ be a collection of simple objects of $\CA$ and let $\overline{Q}_{\underline{\CF}}$ be the Ext-quiver of $\underline{\CF}$. Recall the morphism of monoids $\lambda_{\underline{\CF}}\colon \BoN^{\underline{\CF}}\rightarrow \K(\CA)$
(see \S\ref{subsubsection:extquiver}).
\begin{lemma}
\label{lemma:pullbackEulerform}
 The pull-back by $\lambda_{\underline{\CF}}$ of the Euler form of $\SC$ is the Euler form of $\SG_2(\BoC Q_{\underline{\CF}})$: for any $\dd,\ee\in\BoN^{\underline{\CF}}$, $(\dd,\ee)_{\SG_2(Q_{\underline{\CF}})}=(\lambda_{\underline{\CF}}(\dd),\lambda_{\underline{\CF}}(\ee))_{\SC}$.
\end{lemma}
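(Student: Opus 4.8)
The statement is a direct comparison between two bilinear forms, so the natural approach is to unpack both sides on the generators $1_{\CF_i}$ of $\BoN^{\underline{\CF}}$ and check equality there; by bilinearity this suffices. First I would recall that for the derived preprojective algebra $\SG_2(\BoC Q_{\underline{\CF}})$ the Euler form is, by the computation quoted near the end of \S\ref{subsection:QandR}, the symmetrised Euler form of the quiver $Q_{\underline{\CF}}$, i.e. $(\dd,\ee)_{\SG_2(\BoC Q_{\underline{\CF}})} = (\dd,\ee)_{Q_{\underline{\CF}}}$, which on basis vectors reads $(1_i,1_j)_{Q_{\underline{\CF}}} = 2\delta_{ij} - \#\{\text{arrows } i\to j \text{ in } \overline{Q}_{\underline{\CF}}\}$ after accounting for the doubling (so for $i\neq j$ it is $-\ext^1(\CF_i,\CF_j)-\ext^1(\CF_j,\CF_i)$, and for $i=j$ it is $2-\ext^1(\CF_i,\CF_i)$, using that $\overline{Q}_{\underline{\CF}}$ has $\ext^1(\CF_i,\CF_j)$ arrows from $i$ to $j$).

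**Key steps.** The second step is to compute $(\lambda_{\underline{\CF}}(1_{\CF_i}),\lambda_{\underline{\CF}}(1_{\CF_j}))_{\SC} = ([\CF_i],[\CF_j])_{\SC}$ directly from the definition of the (symmetrised) Euler form of $\SC$ as $(\CF,\CG)_{\SC} = \chi(\RHom_{\SC}(\CF,\CG)) + \chi(\RHom_{\SC}(\CG,\CF))$, where $\chi$ is the alternating sum of $\dim\Ext^k$. Since $\CA$ is $2$-Calabi--Yau and the $\CF_i$ are simple, $\Hom(\CF_i,\CF_j)=\delta_{ij}\BoC$, $\Ext^2(\CF_i,\CF_j)\cong \Hom(\CF_j,\CF_i)^\vee = \delta_{ij}\BoC$ by Serre duality, and $\Ext^{\geq 3}=0$ because the collection is a $\Sigma$-collection (each $\Ext^*_{\SC}(\CF_i,\CF_i)\cong \HO^*(S,\BoC)$ for a closed surface $S$, so it is concentrated in degrees $0,1,2$, and the off-diagonal Homs vanish). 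Hence $(\CF_i,\CF_j)_{\SC} = 2\delta_{ij} - \ext^1(\CF_i,\CF_j) - \ext^1(\CF_j,\CF_i)$, which matches the quiver side exactly. Then bilinearity of both forms propagates the equality from basis vectors to all of $\BoN^{\underline{\CF}}$, giving the claim; note also that $\lambda_{\underline{\CF}}$ being a monoid morphism is what lets us pull back cleanly.

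**Main obstacle.** The only real subtlety is justifying that the Euler pairing on $\SC$ is \emph{finite} and computed by exactly these $\Ext$-groups — i.e. that $\Ext^{\geq 3}_{\SC}(\CF_i,\CF_j)=0$ and that Serre duality ($\Ext^k(\CF_i,\CF_j)\cong \Ext^{2-k}(\CF_j,\CF_i)^\vee$) applies on the nose. This is precisely what the $\Sigma$-collection hypothesis together with the right $2$-Calabi--Yau structure (Assumption~\ref{BPS_cat_assumption}, via the relative $2$CY structure of \cite{brav2019relative}) buys us; I would cite these and the definition of $\Sigma$-object in \S\ref{subsubsection:extquiver} rather than reprove them. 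A minor bookkeeping point is keeping the conventions straight between $\overline{Q}_{\underline{\CF}}$ (which already has $\ext^1(\CF_i,\CF_j)$ arrows $i\to j$) and its presentation as the double of $Q_{\underline{\CF}}$, but this is purely combinatorial and causes no trouble once one writes $(1_i,1_j)_{Q_{\underline{\CF}}}$ out explicitly.
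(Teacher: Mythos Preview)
Your approach is exactly what the paper does: its proof is the single sentence ``This is a direct calculation using the definition of the Ext-quiver,'' and your proposal is a correct fleshing-out of that calculation by checking on basis vectors and extending by bilinearity.

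One bookkeeping slip to fix: in the paper $(-,-)_{\SC}$ is the ordinary Euler pairing $\chi(\RHom_{\SC}(-,-))$, which is \emph{already} symmetric by the 2CY duality $\Ext^k(\CF_i,\CF_j)\cong\Ext^{2-k}(\CF_j,\CF_i)^\vee$; it is not the sum $\chi(\RHom(\CF,\CG))+\chi(\RHom(\CG,\CF))$. With the correct convention one gets $([\CF_i],[\CF_j])_{\SC}=2\delta_{ij}-\ext^1(\CF_i,\CF_j)$, and on the quiver side $(1_i,1_j)_{Q_{\underline{\CF}}}=2\delta_{ij}-(a_{ij}+a_{ji})=2\delta_{ij}-\ext^1(\CF_i,\CF_j)$ since $a_{ij}+a_{ji}$ equals the number of arrows $i\to j$ in the double $\overline{Q}_{\underline{\CF}}$. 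Your off-diagonal expression $-\ext^1(\CF_i,\CF_j)-\ext^1(\CF_j,\CF_i)$ double-counts (these two numbers are equal, not summed). Once this is corrected the two sides match on the nose and the rest of your argument goes through unchanged.
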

\begin{proof}
 This is a direct calculation using the definition of the Ext-quiver (\S\ref{subsubsection:extquiver}).
\end{proof}

We now define the sets of primitive positive roots $\Sigma_{\CA}$ and simple positive roots $\Phi_{\CA}^+$ for $\CA$ by
\[
 \Sigma_{\CA}\coloneqq \Sigma_{\pi_0(\CM_{\CA}),(-,-)_{\SC}};\quad\quad
 \Phi_{\CA}^+\coloneqq \Phi_{\pi_0(\CM_{\CA}),(-,-)_{\SC}}^+
\]
according to \S\ref{subsection:roots} for the monoid with bilinear form $(\pi_0(\CM_{\CA}),(-,-)_{\SC})$. We also have the roots for $\K(\CA)$, $\Sigma_{\K(\CA)}$ and $\Phi^+_{\K(\CA)}$.

\begin{proposition}
\label{proposition:rootsneighbourhood}
 \begin{enumerate}
 \item $\Sigma_{\CA}$ is the set of connected components $a\in\pi_0(\CM)$ such that $\CA$ has simple objects of class $a$, or equivalently such that $\JH_{\CA,a}$ is a $\mathbf{G}_m$-gerbe over an open subset of $\CM_{\CA,a}$,
 \item $\lambda_{\underline{\CF}}^{-1}(\Sigma_{\CA})=\Sigma_{\Pi_{Q_{\underline{\CF}}}}$,
 \item $\lambda_{\underline{\CF}}^{-1}(\Phi^+_{\CA})=\Phi^+_{\Pi_{Q_{\underline{\CF}}}}$. This last equality is compatible with the decompositions of simple positive roots into real, isotropic and hyperbolic roots, that is, $\lambda_{\underline{\CF}}^{-1}(\Phi^{+,\star}_{\CA})=\Phi^{+,\star}_{\Pi_{Q_{\underline{\CF}}}}$ for $\star\in\{\real, \iso,\hyp\}$.
 \end{enumerate}
\end{proposition}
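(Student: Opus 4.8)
The plan is to reduce all three parts to the corresponding facts for preprojective algebras of quivers --- that $\Sigma_{\Pi_Q}$ is exactly the set of dimension vectors admitting a simple $\Pi_Q$-representation, and that for $\dd\in\Sigma_{\Pi_Q}$ the variety $\mu_{Q,\dd}^{-1}(0)$, hence also $\CM_{\Pi_Q,\dd}$, is irreducible, both due to Crawley--Boevey \cite{crawley2001geometry} (together with the Lemma of \S\ref{subsection:rootsforPi}) --- by means of the local neighbourhood theorem (Theorem~\ref{theorem:neighbourhood}). Fix notation: for a closed point $x\in\CM_\CA$ whose semisimple object is $\bigoplus_i\CF_i^{\oplus m_i}$, put $\underline{\CF}_x=\{\CF_i\}$, let $Q_x$ be a half of its Ext-quiver and $\dd_x=(m_i)_i$, so that $\imath_{\underline{\CF}_x}(\dd_x)=x$; write $\mu_x\colon\BoN^{\underline{\CF}_x}\to\pi_0(\CM_\CA)$ for the composite of $\imath_{\underline{\CF}_x}$ with $\CM_\CA\to\pi_0(\CM_\CA)$, so that $\mu_x(\dd_x)$ is the connected component through $x$ and $\mu_x^{*}(-,-)_\SC=(-,-)_{\SG_2(\BoC Q_x)}$ by Lemma~\ref{lemma:pullbackEulerform}. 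The key first step is the \emph{local} assertion: $\CA$ has simple objects arbitrarily close to $x$ (equivalently $x$ lies in the closure of the locus $\CM_\CA^{\simple}$ of simple objects) if and only if $\dd_x\in\Sigma_{\Pi_{Q_x}}$. Indeed Theorem~\ref{theorem:neighbourhood} identifies an analytic neighbourhood of $x$ in $\CM_\CA$ with one of the origin $0_{\dd_x}$ in $\CM_{\Pi_{Q_x},\dd_x}$ compatibly with the maps from the moduli stacks, so simple objects of $\CA$ near $x$ correspond to simple $\Pi_{Q_x}$-representations of dimension $\dd_x$ near $0_{\dd_x}$; by Crawley--Boevey such representations exist precisely when $\dd_x\in\Sigma_{\Pi_{Q_x}}$, and when they exist they exist arbitrarily close to $0_{\dd_x}$ because the $\BoC^{*}$-action rescaling all matrices in $\mu_{Q_x,\dd_x}^{-1}(0)$ contracts it onto the origin while preserving submodule lattices, hence the (open, nonempty) simple locus.

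The second step is that the open locus $\CM_{\CA,m}^{\simple}\subset\CM_{\CA,m}$ is dense as soon as it is nonempty. If its closure $Z$ were a proper subset, connectedness of $\CM_{\CA,m}$ gives a point $w\in Z$ lying also in $\overline{\CM_{\CA,m}\setminus Z}$; but $w\in Z$ forces, by the first step, $\dd_w\in\Sigma_{\Pi_{Q_w}}$, whence $\CM_{\Pi_{Q_w},\dd_w}$ is irreducible, so by Theorem~\ref{theorem:neighbourhood} the scheme $\CM_{\CA,m}$ is analytically irreducible at $w$ and therefore lies on a unique irreducible component through $w$, contradicting $w\in\overline{\CM_{\CA,m}\setminus Z}$. (This density, with $\dim\CM_{\CA,m'}=2$ for $m'$ a primitive isotropic root, is essentially Proposition~\ref{proposition:geometrygoodmodspace}.) Part~(1) now follows. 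The equivalence of the two geometric conditions is formal: $\CM_{\CA,m}^{\simple}$ is open and over it $\JH_{\CA,m}$ is a $\mathbf{G}_m$-gerbe since a semisimple object has automorphism group $\mathbf{G}_m$ exactly when it is simple. For the identification with $\Sigma_\CA=\Sigma_{\pi_0(\CM_\CA),(-,-)_\SC}$: if $m\in\Sigma_\CA$ pick any $x\in\CM_{\CA,m}$; a nontrivial decomposition of $\dd_x$ in $\BoN^{\underline{\CF}_x}$ pushes forward along $\mu_x$ to a nontrivial decomposition of $m$ with the same values of the form, so the inequalities defining $m\in\Sigma_\CA$ force $\dd_x\in\Sigma_{\Pi_{Q_x}}$ and the first step produces a simple object of class $m$. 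Conversely, given a simple object $\CG$ of class $m$ and a nontrivial decomposition $m=\sum_{j=1}^{s}n_j$ in $\pi_0(\CM_\CA)$, choose points $x_j\in\CM_{\CA,n_j}$ and set $y=x_1\oplus\dots\oplus x_s\in\CM_{\CA,m}$; then $Q_y$ contains each $Q_{x_j}$ as a full subquiver and $\dd_y$ is the corresponding sum of the $\dd_{x_j}$. Density of $\CM_{\CA,m}^{\simple}$ puts $y$ in its closure, so $\dd_y\in\Sigma_{\Pi_{Q_y}}$ by the first step, and applying the defining inequality of $\Sigma_{\Pi_{Q_y}}$ to the nontrivial decomposition $\dd_y=\sum_j\dd_{x_j}$ together with Lemma~\ref{lemma:pullbackEulerform} gives $2-(m,m)_\SC>\sum_j\bigl(2-(n_j,n_j)_\SC\bigr)$; the remaining inequality $2-(m,m)_\SC\ge 0$ is automatic, as $(m,m)_\SC=2\dim\End(\CG)-\dim\Ext^1_{\SC}(\CG,\CG)\le 2$. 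Hence $m\in\Sigma_\CA$.

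For (2), chain the equivalences: $\dd\in\Sigma_{\Pi_{Q_{\underline{\CF}}}}$ $\iff$ $\Pi_{Q_{\underline{\CF}}}$ has a simple representation of dimension $\dd$ (Crawley--Boevey and the Lemma of \S\ref{subsection:rootsforPi}) $\iff$ $\imath_{\underline{\CF}}(\dd)$ lies in the closure of $\CM_\CA^{\simple}$ (first step, applied at $x=\imath_{\underline{\CF}}(\dd)$) $\iff$ $\CA$ has a simple object of class $\mu_{\underline{\CF}}(\dd)$ (using density) $\iff$ $\mu_{\underline{\CF}}(\dd)\in\Sigma_\CA$ (part~(1)), where $\mu_{\underline{\CF}}=p\circ\imath_{\underline{\CF}}$ and $\lambda_{\underline{\CF}}=\cl\circ\mu_{\underline{\CF}}$; applying Lemma~\ref{pb_root_lemma} to the surjection $\cl\colon\pi_0(\CM_\CA)\to\K(\CA)$ (so that $\Sigma_\CA=\cl^{-1}(\Sigma_{\K(\CA)})$) rewrites $\mu_{\underline{\CF}}^{-1}(\Sigma_\CA)$ as $\lambda_{\underline{\CF}}^{-1}(\Sigma_{\K(\CA)})$. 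Part~(3) follows from (2), the description $\Phi^{+}_{M,(-,-)}=\Sigma_{M,(-,-)}\cup\{l\,m\mid l\ge 1,\ m\in\Sigma_{M,(-,-)},\ (m,m)=0\}$, and Lemma~\ref{lemma:pullbackEulerform} (which makes $\lambda_{\underline{\CF}}$ preserve $q$-isotropy and hence the $\real/\iso/\hyp$ trichotomy); in the isotropic case a $q$-isotropic $\dd$ with $\lambda_{\underline{\CF}}(\dd)=l\,m'$, $m'$ primitive isotropic, must be $l$ times a single primitive isotropic root of $Q_{\underline{\CF}}$, which one reads off from the description of $\CM_{\CA,lm'}$ and the small-diagonal immersion $u_{lm'}$ of \S\ref{subsubsection:BPSalg2CY} (Proposition~\ref{proposition:geometrygoodmodspace}). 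The step I expect to be the main obstacle is the density assertion of the second paragraph: transporting Crawley--Boevey's irreducibility of $\mu^{-1}(0)$ through the analytic-local model to the a priori possibly reducible connected components $\CM_{\CA,m}$; once this is in place, the rest is bookkeeping with Theorem~\ref{theorem:neighbourhood} and Lemmas~\ref{lemma:pullbackEulerform} and~\ref{pb_root_lemma}.
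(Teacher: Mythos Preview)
Your approach is the paper's own: reduce to the preprojective case via Theorem~\ref{theorem:neighbourhood} and invoke Crawley--Boevey together with Lemma~\ref{lemma:pullbackEulerform}. The paper's proof is three terse lines, and you have supplied the actual mechanism---in particular the density of the simple locus in $\CM_{\CA,m}$, which the paper defers to Proposition~\ref{proposition:geometrygoodmodspace}. Your density argument is correct: since $\CM_{\CA,m}^{\simple}$ is open, its closure $Z$ is a union of irreducible components, so a point $w\in Z\cap\overline{\CM_{\CA,m}\setminus Z}$ would lie on two distinct components, contradicting analytic irreducibility at $w$. For part~(2) you are in fact more careful than the paper: the paper invokes only Lemma~\ref{pb_root_lemma}, but that lemma is stated for a \emph{surjective} morphism of monoids, which $\lambda_{\underline{\CF}}$ is not; your route through the geometric characterisation in (1) supplies the inclusion $\Sigma_{\Pi_{Q_{\underline{\CF}}}}\subset\lambda_{\underline{\CF}}^{-1}(\Sigma_\CA)$ that the combinatorial lemma alone cannot give.

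There is one genuine gap, in your treatment of the isotropic case of (3). You claim that an isotropic $\dd$ with $\lambda_{\underline{\CF}}(\dd)=lm'$ (for $m'\in\Sigma_\CA$ isotropic) must itself be $l$ times a primitive isotropic root of $Q_{\underline{\CF}}$, and appeal to Proposition~\ref{proposition:geometrygoodmodspace}. This step fails. Take $\CA=\Rep\Pi_J$ for $J$ the Jordan quiver, and let $\CF_1,\CF_2$ be two distinct one-dimensional simples (points of $\BoA^2$); then $\Ext^1(\CF_1,\CF_2)=0$, so $Q_{\underline{\CF}}$ is two disjoint Jordan quivers, and $\dd=(1,1)$ satisfies $\lambda_{\underline{\CF}}(\dd)=2\in\Phi^+_\CA$ while $\dd\notin\Phi^+_{\Pi_{Q_{\underline{\CF}}}}=\{(l,0),(0,l):l\geq 1\}$. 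So the inclusion $\lambda_{\underline{\CF}}^{-1}(\Phi^+_\CA)\subset\Phi^+_{\Pi_{Q_{\underline{\CF}}}}$ is not true as stated; this is an issue with the proposition itself rather than with your strategy, and only the opposite inclusion (which you prove correctly) is used downstream, e.g.\ in Lemma~\ref{lemma:isoroots} and Proposition~\ref{proposition:compatibilityFreealgebras}.
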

\begin{proof}
 \begin{enumerate}
 \item This is known for $\CA=\Rep\Pi_Q$ by \cite[Theorem 1.2]{crawley2001geometry}. Since the stabilizer of a simple representation is $\mathbf{G}_m$, $\CA$ has a simple object of class $a$ if and only if $\JH_a$ is a $\mathbf{G}_m$-gerb over an open subset of $\CM_a$, this condition can be checked analytically locally and so (1) is a consequence of the local neighbourhood theorem and Lemma~\ref{lemma:pullbackEulerform}
 \item By $(1)$, the set $\Sigma_{\CA}$ is the set of connected components $a\in\pi_0(\CM_{\CA})$ over which $\JH_{\CA}$ is generically a $\mathbf{G}_m$-gerbe. This condition is preserved under restriction of $\JH_{\CA}$ to an \'etale neighbourhood of $\CM_{\CA,a}$. By the local neighbourhood theorem, if $\dd\in\BoN^{}$, then an \'etale neighbourhood of $\JH_{\Pi_{Q_{\underline{\CF}}}}$ around $0\in\CM_{\Pi_{Q_{\underline{\CF}}},\dd}$ is isomorphic to an \'etale neighbourhood of $\JH_{\CA}$ around the point corresponding to $\bigoplus_{i=1}^r\CF_i^{\oplus d_i}$. The point $(2)$ follows.
 \item This is a consequence of (1) and Lemma~\ref{lemma:pullbackEulerform}.
 \end{enumerate}
\end{proof}

\subsection{Geometry of the good moduli space}
\label{subsection:geometrygoodmodulispace}
Using the local neighbourhood theorem (Theorem~\ref{theorem:neighbourhood}) and the geometric properties of the moduli spaces of representations of preprojective algebras \cite{crawley2001geometry}, we derive geometric properties of the good moduli space of $2$-Calabi--Yau categories.

We define the function
\[
 p\colon\pi_0(\CM_{\CA})\rightarrow\BoZ;\quad\quad
  a        \mapsto  2-(a,a)_{\SC}.
\]
\begin{proposition}
\label{proposition:geometrygoodmodspace}
 \begin{enumerate}
 \item \label{item:geoimrealroot}Let $a\in\Sigma_{\CA}$. Then $\CM_{\CA,a}$ is irreducible of dimension $p(a)$. In particular, for $a\in\Sigma_{\CA}$ such that $(a,a)=0$, $\dim\CM_{\CA,a}=2$ and for $a\in\Phi_{\CA}^{+,\real}$, $\CM_{\CA,a}=\pt$,
 \item \label{item:irrdim}Let $a\in\Phi_{\CA}^{+,\iso}$, $a=la'$ with $l\in\BoN$ and $a'\in\Sigma_{\CA}$. Then $\CM_{\CA,la}$ is irreducible of dimension $2l$.
 \item \label{item:reglocus} Let $a\in\Sigma_{\CA}$ be an isotropic class. Then for any $l\in \BoN$ the map
 \[
 S^l\CM_{\CA,a}\rightarrow\CM_{\CA,la}
 \]
given by the direct sum induces an isomorphism between the underlying reduced schemes.
 \item \label{item:genstab}Let $a\in\Phi_{\CA}^{+,\iso}$. We write $a=la'$ for $a'\in\Sigma_{\CA}$ and $l\in\BoN_{\geq 1}$. Then, the stabilizer of a general element $x\in\CM_{\CA,a}$ is isomorphic to $(\BoC^*)^l$.
 \end{enumerate}
\end{proposition}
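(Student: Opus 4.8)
The plan is to deduce all four statements from the corresponding facts about moduli of representations of preprojective algebras, proved by Crawley-Boevey in \cite{crawley2001geometry, crawley2002decomposition}, by transporting them through the local neighbourhood theorem (Theorem~\ref{theorem:neighbourhood}). For a closed point $x\in\CM_{\CA,a}$, representing a semisimple object $\bigoplus_i\CF_i^{\oplus d_i}$, write $(Q_x,\dd_x)$ for the attached Ext-quiver datum, so that Theorem~\ref{theorem:neighbourhood} supplies an analytic-local isomorphism $(\CM_{\CA,a},x)\cong(\CM_{\Pi_{Q_x},\dd_x},0_{\dd_x})$ with $\lambda_{\underline{\CF}}(\dd_x)=a$; by Lemma~\ref{lemma:pullbackEulerform} the Euler forms match, and by Proposition~\ref{proposition:rootsneighbourhood} one has $\dd_x\in\Sigma_{\Pi_{Q_x}}$ when $a\in\Sigma_\CA$ and $\dd_x\in\Phi^+_{\Pi_{Q_x}}$ when $a\in\Phi^+_\CA$. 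Since dimension, reducedness, normality and analytic-local irreducibility are all local in the analytic topology, the corresponding local statements for $\CM_{\Pi_{Q_x},\dd_x}$ transfer verbatim.

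First, for part~(\ref{item:geoimrealroot}): by \cite{crawley2001geometry}, for $\dd\in\Sigma_{\Pi_Q}$ the space $\CM_{\Pi_Q,\dd}$ is a reduced, normal, irreducible variety of dimension $2-(\dd,\dd)_{\SG_2(\BoC Q)}$. Locality of dimension and normality then gives that $\CM_{\CA,a}$ has dimension $2-(\dd_x,\dd_x)_{\SG_2(\BoC Q_x)}=2-(a,a)_\SC=p(a)$ everywhere and is analytically locally irreducible (its complete local rings are complete normal local rings, hence domains). As $a\in\Sigma_\CA\subset\pi_0(\CM_\CA)$, the finite-type scheme $\CM_{\CA,a}$ is a single connected component of $\CM_\CA$; a connected scheme all of whose local rings are domains has pairwise disjoint closed irreducible components, hence is irreducible. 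The two special cases follow since $(a,a)_\SC=0$ forces the dimension to be $2$, and $a\in\Phi^{+,\real}_\CA$ forces $(a,a)_\SC=2$, so $\CM_{\CA,a}=\pt$.

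Then, for part~(\ref{item:reglocus}): the $l$-fold direct sum $(\CM_{\CA,a})^{\times l}\to\CM_{\CA,la}$ is finite by Assumption~\ref{ds_fin} and $S_l$-equivariant, hence descends to a finite morphism $\sigma\colon S^l\CM_{\CA,a}\to\CM_{\CA,la}$, and by part~(\ref{item:geoimrealroot}) the source $S^l\CM_{\CA,a}$ is an irreducible normal variety of dimension $2l$. On the preprojective side, Crawley-Boevey's canonical-decomposition theorem \cite{crawley2002decomposition} shows that the canonical decomposition of $l\dd$, for $\dd$ an indivisible isotropic root, is $(\dd,\dots,\dd)$, i.e. the generic semisimple $\Pi_Q$-module of dimension $l\dd$ is a direct sum of $l$ pairwise non-isomorphic $\dd$-dimensional simples; together with the normality of $\CM_{\Pi_Q,l\dd}$ (\cite{crawley2001geometry}) this shows the corresponding sum morphism $S^l\CM_{\Pi_Q,\dd}\to\CM_{\Pi_Q,l\dd}$ is finite and restricts to an isomorphism over the dense open locus of sums of $l$ distinct simples, hence, being a finite birational morphism onto a normal variety, is an isomorphism. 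Transferring this through Theorem~\ref{theorem:neighbourhood} at each point of $\CM_{\CA,la}$, and checking that $\sigma$ is compatible with the local models, shows $\sigma$ is an isomorphism onto $(\CM_{\CA,la})_{\red}$. Part~(\ref{item:irrdim}) is then immediate: as a reduced scheme $\CM_{\CA,la}$ is $S^l$ of an irreducible surface, hence irreducible of dimension $2l$.

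Finally, for part~(\ref{item:genstab}), write $a=la'$ with $a'\in\Sigma_\CA$ isotropic. By Proposition~\ref{proposition:rootsneighbourhood}(1) the locus of simple objects is a dense open subscheme of $\CM_{\CA,a'}$ (using part~(\ref{item:geoimrealroot}) for irreducibility), so by part~(\ref{item:reglocus}) the general point of $\CM_{\CA,a}$ corresponds to $\bigoplus_{i=1}^l T_i$ with $T_1,\dots,T_l$ pairwise non-isomorphic simple objects of class $a'$; Schur's lemma in the finite-length category $\CA$ gives $\Hom(T_i,T_j)=0$ for $i\neq j$, so the stabilizer of this point of $\FM_\CA$ is $\prod_i\Aut(T_i)=(\BoC^*)^l$. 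The step I expect to be genuinely non-formal is part~(\ref{item:reglocus}): it needs the precise form of the canonical decomposition together with the normality input, and — more delicately — one must check that the \emph{global} morphism $\sigma$ is identified with its preprojective counterpart under Theorem~\ref{theorem:neighbourhood} even though the Ext-quiver $Q_x$ at a point of $\CM_{\CA,la}$ may be disconnected and the multiplicity $l_x$ with $\dd_x=l_x\dd_x'$ need not equal $l$; keeping careful track of reducedness of the good moduli space also requires a little attention.
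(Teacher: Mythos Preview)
Your approach is essentially the paper's: reduce everything to Crawley-Boevey's results for preprojective algebras via the local neighbourhood theorem. The one substantive difference is in the logical order and a missing elementary ingredient for part~(\ref{item:reglocus}).

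The paper proves (\ref{item:irrdim}) \emph{before} (\ref{item:reglocus}), not after, and then uses (\ref{item:irrdim}) together with the trivial observation that $\sigma\colon S^l\CM_{\CA,a}\to\CM_{\CA,la}$ is \emph{injective on closed points} (semisimple objects are determined by their simple constituents with multiplicity) to conclude that $\sigma$ is a proper bijection between irreducible varieties of the same dimension $2l$, hence a homeomorphism. With the homeomorphism in hand, the only thing left to check via Theorem~\ref{theorem:neighbourhood} is that $\sigma$ induces isomorphisms on completed local rings; this is exactly the preprojective statement from \cite{crawley2002decomposition}. This neatly dissolves the worry you raise at the end about matching the global $\sigma$ with its local model and about bookkeeping of $l_x$ versus $l$: once you know $\sigma$ is already a homeomorphism, no global-to-local gluing of the map itself is required. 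Your route---deducing (\ref{item:irrdim}) from (\ref{item:reglocus}) and arguing ``finite birational onto normal is an isomorphism'' on the preprojective side---is also valid, but note that normality of $\CM_{\Pi_Q,l\dd}$ for $l\dd\notin\Sigma_{\Pi_Q}$ is not in \cite{crawley2001geometry}; the paper sidesteps this by citing \cite{crawley2002decomposition} directly for the isomorphism.
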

\begin{proof}
The property \eqref{item:geoimrealroot} is true for $\CA=\Rep\Pi_Q$ by \cite[Theorem 1.2]{crawley2001geometry} and is easily deduced for any $\CA$ by the local neighbourhood theorem (Theorem~\ref{theorem:neighbourhood}). Similarly, \eqref{item:irrdim} is true for preprojective algebras of quivers and since irreducibility and dimension of a connected component can be checked in the neigbourhoods of closed points, we obtain the statement for any $\CA$ by Theorem \ref{theorem:neighbourhood}. \eqref{item:genstab} is a direct consequence of \eqref{item:reglocus}. Therefore, we finish by proving \eqref{item:reglocus}. The direct sum gives a finite map $\oplus\colon\CM_{\CA,a}^l\rightarrow \CM_{\CA,la}$ (by Assumption~\ref{ds_fin}). It is $S_l$-invariant and the induced map $\oplus\colon S^l\CM_{\CA,a}\rightarrow\CM_{\CA,la}$ is proper and injective.  By (1) and (2), the algebraic varieties $(S^l\CM_{\CA,a})_{\red}$ and $(\CM_{\CA,la})_{\red}$ are of the same dimension $2l$. Therefore, $\oplus$ induces a homeomorphism $(S^l\CM_{\CA,a})_{\red}\rightarrow(\CM_{\CA,la})_{\red}$ between these two algebraic varieties. By \cite{crawley2002decomposition}, the map of the Proposition $S^l\CM_{\Pi_Q,\dd}\rightarrow \CM_{\Pi_Q,l\dd}$ induces an isomorphism between the underlying reduced schemes when $\CA=\Rep\Pi_Q$ and $\dd\in\BoN^{Q_0}$ is an indivisible imaginary root and so, by the local neighbourhood theorem, the morphism $(S^l\CM_{\CA,a})_{\red}\rightarrow(\CM_{\CA,la})_{\red}$ induces isomorphisms between the completions of local rings at all closed points. Therefore, it is an isomorphism.
\end{proof}

Let $a\in\Sigma_{\CA}\cap\Phi_{\CA}^{+,\iso}$ be an indivisible isotropic root. We let $\CM_{\CA,a}^{\simp}$ be the locus of simple objects. For $l\geq 1$ we let $\CM_{\CA,la}'\subset \CM_{\CA,la}$ be the locus of semisimple objects of $\CA$ whose simple factors are of class $a$ (so $\CM_{\CA,a}^{\simp}=\CM_{\CA,a}'$). By Proposition~\ref{proposition:geometrygoodmodspace} \eqref{item:reglocus}, $\CM_{\CA,la}'$ is open in $\CM_{\CA,la}$. We let $\FM_{\CA,la}'\subset\FM_{\CA,la}$ be the open substack parametrising objects of $\CA$ whose simple subquotients are of class $a$: we have a Cartesian diagram
\[
 \begin{tikzcd}
	{\FM'_{\CA,la}} & {\FM_{\CA,la}} \\
	{\CM'_{\CA,la}} & {\CM_{\CA,la}}
	\arrow["{\JH_{\CA,la}}", from=1-2, to=2-2]
	\arrow["{\JH'_{\CA,la}}"', from=1-1, to=2-1]
	\arrow["{\tilde{\jmath}}"', from=2-1, to=2-2]
	\arrow["\jmath", from=1-1, to=1-2]
	\arrow["\lrcorner"{anchor=center, pos=0.125}, draw=none, from=1-1, to=2-2]
\end{tikzcd}
\]
and the isomorphism of Proposition~\ref{proposition:geometrygoodmodspace} \eqref{item:reglocus} induces an isomorphism of reduced schemes $(\CM'_{\CA,la})_{\red}\cong (S^l\CM_{\CA,a}')_{\red}$.

\section{The cohomological Hall algebra and the BPS algebra}
We let $\CA$ be a $2$-Calabi--Yau Abelian category, and $\JH\colon\FM_{\CA}\rightarrow\CM_{\CA}$ be its moduli stack of objects and good moduli space as in \S\ref{section:modulistack2dcats}.

\subsection{The cohomological Hall algebra}
\label{subsection:theCoHA}
In \cite[\S 7]{davison2022BPS}, we define the \emph{relative cohomological Hall algebra} of $\CA$ as an algebra structure on the complex of mixed Hodge modules $\ulrelCoHA_{\CA}\coloneqq (\JH_{\CA})_*\BD\ulBoQ_{\FM_{\CA}}^{\vir}$. Here, $\BoQ_{\FM_{\CA}}^{\vir}\coloneqq\BoQ_{\FM_{\CA}}\otimes\BoL^{(-,-)_{\SC}/2}$  where $(-,-)_{\SC}$ is the Euler form of the dg-category $\SC$.

\begin{theorem}[\cite{davison2022BPS}]
Under Assumptions \ref{p_assumption}-\ref{BPS_cat_assumption} there is a virtual pullback along $q$ and a pushforward along $p$, yielding the cohomological Hall algebra product
\begin{equation}
\label{eq:CoHAproduct}
\mathfrak{m} \colon \JH_* \BD\ulBoQ_\Mst^{\vir} \boxdot \JH_* \BD \ulBoQ_{\Mst}^{\vir}\coloneqq \oplus_*(\JH\times\JH)_* \BD\ulBoQ^{\vir}_{\Mst\times\Mst} \longto \JH_* \BD\ulBoQ_{\Mst}^{\vir}.
\end{equation}
\end{theorem}

\begin{assumption}{7}
\label{ass:associativity}
The cohomological Hall algebra product $\mathfrak{m}$ endows the mixed Hodge module complex $\JH_* \BD \ul{\BoQ}_{\Mst}^{\vir}$ 
with the structure of an associative algebra in the symmetric monoidal category $(\D^{+}(\MHM(\CM)),\boxdot)$.
\end{assumption}

\begin{remark}
Assumption~\ref{ass:associativity} is shown to hold for 2-Calabi--Yau categories of geometric and algebraic origin in the appendix of \cite{davison2022BPS}.
\end{remark}

This means that that the morphism \eqref{eq:CoHAproduct} $
 \mult\colon\ulrelCoHA_{\CA}\boxdot\ulrelCoHA_{\CA}\rightarrow\ulrelCoHA_{\CA}$ satisfies the standard axioms of associativity and unitality in monoidal categories, where the tensor structure $\boxdot$ on $\CD^+(\MHM(\CM))$ is defined as in \S\ref{subsection:GKMtensorcategories}. The product is defined via (virtual) pullback and pushforward from the stack of short exact sequences of objects in $\CA$. The mixed Hodge module $\ulBoQ_{\FM_{\CA}}^{\vir}\in\CD^+(\MHM(\FM_{\CA}))$ is the twisted constant mixed Hodge module $\ulBoQ_{\FM_{\CA,a}}\otimes\mathbf{L}^{(a,a)_{\SC}/2}$ on the connected component $\FM_{\CA,a}$ of $\FM_{\CA}$. We are committing here a standard abuse of notation: while we do not define anywhere the derived category of mixed Hodge modules on the stack $\FM_{\CA}$, the direct image $(\JH_{\CA})_*\BD\ulBoQ_{\FM_{\CA}}^{\vir}$ is still defined, as an unbounded complex of mixed Hodge modules: see \cite[\S 2.2]{davison2020cohomological} for details.

The \emph{absolute cohomological Hall algebra} is obtained by taking the derived global sections $\HO^*(\ulrelCoHA_{\CA})$, with the multiplication $\HO^*(\mult)$. If $\CB\subset \CA$ is a Serre subcategory as in \S\ref{subsubsection:serre} and $\imath\colon \CM_{\CB}\hookrightarrow\CM_{\CA}$ is the inclusion of the submonoid parametrising semisimple objects of $\CA$ contained in $\CB$, we let $\FM_{\CB}$ be the stack defined by the following Cartesian diagram:
\[
 \begin{tikzcd}
	{\mathfrak{M}_{\mathcal{B}}} & {\mathfrak{M}_{\CA}} \\
	{\mathcal{M}_{\mathcal{B}}} & {\mathcal{M}_{\CA}.}
	\arrow[from=1-1, to=1-2]
	\arrow["\JH", from=1-2, to=2-2]
	\arrow["\imath",from=2-1, to=2-2]
	\arrow[from=1-1, to=2-1]
	\arrow["\lrcorner"{anchor=center, pos=0.125}, draw=none, from=1-1, to=2-2]
\end{tikzcd}
\]
Then, by base change, the morphism $\imath^!\mult$ gives $\imath^!\ulrelCoHA_{\CA}\in\CD^+(\MHM(\CM_{\CB}))$ an algebra structure. It is the \emph{relative cohomological Hall algebra} of $\CB$, and $\HO^*(\imath^!\ulrelCoHA_{\CA})$ is the \emph{absolute cohomological Hall algebra} of $\CB$. By base change in the diagram
\[
 \begin{tikzcd}
	{\CM_{\CB}\times \CM_{\CB}} & {\CM_{\CB}} \\
	{\CM_{\CA}\times \CM_{\CA}} & {\CM_{\CA}}
	\arrow["\imath", from=1-2, to=2-2]
	\arrow["\oplus", from=1-1, to=1-2]
	\arrow["\oplus"', from=2-1, to=2-2]
	\arrow["{\imath\times \imath}"', from=1-1, to=2-1]
	\arrow["\lrcorner"{anchor=center, pos=0.125}, draw=none, from=1-1, to=2-2],
\end{tikzcd}
\]
which we implicitly assumed to be Cartesian (we say that $\CM_{\CB}$ is a \emph{saturated submonoid} of $\CM_{\CA}$) the functor $\imath^!\colon \CD^+(\MHM(\CM_{\CA}))\rightarrow \CD^+(\MHM(\CM_{\CB}))$ is strictly monoidal.

The cases most relevant to this paper are the following.
\begin{enumerate}
 \item $\CB$ is the Serre subcategory of $\CA$ generated by a collection of pairwise non-isomorphic simple objects of $\CA$, $\underline{\CF}=\{\CF_1,\hdots,\CF_r\}$. Then, $\CM_{\CB}\cong \BoN^{\underline{\CF}}$ is a discrete submonoid of $\CM_{\CA}$. Therefore we may identify $\HO^*(\imath^!\ulrelCoHA_{\CA})=\imath^!\ulrelCoHA_{\CA}$. It is called the cohomological Hall algebra of $\underline{\CF}$. If $\CA=\Rep\Pi_Q$ and $\underline{\CF}$ is the collection of nilpotent one-dimensional representations of $\Pi_Q$, $\underline{\CF}=\CN=\{S_i\colon i\in Q_0\}$, we obtain the \emph{nilpotent cohomological Hall algebra}, $\ulrelCoHA_{\Pi_Q}^{\CN}$.
 
 \item $\CA=\Rep\Pi_Q$ and $\CB$ is the category of \emph{strictly semi-nilpotent representations} of $\Pi_Q$\footnote{A representation $M$ of $\Pi_Q$ is called \emph{strictly semi-nilpotent} if there exists a filtration $0=M_0\subset M_1\subset\hdots\subset M_r=M$ with consecutive subquotients supported at a single vertex, such that for $\alpha\in Q_1$, $x_{\alpha}M_i\subset M_i$ and for $\alpha\in Q_1^*$, or $x_{\alpha}$ not a loop, $x_{\alpha}M_i\subset M_{i-1}$.} (in the sense of \cite{bozec2020number}, see also \cite[\S 7.2.2]{davison2022BPS}). Then, $\CM_{\Pi_Q}^{\SSN}\coloneqq\CM_{\CB}$ is the closed submonoid of $\CM_{\CA}$ of semisimple representations of $\Pi_Q$ such that the only arrows of $\overline{Q}$ acting possibly nontrivially are the loops in $Q_1\subset \overline{Q}_1=Q_1\sqcup Q_1^*$. We call $\ulrelCoHA_{\Pi_Q}^{\SSN}\coloneqq \imath^!\ulrelCoHA_{\CA}$ the \emph{relative strictly semi-nilpotent cohomological Hall algebra} and $\HO^*(\ulrelCoHA_{\Pi_Q}^{\SSN})$ is the \emph{strictly semi-nilpotent cohomological Hall algebra}.
\end{enumerate}
The proof of Theorem~\ref{theorem:BPSalgisenvBorcherds} uses in a crucial way that one can describe completely explicitly the \emph{degree zero strictly semi-nilpotent cohomological Hall algebra} $\HO^0(\ulrelCoHA_{\Pi_Q}^{\SSN})$ -- see Theorem~\ref{theorem:degree0SSN}.

\subsection{The BPS algebra}
\label{subsection:BPSalg}
We let $\CA$, $\JH_{\CA}\colon\FM_{\CA}\rightarrow\CM_{\CA}$, $\ulrelCoHA_{\CA}=(\JH_{\CA})_*\BD\ulBoQ_{\FM_{\CA}}^{\vir}\in\CD^+(\MHM(\CM_{\CA}))$ and $\mult\colon\ulrelCoHA_{\CA}\boxdot\ulrelCoHA_{\CA}\rightarrow \ulrelCoHA_{\CA}$ be as in \S\ref{subsection:gsetup} and \S\ref{subsection:theCoHA}. In particular, we make Assumptions \ref{p_assumption}--\ref{BPS_cat_assumption}.

\begin{lemma}
\label{lemma:nonnegativeperverse}
 The complex of mixed Hodge modules $\ulrelCoHA_{\CA}$ is concentrated in nonnegative cohomological degrees: $\mathcal{H}^i(\ulrelCoHA_{\CA})=0$ for $i<0$.  The mixed Hodge module $\CH^{0}(\ulrelCoHA_{\CA})$ is semisimple.
\end{lemma}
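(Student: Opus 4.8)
The plan is to reduce both assertions to the case $\CA=\Rep\Pi_Q$ via the local neighbourhood theorem, and then to deduce them from dimensional reduction together with the known structure of the cohomological Hall algebra of a quiver with potential.

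First I would note that both statements are local on $\CM_{\CA}$ in the analytic topology. For the vanishing $\CH^i(\ulrelCoHA_{\CA})=0$, $i<0$, this is immediate, since the truncation functors for the standard $t$-structure on $\CD^{+}(\MHM(-))$ commute with restriction to an analytic open subset. For the semisimplicity of $\CH^0(\ulrelCoHA_{\CA})$ it suffices to prove that this mixed Hodge module is pure of weight zero: purity is a local property (the weight filtration is compatible with open restriction), and the category of pure Hodge modules of a fixed weight is semisimple by Saito's theory. By Theorem~\ref{theorem:neighbourhood}, in an analytic neighbourhood of any closed point $x\in\CM_{\CA}$ the morphism $\JH_{\CA}$ is obtained, by base change along a Cartesian square whose horizontal maps are analytic open immersions, from the good moduli space morphism $\JH_{\Pi_Q,\mm}$ for $\Rep\Pi_Q$, where $Q$ is a half of the Ext-quiver of the semisimple object parametrised by $x$. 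Since forming $\BD\ulBoQ^{\vir}$ and pushing forward along the good moduli space morphism are compatible with these base changes, $\ulrelCoHA_{\CA}$ restricts over such a neighbourhood, up to a Tate twist, to $\ulrelCoHA_{\Pi_Q}$. Hence it is enough to treat $\CA=\Rep\Pi_Q$.

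For $\CA=\Rep\Pi_Q$ I would invoke dimensional reduction \cite{davison2017critical, kinjo2022dimensional}: up to a Tate twist, $\BD\ulBoQ^{\vir}_{\FM_{\Pi_Q}}$ is the direct image to $\FM_{\Pi_Q}$ of the vanishing cycle mixed Hodge module $\phi_{W}\ulBoQ^{\vir}_{\FM_{\tilde{Q}}}$ attached to the tripled quiver $\tilde{Q}$ with its natural potential $W$, and this identification is compatible with the good moduli space morphisms; so $\ulrelCoHA_{\Pi_Q}$ is, up to twist, the relative cohomological Hall algebra $\JH_*\phi_{W}^{\vir}$ of $(\tilde{Q},W)$. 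The cohomological integrality theorem of Davison--Meinhardt \cite{davison2020cohomological}, together with the purity of the BPS sheaf \cite{davison2020bps}, then gives an isomorphism of complexes of mixed Hodge modules on $\CM_{\Pi_Q}$
\[
\JH_*\phi_{W}^{\vir}\;\cong\;\Sym_{\boxdot}\!\left(\BPS_{\Pi_Q,\Lie}^{\td}\otimes\HO(\B\BoC^*,\ulBoQ)\right),
\]
where $\BPS_{\Pi_Q,\Lie}^{\td}$ is a semisimple perverse sheaf concentrated in cohomological degree $0$. Because $\HO(\B\BoC^*,\ulBoQ)$ is concentrated in nonnegative cohomological degrees and $\oplus$ is finite, so that $\oplus_*$ is $t$-exact for the perverse $t$-structure, the functor $\Sym_{\boxdot}$ preserves the property of being concentrated in nonnegative cohomological degrees; this gives $\CH^i(\ulrelCoHA_{\Pi_Q})=0$ for $i<0$. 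Taking $\CH^0$ of the right-hand side leaves only the contributions in which every symmetric factor lies in degree $0$, so $\CH^0(\ulrelCoHA_{\Pi_Q})\cong\Sym_{\boxdot}(\BPS_{\Pi_Q,\Lie}^{\td})$, which is semisimple: a finite pushforward of a semisimple perverse sheaf is semisimple, an external product of semisimple perverse sheaves is semisimple, and passing to symmetric-group invariants yields a direct summand. In particular $\CH^0(\ulrelCoHA_{\Pi_Q})$ is pure of weight zero, which is the input needed in the previous step.

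The hard part will be the local-to-global descent of semisimplicity: one must be sure that purity of $\CH^0(\ulrelCoHA_{\CA})$ can genuinely be checked in analytic neighbourhoods, or else cite directly the global purity of $\ulrelCoHA_{\CA}$ established by the same reduction argument in \cite{davison2021purity}. One must also keep careful track of the Tate twists introduced by dimensional reduction and by the neighbourhood theorem, and of the compatibility of the cohomological Hall algebra products under all of these identifications, which is part of the construction of the relative CoHA in \cite{davison2022BPS}.
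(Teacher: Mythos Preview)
Your argument is correct and is essentially the content of the result that the paper simply cites: the paper's own proof is a one-line reference to \cite[Theorem~A]{davison2021purity}, whose proof proceeds exactly as you outline (reduce to $\Rep\Pi_Q$ via the local neighbourhood theorem, then invoke dimensional reduction and the integrality/purity results of \cite{davison2020cohomological,davison2020bps}). So you have not taken a different route; you have unpacked the cited black box.

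Two small remarks. First, your closing worry about compatibility of the CoHA \emph{products} under the various identifications is unnecessary here: the lemma concerns only the underlying complex $(\JH_{\CA})_*\BD\ulBoQ_{\FM_{\CA}}^{\vir}$, not its algebra structure, so you never need that compatibility. Second, your concern about checking purity analytically-locally is legitimate but resolvable: purity of a mixed Hodge module amounts to vanishing of the graded pieces $\mathrm{Gr}^W_i$ for $i\neq 0$, and since the weight filtration is functorial under open restriction and vanishing of a sub-MHM can be tested locally, purity is indeed an analytically local property. Alternatively one may, as you note, invoke the global purity statement of \cite{davison2021purity} directly.
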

\begin{proof}
 This is part of \cite[Theorem A]{davison2020bps}, which shows, in particular, that the mixed Hodge module $\CH^{0}(\ulrelCoHA_{\CA})$ is pure of weight zero.
\end{proof}

It follows that $\tau^{\leq 0}\ulrelCoHA_{\CA}\cong \CH^0(\ulrelCoHA_{\CA})$. Since $\boxdot$ is bi-exact, it also follows that $\tau^{\leq 0}(\ulrelCoHA_{\CA}\boxdot \ulrelCoHA_{\CA})\cong \CH^0(\ulrelCoHA_{\CA}\boxdot \ulrelCoHA_{\CA})\cong \CH^0(\ulrelCoHA_{\CA})\boxdot \CH^0(\ulrelCoHA_{\CA})$.  By taking the degree zero cohomology mixed Hodge modules, we obtain an associative algebra $(\CH^{0}(\ulrelCoHA_{\CA}),\CH^0(\mult))$ in $(\MHM(\mathcal{M}),\boxdot)$.  The operation $\CH^0(\mult)$ is associative because $\mult$ is.

We set $\ul{\BPS}_{\CA,\Alg}\coloneqq\CH^{0}(\ulrelCoHA_{\CA})$ with its algebra structure given above. This algebra object is called \emph{the relative BPS algebra}. Its cohomology (derived global sections) is denoted $\underline{\aBPS}_{\CA,\Alg}:=\HO^*(\ul{\BPS}_{\CA,\Alg})$ and called \emph{the BPS algebra}.

\begin{proposition}
The natural morphism of algebras $\underline{\aBPS}_{\CA,\Alg}\rightarrow \HO^*(\ulrelCoHA_{\CA})$ is injective.
\end{proposition}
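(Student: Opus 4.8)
The plan is to deduce injectivity from the fact that, by the decomposition theorem for $2$-Calabi--Yau categories \cite{davison2021purity}, the complex $\ulrelCoHA_{\CA}$ is pure, and hence splits as a (non-canonical) direct sum of shifts of semisimple mixed Hodge modules. First I would recall, using Lemma~\ref{lemma:nonnegativeperverse}, that $\ulrelCoHA_{\CA}$ is concentrated in nonnegative cohomological degrees with $\CH^{0}(\ulrelCoHA_{\CA})=\ul{\BPS}_{\CA,\Alg}$ semisimple; then purity gives an isomorphism
\[
\ulrelCoHA_{\CA}\;\cong\;\bigoplus_{i\geq 0}\CH^{i}(\ulrelCoHA_{\CA})[-i]
\]
in $\CD^+(\MHM(\CM_{\CA}))$. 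In particular this supplies a retraction $\rho\colon\ulrelCoHA_{\CA}\to\ul{\BPS}_{\CA,\Alg}$ onto the $i=0$ summand. The natural morphism of algebras in the statement is, by construction, the map obtained from the canonical morphism $\iota\colon\ul{\BPS}_{\CA,\Alg}=\tau^{\leq 0}\ulrelCoHA_{\CA}\to\ulrelCoHA_{\CA}$ by applying derived global sections $\HO^{*}=\HO^{*}(\CM_{\CA},-)$; injectivity will follow once $\iota$ is shown to be a split monomorphism.

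Next I would verify that $\rho$ is a left inverse of $\iota$. Under the above splitting, $\tau^{\geq 1}\ulrelCoHA_{\CA}\cong\bigoplus_{i\geq 1}\CH^{i}(\ulrelCoHA_{\CA})[-i]$, and the composite of $\iota$ with the projection onto this factor vanishes because $\Hom_{\CD^+(\MHM(\CM_{\CA}))}(\ul{\BPS}_{\CA,\Alg},\tau^{\geq 1}\ulrelCoHA_{\CA})=0$ by the axioms of the $t$-structure (the source lies in $\CD^{\leq 0}$, the target in $\CD^{\geq 1}$); thus $\iota$ factors through the $i=0$ summand. Since $\CH^{0}(\iota)=\id_{\ul{\BPS}_{\CA,\Alg}}$ and $\ul{\BPS}_{\CA,\Alg}$ is a single mixed Hodge module, this forces $\rho\circ\iota=\id$. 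Applying the exact functor $\HO^{*}$ yields $\HO^{*}(\rho)\circ\HO^{*}(\iota)=\id$ on $\underline{\aBPS}_{\CA,\Alg}=\HO^{*}(\ul{\BPS}_{\CA,\Alg})$, so the natural map $\underline{\aBPS}_{\CA,\Alg}\to\HO^{*}(\ulrelCoHA_{\CA})$ is split injective; note that neither the algebra structure nor the $\psi$-twist enters, consistent with the conventions of \S\ref{notations_subsec}.

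I expect the only genuine input to be the decomposition theorem, which I am treating as a black box; once it is invoked, the argument is formal. The two points requiring care are that the splitting of $\ulrelCoHA_{\CA}$ is non-canonical — so one must argue, as above, that the \emph{canonical} map $\iota$ nonetheless splits, which is what the vanishing of $\Hom(\ul{\BPS}_{\CA,\Alg},\tau^{\geq 1}\ulrelCoHA_{\CA})$ takes care of — and that $\CM_{\CA}$ may have infinitely many connected components, which causes no difficulty since $\underline{\aBPS}_{\CA,\Alg}$ is $\K(\CA)\times\BoZ$-graded with finite-dimensional graded pieces, so injectivity may be checked one graded degree at a time, involving only finitely many components.
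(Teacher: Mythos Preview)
Your argument is correct and essentially identical to the paper's: both invoke the decomposition theorem for 2CY categories \cite{davison2021purity} to split $\ulrelCoHA_{\CA}$ as a direct sum of its shifted cohomology objects, deduce that the inclusion $\ul{\BPS}_{\CA,\Alg}\hookrightarrow\ulrelCoHA_{\CA}$ admits a retraction, and conclude by applying $\HO^*$. You supply a little extra justification (the $t$-structure vanishing ensuring the canonical $\iota$ lands in the $i=0$ summand), but this is elaboration rather than a different route.
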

\begin{proof}
 The decomposition theorem for 2CY categories of \cite[Theorem B]{davison2021purity} asserts in particular that the underlying complex of mixed Hodge modules $\ulrelCoHA_{\CA}$ is the direct sum of its shifted cohomology mixed Hodge modules and so the natural map $\ul{\BPS}_{\CA,\Alg}\rightarrow \ulrelCoHA_{\CA}$ coming from the adjunction $\tau^{\geq 0}\rightarrow \id$ admits a retraction. Therefore, the map $\underline{\rmBPS}_{\CA,\Alg}\rightarrow\HO^*(\ulrelCoHA_{\CA})$ obtained by taking derived global sections also admits a retraction and is injective.
\end{proof}

\subsection{$\psi$-twists}
\subsubsection{Twisted multiplication}
\label{subsubsection:twistmultiplication}
As in \cite{davison2020cohomological,davison2022BPS}, we work with a twist of the multiplication of the cohomological Hall algebra by a bilinear form $\psi$. This twist is necessary to have the structure of an enveloping algebra on the BPS algebra (it appears already in the PBW theorem for CoHAs of Jacobi algebras, see \cite[\S 1.6]{davison2020cohomological}). In vague terms (made precise in \cite{hennecart2022geometric}), this twist exchanges the specialisation at $q=-1$ of the quantum group $\UEA_q(\mathfrak{g})$ with the enveloping algebra $\UEA(\mathfrak{g})$ (the specialisation $q=1$). In this section, we explain this twist.

As in \S\ref{subsubsection:BPSalg2CY} we choose a monoid $\K(\CA)$ and fix a morphism of monoids $\cl\colon\pi_0(\CM_{\CA})\rightarrow \K(\CA)$ through which the Euler form factors. We assume again that $\K(\CA)$ can be embedded in a lattice and that $\cl$ has finite fibres. We let $\psi\colon\K(\CA)\times\K(\CA)\rightarrow\BoZ/2\BoZ$ be a bilinear form such that for any $a,b\in\K(\CA)$,
\[
 \psi(a,b)+\psi(b,a)\equiv (a,b)_{\SC}\pmod{2}.
\]
Such a bilinear form exists since for any $a\in\pi_0(\CM_{\CA})$, $(a,a)_{\SC}\equiv 0\pmod{2}$. Indeed, it follows that any matrix $M$ representing the form induced by $(-,-)_{\SC}$ on $\K(\CA)$ has even entries on the diagonal, and we may define $\psi(-,-)$ via the matrix obtained by replacing all entries in $M$ above the diagonal by zeros.

For such a bilinear form $\psi$, we let $\ulrelCoHA_{\CA}^{\psi}$ be the relative cohomological Hall algebra with the multiplication $\mult_{a,b}^{\psi}\coloneqq(-1)^{\psi(a,b)}\mult_{a,b}\colon \ulrelCoHA_{\CA}^{\psi}\boxtimes\ulrelCoHA_{\CA}^{\psi}\rightarrow\ulrelCoHA_{\CA}^{\psi}$. As complexes of mixed Hodge modules on $\CM_{\CA}$, $\ulrelCoHA_{\CA}^{\psi}=\ulrelCoHA_{\CA}$. The twisted multiplication induces twisted multiplications $\mult^{\psi}$ on the relative BPS algebra $\underline{\BPS}_{\CA,\Alg}$, on the absolute CoHA $\HO^*\!\!\ulrelCoHA_{\CA}$ and the absolute BPS algebra $\underline{\rmBPS}_{\CA,\Alg}$. We denote by $\ul{\BPS}_{\CA,\Alg}^{\psi}$, $\HO^*\!\!\ulrelCoHA_{\CA}^{\psi}$ and $\underline{\rmBPS}_{\CA,\Alg}^{\psi}$ the algebra objects with the twisted multiplications.

\subsubsection{Induced twist for Ext-quivers}
\label{subsubsection:inducedtwists}
Let $\CA$ be an Abelian subcategory of a 2CY category as in \S\ref{section:modulistack2dcats}. If $\underline{\CF}$ is a collection of simple objects of $\CA$, we have the Ext-quiver $\overline{Q}_{\underline{\CF}}$ (\S\ref{subsubsection:extquiver}) and the preprojective algebra $\Pi_{Q_{\underline{\CF}}}$. Using the morphism of monoids \eqref{equation:morphismmonoids}, we obtain by pullback a bilinear form $\psi$ on $\BoN^{(Q_{\underline{\CF}})_0}$. By Lemma~\ref{lemma:pullbackEulerform}, for any $\dd,\dd'\in\BoN^{(Q_{\underline{\CF}})_0}$, $\psi(\dd,\dd')+\psi(\dd',\dd)\equiv (\dd,\dd')_{\SG_2(\BoC Q_{\underline{\CF}})}\pmod{2}$.

\subsection{Primitive summands}
\label{subsec:generators}
\subsubsection{Real and hyperbolic primitive summands}
In this section, we define the Chevalley generators for primitive positive roots $m\in \Sigma_{\K(\CA)}$.

\begin{lemma}[{\cite[Theorem 6.6]{davison2021purity}}]
\label{lemma:inclusionICBPSalg}
 For any $m\in \K(\CA)$ such that $\JH_{\CA,m}$ is a $\mathbf{G}_m$-gerb over an open subset (that is, $m\in\Sigma_{\CA}$, see \S\ref{subsection:rootsforPi}), we have a canonical monomorphism
 \[
 \ul{\IC}(\CM_{\CA,m})\rightarrow \ulBPS_{\CA,\Alg}^{(\psi)}.
 \]
\end{lemma}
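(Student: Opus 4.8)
The plan is to realise $\ul{\IC}(\CM_{\CA,m})$ as the unique full–support simple summand of $\ulBPS_{\CA,\Alg}$ over the component $\CM_{\CA,m}$. Since the statement concerns only the underlying mixed Hodge module of $\ulBPS_{\CA,\Alg}^{(\psi)}$ and not its product, the $\psi$-twist is irrelevant and I would suppress it. Working one connected component of $\CM_{\CA,m}$ at a time we may, by Lemma~\ref{pb_root_lemma} and Proposition~\ref{proposition:rootsneighbourhood}, assume the relevant class lies in $\Sigma_{\CA}$, so that (Proposition~\ref{proposition:geometrygoodmodspace}) $\CM_{\CA,m}$ is irreducible of dimension $p(m)=2-(m,m)_{\SC}$, with the open subset over which $\JH_{\CA,m}$ is a $\mathbf{G}_m$-gerbe being the dense locus $U\coloneqq\CM_{\CA,m}^{\simp}$ of simple objects.

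First I would recall, from Lemma~\ref{lemma:nonnegativeperverse} and the purity assertion in its proof, that $\ulBPS_{\CA,\Alg}=\CH^0(\ulrelCoHA_\CA)$ is a semisimple mixed Hodge module, pure of weight $0$. Decomposing $\ulBPS_{\CA,\Alg}|_{\CM_{\CA,m}}$ into simple summands $\IC_{Z}(\mathcal L)$ supported on irreducible closed $Z\subseteq\CM_{\CA,m}$, it therefore suffices to show that $\ulBPS_{\CA,\Alg}|_{U}\cong \ul{\BoQ}^{\vir}_U=\ul{\IC}(\CM_{\CA,m})|_U$ on the smooth dense open $U$; this forces exactly one summand to have full support and identifies it with $\ul{\IC}(\CM_{\CA,m})$, of multiplicity one.

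The main input is the local computation on $U$ via the local neighbourhood theorem (Theorem~\ref{theorem:neighbourhood}). For a closed point $x\in U$, the corresponding object $\CF$ is a simple $\Sigma$-object, so $\Ext^*_{\SC}(\CF,\CF)\cong\HO^*(S,\BoC)$ for a closed genus $g$ surface $S$; hence $\dim\Ext^1(\CF,\CF)=2g$ and $(m,m)_{\SC}=2-2g$. A half of the Ext-quiver of $\{\CF\}$ is the quiver $Q$ with one vertex and $g$ loops, with $\mm=1$. Since $\GL_1=\BoC^*$ acts trivially on $\BoA_{\overline{Q},1}=\BoA^{2g}$ and the moment map $\mu_{Q,1}$ vanishes identically,
\[
\FM_{\Pi_Q,1}\simeq\BoA^{2g}\times\B\BoC^*\xrightarrow{\JH_{1}}\BoA^{2g}\cong\CM_{\Pi_Q,1}
\]
is the trivial $\BoC^*$-gerbe over a smooth affine space. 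Theorem~\ref{theorem:neighbourhood} identifies $\JH_{\CA,m}$ near $x$, analytically locally, with $\JH_1$ near the origin $0_\mm$; in particular $U$ is smooth of dimension $2g=p(m)$. Pushing $\BD\ul{\BoQ}^{\vir}_{\FM_{\Pi_Q,1}}$ forward along this trivial gerbe and applying the K\"unneth formula yields $\BD\ul{\BoQ}^{\vir}_{\BoA^{2g}}\otimes\HO^*(\B\BoC^*,\ul{\BoQ})$, and since $\HO^*(\B\BoC^*,\ul{\BoQ})$ sits in nonnegative degrees with one–dimensional degree–$0$ part, taking $\CH^0$ leaves the shifted constant (hence intersection) complex of $\BoA^{2g}$. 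Transporting this back through the neighbourhood identification, and using that $\ulBPS_{\CA,\Alg}$ and $\ul{\IC}(\CM_{\CA,m})$ are both pure of weight $0$ (which pins down the Tate twist), we get $\ulBPS_{\CA,\Alg}|_U\cong\ul{\IC}(\CM_{\CA,m})|_U$.

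Combining the two parts, $\Hom(\ul{\IC}(\CM_{\CA,m}),\ulBPS_{\CA,\Alg})$ is one–dimensional, and the morphism normalised to restrict to the identity over $U$ is the desired canonical monomorphism (extended by zero to all of $\CM_{\CA}$). I expect the real difficulty to be the local step — correctly computing the pushforward of the virtual complex $\BD\ul{\BoQ}^{\vir}$ along the $\BoC^*$-gerbe supplied by Theorem~\ref{theorem:neighbourhood} and matching normalisations — together with the deep (but, here, already available) semisimplicity and purity of $\CH^0(\ulrelCoHA_\CA)$; the final patching is then formal.
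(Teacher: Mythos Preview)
Your argument is correct. The paper itself gives no proof of this lemma, merely citing \cite[Theorem 6.6]{davison2021purity}; you have reconstructed the natural argument from the tools already assembled in the paper, and it goes through. The key ingredients you invoke --- semisimplicity and purity of $\CH^0(\ulrelCoHA_\CA)$ from Lemma~\ref{lemma:nonnegativeperverse}, irreducibility of $\CM_{\CA,m}$ from Proposition~\ref{proposition:geometrygoodmodspace}, and the local neighbourhood theorem to identify the fibres of $\JH$ over the simple locus --- are exactly the right ones, and your restriction-to-$U$ strategy is the standard way to isolate a full-support simple summand of a semisimple perverse sheaf or MHM.

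Two minor remarks. First, the local neighbourhood theorem provides \emph{analytic} identifications, so strictly speaking your computation over $U$ pins down $\rat(\ulBPS_{\CA,\Alg})|_U$ rather than the MHM directly; you correctly note that purity of weight~$0$ on both sides then determines the Hodge structure (this is the passage the paper handles elsewhere, e.g.\ in the proof of Lemma~\ref{lemma:restBPSalgfibre}, by the same device). Second, you do not actually need the full strength of Theorem~\ref{theorem:neighbourhood} for this: once you know $\JH_{\CA,m}^{-1}(U)\to U$ is a $\mathbf{G}_m$-gerbe over smooth $U$, the pushforward of the virtual constant complex already factorises as (constant sheaf on $U$)$\,\otimes\,\HO^*(\B\BoC^*,\ul{\BoQ})$ without reference to any quiver model --- your detour through $\Pi_Q$ with one vertex and $g$ loops is correct but slightly more than necessary.
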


\subsubsection{Isotropic primitive summands}
\label{subsubsection:isotropicprimitivesummands}
In this section, we describe the generators for isotropic positive roots.

Let $a\in\Sigma_{\CA}$ be isotropic. We let
\[
 \Delta_l\colon\CM'_{\CA,a}\rightarrow \CM'_{\CA,la}
\]
be the diagonal inclusion (see \S\ref{subsection:geometrygoodmodulispace} for the meaning of the prime symbols).

For $a\in\Sigma_{\CA}$, such that $(a,a)_{\SC}=0$ ($a$ is isotropic), we let $\FM'_{\CA,\bullet a}=\bigsqcup_{l\geq 0}\FM'_{\CA,la}$ and similarly for $\CM'_{\CA,\bullet a}$. We let $\JH'_{\CA,\bullet a}\colon \FM'_{\CA,\bullet a}\rightarrow \CM'_{\CA,\bullet a}$ be the restriction of the map $\JH_{\CA}$. We define $\ulrelCoHA'^{\psi}_{\CA}\coloneqq \tilde{\jmath}^*\ulrelCoHA^{\psi}_{\CA}$, with $\tilde{\jmath}$ defined as in \S \ref{subsection:geometrygoodmodulispace}. The mixed Hodge module complex $\ulrelCoHA'^{\psi}_{\CA}$ inherits a multiplication morphism given by restricting $\mathfrak{m}^{\psi}\colon\ulrelCoHA^{\psi}_{\CA}\boxdot \ulrelCoHA^{\psi}_{\CA}\rightarrow \ulrelCoHA^{\psi}_{\CA}$.
\begin{theorem}
\label{theorem:BPSisotropic}
 Let $a\in\Sigma_{\CA}$ be isotropic. We have a PBW isomorphism of mixed Hodge module complexes
 \begin{equation}
 \label{IsotPBW}
\Sym_{\boxdot}\left(\bigoplus_{l\geq 1}(\Delta_l)_*\ulBoQ_{\CM'_{\CA,a}}\otimes\BoL^{-1}\otimes \HO^*_{\BoC^*}\right)\cong (\JH'_{\CA,\bullet a})_*\BD\ulBoQ_{\FM'_{\CA,\bullet a}}^{\vir}=\ulrelCoHA'^{\psi}_{\CA},
 \end{equation}
 i.e. a morphism $\bigoplus_{l\geq 1}(\Delta_l)_*\underline{\BoQ}_{\CM'_{\CA,a}}\otimes\BoL^{-1}\otimes \HO^*_{\BoC^*}\rightarrow \ulrelCoHA'_{\CA}$ inducing the above morphism via the product structure on the target. The morphism \eqref{IsotPBW} induces an isomorphism of algebra objects in $(\MHM(\CM'_{\CA,\bullet a}),\boxdot)$:
\[
 \CH^{0}((\JH'_{\CA,\bullet a})_*\underline{\BoQ}_{\FM'_{\CA,\bullet a}})=\tau^{\leq 0}(\JH'_{\CA,\bullet a})_*\BD\underline{\BoQ}_{\FM'_{\CA,\bullet a}}^{\vir}\cong \Sym_{\boxdot}\left(\bigoplus_{l\geq 1}(\Delta_l)_*\ulBoQ_{\CM'_{\CA,a}}\otimes\BoL^{-1}\right).
\]
\end{theorem}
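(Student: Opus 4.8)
The plan is to prove \eqref{IsotPBW} by reducing, via the local neighbourhood theorem, to the single category $\CA=\Rep\BoC[x,y]$, and there combining the cohomological integrality theorem (in the form available through dimensional reduction) with an explicit computation of the BPS sheaf in each degree. Write $Y\coloneqq\CM'_{\CA,a}$; by Proposition~\ref{proposition:geometrygoodmodspace} this is an irreducible surface, $\CM'_{\CA,la}\cong S^lY$ and $\CM'_{\CA,\bullet a}\cong\bigsqcup_{l\geq 0}S^lY$ as reduced schemes with $\boxdot$ the concatenation of multisets, and the restriction of $(-,-)_{\SC}$ to the components indexed by multiples of $a$ vanishes, so the $\psi$-twist is trivial on $\CM'_{\CA,\bullet a}$ and plays no role. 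Both sides of \eqref{IsotPBW}, and the algebra structure on the right, are built by virtual pullback and proper pushforward along morphisms of stacks insensitive to passing to analytic open substacks, so it suffices to establish \eqref{IsotPBW} analytic-locally on $\CM'_{\CA,\bullet a}$. By Theorem~\ref{theorem:neighbourhood}, a closed point $x\in\CM'_{\CA,la}$ corresponding to $\bigoplus_{i=1}^r\CF_i^{\oplus m_i}$ with each $\CF_i$ simple of class $a$ has an analytic neighbourhood over which $\JH_{\CA}$ is identified with the good moduli morphism of $\Rep\Pi_{Q_x}$ near $0_{\mm}$, $Q_x$ being a half of the Ext-quiver of $\{\CF_1,\ldots,\CF_r\}$. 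Since $a$ is isotropic, Lemma~\ref{lemma:pullbackEulerform} gives $(1_{\CF_i}+1_{\CF_j},1_{\CF_i}+1_{\CF_j})_{\SG_2(\BoC Q_x)}=(2a,2a)_{\SC}=0$ and $(1_{\CF_i},1_{\CF_i})_{\SG_2(\BoC Q_x)}=(a,a)_{\SC}=0$, which force $Q_x$ to have no arrows between distinct vertices and exactly one loop at each vertex; hence $Q_x$ is a disjoint union of $r$ Jordan quivers, $\Pi_{Q_x}\cong\BoC[x_1,y_1]\times\cdots\times\BoC[x_r,y_r]$, and the local model of $\FM'_{\CA,\bullet a}$ near $x$ is a product of copies of the moduli of $\BoC[x,y]$-modules near its origin. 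As the relative CoHA is multiplicative under Cartesian products of categories, and both $\Sym_\boxdot$ and the generators $(\Delta_l)_*\ulBoQ_{\CM'_{\CA,a}}\otimes\BoL^{-1}\otimes\HO^*_{\BoC^*}$ are multiplicative under external products, it suffices to prove \eqref{IsotPBW} for $\CA=\Rep\BoC[x,y]$, where $\CM'_{\CA,\bullet a}=\bigsqcup_l S^l\BoA^2=\CM_{\BoC[x,y]}$, $Y=\BoA^2$, $\FM'_{\CA,\bullet a}=\FM_{\BoC[x,y]}$ and $\Delta_l\colon\BoA^2\hookrightarrow S^l\BoA^2$ is the small diagonal.

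For $\CA=\Rep\BoC[x,y]=\Rep\Pi_Q$ ($Q$ the Jordan quiver), the $3$-Calabi--Yau completion is $\Rep\BoC[x,y,z]=\Rep\Jac(\tilde Q,\tilde W)$, the tripled Jordan quiver with potential $\tilde W=z(xy-yx)$. The dimensional reduction theorem of \cite{davison2017critical,kinjo2022dimensional} relates $\ulrelCoHA_{\BoC[x,y]}$, and its BPS sheaf, to the critical CoHA and BPS sheaf of $(\tilde Q,\tilde W)$, up to explicit Tate twists and compatibly with the algebra structures; through this the relative cohomological integrality (PBW) isomorphism of Davison--Meinhardt \cite{davison2020cohomological} for $\Rep\Jac(\tilde Q,\tilde W)$ transports to a PBW isomorphism $\Sym_\boxdot(\underline{\SF}\otimes\HO^*_{\BoC^*})\xrightarrow{\cong}\ulrelCoHA_{\BoC[x,y]}$ of complexes of mixed Hodge modules on $\bigsqcup_l S^l\BoA^2$, with $\underline{\SF}=\bigoplus_{l\geq 1}\underline{\SF}_l$ and each $\underline{\SF}_l$ a semisimple pure perverse sheaf on $S^l\BoA^2$ underlying a pure Hodge module. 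It remains to show $\underline{\SF}_l\cong(\Delta_l)_*\ulBoQ_{\BoA^2}\otimes\BoL^{-1}$ for all $l\geq 1$.

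First, $\supp(\underline{\SF}_l)\subseteq\Delta_l(\BoA^2)$: a point of $S^l\BoA^2$ outside the small diagonal lies in the image of some $\oplus\colon S^{l'}\BoA^2\times S^{l-l'}\BoA^2\to S^l\BoA^2$ with the two arguments of disjoint support, so near it the category splits as a product and the CoHA as a $\boxdot$-tensor product; an induction on $l$ with the PBW isomorphism then shows that $\underline{\SF}_l$ cannot be supported at such points when $l\geq 2$ (equivalently, one invokes the general support bound for relative BPS sheaves). Hence $\underline{\SF}_l=(\Delta_l)_*\underline{\SH}_l$ for a semisimple pure perverse sheaf $\underline{\SH}_l$ on $\BoA^2$, which is moreover equivariant for the translation action of $\BoA^2$ on $\BoC[x,y]$, hence a direct sum of Tate twists of $\ulBoQ_{\BoA^2}$. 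Finally, the class of $\underline{\SH}_l$ in the Grothendieck ring is determined: the PBW isomorphism gives $\sum_l[\HO^{\BoMo}(\FM_{\BoC[x,y],l},\BoQ^{\vir})]\,q^l=\Exp\!\big(\sum_{l\geq 1}[\underline{\SH}_l\otimes\HO^*_{\BoC^*}]\,q^l\big)$, and the left-hand side is computed independently by dimensional reduction to the one-loop quiver ($\FM_{\BoC[x],l}=\mathfrak{gl}_l/\GL_l$, so $\HO^{\BoMo}(\FM_{\BoC[x],l},\BoQ^{\vir})=\HO^*(\B\GL_l,\BoQ)$ up to twist), so injectivity of $\Exp$ forces $[\underline{\SH}_l]=[\ulBoQ_{\BoA^2}\otimes\BoL^{-1}]$, and with the equivariance $\underline{\SH}_l\cong\ulBoQ_{\BoA^2}\otimes\BoL^{-1}$ on the nose. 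This proves \eqref{IsotPBW} for $\BoC[x,y]$, hence by the first paragraph in general. The remaining assertions follow by applying $\tau^{\leq 0}$: by Lemma~\ref{lemma:nonnegativeperverse} restricted to the open $\CM'_{\CA,\bullet a}$ the complex $(\JH'_{\CA,\bullet a})_*\BD\ulBoQ^{\vir}_{\FM'_{\CA,\bullet a}}$ is concentrated in nonnegative degrees (giving the first displayed equality of the theorem), while $\tau^{\leq 0}\HO^*_{\BoC^*}=\ulBoQ_{\pt}$ and $\tau^{\leq 0}$ commutes with $\Sym_\boxdot$ of a complex in nonnegative degrees, so $\tau^{\leq 0}$ of the right-hand side of \eqref{IsotPBW} is $\Sym_\boxdot\big(\bigoplus_{l\geq 1}(\Delta_l)_*\ulBoQ_{\CM'_{\CA,a}}\otimes\BoL^{-1}\big)$; this is an isomorphism of algebra objects because the induced BPS multiplication is commutative on the isotropic generators, consistent with the vanishing $(la',l'a')_{\SC}=0$ of the relevant Cartan entries.

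The \emph{main obstacle} is the third paragraph: the exact determination of the BPS sheaf of $\BoC[x,y]$ (equivalently of $\BoC[x,y,z]$) in every degree — that it is concentrated on the small diagonal and has generic multiplicity exactly one, in the correct cohomological degree and weight. The support statement rests on the multiplicativity of the CoHA over locally split loci together with the inductive primitivity of BPS sheaves, and the multiplicity-one statement on the Göttsche/Nakajima--Grojnowski type generating function extracted via dimensional reduction; organising these inputs compatibly with the mixed Hodge module structures and the Tate twists is the technical heart of the argument.
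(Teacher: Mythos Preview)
Your approach is essentially the paper's: reduce via the local neighbourhood theorem to the Jordan quiver case $\CA=\Rep\BoC[x,y]$, and there invoke the known PBW/BPS description coming from dimensional reduction to the tripled Jordan quiver with potential. The paper packages the second step by citing \cite[Propositions~A.1, A.4]{davison2021nonabelian}, whereas you unpack it via Davison--Meinhardt cohomological integrality plus a support/equivariance determination of the BPS sheaf; these are the same content. Your Ext-quiver analysis at a general point of $\CM'_{\CA,la}$ (disjoint union of Jordan quivers) is a correct and slightly more detailed version of what the paper only states at points of the small diagonal.

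One correction and one gap. First, your claim that the $\psi$-twist is trivial on $\CM'_{\CA,\bullet a}$ is not justified: the constraint $\psi(a,a)+\psi(a,a)\equiv(a,a)_{\SC}=0\pmod 2$ is vacuous, so $\psi(a,a)$ may be $0$ or $1$. This does not affect the first statement (the PBW map changes only by signs, hence is an isomorphism for one $\psi$ iff for the other), but it matters for the second. The paper handles it by noting that on $\BoN a$ the twist is symmetric, $\psi(la,l'a)=ll'\psi(a,a)=\psi(l'a,la)$, so the twisted commutator is a nonzero scalar multiple of the untwisted one and one vanishes iff the other does. Second, your justification that $\tau^{\leq 0}$ of \eqref{IsotPBW} is an isomorphism \emph{of algebra objects} is circular as written: ``consistent with the vanishing $(la,l'a)_{\SC}=0$ of the Cartan entries'' presumes the GKM description you are ultimately trying to establish. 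You need an independent reason the BPS product on the isotropic generators is commutative --- either cite it directly for $\BoC[x,y]$ (e.g.\ the Heisenberg description in \cite[\S7.2]{davison2020bps} or the torsion-sheaf picture) and transport via the local model, or argue as the paper does via the $\psi\leftrightarrow\psi'$ symmetry just described.
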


\begin{proof}
We have a Cartesian diagram
\[
 \begin{tikzcd}
	{\FM'_{\CA,a}} & {\FM'_{\CA,la}} \\
	{\CM'_{\CA,a}} & {\CM'_{\CA,la}}
	\arrow["{\Delta_l}"', from=2-1, to=2-2]
	\arrow["{\JH'_{\CA,la}}", from=1-2, to=2-2]
	\arrow["{q_l}"', from=1-1, to=2-1]
	\arrow["{\tilde{\Delta_l}}", from=1-1, to=1-2]
	\arrow["\lrcorner"{anchor=center, pos=0.125}, draw=none, from=1-1, to=2-2].
\end{tikzcd}
\]
The objects of $\CA$ parametrised by the closed points $x$ in the image of $\Delta_l$ are of the form $S^{\oplus l}$ where $S$ is a simple object of $\CA$ of class $a$. The Ext-quiver of $S$ is the double of the Jordan quiver (it has one vertex -- corresponding to $S$, and $1=\frac{1}{2}(2-(a,a)_{\SC})$ arrows) and therefore, by the local neighbourhood theorem (Theorem~\ref{theorem:neighbourhood}), the map $q_l$ is analytically locally around $x$ modeled on the map $\BA^2\times\FN_l\rightarrow\BA^2$ where $\FN_l$ is the stack of pairs of commuting nilpotent $l\times l$ matrices.

For the first statement, we can now run the arguments of the proof of \cite[Proposition A.4]{davison2021nonabelian}. That the analogue in our situation of the morphism $\Psi_X$ of \emph{loc.cit.} is an isomorphism follows from the local neighbourhood theorem and the case of the Jordan quiver (or torsion sheaves on $\BA^2$, see \cite[Proposition A.1]{davison2021nonabelian} and the references therein). For the second statement, again we appeal to \cite[Proposition A.4]{davison2021nonabelian}, except in [loc. cit] the $\psi$-twist is not considered, or equivalently we consider the $\psi$-twist for which $\psi(a,a)=0$, and not $\psi'$ satisfying $\psi'(a,a)=1$ (the only nontrivial choice). On the other hand, the commutator for $\tau^{\leq 0}(\ulrelCoHA'^{\psi}_{\CA})$ vanishes if and only if the commutator for $\tau^{\leq 0}(\ulrelCoHA'^{\psi'}_{\CA})$ vanishes, and the twisted case follows from the untwisted case.
\end{proof}

The following two corollaries are direct consequences of Theorem~\ref{theorem:BPSisotropic}.
\begin{corollary}
\label{corollary:restrictionBPSiso}
For any $a\in\Sigma_{\CA}$ such that $(a,a)_{\SC}=0$ and any $r\geq 1$, we have 
\[
\tilde{\jmath}^*\ulBPS_{\CA,\Alg,ra}^{(\psi)}\cong \left(\Sym_{\boxdot}\left(\bigoplus_{l\geq 1}(\Delta_l)_*\ulBoQ_{\CM'_{\CA,a}}\otimes\BoL^{-1}\right)\right)_{ra}.
\]
\end{corollary}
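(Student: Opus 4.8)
The plan is to deduce the statement from the second isomorphism of Theorem~\ref{theorem:BPSisotropic} by checking that restriction along the open immersion $\tilde{\jmath}$ commutes with the perverse truncation and with the $\boxdot$-monoidal structure, so that the identification $\ulBPS_{\CA,\Alg}^{(\psi)}=\CH^0(\ulrelCoHA_{\CA}^{(\psi)})$ of \S\ref{subsection:BPSalg} restricts to the locus $\CM'_{\CA,\bullet a}$. First I would record that $\tilde{\jmath}\colon\CM'_{\CA,\bullet a}\hookrightarrow\CM_{\CA}$ is an open immersion; for each $l\geq 0$ this is precisely the statement, proved using Proposition~\ref{proposition:geometrygoodmodspace}\,\eqref{item:reglocus}, that $\CM'_{\CA,la}$ is open in $\CM_{\CA,la}$. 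Hence $\tilde{\jmath}^*\cong\tilde{\jmath}^!$ is $t$-exact for the perverse $t$-structure on mixed Hodge modules, and in particular commutes with $\CH^0$. Moreover $\CM'_{\CA,\bullet a}$ is a saturated submonoid of $\CM_{\CA}$: if every simple subquotient of $x\oplus y$ has class $a$, then the same holds for $x$ and for $y$; so the square expressing the compatibility of $\tilde{\jmath}$ with the direct sum maps is Cartesian, and base change shows $\tilde{\jmath}^*$ is strictly monoidal for $\boxdot$ (hence sends algebra objects to algebra objects, compatibly with the $\K(\CA)$-grading). Only the first of these facts is needed for the bare isomorphism of mixed Hodge modules asserted in the corollary.

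Next, using the definition $\ulBPS_{\CA,\Alg}=\CH^0(\ulrelCoHA_{\CA})$, the $t$-exactness of $\tilde{\jmath}^*$, and the definition $\ulrelCoHA'^{(\psi)}_{\CA}=\tilde{\jmath}^*\ulrelCoHA^{(\psi)}_{\CA}$ from \S\ref{subsubsection:isotropicprimitivesummands}, I would write
\[
\tilde{\jmath}^*\ulBPS_{\CA,\Alg}^{(\psi)}=\tilde{\jmath}^*\CH^0(\ulrelCoHA_{\CA}^{(\psi)})\cong\CH^0\bigl(\tilde{\jmath}^*\ulrelCoHA_{\CA}^{(\psi)}\bigr)=\CH^0(\ulrelCoHA'^{(\psi)}_{\CA}).
\]
The $\psi$-twist is irrelevant here, as it only modifies the product and leaves the underlying complex, hence its perverse cohomology, unchanged. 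By the second isomorphism of Theorem~\ref{theorem:BPSisotropic} the last term is isomorphic to $\Sym_{\boxdot}\bigl(\bigoplus_{l\geq 1}(\Delta_l)_*\ulBoQ_{\CM'_{\CA,a}}\otimes\BoL^{-1}\bigr)$. This is an isomorphism of $\K(\CA)$-graded (equivalently $\BoN$-graded via $l\mapsto la$) mixed Hodge modules on $\CM'_{\CA,\bullet a}$, and extracting the summand supported on $\CM'_{\CA,ra}$ yields exactly the claimed identity.

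There is no genuine obstacle here: all the substance lies in Theorem~\ref{theorem:BPSisotropic} (which in turn rests on the local neighbourhood theorem and the known case of the Jordan quiver), and what remains is the routine bookkeeping that open restriction is $t$-exact, is monoidal for $\boxdot$ via saturatedness of $\CM'_{\CA,\bullet a}$, and respects the grading. The only point I would take care to state explicitly is the saturatedness of $\CM'_{\CA,\bullet a}$, since this is what makes the restriction functor monoidal and hence makes the algebra structures (not merely the underlying mixed Hodge modules) correspond; for the isomorphism as stated, even this is unnecessary.
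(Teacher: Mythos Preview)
Your proof is correct and follows essentially the same approach as the paper, which simply states that the corollary is a direct consequence of Theorem~\ref{theorem:BPSisotropic}. You have spelled out the routine verification (that $\tilde{\jmath}^*$ is $t$-exact and hence commutes with $\CH^0$, and that $\tilde{\jmath}^*\ulrelCoHA_{\CA}=\ulrelCoHA'_{\CA}$ by base change along the Cartesian square at the end of \S\ref{subsection:geometrygoodmodulispace}) that the paper leaves implicit.
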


\begin{corollary}
\label{corollary:isoprimsummand}
Choose $m\in\K(\CA)$ such that $m=rm'$ with $r\in\BoN_{\geq 1}$, $(m',m')=0$ and $m'\in\Sigma_{\CA}$. Then, we have a canonical monomorphism $(u_m)_*\ul{\IC}(\CM_{\CA,m'})\rightarrow\ul{\BPS}_{\CA,\Alg}^{(\psi)}$.
 \end{corollary}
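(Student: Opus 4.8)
The plan is to realise $(u_m)_*\ul{\IC}(\CM_{\CA,m'})$ as a multiplicity-one direct summand of the semisimple mixed Hodge module $\ul{\BPS}_{\CA,\Alg,m}^{(\psi)}$, by first identifying it as a summand after restricting to the open subscheme $\CM'_{\CA,m}\subset\CM_{\CA,m}$ parametrising semisimple objects all of whose factors have class $m'$, and then extending across $\CM_{\CA,m}$ using semisimplicity.

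First I would compute the restriction along the open immersion $\tilde{\jmath}\colon\CM'_{\CA,m}\hookrightarrow\CM_{\CA,m}$. Since $m'\in\Sigma_{\CA}$ the category $\CA$ has simple objects of class $m'$, so $\CM'_{\CA,m'}=\CM^{\simp}_{\CA,m'}$ is a nonempty open subscheme of the irreducible surface $\CM_{\CA,m'}$, and $u_m$ meets $\CM'_{\CA,m}$ exactly along the small diagonal $\Delta_r(\CM'_{\CA,m'})$ (the points $S^{\oplus r}$ with $S$ simple of class $m'$); the resulting square with bottom arrow $\tilde{\jmath}$, top arrow $\CM'_{\CA,m'}\hookrightarrow\CM_{\CA,m'}$, and vertical maps $\Delta_r$ and $u_m$ is Cartesian. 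As $m'$ is isotropic, the relevant half of the Ext-quiver of a simple object of class $m'$ is the Jordan quiver, so by the local neighbourhood theorem (Theorem~\ref{theorem:neighbourhood}), exactly as in the proof of Theorem~\ref{theorem:BPSisotropic}, $\CM'_{\CA,m'}$ is smooth of dimension $2$ (locally modelled on $\BoA^2$); hence $\ul{\IC}(\CM_{\CA,m'})$ restricts over it to $\ul{\BoQ}_{\CM'_{\CA,m'}}\otimes\BoL^{-1}$, and base change along the Cartesian square yields
\[
\tilde{\jmath}^*(u_m)_*\ul{\IC}(\CM_{\CA,m'})\cong(\Delta_r)_*\ul{\BoQ}_{\CM'_{\CA,m'}}\otimes\BoL^{-1}.
\]
On the other hand, Corollary~\ref{corollary:restrictionBPSiso}, applied with $a=m'$ and $ra=m$, identifies $\tilde{\jmath}^*\ul{\BPS}_{\CA,\Alg,m}^{(\psi)}$ with the degree-$m$ part of $\Sym_{\boxdot}\big(\bigoplus_{l\geq1}(\Delta_l)_*\ul{\BoQ}_{\CM'_{\CA,m'}}\otimes\BoL^{-1}\big)$. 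In that $\Sym$, the summand indexed by the one-part partition $(r)$ of $r$ is precisely $(\Delta_r)_*\ul{\BoQ}_{\CM'_{\CA,m'}}\otimes\BoL^{-1}$, which is simple (push-forward under the closed immersion $\Delta_r$ of the simple weight-zero module on the irreducible smooth $\CM'_{\CA,m'}$), while every summand coming from a partition with at least two parts is supported on an irreducible subvariety of $\CM_{\CA,m}$ of dimension $>2$ and so cannot be isomorphic to it. Thus $(\Delta_r)_*\ul{\BoQ}_{\CM'_{\CA,m'}}\otimes\BoL^{-1}$, hence $\tilde{\jmath}^*(u_m)_*\ul{\IC}(\CM_{\CA,m'})$, occurs in $\tilde{\jmath}^*\ul{\BPS}_{\CA,\Alg,m}^{(\psi)}$ as a direct summand of multiplicity one.

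It then remains to transfer this from the open subscheme to all of $\CM_{\CA,m}$, and this is where the main work sits. I would use that $\ul{\BPS}_{\CA,\Alg,m}^{(\psi)}=\CH^0(\ulrelCoHA_{\CA})_m$ is a semisimple mixed Hodge module by Lemma~\ref{lemma:nonnegativeperverse} (a property visibly independent of the $\psi$-twist), and that $N\coloneqq(u_m)_*\ul{\IC}(\CM_{\CA,m'})$ is simple with irreducible support $u_m(\CM_{\CA,m'})$ meeting the open set $\CM'_{\CA,m}$. Writing $\ul{\BPS}_{\CA,\Alg,m}^{(\psi)}$ as a direct sum of its pairwise non-isomorphic simple constituents and invoking the standard fact that, for an open immersion $\tilde{\jmath}$, a simple summand whose support meets the open set is recovered from its (simple, nonzero) restriction by the intermediate extension $\tilde{\jmath}_{!*}$, one sees that $\tilde{\jmath}^*$ preserves the multiplicity of each such summand; combined with $\tilde{\jmath}_{!*}\tilde{\jmath}^*N\cong N$ and the multiplicity-one statement above, this forces $N$ to occur in $\ul{\BPS}_{\CA,\Alg,m}^{(\psi)}$ with multiplicity one, and the inclusion of the corresponding isotypic summand is the asserted canonical monomorphism. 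The hardest point is precisely this passage from an open subscheme to the whole moduli space, which relies essentially on the (already established) semisimplicity of the BPS algebra sheaf; by contrast the smoothness of the simple locus of $\CM_{\CA,m'}$ and the multiplicity-one bookkeeping inside the $\Sym$ are routine given Theorem~\ref{theorem:neighbourhood} and Proposition~\ref{proposition:geometrygoodmodspace}.
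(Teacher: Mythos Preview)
Your proof is correct and follows essentially the same approach as the paper: restrict to the open locus $\CM'_{\CA,m}$, identify $(\Delta_r)_*\ul{\BoQ}_{\CM'_{\CA,m'}}\otimes\BoL^{-1}$ as a summand of $\tilde{\jmath}^*\ul{\BPS}_{\CA,\Alg,m}^{(\psi)}$ via Corollary~\ref{corollary:restrictionBPSiso}, and then use semisimplicity (Lemma~\ref{lemma:nonnegativeperverse}) to embed its intermediate extension $(u_m)_*\ul{\IC}(\CM_{\CA,m'})$ into $\ul{\BPS}_{\CA,\Alg,m}^{(\psi)}$. You additionally spell out the multiplicity-one statement (the paper leaves this implicit); your support-dimension argument is justified because $\oplus$ restricted to the relevant products of diagonals is finite, hence small, so the push-forward of the constant sheaf is an $\IC$ on the full $2k$-dimensional image with no summands on smaller strata.
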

 \begin{proof}
 This follows from Corollary~\ref{corollary:restrictionBPSiso} and the fact that $\tilde{\jmath}_{!*}\tilde{\jmath}^*\ulBPS_{\CA,\Alg,rm'}^{(\psi)}$ is a direct summand of $\ulBPS_{\CA,\Alg,rm'}^{(\psi)}$ by semisimplicity of $\ulBPS_{\CA,\Alg,rm'}^{(\psi)}$ (Lemma~\ref{lemma:nonnegativeperverse}).
 \end{proof}
\subsubsection{Generators}
\label{subsection:generators}

In this section, we define a mixed Hodge module $\underline{\SG}_{\CA}$ on $\CM_{\CA}$ which will be used for the study of the cohomological Hall algebra of $\CA$, eventually playing the role of the Chevalley generators of (half) a generalised Kac--Moody algebra. More precisely, $\underline{\SG}\coloneqq\bigoplus_{m\in\Phi^+_{\CA}}\underline{\SG}_{\CA,m}$ where
\[
 \underline{\SG}_{\CA,m}\coloneqq
 \begin{cases}
 \underline{\IC}(\CM_{\CA,m})&\text{ if $m\in\Sigma_{\CA}$}\\
 (u_m)_*\underline{\IC}(\CM_{\CA,m'})&\text{ if $m=lm'$ with $m'\in\Sigma_{\CA}$ isotropic and $l\geq 2$ },
 \end{cases}
\]
where 
\begin{align*}
u_m&\coloneqq \CM_{\CA,m'}\rightarrow \CM_{\CA,m}\\
&x\mapsto x^{\oplus l}
\end{align*}
is the direct sum map. We implicitly use that the decomposition $m=lm'$ is unique as by Proposition~\ref{proposition:geometrygoodmodspace}\eqref{item:reglocus}, $l$ is the dimension of the stabiliser of a general element of $\CM_{\CA,la}$ and $(\CM_{\CA,a})_{\red}$ can be identified with the small diagonal inside $\CM_{\CA,a}$, that is the closure of the locus of points having $\GL_l(\BoC)$ as stabiliser.

For $m\in\K(\CA)$, we let $\underline{\SG}_{\CA,m}\coloneqq \bigoplus_{m'\in\cl^{-1}(m)}\underline{\SG}_{\CA,m'}$. We set $\underline{\SG}\coloneqq \bigoplus_{m\in\Phi_{\K(\CA)}^+}\underline{\SG}_{\CA,m}$.

\subsubsection{Primitivity in the preprojective case}
\label{subsubsec:ppre}

We finish this section by showing that, at least in the case $\CA=\Rep(\Pi_Q)$, the summands $\underline{\SG}_{\CA,m}$ of the BPS algebra that we have defined deserve to be called primitive.
\begin{proposition}
\label{proposition:directcomplementprimitive}
Let $Q$ be a quiver. We let
\[
\underline{\SG}_{\dd} \coloneqq \underline{\SG}_{\Pi_Q,\vec{d}}= \begin{cases}
\ul{\IC}(\CM_{\Pi_Q,\dd}) &\text{ if $\dd\in\Sigma_{\Pi_Q}$}\\
(u_{\dd})_*\ul{\IC}(\CM_{\Pi_Q,\delta}) &\text{ if $\dd=l\delta$ with $l\geq 1$ and $\delta\in\Sigma_{\Pi_Q}$ is isotropic.}
\end{cases}
\]
Here, $u_{\dd}\colon \CM_{\Pi_Q,\delta}\rightarrow \CM_{\Pi_Q,l\delta}$; $x\mapsto x^{\oplus l}$. We let $\underline{\SG}=\bigoplus_{\dd\in\Phi_{\Pi_Q}^+}\underline{\SG}_{\dd}$. Then, there exists a canonical decomposition $\ulBPS_{\Pi_Q,\Alg}\cong \underline{\SF}\oplus\underline{\SG}$ with $\underline{\SF}\in\MHM(\CM_{\Pi_Q})$ such that for any $\dd,\dd'\in\BoN^{Q_0}\setminus \{0\}$, the multiplication
\[
 \mult^{\psi}\colon \ulBPS_{\Pi_Q,\Alg,\dd}^{\psi}\boxdot\ulBPS_{\Pi_Q,\Alg,\dd'}^{\psi}\rightarrow \ulBPS_{\Pi_Q,\Alg,\dd+\dd'}^{\psi}
\]
factors through the natural inclusion of $\underline{\mathscr{F}}$.
\end{proposition}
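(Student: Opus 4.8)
The plan is to use semisimplicity of the BPS algebra sheaf to produce a \emph{canonical} complement, and then to verify the primitivity statement one $\K$-degree at a time, the only genuine work being in the isotropic case. First, by Lemma~\ref{lemma:nonnegativeperverse} the mixed Hodge module $\ulBPS_{\Pi_Q,\Alg}$ is semisimple, so in each $\K$-degree it has a canonical isotypic decomposition. By Proposition~\ref{proposition:geometrygoodmodspace} the spaces $\CM_{\Pi_Q,\dd}$ (resp.\ $\CM_{\Pi_Q,\delta}$) are irreducible, so each $\underline{\SG}_{\dd}$ with $\dd\in\Phi^+_{\Pi_Q}$ is a \emph{simple} mixed Hodge module, which by Lemma~\ref{lemma:inclusionICBPSalg} and Corollary~\ref{corollary:isoprimsummand} is a direct summand of $\ulBPS_{\Pi_Q,\Alg,\dd}$. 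I would first show that it occurs there with multiplicity exactly one; then I set $\underline{\SF}_{\dd}$ to be the sum of all the other isotypic components (and $\underline{\SF}_{\dd}=\ulBPS_{\Pi_Q,\Alg,\dd}$ when $\dd\notin\Phi^+_{\Pi_Q}$, where $\underline{\SG}_{\dd}=0$), and $\underline{\SF}=\bigoplus_{\dd}\underline{\SF}_{\dd}$. This decomposition is manifestly canonical, and it remains to prove that for all $\dd_1,\dd_2\in\BoN^{Q_0}\setminus\{0\}$ the composite of $\mult^{\psi}$ with the projection of $\ulBPS_{\Pi_Q,\Alg,\dd_1+\dd_2}$ onto its $\underline{\SG}_{\dd_1+\dd_2}$-isotypic summand vanishes.

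For the multiplicity-one claim: when $\dd\in\Sigma_{\Pi_Q}$, the multiplicity of $\underline{\SG}_{\dd}=\ul{\IC}(\CM_{\Pi_Q,\dd})$ equals the generic rank of $\ulBPS_{\Pi_Q,\Alg,\dd}$ on the dense open locus $\CM^{\simp}_{\Pi_Q,\dd}$ of simple representations, over which $\JH_{\Pi_Q,\dd}$ is a $\mathbf{G}_m$-gerbe and the BPS sheaf restricts to $\ulBoQ^{\vir}$, so this rank is one (this is part of the structure theory of \cite{davison2021purity,davison2020bps} behind Lemma~\ref{lemma:inclusionICBPSalg}). When $\dd=l\delta$ with $\delta\in\Sigma_{\Pi_Q}$ isotropic and $l\geq 2$, I would restrict along the open immersion $\tilde{\jmath}\colon\CM'_{\Pi_Q,\bullet\delta}\hookrightarrow\CM_{\Pi_Q}$ of \S\ref{subsection:geometrygoodmodulispace}: base change along the Cartesian square defining $\CM'$ shows that $u_{\dd}$ pulls back to the diagonal $\Delta_l$ and, since over $\CM'_{\Pi_Q,\delta}=\CM^{\simp}_{\Pi_Q,\delta}$ the Ext-quiver is the doubled Jordan quiver (Theorem~\ref{theorem:neighbourhood}), so that this locus is smooth of dimension two, that $\tilde{\jmath}^{*}\underline{\SG}_{\dd}\cong(\Delta_l)_{*}\ulBoQ^{\vir}_{\CM'_{\Pi_Q,\delta}}$ — precisely the degree-$l\delta$ generator $V_l$ in the free symmetric algebra isomorphism $\tilde{\jmath}^{*}\ulBPS_{\Pi_Q,\Alg}^{(\psi)}\cong\Sym_{\boxdot}\!\big(\bigoplus_{l'\geq 1}V_{l'}\big)$ of Theorem~\ref{theorem:BPSisotropic} and Corollary~\ref{corollary:restrictionBPSiso}. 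In that free symmetric algebra $V_l$ occurs only in symmetric degree one, hence with multiplicity one (its symmetric-degree-$\geq 2$ part is a sum of pushforwards along the generically injective finite maps $\oplus^{(n)}$ of intersection complexes of smooth varieties, and the extra simple constituents carry the sign-type local systems that restrict to zero on the small diagonal — the computation for torsion sheaves on $\BoA^2$ used in the proof of Theorem~\ref{theorem:BPSisotropic}). Finally, every summand of $\ulBPS_{\Pi_Q,\Alg,l\delta}$ isomorphic to $\underline{\SG}_{\dd}$ is supported on the small diagonal, whose trace on $\CM'_{\Pi_Q,l\delta}$ is dense in it, so $\tilde{\jmath}^{*}$ is injective on such summands and the multiplicity over all of $\CM_{\Pi_Q,l\delta}$ is again one.

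For the vanishing of the product onto $\underline{\SG}_{\dd}$: put $\dd=\dd_1+\dd_2$ with $\dd_1,\dd_2\neq 0$; we may assume $\dd\in\Phi^+_{\Pi_Q}$, and since $\Phi^{+,\real}_{\Pi_Q}$ consists of the indecomposable basis vectors $1_i$, $\dd$ is imaginary. As $\mult^{\psi}$ is $\oplus_{*}$ of a morphism out of $\ulBPS_{\Pi_Q,\Alg,\dd_1}\boxtimes\ulBPS_{\Pi_Q,\Alg,\dd_2}$, its image is supported on the closed (Assumption~\ref{ds_fin}) locus $\image(\oplus\colon\CM_{\Pi_Q,\dd_1}\times\CM_{\Pi_Q,\dd_2}\to\CM_{\Pi_Q,\dd})$ of decomposable semisimple representations. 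If $\dd\in\Sigma_{\Pi_Q}$, this locus misses $\CM^{\simp}_{\Pi_Q,\dd}$ whereas $\underline{\SG}_{\dd}=\ul{\IC}(\CM_{\Pi_Q,\dd})$ has full support, so no constituent of $\image(\mult^{\psi})$ is isomorphic to $\underline{\SG}_{\dd}$; if $\dd=l\delta$ with $\delta$ isotropic, $l\geq 2$, and one of $\dd_1,\dd_2$ is \emph{not} a multiple of $\delta$, then by uniqueness of semisimple decompositions the generic point $x^{\oplus l}$ of the small diagonal does not lie in $\image(\oplus)$, hence the small diagonal is not contained in it and again $\underline{\SG}_{\dd}$ is not a constituent of $\image(\mult^{\psi})$; in both cases the projection vanishes. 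The remaining subcase $\dd_1=l_1\delta$, $\dd_2=l_2\delta$ ($l_1,l_2\geq 1$) is not detectable from supports, since the small diagonal \emph{does} lie in $\image(\oplus)$: I would restrict along $\tilde{\jmath}$ (strictly monoidal, by base change), whereupon Theorem~\ref{theorem:BPSisotropic} identifies $\tilde{\jmath}^{*}\mult^{\psi}$ with the multiplication of the free symmetric algebra restricted to strictly positive degrees, whose image lies in the symmetric-degree-$\geq 2$ part and so meets $\Sym^{1}=\bigoplus_{l'}V_{l'}=\tilde{\jmath}^{*}\underline{\SG}$ trivially; since the $\underline{\SG}_{\dd}$-isotypic summand is supported on the small diagonal and is thus faithfully detected by $\tilde{\jmath}^{*}$, the projection vanishes globally too. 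The main obstacle is exactly this last subcase: everything else is bookkeeping with the isotypic decomposition and with supports, but here one genuinely needs the explicit free-symmetric-algebra description of the isotropic part of the BPS algebra (Theorem~\ref{theorem:BPSisotropic}) — i.e.\ the fact that its generators are not decomposable.
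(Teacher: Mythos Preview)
Your proof is correct and follows essentially the same strategy as the paper: semisimplicity gives the canonical isotypic complement, multiplicity-one is checked by looking at the generic locus (for $\dd\in\Sigma_{\Pi_Q}$) or the small diagonal (for the isotropic case), and the factorisation through $\underline{\SF}$ is a support argument.

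The one noteworthy difference is in the isotropic case. The paper observes that when $\delta\in\Sigma_{\Pi_Q}$ is isotropic its support is an affine subquiver, and then invokes the explicit description of the BPS Lie algebra sheaf for affine quivers from \cite[\S 7.2]{davison2020bps} together with the cohomological integrality isomorphism $\ulBPS_{\Pi_Q,\Alg}\cong\Sym_{\boxdot}(\ulBPS_{\Pi_Q,\Lie})$ to identify $(u_{\dd})_*\ul{\IC}(\CM_{\Pi_Q,\delta})$ as the \emph{unique} summand of $\ulBPS_{\Pi_Q,\Alg,l\delta}$ supported on the small diagonal; both the multiplicity-one claim and the factorisation then follow at once. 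You instead stay inside this paper: you restrict to the open locus $\CM'_{\Pi_Q,\bullet\delta}$ via Theorem~\ref{theorem:BPSisotropic}, read off multiplicity one in the free symmetric algebra, and use that $\tilde{\jmath}^*$ is faithful on simples supported on the small diagonal to globalise. Your argument is a little longer but more self-contained, avoiding the forward reference to cohomological integrality and the external affine-quiver computation. Both routes ultimately rest on the same geometric fact (only the symmetric-degree-one part of the isotropic BPS algebra lives on the small diagonal), so this is a difference in packaging rather than in substance.
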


\begin{proof}
 If $\dd\in\BoN^{Q_0}\setminus\Phi_{\Pi_Q}^+$, we set $\underline{\SG}_{\dd}=0$, and $\underline{\SF}_{\dd}=\ulBPS_{\Pi_Q,\Alg,\dd}^{(\psi)}$.
 
 If $\dd\in\Sigma_{\Pi_Q}$, by Lemma~\ref{lemma:inclusionICBPSalg}, $\ul{\IC}(\CM_{\Pi_Q,\dd})$ is a direct summand appearing with multiplicity one in the mixed Hodge module $\ulBPS_{\Pi_Q,\Alg,\dd}^{(\psi)}$. Therefore, it has a canonical complement $\underline{\SF}_{\dd}$.
 
 If $\dd=l\delta$ with $l\geq 1$ and $\delta\in\Sigma_{\Pi_Q}$ isotropic, then the support of $\delta$ is an affine subquiver of $Q$. By the description of the BPS sheaf for affine quivers \cite[\S 7.2]{davison2020bps}, and the relative cohomological integrality theorem, we have a canonical isomorphism of mixed Hodge modules $\ulBPS_{\Pi_Q,\Alg,\dd}^{(\psi)}=(\Sym_{\boxdot}(\ulBPS_{\Pi_Q,\Lie}))_{\dd}$. It follows from Theorem~\ref{theorem:BPSisotropic} that $(u_{\dd})_*\ul{\IC}(\CM_{\Pi_Q,\delta})$ is the unique direct summand of $\ulBPS_{\Pi_Q,\Alg,\dd}^{(\psi)}$ supported on the small diagonal. This proves that it has a canonically defined complement $\underline{\SF}_{\dd}\subset \ulBPS_{\Pi_Q,\Alg,\dd}^{(\psi)}$.
 
 By the same arguments as in \cite[Theorem 7.1]{davison2020bps} for hyperbolic positive roots and \cite[\S 7.2]{davison2020bps} for $Q$ affine (which amount to support considerations), the multiplication map
 \[
 \mult^{\psi}\colon \ulBPS_{\Pi_Q,\Alg,\dd}^{\psi}\boxdot \ulBPS_{\Pi_Q,\Alg,\dd'}^{\psi}\rightarrow\ulBPS_{\Pi_Q,\Alg,\dd+\dd'}^{\psi}
 \]
factors through the inclusion of $\underline{\SF}_{\dd+\dd'}$ for any pair of nonzero $\dd,\dd'\in\BoN^{Q_0}\setminus\{0\}$.
\end{proof}

\section{The BPS (Lie) algebra for 2-Calabi--Yau categories -- Proof of Theorem~\ref{theorem:BPSalgisenvBorcherds}}
\label{section:BPSalgCohint}

We let $\CA$ be a $2$-Calabi--Yau Abelian category as in \S\ref{subsection:gsetup}, satisfying Assumptions~\ref{p_assumption}-\ref{BPS_cat_assumption} of \S\S\ref{subsection:assumptionsCoHA},\ref{subsec:spaceassumptions}. We fix a bilinear form $\psi$ as in \S\ref{subsubsection:twistmultiplication} and we denote by the same letter $\psi$ the induced bilinear form on $\BoN^{Q_0}$ if $Q=(Q_0,Q_1)$ is an Ext-quiver of a collection of simple objects of $\CA$ as in \S\ref{subsubsection:inducedtwists}.

\subsection{Local neighbourhoods and $\Sigma$-collections}
\label{subsection:locneighbSigmacoll}
Let $\underline{\CF}=\{ \CF_1,\ldots,\CF_r \}$ be a $\Sigma$-collection in $\CA$, satisfying the assumptions of Theorem~\ref{theorem:neighbourhood}. Define the classes $a_i = [\CF_i] \in \K(\CA)$ and let $x_i \in \Msp_{\CA,a_i}$ be the $\BoC$-point corresponding to $\CF_i$. The inclusions $x_i \into \Msp_{\CA}$ induce a monoid morphism $\imath_{\underline{\CF}} \colon \BoN^{\underline{\CF}} \into \Msp_{\CA}$ sending $1_{\CF_i}\mapsto x_i$ (see \S\ref{subsubsection:extquiver}).

Let $Q$ be half of the Ext-quiver of the collection $\underline{\CF}$. 
The inclusions $\{S_i\} \into \Msp_{\Pi_{Q},1_i}$ of the simple nilpotent representations of $\Pi_Q$ induce a monoid morphism $\imath_{\CN} \colon \BoN^{\underline{\CF}} \into \Msp_{\Pi_{Q}}$.
By \cite[Lemma 3.4]{davison2022BPS}, $\imath_{\CN}^!$ and $\imath_{\underline{\CF}}^!$ are strictly monoidal.

For $\vec{m}\in \BoN^{\underline{\CF}}$ we write $a_{\vec{m}} \coloneqq \sum_{i}m_ia_i$. Pick for every $\vec{m} \in \BoN^{\underline{\CF}}$ an analytic Ext-quiver neighbourhood
$\CU_{\vec{m}}$ of the point $x_\vec{m} \in \Msp_{\CA,a_{\vec{m}}}$ as in Theorem~\ref{theorem:neighbourhood},
corresponding to the semisimple object $\bigoplus_{i} \CF_i^{\oplus m_i}$ of
class $a_{\vec{m}}$. Set $\CU = \bigsqcup_{\vec{m} \in \BoN^{I}} \CU_{\vec{m}}$.
By Theorem~\ref{theorem:neighbourhood} we have a commutative diagram of analytic spaces 
\begin{equation}
\label{equation:diagramneighbourhood}
\begin{tikzcd}
&\BoN^{\underline{\CF}} \ar[dl,"\imath_{\CN}"',hook']\ar[d,"y",hook] \ar[dr,"\imath_{\underline{\CF}}",hook] & \\
 \Msp_{\Pi_Q} & \CU \ar[l,"\jmath_{\CN}"',hook']\ar[r,"\jmath_{\underline{\CF}}",hook] & \Msp_{\CA}.
\end{tikzcd}
\end{equation}
The horizontal morphisms $\jmath_{\CN},\jmath_{\underline{\CF}}$ are analytic-open embeddings such that $\jmath_{\CN}(\CU)$ (resp. $\jmath_{\underline{\CF}}(\CU)$) is an analytic neighbourhood of $\imath_{\CN}(\BoN^{\underline{\CF}})$ (resp. $\imath_{\underline{\CF}}(\BoN^{\underline{\CF}})$).

\subsection{Restriction of the BPS algebra sheaf}
\begin{lemma}
\label{lemma:restBPSalgfibre}
 If $\ulBPS_{\Pi_Q,\Alg}^{(\psi)}$ denotes the BPS algebra sheaf on $\CM_{\Pi_Q}$ and $\ulBPS_{\CA,\Alg}^{(\psi)}$ the BPS algebra sheaf on $\CM_{\CA}$, we have a canonical isomorphism $\iota_{\CN}^!\ulBPS_{\Pi_Q,\Alg}^{(\psi)}\cong\iota_{\underline{\CF}}^!\ulBPS_{\CA,\Alg}^{(\psi)}$ in $\CD^+(\MHM(\BoN^{\underline{\CF}}))$ (i.e. of $\BoN^{\underline{\CF}}$-graded complexes of mixed Hodge structures).
\end{lemma}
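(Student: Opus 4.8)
The plan is to reduce the statement to the observation that, after restricting to the analytic neighbourhood $\CU$ of diagram~\eqref{equation:diagramneighbourhood}, the relative cohomological Hall algebras of $\CA$ and of $\Pi_Q$ literally coincide, and then to restrict this coincidence along $y$ to the discrete locus $\BoN^{\underline{\CF}}=\BoN^{Q_0}$. Recall that $\ulBPS_{\CA,\Alg}^{(\psi)}=\CH^0(\ulrelCoHA_{\CA}^{(\psi)})$ and $\ulBPS_{\Pi_Q,\Alg}^{(\psi)}=\CH^0(\ulrelCoHA_{\Pi_Q}^{(\psi)})$, with $\ulrelCoHA_{\CA}=(\JH_{\CA})_*\BD\ulBoQ_{\FM_{\CA}}^{\vir}$. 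In diagram~\eqref{equation:diagramneighbourhood} one has $\imath_{\CN}=\jmath_{\CN}\circ y$ and $\imath_{\underline{\CF}}=\jmath_{\underline{\CF}}\circ y$, where $\jmath_{\CN}$ and $\jmath_{\underline{\CF}}$ are analytic open immersions; hence $\imath_{\CN}^!=y^!\jmath_{\CN}^*$ and $\imath_{\underline{\CF}}^!=y^!\jmath_{\underline{\CF}}^*$. Since $\jmath^*$ is exact it commutes with the operation $\CH^0$, so it suffices to construct a canonical isomorphism
\[
\jmath_{\CN}^*\,\ulrelCoHA_{\Pi_Q}^{(\psi)}\;\xrightarrow{\ \cong\ }\;\jmath_{\underline{\CF}}^*\,\ulrelCoHA_{\CA}^{(\psi)}\qquad\text{in }\CD^+(\MHM(\CU)),
\]
then take $\CH^0$ of both sides to obtain the analogous isomorphism of BPS algebra sheaves on $\CU$, and finally apply $y^!$.

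The displayed isomorphism is built componentwise over $\CU=\bigsqcup_{\vec m}\CU_{\vec m}$ using the Cartesian squares of the local neighbourhood theorem (Theorem~\ref{theorem:neighbourhood}). For each $\vec m\in\BoN^{\underline{\CF}}$ those squares identify $\FM_{\Pi_Q,\vec m}\times_{\CM_{\Pi_Q,\vec m}}\CU_{\vec m}$ with $\FM_{\CA}\times_{\CM_{\CA}}\CU_{\vec m}$, compatibly with the good moduli space morphisms down to $\CU_{\vec m}$. Restriction to an (analytic) open subset commutes with pushforward along a good moduli space morphism, so $\jmath_{0_{\vec m}}^*\ulrelCoHA_{\Pi_Q}$ and $\jmath_{x_{\vec m}}^*\ulrelCoHA_{\CA}$ are both computed as the pushforward of $\BD\ulBoQ^{\vir}$ from this common stack. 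The virtual Tate twists match because the Euler forms do: by Lemma~\ref{lemma:pullbackEulerform}, $(\vec m,\vec m)_{\SG_2(\BoC Q)}=(a_{\vec m},a_{\vec m})_{\SC}$, and the induced $\psi$-twists agree by \S\ref{subsubsection:inducedtwists} (in any case the statement is insensitive to $\psi$). Assembling these isomorphisms over all $\vec m$ gives the isomorphism of complexes of mixed Hodge modules on $\CU$; applying $\CH^0$ and then $y^!$ yields the claimed canonical isomorphism $\imath_{\CN}^!\ulBPS_{\Pi_Q,\Alg}^{(\psi)}\cong\imath_{\underline{\CF}}^!\ulBPS_{\CA,\Alg}^{(\psi)}$ in $\CD^+(\MHM(\BoN^{Q_0}))$. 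Moreover, since $\imath_{\CN}^!$ and $\imath_{\underline{\CF}}^!$ are strictly monoidal (\cite[Lemma~5.2]{davison2022BPS}) and the identifications of Theorem~\ref{theorem:neighbourhood} are compatible with the diagrams of globally presented quasi-smooth morphisms defining the CoHA product, this isomorphism is in fact one of $\BoN^{Q_0}$-graded algebra objects.

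The one point that genuinely needs care is that the formation of the relative CoHA commutes with the analytic-local base change supplied by Theorem~\ref{theorem:neighbourhood}: the $\JH$-pushforward commutes with open restriction for free, but the virtual pullback along $\bs{q}'$ and the proper pushforward along $p$ must also be compatible with pulling back along $\jmath_{x_{\vec m}}$ and $\jmath_{0_{\vec m}}$. This is exactly the ``locality on the good moduli space'' that the globally-presented-quasi-smooth-morphism formalism of \cite{davison2022BPS} is set up to guarantee, and it is in the spirit of (and can be extracted from) \cite[Lemma~5.2]{davison2022BPS}; so in the present setting the lemma is essentially a packaging of that machinery rather than a new construction, and the only extra input required is the matching of the $\vir$ (and $\psi$) twists, which is Lemma~\ref{lemma:pullbackEulerform} together with \S\ref{subsubsection:inducedtwists}.
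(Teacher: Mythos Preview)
Your proof is correct and follows essentially the same route as the paper: factor $\imath_{\CN}=\jmath_{\CN}\circ y$ and $\imath_{\underline{\CF}}=\jmath_{\underline{\CF}}\circ y$, use that restriction along the analytic open immersions commutes with $\CH^0$ and with pushforward along the good moduli space map, identify both restrictions with $\CH^0((p_{\dd})_*\BD\ulBoQ_{\mathfrak{U}}^{\vir})$ on $\CU$ via the Cartesian squares of Theorem~\ref{theorem:neighbourhood}, and then apply $y^!$. The paper's proof is just the one-line chain of isomorphisms encoding exactly this; your additional remarks on the matching of $\vir$-twists and on the algebra structure are correct but go beyond what the lemma itself asserts (the algebra compatibility is treated separately in Proposition~\ref{proposition:compatibilitymultfibre}).
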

\begin{proof}
 With notation as in \eqref{equation:diagramneighbourhood} we have isomorphisms
 \[
  \imath_{\CN}^!\ulBPS_{\Pi_Q,\Alg,\dd}^{\psi}\cong y^!\jmath_{\CN}^!\ulBPS_{\Pi_Q,\Alg,\dd}^{\psi}\cong y^!\CH^{0}((p_{\dd})_*\BD\ulBoQ_{\mathfrak{U}}^{\vir})\cong y^!\jmath_{\underline{\CF}}^!\ulBPS_{\CA,\Alg,a_{\dd}}^{\psi} \cong\imath_{\underline{\CF}}^!\ulBPS_{\CA,\Alg,a_{\dd}}^{\psi}
 \]
with $p_{\dd}$ as in Theorem \ref{theorem:neighbourhood}, since cohomology functors commute with open immersions.
\end{proof}

\subsection{Compatibility of generators with local neighbourhoods}
Recall the definition of the generating mixed Hodge modules $\underline{\SG}_{\CA,m}$ for $m\in\Phi^+_{\K(\CA)}$ and $\underline{\SG}_{\CA}$ in \S\ref{subsection:generators}. We consider
\[
\Free_{\boxdot\sAlg}(\underline{\SG}_{\CA})\coloneqq \Free_{\boxdot\sAlg}\left(\bigoplus_{m\in\Phi^+_{\K(\CA)}}\underline{\SG}_{\CA,m}\right).
\]
We let $\CI_{\CA,\underline{\SG}_{\CA},\Alg}$ be the Serre ideal, as defined in \S\ref{subsubsection:Serresrelationsandideal}, for the monoid $\CM_{\CA}$ and the mixed Hodge modules $\underline{\SG}_{\CA,m}$.

We let $\imath_{\underline{\CF}}\colon\BoN^{\underline{\CF}}\rightarrow \CM_{\CA}$ and $\imath_{\CN}\colon \BoN^{\underline{\CF}}\rightarrow \CM_{\Pi_{Q_{\underline{\CF}}}}$ be as in \S\ref{subsection:locneighbSigmacoll}, and define $\lambda_{\CF}$ as in \eqref{lambdaFdef}.

\begin{lemma}
\label{lemma:isoroots}
 Let $\vec{m}\in\Phi_{\Pi_Q}^{+,\iso}$. We let $\vec{m}=l\vec{m'}$ with $\vec{m'}\in\Sigma_{\Pi_Q}$. Then $\lambda_{\underline{\CF}}(\vec{m})=l\lambda_{\underline{\CF}}(\vec{m'})$ is the decomposition of $\lambda_{\underline{\CF}}(\vec{m})$ of the form $l'a$ with $a\in \Sigma_{\CA}$. Moreover, for $m\in\Phi_{\CA}^{+,\iso}$, $m=lm'$ with $l$ maximal, the map $u_{m}$ is a closed immersion whose image is the closure of the subset of $\CM_{\CA,m}$ of semisimple objects whose stabilizer is isomorphic to $\GL_l(\BoC)$.
\end{lemma}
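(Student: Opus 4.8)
The plan is to reduce both statements to the corresponding combinatorial and geometric facts about preprojective algebras of quivers, which are already available in the excerpt. For the first statement, I would first recall from Proposition~\ref{proposition:rootsneighbourhood}(3) (and the compatibility with the $\real/\iso/\hyp$ decomposition) that $\lambda_{\underline{\CF}}^{-1}(\Phi_{\CA}^{+,\iso}) = \Phi_{\Pi_Q}^{+,\iso}$, so that $\lambda_{\underline{\CF}}(\vec m) \in \Phi_{\CA}^{+,\iso}$. By Lemma~\ref{lemma:pullbackEulerform} the Euler form pulls back correctly, so $(\vec m', \vec m')_{\SG_2(\BoC Q)} = (\lambda_{\underline{\CF}}(\vec m'), \lambda_{\underline{\CF}}(\vec m'))_{\SC} = 0$ and hence $\lambda_{\underline{\CF}}(\vec m')$ is isotropic. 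By Lemma~\ref{pb_root_lemma} (applied to $\lambda_{\underline{\CF}}$, using that $\lambda_{\underline{\CF}}(\vec m')\neq 0$ since $\vec m'\in\Sigma_{\Pi_Q}$) we get $\lambda_{\underline{\CF}}(\vec m')\in\Sigma_{\CA}$. Thus $\lambda_{\underline{\CF}}(\vec m) = l\,\lambda_{\underline{\CF}}(\vec m')$ with $\lambda_{\underline{\CF}}(\vec m')\in\Sigma_{\CA}$ isotropic; since the decomposition of an isotropic simple root as a multiple of a primitive one is unique (by Proposition~\ref{proposition:geometrygoodmodspace}\eqref{item:reglocus}, $l$ is recovered as the generic stabilizer dimension), we conclude $l'=l$ and $a = \lambda_{\underline{\CF}}(\vec m')$.

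For the second statement, fix $m\in\Phi_{\CA}^{+,\iso}$ and write $m = lm'$ with $l$ maximal, so $m'\in\Sigma_{\CA}$ is isotropic. By Proposition~\ref{proposition:geometrygoodmodspace}\eqref{item:reglocus}, the direct sum map $S^l\CM_{\CA,m'}\to\CM_{\CA,lm'}$ is an isomorphism of reduced schemes, and $u_m$ is the composition of the diagonal $\CM_{\CA,m'}\to S^l\CM_{\CA,m'}$ with this isomorphism; the diagonal of a symmetric product is a closed immersion, so $u_m$ is a closed immersion. To identify its image, I would argue locally: by the local neighbourhood theorem (Theorem~\ref{theorem:neighbourhood}) the map $\JH_{\CA,m}$ near a point in the image of $u_m$ is modelled on the analogous preprojective-algebra picture for the Jordan quiver, where the relevant point corresponds to $S^{\oplus l}$ for $S$ a simple object (equivalently, a $1$-dimensional representation of $\Pi_{\mathrm{Jordan}}$ repeated $l$ times). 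In that local model, the closed point with stabilizer $\GL_l(\BoC)$ is exactly the image of the diagonal, and the stratification of $\CM_{\CA,m}$ by stabilizer type matches the partition stratification of the symmetric product; hence the closure of the locus with stabilizer $\GL_l(\BoC)$ is precisely $\image(u_m)$. (Alternatively, one can cite that $(\CM_{\CA,m'})_{\red}$ sits inside $\CM_{\CA,m}$ as the small diagonal — the closure of the $\GL_l$-stabilizer locus — as already noted in \S\ref{subsection:generators}.)

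The main obstacle is the second statement's identification of $\image(u_m)$ with the closure of the maximal-stabilizer locus: this requires knowing that under the local isomorphism of Proposition~\ref{proposition:geometrygoodmodspace}\eqref{item:reglocus} (and its étale/analytic-local refinement via the neighbourhood theorem) the stabilizer-type stratification corresponds to the diagonal stratification of $S^l\CM_{\CA,m'}$. This is essentially a consequence of the fact that an object $S^{\oplus a_1}_1\oplus\cdots\oplus S^{\oplus a_k}_k$ with the $S_j$ pairwise non-isomorphic simples of class $m'$ has stabilizer $\prod_j\GL_{a_j}(\BoC)$, so the largest stabilizer occurs exactly when all summands are isomorphic, i.e. on the image of the diagonal; the closure statement then follows because the diagonal is closed and the locus is already closed of the right dimension $2$. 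I expect the remaining verifications (that $u_m$ factors through the diagonal, that the decomposition $m=lm'$ is unique) to be routine given the cited results.
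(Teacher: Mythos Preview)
Your proposal is correct and follows essentially the same approach as the paper: both reduce to the preprojective case via the local neighbourhood theorem and use Proposition~\ref{proposition:geometrygoodmodspace} (especially the symmetric-product description \eqref{item:reglocus} and the generic-stabilizer characterisation of $l$) to handle the decomposition and the identification of $\image(u_m)$. The paper's write-up is terser --- it cites properness of $\oplus$ for closedness of the image and \cite{crawley2002decomposition} for the $\Rep\Pi_Q$ case, then deduces the general statement by noting that the closure of the $\GL_l$-stabilizer locus is irreducible and $2$-dimensional --- but the substance is the same as what you outline. One minor point: rather than invoking Lemma~\ref{pb_root_lemma} directly (whose surjectivity hypothesis $\lambda_{\underline{\CF}}$ does not satisfy), it is cleaner to cite Proposition~\ref{proposition:rootsneighbourhood}(2), which already records $\lambda_{\underline{\CF}}^{-1}(\Sigma_{\CA})=\Sigma_{\Pi_Q}$.
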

\begin{proof}
 The first assertion, regarding the decomposition of $\lambda_{\underline{\CF}}(\mm)$, comes from Proposition~\ref{proposition:geometrygoodmodspace}. By properness of $\oplus$ (Assumption~\ref{ds_fin}), the image of the morphism $u_m$ from \S \ref{subsection:generators} is closed. The second statement is true for $\CA=\Rep\Pi_Q$ by \cite[Theorem 3.4]{crawley2002decomposition}. Then the local neighbourhood theorem (Theorem~\ref{theorem:neighbourhood}) implies that for any $\CA$, the closure of the set of points with stabiliser $\GL_l(\BoC)$ is irreducible and $2$-dimensional. The second claim follows.
\end{proof}

\begin{lemma}
 \label{lemma:restrootsgens}
 Let $\dd\in\Sigma_{\Pi_Q}$ be isotropic, so $(\dd,\dd)=0$. Then,
 \[
 \imath_{\CN}^!\underline{\SG}_{\Pi_Q,l\dd}\cong \iota_{\underline{\CF}}^!\underline{\SG}_{\CA,l\lambda_{\underline{\CF}}(\dd)}.
 \]
\end{lemma}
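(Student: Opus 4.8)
## Proof proposal for Lemma~\ref{lemma:restrootsgens}

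The plan is to reduce everything to the two local models provided by the local neighbourhood theorem (Theorem~\ref{theorem:neighbourhood}) and Lemma~\ref{lemma:restBPSalgfibre}, exactly as in the proof of Lemma~\ref{lemma:isoroots}. First, let me record what both sides are. On the category side, $\dd\in\Sigma_{\Pi_Q}$ isotropic means $\lambda_{\underline{\CF}}(\dd)=:a$ lies in $\Sigma_{\CA}$ and is isotropic, by Proposition~\ref{proposition:rootsneighbourhood}(2)--(3) together with Lemma~\ref{lemma:pullbackEulerform}. So $\lambda_{\underline{\CF}}(l\dd)=la$, and by Lemma~\ref{lemma:isoroots} this is precisely the decomposition of $\lambda_{\underline{\CF}}(l\dd)$ with maximal multiplicity (the multiplicity cannot grow, since $\dd$ is already indivisible among positive roots, and $a$ is already indivisible in $\Sigma_{\CA}$ by the same pull-back of roots). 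Hence by definition (\S\ref{subsection:generators}) we have $\underline{\SG}_{\CA,l\lambda_{\underline{\CF}}(\dd)}=(u_{la})_*\ul{\IC}(\CM_{\CA,a})$ for $l\geq 2$, and $\underline{\SG}_{\CA,a}=\ul{\IC}(\CM_{\CA,a})$ for $l=1$; symmetrically $\underline{\SG}_{\Pi_Q,l\dd}=(u_{l\dd})_*\ul{\IC}(\CM_{\Pi_Q,\dd})$ for $l\geq 2$ and $\ul{\IC}(\CM_{\Pi_Q,\dd})$ for $l=1$.

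Next I would produce the identification by base change along the neighbourhood diagram \eqref{equation:diagramneighbourhood}. The key geometric input is that the closed immersions $u_{l\dd}\colon\CM_{\Pi_Q,\dd}\to\CM_{\Pi_Q,l\dd}$ and $u_{la}\colon\CM_{\CA,a}\to\CM_{\CA,la}$ are compatible with the analytic-open Ext-quiver neighbourhoods: the monoid morphisms $\imath_{\CN}$ and $\imath_{\underline{\CF}}$ both send $\BoN^{\underline{\CF}}$ into the respective ``small diagonal'' loci, and by Theorem~\ref{theorem:neighbourhood} the square relating $(\Msp_{\Pi_Q},\imath_{\CN}(l\dd))$ and $(\Msp_{\CA},\imath_{\underline{\CF}}(la))$ through $\CU_{l\dd}$ is Cartesian, while the local model for $\CM_{\CA,a}$ near $x_a$ is the same $\BA^2\times(\text{point})$ model as for $\CM_{\Pi_Q,\dd}$ near the simple representation (this is exactly the statement used in the proof of Theorem~\ref{theorem:BPSisotropic}). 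Therefore $\imath_{\CN}^! u_{l\dd *}\ul{\IC}(\CM_{\Pi_Q,\dd})$ and $\imath_{\underline{\CF}}^! u_{la *}\ul{\IC}(\CM_{\CA,a})$ are each computed from the restriction of the corresponding $\ul{\IC}$-sheaf to an analytic neighbourhood of the relevant point of the small diagonal, pushed forward along the diagonal inclusion; and these neighbourhoods are biholomorphic via the diagram \eqref{equation:diagramneighbourhood}. Since $\imath_{\CN}^!$ and $\imath_{\underline{\CF}}^!$ are strictly monoidal (\S\ref{subsection:locneighbSigmacoll}, citing \cite[Lemma 5.2]{davison2022BPS}) and $\ul{\IC}$ is an analytic-local construction, this gives the desired isomorphism of $\BoN^{Q_0}$-graded mixed Hodge structures.

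The cleanest way to package this is probably to invoke Corollary~\ref{corollary:isoprimsummand} and Theorem~\ref{theorem:BPSisotropic} directly: both $\underline{\SG}_{\Pi_Q,l\dd}$ and $\underline{\SG}_{\CA,l\lambda_{\underline{\CF}}(\dd)}$ are characterised \emph{intrinsically} as the unique simple direct summand of $\ulBPS^{(\psi)}_{\bullet,\Alg,l\dd}$ (respectively $\ulBPS^{(\psi)}_{\bullet,\Alg,la}$) supported on the small diagonal. Applying $\imath_{\CN}^!$ (resp.\ $\imath_{\underline{\CF}}^!$) to this characterisation and using Lemma~\ref{lemma:restBPSalgfibre}, which identifies $\imath_{\CN}^!\ulBPS^{(\psi)}_{\Pi_Q,\Alg}\cong\imath_{\underline{\CF}}^!\ulBPS^{(\psi)}_{\CA,\Alg}$ as $\BoN^{Q_0}$-graded mixed Hodge structures, the two restrictions are forced to agree. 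I expect the main obstacle to be bookkeeping rather than mathematics: namely, checking that the ``support on the small diagonal'' characterisation is genuinely preserved under $\imath^!$ (a priori $\imath^!$ of a sheaf supported on the small diagonal could pick up contributions from the rest of the semisimplification once we restrict to the discrete monoid $\BoN^{\underline{\CF}}$), and that the local neighbourhood identification is compatible with the diagonal inclusions $\Delta_l$ used in Theorem~\ref{theorem:BPSisotropic} on both sides. Both points follow from the Cartesianness of the squares in Theorem~\ref{theorem:neighbourhood} and the explicit $\BA^2\times\FN_l$ local model, but they should be spelled out carefully.
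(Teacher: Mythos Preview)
Your proposal is correct and your second paragraph is essentially the paper's approach: the paper uses the second part of Lemma~\ref{lemma:isoroots} to identify the small diagonal \emph{intrinsically} inside the analytic neighbourhood $\CU_{l\dd}$ as the closure $\CU_{\Delta}$ of the $\GL_l(\BoC)$-stabilizer locus, whence $\jmath_{\CN}^!\underline{\SG}_{\Pi_Q,l\dd}\cong \ul{\IC}(\CU_{\Delta})\cong \jmath_{\underline{\CF}}^!\underline{\SG}_{\CA,l\lambda_{\underline{\CF}}(\dd)}$, and then applies $y^!$. Your alternative route via Lemma~\ref{lemma:restBPSalgfibre} and the ``unique summand on the small diagonal'' characterisation would also work but is unnecessarily indirect here; the stabilizer characterisation already resolves the bookkeeping worry you flag, since it is manifestly preserved by the Cartesian squares in Theorem~\ref{theorem:neighbourhood}.
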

\begin{proof}
 This is a consequence of Lemma~\ref{lemma:isoroots}. We let $\CU_{\Delta}\subset\CU_{l\dd}$ be the closure of the locus of closed points having $\GL_l(\BoC)$ as stabilizer. In diagram \eqref{equation:diagramneighbourhood}, we have
 \[
 \jmath_{\CN}^!\underline{\SG}_{\Pi_Q,l\dd}\cong \ul{\IC}(\CU_{\Delta})\cong \jmath_{\underline{\CF}}^!\underline{\SG}_{\CA,l\lambda_{\underline{\CF}}(\dd)}
 \]
and so by applying $y^!$, we obtain the lemma.
\end{proof}

\begin{proposition}
\label{proposition:compatibilityFreealgebras}
 We have a canonical isomorphism of algebras
 \[
\beta\colon \iota_{\underline{\CF}}^!\Free_{\boxdot\sAlg}(\underline{\SG}_{\CA})\cong \iota_{\CN}^!\Free_{\boxdot\sAlg}(\underline{\SG}_{\Pi_{Q}})
 \]
in $\CD^+(\MHM(\BoN^{\underline{\CF}}))$.
\end{proposition}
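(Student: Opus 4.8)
The plan is to use strict monoidality of the restriction functors to reduce the claim from the free algebras to their generating objects, and then to identify the generators one root at a time via the local neighbourhood theorem and the lemmas already at hand.

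By \cite[Lemma~5.2]{davison2022BPS} both $\iota_{\underline{\CF}}^!$ and $\iota_{\CN}^!$ are strictly monoidal; concretely, the relevant squares over $+\colon\BoN^{\underline{\CF}}\times\BoN^{\underline{\CF}}\to\BoN^{\underline{\CF}}$ are Cartesian because $\imath_{\underline{\CF}}(\BoN^{\underline{\CF}})$ and $\imath_{\CN}(\BoN^{\underline{\CF}})$ are saturated submonoids, which in turn follows from uniqueness of the decomposition of a semisimple object into simples. Since $\Free_{\boxdot\sAlg}(-)=\bigoplus_{r\geq 0}(-)^{\boxdot r}$ and $\iota^!$ commutes with the direct sums appearing here (in each fixed $\BoN^{\underline{\CF}}$-degree only finitely many summands contribute), a strict monoidal functor carries $\Free_{\boxdot\sAlg}(\underline{\SG})$ to $\Free_{\boxdot\sAlg}(\iota^!\underline{\SG})$ as an algebra object. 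As $\Free_{\boxdot\sAlg}(-)$ is functorial in its argument, it therefore suffices to produce a canonical isomorphism $\iota_{\underline{\CF}}^!\underline{\SG}_{\CA}\cong\iota_{\CN}^!\underline{\SG}_{\Pi_Q}$ of $\BoN^{\underline{\CF}}$-graded objects of $\CD^+(\MHM(\BoN^{\underline{\CF}}))$; applying $\Free_{\boxdot\sAlg}$ to such an isomorphism yields the asserted isomorphism of algebras.

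To build this isomorphism I would work degree by degree. For $\vec{m}\in\BoN^{\underline{\CF}}$ the $\vec{m}$-graded piece of $\iota_{\underline{\CF}}^!\underline{\SG}_{\CA}$ is the restriction of $\underline{\SG}_{\CA,\lambda_{\underline{\CF}}(\vec{m})}$ to the point $\imath_{\underline{\CF}}(\vec{m})$, and that of $\iota_{\CN}^!\underline{\SG}_{\Pi_Q}$ is the restriction of $\underline{\SG}_{\Pi_Q,\vec{m}}$ to $\imath_{\CN}(\vec{m})$; both vanish unless $\vec{m}\in\Phi^+_{\Pi_Q}$, in which case $\lambda_{\underline{\CF}}(\vec{m})\in\Phi^+_{\K(\CA)}$ with matching root type, by Proposition~\ref{proposition:rootsneighbourhood} and Lemma~\ref{lemma:pullbackEulerform}. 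If $\vec{m}$ is real, then $\CM_{\CA,\lambda_{\underline{\CF}}(\vec{m})}$ and $\CM_{\Pi_Q,\vec{m}}$ are both a reduced point by Proposition~\ref{proposition:geometrygoodmodspace}\eqref{item:geoimrealroot}, and both generators are $\underline{\BoQ}_{\pt}$. If $\vec{m}$ lies in $\Sigma_{\Pi_Q}$ and is hyperbolic or isotropic, then both generators are the intersection complexes of the corresponding moduli components, which Theorem~\ref{theorem:neighbourhood} (through the diagram \eqref{equation:diagramneighbourhood}) identifies analytic-locally around $\imath_{\CN}(\vec{m})$ and $\imath_{\underline{\CF}}(\vec{m})$; since intersection complexes form a local construction, the two restrictions agree. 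Finally, if $\vec{m}=l\vec{m}'$ is isotropic but non-primitive, both generators are pushforwards of intersection complexes along small-diagonal maps, and the required identification of restrictions is precisely Lemma~\ref{lemma:restrootsgens}, whose proof uses Lemma~\ref{lemma:isoroots} to match the small-diagonal loci under the neighbourhood identification. Assembling these identifications over all $\vec{m}$ gives the isomorphism of generators, and hence the proposition.

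The substantive part is the compatibility of the constructions defining $\underline{\SG}_{\CA}$ — the intersection-complex extensions, and in the non-primitive isotropic case the pushforwards from small diagonals — with the analytic-local model of $\JH\colon\FM_{\CA}\to\CM_{\CA}$ provided by Theorem~\ref{theorem:neighbourhood}; but this has been isolated already in Lemmas~\ref{lemma:isoroots} and \ref{lemma:restrootsgens}. The rest is bookkeeping: tracking which root type a given $\vec{m}$ has via Proposition~\ref{proposition:rootsneighbourhood}, and checking that $\iota^!$ really commutes with the direct sums defining the free algebras so that the reduction to generators goes through cleanly.
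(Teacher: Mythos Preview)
Your proof is correct and follows essentially the same approach as the paper: reduce to generators via strict monoidality of $\iota_{\underline{\CF}}^!$ and $\iota_{\CN}^!$, then match the generators root by root using Proposition~\ref{proposition:rootsneighbourhood}, the local neighbourhood diagram~\eqref{equation:diagramneighbourhood}, and Lemma~\ref{lemma:restrootsgens}. The only cosmetic difference is that the paper groups real and hyperbolic roots together (both being IC complexes) and treats all isotropic roots uniformly via Lemma~\ref{lemma:restrootsgens}, whereas you separate out the real case and handle primitive isotropic roots alongside hyperbolic ones; since $u_m$ is the identity for primitive $m$, this amounts to the same thing.
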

\begin{proof}
 This comes from the natural isomorphism of $\BoN^{\underline{\CF}}$-graded vector spaces
\begin{equation*}
(\imath_{\CN})^!\left(\bigoplus_{\vec{m} \in \Phi_{\Pi_Q}^{+\real}\sqcup\Phi_{\Pi_Q}^{+,\hyp}} \ul{\IC}(\Msp_{\Pi_Q,\vec{m}})\right) \cong \imath_{\underline{\CF}}^!\left(\bigoplus_{a \in \Phi_{\CA}^{+,\real}\sqcup\Phi_{\CA}^{+,\hyp}} \ul{\IC}(\Msp_{\CA,a}) \right)
\end{equation*}
obtained using Proposition~\ref{proposition:rootsneighbourhood} and the diagram \eqref{equation:diagramneighbourhood}, the isomorphism of $\BoN^{\underline{\CF}}$-graded vector spaces
\begin{equation*}
(\imath_{\CN})^!\left(\bigoplus_{\vec{m} \in \Phi_{\Pi_Q}^{+\iso}} \underline{\SG}_{\Pi_Q\vec{m}}\right) \cong \imath_{\underline{\CF}}^!\left(\bigoplus_{a \in \Phi_{\CA}^{+,\iso}} \underline{\SG}_{\CA,a} \right)
\end{equation*}
obtained using Lemma~\ref{lemma:restrootsgens} and diagram \eqref{equation:diagramneighbourhood}, and the fact that $\imath^!_{\underline{\CF}}$ and $\imath^!_{\CN}$ are strictly monoidal functors (\S\ref{subsection:theCoHA}), and so they commute with the formation of the free algebras.
\end{proof}

\subsection{Compatibility of Serre relations with local neighbourhoods}
\label{subsection:compatibilityserrerelationslocal}
Recall the ideals of Serre relations for $\CA$ and $\Pi_Q$, $\CI_{\CA,\underline{\SG}_{\CA},\Alg}$ and $\CI_{\Pi_Q,\underline{\SG}_{\Pi_Q},\Alg}$, defined in \S \ref{subsection:GKMtensorcategories}. We shorten the notation: $\underline{\CI}_{\CA,\Alg}\coloneqq \CI_{\CA,\underline{\SG}_{\CA},\Alg}$ and $\underline{\CI}_{\Pi_Q,\Alg}\coloneqq \CI_{\Pi_Q,\underline{\SG}_{\Pi_Q},\Alg}$. Likewise, we shorten $\underline{\mathfrak{n}}^+_{\CA}\coloneqq\mathfrak{n}^+_{\CM_{\CA},\underline{\SG}_{\CA}}$ and $\underline{\mathfrak{n}}^+_{\Pi_Q}\coloneqq\mathfrak{n}^+_{\CM_{\Pi_Q},\underline{\SG}_{\Pi_Q}}$.
\begin{proposition}
\label{proposition:compatibilitySerreideals}
 We have a canonical isomorphism
 \[
 \iota_{\underline{\CF}}^!\underline{\CI}_{\CA,\Alg}\cong \iota_{\CN}^!\underline{\CI}_{\Pi_{Q},\Alg}.
 \]
\end{proposition}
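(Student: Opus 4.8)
The plan is to reduce Proposition~\ref{proposition:compatibilitySerreideals} to the compatibility of the generating subobjects $\underline{\SR}_{m,n}$ of the two Serre ideals, and then to transport the presentation of each ideal as the image of an explicit morphism through the strictly monoidal functors $\iota_{\underline{\CF}}^!$ and $\iota_{\CN}^!$ (strict monoidality being \cite[Lemma 5.2]{davison2022BPS}, recalled in \S\ref{subsection:locneighbSigmacoll}). The isomorphism $\iota_{\underline{\CF}}^!\Free_{\boxdot\sAlg}(\underline{\SG}_{\CA})\cong\iota_{\CN}^!\Free_{\boxdot\sAlg}(\underline{\SG}_{\Pi_Q})$ of Proposition~\ref{proposition:compatibilityFreealgebras} is already in hand, and its construction makes it canonical; the task is to check that it matches the two Serre ideals sitting inside these free algebras.

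First I would record the purity that makes everything go through. Each $\underline{\SG}_{\CA,m}$ is an intersection complex or a finite pushforward of one, hence pure; since $\oplus$ is finite (Assumption~\ref{ds_fin}) the operation $\boxdot$ preserves purity, and the finiteness of the sum map on $\K(\CA)$ makes each graded piece of $\Free_{\boxdot\sAlg}(\underline{\SG}_{\CA})$ a finite direct sum of pure mixed Hodge modules. Consequently the free algebra, every $\underline{\SR}_{m,n}$, the Serre ideal $\underline{\CI}_{\CA,\Alg}$ and the quotient $\Free_{\boxdot\sAlg}(\underline{\SG}_{\CA})/\underline{\CI}_{\CA,\Alg}$ are semisimple, and the same holds on the $\Pi_Q$ side. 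Hence every morphism among these objects splits, so although $\iota_{\underline{\CF}}^!$ is not exact it does commute with the formation of images, kernels and cokernels of such morphisms, in the sense that it carries a splitting $X\cong\ker f\oplus\image f$ to the corresponding decomposition of $\iota_{\underline{\CF}}^!X$ as a complex of $\BoN^{\underline{\CF}}$-graded mixed Hodge structures.

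Next I would match the generators of the two ideals. The subobjects $\underline{\SR}_{m,n}$ are images of iterated adjoint morphisms $\ad(\underline{\SG}_m)^{1-a_{m,n}}(\underline{\SG}_n)$ assembled from $\underline{\SG}$, the braiding $T$, the product on $\Free_{\boxdot\sAlg}$ and $\boxdot$, and a symmetric monoidal functor commutes with all of these. By Lemma~\ref{lemma:pullbackEulerform} the Cartan entries satisfy $(\lambda_{\underline{\CF}}(\vec m),\lambda_{\underline{\CF}}(\vec n))_{\SC}=(\vec m,\vec n)_{\SG_2(\BoC Q)}$, so the exponents $1-a_{m,n}$ and the case distinctions of \S\ref{subsubsection:Serresrelationsandideal} transport along $\lambda_{\underline{\CF}}$, which by Proposition~\ref{proposition:rootsneighbourhood} identifies $\Phi^+_{\Pi_Q}$ with $\lambda_{\underline{\CF}}^{-1}(\Phi^+_{\CA})$ compatibly with the real/isotropic/hyperbolic decomposition (and the isotropic bookkeeping $\vec m=l\vec m'\mapsto la$ of Lemma~\ref{lemma:isoroots}). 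Since the isomorphism of Proposition~\ref{proposition:compatibilityFreealgebras} is built precisely out of the identifications $\iota_{\underline{\CF}}^!\underline{\SG}_{\CA,a}\cong\iota_{\CN}^!\underline{\SG}_{\Pi_Q,\vec m}$ coming from Proposition~\ref{proposition:rootsneighbourhood} and Lemma~\ref{lemma:restrootsgens}, it restricts to an isomorphism of the generating subobjects. Combining these observations, $\iota_{\underline{\CF}}^!$ carries the adjoint morphism defining $\underline{\SR}^{\CA}_{\lambda_{\underline{\CF}}(\vec m),\lambda_{\underline{\CF}}(\vec n)}$ to the one defining $\underline{\SR}^{\Pi_Q}_{\vec m,\vec n}$, and hence, by the splitting remark, $\iota_{\underline{\CF}}^!\underline{\SR}^{\CA}_{\lambda_{\underline{\CF}}(\vec m),\lambda_{\underline{\CF}}(\vec n)}\cong\iota_{\CN}^!\underline{\SR}^{\Pi_Q}_{\vec m,\vec n}$, compatibly with the inclusions into $\iota^!\Free_{\boxdot\sAlg}$.

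Finally I would pass from generators to ideals. By definition (\S\ref{subsubsection:Serresrelationsandideal}) the Serre ideal $\underline{\CI}_{\CA,\Alg}$ is the image of the morphism $\Free_{\boxdot\sAlg}(\underline{\SG}_{\CA})\boxdot\big((\bigoplus_{m,n}\underline{\SR}_{m,n})\boxdot\Free_{\boxdot\sAlg}(\underline{\SG}_{\CA})\big)\to\Free_{\boxdot\sAlg}(\underline{\SG}_{\CA})$ induced by $\mult\circ(\id\boxdot\mult)$; applying the strictly monoidal $\iota_{\underline{\CF}}^!$, which commutes with this image by the semisimplicity point, yields the analogous morphism built from $\iota_{\underline{\CF}}^!\Free_{\boxdot\sAlg}(\underline{\SG}_{\CA})$ and the $\iota_{\underline{\CF}}^!\underline{\SR}_{m,n}$, whose image is $\iota_{\underline{\CF}}^!\underline{\CI}_{\CA,\Alg}$. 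Under the identifications of the previous two paragraphs this morphism is the one defining $\underline{\CI}_{\Pi_Q,\Alg}$, giving the desired canonical isomorphism $\iota_{\underline{\CF}}^!\underline{\CI}_{\CA,\Alg}\cong\iota_{\CN}^!\underline{\CI}_{\Pi_Q,\Alg}$ inside $\iota_{\underline{\CF}}^!\Free_{\boxdot\sAlg}(\underline{\SG}_{\CA})\cong\iota_{\CN}^!\Free_{\boxdot\sAlg}(\underline{\SG}_{\Pi_Q})$. The only genuinely delicate point, I expect, is the interaction of the non-exact functor $\iota^!$ with image formation: it rests entirely on the purity and semisimplicity of the free algebras and all their relevant subquotients, which is supplied by the decomposition theorem for $2$-Calabi--Yau categories \cite{davison2021purity} together with the finiteness of $\oplus$.
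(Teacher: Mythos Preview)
Your approach is essentially the same as the paper's: match the generating subobjects $\underline{\SR}_{m,n}$ via Lemma~\ref{lemma:pullbackEulerform}, Proposition~\ref{proposition:rootsneighbourhood}, and Proposition~\ref{proposition:compatibilityFreealgebras}, then use strict monoidality of $\iota_{\underline{\CF}}^!$ and $\iota_{\CN}^!$ to pass to the ideals they generate. The paper's proof is terse and simply asserts that $\iota^!$ of the Serre ideal is the ideal generated by the $\iota^!$ of the relations; you supply the justification for this by observing that all objects involved are semisimple, so every morphism splits and $\iota^!$ preserves images despite not being exact.

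One small correction: the purity and semisimplicity you need do not require the 2CY decomposition theorem of \cite{davison2021purity}. The generators $\underline{\SG}_{\CA,m}$ are intersection complexes or their finite pushforwards, hence pure and semisimple by Saito's theory, and $\boxdot$ preserves these properties because $\oplus$ is finite. The 2CY decomposition theorem concerns $\JH_*\BD\underline{\BoQ}^{\vir}$, which plays no role at this point in the argument.
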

\begin{proof}
 $\iota^!_{\underline{\CF}}\underline{\CI}_{\CA,\Alg}$ is the ideal of $\Free_{\boxdot\sAlg}(\underline{\SG}_{\CA})$ generated by Serre relations relative to $\iota^!_{\underline{\CF}}\underline{\SG}_{\CA,m}$, $m\in \Phi^+_{\CA}$ while $\iota_{\CN}^!\underline{\CI}_{\Pi_{Q_{\underline{\CF}}},\Alg}$ is the ideal of $\Free_{\boxdot\sAlg}(\underline{\SG}_{\Pi_{Q}})$ generated by Serre relations relative to $\iota_{\CN}^!\underline{\SG}_{\Pi_Q,\dd}$, $\dd\in \Phi^+_{\Pi_Q}$. By Lemma~\ref{lemma:pullbackEulerform}, Proposition~\ref{proposition:rootsneighbourhood} and Proposition~\ref{proposition:compatibilityFreealgebras}, for $\dd,\ee\in \Phi^+_{\Pi_{Q}}$, we have $\iota_{\CN}^!\underline{\SR}_{\dd,\ee}\cong\iota^!_{\underline{\CF}}\underline{\SR}_{\psi_{\underline{\CF}(\dd)},\psi_{\underline{\CF}}(\ee)}$ (with notation as in \S\ref{subsubsection:Serresrelationsandideal}). This proves the lemma. 
\end{proof}

\begin{corollary}
\label{corollary:riuea}
 We have a canonical isomorphism
\[
 \iota_{\underline{\CF}}^!\UEA(\underline{\mathfrak{n}}^+_{\CA})\cong \iota_{\CN}^!\UEA(\underline{\mathfrak{n}}^+_{\Pi_{Q}}).
\]
\end{corollary}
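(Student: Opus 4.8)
The plan is to combine the tautological presentation $\UEA(\underline{\mathfrak{n}}^+_{\CA})\cong\Free_{\boxdot\sAlg}(\underline{\SG}_{\CA})/\underline{\CI}_{\CA,\Alg}$ (valid by the definition of $\underline{\mathfrak{n}}^+_{\CA}$ in \S\ref{subsection:GKMtensorcategories} together with Proposition~\ref{proposition:comparisonLiealggeneral}), and its analogue for $\Pi_Q$, with the two compatibilities already established: the algebra isomorphism $\iota_{\underline{\CF}}^!\Free_{\boxdot\sAlg}(\underline{\SG}_{\CA})\cong\iota_{\CN}^!\Free_{\boxdot\sAlg}(\underline{\SG}_{\Pi_Q})$ of Proposition~\ref{proposition:compatibilityFreealgebras}, and the matching of Serre ideals $\iota_{\underline{\CF}}^!\underline{\CI}_{\CA,\Alg}\cong\iota_{\CN}^!\underline{\CI}_{\Pi_Q,\Alg}$ of Proposition~\ref{proposition:compatibilitySerreideals}. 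Thus $\UEA(\underline{\mathfrak{n}}^+_{\CA})$ sits in a short exact sequence $0\to\underline{\CI}_{\CA,\Alg}\to\Free_{\boxdot\sAlg}(\underline{\SG}_{\CA})\to\UEA(\underline{\mathfrak{n}}^+_{\CA})\to 0$ of mixed Hodge modules on $\CM_{\CA}$, and similarly on $\CM_{\Pi_Q}$, and the task is to apply $\iota_{\underline{\CF}}^!$, $\iota_{\CN}^!$ and identify the results.

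The only real subtlety is that $\iota_{\underline{\CF}}^!$ and $\iota_{\CN}^!$ are not exact, so a priori they need not commute with the quotient. I would handle this exactly as in the proof of Lemma~\ref{lemma:restBPSalgfibre}, by factoring $\iota_{\underline{\CF}}^!=y^!\circ\jmath_{\underline{\CF}}^!$ and $\iota_{\CN}^!=y^!\circ\jmath_{\CN}^!$ through the common analytic Ext-quiver neighbourhood $\CU$ of diagram~\eqref{equation:diagramneighbourhood}. The restrictions $\jmath_{\underline{\CF}}^!=\jmath_{\underline{\CF}}^*$ and $\jmath_{\CN}^!=\jmath_{\CN}^*$ along the analytic open immersions are $t$-exact, so on $\CU$ one genuinely has $\jmath_{\underline{\CF}}^*\UEA(\underline{\mathfrak{n}}^+_{\CA})=\jmath_{\underline{\CF}}^*\Free_{\boxdot\sAlg}(\underline{\SG}_{\CA})/\jmath_{\underline{\CF}}^*\underline{\CI}_{\CA,\Alg}$ in $\MHM(\CU)$, and likewise for $\Pi_Q$. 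Moreover the arguments of Propositions~\ref{proposition:compatibilityFreealgebras} and~\ref{proposition:compatibilitySerreideals} take place already at the level of $\CU$: they rest on the identifications $\jmath_{\underline{\CF}}^!\underline{\IC}(\CM_{\CA,\lambda_{\underline{\CF}}(\vec{m})})\cong\underline{\IC}(\CU_{\vec{m}})\cong\jmath_{\CN}^!\underline{\IC}(\CM_{\Pi_Q,\vec{m}})$ supplied by the local neighbourhood theorem (Theorem~\ref{theorem:neighbourhood}) and Proposition~\ref{proposition:rootsneighbourhood} in the real and hyperbolic cases, on the isotropic variant $\jmath_{\CN}^!\underline{\SG}_{\Pi_Q,\vec{m}}\cong\underline{\IC}(\CU_\Delta)\cong\jmath_{\underline{\CF}}^!\underline{\SG}_{\CA,\lambda_{\underline{\CF}}(\vec{m})}$ of Lemma~\ref{lemma:restrootsgens}, and on the fact that these restriction functors are strictly monoidal. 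So over $\CU$ there is a canonical isomorphism between the pair $(\jmath_{\underline{\CF}}^*\Free_{\boxdot\sAlg}(\underline{\SG}_{\CA}),\jmath_{\underline{\CF}}^*\underline{\CI}_{\CA,\Alg})$ and the pair $(\jmath_{\CN}^*\Free_{\boxdot\sAlg}(\underline{\SG}_{\Pi_Q}),\jmath_{\CN}^*\underline{\CI}_{\Pi_Q,\Alg})$, the isomorphism on free algebras being an isomorphism of algebras carrying each generating relation $\underline{\SR}_{\dd,\ee}$ to the one indexed by $\lambda_{\underline{\CF}}(\dd),\lambda_{\underline{\CF}}(\ee)$.

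Taking cokernels along the resulting commutative square of inclusions then yields a canonical isomorphism of algebra objects $\jmath_{\underline{\CF}}^*\UEA(\underline{\mathfrak{n}}^+_{\CA})\cong\jmath_{\CN}^*\UEA(\underline{\mathfrak{n}}^+_{\Pi_Q})$ in $\MHM(\CU)$; applying $y^!$ and invoking the strict monoidality of $\iota_{\underline{\CF}}^!=y^!\jmath_{\underline{\CF}}^!$ and $\iota_{\CN}^!=y^!\jmath_{\CN}^!$ recorded in \S\ref{subsection:locneighbSigmacoll} (compare \S\ref{subsection:theCoHA}) then produces the desired canonical isomorphism $\iota_{\underline{\CF}}^!\UEA(\underline{\mathfrak{n}}^+_{\CA})\cong\iota_{\CN}^!\UEA(\underline{\mathfrak{n}}^+_{\Pi_Q})$ of algebra objects in $\CD^+(\MHM(\BoN^{\underline{\CF}}))$. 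I expect the one step that needs genuine care to be the bookkeeping that the isomorphisms of Propositions~\ref{proposition:compatibilityFreealgebras} and~\ref{proposition:compatibilitySerreideals} assemble into a single compatible isomorphism of the pair \emph{(free algebra, Serre ideal)} over $\CU$, with the index correspondence $\dd\leftrightarrow\lambda_{\underline{\CF}}(\dd)$ threaded consistently through the real, isotropic and hyperbolic cases; once that is in hand the corollary is a formal consequence of forming cokernels in $\MHM(\CU)$ and pushing forward along the strictly monoidal $y^!$, with the remaining ingredients — exactness of restriction along open immersions and strict monoidality of the restriction functors — already available.
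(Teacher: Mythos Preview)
Your proof is correct and follows essentially the same approach as the paper's: the paper's proof is a single sentence saying this is a ``straightforward combination of Propositions~\ref{proposition:compatibilityFreealgebras} and~\ref{proposition:compatibilitySerreideals}, by taking quotients of free algebras by their respective Serre ideals.'' You have supplied the detail the paper omits --- in particular, your factorization $\iota_{\underline{\CF}}^!=y^!\circ\jmath_{\underline{\CF}}^!$ through the analytic neighbourhood $\CU$ so that the quotient can be formed at the level of genuine mixed Hodge modules before applying $y^!$ is a clean and correct way to handle the non-exactness of $\iota^!$, and the compatibility of the pair (free algebra, Serre ideal) that you flag as the delicate step is exactly what is implicit in the proof of Proposition~\ref{proposition:compatibilitySerreideals}, where the Serre ideal on one side is identified with the Serre ideal on the other \emph{inside} the identified free algebras.
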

\begin{proof}
 This is a straightforward combination of Propositions \ref{proposition:compatibilityFreealgebras} and \ref{proposition:compatibilitySerreideals}, by taking quotients of free algebras by their respective Serre ideals.
\end{proof}

We let $\Upsilon_{\CA}\colon\Free_{\boxdot\sAlg}(\underline{\SG}_{\CA})\rightarrow \ulBPS_{\CA,\Alg}^{\psi}$ be the morphism of algebra objects in $\MHM(\CM_{\CA})$ and $\MHM(\CM_{\Pi_Q})$ extending the morphisms $\underline{\SG}_m\rightarrow\ulBPS_{\CA,\Alg}^{\psi}$ ($m\in\Phi_{\CA}^+$) given in Lemma~\ref{lemma:inclusionICBPSalg} (for real and hyperbolic simple positive roots) and Corollary~\ref{corollary:isoprimsummand} (for isotropic simple positive roots). We define the morphism $\Upsilon_{\Pi_Q}\colon\Free_{\boxdot\sAlg}(\underline{\SG}_{\Pi_{Q}})\rightarrow \ulBPS_{\Pi_Q,\Alg}^{\psi}$ in the same way (for the particular case $\CA=\Rep\Pi_Q$).
\begin{proposition}
\label{proposition:compatibilitymultfibre}
 We have a commutative diagram
 \[
 \begin{tikzcd}
	{\iota_{\CN}^!
		\Free_{\boxdot\sAlg}(\underline{\SG}_{\Pi_{Q_{\underline{\CF}}}}) 
			}
	 & {\iota_{\CN}^!\ulBPS_{\Pi_{Q_{\underline{\CF}}},\Alg}^{\psi}} \\
	{\iota_{\underline{\CF}}^!
			\Free_{\boxdot\sAlg}(\underline{\SG}_{\CA})
		} & {\iota_{\underline{\CF}}^!\ulBPS_{\CA}^{\psi}}
	\arrow[from=1-1, to=2-1, "\beta"]
	\arrow[from=1-2, to=2-2]
	\arrow[from=2-1, to=2-2, "\Upsilon_{\CA}"]
	\arrow[from=1-1, to=1-2, "\Upsilon_{\Pi_Q}"]
\end{tikzcd}
 \]
where vertical arrows are isomorphisms, given respectively by Proposition~\ref{proposition:compatibilityFreealgebras} and Lemma~\ref{lemma:restBPSalgfibre}.
\end{proposition}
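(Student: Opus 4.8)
The plan is to reduce the commutativity of the square to a check on the generating subobjects, and then to carry out that check on the analytic Ext\-quiver neighbourhood of diagram~\eqref{equation:diagramneighbourhood}. First I would note that, since $\iota^!_{\underline{\CF}}$ and $\iota^!_{\CN}$ are strictly monoidal (\S\ref{subsection:theCoHA}), applying either of them to $\Free_{\boxdot\sAlg}(\underline{\SG}_{\CA})=\bigoplus_{r\geq 0}\oplus_*(\underline{\SG}_{\CA}^{\boxtimes r})$ (resp. to $\Free_{\boxdot\sAlg}(\underline{\SG}_{\Pi_Q})$) again produces a free associative algebra object, on the generating subobject $\iota^!_{\underline{\CF}}\underline{\SG}_{\CA}$ (resp. $\iota^!_{\CN}\underline{\SG}_{\Pi_Q}$), in the symmetric monoidal category of $\BoN^{\underline{\CF}}$\-graded mixed Hodge structures; and the left\-hand vertical isomorphism of Proposition~\ref{proposition:compatibilityFreealgebras} is by construction the free algebra on the isomorphism of generating subobjects produced there. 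Since $\iota^!_{\CN}\Upsilon_{\Pi_Q}$ and $\iota^!_{\underline{\CF}}\Upsilon_{\CA}$ are morphisms of algebra objects, the universal property of free algebras reduces commutativity of the square to the statement that its two composites agree after restriction along the canonical inclusion $\iota^!_{\underline{\CF}}\underline{\SG}_{\CA}\hookrightarrow\iota^!_{\underline{\CF}}\Free_{\boxdot\sAlg}(\underline{\SG}_{\CA})$.

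Next I would reduce further. Because $\iota^!_{\underline{\CF}}$ only sees the components of $\CM_{\CA}$ in the image of $\lambda_{\underline{\CF}}$, and $\lambda_{\underline{\CF}}^{-1}(\Phi^+_{\CA})=\Phi^+_{\Pi_Q}$ by Proposition~\ref{proposition:rootsneighbourhood}(3), it suffices to fix $\dd\in\Phi^+_{\Pi_Q}$, set $m=\lambda_{\underline{\CF}}(\dd)\in\Phi^+_{\K(\CA)}$, and show that the restricted monomorphisms $\iota^!_{\CN}\underline{\SG}_{\Pi_Q,\dd}\to\iota^!_{\CN}\ulBPS^{\psi}_{\Pi_Q,\Alg}$ and $\iota^!_{\underline{\CF}}\underline{\SG}_{\CA,m}\to\iota^!_{\underline{\CF}}\ulBPS^{\psi}_{\CA,\Alg}$ agree under the identifications of source (from the proof of Proposition~\ref{proposition:compatibilityFreealgebras}: Lemma~\ref{lemma:restrootsgens} in the isotropic case, Proposition~\ref{proposition:rootsneighbourhood} and diagram~\eqref{equation:diagramneighbourhood} in the real and hyperbolic cases) and of target (Lemma~\ref{lemma:restBPSalgfibre}). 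Here I use that $\Upsilon_{\CA}$ restricts on $\underline{\SG}_{\CA,m}$ to the canonical monomorphism of Lemma~\ref{lemma:inclusionICBPSalg} when $m$ is real or hyperbolic and to that of Corollary~\ref{corollary:isoprimsummand} when $m$ is isotropic, and symmetrically for $\Upsilon_{\Pi_Q}$.

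Then I would carry out this last check on the neighbourhood. Every identification in play factors through diagram~\eqref{equation:diagramneighbourhood}, having the shape $y^!$ applied to a comparison over $\CU$ of the form $\jmath^!_{\CN}(-)\cong(\text{object on }\CU)\cong\jmath^!_{\underline{\CF}}(-)$. Since $\jmath_{\CN}$ and $\jmath_{\underline{\CF}}$ are analytic\-open immersions, $\jmath^!=\jmath^*$ is $t$\-exact, commutes with $\CH^0$, and is compatible with the cohomological Hall products (as in the proof of Lemma~\ref{lemma:restBPSalgfibre}); so over $\CU$ one gets a single object $\ulBPS^{\psi}_{\CU}$ identified with both $\jmath^!_{\CN}\ulBPS^{\psi}_{\Pi_Q,\Alg}$ and $\jmath^!_{\underline{\CF}}\ulBPS^{\psi}_{\CA,\Alg}$, together with a single simple mixed Hodge module on $\CU$ identified with both $\jmath^!_{\CN}\underline{\SG}_{\Pi_Q,\dd}$ and $\jmath^!_{\underline{\CF}}\underline{\SG}_{\CA,m}$. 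It remains to see that the two canonical monomorphisms into $\ulBPS^{\psi}_{\CU}$ obtained by restricting from the two sides coincide. For $m$ real or hyperbolic this holds because Lemma~\ref{lemma:inclusionICBPSalg} exhibits $\ul{\IC}(\CM_{\CA,m})$ as the multiplicity\-one simple summand of $\ulBPS^{\psi}_{\CA,\Alg,m}$ supported on all of $\CM_{\CA,m}$ (and likewise for $\Pi_Q$), a characterisation preserved by the open restriction $\jmath^*$ and matched under the neighbourhood identification. For $m$ isotropic the monomorphism of Corollary~\ref{corollary:isoprimsummand} is the one extracted, via Corollary~\ref{corollary:restrictionBPSiso}, from the isomorphism of Theorem~\ref{theorem:BPSisotropic}, whose proof is itself conducted through the local neighbourhood theorem and the Jordan\-quiver computation, so its restriction to $\CU$ is literally the corresponding assertion for $\Pi_Q$. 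Applying $y^!$ to the resulting equality of maps over $\CU$ gives the commutativity of the square.

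The main obstacle is precisely this last point: verifying that the canonical monomorphisms of Lemma~\ref{lemma:inclusionICBPSalg} and Corollary~\ref{corollary:isoprimsummand} are compatible with passage to the analytic Ext\-quiver neighbourhood. For isotropic roots this is essentially automatic, since Theorem~\ref{theorem:BPSisotropic} is proved via the local neighbourhood theorem; for real and hyperbolic roots it requires unwinding the construction of the direct\-summand inclusion in \cite[Theorem~6.6]{davison2021purity} far enough to see that it localises under open restriction, which is where the care is needed.
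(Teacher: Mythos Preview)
Your approach is essentially the same as the paper's, but far more detailed. The paper's proof is just a few lines: it notes that both vertical arrows are \emph{algebra} isomorphisms (Proposition~\ref{proposition:compatibilityFreealgebras} for the free algebras, and the ``compatibility with restriction to monoids of the multiplication'' from \cite[Corollary~9.10]{davison2022BPS} for the BPS algebras), and asserts that commutativity follows. In other words, the paper implicitly makes exactly your reduction via the universal property of free algebras, but leaves the verification on generators unspoken.

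Your more careful treatment---separating the real/hyperbolic and isotropic cases and explicitly tracing the canonical summand inclusions of Lemma~\ref{lemma:inclusionICBPSalg} and Corollary~\ref{corollary:isoprimsummand} through the analytic neighbourhood---fills in precisely what the paper elides. The ``main obstacle'' you flag (that these inclusions localise under open restriction) is real but mild: for real/hyperbolic roots the IC summand is characterised by its full support and multiplicity one, which is preserved by $\jmath^*$; for isotropic roots Theorem~\ref{theorem:BPSisotropic} is itself proved via the local neighbourhood theorem, so compatibility is built in. The paper simply does not mention this point.
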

\begin{proof}
 The left-most vertical arrow is an isomorphism by Proposition~\ref{proposition:compatibilityFreealgebras}. The right-most vertical arrow is an isomorphism by Lemma~\ref{lemma:restBPSalgfibre}. By compatibility with restriction to monoids of the multiplication proven in \cite[Corollary 7.3]{davison2022BPS}, the right-most vertical morphism is a morphism of algebras.  Since all four morphisms are morphisms of algebra objects, commutativity can be checked on the generating objects $\iota_{\CN}^!\underline{\SG}_{\Pi_{Q_{\underline{\CF}}}}$, and so holds by the construction of $\beta$.
\end{proof}

\subsection{Loop equivariance}
Let $\overline{L}\subset \overline{Q}_0$ be the set of loops of the quiver $\overline{Q}$. The group $\BG_a^{\overline{L}}$ acts on $\FM_{\Pi_Q}$ in the following way. If $\underline{x}\coloneqq(x_{\alpha})_{\alpha\in \overline{Q}_1}\in\mu^{-1}_{Q,\dd}(0)$ is a $\dd$-dimensional representation of $Q$ and $\underline{g}\in\BG_a^{\overline{L}}$, then 
 \[
 (\underline{g}\cdot \underline{x})_{\alpha}\coloneqq
 \left\{
 \begin{aligned}
 &x_{\alpha}+g_{\alpha}\cdotsh\Id_{d_{s(\alpha)}\times d_{s(\alpha)}}\text{ if $\alpha\in \overline{L}$}\\
 &x_{\alpha}\text{ otherwise}.
 \end{aligned}
 \right.
\]
This action commutes with the $\GL_{\dd}$-action on $\mu_{\dd}^{-1}(0)$ and so descends to $\BG_a^{\overline{L}}$-actions on $\FM_{\Pi_Q,\dd}$ and $\CM_{\Pi_Q,\dd}$, and therefore on $\FM_{\Pi_Q}$ and $\CM_{\Pi_Q}$.

\begin{lemma}
 \label{lemma:loopequivariant}
 The underlying complexes of perverse sheaves of the relative cohomological Hall algebra $\ulrelCoHA_{\Pi_Q}^{(\psi)}$, the relative BPS algebra $\ulBPS_{\Pi_Q,\Alg}^{(\psi)}$ and the free algebra $\Free_{\boxdot\sAlg}(\underline{\SG}_{\Pi_Q})$ are $\BG_{a}^{\overline{L}}$-equivariant.
\end{lemma}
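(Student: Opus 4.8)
The plan is to verify $\BG_a^{\overline L}$-equivariance separately for each of the three objects, observing that equivariance for $\ulrelCoHA_{\Pi_Q}^{(\psi)}$ is the most structural assertion from which the others follow, while the free algebra is essentially formal once one knows the generators are equivariant. First I would note that the $\BG_a^{\overline L}$-action on $\FM_{\Pi_Q}$ described above lifts the action on $\CM_{\Pi_Q}$, so that $\JH_{\Pi_Q}$ is $\BG_a^{\overline L}$-equivariant, and the virtual twist $\ulBoQ_{\FM_{\Pi_Q}}^{\vir}$ is the pullback of a constant(-ish) mixed Hodge module on $\pi_0(\FM_{\Pi_Q})$, hence canonically equivariant. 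Therefore $\ulrelCoHA_{\Pi_Q}^{(\psi)} = (\JH_{\Pi_Q})_*\BD\ulBoQ_{\FM_{\Pi_Q}}^{\vir}$ acquires a $\BG_a^{\overline L}$-equivariant structure by functoriality of pushforward and Verdier duality under the equivariant group action. The one point needing a word is that $\BG_a^{\overline L}$, being a successive extension of copies of $\BG_a$, is a connected (indeed contractible, as a variety $\cong \BA^{|\overline L|}$) group, so $\BG_a^{\overline L}$-equivariance of a single object or morphism of mixed Hodge modules is automatic once the underlying action of the group \emph{scheme} on the base is given — no extra cocycle data is required, and in particular every morphism of the relevant sheaves is automatically equivariant.

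Second, for the CoHA multiplication $\mult^{(\psi)}$ itself: the stack $\mathfrak{Exact}$ of short exact sequences of $\Pi_Q$-modules carries a compatible $\BG_a^{\overline L}$-action (acting simultaneously on the three terms by the same formula, since the loop-scalar shift is an automorphism of the abelian category respecting sub- and quotient objects), and the structure morphisms $p$ and $q$ of \S\ref{subsection:theCoHA} are $\BG_a^{\overline L}$-equivariant. Since the virtual pullback along $\bs q'$ and the pushforward along $p$ are defined by operations that commute with pullback along the (flat, in fact affine-space) morphisms coming from the group action, the product morphism $\mult^{(\psi)}$ is $\BG_a^{\overline L}$-equivariant. (The sign twist $\psi$ depends only on classes in $\BoN^{Q_0}$, which are $\BG_a^{\overline L}$-invariant, so the twist does not interfere.) Consequently $\ulrelCoHA_{\Pi_Q}^{(\psi)}$ is an algebra object in the equivariant category, and its degree-zero cohomology sheaf $\ulBPS_{\Pi_Q,\Alg}^{(\psi)} = \CH^0(\ulrelCoHA_{\Pi_Q}^{(\psi)})$ inherits the equivariant structure, as the cohomology functors $\CH^i$ are equivariant (being computed Zariski-locally, or by the triangulated-functor formalism applied equivariantly).

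Third, for $\Free_{\boxdot\sAlg}(\underline{\SG}_{\Pi_Q})$: it suffices to show each generator $\underline{\SG}_{\Pi_Q,\dd}$ is $\BG_a^{\overline L}$-equivariant on $\CM_{\Pi_Q,\dd}$, since $\Free_{\boxdot\sAlg}(-)$ is built from $\oplus_*$ and $\boxtimes$, both of which commute with the diagonal $\BG_a^{\overline L}$-action (the sum map $\oplus$ is $\BG_a^{\overline L}$-equivariant for the diagonal action on the source). For $\dd\in\Sigma_{\Pi_Q}$ this is clear: $\underline{\SG}_{\Pi_Q,\dd}=\ul\IC(\CM_{\Pi_Q,\dd})$ is the intersection complex of a $\BG_a^{\overline L}$-variety, hence canonically equivariant (again using connectedness of $\BG_a^{\overline L}$, so the equivariant structure is unique). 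For isotropic $\dd=l\delta$, the map $u_{\dd}\colon\CM_{\Pi_Q,\delta}\to\CM_{\Pi_Q,l\delta}$, $x\mapsto x^{\oplus l}$, is $\BG_a^{\overline L}$-equivariant (the loop-scalar shift commutes with forming $l$-fold direct sums), so $(u_{\dd})_*\ul\IC(\CM_{\Pi_Q,\delta})$ is equivariant as well.

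**Main obstacle.** The only genuinely delicate point — and the one I would spend most care on — is verifying that the $\BG_a^{\overline L}$-action on $\mathfrak{Exact}$ is compatible with the \emph{derived} enhancements and global presentations of Assumption~\ref{assumption:associativity} used to define $\mult$, so that the virtual pullback is literally equivariant rather than merely equivariant ``up to higher homotopy.'' In practice this reduces to the observation that the group $\BG_a^{\overline L}$ acts on everything in sight by shifting the loop-components of the representation by a scalar, an operation visibly compatible with the RHom complexes, the cone construction $\rho\mapsto\cone(f)[1]$, and hence with the total-space presentations $\Tot^Q_{\bsM}(\CC^\bullet)$; since all the structure maps are $\BG_a^{\overline L}$-equivariant and $\BG_a^{\overline L}$ is a connected unipotent group, one can either work in the equivariant derived category of mixed Hodge modules for the quotient stack $\FM_{\Pi_Q}/\BG_a^{\overline L}$ throughout, or simply note that $\BG_a$-equivariance imposes no obstruction on objects/morphisms, only a (here automatically satisfied) constraint. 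I would phrase the proof to avoid a heavy equivariant-derived-category formalism, instead recording that every morphism entering the construction of $\mult$ is the pullback of a morphism over the quotient, which is all that later applications (the loop-equivariant vanishing/support arguments) will need.
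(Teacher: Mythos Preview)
Your argument is correct and follows essentially the same route as the paper: equivariance of $\ulrelCoHA_{\Pi_Q}^{(\psi)}$ and $\ulBPS_{\Pi_Q,\Alg}^{(\psi)}$ from $\BG_a^{\overline L}$-equivariance of $\JH_{\Pi_Q}$, and equivariance of $\Free_{\boxdot\sAlg}(\underline{\SG}_{\Pi_Q})$ from equivariance of the generators $\underline{\SG}_{\Pi_Q,\dd}$ (via stability of the smooth locus for $\dd\in\Sigma_{\Pi_Q}$, and equivariance of $u_{\dd}$ in the isotropic case).

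One remark: you are doing more work than the lemma asks for. Recall the paper's convention that the superscript $(\psi)$ (with parentheses) signals that the statement does not depend on the algebra structure. So the lemma only asserts $\BG_a^{\overline L}$-equivariance of the underlying complexes of mixed Hodge modules, not of the multiplication $\mult$. Your ``main obstacle'' --- compatibility of the $\BG_a^{\overline L}$-action with the derived enhancements and virtual pullback defining $\mult$ --- is therefore not needed here, and indeed the paper's proof says nothing about it. The later application (in the proof of Proposition~\ref{proposition:factorisationthroughquotient}) only uses that a simple summand $\ul{\CT}$ of one of these objects is an equivariant mixed Hodge module, hence has $\BG_a^{\overline L}$-stable support; no equivariance of $\mult$ enters. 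Your extra care is not wrong, but you can safely drop that discussion for this lemma.
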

\begin{proof}
 The equivariance of the relative cohomological Hall algebra $\ulrelCoHA_{\Pi_Q}^{(\psi)}$ and of the relative BPS algebra $\ulBPS_{\Pi_Q, \Alg}^{(\psi)}$ are direct consequences of the $\BG_a^{\overline{L}}$-equivariance of $\JH_{\Pi_Q}$. The equivariance of $\Free_{\boxdot\sAlg}(\underline{\SG}_{\Pi_Q})$ comes from the $\BG_a^{\overline{L}}$ stability of the smooth locus of $\CM_{\Pi_Q,\dd}$ (for $\dd\in \Sigma_{\Pi_Q}$) and the $\BG_a^{\overline{L}}$-stability of the image of $u_{\dd}\colon \CM_{\Pi_Q,\dd}\rightarrow \CM_{\Pi_Q,l\dd}$ for $\dd\in\Sigma_{\Pi_Q}$ (for the case when $\dd$ is isotropic).
\end{proof}

Let $L \subset \overline{L}$ be the set of loops of the quiver $Q$. The group $\BG_a^{L}$ is naturally a subgroup of $\BG_a^{\overline{L}}$ and hence acts on $\Mst_{\Pi_Q}$ via the above action.

\subsection{The degree zero BPS algebra of the semi-nilpotent stack}

\subsubsection{Borcherds--Bozec Lie algebra of a quiver}
\label{subsubsection:borcherdsbozec}
Let $Q=(Q_0,Q_1)$ be a quiver. We consider the monoid $M=\BoN^{Q_0}$. It is endowed with the symmetrisation of the Euler form of $Q$, $(\dd,\ee)=2\sum_{i\in Q_0}\dd_i\ee_i-\sum_{i\xrightarrow{\alpha}j\in Q_1}(\dd_i\dd_j+\dd_j\dd_i)$ (which is the Euler form of the Ginzburg dg-algebra $\SG_2(\BoC Q)$). Recall the set of positive simple roots $\Phi^+_{M,(-,-)}$ as defined in \S\ref{subsection:roots}. We have a decomposition $Q_0=Q_0^{\real}\sqcup Q_0^{\im}$, where $Q_0^{\real}$ is the set of vertices of $Q$ carrying no loops and $Q_0^{\im}$ is the set of vertices of $Q$ carrying at least one loop. We let $I_{\infty}=(Q_0^{\real}\times\{1\})\sqcup(Q_0^{\im}\times \BoZ_{\geq 1})$. We have an injection $I_{\infty}\rightarrow \Phi_{M,(-,-)}^{+}$ sending $(i',n)\in I_{\infty}$ to $n1_{i'}$. We consider the weight function
\[
 \begin{matrix}
 P&\colon&\Phi_{M,(-,-)}^+&\rightarrow&\BoN\\
 &    &m        &\mapsto  &\left\{\begin{aligned}
                     &1\text{ if $m\in I_{\infty}$}\\
                     &0 \text{ otherwise.}
                    \end{aligned}\right.

 \end{matrix}
\]
The \emph{Borcherds--Bozec Lie algebra} associated to $Q$ is $\mathfrak{g}_{Q}\coloneqq \mathfrak{g}_{M,(-,-),P}$, following \S\ref{section:GKM}. We extend the bilinear form $(-,-)$ to $\BoZ^{(I_{\infty})}$ by pullback via the natural morphism $\BoZ^{(I_{\infty})}\rightarrow \BoZ^{Q_0}$, $(i',n)\mapsto n1_{i'}$. The positive part $\mathfrak{n}_Q^+$ of $\mathfrak{g}_Q$ admits the following description by generators and relations. It is generated by $e_{(i',n)}$ for $(i',n)\in I_{\infty}$ with the relations
\[
 \begin{aligned}
 \ad(e_j)^{1-(j,i)}(e_i)&=0&\text{ for $j\in Q_0^{\real}\times \{1\}$, $i\neq j$}\\
 [e_i,e_j]&=0 &\text{ if $(i,j)=0$}.
 \end{aligned}
\]

\subsubsection{The cohomological Hall algebra of the strictly semi-nilpotent stack}
Recall from \S\ref{subsection:theCoHA} the stack of strictly semi-nilpotent representations of $\Pi_Q$, $\FM_{\Pi_Q}^{\SSN}$. The absolute cohomological Hall algebra of the category of strictly semi-nilpotent representations of $Q$ is
\[
 \HO^*\!\!\!\mathscr{A}_{\Pi_Q}^{\SSN,\psi}:=\bigoplus_{\dd\in \BoN^{Q_0}}\HO_*^{\BoMo}(\mathfrak{M}_{\Pi_Q,\dd}^{\SSN},\BoQ^{\vir}),
\]
and has been defined and studied in \cite{schiffmann2020cohomological}.

We let $\HO^0\!\SA_{\Pi_Q}^{\SSN,\psi}$ be the \emph{degree zero strictly semi-nilpotent cohomological Hall algebra}. It has a combinatorial basis given by fundamental classes of irreducible components of $\mathfrak{M}_{\Pi_Q}^{\SSN}$, $[\mathfrak{M}^{\SSN}_{\Pi_Q,\dd,c}]$. This algebra is identified with the enveloping algebra of the (positive half of the) Borcherds--Bozec Lie algebra. This is a result proven in detail in \cite[Theorem 1.3]{hennecart2022geometric}:

\begin{theorem}
\label{theorem:degree0SSN}
 There is an isomorphism of algebras
 \[
 \UEA(\mathfrak{n}_Q^+)\rightarrow \HO^0\!\mathcal{A}_{\Pi_Q}^{\SSN,\psi}
 \]
 sending the generator $e_{(i',n)}$ to $[\mathfrak{M}^{\SSN}_{n1_{i'},(n)}]$, where $\FM_{n1_{i'},(n)}^{\SSN}$ is the closed substack of $\FM_{\Pi_Q}$ of $n1_{i'}$-dimensional representations for which the arrows of $Q_1^{*}$ act by $0$.

\end{theorem}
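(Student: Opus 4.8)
The plan is to obtain the asserted isomorphism as an explicit algebra map $\UEA(\Fn^+_Q)\to\HO^0\SA^{\SSN,\psi}_{\Pi_Q}$ sending $e_{(i',n)}$ to $[\FM^{\SSN}_{n1_{i'},(n)}]$, and then to force it to be bijective by a dimension count. First I would record the geometric input: the strictly semi-nilpotent stack $\FM^{\SSN}_{\Pi_Q,\dd}$ is of finite type and equidimensional (it is the Lagrangian-type substack of $\FM_{\Pi_Q,\dd}$ studied in \cite{schiffmann2020cohomological}), hence has finitely many irreducible components, and the fundamental classes $[\FM^{\SSN}_{\Pi_Q,\dd,c}]$, $c\in\Irr(\FM^{\SSN}_{\Pi_Q,\dd})$, form a $\BoQ$-basis of the $\dd$-graded piece of $\HO^0\SA^{\SSN,\psi}_{\Pi_Q}$. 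One then checks that for $(i',n)\in I_\infty$ the closed substack on which the starred arrows $Q_1^*$ act by zero is a single irreducible component $\FM^{\SSN}_{n1_{i'},(n)}$ of $\FM^{\SSN}_{\Pi_Q,n1_{i'}}$, so the proposed assignment on generators is at least well posed as a map of $\BoN^{Q_0}$-graded vector spaces.

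Next I would verify that the defining relations of $\Fn^+_Q$ from \S\ref{subsubsection:borcherdsbozec} hold among these classes, so that the assignment extends to an algebra morphism. The vanishing $[e_i,e_j]=0$ when $(i,j)=0$ is a support/splitting statement: when the two vertices carry no connecting arrows the relevant stack of short exact sequences decomposes as a product, and the $\psi$-twist is precisely what turns the graded commutator of the CoHA into an honest commutator (this also covers the commutation of the $e_{(i',n)}$ at a single $1$-loop vertex). The Serre relations $\ad(e_j)^{1-(j,i)}(e_i)=0$ at a loop-free vertex $j$ I would reduce, using compatibility of the CoHA with the subquiver on $\{i,j\}$ together with the $\BG_a^{L}$-equivariance of Lemma~\ref{lemma:loopequivariant}, to a rank-two fundamental-class identity on flag-type correspondences of exactly the kind already established for the nilpotent CoHA.

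Surjectivity I would prove by induction on $\dd$ using the coproduct/restriction morphisms on $\HO^0\SA^{\SSN,\psi}_{\Pi_Q}$: the fundamental class of any component other than the $\FM^{\SSN}_{n1_{i'},(n)}$ has nontrivial image under restriction to a proper Levi, hence, by the inductive hypothesis, lies in the image of the multiplication from strictly smaller classes. For injectivity it then suffices to match graded dimensions. On the algebraic side the PBW theorem expresses $\dim_{\BoQ}\UEA(\Fn^+_Q)_\dd$ through the root multiplicities $\dim(\Fn^+_Q)_\alpha$, which are computed by the nilpotent Kac polynomials of $Q$; on the geometric side $|\Irr(\FM^{\SSN}_{\Pi_Q,\dd})|$ is governed by the same combinatorial data, either through point counts of $\FM^{\SSN}_{\Pi_Q,\dd}$ over finite fields \cite{bozec2020number,bozec2019counting} or through the (generalised) crystal structure on $\bigsqcup_{\dd}\Irr(\FM^{\SSN}_{\Pi_Q,\dd})$. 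A surjection between graded vector spaces of equal finite dimension is an isomorphism, and it is an algebra map by construction, which completes the argument; the complete details are in \cite[Theorem~1.3]{hennecart2022geometric}.

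The hard part will be the surjectivity/primitivity step: showing that the fundamental classes of all irreducible components of the strictly semi-nilpotent Lagrangian are generated, under the CoHA product, by those attached to $I_\infty$, together with the correct behaviour of the coproduct on fundamental classes. This is exactly where the combinatorics of generalised crystals for quivers with loops enters, and it is also what makes the dimension matching in the injectivity step possible; an alternative route would be to bypass the Kac-polynomial count by directly producing a PBW basis of $\HO^0\SA^{\SSN,\psi}_{\Pi_Q}$ indexed by the same Kostant-partition data as a PBW basis of $\UEA(\Fn^+_Q)$, but this requires the same understanding of the irreducible components.
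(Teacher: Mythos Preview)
Your proposal is reasonable and, crucially, ends by citing \cite[Theorem~1.3]{hennecart2022geometric}, which is exactly how the paper treats this statement: it is not proved in the paper at all but is quoted verbatim from that reference (see the sentence immediately preceding the theorem). So there is no ``paper's own proof'' to compare against; you have supplied a plausible outline of the argument in \cite{hennecart2022geometric} and then given the same citation the paper gives.

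A couple of minor remarks on the sketch itself. First, your handling of the relations at a single isotropic (one-loop) vertex is a little quick: the commutation $[e_{(i',m)},e_{(i',n)}]=0$ there is not a pure ``product decomposition'' argument, since the vertex carries loops; in practice one reduces to the Jordan quiver and uses the explicit description of its nilpotent CoHA. Second, in the dimension-matching step you invoke both a Kac-polynomial count and a crystal-structure count as alternatives; the crystal route (matching $\bigsqcup_\dd\Irr(\FM^{\SSN}_{\Pi_Q,\dd})$ with a PBW-type basis of $\UEA(\Fn^+_Q)$) is indeed the one closer to what is actually carried out in \cite{hennecart2022geometric}, and is also what underlies your surjectivity step, so the two parts of your argument are not really independent. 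None of this is a gap, but it is worth being precise about which of the two mechanisms you are invoking.
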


Recall the definition of $\CM_{\Pi_Q}^{\SSN}$ in \S\ref{subsection:theCoHA}: this is the subscheme of $\CM_{\Pi_Q}$ of semisimple $\Pi_Q$-representations such that the only arrows of $\overline{Q}$ acting nontrivially are loops in $Q_1\subset \overline{Q}_1$. Let $\imath_{\SSN}\colon\mathcal{M}_{\Pi_Q}^{\SSN}\rightarrow\mathcal{M}_{\Pi_Q}$ be the natural inclusion.

\begin{lemma}[\cite{davison2020bps} \S 6.4]
 The natural map $\ptau_{\leq 0}\mathscr{A}_{\Pi_Q}^{(\psi)}\rightarrow \mathscr{A}_{\Pi_Q}^{(\psi)}$ induces a map $\imath_{\SSN}^!\ptau_{\leq 0}\mathscr{A}_{\Pi_Q}^{(\psi)}\rightarrow \imath_{\SSN}^!\mathscr{A}_{\Pi_Q}^{(\psi)}=:\mathscr{A}_{\Pi_Q}^{\SSN,(\psi)}$ which in turn induces an isomorphism
 \[
 \HO^0(\imath_{\SSN}^!\ptau_{\leq 0}\mathscr{A}_{\Pi_Q})\rightarrow \HO^0(\imath_{\SSN}^!\mathscr{A}_{\Pi_Q}).
 \]
\end{lemma}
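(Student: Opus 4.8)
The plan is to compare $\imath_{\SSN}^!\mathscr{A}_{\Pi_Q}$ with $\imath_{\SSN}^!\ptau_{\leq 0}\mathscr{A}_{\Pi_Q}$ by means of the perverse truncation triangle and then take derived global sections. Since $\mathscr{A}_{\Pi_Q}$ is concentrated in nonnegative perverse degrees (Lemma~\ref{lemma:nonnegativeperverse}), we have $\ptau_{\leq 0}\mathscr{A}_{\Pi_Q}=\CH^0(\mathscr{A}_{\Pi_Q})=\ul{\BPS}_{\Pi_Q,\Alg}^{(\psi)}$ and a distinguished triangle
\[
\ul{\BPS}_{\Pi_Q,\Alg}^{(\psi)}\longto \mathscr{A}_{\Pi_Q}^{(\psi)}\longto \ptau_{\geq 1}\mathscr{A}_{\Pi_Q}^{(\psi)}\xrightarrow{+1}.
\]
Applying $\imath_{\SSN}^!$ (which by base change sends $\mathscr{A}_{\Pi_Q}$ to the strictly semi-nilpotent cohomological Hall algebra sheaf) and then taking derived global sections over $\CM_{\Pi_Q}^{\SSN}$ produces a long exact sequence, so the map in the statement is an isomorphism provided $\HO^{j}(\CM_{\Pi_Q}^{\SSN},\imath_{\SSN}^!\ptau_{\geq 1}\mathscr{A}_{\Pi_Q})=0$ for $j\leq 0$. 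This vanishing is all that remains.

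To establish it, I would first split the higher part: by the decomposition theorem for $2$-Calabi--Yau categories (\cite{davison2021purity}, already used above to exhibit $\ul{\BPS}_{\CA,\Alg}$ as a direct summand of $\mathscr{A}_{\CA}$) one has $\mathscr{A}_{\Pi_Q}\cong\bigoplus_{i\geq 0}\CH^i(\mathscr{A}_{\Pi_Q})[-i]$ with each $\CH^i(\mathscr{A}_{\Pi_Q})$ a semisimple mixed Hodge module, so $\imath_{\SSN}^!\ptau_{\geq 1}\mathscr{A}_{\Pi_Q}\cong\bigoplus_{i\geq 1}(\imath_{\SSN}^!\CH^i(\mathscr{A}_{\Pi_Q}))[-i]$ and it suffices to show $\HO^{<0}(\CM_{\Pi_Q}^{\SSN},\imath_{\SSN}^!\CH^i(\mathscr{A}_{\Pi_Q}))=0$ for all $i\geq 1$. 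Next, for $\CA=\Rep\Pi_Q$ the Davison--Meinhardt cohomological integrality isomorphism (obtained via dimensional reduction; it is the special case of Theorem~\ref{theorem:relPBW} proved in \cite{davison2020cohomological,davison2016integrality}) identifies $\mathscr{A}_{\Pi_Q}$ with $\Sym_{\boxdot}(\ul{\BPS}_{\Pi_Q,\Lie}\otimes\HO^*(\B\BoC^*,\ul{\BoQ}))$; since $\ul{\BPS}_{\Pi_Q,\Lie}$ is generated as a Lie algebra object by the sheaves $\underline{\SG}_{\Pi_Q,\dd}$ ($\dd\in\Phi^+_{\Pi_Q}$), every simple summand of every $\CH^i(\mathscr{A}_{\Pi_Q})$ occurs in some $\oplus_*(\underline{\SG}_{\Pi_Q,\dd_1}\boxtimes\cdots\boxtimes\underline{\SG}_{\Pi_Q,\dd_n})$ up to a Tate twist. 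Because $\CM_{\Pi_Q}^{\SSN}$ is a saturated submonoid, $\imath_{\SSN}^!$ is strictly monoidal (as used above for $\imath_{\CN}^!$ and $\imath_{\underline{\CF}}^!$), so the Künneth formula reduces the whole claim to the single assertion that for every $\dd\in\Phi^+_{\Pi_Q}$,
\[
\HO^{<0}\big(\CM_{\Pi_Q,\dd}^{\SSN},\,\imath_{\SSN}^!\,\underline{\SG}_{\Pi_Q,\dd}\big)=0,
\]
i.e. that $\imath_{\SSN}^!\underline{\SG}_{\Pi_Q,\dd}$ has derived global sections concentrated in nonnegative cohomological degrees.

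The hard part is this last vanishing, which I expect to carry the real content (it is exactly what is proved in \cite[\S6.4]{davison2020bps}). For $\dd\in\Phi^{+,\real}_{\Pi_Q}$ it is immediate since $\CM_{\Pi_Q,\dd}=\pt$ (Proposition~\ref{proposition:geometrygoodmodspace}), and for non-real $\dd$ I would use the local neighbourhood theorem (Theorem~\ref{theorem:neighbourhood}) to describe $\CM_{\Pi_Q,\dd}$ locally along the strata of its natural stratification in terms of preprojective moduli spaces attached to strictly smaller data — the local model being the Jordan quiver / $\BA^2$ in the isotropic case, cf. Theorem~\ref{theorem:BPSisotropic} — and combine three ingredients: (i) $\imath_{\SSN}^!$ is left $t$-exact, so $\imath_{\SSN}^!\underline{\SG}_{\Pi_Q,\dd}$ lies in perverse degrees $\geq 0$, whence Artin vanishing on the affine scheme $\CM_{\Pi_Q,\dd}^{\SSN}$ bounds the $*$-hypercohomology of its perverse cohomology sheaves; (ii) the strictly semi-nilpotent variety is Lagrangian inside $\mu_{Q,\dd}^{-1}(0)$ (Bozec--Schiffmann, \cite{bozec2020number}), so $\CM_{\Pi_Q,\dd}^{\SSN}$ is at most half-dimensional in $\CM_{\Pi_Q,\dd}$ and, more precisely, meets each stratum in dimension at most half of its codimension; and (iii) this "relative semismallness", fed into the support/cosupport estimates for $\underline{\SG}_{\Pi_Q,\dd}=\ul{\IC}(\CM_{\Pi_Q,\dd})$ and run inductively along the stratification (the base case being the smooth locus, where the estimate is exactly half-dimensionality), forces $\imath_{\SSN}^!\underline{\SG}_{\Pi_Q,\dd}$ to have no cohomology sheaves in negative degrees. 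This yields the displayed vanishing and completes the argument.
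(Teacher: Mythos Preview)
Your overall strategy is right and is essentially the one in \cite[\S6.4]{davison2020bps}: use the truncation triangle together with the decomposition $\mathscr{A}_{\Pi_Q}\cong\bigoplus_{i\geq 0}\CH^i(\mathscr{A}_{\Pi_Q})[-i]$, reduce to $\HO^{<0}(\imath_{\SSN}^!\mathcal{P})=0$ for each simple summand $\mathcal{P}$, and deduce that vanishing from Lagrangian/half-dimensionality estimates for the strictly semi-nilpotent locus.

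There is, however, a genuine circularity in the way you carry out the reduction step. You invoke the integrality isomorphism $\mathscr{A}_{\Pi_Q}\cong\Sym_{\boxdot}(\ulBPS_{\Pi_Q,\Lie}\otimes\HO^*_{\BoC^*})$ and the fact that $\ulBPS_{\Pi_Q,\Lie}$ is generated by the $\underline{\SG}_{\Pi_Q,\dd}$. In the logical structure of this paper, $\ulBPS_{\Pi_Q,\Lie}$ is \emph{defined} (Definition~\ref{def:BPSheafalg}) as the Lie object generated by the $\underline{\SG}_{\Pi_Q,\dd}$, so that part is fine; but the integrality isomorphism for this object is Theorem~\ref{theorem:relPBW}, whose proof (see \S\ref{subsubsection:fullPBW}) passes through Proposition~\ref{proposition:BPS3dcoincide} and hence through Theorem~\ref{theorem:BPSalgisenvBorcherds}, which in turn uses the present lemma via Corollary~\ref{corollary:degree0SSNvanish} and Proposition~\ref{proposition:factorisationthroughquotient}. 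So as written you are assuming what you need to prove.

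The fix is exactly what \cite{davison2020bps} does: work with the \emph{3d} BPS sheaf $\BPS_{\Pi_Q,\Lie}^{\td}$ of Theorem~\ref{theorem:lessperverse}, which is established there independently of anything in this paper. The relative PBW isomorphism $\mathscr{A}_{\Pi_Q}\cong\Sym_{\boxdot}(\BPS_{\Pi_Q,\Lie}^{\td}\otimes\HO^*_{\BoC^*})$ comes from \cite{davison2016integrality,davison2020cohomological} via dimensional reduction, again independently. The simple summands of $\BPS_{\Pi_Q,\Lie}^{\td}$ (and hence of every $\CH^i(\mathscr{A}_{\Pi_Q})$) are controlled by the support lemma in \cite{davison2020bps} (cf.\ \cite{meinhardt2019donaldson}); you do not need (and cannot yet use) the statement that they are generated by the $\underline{\SG}_{\Pi_Q,\dd}$. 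With that replacement, your final paragraph is the correct endgame: the half-dimensionality of $\CM_{\Pi_Q}^{\SSN}$ inside the supports, fed into the support/cosupport conditions for IC sheaves, gives $\HO^{<0}(\imath_{\SSN}^!\BPS_{\Pi_Q,\Lie}^{\td})=0$, and K\"unneth plus monoidality of $\imath_{\SSN}^!$ finishes.
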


\begin{corollary}
\label{corollary:degree0SSNvanish}
Let $Q$ be a quiver. Then the morphism 
\begin{equation}
\label{equat:cor}
 \HO^0(\imath_{\SSN}^!\Upsilon_{\Pi_Q})\colon \HO^0\!(\imath_{\SSN}^!(\Free_{\boxdot\sAlg}(\SG_{\Pi_Q})))\rightarrow \HO^0(\mathscr{A}_{\Pi_Q}^{\SSN,\psi})
\end{equation}
obtained by applying the functor $\HO^0(\imath_{\SSN}^!(-))$ to the natural morphism $\Upsilon_{\Pi_Q}\colon\Free_{\boxdot\sAlg}(\SG_{\Pi_Q})\rightarrow\SA_{\Pi_Q}^{\psi}$ (see \S\ref{subsection:compatibilityserrerelationslocal}) vanishes on $\HO^0(\imath_{\SSN}^!\CI_{\Pi_Q,\Alg})$ and induces an isomorphism
\[
 \HO^0\!(\imath_{\SSN}^!(\Free_{\boxdot\sAlg}(\SG_{\Pi_Q})))/\HO^0(\imath_{\SSN}^!\CI_{\Pi_Q,\Alg})\rightarrow \HO^0(\mathscr{A}_{\Pi_Q}^{\SSN,\psi}).
\]
\end{corollary}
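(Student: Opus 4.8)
The plan is to realise the map in the statement as an explicit composite, identify its source and target by Künneth and monoidality, and then read off the kernel from the Borcherds--Bozec presentation of $\UEA(\Fn_Q^+)$ supplied by Theorem~\ref{theorem:degree0SSN}.

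\emph{Assembling the map.} Since $\Upsilon_{\Pi_Q}$ factors through $\ulBPS_{\Pi_Q,\Alg}^{\psi}=\ptau_{\leq 0}\SA_{\Pi_Q}^{\psi}$, applying $\imath_{\SSN}^!$ and $\HO^0$, using the lemma preceding the corollary to identify $\HO^0(\imath_{\SSN}^!\ptau_{\leq 0}\SA_{\Pi_Q})\cong\HO^0(\imath_{\SSN}^!\SA_{\Pi_Q})=\HO^0(\SA_{\Pi_Q}^{\SSN,\psi})$, and then Theorem~\ref{theorem:degree0SSN}, yields a morphism of $\BoN^{Q_0}$-graded algebras
\[
\phi\colon \HO^0\!\bigl(\imath_{\SSN}^!\Free_{\boxdot\sAlg}(\SG_{\Pi_Q})\bigr)\longrightarrow\UEA(\Fn_Q^+),
\]
which is the map $\HO^0(\imath_{\SSN}^!\Upsilon_{\Pi_Q})$ of the statement followed by the isomorphism of Theorem~\ref{theorem:degree0SSN}. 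As $\CM_{\Pi_Q}^{\SSN}\subset\CM_{\Pi_Q}$ is a saturated submonoid, $\imath_{\SSN}^!$ is strictly monoidal (\S\ref{subsection:theCoHA}), so $\imath_{\SSN}^!\Free_{\boxdot\sAlg}(\SG_{\Pi_Q})=\Free_{\boxdot\sAlg}(\imath_{\SSN}^!\SG_{\Pi_Q})$, the ideal $\imath_{\SSN}^!\CI_{\Pi_Q,\Alg}$ is the Serre ideal generated by the $\imath_{\SSN}^!\underline{\SR}_{m,n}$, and by Künneth $\HO^*(\imath_{\SSN}^!\Free_{\boxdot\sAlg}(\SG_{\Pi_Q}))$ is the free graded associative algebra on $\bigoplus_{m}\HO^*(\imath_{\SSN}^!\SG_{\Pi_Q,m})$; everything thus comes down to computing the restricted generators.

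\emph{Restricted generators.} Using the local neighbourhood theorem (Theorem~\ref{theorem:neighbourhood}), the conical $\mathbf{G}_m$-action on $\CM_{\Pi_Q,\dd}^{\SSN}\cong\prod_{i\in Q_0}\Sym^{d_i}(\BoA^{g_i})$, and the loop-equivariance of Lemma~\ref{lemma:loopequivariant}, I would establish: (a) $\HO^{<0}(\imath_{\SSN}^!\SG_{\Pi_Q,m})=0$ for all $m$, so that $\HO^0$ of the free algebra above is the free associative algebra on $\bigoplus_m\HO^0(\imath_{\SSN}^!\SG_{\Pi_Q,m})$; (b) for $m=n1_{i'}$ corresponding to $(i',n)\in I_\infty$, $\HO^0(\imath_{\SSN}^!\SG_{\Pi_Q,n1_{i'}})$ is one-dimensional and $\phi$ sends it to a nonzero multiple of $e_{(i',n)}=[\FM_{n1_{i'},(n)}^{\SSN}]$; (c) $\HO^0(\imath_{\SSN}^!\SG_{\Pi_Q,m})=0$ for $m\in\Phi_{\Pi_Q}^+$ with $\lvert\supp m\rvert\geq 2$. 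For (b), Theorem~\ref{theorem:neighbourhood} reduces the computation to the quiver with one vertex and $g_{i'}$ loops: $g_{i'}=0$ is immediate; $g_{i'}=1$ follows from $\CM_{\Pi,n1}\cong\Sym^n\BoA^2$ (Proposition~\ref{proposition:geometrygoodmodspace}) and Theorem~\ref{theorem:BPSisotropic}, with $\CM^{\SSN}$ the diagonally embedded $\Sym^n\BoA^1$; $g_{i'}\geq 2$ follows from the cone structure of $\CM_{\Pi,n1}$ and the (strictly positive) costalk of $\ul{\IC}(\CM_{\Pi,n1})$ at its cone point, which is the image of $\imath_{\SSN}$. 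For (c), the point is that $\CM_{\Pi_Q,m}^{\SSN}$ sits inside $\CM_{\Pi_Q,m}$ away from the simple locus and of sufficiently small dimension that the support/costalk estimates for $\ul{\IC}(\CM_{\Pi_Q,m})$ — again reduced to Ext-quivers by Theorem~\ref{theorem:neighbourhood} — push $\HO^*(\imath_{\SSN}^!\SG_{\Pi_Q,m})$ entirely into strictly positive cohomological degree. Granting (a)--(c), $\phi$ hits all the generators $e_{(i',n)}$ of $\UEA(\Fn_Q^+)$, hence is surjective, and after rescaling to identify $\HO^0(\imath_{\SSN}^!\SG_{\Pi_Q,n1_{i'}})$ with $\BoQ e_{(i',n)}$ it becomes the canonical surjection $\Free\bigl(\bigoplus_{(i',n)\in I_\infty}\BoQ e_{(i',n)}\bigr)\twoheadrightarrow\UEA(\Fn_Q^+)$, whose kernel is the two-sided ideal generated by the Borcherds--Bozec relations.

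\emph{Matching the ideal.} It remains to identify $\HO^0(\imath_{\SSN}^!\CI_{\Pi_Q,\Alg})$ with this kernel. Being generated by the $\HO^0(\imath_{\SSN}^!\underline{\SR}_{m,n})$, with $a_{m,n}=(m,n)_{\SG_2(\BoC Q)}$, and since by (c) only the single-vertex-supported generators survive in $\HO^0$, the surviving relations are exactly the generalised Serre relations attached to the real vertices $i\in Q_0^{\real}$ together with the commutation relations $[e_i,e_j]=0$ when $(i,j)=0$ — precisely the defining relations of $\Fn_Q^+$ recalled in \S\ref{subsubsection:borcherdsbozec}. Hence $\ker\phi=\HO^0(\imath_{\SSN}^!\CI_{\Pi_Q,\Alg})$, so $\phi$ (equivalently $\HO^0(\imath_{\SSN}^!\Upsilon_{\Pi_Q})$) annihilates $\HO^0(\imath_{\SSN}^!\CI_{\Pi_Q,\Alg})$ and induces the claimed isomorphism. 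I expect the genuine difficulty to be the restricted-generator step: showing that the multi-vertex primitive summands, and the positive-degree cohomology of all restricted summands, contribute nothing in cohomological degree $0$, while the single-vertex ones contribute exactly the fundamental classes $[\FM_{n1_{i'},(n)}^{\SSN}]$ — this is where the local neighbourhood theorem and the geometry of moduli spaces of preprojective-algebra representations are essential.
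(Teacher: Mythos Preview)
Your overall architecture is the same as the paper's: pass through $\ulBPS_{\Pi_Q,\Alg}^{\psi}$, use strict monoidality of $\imath_{\SSN}^!$, and reduce everything to computing $\HO^0(\imath_{\SSN}^!\SG_{\Pi_Q,m})$ for $m\in\Phi^+_{\Pi_Q}$.  The difficulty is exactly where you locate it, in steps~(b) and~(c), and here your argument diverges from the paper's and contains a real gap.

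First, your description of $\CM_{\Pi_Q,\dd}^{\SSN}$ as $\prod_{i}\Sym^{d_i}(\BoA^{g_i})$ is incorrect when some $g_i\geq 2$.  For a single vertex with $g\geq 2$ loops, the $\SSN$ semisimple locus is the GIT quotient $\mathrm{Mat}_n(\BoC)^{g}\cms\GL_n$ (only the $Q_1$-loops act, the preprojective relation becomes vacuous), which for $n\geq 2$ is much larger than $\Sym^n\BoA^{g}$.  In particular the image of $\imath_{\SSN}$ is not the cone point, so the costalk argument you sketch for the hyperbolic part of~(b) does not apply; the paper instead cites \cite[\S6.4.4]{davison2021purity} for this one-dimensionality.

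Second, and more seriously, your proposed proof of~(c) via support/costalk estimates for $\ul{\IC}(\CM_{\Pi_Q,m})$ is not justified.  Knowing that $\CM_{\Pi_Q,m}^{\SSN}$ misses the simple locus only tells you that $\imath_{\SSN}^!\IC(\CM_{\Pi_Q,m})$ lies in perverse degrees $\geq 1$; this does not by itself force $\HO^0$ of derived global sections to vanish, and the suggested reduction to Ext-quivers via Theorem~\ref{theorem:neighbourhood} is circular (it reproduces the same problem for a different quiver).  The paper avoids this entirely by an algebraic trick: Proposition~\ref{proposition:directcomplementprimitive} gives a canonical decomposition $\ulBPS_{\Pi_Q,\Alg}=\SF\oplus\SG$ with the multiplication factoring through $\SF$.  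For $\dd\notin I_\infty$, Theorem~\ref{theorem:degree0SSN} says that $\HO^0(\SA_{\Pi_Q,\dd}^{\SSN,\psi})$ is spanned by products, hence equals $\HO^0(\imath_{\SSN}^!\SF_{\dd})$; since $\HO^0(\imath_{\SSN}^!\SG_{\dd})$ is a direct complement inside the same space, it must vanish.  This is the missing idea: use the primitivity of $\SG$ together with the \emph{already known} Borcherds--Bozec description of $\HO^0(\SA^{\SSN,\psi}_{\Pi_Q})$ to force the vanishing, rather than trying to extract it from the geometry of intersection complexes.
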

\begin{proof}
 The functor $\imath_{\SSN}^!$ is strictly monoidal (\S\ref{subsection:theCoHA}), so it commutes with the operator $\Free$. By Proposition \ref{proposition:directcomplementprimitive}, the subobject $\SG_{\Pi_Q}\subset\BPS_{\Pi_Q,\Alg}^{\psi}$ admits a direct sum complement $\SF\subset \mathcal{BPS}_{\Pi_Q,\Alg}^{\psi}$ such that the multiplication map $\mult_{\dd,\ee}^{\psi}\colon \mathcal{BPS}_{\Pi_Q,\Alg,\dd}^{\psi}\boxtimes\mathcal{BPS}_{\Pi_Q,\Alg,\ee}^{\psi}\rightarrow \mathcal{BPS}_{\Pi_Q,\Alg,\dd+\ee}^{\psi}$ factors through the inclusion $\SF_{\dd+\ee}\subset\mathcal{BPS}_{\Pi_Q,\Alg,\dd+\ee}^{\psi}$ if $\dd\neq0 \neq \ee$. Consequently, the multiplication map $\HO^0(\imath_{\SSN}^!\mult_{\dd,\ee}^{\psi})\colon\HO^0(\mathscr{A}_{\Pi_Q,\dd}^{\SSN,\psi})\otimes\HO^0(\mathscr{A}_{\Pi_Q,\ee}^{\SSN,\psi})\rightarrow\HO^0(\mathscr{A}_{\Pi_Q,\dd+\ee}^{\SSN,\psi})$ factors through $\HO^0(\imath_{\SSN}^!\SF_{\dd+\ee})$. 
 
 If $\dd=n1_{i'}$ for some $i'\in Q_0^{\hyp}$, $n\geq 1$, then, $\HO^0(\imath_{\SSN}^!\mathcal{IC}(\mathcal{M}_{\Pi_Q,n1_{i'}}))=\BoQ e_{i',n}$ is one-dimensional \cite[\S 6.4.4]{davison2020bps}. If $i'\in Q_0^{\real}$, $\CM_{\Pi_Q,1_{i'}}=\pt$ and therefore, $\HO^0(\imath_{\SSN}^!\mathcal{IC}(\mathcal{M}_{\Pi_Q,1_{i'}}))=\BoQ e_{i',1}$ is one-dimensional. If $\dd=ne_{i'}$ with $i'\in Q_0^{\iso}$, $\HO^0(\imath_{\SSN}^!\SG_{n1_{i'}}))=\BoQ e_{i',n}$ is one-dimensional by the description of the BPS Lie algebra sheaf for the Jordan quiver \cite[\S 7.2]{davison2020bps} and given that in this case, $\SG_{n1_{i'}}=\BPS_{\Pi_Q,\Lie,n1_{i'}}$ (see also \S\ref{subsubsection:isotropicprimitivesummands} which applies to $\Pi_Q$ for $Q$ the Jordan quiver). By Theorem~\ref{theorem:degree0SSN}, if $\dd\neq n1_{i'}$ for $(i',n)\in I_{\infty}$, then $\HO^0(\imath_{\SSN}^!\SF_{\dd})=\HO^0(\mathscr{A}_{\Pi_Q,\dd}^{\SSN,\psi})$. If moreover $\dd\in\Phi^+_{\Pi_Q}$, $\HO^0(\imath_{\SSN}^!\SG_{\dd})$ is a direct summand of $\HO^0(\mathscr{A}_{\Pi_Q,\dd}^{\SSN})$ with complement $\HO^0(\imath_{\SSN}^!\SF_{\dd})$. It therefore vanishes. The domain of \eqref{equat:cor} is then equal to $\HO^0\!\left(\Free\left(\bigoplus_{i\in I_{\infty}}\BoQ e_i\right)\right)$. By strict monoidality of $\imath_{\SSN}^!$, $\imath_{\SSN}^!\CI_{\Pi_Q,\Alg}$ is generated by the Serre relations for $e_{(i',n)}$, $(i',n)\in I_{\infty}$, given in \S\ref{subsubsection:borcherdsbozec}. By Theorem~\ref{theorem:degree0SSN}, $\HO^0(\imath_{\SSN}^!\Upsilon_{\Pi_Q})$ vanishes on $\HO^0(\imath_{\SSN}^!\CI_{\Pi_Q,\Alg})$.
\end{proof}

\subsection{The BPS Lie algebra of a 2-Calabi--Yau category}
\label{subsection:mainproof}
The proof of Theorem~\ref{theorem:BPSalgisenvBorcherds} is given in this section. 

\subsubsection{Factorization through the Serre ideal}
\label{subsubsection:factorizationSerreideal}
We adopt ideas from the proof of \cite[Theorem~10.2]{davison2022BPS}. In comparison to the case of totally negative $2$-Calabi--Yau categories, there is an extra step which is to show that the morphism $\Free_{\boxdot\sAlg}(\underline{\SG})\rightarrow\ul{\BPS}_{\CA,\Alg}^{\psi}$ factors through the quotient by the Serre ideal $\Free_{\boxdot\sAlg}(\SG)\rightarrow \Free_{\boxdot\sAlg}(\underline{\SG})/\underline{\CI}_{\CA,\Alg}$.

\begin{lemma}
\label{lemma:supportnotkernel}
 Let $(Q,\dd)$ be such that $\Upsilon_{\Pi_Q,\dd}\colon \Free_{\boxdot\sAlg}(\underline{\SG}_{\Pi_Q})_{\dd}\rightarrow\ulBPS_{\Pi_Q,\Alg,\dd}^{\psi}$ does not vanish on $\underline{\CI}_{\Pi_Q,\Alg,\dd}$. Then, if $\underline{\CT}$ is a mixed Hodge module which is a simple summand of $\underline{\CI}_{\Pi_Q,\Alg,\dd}$ not contained in the kernel of $\Upsilon_{\Pi_Q,\dd}$, then
\[\supp\ul{\CT}\subset \overline{\{x\in\CM_{\Pi_Q,\dd}\mid \Upsilon_{\Pi_{Q_x},\dd_x}\text{ does not vanish on $\underline{\CI}_{\Pi_{Q_x},\Alg,\dd_x}$}\}},
\]
where the bar indicates the Zariski closure.
\end{lemma}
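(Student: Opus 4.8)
\emph{Sketch of proof.} The plan is to localise the non-vanishing of $\Upsilon_{\Pi_Q,\dd}$ on the Serre ideal along the Ext-quiver neighbourhoods of Theorem~\ref{theorem:neighbourhood}. First I would record two elementary reductions. Since $\underline{\CT}$ is a simple summand of $\underline{\CI}_{\Pi_Q,\Alg,\dd}$ not contained in $\ker(\Upsilon_{\Pi_Q,\dd})$, the restriction $\Upsilon_{\Pi_Q,\dd}|_{\underline{\CT}}$ has kernel a proper sub-mixed-Hodge-module of the simple object $\underline{\CT}$, hence is zero; thus $\Upsilon_{\Pi_Q,\dd}|_{\underline{\CT}}$ is injective and $\supp\bigl(\Upsilon_{\Pi_Q,\dd}(\underline{\CT})\bigr)=\supp\underline{\CT}$. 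Moreover $\supp\underline{\CT}$ is an irreducible closed subvariety of $\CM_{\Pi_Q,\dd}$ whose closed points are dense in it; writing $Z$ for the set appearing on the right-hand side of the Lemma before taking closure, it therefore suffices to prove that every closed point $x\in\supp\underline{\CT}$ lies in $Z$, since then $\supp\underline{\CT}\subset\overline{Z}$.

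Next, I would fix such a closed point $x$, let $\underline{\CF}$ be the collection of simple factors of the semisimple $\Pi_Q$-module it parametrises, and let $Q_x,\dd_x$ be the associated half Ext-quiver and multiplicity vector, so that $\lambda_{\underline{\CF}}(\dd_x)=\dd$. Applying Theorem~\ref{theorem:neighbourhood} gives, as in diagram~\eqref{equation:diagramneighbourhood}, an analytic neighbourhood $\CU_x$ together with analytic-open embeddings $\jmath_{\underline{\CF}}\colon\CU_x\hookrightarrow\CM_{\Pi_Q,\dd}$ and $\jmath_{\CN}\colon\CU_x\hookrightarrow\CM_{\Pi_{Q_x},\dd_x}$, the latter identifying $\CU_x$ with a neighbourhood of $0_{\dd_x}$ and matching $x$ with $0_{\dd_x}$. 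The key point — and the part I expect to need the most care — is that the comparison results of Propositions~\ref{proposition:compatibilityFreealgebras}, \ref{proposition:compatibilitySerreideals}, \ref{proposition:compatibilitymultfibre} and Lemmas~\ref{lemma:restBPSalgfibre}, \ref{lemma:restrootsgens} hold already at the level of the neighbourhood $\CU_x$: running those arguments with $\jmath_{\underline{\CF}}^{*},\jmath_{\CN}^{*}$ in place of the restrictions to the discrete central monoid yields compatible isomorphisms
\[
\jmath_{\underline{\CF}}^{*}\Free_{\boxdot\sAlg}(\underline{\SG}_{\Pi_Q})\cong\jmath_{\CN}^{*}\Free_{\boxdot\sAlg}(\underline{\SG}_{\Pi_{Q_x}}),\quad \jmath_{\underline{\CF}}^{*}\underline{\CI}_{\Pi_Q,\Alg}\cong\jmath_{\CN}^{*}\underline{\CI}_{\Pi_{Q_x},\Alg},\quad \jmath_{\underline{\CF}}^{*}\ulBPS_{\Pi_Q,\Alg}^{\psi}\cong\jmath_{\CN}^{*}\ulBPS_{\Pi_{Q_x},\Alg}^{\psi}
\]
intertwining $\jmath_{\underline{\CF}}^{*}\Upsilon_{\Pi_Q}$ with $\jmath_{\CN}^{*}\Upsilon_{\Pi_{Q_x}}$ in degrees $\dd$ and $\dd_x$ respectively; here one uses that restriction along an analytic-open embedding is exact, together with the fact that the whole package (the CoHA product, the BPS algebra sheaf, the generators $\underline{\SG}$, the free algebra and the Serre ideal) is built functorially from $\JH$ and the direct-sum maps, which are matched over $\CU_x$ by the neighbourhood theorem.

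Finally I would conclude as follows. Because $x\in\supp\underline{\CT}=\supp\Upsilon_{\Pi_Q,\dd}(\underline{\CT})$, both $\jmath_{\underline{\CF}}^{*}\underline{\CT}$ and $\jmath_{\underline{\CF}}^{*}\Upsilon_{\Pi_Q,\dd}(\underline{\CT})\cong\jmath_{\underline{\CF}}^{*}\underline{\CT}$ are nonzero, so $\jmath_{\underline{\CF}}^{*}\Upsilon_{\Pi_Q,\dd}$ does not vanish on the summand $\jmath_{\underline{\CF}}^{*}\underline{\CT}\subset\jmath_{\underline{\CF}}^{*}\underline{\CI}_{\Pi_Q,\Alg,\dd}$. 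Transporting through the isomorphisms above, $\jmath_{\CN}^{*}\Upsilon_{\Pi_{Q_x},\dd_x}$ does not vanish on $\jmath_{\CN}^{*}\underline{\CI}_{\Pi_{Q_x},\Alg,\dd_x}$; and since $\jmath_{\CN}$ is an open embedding, vanishing of $\Upsilon_{\Pi_{Q_x},\dd_x}$ on $\underline{\CI}_{\Pi_{Q_x},\Alg,\dd_x}$ would force vanishing of its restriction along $\jmath_{\CN}$. Hence $\Upsilon_{\Pi_{Q_x},\dd_x}$ does not vanish on $\underline{\CI}_{\Pi_{Q_x},\Alg,\dd_x}$, i.e.\ $x\in Z$, as wanted. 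Besides the neighbourhood-level comparison flagged above, the only additional bookkeeping is the translation of gradings under $\lambda_{\underline{\CF}}$ (degree $\dd$ for $\Pi_Q$ versus degree $\dd_x$ for $\Pi_{Q_x}$), which is routine.
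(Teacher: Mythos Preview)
Your overall strategy is right, but the step you flag as ``the part I expect to need the most care'' is in fact a genuine gap, and the paper avoids it rather than fills it. The comparison results you cite (Propositions~\ref{proposition:compatibilityFreealgebras}, \ref{proposition:compatibilitySerreideals}, \ref{proposition:compatibilitymultfibre}) are proved for the \emph{discrete} restrictions $\imath_{\underline{\CF}}^{!},\imath_{\CN}^{!}$, and their proofs rely essentially on the strict monoidality of these functors. The analytic neighbourhoods $\CU_{\mm}$ do not form a submonoid of $\CM_{\Pi_Q}$: the direct sum $\CU_{\mm_1}\times\CU_{\mm_2}\to\CM_{\Pi_Q}$ need not land in $\CU_{\mm_1+\mm_2}$, so $\jmath_{\underline{\CF}}^{*}$ is not monoidal and there is no reason for $\jmath_{\underline{\CF}}^{*}\Free_{\boxdot\sAlg}(\underline{\SG}_{\Pi_Q})$ to agree with $\jmath_{\CN}^{*}\Free_{\boxdot\sAlg}(\underline{\SG}_{\Pi_{Q_x}})$. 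Concretely, $\Free_{\boxdot\sAlg}(\underline{\SG}_{\Pi_Q})_{\dd}$ is built from $\oplus_*$ of products indexed by decompositions of $\dd$ in $\BoN^{Q_0}$, whereas the $Q_x$ side is indexed by decompositions of $\dd_x$ in $\BoN^{\underline{\CF}}$; these do not match after open restriction, and the claim that ``the direct-sum maps are matched over $\CU_x$ by the neighbourhood theorem'' is not what Theorem~\ref{theorem:neighbourhood} provides.

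The paper's argument sidesteps this entirely by staying with $\imath^{!}$. It takes $x$ in an open dense subset $T\subset\supp\underline{\CT}$ on which $\underline{\CT}$ is a shifted local system, so that the costalk $\imath_{\underline{\CF}}^{!}\underline{\CT}$ (at the point $\dd_x$) is nonzero; since $\Upsilon_{\Pi_Q,\dd}|_{\underline{\CT}}$ is an isomorphism onto its image, $\imath_{\underline{\CF}}^{!}\Upsilon_{\Pi_Q,\dd}$ is nonzero on $\imath_{\underline{\CF}}^{!}\underline{\CT}\subset\imath_{\underline{\CF}}^{!}\underline{\CI}_{\Pi_Q,\Alg,\dd}$. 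The comparison diagram of Proposition~\ref{proposition:compatibilitymultfibre} together with Proposition~\ref{proposition:compatibilitySerreideals} then transports this to $\imath_{\CN}^{!}\Upsilon_{\Pi_{Q_x},\dd_x}$ not vanishing on $\imath_{\CN}^{!}\underline{\CI}_{\Pi_{Q_x},\Alg,\dd_x}$, hence $\Upsilon_{\Pi_{Q_x},\dd_x}$ does not vanish on $\underline{\CI}_{\Pi_{Q_x},\Alg,\dd_x}$. Thus $T\subset Z$, and $\supp\underline{\CT}=\overline{T}\subset\overline{Z}$. Note in particular that one does not need (and does not prove) that \emph{every} closed point of $\supp\underline{\CT}$ lies in $Z$; a dense subset suffices.
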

\begin{proof}
 Since the source and the target of $\Upsilon_{\Pi_Q,\dd}$ are semisimple mixed Hodge modules, we may prove the lemma in terms of perverse sheaves after applying the functor $\rat$. We let $T$ be an open subset of $\supp(\CT)$ such that $\CT_{|T}$ is given by a local system shifted by $\dim \supp\CT$. We let $x\in T$. Then, by Proposition~\ref{proposition:compatibilitymultfibre} and Proposition~\ref{proposition:compatibilitySerreideals}, $\iota_{\CN}^!\Upsilon_{\Pi_{Q_x},\dd_x}$ does not vanish on $\iota_{\CN}^!\CI_{\Pi_{Q_x},\Alg,\dd_x}$ and therefore, $\Upsilon_{\Pi_{Q_x},\dd_x}$ does not vanish on $\CI_{\Pi_{Q_x},\Alg,\dd_x}$. This completes the proof of the lemma.
\end{proof}

\begin{proposition}
\label{proposition:factorisationthroughquotient}
 The morphism $\Upsilon_{\CA}\colon\Free_{\boxdot\sAlg}(\underline{\SG}_{\CA})\rightarrow\ul{\BPS}_{\CA,\Alg}^{\psi}$ factors through $\Free_{\boxdot\sAlg}(\underline{\SG}_{\CA})\rightarrow \Free_{\boxdot\sAlg}(\underline{\SG}_{\CA})/\underline{\CI}_{\CA,\Alg}$.
\end{proposition}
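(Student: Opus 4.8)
The plan is to show that $\Upsilon_{\CA}$ annihilates the Serre ideal $\underline{\CI}_{\CA,\Alg}$. Since $\Upsilon_{\CA}$ is a morphism of algebra objects in $\MHM(\CM_{\CA})$ and $\underline{\CI}_{\CA,\Alg}$ is the two-sided ideal generated by the $\underline{\SR}_{m,n}$ ($m,n\in\Phi^+_{\K(\CA)}$), it suffices to check that for every $a\in\K(\CA)$ the component $\Upsilon_{\CA,a}$ vanishes on $\underline{\CI}_{\CA,\Alg,a}$. Both $\underline{\CI}_{\CA,\Alg,a}$ and $\ul{\BPS}_{\CA,\Alg,a}^{\psi}$ are semisimple mixed Hodge modules (Lemma~\ref{lemma:nonnegativeperverse}), so this amounts to showing that no simple summand of $\underline{\CI}_{\CA,\Alg,a}$ survives under $\Upsilon_{\CA,a}$.

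First I would reduce to the case $\CA=\Rep\Pi_Q$. Suppose $\underline{\CT}$ is a simple summand of $\underline{\CI}_{\CA,\Alg,a}$ with $\Upsilon_{\CA,a}(\underline{\CT})\neq 0$, and pick a point $x$ in a dense open subset of $\supp\underline{\CT}$, with $\Sigma$-collection $\underline{\CF}$ of simple constituents and half Ext-quiver $Q_x$, so $x$ has class $\dd_x$. Passing to an analytic Ext-quiver neighbourhood of $x$ (Theorem~\ref{theorem:neighbourhood}) and using Lemma~\ref{lemma:restBPSalgfibre}, Proposition~\ref{proposition:compatibilityFreealgebras}, Proposition~\ref{proposition:compatibilitySerreideals} together with the commuting square of Proposition~\ref{proposition:compatibilitymultfibre}, one identifies $\imath_{\underline{\CF}}^!\Upsilon_{\CA}$ with $\imath_{\CN}^!\Upsilon_{\Pi_{Q_x}}$, compatibly with the respective Serre ideals. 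Hence it is enough to prove that $\Upsilon_{\Pi_Q,\dd}$ vanishes on $\underline{\CI}_{\Pi_Q,\Alg,\dd}$ for every quiver $Q$ and every $\dd\in\BoN^{Q_0}$.

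This I would prove by induction on $|\dd|=\sum_{i\in Q_0}d_i$. If $\Upsilon_{\Pi_Q,\dd}$ failed to kill $\underline{\CI}_{\Pi_Q,\Alg,\dd}$, then Lemma~\ref{lemma:supportnotkernel} would produce a non-killed simple summand $\underline{\CT}$ whose support lies in the closure of the bad locus $\{x\in\CM_{\Pi_Q,\dd}\mid \Upsilon_{\Pi_{Q_x},\dd_x}\text{ does not vanish on }\underline{\CI}_{\Pi_{Q_x},\Alg,\dd_x}\}$. Whenever the semisimple representation at $x$ has a constituent of dimension $>1$ we have $|\dd_x|<|\dd|$, so by the inductive hypothesis $x$ is not in the bad locus. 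Therefore the bad locus, and hence $\supp\underline{\CT}$, is contained in the closed subset $Z_{\dd}\subset\CM_{\Pi_Q,\dd}$ of semisimple representations all of whose constituents are one-dimensional, which is identified with $\prod_{i\in Q_0}\Sym^{d_i}(\BoA^{2g_i})$, where $g_i$ is the number of loops of $Q$ at $i$.

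The remaining point, which is the crux, is to rule out a non-killed summand supported on $Z_{\dd}$. Here I would exploit that $\Upsilon_{\Pi_Q}$, $\underline{\CI}_{\Pi_Q,\Alg}$ and $\ul{\BPS}_{\Pi_Q,\Alg}^{\psi}$ are $\BG_a^{\overline{L}}$-equivariant (Lemma~\ref{lemma:loopequivariant}), so that $\ker\Upsilon_{\Pi_Q}$, the bad locus and $\supp\underline{\CT}$ are $\BG_a^{\overline{L}}$-stable; combining this translation symmetry with a second application of the local neighbourhood theorem along points of $Z_{\dd}$, the vanishing question is transported to the strictly semi-nilpotent locus $\CM_{\Pi_Q}^{\SSN}\subset Z_{\dd}$, where Corollary~\ref{corollary:degree0SSNvanish} states precisely that $\HO^0(\imath_{\SSN}^!\Upsilon_{\Pi_Q})$ kills $\HO^0(\imath_{\SSN}^!\underline{\CI}_{\Pi_Q,\Alg})$. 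This would force $\underline{\CT}=0$, contradicting $\Upsilon_{\Pi_Q,\dd}(\underline{\CT})\neq 0$ and completing the induction, hence the proposition. I expect the genuine difficulty to be exactly this last transport step — matching the local geometry along $Z_{\dd}$, and the corresponding local Ext-quivers, with the strictly semi-nilpotent picture so that Corollary~\ref{corollary:degree0SSNvanish} applies — whose representation-theoretic input is the identification of $\HO^0(\SA_{\Pi_Q}^{\SSN,\psi})$ with $\UEA(\mathfrak{n}_Q^+)$ from Theorem~\ref{theorem:degree0SSN}.
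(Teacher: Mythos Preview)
Your overall strategy matches the paper's: reduce to $\CA=\Rep\Pi_Q$ via the local neighbourhood theorem and the compatibility statements, use Lemma~\ref{lemma:supportnotkernel} to constrain the support of a hypothetical non-killed simple summand $\underline{\CT}$, and finish with $\BG_a^{\overline{L}}$-equivariance and Corollary~\ref{corollary:degree0SSNvanish}. The gap is in your induction parameter. Inducting on $|\dd|$ alone only forces $\supp\underline{\CT}$ into $Z_\dd$, the locus where all simple constituents are one-dimensional. When some vertex $i$ carries loops and $d_i\geq 2$, this locus has many $\BG_a^{\overline{L}}$-orbits, and at a point $x\in Z_\dd$ off the orbit of $0_\dd$ the half Ext-quiver $Q_x$ satisfies $|\dd_x|=|\dd|$ but has strictly more vertices than $Q$, so your inductive hypothesis says nothing about it. Your proposed ``transport to $\CM_{\Pi_Q}^{\SSN}$'' does not go through: the translation action cannot move an arbitrary point of $Z_\dd$ into the strictly semi-nilpotent locus, and there is no reason a simple perverse sheaf supported somewhere in $Z_\dd$ should have nonzero $\HO^0\imath_{\SSN}^!$.

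The paper repairs this by inducting on $\QuivDim$ with respect to the lexicographic order on $\mu(Q,\dd)=(|\dd|,-|Q_0|)$, which is well-founded since $|Q_0|\leq |\dd|$ for fully supported $\dd$. The locus $\{x\in\CM_{\Pi_Q,\dd}:\mu(Q_x,\mm_x)=\mu(Q,\dd)\}$ is exactly the single orbit $\BG_a^{\overline{L}}\cdot 0_\dd$ (this is \cite[Lemma~11.10]{davison2022BPS}), so the refined inductive hypothesis pins $\supp\underline{\CT}$ down to that orbit. Equivariance and simplicity then force $\rat(\underline{\CT})=\BoQ_{\BoC^{\overline{L}}}[2|L|]$, whence $\HO^0(\imath_{\SSN}^!\CT)\cong\BoQ\neq 0$, contradicting Corollary~\ref{corollary:degree0SSNvanish}. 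This is precisely the step you flagged as delicate; the missing idea is the second induction key $-|Q_0|$, which absorbs the ``second application of the local neighbourhood theorem'' you were reaching for.
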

\begin{proof}
\emph{Step 1: Reduction to preprojective algebras of quivers}
 
If there exists a simple mixed Hodge module $\ul{\SF}\subset \CI_{\CA,\Alg}$ such that the restriction of $\Upsilon_{\CA}$ to $\ul{\SF}$ is non-zero, then we let $x$ be a point such that $\imath_x^!\ul{\SF}\neq 0$ where $\imath_x\colon \{x\}\rightarrow \CM_{\CA}$ is the inclusion (existence of such a point under these assumptions is standard, see e.g. \cite[Corollary 10.7]{davison2022BPS}). The point $x$ corresponds to a semisimple object $\CF=\bigoplus_{j=1}^r\CF^{\oplus m_j}$. We let $\underline{\CF}=\{\CF_1,\hdots\CF_r\}$ be the corresponding collection of simple objects of $\CA$. We let $\iota_{\underline{\CF}}$ be as in diagram \eqref{equation:diagramneighbourhood}. Then, $\iota_{\underline{\CF}}^!\Upsilon_{\CA}$ does not vanish on $\iota_{\underline{\CF}}^!\ul{\SF}$. But, by Proposition~\ref{proposition:compatibilitymultfibre}, if $Q_{\underline{\CF}}$ is a quiver such that its double $\overline{Q_{\underline{\CF}}}$ is the Ext-quiver of $\underline{\CF}=\{\CF_1,\hdots,\CF_r\}$, $\iota_{\CN}^!\Upsilon_{\Pi_{Q_{\underline{\CF}}}}$ does not vanish on $\iota_{\CN}^!\ul{\CI}_{\Pi_{Q_{\underline{\CF}}},\Alg}$. This implies that $\Upsilon_{\Pi_{Q_{\underline{\CF}}}}$ does not vanish on $\ul{\CI}_{\Pi_{Q_{\underline{\CF}}},\Alg}$. So if we can prove Proposition~\ref{proposition:factorisationthroughquotient} for $\CA=\Rep\Pi_Q$ for any quiver $Q$, it is true for any Abelian category $\CA$ satisfying the assumptions of \S\ref{section:modulistack2dcats}.
 
\emph{Step 2: Proof for preprojective algebras of quivers}
We turn to the proof of Proposition~\ref{proposition:factorisationthroughquotient} for $\CA=\Rep(\Pi_Q)$.  We prove by induction the following claim.
\smallbreak
\emph{Claim 1:} For every quiver $Q$ and dimension vector $\vec{d} \in \BoN^{Q_0}$ supported on the whole of $Q$, the morphism
\begin{equation*}
\Upsilon_{\Pi_Q,\vec{d}} \colon \bigg(\Free_{\boxdot\sAlg}\bigg(\bigoplus_{\vec{f} \in \Phi^+_{\Pi_Q}} \underline{\SG}_{\Pi_Q,\vec{f}} \bigg)\bigg)_{\vec{d}} \longto \CH^{0}(\JH_{\vec{d},\ast} \BD\ul{\BoQ}_{\Mst_{\Pi_Q,\vec{d}}}^{\vir})^{\psi}=\ulBPS_{\Pi_Q,\Alg,\dd}^{\psi}
\end{equation*}
factors through the quotient $\Free_{\boxdot\sAlg}(\underline{\SG}_{\Pi_Q})_{\vec{d}}\rightarrow \Free_{\boxdot\sAlg}(\ul{\SG}_{\Pi_Q})_{\vec{d}}/\ul{\CI}_{\Pi_Q,\Alg,\dd}$.
\smallbreak
We let $\QuivDim$ be the set of pairs $(Q,\vec{d})$ of a quiver $Q=(Q_0,Q_1)$ and a dimension vector $\vec{d}\in\BoN^{Q_0}$ supported on the whole of $Q$. We define
 \begin{equation*}
\begin{matrix}
\mu\colon &\QuivDim &\rightarrow& \BoZ_{>0} \times \BoZ_{<0}& \\
&(Q,\vec{d}) &\mapsto& (\abs{\vec{d}},-\abs{Q_0})& = \left(\sum_{i\in Q_0}d_i,-\abs{Q_0}\right).
\end{matrix}
\end{equation*}
We prove the claim by induction on $\QuivDim$ with respect to the partial order given by the pull-back by $\mu$ of the lexicographical order on $\BoZ_{>0} \times \BoZ_{<0}$. More precisely, we have
\[
 (Q,\dd)\leq (Q',\dd') \iff
 \left\{
 \begin{aligned}
 &\abs{\vec{d}}\leq\abs{\vec{d'}} \\
 &\text{or } \abs{\vec{d}}=\abs{\vec{d'}} \text{ and } -\abs{Q_0}\leq -\abs{Q_0'}.
 \end{aligned}
 \right.
\]

We do this by proving the following claim.
\smallbreak
\emph{Claim 2:} If $\Upsilon_{\Pi_{Q'_x},\vec{m}_x}$ vanishes on $\ul{\CI}_{\Pi_{Q'_x},\Alg,\dd_x}$ for all half Ext quivers with dimension vectors $(Q'_x,\vec{m}_x) \in \QuivDim$ for closed points $x \in \Msp_{\Pi_Q,\vec{d}}$ such that $\mu(Q'_x,\vec{m}_x) < \mu(Q,\vec{d})$, then $\Upsilon_{\Pi_{Q},\vec{d}}$ vanishes on $\ul{\CI}_{\Pi_Q,\Alg,\dd}$.
\smallbreak
Indeed, suppose $\Upsilon_{\Pi_Q,\vec{d}}$ does not vanish on $\ul{\CI}_{\Pi_Q,\Alg,\dd}$. Let $\ul{\CT}\subset \ul{\CI}_{\Pi_Q,\Alg,\dd}$ be a simple direct summand such that the restriction of $\Upsilon_{\Pi_Q,\dd}$ to $\ul{\CT}$ is nonzero.
 
We have the following chain of inclusions
\begin{align*}
\supp(\ul{\CT}) &\subset \overline{ \{ x \in \Msp_{\Pi_{Q},\vec{d}} \mid \Upsilon_{\Pi_x,\vec{m}_x} \text{ does not vanish on $\ul{\CI}_{\Pi_{Q_x},\Alg,\dd_x}$} \} } 
&\qquad \text{(by Lemma~\ref{lemma:supportnotkernel})}\\
&\subset \overline{\{ x \in \Msp_{\Pi_Q,\vec{d}} \mid \mu(Q_x,\vec{m}_x) = \mu(Q,\dd) \}} 
 &\qquad \text{(by hypothesis and \cite[Lemma 10.11(ii)]{davison2022BPS}})\\
&=\BG_a^{\overline{L}} \cdot 0_{\vec{d}} &\qquad \text{(by \cite[Lemma 10.11(iii)]{davison2022BPS}).}
\end{align*}
In words, the hypothesis guarantees that the support of $\ul{\CT}$ is contained in the $\BG_a^{\bar{L}}$-orbit $\bar{Z} = \BG_a^{\bar{L}}\cdot 0_{\vec{d}} \cong \BoC^{\bar{L}}$ of $0_\vec{d}$.
Since $\ul{\CT}$ is a $\BG_a^{\bar{L}}$-equivariant mixed Hodge module (Lemma~\ref{lemma:loopequivariant}), its support is $\BG_a^{\bar{L}}$-stable and hence equal to the entire orbit $\bar{Z}$. 

The groups $\BG_a^{\bar{L}}$ and $\BG_a^{L}$ are contractible, and so are any of their orbits. Therefore taking total cohomology induces equivalences 
$\Perv_{\BG_a^{\bar{L}}}(\bar{Z}) \simeq \Vect(\BoQ) \simeq \Perv_{\BG_a^{L}}(Z)$ with the category of $\BoQ$-vector spaces, where $Z$ is the $\BG_a^{L}$-orbit of $0_{\vec{d}}$. Let $\imath_{\SSN}\colon \Msp_{\Pi_{Q},\vec{d}}^{\SSN} \into \Msp_{\Pi_Q,\vec{d}}$ be the inclusion.
We have a cartesian square
\begin{equation*}
\begin{tikzcd}
\BoC^{L} \cong Z \ar[d]\ar[r] & \Msp_{\Pi_Q,\vec{d}}^{\SSN} \ar[d] \\
\BoC^{\bar{L}}\cong \bar{Z} \ar[r] & \Msp_{\Pi_Q,\vec{d}}.
\end{tikzcd}
\end{equation*}
Since $\ul{\CT}$ is simple, we have $\CT\coloneqq\rat(\ul{\CT}) = \BoQ_{\BG_a^{\overline{L}}}[2\abs{L}]$ and $\imath_{\SSN,\vec{d}}^{!}\rat(\CT) = \BoQ_{\BG_a^{L}}$. 

Hence $\HO^0({\imath_{\SSN,\vec{d}}^!\CT})\cong \BoQ$ is a summand of $\HO^{0}(\imath_{\SSN}^{!}\CI_{\Pi_Q,\dd})$, which contradicts Corollary~\ref{corollary:degree0SSNvanish}. This finishes the proof.
\end{proof}

We let \[
\overline{\Upsilon}_{\CA}\colon \Free_{\boxdot\sAlg}(\ul{\SG}_{\CA})/\ul{\CI}_{\CA,\Alg}\rightarrow \ul{\BPS}_{\CA,\Alg}
\]
be the morphism obtained using Proposition~\ref{proposition:factorisationthroughquotient}.

We now have the generalisation to arbitrary quivers of \cite[Corollary 8.9]{davison2022BPS}.

\begin{corollary}
\label{corollary:degree0SSN}
Let $Q$ be a quiver. The morphism 
\[
 \HO^0(\imath_{\SSN}^!\overline{\Upsilon}_{\Pi_Q})\colon \HO^0\!(\imath_{\SSN}^!(\Free_{\boxdot\sAlg}(\underline{\SG}_{\Pi_Q})/\underline{\CI}_{\Pi_Q,\Alg}))\rightarrow \HO^0(\ulrelCoHA_{\Pi_Q}^{\SSN,\psi})
\]
is an isomorphism.
\end{corollary}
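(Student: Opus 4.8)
The plan is to deduce the statement formally from Corollary~\ref{corollary:degree0SSNvanish}, combining it with the factorisation $\Upsilon_{\Pi_Q}=\overline{\Upsilon}_{\Pi_Q}\circ q$ supplied by Proposition~\ref{proposition:factorisationthroughquotient} (where $q\colon\Free_{\boxdot\sAlg}(\underline{\SG}_{\Pi_Q})\to\Free_{\boxdot\sAlg}(\underline{\SG}_{\Pi_Q})/\underline{\CI}_{\Pi_Q,\Alg}$ is the quotient morphism) together with the observation that this quotient \emph{splits}. Concretely, for each $\dd\in\BoN^{Q_0}$ the mixed Hodge module $\Free_{\boxdot\sAlg}(\underline{\SG}_{\Pi_Q})_{\dd}$ is semisimple: the $\underline{\SG}_{\Pi_Q,m}$ are (closed pushforwards of) intersection complexes, external products preserve semisimplicity, $\oplus_*$ is finite (Assumption~\ref{ds_fin}, \cite[Lemma~2.1]{meinhardt2019donaldson}) hence exact and semisimplicity-preserving, and the sum over word length is finite in each fixed degree. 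Consequently the subobject $\underline{\CI}_{\Pi_Q,\Alg}$ is a direct summand and the short exact sequence
\[
0\longrightarrow \underline{\CI}_{\Pi_Q,\Alg}\longrightarrow \Free_{\boxdot\sAlg}(\underline{\SG}_{\Pi_Q})\xrightarrow{\;q\;}\Free_{\boxdot\sAlg}(\underline{\SG}_{\Pi_Q})/\underline{\CI}_{\Pi_Q,\Alg}\longrightarrow 0
\]
splits componentwise in $\MHM(\CM_{\Pi_Q})$.

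First I would apply $\imath_{\SSN}^{!}$ and then $\HO^0(-)$ to this split sequence: since it splits, every connecting map vanishes, so $\HO^0(\imath_{\SSN}^{!}q)$ is surjective with kernel $\HO^0(\imath_{\SSN}^{!}\underline{\CI}_{\Pi_Q,\Alg})$, i.e.\ it induces an isomorphism $\HO^0(\imath_{\SSN}^{!}\Free_{\boxdot\sAlg}(\underline{\SG}_{\Pi_Q}))/\HO^0(\imath_{\SSN}^{!}\underline{\CI}_{\Pi_Q,\Alg})\xrightarrow{\cong}\HO^0(\imath_{\SSN}^{!}(\Free_{\boxdot\sAlg}(\underline{\SG}_{\Pi_Q})/\underline{\CI}_{\Pi_Q,\Alg}))$. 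On the other hand, Corollary~\ref{corollary:degree0SSNvanish} says precisely that $\HO^0(\imath_{\SSN}^{!}\Upsilon_{\Pi_Q})$ kills $\HO^0(\imath_{\SSN}^{!}\underline{\CI}_{\Pi_Q,\Alg})$ and induces an isomorphism from the \emph{same} quotient onto $\HO^0(\mathscr{A}^{\SSN,\psi}_{\Pi_Q})=\HO^0(\ulrelCoHA_{\Pi_Q}^{\SSN,\psi})$. Since $\HO^0(\imath_{\SSN}^{!}\Upsilon_{\Pi_Q})=\HO^0(\imath_{\SSN}^{!}\overline{\Upsilon}_{\Pi_Q})\circ\HO^0(\imath_{\SSN}^{!}q)$ by functoriality, $\HO^0(\imath_{\SSN}^{!}\overline{\Upsilon}_{\Pi_Q})$ is identified with the composite of the inverse of the first isomorphism with the second, hence is itself an isomorphism; its target $\HO^0(\imath_{\SSN}^{!}\ulBPS_{\Pi_Q,\Alg}^{\psi})$ is identified with $\HO^0(\ulrelCoHA_{\Pi_Q}^{\SSN,\psi})$ by the lemma of \cite[\S6.4]{davison2020bps} recalled just above Corollary~\ref{corollary:degree0SSNvanish}. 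This yields the asserted isomorphism.

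In this argument there is no substantive obstacle: all the geometry has already been absorbed into Corollary~\ref{corollary:degree0SSNvanish} and, behind it, Theorem~\ref{theorem:degree0SSN} (identifying the degree-zero strictly semi-nilpotent CoHA with the enveloping algebra of the Borcherds--Bozec Lie algebra) together with the explicit description of the summands $\underline{\SG}_{\Pi_Q,m}$ on the semi-nilpotent locus. The only points requiring care in a clean write-up are purely formal: that $\imath_{\SSN}^{!}$ is additive and strictly monoidal, that all the $\HO^0(\imath_{\SSN}^{!}(-))$ occurring here are genuine $\BoN^{Q_0}$-graded vector spaces, and that the $\psi$-twist leaves every underlying mixed Hodge module unchanged, so that it can be carried along harmlessly.
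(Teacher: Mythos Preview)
Your proof is correct and follows essentially the same route as the paper: both arguments reduce to establishing the identification
\[
\HO^0\!\bigl(\imath_{\SSN}^!(\Free_{\boxdot\sAlg}(\underline{\SG}_{\Pi_Q})/\underline{\CI}_{\Pi_Q,\Alg})\bigr)\;\cong\;\HO^0\!\bigl(\imath_{\SSN}^!\Free_{\boxdot\sAlg}(\underline{\SG}_{\Pi_Q})\bigr)\big/\HO^0\!\bigl(\imath_{\SSN}^!\underline{\CI}_{\Pi_Q,\Alg}\bigr)
\]
and then invoking Corollary~\ref{corollary:degree0SSNvanish}. The only difference is in how this quotient identification is justified: you split the short exact sequence using semisimplicity of $\Free_{\boxdot\sAlg}(\underline{\SG}_{\Pi_Q})$ (which is valid, since the generators are pure weight-zero IC sheaves and $\oplus_*$ is finite), whereas the paper argues via strict monoidality of $\imath_{\SSN}^!$ together with the fact that $\HO^0$ is monoidal on complexes with vanishing negative derived global sections. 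Your splitting argument is arguably cleaner and more explicit; the paper's formulation has the advantage of making the algebra-homomorphism structure of the identification transparent, which it then uses to match $\HO^0(\imath_{\SSN}^!\overline{\Upsilon}_{\Pi_Q})$ directly with the isomorphism of Theorem~\ref{theorem:degree0SSN}.
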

\begin{proof}
We may prove the corollary after applying the functor $\rat$. By strict monoidality of $\imath_{\SSN}^!$ and $\HO^0$ applied to complexes with vanishing derived global sections in negative degrees, we have 
$$\HO^0\!(\imath_{\SSN}^!(\Free_{\boxdot\sAlg}(\SG_{\Pi_Q})/\CI_{\Pi_Q,\Alg}))\cong \HO^0\!(\imath_{\SSN}^!(\Free_{\boxdot\sAlg}(\SG_{\Pi_Q})))/\HO^0(\imath_{\SSN}^!(\CI_{\Pi_Q,\Alg})).$$
By Corollary~\ref{corollary:degree0SSNvanish}, this can again be identified with $\Free(\bigoplus_{i\in I_{\infty}}\BoQ e_i)/\HO^0(\imath_{\SSN}^!\CI_{\Pi_Q,\Alg})$. By the proof of Corollary~\ref{corollary:degree0SSNvanish}, $\HO^0(\imath_{\SSN}^!\CI_{\Pi_Q,\Alg})$ is the ideal of Serre relations of $e_i$ as defined in \S\ref{subsubsection:borcherdsbozec}. The morphism $\HO^0(\imath_{\SSN}^!\overline{\Upsilon}_{\Pi_Q})$ can be identified with the morphism of Theorem~\ref{theorem:degree0SSN}: it is an isomorphism.
\end{proof}

\subsubsection{Proof of Theorem~\ref{theorem:BPSalgisenvBorcherds}}
\label{subsubsection:BPSalgebra2CYcats}
Theorem~\ref{theorem:BPSalgisenvBorcherds} can now be proven using the same inductive argument as the proof of Proposition~\ref{proposition:factorisationthroughquotient} applied to the morphism $\overline{\Upsilon}_{\CA}\colon \Free_{\boxdot\sAlg}(\underline{\SG}_{\CA})/\underline{\CI}_{\CA,\Alg}\rightarrow\ul{\BPS}_{\CA,\Alg}^{\psi}$. As for the proof of Proposition~\ref{proposition:factorisationthroughquotient}, we first reduce to the case of quivers and then induct on the set $\QuivDim$ defined in \S\ref{subsubsection:factorizationSerreideal}. We first reduce to showing the following claim.
\smallbreak
\emph{Claim 1:}
 If $\overline{\Upsilon}_{\Pi_Q}$ is an isomorphism for any quiver $Q$, then $\overline{\Upsilon}_{\CA}$ is an isomorphism for any $2$-Calabi--Yau category as in \S\ref{section:modulistack2dcats}.
\smallbreak
 To prove this, we take a nonzero direct summand $\ul{\CT}\subset \ul{\CK}\oplus \ul{\CC}$ where $\ul{\CK}=\ker\overline{\Upsilon}_{\CA}$ and $\ul{\CC}=\coker\overline{\Upsilon}_{\CA}$. We take $x\in\supp\ul{\CT}$ such that for $\imath_x\colon\{x\}\rightarrow\CM_{\CA}$, $\imath_x^!\ul{\CT}\neq 0$. Then $\Upsilon_{\Pi_{Q_x}}$ is not an isomorphism, where $Q_x$ is defined in \S\ref{subsubsection:extquiver} as a half of the Ext-quiver of the collection $\underline{\CF}$ of simple summands of the semisimple object of $\CA$ corresponding to $x$. In more detail, we have the following commutative diagram obtained using Proposition~\ref{proposition:compatibilitymultfibre} and Proposition~\ref{proposition:compatibilitySerreideals} and the strict monoidality of $\imath_{\CN}^!$, $\imath_{\underline{\CF}}^!$:
 \[
  \begin{tikzcd}
	{\iota_{\CN}^!(\Free_{\boxdot\sAlg}(\ul{\SG}_{\Pi_{Q_{\underline{\CF}}}})/\ul{\CI}_{\Pi_Q,\Alg})} & {\iota_{\CN}^!\ulBPS_{\Pi_{Q_{\underline{\CF}}},\Alg}^{\psi}} \\
	{\iota_{\underline{\CF}}^!(\Free_{\boxdot\sAlg}(\ul{\SG}_{\CA})/\ul{\CI}_{\CA,\Alg})} & {\iota_{\underline{\CF}}^!\ulBPS_{\CA}^{\psi}}
	\arrow[from=1-1, to=2-1]
	\arrow[from=1-2, to=2-2]
	\arrow["\imath_{\underline{\CF}}^!\overline{\Upsilon}_{\CA}",from=2-1, to=2-2]
	\arrow["\imath_{\CN}^!\overline{\Upsilon}_{\Pi_{Q_x}}",from=1-1, to=1-2]
\end{tikzcd}
 \]
By Propositions \ref{proposition:compatibilitySerreideals} and \ref{proposition:compatibilitymultfibre}, both vertical arrows are isomorphisms. It follows that the bottom horizontal arrow $\imath_{\underline{\CF}}^!\overline{\Upsilon}_{\CA}$ is an isomorphism if and only if the top horizontal arrow $\imath_{\CN}^!\overline{\Upsilon}_{\Pi_{Q_x}}$ is an isomorphism. Since $\imath_{\underline{\CF}}^!\ul{\CT}$ is in the kernel or cokernel of $\imath_{\underline{\CF}}^!\overline{\Upsilon}_{\CA}$, $\imath_{\CN}^!\overline{\Upsilon}_{\Pi_{Q_x}}$ is not an isomorphism, and therefore, $\overline{\Upsilon}_{\Pi_{Q_x}}$ itself is not an isomorphism. This proves Claim 1.

Then, the fact that $\overline{\Upsilon}_{\Pi_Q}$ is an isomorphism for any quiver $Q$ is proved by induction on $\QuivDim$ as in Step $2$ of the proof of Proposition~\ref{proposition:factorisationthroughquotient}, by appealing to Corollary~\ref{corollary:degree0SSN}. More precisely, we have the following claim whose proof is identical to that of Lemma~\ref{lemma:supportnotkernel}.
 \smallbreak
 \emph{Claim 2:} Let $(Q,\dd)\in\QuivDim$ be such that 
 \[
 \overline{\Upsilon}_{\Pi_Q,\dd}\colon \left(\Free_{\boxdot\sAlg}(\ul{\SG}_{\Pi_Q})/\ul{\CI}_{\Pi_Q,\Alg}\right)_{\dd}\rightarrow \ul{\BPS}_{\Pi_Q,\Alg,\dd}^{\psi}
 \]
  is not an isomorphism. Then, if $\ul{\CT}$ is a mixed Hodge module which is a simple summand of $\ker\overline{\Upsilon}_{\Pi_Q,\dd}\oplus\coker\overline{\Upsilon}_{\Pi_Q,\dd}$, then $\supp\ul{\CT}\subset\overline{\{x\in\CM_{\Pi_Q,\dd}\mid \overline{\Upsilon}_{\Pi_{Q_x},\dd_x} \text{is not an isomorphism}\}}$. 
  \smallbreak
  Then, we have the claim whose proof is the same as the one for Claim~2 of the Step~2 of the proof of Proposition~\ref{proposition:factorisationthroughquotient}:
 \smallbreak
 \emph{Claim 3:} If $\overline{\Upsilon}_{\Pi_{Q'_x},\vec{m}_x}$ is an isomorphism for all half Ext-quivers with dimension vector $(Q'_x,\vec{m}_x)\in\QuivDim$ for closed points $x\in\CM_{\Pi_Q,\dd}$ such that $\mu(Q_x,\vec{m}_x)<\mu(Q,\dd)$, then $\overline{\Upsilon}_{\Pi_Q,\dd}$ is an isomorphism.
 
 This claim implies by induction that for any $\dd\in\BoN^{Q_0}$, $\overline{\Upsilon}_{\Pi_Q,\dd}$ is an isomorphism, finishing the proof of Theorem~\ref{theorem:BPSalgisenvBorcherds}; for the base case, we need to show that $\overline{\Upsilon}_{\Pi_Q,\dd}$ is an isomorphism when $Q$ has a single vertex, and $\dd=1$, which is clear.
\qed

\subsubsection{The BPS Lie algebra}
\label{subsubsection:BPSLiealgebra}
Let $\CA$ be a 2-Calabi--Yau category satisfying our Assumptions \ref{p_assumption}--\ref{BPS_cat_assumption} in \S\ref{subsection:assumptionsCoHA} and \S\ref{subsec:spaceassumptions}as well as Assumption \ref{ass:associativity}.  In \S\ref{subsubsection:BPSalgebra2CYcats}, we proved Theorem~\ref{theorem:BPSalgisenvBorcherds}, that is that we have an isomorphism in $\MHM(\CM_{\CA})$
\[
 \ul{\BPS}_{\CA,\Alg}^{\psi}\cong\UEA(\mathfrak{n}^+_{\CM_{\CA},\ul{\SG}_{\CA}})
\]
where $\ul{\SG}_{\CA}$ is as in \S\ref{subsection:generators}, motivating the following definition:
\begin{definition}
\label{def:BPSheafalg}
The BPS \emph{Lie} algebra sheaf of $\CA$ is 
\[
\ul{\BPS}_{\CA,\Lie}\coloneqq\mathfrak{n}^+_{\CM_{\CA},\ul{\SG}_{\CA}}.
\]
In words, this is the generalised half Kac--Moody Lie algebra object in the tensor category of mixed Hodge modules on the coarse moduli space (as defined in \S \ref{subsection:GKMtensorcategories}), with generating mixed Hodge modules defined in \S \ref{subsection:generators}.
If $\imath_{\triangle}\colon \CM_{\CA}^{\triangle}\hookrightarrow \CM_{\CA}$ is the inclusion of a saturated monoid (in schemes) corresponding to a Serre subcategory, we define
\[
\ul{\Fn}_{\CA}^{\triangle,+}\coloneqq\HO^*(\imath_{\triangle}^!\mathfrak{n}^+_{\CM_{\CA},\ul{\SG}_{\CA}})
\]
and in particular $\ul{\Fn}_{\CA}^+\coloneqq\HO^*(\mathfrak{n}_{\CM_{\CA},\ul{\SG}_{\CA}})$. We denote by $\Fg^{\triangle}_{\CA}\supset \Fn_{\CA}^{\triangle,+}$ the extension to the generalised Kac--Moody Lie algebra as defined in Theorem~\ref{corollary:absBor}.
\end{definition}

\subsection{Verdier duality and the compactly supported CoHA}
\begin{corollary}
 The BPS algebra sheaf $\ulBPS_{\CA,\Alg}$ and the BPS Lie algebra sheaf $\ulBPS_{\CA,\Lie}$ are Verdier self-dual mixed Hodge modules on $\CM_{\CA}$. More precisely, each simple direct summand of $\ulBPS_{\CA,\Alg}$ (and hence of $\ulBPS_{\CA,\Lie}$) is Verdier self-dual.
\end{corollary}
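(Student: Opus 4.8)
The claim is that $\ulBPS_{\CA,\Alg}$ (equivalently $\ulBPS_{\CA,\Lie}$, since the latter is built from the generators $\ul{\SG}_{\CA}$ appearing inside the former) decomposes as a direct sum of Verdier self-dual simple mixed Hodge modules on $\CM_{\CA}$. Both sheaves are semisimple (Lemma~\ref{lemma:nonnegativeperverse}), so it suffices to show every simple summand is Verdier self-dual.

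The plan is to reduce to the explicit description of the generators $\ul{\SG}_{\CA,m}$ provided by Theorem~\ref{theorem:BPSalgisenvBorcherds}. By that theorem, $\ulBPS_{\CA,\Alg}^{\psi}\cong \UEA(\mathfrak{n}^+_{\CM_{\CA},\ul{\SG}_{\CA}})$, and $\ulBPS_{\CA,\Lie}=\mathfrak{n}^+_{\CM_{\CA},\ul{\SG}_{\CA}}$ is the Lie subalgebra object of this generated by $\ul{\SG}_{\CA}=\bigoplus_{m\in\Phi_{\K(\CA)}^+}\ul{\SG}_{\CA,m}$. So the set of simple summands of $\ulBPS_{\CA,\Alg}$ is contained in the set of simple summands of the various iterated convolutions $\ul{\SG}_{\CA,m_1}\boxdot\cdots\boxdot\ul{\SG}_{\CA,m_r}$. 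Since Verdier duality $\BD$ is a monoidal functor with respect to $\boxdot$ (the sum map $\oplus$ is finite, hence proper, so $\oplus_*=\oplus_!$ commutes with $\BD$), it is enough to check that each generating mixed Hodge module $\ul{\SG}_{\CA,m}$ is Verdier self-dual. First I would therefore record this monoidality of $\BD$ and reduce the statement to the generators.

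Next I would check Verdier self-duality of each $\ul{\SG}_{\CA,m}$ case by case according to the definition in \S\ref{subsection:generators}. If $m\in\Sigma_{\CA}$, then $\ul{\SG}_{\CA,m}=\ul{\IC}(\CM_{\CA,m})$ is an intersection cohomology complex of a scheme whose connected components are irreducible of even dimension $p(m)$ (Proposition~\ref{proposition:geometrygoodmodspace}\eqref{item:geoimrealroot}), hence pure of weight zero and Verdier self-dual by the standard properties of IC sheaves recalled in \S\ref{notations_subsec} (the normalisation by $\BoL^{-\dim/2}$ is chosen precisely so that $\BD\ul{\IC}(X)\cong\ul{\IC}(X)$). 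If $m=lm'$ with $m'\in\Sigma_{\CA}$ isotropic, then $\ul{\SG}_{\CA,m}=(u_m)_*\ul{\IC}(\CM_{\CA,m'})$; here $u_m$ is a closed immersion (Lemma~\ref{lemma:isoroots}), so $(u_m)_*=(u_m)_!$ and $\BD(u_m)_*=(u_m)_*\BD$, reducing again to self-duality of $\ul{\IC}(\CM_{\CA,m'})$, which holds as $\CM_{\CA,m'}$ is irreducible of dimension $2$. The real case $m\in\Phi_{\K(\CA)}^{+,\real}$ is the special case $\CM_{\CA,m}=\pt$, where $\ul{\SG}_{\CA,m}=\ul{\BoQ}_{\pt}$ is manifestly self-dual.

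I do not anticipate a serious obstacle: the content is entirely bookkeeping around the fact that $\BD$ commutes with proper pushforward and fixes IC sheaves (with the chosen Tate twist normalisation), combined with the structural Theorem~\ref{theorem:BPSalgisenvBorcherds} which identifies the simple summands of $\ulBPS_{\CA,\Alg}$ and $\ulBPS_{\CA,\Lie}$ in terms of the $\ul{\SG}_{\CA,m}$. The one point requiring a little care is that "each simple direct summand is self-dual" is slightly stronger than "the whole sheaf is self-dual"; but this follows because $\BD$ sends the decomposition into isotypic pieces to itself, and each $\ul{\SG}_{\CA,m}$ — hence each summand of every convolution $\ul{\SG}_{\CA,m_1}\boxdot\cdots\boxdot\ul{\SG}_{\CA,m_r}$ — is individually Verdier self-dual, so no two distinct simples can be swapped by $\BD$.
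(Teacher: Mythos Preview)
Your reduction is correct up to the point where you have shown that each convolution $\ul{\SG}_{\CA,m_1}\boxdot\cdots\boxdot\ul{\SG}_{\CA,m_r}$ is Verdier self-dual as a whole. The gap is in the last paragraph: the ``hence'' in ``each $\ul{\SG}_{\CA,m}$ --- hence each summand of every convolution $\ul{\SG}_{\CA,m_1}\boxdot\cdots\boxdot\ul{\SG}_{\CA,m_r}$ --- is individually Verdier self-dual'' is not justified. Self-duality of a semisimple object does not imply self-duality of each simple summand: if $V$ is a simple perverse sheaf with $\BD V\not\cong V$, then $V\oplus \BD V$ is self-dual while neither summand is. So your argument only shows that $\BD$ permutes the isotypic pieces of the convolution, not that it fixes each one.

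The paper closes this gap by identifying the simple summands explicitly. The convolution in question is $\oplus'_*$ of an exterior product of IC sheaves, where $\oplus'\colon \prod_i \CM_{\CA,m_i}\times\prod_j \CM_{\CA,n_j}\to \CM_{\CA}$ is finite (in fact small) and is a Galois covering over a dense open subset of its image, with deck group a product of symmetric groups (coming from reordering identical factors). The simple summands are therefore IC sheaves with coefficients in local systems corresponding to irreducible representations of products of symmetric groups. Since every irreducible representation of a symmetric group over $\BoQ$ is self-dual (characters are real-valued), each such local system is self-dual, and hence each simple summand is Verdier self-dual. This extra input about symmetric-group representations is exactly what your argument is missing.
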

\begin{proof}
 By Theorem \ref{theorem:BPSalgisenvBorcherds}, the relative BPS algebra is a quotient of $\Free_{\boxdot\sAlg}(\ul{\SG}_{\CA})$ and so (by finiteness of $\oplus$ which gives the commutation of $\boxdot$ with Verdier duality), it suffices to prove the Verdier self-duality of each simple direct summand of $\Free_{\boxdot\sAlg}(\ul{\SG}_{\CA})$. Such a simple direct summand appears in the pushforward of the $\ul{\IC}$ mixed Hodge module by the finite map $\oplus'\colon \prod_{i=1}^r\CM_{\CA,m_i}\times\prod_{j=1}^s\CM_{\CA,n_j}\rightarrow\CM_{\CA}$, $((x_i)_{1\leq i\leq r},(y_j)_{1\leq j\leq s})\mapsto \bigoplus_{i=1}^rx_i^{l_i}\oplus\bigoplus_{j=1}^sy_j$, where $m_i\in\Sigma_{\K(\CA)}\cap\Phi^{+,\iso}_{\K(\CA)}$, $n_j\in\Sigma_{\K(\CA)}\setminus (\Sigma_{\K(\CA)}\cap\Phi^{+,\iso}_{\K(\CA)})$ and $l_i\in\BoZ_{\geq 1}$. The map $\oplus'$ is small and is a covering over an open subspace of its image with structure group a product of symmetric groups. The corollary then follows from the fact that all representations of symmetric groups are self-dual.
\end{proof}

Recall the class map $\cl\colon\CM_{\CA}\rightarrow\K(\CA)$ (\S \ref{subsubsection:twistmultiplication}). The absolute CoHA can be seen as $\cl_*\ulrelCoHA_{\CA}\in\CD^+(\MHM(\K(\CA)))$ endowed with the multiplication $\cl_*\mult$. We also consider the variant using the functor $\cl_!$. By the properness of $\oplus$ (implying $\oplus_*= \oplus_!$), $\cl_!$ gives a monoidal functor $\cl\colon \CD^+(\MHM(\CM_{\CA}))\rightarrow\CD^+(\MHM(\K(\CA)))$. We can then consider the \emph{compactly supported CoHA} of $\CA$, defined as $\cl_!\ulrelCoHA_{\CA}$. A subalgebra is given by the compactly supported BPS algebra $\cl_!\ulBPS_{\CA,\Alg}$, and we define $\cl_!\ulBPS_{\CA,\Lie}$ to be the compactly supported BPS Lie algebra. A corollary of Theorems \ref{theorem:BPSalgisenvBorcherds} and \ref{theorem:relPBW} is the following.
\begin{corollary}
 \begin{enumerate}
 \item The compactly supported BPS algebra $\cl_!\BPS_{\CA,\Alg}$ is isomorphic to $\UEA(\Fn^+_{\K(\CA),(-,-)_{\SC},\IP_{\rmc}})$ where
\[
 \begin{matrix}
  \IP_{\rmc}&\colon&\Phi^+_{\K(\CA)}&\rightarrow&\BoN[t^{\pm1/2}]\\
  &&m&\mapsto&\IP_{\rmc}(\SG_{\CA,m})&=\sum_{j\in\BoZ}\dim\HO^j(\SG_{\CA,m})t^{-j/2}.
 \end{matrix}
\]
\item We have a PBW isomorphism
\[
 \Sym\left(\Fn^+_{\K(\CA),(-,-)_{\SC},\IP_{\rmc}}\otimes\HO^*_{\BoC^*}\right)\rightarrow\cl_!\SA_{\CA}.
\]
 \end{enumerate}
\end{corollary}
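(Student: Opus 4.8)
The plan is to deduce both parts by pushing Theorems~\ref{theorem:BPSalgisenvBorcherds} and~\ref{theorem:relPBW} forward along $\cl$ with proper supports, i.e. by applying the functor $\cl_!$. First I would record the one structural input: $\cl_!\colon\CD^+(\MHM(\CM_{\CA}))\to\CD^+(\MHM(\K(\CA)))$ is symmetric monoidal for $\boxdot$. Indeed, since $\oplus\colon\CM_{\CA}\times\CM_{\CA}\to\CM_{\CA}$ is proper (Assumption~\ref{ds_fin}) we have $\oplus_!=\oplus_*$, and combining this with the Künneth isomorphism for $(\cl\times\cl)_!$ and the fact that $\cl$ is a morphism of monoids produces a natural isomorphism $\cl_!(\SF\boxdot\SG)\cong\cl_!\SF\boxdot\cl_!\SG$, exactly as for $\cl_*$ (the difference being that $\cl$ itself need not be proper, which is why $\cl_!\neq\cl_*$ in general). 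Being exact and monoidal, $\cl_!$ then commutes with $\Free_{\boxdot\sAlg}$, $\Free_{\boxdot\sLie}$, with the formation of two-sided and Lie ideals, with $\UEA(-)$ and with $\Sym_{\boxdot}$ (which on the discrete monoid $\K(\CA)$ is the ordinary graded-symmetric algebra), as well as with $-\otimes\HO^*_{\BoC^*}$; finiteness of $\cl$ and of the sum map on $\K(\CA)$ keeps all outputs genuine complexes of mixed Hodge modules on $\K(\CA)$.

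Next I would compute $\cl_!$ on the generating sheaves $\SG_{\CA,m}$. Writing $a_m\colon\CM_{\CA,m}\to\pt$ for the structure map, $\cl_!\SG_{\CA,m}=a_{m,!}\SG_{\CA,m}$ is the compactly supported cohomology of $\SG_{\CA,m}$. Each $\SG_{\CA,m}$ is a simple direct summand of $\ulBPS_{\CA,\Alg}$ (Lemma~\ref{lemma:inclusionICBPSalg} and Corollary~\ref{corollary:isoprimsummand}), hence Verdier self-dual by the preceding corollary, so $a_{m,!}\SG_{\CA,m}=\BD\bigl(a_{m,*}\BD\SG_{\CA,m}\bigr)\cong\BD\bigl(a_{m,*}\SG_{\CA,m}\bigr)$ and therefore $\dim\HO^j(\cl_!\SG_{\CA,m})=\dim\HO^{-j}(\CM_{\CA,m},\SG_{\CA,m})$ for every $j$, which gives
\[
\sum_{j\in\BoZ}\dim\HO^j(\cl_!\SG_{\CA,m})\,t^{j/2}=\sum_{j\in\BoZ}\dim\HO^{j}(\SG_{\CA,m})\,t^{-j/2}=\IP_{\rmc}(\SG_{\CA,m}).
\]
For $m\in\Phi^{+,\real}_{\K(\CA)}$ one has $\CM_{\CA,m}=\pt$ (Proposition~\ref{proposition:geometrygoodmodspace}) and $\SG_{\CA,m}=\ul{\BoQ}_{\pt}$, so $\cl_!\SG_{\CA,m}=\ul{\BoQ}_{\pt}$ is one-dimensional in cohomological degree $0$, as required by the generalised Kac--Moody formalism of \S\ref{subsection:GKMmonoids}.

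Then the two statements would follow. For (1), I would apply $\cl_!$ to the isomorphism $\ulBPS_{\CA,\Alg}^{\psi}\cong\UEA(\Fn^+_{\CM_{\CA},(-,-)_{\SC},\SG_{\CA}})=\Free_{\boxdot\sAlg}(\SG_{\CA})/\CI_{\CM_{\CA},\SG_{\CA},\Alg}$ of Theorem~\ref{theorem:BPSalgisenvBorcherds}: by the first two steps the right-hand side becomes $\Free_{\boxdot\sAlg}(\cl_!\SG_{\CA})$ modulo the Serre ideal attached to the graded vector spaces $\cl_!\SG_{\CA,m}$ (here one uses that $\cl_!\SG_{\CA,m}$ is one-dimensional in degree $0$ for real $m$, so that $\cl_!$ sends each Serre subobject $\SR_{m,n}$ to the Serre subspace in the sense of \S\ref{subsection:GKMmonoids}), and, arguing as in \S\ref{subsection:globsectGKM}, this quotient is $\UEA(\Fn^+_{\K(\CA),(-,-)_{\SC},\IP_{\rmc}})$. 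Running the same argument on $\ulBPS_{\CA,\Lie}=\Fn^+_{\CM_{\CA},(-,-)_{\SC},\SG_{\CA}}=\Free_{\boxdot\sLie}(\SG_{\CA})/\CI_{\CM_{\CA},\SG_{\CA},\Lie}$ and invoking Lemma~\ref{lemma:envelopingLiealg} gives $\cl_!\ulBPS_{\CA,\Lie}\cong\Fn^+_{\K(\CA),(-,-)_{\SC},\IP_{\rmc}}$ as a Lie algebra. For (2), I would apply $\cl_!$ to the relative PBW isomorphism $\Psi^{\psi}_{\CA}\colon\Sym_{\boxdot}\bigl(\ulBPS_{\CA,\Lie}\otimes\HO^*_{\BoC^*}\bigr)\xrightarrow{\cong}\ulrelCoHA_{\CA}^{\psi}$ of Theorem~\ref{theorem:relPBW}: by monoidality the source is taken to $\Sym\bigl(\cl_!\ulBPS_{\CA,\Lie}\otimes\HO^*_{\BoC^*}\bigr)\cong\Sym\bigl(\Fn^+_{\K(\CA),(-,-)_{\SC},\IP_{\rmc}}\otimes\HO^*_{\BoC^*}\bigr)$ while the target is $\cl_!\SA_{\CA}$, so $\cl_!\Psi^{\psi}_{\CA}$ is the asserted isomorphism, realised as before by evaluating symmetric tensors through the $\psi$-twisted compactly supported Hall product $\cl_!\mathfrak{m}^{\psi}$.

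I do not expect a serious obstacle: the statement is essentially formal given Theorems~\ref{theorem:BPSalgisenvBorcherds} and~\ref{theorem:relPBW}. The only step that is more than bookkeeping is the identification of the weight function, i.e. the passage between ordinary and compactly supported cohomology of the generators $\SG_{\CA,m}$, which relies on their Verdier self-duality (the preceding corollary); the other structural ingredient, monoidality of $\cl_!$, rests entirely on the properness of $\oplus$ (Assumption~\ref{ds_fin}).
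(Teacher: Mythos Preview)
Your proposal is correct and follows essentially the same approach as the paper: apply $\cl_!$ to the relative results and use Verdier self-duality of $\ul{\SG}_{\CA,m}$ to identify the weight function via $\HO^*_{\rmc}(\SG_{\CA,m})\cong\HO^*(\SG_{\CA,m})^*$. The paper's own proof is a two-line remark invoking only Theorem~\ref{theorem:BPSalgisenvBorcherds} and this self-duality; you spell out the monoidality of $\cl_!$ (from properness of $\oplus$) and explicitly invoke Theorem~\ref{theorem:relPBW} for part~(2), which is the natural source for the PBW statement and is arguably what the paper's terse proof leaves implicit.
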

\begin{proof}

Both statements are straightforward consequences of Theorem \ref{theorem:BPSalgisenvBorcherds}, and the description of the weight function uses that $\SG_{\CA,m}$ is Verdier self-dual and so $\HO^*_{\rmc}(\SG_{\CA,m})\cong \HO^*(\SG_{\CA,m})^*$ (the second star in the R.H.S. indicates the graded linear dual).

\end{proof}

\subsection{Weyl-group invariance}
\label{subsection:weylinvariance}
Let $\CA$ be a 2-Calabi--Yau category as in Definition \ref{def:BPSheafalg}, and let $\ul{\SG}_{\CA}\in \MHM(\CM_{\CA})$ be defined as above (see \S \ref{subsection:generators}). Let $\imath_{\triangle}\colon \CM_{\CA}^{\triangle}\hookrightarrow \CM_{\CA}$ be the inclusion of monoids corresponding to a Serre subcategory, and let us assume that if $\CM_{\CA,m}\subset \CM_{\CA}$ is a connected component corresponding to a real simple root $m$, then $\CM_{\CA,m}^{\triangle}=\CM_{\CA,m}$. It follows that the weight function $P\colon \Phi_{M,(-,-)}^+\rightarrow\BoN[t^{\pm 1/2}]$ defining the generalised Kac--Moody Lie algebra $\Fg^{\triangle}_{\CA}$ satisfies $P(m)=1$, as required by the definition of GKM algebras in \S\ref{subsection:GKMmonoids}.

We define the graph $\Gamma$ as follows: the vertices $V$ of $\Gamma$ are labelled by real simple roots $a\in\Phi_{\CA}^{+,\real}$ of $\CA$. If $i$ and $j$ are two vertices of $\Gamma$, then we have $-(i,j)_{\SC}$ edges between them. Let $W_{\Gamma}$ be the Weyl group associated to the graph $\Gamma$: it is the subgroup of the group of linear automorphisms of $\BoC^V$ generated by the reflections
\[
s_i(\lambda)=\lambda -\lambda^T C_{i}1_i
\]
where $C_{i}$ is the $i$th column of the Cartan matrix defined by $C_{ij}=2\delta_{ij}-\#\{\textrm{edges between }i\textrm{ and }j\}$. Let $\widehat{M}$ denote the groupification of $\K(\CA)$. The group $W_{\Gamma}$ acts similarly on $\widehat{M}\otimes_{\BoZ}\BoC$ by reflections:
\[
s_i(m)=m-(m,i)_{\SC}i.
\]

\begin{proposition}
\label{proposition:weylInv}
Let $\CA$ be a 2-Calabi--Yau category, and let $\imath_{\triangle}\colon\CM^{\triangle}_{\CA}\hookrightarrow \CM_{\CA}$ be as above. Let $m,m'\in\K(\CA)$ satisfy $w(m)=m'$, where $w\in W_{\Gamma}$ and $W_{\Gamma}$ is the Weyl group defined above. Then there is an isomorphism of weight spaces
\begin{equation}
\label{WeylKacIso}
\Fn_{\CA,m}^{\ttBPS,\triangle,+}\cong \Fn_{\CA,m'}^{\ttBPS,\triangle,+}.
\end{equation}
\end{proposition}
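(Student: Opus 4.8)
The plan is to realise the isomorphism \eqref{WeylKacIso} as the restriction of an inner automorphism of the \emph{full} generalised Kac--Moody Lie algebra $\Fg^{\ttBPS,\triangle}_{\CA}$, built from an $\mathfrak{sl}_2$-triple attached to a real simple root, following the classical Tits/Borcherds argument for Weyl-invariance of root multiplicities. By Theorem~\ref{corollary:absBor} we may work entirely on the combinatorial side: $\Fg^{\ttBPS,\triangle}_{\CA}=\Fg_{\K(\CA),(-,-)_{\SC},\IP^{\triangle}_{\CA}}$ is the generalised Kac--Moody Lie algebra associated to the monoid $\K(\CA)$ with bilinear form $(-,-)_{\SC}$ and weight function $\IP^{\triangle}_{\CA}$, it is $\widehat{\K(\CA)}\times\BoZ$-graded (\S\ref{subsubsection:pospartGKM}), and the hypothesis $\CM^{\triangle}_{\CA,m}=\CM_{\CA,m}$ for real simple roots $m$ guarantees $\IP^{\triangle}_{\CA}(m)=1$ there. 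Writing $w=s_{i_1}\cdots s_{i_k}$ with each $i_\ell\in\Phi^{+,\real}_{\CA}$, it suffices to treat a single simple reflection $s_i$ and then compose; note that the intermediate weights need not lie in $\K(\CA)$, but this is harmless because the construction lives on all of $\Fg^{\ttBPS,\triangle}_{\CA}$.

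Fix $i\in\Phi^{+,\real}_{\CA}$ and let $e_i,f_i,h_i\in\Fg^{\ttBPS,\triangle}_{\CA}$ be the associated Chevalley generators. Since $\IP^{\triangle}_{\CA}(i)=1$, all three sit in cohomological degree $0$ and span an $\mathfrak{sl}_2$-triple. The relations defining $\Fg^{\ttBPS,\triangle}_{\CA}$ (the $\mathfrak{sl}_2$-relations together with $\ad(e_i)^{1-a_{i,n}}(e_n)=0$ and the dual Serre relations) imply, by the standard argument, that $\{x\in\Fg^{\ttBPS,\triangle}_{\CA}: \ad(e_i) \text{ acts locally nilpotently on }x\}$ is a subalgebra containing all generators, hence is everything; the same holds for $\ad(f_i)$ (see \cite{borcherds1988generalized}, and \cite[\S3]{kac1990infinite} in the Kac--Moody case). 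Each $\widehat{\K(\CA)}\times\BoZ$-graded piece of $\Fg^{\ttBPS,\triangle}_{\CA}$ is finite-dimensional (as $\IP^{\triangle}_{\CA}$ takes values in $\BoN[t^{\pm1/2}]$ and $\K(\CA)$ is finitely generated), so
\[
\tau_i:=\exp(\ad e_i)\,\exp(-\ad f_i)\,\exp(\ad e_i)
\]
is a well-defined Lie algebra automorphism of $\Fg^{\ttBPS,\triangle}_{\CA}$. Because $e_i$ and $f_i$ have cohomological degree $0$, $\tau_i$ preserves the cohomological grading; and $\tau_i$ acts on the Cartan part by the reflection $s_i$, hence carries the $\widehat{\K(\CA)}$-weight space $(\Fg^{\ttBPS,\triangle}_{\CA})_\alpha$ isomorphically onto $(\Fg^{\ttBPS,\triangle}_{\CA})_{s_i(\alpha)}$ for every $\alpha$ (the usual computation, cf. \cite[\S3]{kac1990infinite}).

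Setting $\tau_w:=\tau_{i_1}\circ\cdots\circ\tau_{i_k}$ then yields a cohomologically graded isomorphism $(\Fg^{\ttBPS,\triangle}_{\CA})_m\xrightarrow{\ \sim\ }(\Fg^{\ttBPS,\triangle}_{\CA})_{w(m)}=(\Fg^{\ttBPS,\triangle}_{\CA})_{m'}$. It remains to identify these root spaces with the asserted weight spaces. Since $m$ and $m'=w(m)$ both lie in $\K(\CA)$ and are nonzero, and $\K(\CA)$ is a pointed submonoid of the lattice $\widehat{\K(\CA)}$ (so a nonzero element of $\K(\CA)$ is not a non-positive integral combination of positive roots), the triangular decomposition $\Fg^{\ttBPS,\triangle}_{\CA}=\Fn^{\ttBPS,\triangle,-}_{\CA}\oplus\mathfrak{h}\oplus\Fn^{\ttBPS,\triangle,+}_{\CA}$ forces $(\Fg^{\ttBPS,\triangle}_{\CA})_m=\Fn^{\ttBPS,\triangle,+}_{\CA,m}$ and likewise for $m'$. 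Hence $\tau_w$ restricts to the required isomorphism. (If one wants, one checks that since the real-root generators are of pure Tate weight $0$, $\tau_w$ also respects the weight filtration, upgrading \eqref{WeylKacIso} to an isomorphism of mixed Hodge structures; only the statement on graded vector spaces is used in the applications.)

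The only non-formal input is the local nilpotency of $\ad(e_i)$ on \emph{all} of $\Fg^{\ttBPS,\triangle}_{\CA}$ rather than merely on its positive part; this is precisely where one uses that $i$ is a real simple root and $\IP^{\triangle}_{\CA}(i)=1$, after which the remaining steps are the familiar bookkeeping for generalised Kac--Moody algebras. I expect this to be the main point to spell out carefully. One could instead attempt a geometric proof, producing reflection-functor isomorphisms $\CM_{\CA,m}\cong\CM_{\CA,s_i m}$ (à la Crawley--Boevey--Holland and Nakajima, cf. \cite{crawley2002decomposition}) matching the generating complexes $\underline{\SG}_{\CA,m}$; but a reflection functor need not preserve the Serre subcategory cut out by $\imath_{\triangle}$, which makes the Lie-theoretic route above more convenient.
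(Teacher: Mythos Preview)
Your proof is correct and follows essentially the same route as the paper's: the paper observes that the Kac--Moody subalgebra $\Fg\subset\Fg_{\CA}^{\ttBPS,\triangle}$ generated by the real-root $\mathfrak{sl}_2$-triples acts integrably on $\Fg_{\CA}^{\ttBPS,\triangle}$ and then cites \cite[Proposition~3.7a)]{kac1990infinite}, whereas you unpack that citation by constructing the reflection automorphisms $\tau_i=\exp(\ad e_i)\exp(-\ad f_i)\exp(\ad e_i)$ explicitly. The only extra step you spell out---identifying $(\Fg_{\CA}^{\ttBPS,\triangle})_m$ with $\Fn_{\CA,m}^{\ttBPS,\triangle,+}$ for nonzero $m\in\K(\CA)$ via pointedness of $\K(\CA)$---is implicit in the paper's argument as well.
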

\begin{proof}
Let $\Fg\subset \Fg_{\CA}^{\ttBPS,\triangle}$ be the Kac--Moody Lie algebra generated by the subspace 
\[
\bigoplus_{m\in \Phi_{\K(\CA),(-,-)}^{+,\real}}\left(\Fg_{\CA,m}^{\ttBPS,\triangle}+\Fg_{\CA,-m}^{\ttBPS,\triangle}\right).
\]
In other words, we set $\Fg$ to be the Kac--Moody Lie algebra associated to the (half) Ext quiver of the set of isomorphism classes of objects in $\CA$ corresponding to real simple roots. Then $\Fg_{\CA}^{\ttBPS,\triangle}$ is an integrable $\Fg$-representation, and the claim follows by general theory of Kac--Moody Lie algebras \cite[Proposition 3.7a)]{kac1990infinite}.
\end{proof}

\section{Comparison with the 3D BPS Lie algebra and cohomological integrality for 2CY categories}
\label{section:comparison}
\subsection{BPS Lie algebras for Jacobi algebras}
\label{subsection:DMreminder}

The BPS Lie algebra associated to a quiver with general potential, or Jacobi algebra, was defined in \cite[\S 6.1, Corollary 6.11]{davison2020cohomological}. We briefly recall the definitions, and refer to \cite{kontsevich2010cohomological,davison2017critical,davison2020cohomological} for background and full details. While the theory we present here can readily be lifted to categories of (monodromic) mixed Hodge modules/mixed Hodge structures, as we have done in the rest of the paper, we describe everything at the level of perverse sheaves/vector spaces to avoid the introduction of \emph{monodromic} mixed Hodge modules, which become necessary to deal with vanishing cycle functors and the square-root of the Tate twist.

Let $Q$ be a quiver, which we assume to be symmetric (i.e. for every pair of vertices $i,j\in Q_0$ we assume that there are as many arrows from $i$ to $j$ as there are from $j$ to $i$). We fix a bilinear form $\psi\colon \BoZ^{Q_0}\times \BoZ^{Q_0}\rightarrow \BoZ$ satisfying the condition
\[
\psi(\dd'',\dd')+\psi(\dd',\dd'')\equiv\chi_Q(\dd',\dd')\chi_Q(\dd'',\dd'')+\chi_Q(\dd',\dd'')\quad \pmod{2},
\]
as in \cite[\S2.6]{kontsevich2010cohomological}, or \cite[\S1.6]{davison2020cohomological}. We next pick a \emph{potential} $W\in\BoC Q/[\BoC Q,\BoC Q]$. The potential $W$ defines a function $\Tr(W)$ on $\FM_{Q,\dd}$, sending a $\BoC Q$-module $\rho$ to $\Tr(\overline{W}\cdot \colon\rho\rightarrow\rho)$ where $\overline{W}$ is any lift of $W$ to $\BoC Q$. This function factors through $\JH\colon \FM_{Q,\dd}\rightarrow \CM_{Q,\dd}$. We define
\[
\relCoHA_{Q,W,\dd}\coloneqq \JH_*\phip{\Tr(W)}\BoQ^{\vir}_{\FM_{Q,\dd}}
\]
where $\BoQ_{\FM_{Q,\dd}}^{\vir}\coloneqq\BoQ_{\FM_{Q,\dd}}[-\chi_Q(\dd,\dd)]$, and $\relCoHA_{Q,W}\coloneqq\bigoplus_{\dd\in \BoN^{Q_0}}\relCoHA_{Q,W,\dd}$, with $\phip{\Tr(W)}$ the perverse vanishing cycles functor for the function $\Tr(W)$. Via pushforward and pullback in vanishing cycle cohomology along morphisms from the stack of short exact sequences of $\BoC Q$-modules, along with the Thom--Sebastiani isomorphism, the complex $\relCoHA_{Q,W}$ carries the structure of an algebra object in $\CD^+(\Perv(\CM_Q))\cong\CD^+_{\rmc}(\CM_Q)$, see \cite{kontsevich2010cohomological,davison2017critical}. The monoidal structure on this category is defined via the direct image along the direct sum map $\oplus\colon \CM_Q\times\CM_Q\rightarrow \CM_Q$. As usual, we denote by $\relCoHA_{Q,W}^{\psi}$ the $\psi$-twisted CoHA, in which we twist the multiplication morphism $\relCoHA_{Q,W,\dd'}\boxdot\relCoHA_{Q,W,\dd''}\rightarrow \relCoHA_{Q,W,\dd'+\dd''}$ by $(-1)^{\psi(\dd',\dd'')}$.

For $\dd\in\BoN^{Q_0}$ we define 
\[
\BPS_{Q,W,\dd}\coloneqq (\ptau^{\leq 1}\!\JH_*\phip{\Tr(W)}\BoQ_{\FM_{Q,\dd}}^{\vir})[1].
\]
The following theorem is the basis for the definition of the BPS Lie algebra associated to a quiver with potential \cite[Theorem C]{davison2020cohomological}.
\begin{theorem}[{\cite[Theorem C, Corollary 6.11]{davison2020cohomological}}]
\label{theorem:3dBPS_def}
The complex $\BPS_{Q,W,\dd}$ is a perverse sheaf. Setting $\Fg_{Q,W,\dd}\coloneqq \HO^*\!\BPS_{Q,W,\dd}[-1]$ and $\Fg_{Q,W}\coloneqq \bigoplus_{\dd\in\BoN^{Q_0}}\Fg_{Q,W,\dd}$, the natural morphism $\Fg_{Q,W}\rightarrow \HO^*\!\relCoHA_{Q,W}$ is an injection, and the image is closed under the commutator Lie bracket for the algebra $\HO^*\!\relCoHA_{Q,W}^{\psi}$.
\end{theorem}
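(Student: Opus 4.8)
Since the statement is quoted from \cite[Theorem~C, Corollary~6.11]{davison2020cohomological}, the plan is to recall the architecture of the proof there; it splits into the cohomological integrality theorem, which yields the perversity of $\BPS_{Q,W,\dd}$ and the injectivity of $\Fg_{Q,W}\to\HO^*\!\relCoHA_{Q,W}$, and a separate argument for closure under the bracket.

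\textbf{Perversity and injectivity.}
These follow from the cohomological integrality theorem for a quiver with potential: there is a canonical isomorphism in $\CD^+(\Perv(\CM_Q))$
\[
\relCoHA_{Q,W}\;\cong\;\Sym_{\boxdot}\!\left(\BPS_{Q,W}\otimes\HO^*_{\BoC^*}\right),
\]
under which each $\BPS_{Q,W,\dd}$ is a perverse sheaf and is canonically identified with $(\ptau^{\leq1}\relCoHA_{Q,W,\dd})[1]$, the (shifted) lowest nonvanishing perverse cohomology sheaf of $\relCoHA_{Q,W,\dd}$; in particular $\relCoHA_{Q,W,\dd}$ lies in perverse degrees $\geq1$ for $\dd\neq0$. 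I would recall the proof of this isomorphism as follows. Writing $\FM_{Q,\dd}\simeq\BoA_{Q,\dd}/\GL_{\dd}$, equipped with the function $\Tr(W)$, one approximates the non-proper map $\JH_{\dd}$ by the proper maps to $\CM_{Q,\dd}$ from the smooth moduli spaces of framed representations; the pushforward of $\phip{\Tr(W)}\BoQ$ along each of these proper maps is pure --- this is the ``no poles''/purity input of \cite{davison2016integrality} --- hence by the decomposition theorem a direct sum of shifted $\IC$ sheaves, and a support argument (bounding, using finiteness of $\oplus$, the strata of $\CM_Q$ that can carry a simple summand of $\JH_{\dd,*}\phip{\Tr(W)}\BoQ^{\vir}$) together with an induction on $\dd$ isolates the ``new'' summand $\BPS_{Q,W,\dd}$, the remaining summands being produced from the $\BPS_{Q,W,\dd'}$ with $\dd'<\dd$ through the $\Sym_{\boxdot}$ operation. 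This in particular makes $\relCoHA_{Q,W,\dd}$ pure, hence (by Saito's theory) a direct sum of shifts of its perverse cohomology sheaves, so the adjunction morphism $\ptau^{\leq1}\relCoHA_{Q,W,\dd}\to\relCoHA_{Q,W,\dd}$ is split; applying $\HO^*$ and shifting by $[-1]$ exhibits $\Fg_{Q,W,\dd}=\HO^*\!\BPS_{Q,W,\dd}[-1]$ as a direct summand, hence a subspace, of $\HO^*\!\relCoHA_{Q,W,\dd}$.

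\textbf{Closure under the commutator bracket.}
This is the genuinely delicate point. Equip $\HO^*\!\relCoHA_{Q,W}$ with the perverse filtration $\mathrm{P}_{k}\coloneqq\image\!\bigl(\HO^*(\ptau^{\leq k}\relCoHA_{Q,W})\to\HO^*\!\relCoHA_{Q,W}\bigr)$. One first checks that the CoHA product is filtered, $\mathrm{P}_i\cdot\mathrm{P}_j\subset\mathrm{P}_{i+j}$: this comes from the perverse exactness of $\oplus_{*}$ (finiteness of $\oplus$) and of the external product, once the $\vir$-shifts in the virtual pullback and the proper pushforward used to define the product are taken into account. Since $\mathrm{P}_0$ is concentrated in degree $0\in\BoN^{Q_0}$, for each nonzero $\dd''$ we have $\mathrm{P}_1\cap\HO^*\!\relCoHA_{Q,W,\dd''}=\Fg_{Q,W,\dd''}$; hence for $a\in\Fg_{Q,W,\dd}$ and $b\in\Fg_{Q,W,\dd'}$ the product $ab$ lies a priori only in $\mathrm{P}_2$. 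The key input is that the associated graded algebra $\mathrm{gr}^{\mathrm{P}}_{\bullet}\HO^*\!\relCoHA_{Q,W}^{\psi}$ is graded-commutative in the Koszul sense: the integrality isomorphism upgrades to an isomorphism of algebras with the free graded-commutative algebra $\Sym(\Fg_{Q,W}\otimes\HO^*_{\BoC^*})$, and it is exactly here that the sign twist by $\psi$ is indispensable, cancelling the sign contributed by the Euler form. Consequently the graded commutator $[a,b]$ has vanishing image in $\mathrm{gr}^{\mathrm{P}}_{2}$, so $[a,b]\in\mathrm{P}_1\cap\HO^*\!\relCoHA_{Q,W,\dd+\dd'}=\Fg_{Q,W,\dd+\dd'}$, as required. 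I expect this graded-commutativity of $\mathrm{gr}^{\mathrm{P}}$ of the CoHA --- equivalently, the statement that the product and its opposite agree modulo one step of the perverse filtration --- to be the main obstacle: it rests on a careful analysis of the stack of short exact sequences of $\BoC Q$-modules over the generic locus of $\CM_{Q,\dd+\dd'}$ (and a comparison with dimensional-reduction models), which is the technical core of \cite{davison2020cohomological}.
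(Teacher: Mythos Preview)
The paper does not prove this theorem: it is quoted verbatim as \cite[Theorem~C, Corollary~6.11]{davison2020cohomological} and used as a black box, with no accompanying proof in the paper. There is therefore nothing in the paper to compare your proposal against. Your sketch is a reasonable summary of the argument in the cited reference (integrality/purity giving perversity and the split injection, and the perverse filtration together with graded-commutativity of the associated graded for closure under the bracket), and you correctly flag that the $\psi$-twist is what makes the associated graded commutative.
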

The resulting $\BoN^{Q_0}$-graded Lie algebra is denoted $\Fg_{Q,W}^{\psi}$. We include the superscript $\psi$ since a priori this algebra depends on the bilinear form $\psi$ (in contrast with the Lie algebra $\Fn_{\CA}^+$ we have associated with a 2CY category $\CA$). We use the notation $\mathfrak{g}_{Q,W}$ as opposed to $\Fn^{+}_{Q,W}$, following \cite{davison2020cohomological}. In the case of the tripled quiver with potential \S\ref{subsection:2d3d}, the Lie algebra $\mathfrak{g}_{\tilde{Q},\tilde{W}}^{\psi}$ appears a posteriori to be a half of a GKM Lie algebra (by Proposition \ref{proposition:BPS3dcoincide} and our main result Theorem \ref{corollary:absBor}), and so one could instead use the letter $\Fn$, but we stick with $\Fg$ for consistency with \cite{davison2020cohomological}.

\subsection{Tripled quiver with potential}
\label{subsection:2d3d}
Let $Q$ be a quiver, and denote by $\tilde{Q}$ the tripled quiver of $Q$ defined in \S\ref{subsection:QandR}. We denote by 
\begin{equation}
\label{tWdef}
\tilde{W}=\left(\sum_{i\in Q_0}\omega_i\right)\left(\sum_{a\in Q_1}[a,a^*]\right)
\end{equation}
the \emph{canonical cubic potential}. We denote by 
\[
h\colon \CM_{\tilde{Q}}\rightarrow\CM_{\overline{Q}}
\]
the forgetful map, forgetting the additional loops $\omega_i$ at the vertices. The dimensional reduction isomorphism \cite[Theorem A.1]{davison2017critical} is a natural isomorphism
\[
\Lambda_{\dd} \colon h_*\relCoHA_{\tilde{Q},\tilde{W},\dd}\cong \relCoHA_{\Pi_Q,\dd}
\]
and $\Lambda=\bigoplus_{\dd\in\BoN^{Q_0}}(-1)^{\binom{\lvert \dd\lvert}{2}}\Lambda_{\dd}$ is an isomorphism of algebra objects \cite{davison2017appendix,yang2018cohomological}. Note that 
\[
\chi_{\tilde{Q}}(\dd',\dd')\chi_{\tilde{Q}}(\dd'',\dd'')+\chi_{\tilde{Q}}(\dd',\dd'')\equiv(\dd',\dd'')_Q\quad \pmod{2}
\]
so in particular we may retain our assumed choice of $\psi$: we set $\psi(\dd',\dd'')=\chi_{Q}(\dd',\dd'')$. 

We have the following theorem from \cite{davison2020bps}
\begin{theorem}[{\cite[Theorem 4.1 and 6.1]{davison2020bps} }]
\label{theorem:lessperverse}
The complex $\BPS^{\td}_{\Pi_Q,\Lie}\coloneqq h_*\BPS_{\tilde{Q},\tilde{W}}[-1]=h_*\ptau^{\leq 1}\!\JH_*\phip{\Tr(\tilde{W})}\BoQ_{\FM_{\tilde{Q}}}^{\vir} $ is a perverse sheaf. Moreover, the morphism
\[
\BPS^{\td}_{\Pi_Q,\Lie}\rightarrow \BPS_{\Pi_Q,\Alg}^{\psi}
\]
is an inclusion of perverse sheaves, and the image is closed under the commutator Lie bracket on the target. We denote by $\BPS^{\td,\psi}_{\Pi_Q,\Lie}$ the resulting Lie algebra object. The resulting morphism
\[
\UEA(\BPS^{\td,\psi}_{\Pi_Q,\Lie})\rightarrow \BPS_{\Pi_Q,\Alg}^{\psi}
\]
is an isomorphism of algebra objects. There is a natural isomorphism of Lie algebras $\HO^*\!\BPS^{\td,\psi}_{\Pi_Q,\Lie}\cong \Fg_{\tilde{Q},\tilde{W}}^{\psi}$.
\end{theorem}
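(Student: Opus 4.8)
The final statement is \cite[Theorems~4.1 and~6.1]{davison2020bps}, and the plan is to obtain all of its assertions by transporting the $3$-Calabi--Yau BPS theory recalled in Theorem~\ref{theorem:3dBPS_def} across the dimensional reduction isomorphism $\Lambda$ of \S\ref{subsection:2d3d}, the genuinely new input being a control of perverse amplitudes under the (non-proper) forgetful morphism $h$ --- this is the ``less perverse'' phenomenon from which \cite{davison2020bps} takes its name.

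\emph{Perversity and the inclusion into $\BPS_{\Pi_Q,\Alg}$.} By Theorem~\ref{theorem:3dBPS_def}, $\BPS_{\tilde Q,\tilde W}$ is a perverse sheaf on $\CM_{\tilde Q}$, supported on the image under $\JH_{\tilde Q}$ of the critical locus of $\Tr(\tilde W)$. The heart of the matter is to show that $h_\ast\BPS_{\tilde Q,\tilde W}[-1]=h_\ast\ptau^{\leq 1}\JH_{\tilde Q,\ast}\phip{\Tr(\tilde W)}\BoQ_{\FM_{\tilde Q}}^{\vir}$ is again a perverse sheaf and, moreover, a subobject of $\CH^{0}(\relCoHA_{\Pi_Q})=\BPS_{\Pi_Q,\Alg}$. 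Here one must use dimensional reduction at the level of (perverse) sheaves rather than of cohomology: the canonical cubic potential $\tilde W=\bigl(\sum_i\omega_i\bigr)\bigl(\sum_a[a,a^\ast]\bigr)$ is \emph{linear} in the adjoint loop coordinates $\omega_i$, and I would exploit this --- via \cite[Theorem~A.1]{davison2017critical} together with a comparison of the perverse filtrations on $h_\ast\JH_{\tilde Q,\ast}\phip{\Tr(\tilde W)}\BoQ_{\FM_{\tilde Q}}^{\vir}$ and on $\relCoHA_{\Pi_Q}$ --- to prove that $h_\ast$ of the $\ptau^{\leq 1}$-truncation lands in perverse degree $0$ of the $2$-Calabi--Yau Hall algebra, which by Lemma~\ref{lemma:nonnegativeperverse} is concentrated in perverse degrees $\geq 0$ with degree-zero part $\BPS_{\Pi_Q,\Alg}$. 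A workable way to carry this out is to reduce the perverse-filtration comparison, via the local neighbourhood theorem (Theorem~\ref{theorem:neighbourhood}) and the Ext-quiver description, to the case of a symmetric quiver and its tripling, and then perform the explicit local computation as in \cite{davison2020bps}. I expect this to be the main obstacle.

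\emph{Lie closedness and the enveloping algebra isomorphism.} The map $\Lambda=\bigoplus_{\dd}(-1)^{\binom{\lvert\dd\rvert}{2}}\Lambda_{\dd}$ is an isomorphism of algebra objects for the twist $\psi(\dd',\dd'')=\chi_Q(\dd',\dd'')$, hence it intertwines the commutator brackets of $\relCoHA_{\tilde Q,\tilde W}^{\psi}$ and $\relCoHA_{\Pi_Q}^{\psi}$. Transporting the inclusion $\BPS_{\tilde Q,\tilde W}\hookrightarrow\relCoHA_{\tilde Q,\tilde W}$ and the Lie-closedness of its image from Theorem~\ref{theorem:3dBPS_def} through $h_\ast$ and $\Lambda$, and combining with the previous step, shows that $\BPS^{\td}_{\Pi_Q,\Lie}$ is a perverse subsheaf of $\BPS_{\Pi_Q,\Alg}$ closed under the commutator, which defines the Lie algebra object $\BPS^{\td,\psi}_{\Pi_Q,\Lie}$. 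The universal property of the enveloping algebra produces the morphism $\UEA(\BPS^{\td,\psi}_{\Pi_Q,\Lie})\to\BPS_{\Pi_Q,\Alg}^{\psi}$; I would prove it is an isomorphism by showing it surjective --- because the primitive summands of $\BPS_{\Pi_Q,\Alg}$ (Proposition~\ref{proposition:directcomplementprimitive}) already lie in $\BPS^{\td}_{\Pi_Q,\Lie}$, which one checks after restriction to the strictly semi-nilpotent locus with the help of Theorem~\ref{theorem:degree0SSN} --- and injective, by matching graded dimensions against the $2$-Calabi--Yau PBW/cohomological integrality isomorphism, using Theorem~\ref{theorem:BPSisotropic} in the isotropic directions.

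\emph{Derived global sections.} Finally, $h$ being a morphism of schemes gives $\HO^\ast h_\ast=\HO^\ast$, so $\HO^\ast\BPS^{\td,\psi}_{\Pi_Q,\Lie}\cong\HO^\ast\BPS_{\tilde Q,\tilde W}[-1]=\Fg_{\tilde Q,\tilde W}^{\psi}$ by the definition of the latter in Theorem~\ref{theorem:3dBPS_def}, with Lie bracket inherited from $\HO^\ast\relCoHA_{\tilde Q,\tilde W}^{\psi}\cong\HO^\ast\relCoHA_{\Pi_Q}^{\psi}$. Once the perverse-sheaf-level dimensional reduction of the second paragraph is in hand, each remaining claim is a routine transport of the quiver-with-potential BPS picture.
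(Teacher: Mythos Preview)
This theorem is not proved in the present paper; it is stated as a citation to \cite[Theorems~4.1 and~6.1]{davison2020bps} and is used here as an input (most visibly in the proof of Proposition~\ref{proposition:BPS3dcoincide}, and thence in Theorem~\ref{theorem:relPBW}). There is therefore no proof in the paper to compare your proposal against, and your opening sentence acknowledging this is correct.

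Your outline of the first and last parts is broadly faithful to what \cite{davison2020bps} actually does: the substantive content is the perverse-amplitude control for $h_*$ applied to the $3$CY BPS sheaf (dimensional reduction at the sheaf level, exploiting linearity of $\tilde W$ in the $\omega_i$), after which the inclusion into $\CH^0(\relCoHA_{\Pi_Q})$ and the identification $\HO^*\BPS^{\td,\psi}_{\Pi_Q,\Lie}\cong\Fg_{\tilde Q,\tilde W}^{\psi}$ follow by transport along $\Lambda$.

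Where your proposal goes wrong is the argument for the enveloping-algebra isomorphism. You propose to establish surjectivity via Proposition~\ref{proposition:directcomplementprimitive} and Theorem~\ref{theorem:degree0SSN}, and injectivity by ``matching graded dimensions against the $2$-Calabi--Yau PBW/cohomological integrality isomorphism''. If by the latter you mean Theorem~\ref{theorem:relPBW} of the present paper, this is circular: its proof invokes Proposition~\ref{proposition:BPS3dcoincide}, which in turn rests on Proposition~\ref{prop:3dPBW} (i.e.\ \cite[Formula~(67)]{davison2020bps}), which is precisely part of the package you are trying to establish. The genuine argument in \cite{davison2020bps} does not route through the $2$CY machinery of this paper at all; the isomorphism $\UEA(\BPS^{\td,\psi}_{\Pi_Q,\Lie})\to\BPS_{\Pi_Q,\Alg}^{\psi}$ is obtained by pushing forward the $3$CY PBW theorem of \cite{davison2020cohomological} along $h$ and using the same perverse-amplitude bound, not by an independent surjectivity/injectivity count against $2$CY invariants.
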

In particular, we have a morphism
\[
 \PBW_{\Pi_Q}\colon \Sym_{\boxdot}(\BPS_{\Pi_Q,\Lie}^{\td})\rightarrow \BPS_{\Pi_Q,\Alg}^{\psi}
\]
obtained by evaluating symmetric tensors with the aid of the algebra structure on the target.

\begin{proposition}[{\cite[Formula 67]{davison2020bps}}]
\label{prop:3dPBW}
 The map $\PBW_{\Pi_Q}$ is an isomorphism of perverse sheaves.
\end{proposition}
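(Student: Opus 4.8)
The plan is to obtain this as an essentially formal consequence of Theorem~\ref{theorem:lessperverse} together with the abstract PBW isomorphism of \S\ref{subsubsection:envelopingalgebra}. First I would reduce bookkeeping by working in $\MHM(\CM_{\Pi_Q})$: every object in sight (the BPS algebra sheaf, the 3d BPS Lie algebra sheaf, their free and symmetric constructions) has a canonical (monodromic) mixed Hodge module lift, so it suffices to prove $\PBW_{\Pi_Q}$ is an isomorphism there and then apply $\rat$. Equivalently one can argue directly in $\Perv(\CM_{\Pi_Q})$: the proof of the abstract PBW isomorphism uses only that $\boxdot$ is an exact $\mathbf{Q}$-linear symmetric monoidal bifunctor, and this holds because $\oplus$ is finite (Assumption~\ref{ds_fin}), so $\oplus_*$ is $t$-exact. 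I would also record at this point that $\Sym_{\boxdot}(\BPS_{\Pi_Q,\Lie}^{\td})$ is a genuine perverse sheaf rather than an ind-object: on each component $\CM_{\Pi_Q,\dd}$ only finitely many tensor powers $(\BPS_{\Pi_Q,\Lie}^{\td})^{\boxdot n}$ contribute, since the fibres of $+\colon\BoN^{Q_0}\times\BoN^{Q_0}\to\BoN^{Q_0}$ are finite, so that $\PBW_{\Pi_Q}$ really is a morphism in $\Perv(\CM_{\Pi_Q})$.

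The argument is then a composition of two isomorphisms. Step one: apply the abstract PBW isomorphism to the Lie algebra object $\BPS_{\Pi_Q,\Lie}^{\td,\psi}$, which gives that the canonical symmetrization map $\Sym_{\boxdot}(\BPS_{\Pi_Q,\Lie}^{\td,\psi})\to\UEA(\BPS_{\Pi_Q,\Lie}^{\td,\psi})$ is an isomorphism; since the underlying perverse (or mixed Hodge module) object of $\BPS_{\Pi_Q,\Lie}^{\td,\psi}$ does not depend on $\psi$, the source here is exactly $\Sym_{\boxdot}(\BPS_{\Pi_Q,\Lie}^{\td})$. Step two: invoke Theorem~\ref{theorem:lessperverse} for the isomorphism of algebra objects $\UEA(\BPS_{\Pi_Q,\Lie}^{\td,\psi})\xrightarrow{\cong}\BPS_{\Pi_Q,\Alg}^{\psi}$, the one extending the inclusion $\BPS_{\Pi_Q,\Lie}^{\td}\hookrightarrow\BPS_{\Pi_Q,\Alg}^{\psi}$ of Lie algebra objects. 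Composing yields an isomorphism $\Sym_{\boxdot}(\BPS_{\Pi_Q,\Lie}^{\td})\xrightarrow{\cong}\BPS_{\Pi_Q,\Alg}^{\psi}$.

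The one step I expect to require genuine (though routine) care — and hence the main obstacle — is matching this composite with $\PBW_{\Pi_Q}$ as it is defined, namely as ``evaluate symmetric tensors using the algebra structure on the target''. Here I would unwind both descriptions: the abstract symmetrization morphism sends a symmetric tensor $\ell_1\boxdot\cdots\boxdot\ell_n$ to the symmetrized product of its factors inside $\UEA(\BPS_{\Pi_Q,\Lie}^{\td,\psi})$, and transporting this along the algebra isomorphism of Theorem~\ref{theorem:lessperverse} — which by construction restricts to the chosen inclusion of generators — turns it into the same symmetrized product computed with $\mult^{\psi}$ on $\BPS_{\Pi_Q,\Alg}^{\psi}$, which is precisely the definition of $\PBW_{\Pi_Q}$. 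Once this identification is in place we conclude that $\PBW_{\Pi_Q}$ is an isomorphism of perverse sheaves. As a cross-check, and an independent route should one prefer to avoid the $\UEA$-formalism, the statement can alternatively be derived from the dimensional reduction isomorphism $\Lambda$ of \S\ref{subsection:2d3d} together with the Davison--Meinhardt cohomological integrality/PBW theorem for the quiver with potential $(\tilde Q,\tilde W)$ \cite{davison2020cohomological,davison2017critical}; but the route through Theorem~\ref{theorem:lessperverse} is the most economical.
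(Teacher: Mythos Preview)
Your argument is correct. The paper itself gives no proof of this proposition, merely citing it as \cite[Formula 67]{davison2020bps}; indeed the statement is presented immediately after Theorem~\ref{theorem:lessperverse} with the words ``In particular, we have a morphism $\PBW_{\Pi_Q}\ldots$'', signalling that it is intended as a direct consequence. Your route --- compose the abstract PBW isomorphism $\Sym_{\boxdot}(\mathscr{L})\xrightarrow{\cong}\UEA(\mathscr{L})$ of \S\ref{subsubsection:envelopingalgebra} with the algebra isomorphism $\UEA(\BPS_{\Pi_Q,\Lie}^{\td,\psi})\xrightarrow{\cong}\BPS_{\Pi_Q,\Alg}^{\psi}$ from Theorem~\ref{theorem:lessperverse}, then check that the composite is literally $\PBW_{\Pi_Q}$ --- is exactly the intended unpacking, and your identification step is correct since both maps restrict to the same inclusion of generators and are determined by the algebra structure on the target.
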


\begin{proposition}
\label{proposition:BPS3dcoincide}
There is a natural morphism
 \[
 \BPS_{\Pi_Q,\Lie}\rightarrow\BPS_{\Pi_Q,\Lie}^{\td,\psi}
 \]
 in $\Perv(\CM_{\Pi_Q})$. It is an isomorphism of Lie algebra objects. Taking derived global sections, there is an isomorphism of Lie algebras $ \Fn^{\ttBPS,+}_{\Pi_Q}\cong \Fg_{\tilde{Q},\tilde{W}}^{\psi}$.
\end{proposition}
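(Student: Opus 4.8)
The plan is to realise both $\BPS_{\Pi_Q,\Lie}$ and $\BPS^{\td,\psi}_{\Pi_Q,\Lie}$ as one and the same Lie subalgebra object of the BPS algebra sheaf $\ul{\BPS}^{\psi}_{\Pi_Q,\Alg}$, and then to pass to derived global sections. On the one hand, by Theorem~\ref{theorem:BPSalgisenvBorcherds} together with Proposition~\ref{proposition:comparisonLiealggeneral}, the canonical map $\ul{\BPS}_{\Pi_Q,\Lie}=\mathfrak{n}^+_{\CM_{\Pi_Q},(-,-)_{\SC},\ul{\SG}_{\Pi_Q}}\to\UEA(\ul{\BPS}_{\Pi_Q,\Lie})\cong\ul{\BPS}^{\psi}_{\Pi_Q,\Alg}$ is a monomorphism whose image is the Lie subalgebra generated by $\ul{\SG}_{\Pi_Q}$, with bracket the commutator. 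On the other hand, Theorem~\ref{theorem:lessperverse} exhibits $\BPS^{\td,\psi}_{\Pi_Q,\Lie}$ as a perverse (and, being a summand of $\ul{\BPS}_{\Pi_Q,\Alg}$, semisimple) subsheaf of $\ul{\BPS}^{\psi}_{\Pi_Q,\Alg}$, closed under the commutator bracket, with $\UEA(\BPS^{\td,\psi}_{\Pi_Q,\Lie})\cong\ul{\BPS}^{\psi}_{\Pi_Q,\Alg}$.

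Granting this setup, the construction of the natural morphism $\BPS_{\Pi_Q,\Lie}\to\BPS^{\td,\psi}_{\Pi_Q,\Lie}$ reduces to a single assertion: for every $\dd\in\Phi^+_{\Pi_Q}$ the generating summand $\ul{\SG}_{\Pi_Q,\dd}\subseteq\ul{\BPS}^{\psi}_{\Pi_Q,\Alg}$ lies inside $\BPS^{\td,\psi}_{\Pi_Q,\Lie}$. Indeed, once this is known, $\BPS^{\td,\psi}_{\Pi_Q,\Lie}$ contains the Lie subalgebra generated by $\ul{\SG}_{\Pi_Q}$, hence contains $\ul{\BPS}_{\Pi_Q,\Lie}$; the resulting inclusion is a monomorphism of Lie algebra objects, since its composite with $\BPS^{\td,\psi}_{\Pi_Q,\Lie}\hookrightarrow\ul{\BPS}^{\psi}_{\Pi_Q,\Alg}$ is the bracket-preserving monomorphism recalled above.

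To prove the inclusion $\ul{\SG}_{\Pi_Q,\dd}\subseteq\BPS^{\td,\psi}_{\Pi_Q,\Lie}$ I would run the Ext-quiver reduction already used for Theorem~\ref{theorem:BPSalgisenvBorcherds}. Using the local neighbourhood theorem (Theorem~\ref{theorem:neighbourhood}) together with the compatibility of \emph{both} sheaves with analytic Ext-quiver neighbourhoods — for $\ul{\SG}_{\Pi_Q}$ this is Lemma~\ref{lemma:restrootsgens} and its real/hyperbolic analogues, and for the tripled-quiver vanishing-cycle BPS sheaf it follows from the \'etale-local nature of its construction as recorded in \cite[\S\S 5, 7]{davison2020bps} — the assertion is local on $\CM_{\Pi_Q,\dd}$ and reduces to three cases: (i) a one-vertex loopless quiver, where both sides are $\ul{\BoQ}_{\pt}$; (ii) a one-vertex quiver with loops and a hyperbolic dimension vector; (iii) the Jordan quiver with an isotropic dimension vector. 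In case (ii), $\ul{\SG}_{\Pi_Q,\dd}=\ul{\IC}(\CM_{\Pi_Q,\dd})$ is the intermediate extension of its restriction to the open dense locus of simple representations, over which $\BPS^{\td}_{\Pi_Q,\Lie}$ restricts to the same intersection complex; semisimplicity of $\BPS^{\td}_{\Pi_Q,\Lie}$ as a direct summand of $\ul{\BPS}_{\Pi_Q,\Alg}$ then forces $\ul{\IC}(\CM_{\Pi_Q,\dd})$ to occur in it — this is precisely the support analysis of \cite[Theorem 7.1]{davison2020bps}. In case (iii), $\BPS^{\td}_{\Pi_Q,\Lie}$ is computed explicitly in \cite[\S 7.2]{davison2020bps}, and the diagonal summand $(u_\dd)_*\ul{\IC}(\CM_{\Pi_Q,\dd'})$ (with $\dd=l\dd'$, $\dd'$ indivisible) visibly appears, compatibly with Theorem~\ref{theorem:BPSisotropic} and \S\ref{subsubsection:isotropicprimitivesummands}. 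This matching of the 2CY-intrinsic generators with the 3CY vanishing-cycle BPS sheaf, Ext-quiver-locally, is the step I expect to be the main obstacle, and it is where all the input from \cite{davison2020bps} is consumed.

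Finally, I would upgrade the monomorphism $\ul{\BPS}_{\Pi_Q,\Lie}\hookrightarrow\BPS^{\td,\psi}_{\Pi_Q,\Lie}$ to an isomorphism by a character count. Both sides are semisimple with $\BoN^{Q_0}\times\BoZ$-graded pieces of finite total dimension (being summands of $\ul{\BPS}_{\Pi_Q,\Alg}$); by the PBW isomorphism in tensor categories (\S\ref{subsubsection:envelopingalgebra}) and Theorem~\ref{theorem:BPSalgisenvBorcherds} one has $\Sym_{\boxdot}(\ul{\BPS}_{\Pi_Q,\Lie})\cong\ul{\BPS}^{\psi}_{\Pi_Q,\Alg}$, while Proposition~\ref{prop:3dPBW} gives $\Sym_{\boxdot}(\BPS^{\td}_{\Pi_Q,\Lie})\cong\ul{\BPS}^{\psi}_{\Pi_Q,\Alg}$ as well. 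Since the plethystic logarithm is injective on the relevant positively graded, degreewise-finite completed Grothendieck group, the two subsheaves have equal class there, and a monomorphism between semisimple objects of equal class is an isomorphism; this also shows the isomorphism intertwines the (commutator-induced) brackets. Applying the K\"unneth-monoidal functor $\HO^*(\CM_{\Pi_Q},-)$ then yields the isomorphism of Lie algebras $\Fn^{\ttBPS,+}_{\Pi_Q}=\HO^*(\ul{\BPS}_{\Pi_Q,\Lie})\cong\HO^*(\BPS^{\td,\psi}_{\Pi_Q,\Lie})$, and the target is identified with $\Fg^{\psi}_{\tilde{Q},\tilde{W}}$ by Theorem~\ref{theorem:lessperverse}.
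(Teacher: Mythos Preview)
Your proposal is correct and follows essentially the same architecture as the paper's proof: embed both Lie algebra sheaves in $\ul{\BPS}^{\psi}_{\Pi_Q,\Alg}$, show the generators $\ul{\SG}_{\Pi_Q,\dd}$ lie in $\BPS^{\td,\psi}_{\Pi_Q,\Lie}$ to obtain a monomorphism $\ul{\BPS}_{\Pi_Q,\Lie}\hookrightarrow\BPS^{\td,\psi}_{\Pi_Q,\Lie}$, then use PBW on both sides to upgrade to an isomorphism.

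Two minor stylistic differences are worth noting. First, for the inclusion $\ul{\SG}_{\Pi_Q,\dd}\subseteq\BPS^{\td}_{\Pi_Q,\Lie}$ you propose an Ext-quiver localisation reducing to one-vertex quivers; the paper simply cites \cite[\S 7]{davison2020bps} directly, since that reference already establishes the containment globally for arbitrary $Q$ (Theorem 7.1 for hyperbolic roots, \S 7.2 for isotropic ones). Your local reduction is not wrong, but it is extra scaffolding around the same citation. Second, for the final step the paper argues via a clean two-out-of-three: applying $\Sym_{\boxdot}$ to the chain $\ul{\BPS}_{\Pi_Q,\Lie}\hookrightarrow\BPS^{\td}_{\Pi_Q,\Lie}\hookrightarrow\ul{\BPS}^{\psi}_{\Pi_Q,\Alg}$, the composite is an isomorphism (Theorem~\ref{theorem:BPSalgisenvBorcherds}) and so is the second map (Proposition~\ref{prop:3dPBW}), hence so is the first, which forces the underlying inclusion to be an isomorphism. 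Your plethystic-logarithm character count reaches the same conclusion and is equivalent, but the paper's diagrammatic formulation avoids passing to a Grothendieck group.
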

\begin{proof}
 By Theorem~\ref{theorem:BPSalgisenvBorcherds} there is a monomorphism of Lie algebra objects
 \begin{equation}
 \label{equation:inclusionBPS}
 \BPS_{\Pi_Q,\Lie}\rightarrow \BPS_{\Pi_Q,\Alg}^{\psi}
 \end{equation}
and by \cite[Formula 67]{davison2020bps} there is a monomorphism
 \begin{equation}
 \label{equation:inclusionBPS3d}
 \BPS_{\Pi_Q,\Lie}^{\td,\psi}\rightarrow \BPS_{\Pi_Q,\Alg}^{\psi}.
 \end{equation}
In both instances we consider the target as a Lie algebra object via the commutator bracket. By Theorem~\ref{theorem:BPSalgisenvBorcherds} again, $\BPS_{\Pi_Q,\Lie}$ can be written as a quotient of $\Free_{\boxdot\sLie}(\SG_{\Pi_Q})$, and in particular for each $\dd\in\Phi^+_{\Pi_Q}$ the semisimple perverse sheaf $\BPS_{\Pi_Q,\Lie}$ contains a unique copy of $\SG_{\dd}$. Since $\BPS_{\Pi_Q,\Lie}^{\td}$ contains $\SG_{\dd}$ for each $\dd\in\Phi^+_{\Pi_Q}$ \cite[\S 7]{davison2020bps}, the morphism \eqref{equation:inclusionBPS} factors uniquely through \eqref{equation:inclusionBPS3d}. We therefore have monomorphisms of Lie algebra objects.
\begin{equation}
\label{equation:monoBPSLieAlgs}
 \BPS_{\Pi_Q,\Lie}\rightarrow \BPS_{\Pi_Q,\Lie}^{\td,\psi}\rightarrow\BPS_{\Pi_Q,\Alg}^{\psi}.
\end{equation}
By evaluating symmetric products using the algebra structure on $\BPS_{\Pi_Q,\Alg}^{\psi}$, we obtain from \eqref{equation:monoBPSLieAlgs} the morphisms of perverse sheaves
\begin{equation}
\label{equation:Sym_mono}
 \Sym_{\boxdot}(\BPS_{\Pi_Q,\Lie})\rightarrow\Sym_{\boxdot}(\BPS_{\Pi_Q,\Lie}^{\td})\rightarrow \BPS_{\Pi_Q,\Alg}^{\psi}.
\end{equation}

The composition $\Sym_{\boxdot}(\BPS_{\Pi_Q,\Lie})\rightarrow \BPS_{\Pi_Q,\Alg}^{\psi}$ is an isomorphism by Theorem~\ref{theorem:BPSalgisenvBorcherds} and the second arrow in \eqref{equation:Sym_mono} is the morphism $ \PBW_{\Pi_Q}$, which is an isomorphism by Proposition~\ref{prop:3dPBW}. Therefore the first arrow in \eqref{equation:Sym_mono} is an isomorphism, and consequently the first arrow in \eqref{equation:monoBPSLieAlgs} is also an isomorphism.
\end{proof}

Let $Q$ be a quiver, and let $\Gamma$ be the underlying graph of $Q^{\real}$. The Weyl group $W_{\Gamma}$ acts on $\BoC^{Q_0}$, and its image is generated by the reflections
\[
s_i\colon \dd\mapsto \dd-(\dd,1_i)_{Q}1_i
\]
with $i$ a vertex that supports no 1-cycles, i.e. $i\in Q^{\real}_0$ (see also \S \ref{subsection:weylinvariance}).
\begin{corollary}
\label{corollary:kacWeyl}
Let $Q$ be a quiver, and let $\dd,\dd'\in\BoN^{Q_0}$ be dimension vectors such that $w\cdot \dd=\dd'$ for some $w\in W_{\Gamma}$. Then there is an isomorphism
\[
\Fg_{\tilde{Q},\tilde{W},\dd}\cong \Fg_{\tilde{Q},\tilde{W},\dd'}.
\]
\end{corollary}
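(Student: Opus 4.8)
The plan is to deduce this directly from Proposition~\ref{proposition:weylInv}, which establishes Weyl-group invariance at the level of the BPS Lie algebra of a general $2$CY category, together with the identification of the $3$d and $2$d BPS Lie algebras in Proposition~\ref{proposition:BPS3dcoincide}. The substantive content of the corollary has in effect already been proved in \S\ref{subsection:weylinvariance}; what remains is to transport it along the chain of isomorphisms relating $\Fg_{\tilde{Q},\tilde{W}}$ to $\Fn^{\ttBPS,+}_{\Pi_Q}$, keeping track of the $\BoN^{Q_0}$-grading throughout.

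First I would apply Proposition~\ref{proposition:weylInv} to the $2$CY category $\CA=\Rep\Pi_Q$, taking for the saturated monoid $\CM^{\triangle}_{\CA}$ the whole of $\CM_{\Pi_Q}$ (equivalently, the Serre subcategory $\CA^{\triangle}=\CA$). With this choice the hypothesis ``$\CM^{\triangle}_{\CA,m}=\CM_{\CA,m}$ for real simple roots $m$'' is vacuously satisfied, the weight function $P$ is identically $1$ on real simple roots as required, and the graph $\Gamma$ attached to $\CA$ in \S\ref{subsection:weylinvariance} --- vertices the real simple roots $\Phi^{+,\real}_{\CA}$, i.e.\ the classes $1_i$ for loop-free $i\in Q_0^{\real}$, with $-(i,j)_{\SC}$ edges between $i$ and $j$ --- is exactly the underlying graph of $Q^{\real}$, and the $W_{\Gamma}$-action $s_i(m)=m-(m,1_i)_{\SC}1_i$ coincides with the one in the statement (recall $(-,-)_{\SC}=(-,-)_{\SG_2(\BoC Q)}$ is the symmetrised Euler form). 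Proposition~\ref{proposition:weylInv} then yields, for $w\in W_{\Gamma}$ with $w(\dd)=\dd'$, an isomorphism of weight spaces
\[
\Fn^{\ttBPS,+}_{\Pi_Q,\dd}\cong \Fn^{\ttBPS,+}_{\Pi_Q,\dd'}.
\]

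Next I would invoke Proposition~\ref{proposition:BPS3dcoincide}, which supplies an isomorphism of Lie algebras $\Fn^{\ttBPS,+}_{\Pi_Q}\cong \Fg^{\psi}_{\tilde{Q},\tilde{W}}$. This isomorphism is obtained by applying derived global sections to the isomorphism of Lie algebra objects $\BPS_{\Pi_Q,\Lie}\xrightarrow{\sim}\BPS^{\td,\psi}_{\Pi_Q,\Lie}$ in $\Perv(\CM_{\Pi_Q})$; since $\CM_{\Pi_Q}=\coprod_{\dd}\CM_{\Pi_Q,\dd}$ is $\BoN^{Q_0}$-graded, this morphism of perverse sheaves respects the grading, hence so does the resulting isomorphism of Lie algebras, and it therefore restricts to isomorphisms $\Fn^{\ttBPS,+}_{\Pi_Q,\dd}\cong \Fg_{\tilde{Q},\tilde{W},\dd}$ in each degree (the right-hand side being $\HO^*\!\BPS_{\tilde{Q},\tilde{W},\dd}[-1]$ by definition). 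Composing the identifications in degrees $\dd$ and $\dd'$ with the displayed isomorphism above gives $\Fg_{\tilde{Q},\tilde{W},\dd}\cong \Fg_{\tilde{Q},\tilde{W},\dd'}$, completing the proof.

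There is essentially no obstacle beyond this bookkeeping: the genuinely nontrivial input --- that $\Fg^{\ttBPS}_{\CA}$ is an integrable representation of the Kac--Moody subalgebra generated by the real simple root spaces, so that Kac's theory \cite[Proposition 3.7a)]{kac1990infinite} applies --- is precisely what is proved in Proposition~\ref{proposition:weylInv} using the structural Theorem~\ref{corollary:absBor}. The only mild point to check is that every identification in sight is compatible with the $\BoN^{Q_0}$-grading, which is immediate since each is induced by a morphism of (complexes of) perverse sheaves on the graded scheme $\CM_{\Pi_Q}$ before passing to cohomology.
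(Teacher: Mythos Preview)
Your proof is correct and follows exactly the approach of the paper: combine Proposition~\ref{proposition:weylInv} (applied to $\CA=\Rep\Pi_Q$ with trivial $\triangle$) with the $\BoN^{Q_0}$-graded isomorphism $\Fn^{\ttBPS,+}_{\Pi_Q}\cong\Fg^{\psi}_{\tilde{Q},\tilde{W}}$ of Proposition~\ref{proposition:BPS3dcoincide}. The paper's own proof is a single sentence referencing these two propositions, and you have simply written out the bookkeeping in detail.
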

\begin{proof}
This follows from combining Propositions \ref{proposition:weylInv} and \ref{proposition:BPS3dcoincide}.
\end{proof}

Using the characterisation \eqref{equation:Ngraded} of Kac polynomials in terms of characteristic functions of BPS cohomology, we obtain a new proof of the Weyl-invariance of Kac polynomials
\[
A_{Q,\dd}(q)=A_{Q,w\cdot \dd}(q)
\]
which is part of Theorem 1 of Kac's original paper \cite{kac1983root}.

\subsection{Cohomological integrality and PBW isomorphism for $2$-Calabi--Yau categories}

\subsubsection{Restriction of BPS Lie algebras}

We let $\underline{\CF}=\{ \CF_1,\ldots,\CF_r \}$ be a $\Sigma$-collection in an Abelian category $\CA$, satisfying the assumptions of Theorem~\ref{theorem:neighbourhood}, where $\CA$ also satisfies Assumptions \ref{p_assumption}--\ref{BPS_cat_assumption} from \S \ref{subsection:assumptionsCoHA} and \S \ref{subsec:spaceassumptions}, as well as Assumption \ref{ass:associativity}, so that we can define the BPS Lie algebra for $\CA$. We let $Q$ be a half Ext-quiver of the collection $\underline{\CF}$.
\begin{lemma}
 Let $\ulBPS_{\Pi_Q,\Lie}$ be the BPS Lie algebra sheaf on $\CM_{\Pi_Q}$ and let $\ulBPS_{\CA,\Lie}$ be the BPS Lie algebra sheaf on $\CM_{\CA}$. Then with the notations of diagram \eqref{equation:diagramneighbourhood}, we have a canonical isomorphism of Lie algebras $\imath_{\CN}^!\ulBPS_{\Pi_Q,\Lie}\cong\imath_{\underline{\CF}}^!\ulBPS_{\CA,\Lie}$.
\end{lemma}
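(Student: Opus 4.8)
The plan is to deduce this from the already-established isomorphisms of Theorem~\ref{theorem:BPSalgisenvBorcherds} together with the compatibility results of \S\ref{subsection:compatibilityserrerelationslocal}. Recall that Theorem~\ref{theorem:BPSalgisenvBorcherds} identifies $\ulBPS_{\Pi_Q,\Alg}^{\psi}\cong\UEA(\mathfrak{n}^+_{\CM_{\Pi_Q},\ul{\SG}_{\Pi_Q}})$ and $\ulBPS_{\CA,\Alg}^{\psi}\cong\UEA(\mathfrak{n}^+_{\CM_{\CA},\ul{\SG}_{\CA}})$, and that by construction $\ulBPS_{\Pi_Q,\Lie}=\mathfrak{n}^+_{\CM_{\Pi_Q},\ul{\SG}_{\Pi_Q}}$ is the Lie subalgebra object of this enveloping algebra generated by $\ul{\SG}_{\Pi_Q}$ (and symmetrically for $\CA$). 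So the task reduces to showing that the restriction functors $\imath^!_{\CN}$ and $\imath^!_{\underline{\CF}}$ carry one generated-Lie-subalgebra to the other, compatibly with the identification $\imath_{\CN}^!\ulBPS_{\Pi_Q,\Alg}^{(\psi)}\cong\imath_{\underline{\CF}}^!\ulBPS_{\CA,\Alg}^{(\psi)}$ of Lemma~\ref{lemma:restBPSalgfibre}.

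First I would record that both $\imath_{\CN}^!$ and $\imath_{\underline{\CF}}^!$ are strictly monoidal (\S\ref{subsection:theCoHA}, via \cite[Lemma 5.2]{davison2022BPS}), hence send algebra objects to algebra objects, Lie algebra objects to Lie algebra objects, and free (Lie) algebras to free (Lie) algebras on the restricted generators. Next I would invoke Proposition~\ref{proposition:compatibilityFreealgebras}, which gives a canonical algebra isomorphism $\imath_{\underline{\CF}}^!\Free_{\boxdot\sAlg}(\ul{\SG}_{\CA})\cong\imath_{\CN}^!\Free_{\boxdot\sAlg}(\ul{\SG}_{\Pi_Q})$, and Proposition~\ref{proposition:compatibilitySerreideals}, which gives the matching identification $\imath_{\underline{\CF}}^!\ul{\CI}_{\CA,\Alg}\cong\imath_{\CN}^!\ul{\CI}_{\Pi_Q,\Alg}$ of Serre ideals; the analogous statements for the \emph{Lie} versions $\Free_{\boxdot\sLie}$ and $\ul{\CI}_{\bullet,\Lie}$ follow by the same arguments (the Serre relations $\ul{\SR}_{m,n}$ live inside $\Free_{\boxdot\sLie}$, and restriction of the iterated-bracket morphisms commutes with $\imath^!$). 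Quotienting, and using Proposition~\ref{proposition:comparisonLiealggeneral} to identify $\mathfrak{n}^+_{\CM,\ul{\SG}}=\Free_{\boxdot\sLie}(\ul{\SG})/\CI_{\bullet,\Lie}$ with the Lie subalgebra generated by $\ul{\SG}$ inside $\UEA(\mathfrak{n}^+_{\CM,\ul{\SG}})$, one obtains $\imath_{\CN}^!\mathfrak{n}^+_{\CM_{\Pi_Q},\ul{\SG}_{\Pi_Q}}\cong\imath_{\underline{\CF}}^!\mathfrak{n}^+_{\CM_{\CA},\ul{\SG}_{\CA}}$ as Lie algebra objects, which is precisely the claim.

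Finally I would check that this Lie-algebra isomorphism is compatible with the one on the enveloping-algebra level, i.e. that it sits inside the commutative square of Proposition~\ref{proposition:compatibilitymultfibre} (with $\Upsilon$ replaced by $\overline{\Upsilon}$, using Corollary~\ref{corollary:riuea}); this is automatic since all the identifications are canonical and the vertical maps are algebra isomorphisms restricting to the identity on the generating subobjects $\imath^!\ul{\SG}$. I expect the only genuinely delicate point to be bookkeeping: verifying that the decomposition of the generators $\ul{\SG}_{m}$ into real/hyperbolic pieces ($\ul{\IC}(\CM_{\bullet,m})$) versus isotropic pieces ($(u_m)_*\ul{\IC}(\CM_{\bullet,m'})$) is respected by $\imath^!$ — but this is exactly what Lemma~\ref{lemma:isoroots} and Lemma~\ref{lemma:restrootsgens} guarantee, since $\lambda_{\underline{\CF}}$ matches the divisibility $m=lm'$ on both sides and $\imath^!_{\CN}\ul{\SG}_{\Pi_Q,l\dd}\cong\imath^!_{\underline{\CF}}\ul{\SG}_{\CA,l\lambda_{\underline{\CF}}(\dd)}$. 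So in fact there is no serious obstacle: the lemma is a formal consequence of Theorem~\ref{theorem:BPSalgisenvBorcherds} plus the compatibility package of \S\ref{subsection:compatibilityserrerelationslocal}, and the proof is essentially a one-line appeal to those results together with the observation that ``Lie subalgebra generated by'' is preserved by strictly monoidal functors.
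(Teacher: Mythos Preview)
Your proposal is correct and follows essentially the same approach as the paper's proof: both argue that $\imath^!$ is strictly monoidal, that the restricted BPS Lie algebras are the Lie subalgebras of the restricted BPS algebras generated by the restricted $\ul{\SG}$'s, and that these generators (together with the Serre relations they satisfy) match under the identification of Lemma~\ref{lemma:restBPSalgfibre} via Lemma~\ref{lemma:restrootsgens}, Proposition~\ref{proposition:rootsneighbourhood}, and Lemma~\ref{lemma:pullbackEulerform}. Your write-up is more explicit in citing Propositions~\ref{proposition:compatibilityFreealgebras}, \ref{proposition:compatibilitySerreideals}, and \ref{proposition:comparisonLiealggeneral}, but the underlying argument is the same.
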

\begin{proof}
 Once again, we may prove this lemma after applying the functor $\rat$. Then it follows from Lemma~\ref{lemma:restBPSalgfibre}, Proposition~\ref{proposition:rootsneighbourhood} and Lemma~\ref{lemma:restrootsgens}. More precisely, $\imath_{\CN}^!\BPS_{\Pi_Q,\Lie}^{\psi}$ is the sub-Lie algebra of $\iota_{\CN}^!\BPS_{\Pi_Q,\Alg}^{\psi}$ generated by $\iota_{\CN}^!\SG_{\Pi_Q,\dd}$ for $\dd\in\Phi^+_{\Pi_Q}$ and $\imath_{\underline{\CF}}^!\BPS_{\CA,\Lie}^{\psi}$ is the sub-Lie algebra of $\imath_{\underline{\CF}}^!\BPS_{\CA,\Alg}^{\psi}$ generated by $\iota^!_{\underline{\CF}}\SG_{\CA,m}$, $m\in\Phi^+_{\CA}$, and for $\dd\in\Phi^+_{\Pi_Q}$, $\iota_{\CN,\dd}^!\SG_{\Pi_Q,\dd}\cong\iota_{\underline{\CF},\lambda_{\underline{\CF}}(\dd)}^!\SG_{\CA,\lambda_{\underline{\CF}}(\dd)}$ as in the proof of Proposition \ref{proposition:compatibilityFreealgebras} (where $\lambda_{\underline{\CF}}$ is defined in \S\ref{subsubsection:extquiver}), which moreover satisfy the same Serre relations (using Lemma \ref{lemma:pullbackEulerform}).
\end{proof}

\subsubsection{Proof of the cohomological integrality theorem for 2CY categories}
\label{subsubsection:fullPBW}
We let $\CA$ be a 2CY Abelian category satisfying Assumptions~\ref{p_assumption}-\ref{BPS_cat_assumption}. We also assume that the moduli stack of objects $\FM_{\CA}$ carries a positive determinant line bundle (Assumption~\ref{determinant_assumption}).

We prove Theorem~\ref{theorem:relPBW}. We first construct the morphism $\Psi_{\CA}$.

Composing the adjunction morphism $\ul{\BPS}_{\CA,\Alg}=\tau^{\leq 0}\ulrelCoHA_{\CA}\rightarrow \ulrelCoHA_{\CA}$ with the inclusion $\ul{\BPS}_{\CA,\Lie}\rightarrow \ul{\BPS}_{\CA,\Alg}^{\psi}$ provided by Theorem \ref{theorem:BPSalgisenvBorcherds}, we have a canonical morphism
\[
 \ulBPS_{\CA,\Lie}\rightarrow \ulrelCoHA_{\CA}.
\]
We fix a positive determinant line bundle on $\FM_{\CA}$. This means that we fix a morphism $\det\colon \FM_{\CA}\rightarrow \B \BoC^*$ such that for every point $x\in \FM_{\CA}$ representing an object $\CF$, if we let $\imath\colon \B \BoC^*\rightarrow \FM_{\CA}$ be the morphism corresponding to the inclusion of groups $\BoC^*\subset \Aut_{\CA}(\CF)$, and write $\HO_{\BoC^*}=\BoQ[u]$, then $\imath^*\det^*u=m_{\CF} u$ for some $m_{\CF}\in\BoZ_{\geq 1}$. Fixing a determinant line bundle on $\FM_{\CA}$, the algebra $\HO^*_{\BoC^*}$ acts on $\ulrelCoHA_{\CA}$, and we obtain a morphism
\[
 \ulBPS_{\CA,\Lie}\otimes\HO^*_{\BoC^*}\rightarrow \ulrelCoHA_{\CA}^{\psi}.
\]
The algebra structure on $\ulrelCoHA_{\CA}^{\psi}$ now induces by iterated multiplications a morphism
\[
 \Psi^{\psi}_{\CA}\colon\Sym_{\boxdot}(\ulBPS_{\CA,\Lie}\otimes\HO^*_{\BoC^*})\rightarrow \ulrelCoHA_{\CA}^{\psi}
\]
in $\CD^+(\MHM(\CM_{\CA}))$.

\begin{proof}[Proof of Theorem~\ref{theorem:relPBW}]
We want to prove that $\Psi^{\psi}_{\CA}$ is an isomorphism (or equivalently by conservativity of the functor $\rat$, that $\rat(\Psi_{\CA}^{\psi})$ is an isomorphism). We let $\ul{\CC}\coloneqq\cone(\Psi_{\CA})$. If $\ul{\CC}$ is non-zero, then there exists a closed point $x\in\CM_{\CA}$ such that, with $\imath_x$ the inclusion of $x$, $\imath_x^!\ul{\CC}\neq 0$. We let $\underline{\CF}$ be the collection of simple direct summands of $\CA$ corresponding to $x$. Then, $\iota_{\underline{\CF}}^!\ul{\CC}$ is non-zero, so $\imath_{\underline{\CF}}^!\Psi_{\CA}^{\psi}$ is not an isomorphism. But $\imath_{\underline{\CF}}^!\Psi_{\CA}^{\psi}$ can be identified with the morphism $\imath_{\CN}^!\Psi_{\Pi_{Q_{\underline{\CF}}}}^{\psi}$ and so the latter cannot be an isomorphism. Then $\Psi_{\Pi_{Q_{\underline{\CF}}}}^{\psi}$ is not an isomorphism. This is in contradiction with the PBW theorem\footnote{We remark that although the proof of \cite[Theorem D]{davison2016integrality}, and the proof of \cite[Theorem C]{davison2020cohomological}, on which it is based, are written for a specific choice of positive determinant bundle, the proof works without modification for an arbitrary positive choice.} for preprojective algebras \cite[Theorem D]{davison2016integrality}, using the isomorphism of Proposition~\ref{proposition:BPS3dcoincide}.
\end{proof}

\section{Cuspidal polynomials of quivers}
\label{section:Cuspidalpolsquivers}

\subsection{Cuspidal polynomials}
\label{subsection:cuspidals}
Our main theorem (Theorem \ref{corollary:absBor}) can be applied to proving the positivity of absolutely cuspidal polynomials of quivers \cite{bozec2019counting}, a strengthening of the Kac positivity conjecture \cite[Conjecture 2]{kac1983root}, in full generality.

Let $Q=(Q_1,Q_0,s,t\colon Q_1\rightarrow Q_0)$ be a quiver. In the early '90s Ringel and Green defined \cite{ringel1990hall,ringel1992hall,green1995hall} the \emph{Hall bialgebra} of $Q$ over a finite field $\BoF_q$. As a vector space over $\BoC$, it has a basis given by the set of isomorphism classes of representations of $Q$ over $\BoF_q$:
\[
 H_{Q,\BoF_q}:=\bigoplus_{[M]\in\Rep_Q(\BoF_q)/\sim}\BoC[M]
\]
where $\Rep_Q(\BoF_q)$ denotes the category of finite-dimensional modules and the product is the usual Hall-type product: we recommend \cite{schiffmann2006lectures} for background. Green \cite{green1995hall} defined a coproduct $\Delta$ on $ H_{Q,\BoF_q}$, so that $H_{Q,\BoF_q}$ has the structure of a twisted Hopf algebra (see Xiao's paper \cite{xiao1997drinfeld} for an explicit description of the antipode). The twist, which will not be used in this paper, is explained in [loc. cit.]. 

Sevenhant and Van den Bergh proved \cite{sevenhant2001relation} that $H_{Q,\BoF_q}$ has the structure of the positive part of the quantised enveloping algebra of a generalised Kac--Moody algebra with deformation parameter specialised at $\sqrt{q}$. The generators are given by the space of \emph{cuspidal functions}. These are by definition the primitive elements of the coproduct:
\[
 H_{Q,\BoF_q}^{\cusp}=\{f\in H_{Q,\BoF_q}\mid \Delta(f)=f\otimes 1+1\otimes f\}.
\]
They satisfy Serre relations, see \cite[Theorem 3.4]{hennecart2021isotropic} for a detailed formulation. In \cite{sevenhant2001relation} it is assumed that the quiver is loop-free, although this assumption can be removed.

For $\dd\in\BoN^{Q_0}$, let $M_{Q,\dd}(q)$ be the number of isomorphism classes of $\BoF_q$--representations of $Q$ of dimension vector $\dd$, and let $I_{Q,\dd}(q)$ be the number of isomorphism classes of \emph{indecomposable} $\BoF_q$--representations of $Q$ of dimension vector $\dd$. We denote by $A_{Q,\dd}(q)$ the \emph{Kac polynomial}, counting the number of isomorphism classes of \emph{absolutely indecomposable} $\BoF_q$--representations of $Q$ of dimension vector $\dd$. By Kac \cite{kac1983root}, all of these functions are polynomials in $q$. Moreover, by \cite{hausel2013positivity} (or \cite{davison2018purity,dobrovolska2016moduli} for different approaches), the coefficients of $A_{Q,\dd}(q)$ are nonnegative, as originally conjectured by Kac in \cite{kac1983root}.

Given a $\BoN^{Q_0}$-graded vector space $V$, we define $\ch(V)=\sum_{\dd\in \BoN^{Q_0}}\dim(V_{\dd})z^{\dd}$. If $V$ also carries a cohomological $\BoZ$-grading, we define 
\[
\ch^{\BoZ}(V)\coloneqq\sum_{n\in \BoZ}\sum_{\dd\in\BoN^{Q_0}}\dim(V_{\dd}^n)q^{n/2}z^{\dd}.
\]

The $\BoN^{Q_0}$-graded character of the Hall algebra is given by the formula
\begin{equation}
\label{equation:gradedcharacterHallalgebra}
 \ch(H_{Q,F_q})=\sum_{\dd\in\BoN^{Q_0}}M_{Q,\dd}(q)z^{\dd}=\Exp_{z}\left(\sum_{\dd>0}I_{Q,\dd}(q)z^{\dd}\right)=\Exp_{q,z}\left(\sum_{\dd>0}A_{Q,\dd}(q)z^{\dd}\right),
\end{equation}
where $\Exp_z$ and $\Exp_{q,z}$ denote the plethystic exponentials, see \cite[\S 1.5]{bozec2019counting}. The second equality follows from the Krull--Schmidt property of the category of representations of the quiver and the third from Galois descent for quiver representations.

The space $H_{Q,\BoF_q}^{\cusp}$ is naturally graded by the dimension vector: $H_{Q,\BoF_q}^{\cusp}=\bigoplus_{\dd\in\BoN^{Q_0}}H_{Q,\BoF_q}^{\cusp}[\dd]$. There has been a growing interest in understanding this space and to find a parametrisation of cuspidal functions \cite{bozec2019counting, hennecart2021isotropic}. The first step was to compute its dimension. We have the following result.

\begin{theorem}[{\cite[Theorem 1.1]{bozec2019counting}}]
 The dimension $\dim_{\BoC} H_{Q,\BoF_q}^{\cusp}[\dd]$ is given by a polynomial with rational coefficients $C_{Q,\dd}(q)\in \BoQ[q]$.
\end{theorem}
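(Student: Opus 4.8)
The statement to prove is that the dimension of the graded piece $H_{Q,\BoF_q}^{\cusp}[\dd]$ of the space of cuspidal functions is a polynomial in $q$ with rational coefficients, namely $C_{Q,\dd}(q)$. The plan is to reduce this to known polynomiality statements for the counting functions $M_{Q,\dd}(q)$, $I_{Q,\dd}(q)$, and $A_{Q,\dd}(q)$, together with the structure theory of the Hall bialgebra as (one half of) a quantised enveloping algebra of a generalised Kac--Moody algebra due to Sevenhant--Van den Bergh \cite{sevenhant2001relation}. The key point is that the cuspidal space is the space of primitive elements for Green's coproduct $\Delta$, and that a coradically filtered (or equivalently, in this setting, a PBW) description of $H_{Q,\BoF_q}$ expresses $\ch(H_{Q,\BoF_q})$ as a plethystic exponential of the character of the space of primitives, as encoded by the Weyl--Kac character formula for the generalised Kac--Moody algebra in question.

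First I would recall the character identity \eqref{equation:gradedcharacterHallalgebra}, which gives $\ch(H_{Q,\BoF_q})=\Exp_{q,z}(\sum_{\dd>0}A_{Q,\dd}(q)z^{\dd})$, so the left-hand side is determined by the Kac polynomials and is manifestly a $q$-series with integer (polynomial-in-$q$) coefficients. Next, using \cite{sevenhant2001relation}, $H_{Q,\BoF_q}$ is isomorphic to $\UEA^+_{\sqrt{q}}(\Fg)$ for a generalised Kac--Moody algebra $\Fg$ whose space of simple-root generators in degree $\dd$ has dimension $\dim_{\BoC} H_{Q,\BoF_q}^{\cusp}[\dd]$ (this uses that the cuspidal functions are precisely the primitives and generate, by \cite{sevenhant2001relation}; the loop case is handled by the extension of that theorem). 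By the (graded) PBW theorem for $\UEA^+_{\sqrt q}(\Fg)$, the character $\ch(H_{Q,\BoF_q})$ is the plethystic exponential, over positive roots of $\Fg$ with their root multiplicities, and these root multiplicities are computed from the simple-root multiplicities $C_{Q,\dd}(q)$ (which here are scalars, i.e. constant in the cohomological variable) by the inversion of the Weyl--Kac character formula — this is exactly the combinatorial map $W_Q$ of Proposition~\ref{proposition:Weylcharacter}. Thus $\ch(H_{Q,\BoF_q}) = W_Q(\dd \mapsto \dim_{\BoC}H_{Q,\BoF_q}^{\cusp}[\dd])$.

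The final step is the inversion: since $W_Q$ is injective (Proposition~\ref{proposition:Weylcharacter}) and since $\ch(H_{Q,\BoF_q})=\Exp_{q,z}(\sum_{\dd}A_{Q,\dd}(q)z^{\dd})$ lies in its image, Proposition~\ref{proposition:Weylcharacter} identifies the unique preimage as the weight function $\dd\mapsto C_{Q,\dd}^{\rmabs}(q)$ in the hyperbolic/divisible cases and more generally shows the preimage is given by a family of \emph{polynomials} $C_{Q,\dd}(q)\in\BoQ[q]$ obtained by an explicit iterative inversion of the Weyl--Kac character formula starting from the polynomials $A_{Q,\dd}(q)$. Because the inversion procedure is a finite sequence of additions, subtractions, multiplications, and divisions by the explicitly invertible denominators appearing in the Weyl--Kac denominator formula (which have constant term $1$ as power series in $z$), rationality and polynomiality in $q$ are preserved at each stage, so each $\dim_{\BoC}H_{Q,\BoF_q}^{\cusp}[\dd]$ is the evaluation at $q$ of a well-defined element $C_{Q,\dd}(q)\in\BoQ[q]$, independent of the particular finite field.

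I expect the main obstacle to be bookkeeping rather than conceptual: one must (i) correctly set up the generalised Kac--Moody datum attached to $Q$ (including the contributions of loops, i.e.\ the imaginary simple roots of $Q_0^{\im}$, which requires the loop-generalisation of \cite{sevenhant2001relation} alluded to in the text), and (ii) verify that the iterative inversion of the graded Weyl--Kac character formula genuinely terminates degree-by-degree and outputs \emph{polynomials} in $q$ (as opposed to rational functions), which relies on the fact that the relevant denominators are, after the plethystic logarithm, unit power series. Both of these are carried out in \cite{bozec2019counting}, and the remaining content here is to assemble them; no new analytic or geometric input is needed for this particular statement, which is why it is stated before the main theorems of the paper.
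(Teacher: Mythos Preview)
The paper does not give its own proof of this statement: it is quoted verbatim as \cite[Theorem~1.1]{bozec2019counting} and used as input. So there is no ``paper's proof'' to compare against; your outline is essentially a sketch of the Bozec--Schiffmann argument, and as such it is on the right track.

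That said, one point is slightly muddled. You invoke Proposition~\ref{proposition:Weylcharacter} and the map $W_Q$, but that proposition lives in the \emph{graded} setting (weight functions valued in $\BoN[t^{\pm 1/2}]$) and concerns the \emph{absolutely} cuspidal polynomials $C_{Q,\dd}^{\rmabs}$, which are defined combinatorially \emph{from} the $C_{Q,\dd}$ only after one already knows the latter are polynomials. For the statement at hand you want the ungraded version: for each prime power $q$, Sevenhant--Van den Bergh gives $H_{Q,\BoF_q}\cong \UEA_{\sqrt q}^+(\Fg)$ with simple-root multiplicities $c_{\dd}(q)=\dim H_{Q,\BoF_q}^{\cusp}[\dd]$; the (ungraded) Weyl--Borcherds denominator identity then expresses $\sum_{\dd} M_{Q,\dd}(q)z^{\dd}$ as a universal expression in the $c_{\dd'}(q)$ with $\dd'\leq\dd$; inverting this degree by degree (the denominators are unit power series in $z$) exhibits each $c_{\dd}(q)$ as a fixed rational-coefficient polynomial in the $M_{Q,\dd'}(q)$, hence in $q$. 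Your final paragraph says exactly this, so the argument is fine once you disentangle it from Proposition~\ref{proposition:Weylcharacter} and from $C^{\rmabs}$.
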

The vector space $H_{Q,\BoF_q}^{\cusp}[\dd]$ is ungraded, and its dimension is determined by $q$, via the evaluation of the polynomial $C_{Q,\dd}(q)$. We will see that this same polynomial is obtained as the characteristic function of a \textit{fixed} cohomologically graded vector space.

Bozec and Schiffmann combinatorially defined a new family of polynomials $(C_{Q,\dd}^{\rmabs}(q))_{\dd\in\BoN^{Q_0}}$ from the family $(C_{Q,\dd}(q))_{\dd\in\BoN^{Q_0}}$, which enjoy more favourable properties. We denote by $\BoN^{Q_0}_{\prim}$ the set of indivisible dimension vectors, i.e. of $\dd\in\BoN^{Q_0}$ which cannot be written $l\dd'$ with $l\in\BoZ_{\geq 2}$ and $\dd'\in\BoN^{Q_0}$. Then 
\begin{itemize}
\item $C_{Q,\dd}^{\rmabs}(q)=C_{Q,\dd}(q) \text{ if $\chi_Q(\dd,\dd)<0$}$,
\item $\Exp_{z}\left(\sum_{l\in\BoZ_{>0}}C_{Q,l\dd}(q)z^{l \dd}\right)=\Exp_{q,z}\left(\sum_{l\in\BoZ_{>0}}C_{Q,l\dd}^{\rmabs}(q)z^{l \dd}\right)$ if $\dd\in(\BoN^{Q_0})_{\prim}$ and $\chi_Q(\dd,\dd)=0$,
\item $C_{Q,\dd}^{\rmabs}(q)=0$ otherwise.
\end{itemize}
The definition is motivated by the fact that if there exists a $\BoN\times \BoN^{Q_0}$-graded GKM Lie algebra $\mathfrak{g}_{Q}^{\BoN}$ associated with the monoid $(\BoN^{Q_0},(-,-))$ (in the sense of \S\ref{subsection:GKMmonoids}), whose positive part $\mathfrak{n}^{\BoN,+}_{Q}$ has graded character
\[
 \ch^{\BoZ}(\mathfrak{n}_Q^{\BoN,+})=\sum_{\dd\in\BoN^{Q_0}}A_{Q,\dd}(q)z^{\dd},
\]
then the $\BoN\times\BoN^{Q_0}$-graded dimension of the space $\bigoplus_{\dd\in\BoN^{Q_0}\setminus\{0\}}V_{\dd}$ of simple roots of $\Fg_Q^{\BoN}$ is given by the generating series
\begin{equation}
\label{equation:charBKM}
 \ch^{\BoZ}(V_{\dd})=\sum_{\dd\in\BoN^{Q_0}\setminus\{0\}}C^{\rmabs}_{Q,\dd}(q)z^{\dd},
\end{equation}
see \cite{bozec2019counting}.

\begin{theorem}[{\cite[Theorem 1.4, Theorem 1.6]{bozec2019counting}}]
 The polynomials $C_{Q,\dd}^{\rmabs}(q)$ have integer coefficients.
\end{theorem}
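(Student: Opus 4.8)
The plan is to deduce the integrality of $C^{\rmabs}_{Q,\dd}(q)$ from the integrality of the Kac polynomials $A_{Q,\dd}(q)\in\BoZ[q]$ \cite{kac1983root,hausel2013positivity}, by exhibiting each $C^{\rmabs}_{Q,\dd}(q)$ as the characteristic function of an honest $\BoZ$-graded vector space. Concretely, I would argue that $C^{\rmabs}_{Q,\dd}(q)$ is the simple-root multiplicity polynomial of the cohomologically graded generalised Kac--Moody Lie algebra attached to $\Pi_Q$ whose positive part has character $\sum_{\dd}A_{Q,\dd}(q)z^{\dd}$; any such realisation forces all coefficients of $C^{\rmabs}_{Q,\dd}(q)$ to be nonnegative integers, which in particular settles integrality.

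For the execution I would proceed as follows. First, by Proposition~\ref{proposition:Weylcharacter} and injectivity of $W_Q$, the weight function $\dd\mapsto C^{\rmabs}_{Q,\dd}(q)$ is the unique preimage under $W_Q$ of $\Exp_{z,q}\bigl(\sum_{\dd}A_{Q,\dd}(q)z^{\dd}\bigr)$, and since the plethystic exponential of an integral series is integral, this target lies in $\BoZ[q][\![\BoN^{Q_0}]\!]$. Next, for $\dd$ either primitive or isotropic-divisible, combine Theorem~\ref{corollary:absBor} (the BPS algebra is the positive half of a GKM algebra whose simple-root weight function records the intersection Poincar\'e polynomials of the moduli spaces $\CM_{\Pi_Q,\dd}$) with Theorem~\ref{theorem:IPPNQV} to obtain
\[
C^{\rmabs}_{Q,\dd}(q)=\IP\bigl(\CM_{\Pi_Q,\dd'}\bigr)(q^{-1/2}),
\]
where $\dd'=\dd$ if $\dd\in\Sigma_{\Pi_Q}$ and $\dd'=\dd_0$ if $\dd=l\dd_0$ with $\dd_0\in\Sigma_{\Pi_Q}$ isotropic; on every remaining dimension vector $C^{\rmabs}_{Q,\dd}$ vanishes. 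Finally, observe that $\IP(\CM_{\Pi_Q,\dd'})$ is by definition a polynomial whose coefficients are the dimensions of the finite-dimensional intersection cohomology groups of $\CM_{\Pi_Q,\dd'}$, hence a polynomial with nonnegative integer coefficients; substituting and using the a priori polynomiality of $C^{\rmabs}_{Q,\dd}(q)$ in $q$, one concludes that $C^{\rmabs}_{Q,\dd}(q)$ has nonnegative integer coefficients, which is the claim (and strengthens it to positivity).

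Along this geometric route the real content is pushed into the inputs, so I expect the main obstacle to be exactly the theorems this paper proves elsewhere: Theorem~\ref{corollary:absBor}, which rests on the purity of the BPS sheaf \cite{davison2020bps}, the explicit identification of its intersection-cohomology summands, and the matching of its simple-root multiplicities with cuspidal polynomials via Proposition~\ref{proposition:Weylcharacter}. There is an alternative, self-contained route, namely the combinatorial one of \cite{bozec2019counting}: one inverts the graded Weyl--Kac denominator identity by recursion on the height $\abs{\dd}$, using that the real simple roots contribute only a finite signed sum over the Weyl group $W_Q$ (acting by integer matrices on $\BoZ^{Q_0}$) and that imaginary simple roots of smaller height are integral by induction. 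In that approach the genuinely hard step is the isotropic-divisible case $\dd=l\dd_0$, where $C^{\rmabs}_{Q,l\dd_0}(q)$ must be extracted from $C_{Q,l\dd_0}(q)$ via the identity $\Exp_{z}\bigl(\sum_l C_{Q,l\dd_0}(q)z^{l\dd_0}\bigr)=\Exp_{z,q}\bigl(\sum_l C^{\rmabs}_{Q,l\dd_0}(q)z^{l\dd_0}\bigr)$, i.e. by a $q$-refined plethystic logarithm; this operation is not formally integral, and controlling it (through a divisibility statement for $C_{Q,l\dd_0}(q)$ modulo $l$, equivalently a cyclic-group/Galois-descent argument on the underlying finite-field counts) is the crux handled in \cite{bozec2019counting}.
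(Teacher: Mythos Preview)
The paper does not supply its own proof of this statement: it is quoted verbatim from \cite{bozec2019counting} as background, and the paper's contribution is instead the stronger positivity statement, proved later as Theorem~\ref{theorem:ICNQV}. Your geometric route is exactly the argument the paper uses there: construct the BPS Lie algebra $\Fn^{\ttBPS,+}_{\Pi_Q}$, identify it via Theorem~\ref{corollary:absBor} as the positive half of a GKM algebra with character $\sum_{\dd}A_{Q,\dd}(q^{-1})z^{\dd}$, and then invoke the uniqueness of the weight function (Proposition~\ref{proposition:Weylcharacter}) to conclude $C^{\rmabs}_{Q,\dd}(q)=\IP(\CM_{\Pi_Q,\dd'})$, hence nonnegative and integral. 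There is no circularity: the proof of Theorem~\ref{theorem:BPSalgisenvBorcherds} (and hence of Theorem~\ref{corollary:absBor}) nowhere uses integrality of $C^{\rmabs}_{Q,\dd}$, and Proposition~\ref{proposition:Weylcharacter} is a formal inversion of the denominator identity that is independent of integrality.

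Your description of the alternative combinatorial route from \cite{bozec2019counting} is accurate, including the identification of the isotropic-divisible case as the delicate step requiring the plethystic-logarithm/Galois-descent argument. So both proofs you sketch are correct; the first is the paper's own (for the stronger positivity), the second is the cited source's. The only thing to flag is presentational: since the statement appears in \S\ref{subsection:cuspidals} \emph{before} Theorems~\ref{corollary:absBor} and~\ref{theorem:ICNQV} are available, invoking them here would be a forward reference, which is precisely why the paper defers to \cite{bozec2019counting} at this point and only later derives integrality as a byproduct of positivity.
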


In \cite{bozec2019counting} Bozec and Schiffmann conjecture the following, see also \cite[Conjecture 3.4]{schiffmann2018kac}.

\begin{conjecture}[]
\label{conjecture:BozecSchiffmann}
 For any $\dd\in\BoN^{Q_0}$, $C_{Q,\dd}^{\rmabs}(q)\in\BoN[q]$.
\end{conjecture}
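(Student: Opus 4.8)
The plan is to deduce Conjecture~\ref{conjecture:BozecSchiffmann} from our main structural result Theorem~\ref{corollary:absBor} together with the combinatorial machinery of Bozec--Schiffmann recalled in \S\ref{subsection:positivitycusppolsintro} and \S\ref{subsection:cuspidals}. The key point is that Theorem~\ref{corollary:absBor} exhibits the BPS algebra $\rmBPS_{\CA,\Alg}^{\triangle,\psi}$ (taken for $\CA=\Rep\Pi_Q$ and $\triangle$ the whole category) as the positive half of a cohomologically graded generalised Kac--Moody algebra $\Fg_{\K(\CA),(-,-)_{\SC},\IP_{\CA}}$ whose simple-root multiplicity spaces are the cohomologies $\HO^*(\SG_m)$, hence manifestly have \emph{nonnegative} graded dimensions. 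So the whole strategy is: identify the generating series of these IC-cohomology spaces with the Kac-polynomial series, and then invoke the injectivity of the Weyl character map $W_Q$ (Proposition~\ref{proposition:Weylcharacter}) to conclude that the weight function attached to our GKM algebra must be exactly $\dd\mapsto C^{\rmabs}_{Q,\dd}(q)$, whence positivity.

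Concretely, the steps I would carry out are as follows. First, identify the graded character of $\rmBPS^{\psi}_{\Pi_Q,\Alg}$ (equivalently, of $\HO^*(\ulrelCoHA_{\Pi_Q})$ after dividing out the free $\HO^*_{\BoC^*}$-action using the PBW isomorphism of Corollary~\ref{corollary:PBW} / Theorem~\ref{theorem:relPBW}) with the plethystic exponential $\Exp_{z,q}\left(\sum_{\dd}A_{Q,\dd}(q)z^{\dd}\right)$; this is essentially the cohomological-integrality / wall-crossing identity relating the Borel--Moore homology of $\FM_{\Pi_Q}$ to Kac polynomials, which via dimensional reduction and Hua's formula is already known (and can be cited from \cite{davison2018purity,hausel2013positivity}, or re-derived from Proposition~\ref{proposition:BPS3dcoincide} and \cite{davison2020cohomological}). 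Second, by Theorem~\ref{corollary:absBor} the Lie algebra $\Fg^{\ttBPS}_{\Pi_Q}$ is a cohomologically graded GKM algebra with the monoid $(\BoN^{Q_0},(-,-)_Q)$, whose positive-part character is therefore precisely the above series; this places us exactly in the situation of Proposition~\ref{proposition:Weylcharacter}, i.e. the series $\Exp_{z,q}(\sum A_{Q,\dd}(q)z^{\dd})$ lies in the image of $W_Q$. Third, apply Proposition~\ref{proposition:Weylcharacter}: injectivity of $W_Q$ forces the weight function of $\Fg^{\ttBPS}_{\Pi_Q}$ — which by Theorem~\ref{corollary:absBor} is $\IP_{\Pi_Q}(m)=\ch^{\BoZ}(\HO^*(\SG_m))$, a polynomial with \emph{nonnegative} coefficients — to coincide with $\dd\mapsto C^{\rmabs}_{Q,\dd}(t)$. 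Hence $C^{\rmabs}_{Q,\dd}(q)\in\BoN[q]$ for all $\dd$, and in particular, via Theorem~\ref{theorem:IPPNQV}, $C_{Q,\dd}(q)=\IP(\CM_{\Pi_Q,\dd})(q^{-1/2})$ for $\dd\in\Sigma_{\Pi_Q}$. For the nilpotent and $1$-nilpotent variants one repeats the argument with $\CA$ replaced by the Serre subcategory $\Rep^{\nil}(Q)$ or $\Rep^{1\snil}(Q)$, using the appropriate analogue of Proposition~\ref{proposition:Weylcharacter} for nilpotent Kac polynomials and the corresponding submonoid $\CM^{\triangle}_{\Pi_Q}\subset\CM_{\Pi_Q}$; the BPS-algebra description of Theorem~\ref{corollary:absBor} applies verbatim to these saturated submonoids, so the same injectivity argument yields $C^{\nil,\rmabs}_{Q,\dd}(q), C^{1\snil,\rmabs}_{Q,\dd}(q)\in\BoN[q]$.

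The main obstacle is the first step: pinning down that the character of the degree-zero part of the (twisted) CoHA — more precisely the character of the BPS Lie algebra sheaf's global sections — matches the Kac-polynomial generating series. This requires knowing that the PBW/cohomological-integrality isomorphism (Theorem~\ref{theorem:relPBW}, Corollary~\ref{corollary:PBW}) is compatible with taking characters and that the resulting identity, combined with Hua's formula for $\Exp$ of the Kac series, produces exactly the shape demanded by Proposition~\ref{proposition:Weylcharacter}. This is where one must carefully track the Tate twists hidden in the superscript $\vir$ and the $\psi$-twist (which, being a sign twist, does not affect characters, as noted in \S\ref{notations_subsec}); modulo this bookkeeping the identification is a known consequence of earlier work on purity of the BPS sheaf for preprojective algebras \cite{davison2018purity,davison2020bps}. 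Once that identity is in place, the positivity is immediate from the nonnegativity of $\dim\HO^j(\SG_m)$ and the injectivity of $W_Q$; no further hard input is needed. I would also remark that this argument simultaneously reproves the Kac positivity conjecture (taking cohomological degree zero) and upgrades it, since the \emph{entire} Kac polynomial, not just its constant term, is realised as a graded root multiplicity in $\Fg^{\ttBPS}_{\Pi_Q}$.
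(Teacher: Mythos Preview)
Your proposal is correct and follows essentially the same route as the paper's proof (Theorem~\ref{theorem:ICNQV}). The paper likewise shows that $\Fn_{\Pi_Q}^{\ttBPS,+}\cong\Fg_{\tilde{Q},\tilde{W}}$ has character $\sum_{\dd}A_{Q,\dd}(q^{-1})z^{\dd}$ (equation~\eqref{equation:charBPS}, cited from \cite{davison2020bps}), that this is the positive half of a GKM by Theorem~\ref{corollary:absBor}, and then invokes the Bozec--Schiffmann uniqueness (equations~\eqref{equation:Ngraded} and~\eqref{equation:BPSchar}) to identify the weight function with $C_{Q,\dd}^{\rmabs}$; the only cosmetic difference is that the paper works directly with the Lie algebra character rather than passing through the enveloping algebra and PBW, and is explicit about the change of variables $q\leftrightarrow q^{-1}$.
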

This conjecture is known for isotropic dimension vectors \cite{deng2003new,bozec2019counting, hennecart2021isotropic}, and Theorem \ref{theorem:ICNQV} establishes it in full generality. For isotropic dimension vectors, we have the following, from \cite[\S 1.2 ii)]{bozec2019counting}.
\begin{proposition}
\label{proposition:isotropiccuspidalpolynomials}
 Let $\dd\in\BoN^{Q_0}$ be isotropic: $\chi_Q(\dd,\dd)=0$. We have:
 \[
 C^{\rmabs}_{Q,\dd}(q)=
 \left\{
 \begin{aligned}
 &q \text{ if the support of $\dd$ is an affine quiver or the Jordan quiver}\\
 &0 \text{ otherwise.}
 \end{aligned}
 \right.
 \]

\end{proposition}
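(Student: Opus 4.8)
The plan is to reduce to the case where $\dd$ has connected full support, and then to treat separately the case in which that support is affine (or the Jordan quiver), where the value is $q$, and the remaining case, where the value is $0$. First I would record that $C_{Q,\dd}$ and $C^{\rmabs}_{Q,\dd}$ depend only on the full subquiver $Q'=\supp(\dd)$: every $\dd$-dimensional representation of $Q$ is supported on $Q'$, and $H_{Q',\BoF_q}\subset H_{Q,\BoF_q}$ is a sub-bialgebra (subquotients of $Q'$-representations are $Q'$-representations), so the spaces of primitive functions of degree $\dd$ agree, giving $C_{Q,\dd}=C_{Q',\dd}$; since $C^{\rmabs}$ is produced combinatorially from the family $(C_{Q',l\dd})_{l\geq1}$, also $C^{\rmabs}_{Q,\dd}=C^{\rmabs}_{Q',\dd}$. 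If $Q'$ is disconnected with $\dd$ meeting at least two components, then $H_{Q'}$ is a tensor product of bialgebras, which has no primitive elements in a degree meeting more than one factor, so $C^{\rmabs}_{Q,\dd}=0$ — consistent with the statement. Hence we may assume $Q=\supp(\dd)$ is connected; since $\dd\neq0$ is isotropic, $Q$ is not Dynkin.

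Next, for the vanishing assertion, suppose $C^{\rmabs}_{Q,\dd}(q)\neq0$. By Proposition~\ref{proposition:Weylcharacter} (applied with $\CA=\Rep\Pi_Q$, together with Theorem~\ref{corollary:absBor}), the function $\dd\mapsto C^{\rmabs}_{Q,\dd}$ is the weight function of a cohomologically graded generalised Kac--Moody Lie algebra, hence is supported on $\Phi^+_{\Pi_Q}$. Writing $\dd=l\dd_0$ with $\dd_0$ indivisible, isotropy of $\dd$ forces $\dd_0\in\Sigma_{\Pi_Q}$. As established in \S\ref{subsection:rootsforPi}, $\Sigma_{\Pi_Q}$ is contained in the fundamental cone $C_{\Pi_Q}$, so $(\dd_0,1_i)_{\SG_2(\BoC Q)}\leq0$ for all $i\in Q_0$; since $(\dd_0,\dd_0)_{\SG_2(\BoC Q)}=\sum_i(\dd_0)_i\,(\dd_0,1_i)_{\SG_2(\BoC Q)}=0$ is a sum of non-positive terms and $\dd_0$ has full support, every summand vanishes, so $\dd_0$ is a positive full-support radical vector for $(-,-)_{\SG_2(\BoC Q)}$. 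By the classification of quivers this forces $Q$ to be of affine (extended Dynkin) type or the Jordan quiver, with $\dd_0=\delta_Q$ the minimal imaginary root. Contrapositively, if $\supp(\dd)$ is neither affine nor the Jordan quiver, then $C^{\rmabs}_{Q,\dd}(q)=0$.

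It remains to compute $C^{\rmabs}_{Q,l\delta}(q)$ when $Q=\supp(\dd)$ is affine or Jordan and $\delta=\delta_Q\in\Sigma_{\Pi_Q}$. By Theorem~\ref{corollary:absBor} and Proposition~\ref{proposition:Weylcharacter} one has $C^{\rmabs}_{Q,l\delta}(q)=\ch^{\BoZ}\HO^*(\ul{\SG}_{\Pi_Q,l\delta})$, and $\ul{\SG}_{\Pi_Q,l\delta}=(u_{l\delta})_*\ul{\IC}(\CM_{\Pi_Q,\delta})$ is the pushforward of $\ul{\IC}(\CM_{\Pi_Q,\delta})$ along a finite (indeed closed) immersion, so $\HO^*(\ul{\SG}_{\Pi_Q,l\delta})\cong\HO^*(\ul{\IC}(\CM_{\Pi_Q,\delta}))$ for all $l\geq1$. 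By Proposition~\ref{proposition:geometrygoodmodspace}, $\CM_{\Pi_Q,\delta}$ is an irreducible affine surface, and classically it is the Kleinian singularity $\BoA^2/\Gamma_Q$ for the McKay subgroup $\Gamma_Q\subset\mathrm{SL}_2(\BoC)$ ($\Gamma_Q$ trivial for the Jordan quiver). As a contractible normal surface whose link has vanishing rational $H^1$, we get $\ul{\IC}(\CM_{\Pi_Q,\delta})=\ul{\BoQ}^{\vir}_{\CM_{\Pi_Q,\delta}}$, whose cohomology is one-dimensional and concentrated in the single cohomological degree fixed by the dimension-$2$ ($\vir$) normalisation; this yields $C^{\rmabs}_{Q,l\delta}(q)=q$. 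Alternatively, one combines Theorem~\ref{theorem:IPPNQV} with the classical computation of the intersection cohomology of $\BoA^2/\Gamma_Q$ and the identity $C^{\rmabs}_{Q,\delta}=C_{Q,\delta}$, which is the lowest-order term of the plethystic relation defining $C^{\rmabs}$.

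The main obstacle is exactly this last point: ensuring that the affine/Jordan value is the monomial $q$ and not, say, $1$ or a half-integer power of $q$, which demands careful bookkeeping of the Tate twist built into $\ul{\IC}(-)$ in tandem with $\dim\CM_{\Pi_Q,\delta}=2$ and the identification $\CM_{\Pi_Q,\delta}\cong\BoA^2/\Gamma_Q$; in a pinch this value can simply be quoted from \cite[\S1.2 ii)]{bozec2019counting}. The remaining ingredients — that $\dd\mapsto C^{\rmabs}_{Q,\dd}$ is supported on $\Phi^+_{\Pi_Q}$ and that a connected quiver carrying a positive full-support radical vector for $(-,-)_{\SG_2(\BoC Q)}$ is of affine type or is the Jordan quiver — are standard and can be cited.
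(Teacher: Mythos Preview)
The paper does not prove this proposition; it is simply quoted from \cite[\S1.2~ii)]{bozec2019counting}. So there is no ``paper's own proof'' to compare against, and your task reduces to checking whether your argument stands on its own.

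Your approach is sound but inverts the logical flow of the paper. You derive the proposition from Theorem~\ref{corollary:absBor} and Theorem~\ref{theorem:ICNQV} (equivalently, from equations \eqref{equation:Ngraded}--\eqref{equation:BPSchar}), whereas in the paper the proposition is an input result from \cite{bozec2019counting}, established there by direct and more elementary Hall-algebra arguments (for affine $Q$ one knows the category of representations well enough to identify cuspidals explicitly). Since the proof of Theorem~\ref{theorem:BPSalgisenvBorcherds} in \S\ref{section:BPSalgCohint} does not invoke Proposition~\ref{proposition:isotropiccuspidalpolynomials}, your route is not circular; it is simply heavier machinery for a statement that admits a lighter proof.

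A few small points. First, $\Sigma_{\Pi_Q}$ is not literally contained in the fundamental cone (the real simple roots $1_i$ are not), but your $\dd_0$ is isotropic and hence not real, and the proof of the lemma in \S\ref{subsection:rootsforPi} shows precisely that non-real elements of $\Sigma_{\Pi_Q}$ lie in $C_{\Pi_Q}$; so your radical-vector argument goes through. Second, you are right that the only delicate step is pinning down the monomial $q$: this amounts to computing $\HO^*(\ul{\IC}(\BoA^2/\Gamma_Q))$ with the paper's Tate-twist normalisation and then carefully tracking the change of variables implicit in Theorem~\ref{theorem:ICNQV}. Since you ultimately offer to quote the value from \cite{bozec2019counting}, your argument in the affine/Jordan case collapses back to the citation the paper already makes.
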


Let $Q$ be a quiver. Assuming that the coefficients of the polynomials $C_{Q,\dd}^{\rmabs}(q)$ are nonnegative, Bozec and Schiffmann prove that the $\BoN^{Q_0}\times \BoN$-graded Lie algebra $\mathfrak{n}_Q^{\BoN,+}\coloneqq\mathfrak{n}^+_{\BoN^{Q_0},(-,-),C_{Q,\dd}^{\rmabs}}$ is such that 
\begin{equation}
\label{equation:charnQN}
 \ch^{\BoZ}(\UEA(\mathfrak{n}^{\BoN,+}_Q))=\Exp_{q,z}\left(\sum_{\dd\in\BoN^{Q_0}\setminus\{0\}}A_{Q,\dd}(q)z^{\dd}\right).
\end{equation}

By the graded PBW theorem, this is equivalent to the equality \eqref{equation:charBKM}. Conversely, if such a Borcherds Lie algebra $\mathfrak{g}_Q^{\BoN}$ exists, then the polynomials $C^{\rmabs}_{Q,\dd}(q)$ have nonnegative coefficients: they are given by the equality

\begin{equation}
\label{equation:Ngraded}
 \sum_{\dd\in\BoN^{Q_0}}C_{Q,\dd}^{\rmabs}(q)z^{\dd}=\ch^{\BoZ}(\mathfrak{n}_Q^{\BoN,+}/[\mathfrak{n}_Q^{\BoN,+},\mathfrak{n}_Q^{\BoN,+}]).
\end{equation}

By \cite[\S 1.2]{davison2020bps}, the 3d BPS Lie algebra , $\mathfrak{g}_{\tilde{Q},\tilde{W}}\cong\HO^*\BPS_{\Pi_Q,\Lie}^{\td}$, is a $\BoN^{Q_0}\times 2\BoN_{\leq 0}$-graded Lie algebra with character
 
 \begin{equation}
 \label{equation:charBPS}
 \ch^{\BoZ}(\mathfrak{g}_{\tilde{Q},\tilde{W}})=\sum_{\dd\in\BoN^{Q_0}}A_{Q,\dd}(q^{-1})z^{\dd}
 \end{equation}
which by Proposition~\ref{proposition:BPS3dcoincide} is isomorphic to $\Fn^{\ttBPS,+}_{\Pi_Q}$. Via this isomorphism $\mathfrak{g}_{\tilde{Q},\tilde{W}}$ is identified with the positive part of a GKM Lie algebra associated to the monoid $\BoN^{Q_0}$ with the bilinear form $(-,-)_Q$, symmetrisation of the Euler form of $Q$, and with graded dimension of the spaces of generators given by
\begin{equation}
\label{equation:BPSchar}
 \ch^{\BoZ}(\mathfrak{g}_{\tilde{Q},\tilde{W}}/[\mathfrak{g}_{\tilde{Q},\tilde{W}},\mathfrak{g}_{\tilde{Q},\tilde{W}}])=\sum_{\vec{d}\in\Sigma_{\Pi_Q}}\IP(\mathcal{M}_{\Pi_Q,\dd},q)z^{\dd}+\sum_{\substack{\vec{d}\in\Sigma_{\Pi_Q}\\(\vec{d},\vec{d})=0\\l\geq 2}}\IP(\mathcal{M}_{\Pi_Q,\vec{d}},q)z^{l\vec{d}},
\end{equation}
the intersection Poincar\'e polynomials of $\mathcal{M}_{\Pi_Q,\dd}$ for various $\dd$. 

We finally come to the main combinatorial consequence of Theorem \ref{corollary:absBor}:
\begin{theorem}
\label{theorem:ICNQV}
Conjecture~\ref{conjecture:BozecSchiffmann} is true: for any $\vec{d}\in\BoN^{Q_0}$, $C^{\rmabs}_{Q,\dd}(q)\in\BoN[q]$. Moreover, for any $\dd\in\BoN^{Q_0}$, $\dd\in\Sigma_Q$,
 \[
 C^{\rmabs}_{Q,\dd}(q^{-2})=\IP(\mathcal{M}_{\Pi_Q,\dd},q).
 \]
\end{theorem}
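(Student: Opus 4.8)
The plan is to deduce Theorem~\ref{theorem:ICNQV} from Theorem~\ref{corollary:absBor} by feeding its output into the combinatorial formalism of Bozec and Schiffmann recalled around \eqref{equation:charnQN}--\eqref{equation:Ngraded}. Applying Theorem~\ref{corollary:absBor} to $\CA=\Rep\Pi_Q$ with $\imath_{\triangle}=\id$ and $\K(\CA)=\BoN^{Q_0}$, and combining with Proposition~\ref{proposition:BPS3dcoincide}, realises $\Fg^{\ttBPS}_{\Pi_Q}$ as a genuine $\BoN^{Q_0}\times\BoN$-graded generalised Kac--Moody Lie algebra over the monoid $(\BoN^{Q_0},(-,-)_Q)$ in the sense of \S\ref{subsection:GKMmonoids}, whose positive part $\Fn^{\ttBPS,+}_{\Pi_Q}\cong\Fg_{\tilde{Q},\tilde{W}}^{\psi}$ has character $\ch^{\BoZ}(\Fn^{\ttBPS,+}_{\Pi_Q})=\sum_{\dd}A_{Q,\dd}(q^{-1})z^{\dd}$ by \eqref{equation:charBPS}, and whose space of Chevalley generators has the character computed in \eqref{equation:BPSchar}, namely $\sum_{\dd\in\Sigma_{\Pi_Q}}\IP(\CM_{\Pi_Q,\dd},q)z^{\dd}+\sum_{\dd\in\Sigma_{\Pi_Q},\,(\dd,\dd)=0,\,l\geq 2}\IP(\CM_{\Pi_Q,\dd},q)z^{l\dd}$.

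The preliminary step would be to confirm that this Lie algebra really fits the framework of \S\ref{subsection:GKMmonoids}: its Cartan datum is $(\BoN^{Q_0},(-,-)_Q)$ with simple roots $\Phi^+_{\Pi_Q}$ and the expected real/isotropic/hyperbolic trichotomy (Proposition~\ref{proposition:rootsneighbourhood} specialised to $\Rep\Pi_Q$, i.e. \cite[Theorem~1.2]{crawley2001geometry}), and its weight function takes value $1$ on each real simple root $1_i$ because $\CM_{\Pi_Q,1_i}=\pt$ by Proposition~\ref{proposition:geometrygoodmodspace}\eqref{item:geoimrealroot}. The isotropic case requires particular care: one must use Theorem~\ref{theorem:BPSisotropic} and Corollary~\ref{corollary:isoprimsummand} (equivalently the definition of $\ul{\SG}_{\Pi_Q}$ in \S\ref{subsection:generators}) to record that the generator in degree $l\delta$ is $\ul{\IC}(\CM_{\Pi_Q,\delta})$ supported on the small diagonal rather than $\ul{\IC}(\CM_{\Pi_Q,l\delta})$ --- this is exactly what keeps the final answer consistent with Proposition~\ref{proposition:isotropiccuspidalpolynomials}.

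With these checks done, $\Fg^{\ttBPS}_{\Pi_Q}$ is a generalised Kac--Moody Lie algebra over $(\BoN^{Q_0},(-,-)_Q)$ with a bona fide, manifestly nonnegative weight function $\dd\mapsto\IP(\CM_{\Pi_Q,\dd})$, whose universal enveloping algebra has character $\Exp_{q,z}\bigl(\sum_{\dd}A_{Q,\dd}(q^{-1})z^{\dd}\bigr)$ by Theorem~\ref{theorem:BPSalgisenvBorcherds}, the PBW isomorphism, and \eqref{equation:charBPS}. Hence $\Exp_{z,t}\bigl(\sum_{\dd}A_{Q,\dd}(t)z^{\dd}\bigr)$ lies in the image of the character map $W_Q$ (after the substitution $q\mapsto q^{-1}$, which accounts for the cohomological grading in \eqref{equation:charBPS} being non-positive). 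Proposition~\ref{proposition:Weylcharacter} then gives two conclusions at once: the polynomials $C^{\rmabs}_{Q,\dd}(q)$ are themselves weight functions, so they have nonnegative coefficients (this is Conjecture~\ref{conjecture:BozecSchiffmann}); and $W_Q(C^{\rmabs}_{Q,-})=W_Q(\IP(\CM_{\Pi_Q,-}))$, so by injectivity of $W_Q$ the two weight functions coincide. Translating this equality back into the counting variable --- which introduces the substitution $q\mapsto q^{-2}$ of the statement, exactly as in Theorem~\ref{theorem:IPPNQV} --- yields $C^{\rmabs}_{Q,\dd}(q^{-2})=\IP(\CM_{\Pi_Q,\dd},q)$ for $\dd\in\Sigma_{\Pi_Q}$ (and more generally $C^{\rmabs}_{Q,l\delta}(q^{-2})=\IP(\CM_{\Pi_Q,\delta},q)$ for isotropic $\delta\in\Sigma_{\Pi_Q}$, $l\geq 1$, and $C^{\rmabs}_{Q,\dd}=0$ for $\dd\notin\Phi^+_{\Pi_Q}$).

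The hard part is not conceptual --- the genuinely new ingredient, Theorem~\ref{corollary:absBor}, is already in hand and the remainder is an assembly of existing statements --- but bookkeeping: pinning down the precise dictionary between the cohomological $\BoZ$-grading tracked by $\ch^{\BoZ}$ (in which $\Fg^{\ttBPS}_{\Pi_Q}$ naturally lives, with generators in non-positive degrees) and the counting variable of $A_{Q,\dd}$ and $C^{\rmabs}_{Q,\dd}$, so that the shift $q\mapsto q^{-2}$ comes out correctly and the normalisation of $\IP$ stays coherent with Proposition~\ref{proposition:isotropiccuspidalpolynomials} (where $C^{\rmabs}_{Q,\delta}=q$ for isotropic $\delta$ of affine or Jordan support, matching $\CM_{\Pi_Q,\delta}$ being a surface). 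A secondary, routine point is verifying membership of $\Exp_{z,t}\bigl(\sum_{\dd}A_{Q,\dd}(t)z^{\dd}\bigr)$ in the image of $W_Q$, which is precisely witnessed by the generalised Kac--Moody Lie algebra $\Fg^{\ttBPS}_{\Pi_Q}$ we have constructed, after which injectivity of $W_Q$ closes the argument.
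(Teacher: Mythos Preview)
Your proposal is correct and follows essentially the same approach as the paper's proof: both arguments exhibit $\Fn^{\ttBPS,+}_{\Pi_Q}\cong\Fg_{\tilde{Q},\tilde{W}}^{\psi}$ as a genuine GKM with character \eqref{equation:charBPS} and generator-character \eqref{equation:BPSchar}, and then invoke the uniqueness of the weight function given the character (your Proposition~\ref{proposition:Weylcharacter}, the paper's comparison of \eqref{equation:Ngraded} with \eqref{equation:BPSchar}) to match with $C^{\rmabs}_{Q,\dd}$ after the variable change. Your write-up is more explicit about the bookkeeping (real roots, isotropic generators, the $q\leftrightarrow q^{-2}$ dictionary), but the logical skeleton is identical.
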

\begin{proof}
This comes from the comparison of Equations \eqref{equation:Ngraded} and \eqref{equation:BPSchar}, given that the character of the GKM Lie algebra $\mathfrak{n}_{\Pi_Q}^{\aBPS,+}$ is given by Equation \eqref{equation:charBPS}, which is \eqref{equation:charnQN} up to the change of variables $q\leftrightarrow q^{-2}$.
\end{proof}

\begin{corollary}
 \begin{enumerate}
 \item $C^{\rmabs}_{Q,\dd}(q)\neq 0$ if and only if $\dd\in\Phi^+_{\Pi_Q}$,
 \item If $\dd\in\Phi^+_{\Pi_Q}$, $\deg C_{Q,\dd}^{\rmabs}(q)=1-\chi_Q(\dd,\dd)$ and $C_{Q,\dd}^{\rmabs}(q)$ is a monic polynomial.
 \end{enumerate}
\end{corollary}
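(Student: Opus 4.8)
The plan is to deduce both assertions directly from the identification of $C^{\rmabs}_{Q,\dd}$ with an intersection Poincar\'e polynomial furnished by Theorem~\ref{theorem:ICNQV} and Equation~\eqref{equation:BPSchar}. First I would record the combined consequence of \eqref{equation:Ngraded}, \eqref{equation:BPSchar} and Theorem~\ref{theorem:ICNQV}: for $\dd\in\BoN^{Q_0}$,
\[
C^{\rmabs}_{Q,\dd}(q)=
\begin{cases}
\IP(\CM_{\Pi_Q,\dd^{\circ}})(q^{-1/2}) & \text{if } \dd\in\Phi^+_{\Pi_Q},\\
0 & \text{otherwise,}
\end{cases}
\]
where $\dd^{\circ}\coloneqq\dd$ if $\dd\in\Sigma_{\Pi_Q}$ and $\dd^{\circ}\coloneqq\delta$ if $\dd=l\delta$ with $l\geq 2$ and $\delta\in\Sigma_{\Pi_Q}$ isotropic. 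The only point needing a comment beyond what is already proved is that the primitive summand of $\ulBPS_{\Pi_Q,\Alg}$ indexed by a divisible isotropic root $l\delta$ is $(u_{l\delta})_{*}\ul{\IC}(\CM_{\Pi_Q,\delta})$, so that its intersection Poincar\'e polynomial equals that of $\CM_{\Pi_Q,\delta}$ and hence $C^{\rmabs}_{Q,l\delta}=C^{\rmabs}_{Q,\delta}$; this is exactly the content of the second sum in \eqref{equation:BPSchar} (and is also consistent with Proposition~\ref{proposition:isotropiccuspidalpolynomials}).

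For part~(1), the displayed identity gives $C^{\rmabs}_{Q,\dd}=0$ for $\dd\notin\Phi^+_{\Pi_Q}$ immediately. For the converse I would observe that for $\dd\in\Phi^+_{\Pi_Q}$ the scheme $\CM_{\Pi_Q,\dd^{\circ}}$ is nonempty --- it carries a simple $\Pi_Q$-representation, since $\Sigma_{\Pi_Q}=\Sigma'_{\Pi_Q}$ by the lemma of \S\ref{subsection:rootsforPi} --- and irreducible by Proposition~\ref{proposition:geometrygoodmodspace}, so its $0$th intersection cohomology is $\BoQ$; hence $\IP(\CM_{\Pi_Q,\dd^{\circ}})$ has a nonzero lowest-degree coefficient and $C^{\rmabs}_{Q,\dd}\neq 0$. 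This proves (1) and will be sharpened in (2).

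For part~(2), Proposition~\ref{proposition:geometrygoodmodspace} tells us that $\CM_{\Pi_Q,\dd^{\circ}}$ is irreducible of dimension $n\coloneqq 2-(\dd^{\circ},\dd^{\circ})_{\SG_2(\BoC Q)}=2-2\chi_Q(\dd^{\circ},\dd^{\circ})$, and $\chi_Q(\dd^{\circ},\dd^{\circ})=\chi_Q(\dd,\dd)$ in both cases (trivially when $\dd^{\circ}=\dd$, and because $\chi_Q(l\delta,l\delta)=l^{2}\chi_Q(\delta,\delta)=0=\chi_Q(\delta,\delta)$ in the isotropic case). The perverse complex $\ul{\IC}(\CM_{\Pi_Q,\dd^{\circ}})$ has derived global sections concentrated in cohomological degrees $\geq -n$, the degree $-n$ piece being the contribution of the constant sheaf, of dimension $1$ by irreducibility. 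Translating through the grading and Tate-twist conventions defining $\IP$ --- so that $C^{\rmabs}_{Q,\dd}(q)=\sum_{j}\dim IH^{j}(\CM_{\Pi_Q,\dd^{\circ}})\,q^{(n-j)/2}$ --- the largest power of $q$ is attained uniquely at $j=0$, equals $q^{n/2}=q^{1-\chi_Q(\dd,\dd)}$, and has coefficient $1$. Thus $\deg C^{\rmabs}_{Q,\dd}(q)=1-\chi_Q(\dd,\dd)$ and $C^{\rmabs}_{Q,\dd}(q)$ is monic; that no negative powers of $q$ occur (i.e.\ $IH^{j}(\CM_{\Pi_Q,\dd^{\circ}})=0$ for $j>n$) is forced by the already-known polynomiality of $C^{\rmabs}_{Q,\dd}$, or alternatively by Artin vanishing for the affine variety $\CM_{\Pi_Q,\dd^{\circ}}$.

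I do not expect a genuine obstacle: the argument is essentially bookkeeping once Theorem~\ref{theorem:ICNQV} is in hand. The step requiring the most care is the matching of normalisations --- reconciling the cohomological grading, and the $q\leftrightarrow q^{-2}$ substitution relating the characters \eqref{equation:charnQN} and \eqref{equation:charBPS}, with the degree of $C^{\rmabs}_{Q,\dd}$ as a polynomial in $q$ --- together with the auxiliary observation that for divisible isotropic roots the relevant geometry is the two-dimensional space $\CM_{\Pi_Q,\delta}$ rather than the larger $\CM_{\Pi_Q,l\delta}$.
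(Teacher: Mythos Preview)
Your proposal is correct and follows the same line as the paper's proof: both deduce the corollary from Theorem~\ref{theorem:ICNQV} together with the irreducibility and dimension formula for $\CM_{\Pi_Q,\dd}$ (Proposition~\ref{proposition:geometrygoodmodspace}, ultimately Crawley-Boevey). Your treatment of the divisible isotropic case is in fact cleaner than the paper's: you correctly identify that the relevant variety governing $C^{\rmabs}_{Q,l\delta}$ is the two-dimensional $\CM_{\Pi_Q,\delta}$ (via the generator $(u_{l\delta})_*\ul{\IC}(\CM_{\Pi_Q,\delta})$), whereas the paper's sentence citing $\dim\CM_{\Pi_Q,l\delta}=2l$ is easy to misread.
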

\begin{proof}
 The first statement follows immediately from Theorem \ref{theorem:ICNQV}. The second statement follows from the fact that for $\dd\in\Sigma_{\Pi_Q}$, the singular Nakajima quiver variety $\CM_{\Pi_Q,\dd}$ is irreducible of dimension $2-(\dd,\dd)_Q$ by \cite{crawley2001geometry} and for $\dd\in\Sigma_{\Pi_Q}$, $(\dd,\dd)_Q=0$ and $l\geq 1$, $\CM_{\Pi_Q,l\dd}$ is irreducible of dimension $2l$ by \cite{crawley2002decomposition}.
\end{proof}

\subsection{Intersection Poincar\'e polynomials of singular quiver varieties}

The canonical decomposition of a dimension vector $\dd\in\BoN^{Q_0}$ is the unique decomposition of $\dd$ as a sum of primitive positive roots (elements of $\Sigma_{\Pi_Q}$) such that any other decomposition as a sum of primitive positive roots is a refinement of it. By \cite{crawley2002decomposition}, it is well-defined and unique. Moreover, it gives a product decomposition of the corresponding quiver variety \cite{crawley2002decomposition}.

\begin{corollary}
\label{corollary:IPallNQV}
 Let $\dd\in\BoN^{Q_0}$ be a dimension vector and $\dd=\sum_{j=1}^rm_j\dd_j$ be the canonical decomposition of $\dd$, where the $\dd_j$ are pairwise distinct, $\dd_j\in\Sigma_{\Pi_Q}$ and $m_j\geq 0$. We have
 \[
 \IP(\CM_{\Pi_Q,\dd})(t)=\prod_{j=1}^r\frac{1}{m_j!}\left(\left(\frac{\mathrm{d}}{\mathrm{d}u}\right)^{m_j}\Exp_{t,u}(C_{Q,\dd_j}^{\rmabs}(t^{-2})u)\right)_{u=0}
 \]
where $u$ is an additional formal variable and $\Exp_{t,u}$ is the plethystic exponential.
\end{corollary}
\begin{proof}
This follows from the product decomposition of quiver varieties \cite[Theorem 1.1]{crawley2002decomposition} and the formula for intersection Poincar\'e polynomials of symmetric products.
\end{proof}

The canonical decomposition of a dimension vector $\dd\in\BoN^{Q_0}$ can be determined algorithmically. Indeed, for a given $\dd\in\BoN^{Q_0}$, it is possible to determine the set of primitive positive roots $\dd'\leq \dd$ (this amounts to checking a certain finite number of inequalities for all decompositions of $\dd'$). Then, Crawley-Boevey proves that the canonical decomposition is such that any other decomposition of $\dd$ as a sum of elements of $\Sigma_{\Pi_Q}$ refines it. Therefore, we can start from the trivial decomposition $\dd=\sum_{j=1}^r\dd_j\coloneqq \sum_{i\in Q_0}d_i1_i$ where each $\dd_j$ is equal to $1_i$ for some vertex $i\in Q_0$. We can then iteratively combine some of the dimension vectors $\dd_j$ when their sum is in $\Sigma_{\Pi_Q}$. In conclusion, intersection Poincar\'e polynomials of all quiver varieties can be algorithmically computed using our results.

\subsection{(Semi)-nilpotent Kac and cuspidal polynomials}
Bozec, Schiffmann and Vasserot defined in \cite{bozec2020number} various families of counting polynomials for quiver representations over finite fields, obtained by imposing appropriate nilpotency conditions. They related the polynomials obtained this way to the count of points of some quiver varieties. In this section, we recall these definitions and we define the corresponding \emph{cuspidal} and \emph{absolutely cuspidal} polynomials, following \cite{bozec2019counting}. We use the general results from this paper to prove the positivity of all of the coefficients of these polynomials.

\subsubsection{Nilpotent and semi-nilpotent representations of quivers}
\label{subsubsection:seminilpotentrepquivers}
We recall various notions of nilpotency for quiver representations from \cite{bozec2020number}. Let $Q=(Q_0,Q_1)$ be a quiver. A representation $M$ of $Q$ is called
\begin{enumerate}
 \item \emph{nilpotent} if there exists a flag of $Q_0$-graded vector spaces $\{0\}=M_0\subset M_1\subset\hdots\subset M_r=M$ such that the action of the arrows of $Q$ sends $M_i$ to $M_{i-1}$;
 \item \emph{$1$-nilpotent} if there exists a flag of $Q_0$-graded vector spaces $\{0\}=M_0\subset M_1\subset \hdots\subset M_r=M$ such that loops of $Q$ send $M_i$ to $M_{i-1}$ (with no conditions for other arrows).
\end{enumerate}

Let $\overline{Q}=(Q_0,\overline{Q}_1)$ be the double of the quiver $Q$. We have $\overline{Q}_1=Q_1\sqcup Q_1^*$, and each arrow $\alpha\in Q_1$ has an opposite arrow $\alpha^*\in Q_1^*$. A representation $M$ of $\overline{Q}$ is called
\begin{enumerate}
 \item \label{item:seminilp}\emph{semi-nilpotent} if there exists a flag of $Q_0$-graded vector spaces $\{0\}=M_0\subset M_1\subset\hdots\subset M_r=M$ such that arrows $\alpha\in Q_1$ send $M_i$ to $M_{i-1}$ and arrows $\alpha\in Q_1^*$ send $M_i$ to $M_i$.
 \item \label{item:strictlyseminilp}\emph{strictly semi-nilpotent} if there exists a flag of $Q_0$-graded vector spaces $\{0\}=M_0\subset M_1\subset\hdots\subset M_r=M$ such that consecutive subquotients are supported at a single vertex, each arrow $\alpha\in Q_1$ sends $M_i$ to $M_{i-1}$ and arrows $\alpha\in Q_1^*$ send $M_i$ to $M_i$.
 \item \label{item:starseminilp}$*$-semi-nilpotent if the condition semi-nilpotent \eqref{item:seminilp} is satisfied with $Q_1$ and $Q_1^*$ exchanged.
 \item \label{item:starstrictlyseminilp} $*$-strictly semi-nilpotent if the condition strictly semi-nilpotent \eqref{item:strictlyseminilp} with $Q_1$ and $Q_1^*$ exchanged is satisfied.
 \end{enumerate}
Since the involution of $\Pi_Q$ swapping each arrow $a$ with $a^*$ swaps (3) with (1) and (4) with (2), the BPS cohomology for the $*$-versions is isomorphic to the non-$*$-versions, which we concentrate on exclusively from now on.
\subsubsection{Hall algebras}
Recall from \S\ref{subsection:cuspidals} that we have the Hall algebra of $Q$ over a finite field $\BoF_q$, $H_{Q,\BoF_q}$, which is a bialgebra.
 We define similarly
 \[
 H_{Q,\BoF_q}^{\nil}\coloneqq \bigoplus_{\substack{[M]\in\Rep_Q(\BoF_q))/\sim\\M \text{ is nilpotent}}}\BoC[M];\quad\quad H_{Q,\BoF_q}^{1\snil}\coloneqq \bigoplus_{\substack{[M]\in\Rep_Q(\BoF_q))/\sim\\M \text{ is $1$-nilpotent}}}\BoC[M].
\]
Since the categories $\Rep_Q^{\nil}(\BoF_q)$ and $\Rep_Q^{1\snil}(\BoF_q)$ of nilpotent and $1$-nilpotent representations of $Q$ are \emph{Serre subcategories} (i.e. stable under extension, subobjects and quotients), $H_{Q,\BoF}^{\nil}\subset H_{Q,\BoF_q}^{1\snil}\subset H_{Q,\BoF_q}$ are inclusions of biagebras.

\subsubsection{Kac polynomials}
\label{subsubsection:nilpKacpols}

In \cite{bozec2020number} Bozec, Schiffmann and Vasserot defined and studied Kac polynomials for the categories $\Rep_Q^{\nil}(\BoF_q)$ and $\Rep_Q^{1\snil}(\BoF_q)$ (in particular by relating them to point counts of Nakajima quiver varieties). Namely, one may define as in \S\ref{subsection:cuspidals} the counting functions $M_{Q,\dd}^{\nil}(q)$, $I_{Q,\dd}^{\nil}(q)$ and $A_{Q,\dd}^{\nil}(q)$, $M_{Q,\dd}^{1\snil}(q)$, $I_{Q,\dd}^{1\snil}(q)$ and $A_{Q,\dd}^{1\snil}(q)$ for any $\dd\in\BoN^{Q_0}$ by considering respectively nilpotent and $1$-nilpotent representations of $Q$, and all, or indecomposable, or absolutely indecomposable isomorphism classes of such representations.

In both the cases of nilpotent and of $1$-nilpotent representations, we have the sequence of equalities analogous to \eqref{equation:gradedcharacterHallalgebra} and by \cite{bozec2020number}, all these counting functions are polynomials in $q$. The analogue of the Kac positivity conjecture was proved in \cite{davison2020bps}:
\begin{proposition}[{\cite[\S4.2]{davison2020bps}}]
 The nilpotent versions of Kac polynomials $A_{Q,\dd}^{\nil}(q)$ and $A_{Q,\dd}^{1\snil}(q)$ have nonnegative coefficients.
\end{proposition}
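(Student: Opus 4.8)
The plan is to recognise the nilpotent (resp.\ $1$-nilpotent) Kac polynomials as the graded character of a BPS Lie algebra attached to a Serre subcategory of $\Rep\Pi_Q$, and then to observe that such a character automatically has nonnegative coefficients, being the graded dimension of an honest cohomologically graded vector space. In particular this recovers, inside the framework of this paper, the result of \cite[\S4.2]{davison2020bps}.

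First I would fix $\CA=\Rep\Pi_Q$ and pass to the Serre subcategory of strictly semi-nilpotent representations of $\Pi_Q$, whose saturated good moduli submonoid is $\imath_{\SSN}\colon\CM^{\SSN}_{\Pi_Q}\hookrightarrow\CM_{\Pi_Q}$ as in \S\ref{subsection:theCoHA}; for the $1$-nilpotent case I would instead take the Serre subcategory of semi-nilpotent representations, with submonoid $\CM^{\SemiN}_{\Pi_Q}\hookrightarrow\CM_{\Pi_Q}$. By the cotangent correspondence of \cite{bozec2020number} (see also \S 8.3.2 of \cite{davison2022BPS}) the $\BoF_q$-point count of $\FM^{\SSN}_{\Pi_Q,\dd}$ (resp.\ of $\FM^{\SemiN}_{\Pi_Q,\dd}$) equals a fixed power of $q$ times the point count of the stack of nilpotent (resp.\ $1$-nilpotent) $\dd$-dimensional representations of $Q$. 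Combining this with the $\Exp$-identities recalled in \S\ref{subsubsection:nilpKacpols} (the nilpotent analogues of \eqref{equation:gradedcharacterHallalgebra}) and with the purity of the BPS sheaf proved in \cite{davison2020bps} (so that the relevant Poincar\'e series can be read off from point counts), one sees that $\ch^{\BoZ}(\HO^*\!\ulrelCoHA^{\SSN,\psi}_{\Pi_Q})$ is the plethystic exponential $\Exp_{q,z}\bigl((1-q)^{-1}\sum_{\dd}A^{\nil}_{Q,\dd}(q^{-1})z^{\dd}\bigr)$, and similarly for the semi-nilpotent subcategory with $A^{1\snil}$ in place of $A^{\nil}$.

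On the other hand, applying Corollary~\ref{corollary:PBW} to the subcategory $\CA^{\triangle}$ in question gives a PBW isomorphism
\[
\HO^*\!\ulrelCoHA^{\triangle,\psi}_{\Pi_Q}\;\cong\;\Sym\!\left(\ul{\Fn}^{\ttBPS,\triangle,+}_{\Pi_Q}\otimes\HO^*(\B\BoC^*,\ul{\BoQ})\right),
\]
whence $\ch^{\BoZ}(\HO^*\!\ulrelCoHA^{\triangle,\psi}_{\Pi_Q})=\Exp_{q,z}\bigl((1-q)^{-1}\ch^{\BoZ}(\ul{\Fn}^{\ttBPS,\triangle,+}_{\Pi_Q})\bigr)$. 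Comparing the two expressions and using injectivity of the plethystic exponential gives $\ch^{\BoZ}(\ul{\Fn}^{\ttBPS,\SSN,+}_{\Pi_Q})=\sum_{\dd}A^{\nil}_{Q,\dd}(q^{-1})z^{\dd}$ and $\ch^{\BoZ}(\ul{\Fn}^{\ttBPS,\SemiN,+}_{\Pi_Q})=\sum_{\dd}A^{1\snil}_{Q,\dd}(q^{-1})z^{\dd}$, in exact analogy with \eqref{equation:charBPS}. Since by Theorem~\ref{corollary:absBor} (applied to $\CA^{\triangle}$) these objects are honest $\BoN^{Q_0}\times\BoZ$-graded vector spaces — the positive halves of generalised Kac--Moody Lie algebras, with root spaces built from intersection cohomology of singular quiver varieties — each coefficient of their characters is the dimension of a vector space, hence nonnegative; this proves that $A^{\nil}_{Q,\dd}(q)$ and $A^{1\snil}_{Q,\dd}(q)$ have nonnegative coefficients.

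The step I expect to be the main obstacle is the first one: making the cotangent correspondence of \cite{bozec2020number} precise enough to match the exponents of $q$ and the change of variables $q\leftrightarrow q^{-1}$ so that the two plethystic-exponential formulas line up on the nose, and checking that the purity and the finiteness (in each bidegree) needed to pass between $\BoF_q$-point counts and graded dimensions survive restriction to these non-proper Serre subcategories. This is precisely the bookkeeping carried out in \cite[\S4.2]{davison2020bps} building on \cite{bozec2020number}, so in the present write-up I would simply cite it; the genuinely new input here is Corollary~\ref{corollary:PBW} and Theorem~\ref{corollary:absBor}, which identify the logarithm of the CoHA character with the character of an explicit honest graded vector space.
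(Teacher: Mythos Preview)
The paper does not supply its own proof of this proposition; it simply records the result and cites \cite[\S4.2]{davison2020bps}. Your strategy --- identify each nilpotent Kac polynomial with the graded character of an honest BPS Lie algebra and conclude positivity --- is exactly the method of that reference, so in spirit you are reproducing the cited argument.

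Two concrete corrections are in order. First, you have the correspondences swapped: as recorded in \eqref{equation:characterSSSN}, it is the \emph{semi-nilpotent} subcategory $\SemiN$ whose BPS Lie algebra has character $\sum_{\dd}A^{\nil}_{Q,\dd}(q)z^{\dd}$, and the \emph{strictly} semi-nilpotent $\SSN$ that gives $\sum_{\dd}A^{1\snil}_{Q,\dd}(q)z^{\dd}$, not the reverse. Second, the variable is $q$, not $q^{-1}$: the paper explicitly remarks just after \eqref{equation:characterSSSN} that for these nilpotent versions the sign flip of \eqref{equation:charBPS} does not occur. Neither slip affects the positivity conclusion, but both should be fixed.

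More substantively, your claim that Corollary~\ref{corollary:PBW} and Theorem~\ref{corollary:absBor} are ``the genuinely new input'' overstates what is needed. The PBW isomorphism for $\Pi_Q$ and the identification of the BPS Lie algebra character with the (nilpotent) Kac polynomial generating series were already available from \cite{davison2016integrality,davison2020bps} via the 3d tripled-quiver construction, prior to any GKM description. Once one has \eqref{equation:characterSSSN}, positivity is immediate, since the left-hand side is the character of a graded vector space; this is why the present paper simply cites the result rather than proving it. The GKM structure is only needed for the finer statements about \emph{cuspidal} polynomials (Theorem~\ref{theorem:positivitycusppols}), not for the Kac polynomials themselves.
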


\subsubsection{Cuspidal polynomials}
We define cuspidal functions in $H_{Q,\BoF_q}^{\nil}$ and $H_{Q,\BoF_q}^{1\snil}$ as
\[
 H_{Q,\BoF_q}^{\nil,\cusp}=H_{Q,\BoF_q}^{\nil}\cap H_{Q,\BoF_q}^{\cusp}, \quad H_{Q,\BoF_q}^{1\snil,\cusp}=H_{Q,\BoF_q}^{1\snil}\cap H_{Q,\BoF_q}^{\cusp}.
\]
We define the functions $C^{\nil}_{Q,\dd}(q)$ and $C^{1\snil}_{Q,\dd}(q)$, $\dd\in\BoN^{Q_0}$ as
\[
 C^{\nil}_{Q,\dd}(q)\coloneqq \dim_{\BoC} H_{Q,\BoF_q}^{\nil,\cusp}, \quad C^{1\snil}_{Q,\dd}(q)\coloneqq\dim_{\BoC}H_{Q,\BoF_q}^{1\snil,\cusp}.
\]
The inductive description of cuspidal functions $C_{Q,\dd}$ in \cite{bozec2019counting} works without modification for nilpotent and $1$-nilpotent representations and this together with the polynomiality result for the numbers of isoclasses of nilpotent and $1$-nilpotent representations provides the polynomiality of the nilpotent versions of the functions $C_{Q,\dd}^{\nil}$ and $C_{Q,\dd}^{1\snil}$. One of the key points is the validity of the Sevenhant--Van den Bergh theorem \cite[Theorem 1.1]{sevenhant2001relation} describing the Hall algebra of a quiver as a quantum group for quivers with loops and the categories of representations with or without nilpotency conditions. Sevenhant and Van den Bergh assume that the quiver under consideration is loop-free and do not consider the case of nilpotent representations, but their methods adapt in a straightforward way.

We define \emph{absolutely cuspidal polynomials} by the same combinatorial procedure as in \cite{bozec2019counting}. That is, for $\square\in\{\nil,1\snil\}$, we define $C_{Q,\dd}^{\square,\rmabs}(q)$ as
\[
 \begin{aligned}
 &C_{Q,\dd}^{\square,\rmabs}(q)=C_{Q,\dd}^{\square}(q) &\text{ if $\chi_Q(\dd,\dd)<0$}\\
 &\Exp_{z}\left(\sum_{l\in\BoZ_{>0}}C^{\square}_{Q,l\dd}(q)z^{l\dd}\right)=\Exp_{q,z}\left(\sum_{l\in\BoZ_{>0}}C^{\square,\rmabs}_{Q,l\dd}(q)z^{l\dd}\right)&\text{ if $\dd\in(\BoN^{Q_0})_{\prim}$, \;$\chi_Q(\dd,\dd)=0$}.
 \end{aligned}
\]
Using Proposition~\ref{proposition:isotropiccuspidalpolynomials} and the explicit description of isotropic cuspidal functions in \cite[Theorems 5.2 and 5.4]{hennecart2021isotropic}, the following is easily obtained.

\begin{proposition}
Let $\dd\in\BoN^{Q_0}$ be isotropic: $\chi_Q(\dd,\dd)=0$. Then
\[
 C_{Q,\dd}^{\nil,\rmabs}(q)=
 \left\{
 \begin{aligned}
 1&\text{ if the support of $\dd$ is the Jordan quiver or a cyclic quiver with cyclic orientation,}\\
 q&\text{ if the support of $\dd$ is an affine ADE quiver without cycles,}\\
 0&\text{ otherwise.}
 \end{aligned}
 \right.
\]
and
\[
 C_{Q,\dd}^{1\snil,\rmabs}(q)=
 \left\{
 \begin{aligned}
 1&\text{ if the support of $\dd$ is the Jordan quiver,}\\
 q&\text{ if the support of $\dd$ is an affine ADE quiver,}\\
 0&\text{ otherwise.}
 \end{aligned}
 \right.
 \]
\end{proposition}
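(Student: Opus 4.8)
The plan is to combine the already-known \emph{non-absolute} nilpotent and $1\snil$ isotropic cuspidal polynomials of \cite[Theorems 5.2 and 5.4]{hennecart2021isotropic} with the plethystic identity defining their absolute counterparts, treating separately three cases according to the type of $\supp(\dd)$; write $\square\in\{\nil,1\snil\}$. The first move is a reduction to full support: a representation of $Q$ of dimension vector $\dd$ is, together with all of its nontrivially acting arrows, exactly a representation of the full subquiver $Q'=\supp(\dd)$, and the nilpotency conditions as well as Green's coproduct are compatible with this identification; hence $C^{\square}_{Q,\dd}(q)=C^{\square}_{Q',\dd}(q)$, and therefore $C^{\square,\rmabs}_{Q,\dd}(q)=C^{\square,\rmabs}_{Q',\dd}(q)$. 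So we may assume $Q=\supp(\dd)$ is connected and $\dd$ isotropic with full support; writing $\dd=l\delta$ with $\delta$ indivisible, one has $\chi_Q(\delta,\delta)=0$ and $\supp(\delta)=Q$, so it suffices to know the values for \emph{all} multiples of $\delta$ at once.

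If $Q$ is acyclic — in particular if $Q$ is an affine $\tilde D$, $\tilde E$, or an affine $\tilde A$ with a non-cyclic orientation — then every representation of $Q$ is both nilpotent and $1\snil$: the flag by ``the last $i$ vertices'' for a topological ordering of the vertices witnesses this, since an arrow $v_a\to v_b$ with $a<b$ sends the $i$-th step into the $(i-1)$-st. Hence $H^{\nil}_{Q,\BoF_q}=H^{1\snil}_{Q,\BoF_q}=H_{Q,\BoF_q}$, so $C^{\nil}_{Q,\dd}=C^{1\snil}_{Q,\dd}=C_{Q,\dd}$ and likewise after passing to the absolute versions; Proposition~\ref{proposition:isotropiccuspidalpolynomials} then gives the value $q$. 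If $Q=C_n$ is an oriented cycle with $n\geq 2$, then $Q$ has no loops, the $1\snil$ condition is vacuous, $H^{1\snil}_{Q,\BoF_q}=H_{Q,\BoF_q}$, and $C^{1\snil,\rmabs}_{Q,\dd}(q)=C^{\rmabs}_{Q,\dd}(q)=q$ by Proposition~\ref{proposition:isotropiccuspidalpolynomials} (the support being the affine quiver $\tilde A_{n-1}$). For every remaining (wild) connected $Q$, the description of isotropic cuspidal functions in \cite[Theorems 5.2 and 5.4]{hennecart2021isotropic} shows $C^{\nil}_{Q,l\delta}=C^{1\snil}_{Q,l\delta}=0$ for all $l$, whence both absolute polynomials vanish.

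There remains the \emph{nilpotent} cuspidals of the oriented cycle $C_n$ for all $n\geq 1$ (the case $n=1$ being the Jordan quiver, which simultaneously settles the $1\snil$ case there). Here I would invoke \cite[Theorems 5.2 and 5.4]{hennecart2021isotropic}: the nilpotent Hall algebra of $C_n$ has a one-dimensional space of primitives in each imaginary degree $l\delta$, spanned by the $l$-th imaginary cuspidal function, so $C^{\nil}_{C_n,l\delta}(q)=1$ for every $l\geq 1$, independently of $q$. Substituting this constant into the defining relation
\[
\Exp_{z}\!\left(\sum_{l\geq 1}C^{\nil}_{C_n,l\delta}(q)\,z^{l\delta}\right)=\Exp_{q,z}\!\left(\sum_{l\geq 1}C^{\nil,\rmabs}_{C_n,l\delta}(q)\,z^{l\delta}\right)
\]
and using that a term of $q$-degree $0$ is untouched by the $q$-plethysm, so that $\Exp_{z}(z^{l\delta})=\Exp_{q,z}(z^{l\delta})=(1-z^{l\delta})^{-1}$, both sides with the constant choice coincide with $\prod_{l\geq 1}(1-z^{l\delta})^{-1}$; by injectivity of $\Exp_{q,z}$ on the relevant completed ring we conclude $C^{\nil,\rmabs}_{C_n,l\delta}(q)=1$ for all $l$. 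Assembling the three cases gives the stated values.

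The main obstacle is the last step: one must extract from \cite[Theorems 5.2 and 5.4]{hennecart2021isotropic} the precise fact that the non-absolute count $C^{\nil}_{C_n,l\delta}$ is the \emph{constant} polynomial $1$ — any residual $q$-dependence would be propagated by the plethystic inversion into $C^{\nil,\rmabs}$ and would change the answer — and that those theorems organise the isotropic cuspidals exactly along the cyclic/acyclic dichotomy responsible for the $1$-versus-$q$ distinction. By contrast, the reduction to full support and the observation that all representations of an acyclic quiver are nilpotent are entirely routine.
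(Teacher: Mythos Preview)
Your proposal is correct and takes essentially the same approach as the paper, whose proof is the single sentence ``Using Proposition~\ref{proposition:isotropiccuspidalpolynomials} and the explicit description of isotropic cuspidal functions in \cite[Theorems~5.2 and~5.4]{hennecart2021isotropic}, the following is easily obtained.'' You have simply unpacked that reference: the reduction to full support, the observation that for acyclic (resp.\ loop-free) quivers the nilpotent (resp.\ $1$-nilpotent) Hall algebra coincides with the full one so Proposition~\ref{proposition:isotropiccuspidalpolynomials} applies directly, and the plethystic computation $\Exp_z\big(\sum_{l\geq 1} z^{l\delta}\big)=\Exp_{q,z}\big(\sum_{l\geq 1} z^{l\delta}\big)$ pinning down the value $1$ for the oriented cycle and the Jordan quiver --- this is exactly how the cited ingredients combine.
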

Repeating the proofs of \cite{bozec2019counting}, one obtains the following result.

\begin{proposition}
\label{proposition:interpretationcusppols}
Let $\square\in\{\nil,1\snil\}$. If there exists a $\BoZ^{Q_0}$-graded generalised Kac--Moody Lie algebra $\mathfrak{g}_{Q}^{\square}$, associated to the monoid $\BoN^{Q_0}$ with bilinear form given by the symmetrised Euler form $(-,-)_Q\colon \BoN^{Q_0}\times\BoN^{Q_0}\rightarrow\BoZ$, with graded character of the positive half given by
\begin{align*}
 \ch^{\BoZ}(\Fn_{Q}^{\square,+})\coloneqq&\sum_{\dd\in \BoN^{Q_0}}\sum_{i\in \BoZ}\dim((\Fn_{Q,\dd}^{\square,+})^i)q^{i/2}z^{\dd}
=\sum_{\dd\in\BoN^{Q_0}}A_{Q,\dd}^{\square}(q)z^{\dd},
\end{align*}
then it has weight function
\[
\begin{matrix}
 C_Q^{\square,\rmabs}&\colon& \BoN^{Q_0}&\rightarrow&\BoN[t^{\pm 1}]\\
       &   & \dd    &\mapsto  &C_{Q,\dd}^{\square,\rmabs}(t).
\end{matrix}
\]
\end{proposition}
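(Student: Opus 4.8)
The plan is to deduce this conditional statement from the injectivity of the map $W_Q$ together with the nilpotent and $1$-nilpotent analogues of the second assertion of Proposition~\ref{proposition:Weylcharacter}, obtained by repeating the proof of \cite{bozec2019counting} with the nilpotency-adapted inputs. Fix $\square\in\{\nil,1\snil\}$ and suppose $\Fg_Q^{\square}$ exists as in the statement, so that $\ch^{\BoZ}(\Fn_Q^{\square,+})=\sum_{\dd\in\BoN^{Q_0}}A_{Q,\dd}^{\square}(q)z^{\dd}$ and hence, by the graded PBW theorem, $\ch^{\BoZ}(\UEA(\Fn_Q^{\square,+}))=\Exp_{q,z}\big(\sum_{\dd}A_{Q,\dd}^{\square}(q)z^{\dd}\big)$. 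On the other hand, for the monoid $\BoN^{Q_0}$ with bilinear form $(-,-)_Q$, the graded Weyl--Kac character formula expresses the character of $\Fn^+_{\BoN^{Q_0},(-,-)_Q,P}$ (equivalently, via $\Exp_{q,z}$, of $\UEA(\Fn^+_{\BoN^{Q_0},(-,-)_Q,P})$) as an explicit function $W_Q(P)$ of the weight function $P$; this is precisely the map $W_Q$ recalled before Proposition~\ref{proposition:Weylcharacter}, and it depends only on the monoid-with-form, not on $\square$. Thus it suffices to prove (after identifying the formal variable $t$ with $q$) that
\[
W_Q\big(C_Q^{\square,\rmabs}\big)=\ch^{\BoZ}\big(\Fn^+_{\BoN^{Q_0},(-,-)_Q,C_Q^{\square,\rmabs}}\big)=\sum_{\dd\in\BoN^{Q_0}}A_{Q,\dd}^{\square}(q)z^{\dd};
\]
granting this, the actual weight function $P^{\square}$ of $\Fg_Q^{\square}$ satisfies $W_Q(P^{\square})=\sum_{\dd}A_{Q,\dd}^{\square}(q)z^{\dd}=W_Q(C_Q^{\square,\rmabs})$, and $P^{\square}=C_Q^{\square,\rmabs}$ by the injectivity of $W_Q$ established in Proposition~\ref{proposition:Weylcharacter}, which is a purely combinatorial statement about $W_Q$ and is insensitive to $\square$.

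It therefore remains to establish the displayed identity, which is the nilpotent, resp.\ $1$-nilpotent, version of the second assertion of Proposition~\ref{proposition:Weylcharacter}. First I would invoke the analogues for $\Rep_Q^{\nil}$ and $\Rep_Q^{1\snil}$ of the identity \eqref{equation:gradedcharacterHallalgebra}, namely $\ch(H_{Q,\BoF_q}^{\square})=\Exp_{q,z}\big(\sum_{\dd}A_{Q,\dd}^{\square}(q)z^{\dd}\big)$, together with the polynomiality of $M_{Q,\dd}^{\square}$, $I_{Q,\dd}^{\square}$ and $A_{Q,\dd}^{\square}$, all supplied by \cite{bozec2020number}. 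Next I would use the Sevenhant--Van den Bergh presentation \cite{sevenhant2001relation}, whose proof --- as discussed in \S\ref{subsection:cuspidals} --- adapts without essential change to quivers with loops and to the Serre subcategories $\Rep_Q^{\nil}$, $\Rep_Q^{1\snil}$: it realises $H_{Q,\BoF_q}^{\square}$ as the positive half of the quantised enveloping algebra $\UEA_v$ of a generalised Kac--Moody algebra over $(\BoN^{Q_0},(-,-)_Q)$ specialised at $v=\sqrt q$, whose space of generators (the cuspidal functions) in degree $\dd$ has dimension $C_{Q,\dd}^{\square}(q)$. Finally, the combinatorial recipe defining $C_{Q,\dd}^{\square,\rmabs}$ from $C_{Q,\dd}^{\square}$ via plethystic exponentials is exactly what matches the $(\BoN^{Q_0}\times\BoZ)$-graded character of this quantum GKM with $W_Q(C_Q^{\square,\rmabs})$: the plethystic correction reconciles the polynomial-algebra behaviour of $\UEA_v$ along each isotropic ray $\BoN\delta$ (for $\delta$ an indivisible isotropic root) with the enveloping-algebra normalisation built into $W_Q$, exactly as in \cite{bozec2019counting}. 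Chaining the three inputs yields the displayed identity; here the inductive description of cuspidal functions and the explicit evaluation of the isotropic (absolutely) cuspidal polynomials recalled just above replace their unrestricted counterparts in \cite{bozec2019counting}.

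The hard part is not a new idea but the bookkeeping: one must verify that every ingredient of the \cite{bozec2019counting} argument survives the restriction to nilpotent and $1$-nilpotent representations --- polynomiality, the Hall-algebra generating-function identity, the Sevenhant--Van den Bergh presentation for quivers with loops, the inductive description of cuspidal functions, and the inversion of the graded Weyl--Kac denominator formula. Each of these is either already in the literature (\cite{bozec2020number,hennecart2021isotropic}) or a routine adaptation of it. By contrast, the genuinely new content --- the \emph{existence} of a generalised Kac--Moody Lie algebra $\Fg_Q^{\square}$ with the stated character, equivalently the nonnegativity of the $C_{Q,\dd}^{\square,\rmabs}$ --- is merely hypothesised here, and is to be supplied separately by applying the BPS-algebra machinery to the relevant Serre subcategory of $\Rep\Pi_Q$; it plays no role in the conditional statement above.
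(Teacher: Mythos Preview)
Your proposal is correct and takes essentially the same approach as the paper, which simply states ``Repeating the proofs of \cite{bozec2019counting}, one obtains the following result.'' You have given a careful and accurate expansion of what this repetition entails: the injectivity of $W_Q$ is purely combinatorial and insensitive to $\square$, while the identification $W_Q(C_Q^{\square,\rmabs})=\sum_{\dd}A_{Q,\dd}^{\square}(q)z^{\dd}$ follows by transporting the Sevenhant--Van den Bergh and Hall-algebra ingredients to the nilpotent and $1$-nilpotent Serre subcategories, exactly as the paper indicates in the discussion preceding the proposition.
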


\subsubsection{The (strictly) semi-nilpotent BPS Lie algebras}
We let $\CM_{\Pi_Q}^{\mathcal{SN}}\subset\CM_{\Pi_Q}$ be the closed submonoid parametrising semisimple representations of $\Pi_Q$ such that the only arrows of $\overline{Q}$ that may act nontrivially are the arrows $\alpha\in Q_1^*$ (i.e. semisimple semi-nilpotent representations of $\Pi_Q$). We let $\CM_{\Pi_Q}^{\SSN}\subset \CM_{\Pi_Q}^{\mathcal{SN}}$ be the submonoid parametrising semisimple representations $M$ of $\Pi_Q$ such that the only arrows of $\overline{Q}$ that may act nontrivially are the \emph{loops} $\alpha\in Q_1^*$ (i.e. semisimple strictly semi-nilpotent representations).

For $\triangle\in\{\mathcal{SN},\SSN\}$, the stacks of $\triangle$ representations of $\Pi_Q$ are defined by the Cartesian squares
\[
 \begin{tikzcd}
	{\FM_{\Pi_Q}^{\triangle}} & {\FM_{\Pi_Q}} \\
	{\CM_{\Pi_Q}^{\triangle}} & {\CM_{\Pi_Q}.}
	\arrow[from=1-2, to=2-2]
	\arrow[from=1-1, to=2-1]
	\arrow["\imath_{\triangle}",from=2-1, to=2-2]
	\arrow[from=1-1, to=1-2]
	\arrow["\lrcorner"{anchor=center, pos=0.125}, draw=none, from=1-1, to=2-2]
\end{tikzcd}
\]

Using the pull-back of the sheaf-theoretic Hall algebra in \S\ref{subsection:theCoHA}, we have the relative $\triangle$ cohomological Hall algebras
\[
 \SA_{\Pi_Q}^{\triangle,\psi}\coloneqq \imath_{\Delta}^!\SA_{\Pi_Q}^{\psi},
\]
with the multiplication $\imath_{\triangle}^!\mult^{\psi}$. It is a complex of mixed Hodge modules on $\CM_{\Pi_Q}^{\triangle}$. The absolute $\triangle$ CoHA is defined to be $\HO^*\!\!\SA_{\Pi_Q}^{\triangle,\psi}$.

We similarly define the relative $\triangle$ BPS algebra and BPS Lie algebra as
\[
 \BPS_{\Pi_Q,\Alg}^{\triangle,\psi}\coloneqq \imath_{\triangle}^{!}\BPS_{\Pi_Q,\Alg}^{\psi}, \quad \BPS_{\Pi_Q,\Lie}^{\triangle}\coloneqq \imath_{\triangle}^{!}\BPS_{\Pi_Q,\Lie},
\]
and their absolute versions by taking the derived global sections
\[
 \rmBPS_{\Pi_Q,\Alg}^{\triangle,\psi}\coloneqq \HO^*\!\BPS_{\Pi_Q,\Alg}^{\triangle,\psi}, \quad \Fn_{\Pi_Q}^{\triangle,+}\coloneqq \HO^*\!\BPS_{\Pi_Q,\Lie}^{\triangle}.
\]

By \cite{davison2020bps}, we have
\begin{equation}
\label{equation:characterSSSN}
 \ch^{\BoZ}(\Fn_{\Pi_Q}^{\mathcal{SN},+})=\sum_{\dd\in\BoN^{Q_0}}A_{Q,\dd}^{\nil}(q)z^{\dd}\quad \quad \textrm{and}\quad\quad \ch^{\BoZ}(\Fn_{\Pi_Q}^{\mathcal{SSN},+})=\sum_{\dd\in\BoN^{Q_0}}A_{Q,\dd}^{1\snil}(q)z^{\dd}.
\end{equation}
\begin{remark}
Note that the power of $q$ in Kac polynomials appearing in \eqref{equation:characterSSSN} is $q$ and not $q^{-1}$ as it was in \eqref{equation:charBPS}. This is the same symmetry appearing in \cite[Theorem 1.4 and \S1.7.1]{bozec2020number}.
\end{remark}
\begin{theorem}
\label{theorem:positivitycusppols}
 For $\square\in\{\nil,1\snil\}$ and any $\dd\in\BoN^{Q_0}$, we have $C_{Q,\dd}^{\square,\rmabs}(q)\in\BoN[q]$. More precisely, for $\dd\in\Sigma_{\Pi_Q}$, we have
 \[
 C_{Q,\dd}^{\nil,\rmabs}(q)=\sum_{j\in\BoZ}\HO^{j}(\imath_{\dd,\mathcal{SN}}^!\IC(\CM_{\Pi_Q,\dd}))q^{j/2}
 \]
and
\[
 C_{Q,\dd}^{1\snil,\rmabs}(q)=\sum_{j\in\BoZ}\HO^{j}(\imath_{\dd,\SSN}^!\IC(\CM_{\Pi_Q,\dd}))q^{j/2}.
\]
If $\dd\in\Sigma_{\Pi_Q}$ is isotropic and $l\in\BoN_{>0}$, then
\[
 C_{Q,l\dd}^{\nil,\rmabs}(q)=C_{Q,\dd}^{\nil,\rmabs}(q), \quad C_{Q,l\dd}^{1\snil,\rmabs}(q)=C_{Q,\dd}^{1\snil,\rmabs}(q).
\]
\end{theorem}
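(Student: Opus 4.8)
The plan is to transport the proof of Theorem~\ref{theorem:ICNQV} almost verbatim, replacing $\Rep\Pi_Q$ by its subcategory of (strictly) semi-nilpotent representations. First I would apply Theorem~\ref{corollary:absBor} to $\CA=\Rep\Pi_Q$ with the saturated submonoid $\imath_{\triangle}\colon\CM^{\triangle}_{\Pi_Q}\hookrightarrow\CM_{\Pi_Q}$, for $\triangle\in\{\mathcal{SN},\SSN\}$, which corresponds to the Serre subcategory of (strictly) semi-nilpotent representations of $\Pi_Q$. The hypothesis of Theorem~\ref{corollary:absBor} that $\CM^{\triangle}_{\Pi_Q,m}$ be a point for real simple roots $m$ is satisfied, since $\CM_{\Pi_Q,1_i}=\pt$ for every loop-free vertex $i$ and simple representations at loop-free vertices automatically lie in these subcategories. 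Theorem~\ref{corollary:absBor} then identifies $\Fn^{\triangle,+}_{\Pi_Q}$ with the positive half $\Fn^+_{\BoN^{Q_0},(-,-)_Q,\IP^{\triangle}_{\Pi_Q}}$ of the generalised Kac--Moody Lie algebra $\Fg_{\BoN^{Q_0},(-,-)_Q,\IP^{\triangle}_{\Pi_Q}}$, where, by the explicit description of the weight function recorded after Theorem~\ref{corollary:absBor}, $\IP^{\triangle}_{\Pi_Q}(\dd)$ is the Poincar\'e polynomial of $\HO^*(\CM^{\triangle}_{\Pi_Q,\dd},\imath_{\dd,\triangle}^!\IC(\CM_{\Pi_Q,\dd}))$ for $\dd\in\Sigma_{\Pi_Q}$, and is the Poincar\'e polynomial of $\HO^*(\CM^{\triangle}_{\Pi_Q,\dd'},\imath_{\dd',\triangle}^!\IC(\CM_{\Pi_Q,\dd'}))$ when $\dd=l\dd'$ with $\dd'\in\Sigma_{\Pi_Q}$ isotropic.

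Next I would feed in the character computation \eqref{equation:characterSSSN}, which gives $\ch^{\BoZ}(\Fn^{\mathcal{SN},+}_{\Pi_Q})=\sum_{\dd}A^{\nil}_{Q,\dd}(q)z^{\dd}$ and $\ch^{\BoZ}(\Fn^{\SSN,+}_{\Pi_Q})=\sum_{\dd}A^{1\snil}_{Q,\dd}(q)z^{\dd}$. Writing $\square=\nil$ when $\triangle=\mathcal{SN}$ and $\square=1\snil$ when $\triangle=\SSN$, this exhibits $\Fg_{\BoN^{Q_0},(-,-)_Q,\IP^{\triangle}_{\Pi_Q}}$ as a $\BoZ^{Q_0}$-graded generalised Kac--Moody Lie algebra attached to $(\BoN^{Q_0},(-,-)_Q)$ whose positive half has character $\sum_{\dd}A^{\square}_{Q,\dd}(q)z^{\dd}$. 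Proposition~\ref{proposition:interpretationcusppols} (the $(1$-$)$nilpotent analogue of Proposition~\ref{proposition:Weylcharacter}) then says that the weight function of any such generalised Kac--Moody algebra is $\dd\mapsto C^{\square,\rmabs}_{Q,\dd}(q)$; comparing with the construction of $\Fg_{\BoN^{Q_0},(-,-)_Q,\IP^{\triangle}_{\Pi_Q}}$ this forces $\IP^{\triangle}_{\Pi_Q}(\dd)=C^{\square,\rmabs}_{Q,\dd}(q)$ for all $\dd$, with no change of variables --- the sign difference between the power of $q$ in \eqref{equation:characterSSSN} and in \eqref{equation:charBPS} being exactly the symmetry recorded in the Remark following \eqref{equation:characterSSSN}. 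From this the three assertions of the theorem follow at once: the formulas for $\dd\in\Sigma_{\Pi_Q}$ are read off from the explicit description of $\IP^{\triangle}_{\Pi_Q}$ above; positivity $C^{\square,\rmabs}_{Q,\dd}(q)\in\BoN[q]$ holds because $\IP^{\triangle}_{\Pi_Q}(\dd)$ is the Poincar\'e polynomial of a cohomology group; and for $\dd'\in\Sigma_{\Pi_Q}$ isotropic and $l\geq 1$ the equality $C^{\square,\rmabs}_{Q,l\dd'}(q)=C^{\square,\rmabs}_{Q,\dd'}(q)$ holds since $\IP^{\triangle}_{\Pi_Q}(l\dd')$ is by construction computed from the same complex $\imath_{\dd',\triangle}^!\IC(\CM_{\Pi_Q,\dd'})$ on $\CM^{\triangle}_{\Pi_Q,\dd'}$, independently of $l$.

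The routine verifications are that the (strictly) semi-nilpotent representations of $\Pi_Q$ really do form a Serre subcategory corresponding to a saturated submonoid of $\CM_{\Pi_Q}$ satisfying Assumptions \ref{p_assumption}--\ref{BPS_cat_assumption} of \S\ref{subsection:assumptionsCoHA} and \S\ref{subsec:spaceassumptions} --- this is contained in \cite{davison2022BPS,davison2020bps} --- together with the bookkeeping matching $\triangle$ with $\square$ and tracking the cohomological normalisations. The only genuinely non-formal inputs are \eqref{equation:characterSSSN}, identifying the $\triangle$-BPS characters with (semi-)nilpotent Kac polynomials (imported from \cite{davison2020bps}), and Proposition~\ref{proposition:interpretationcusppols}. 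I expect the point needing most care to be checking that the existence hypothesis of Proposition~\ref{proposition:interpretationcusppols} --- a $\BoN^{Q_0}$-graded generalised Kac--Moody algebra for the symmetrised Euler form with the prescribed positive-half character --- is literally furnished by Theorem~\ref{corollary:absBor} in the $\triangle$-setting; this is so because $\K(\Rep\Pi_Q)=\BoN^{Q_0}$ with bilinear form $(-,-)_Q$, so that the geometrically produced generalised Kac--Moody algebra and the one whose weight function is $C^{\square,\rmabs}_Q$ are being compared within the same family.
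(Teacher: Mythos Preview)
Your proposal is correct and follows essentially the same approach as the paper's proof: apply the strictly monoidal functor $\HO^*\!\imath_{\triangle}^!$ to Theorem~\ref{theorem:BPSalgisenvBorcherds} (equivalently, invoke Theorem~\ref{corollary:absBor} for the saturated submonoid $\CM_{\Pi_Q}^{\triangle}$), combine with the character identities \eqref{equation:characterSSSN}, and conclude via Proposition~\ref{proposition:interpretationcusppols}. One small correction: Assumptions~\ref{p_assumption}--\ref{BPS_cat_assumption} are verified for the ambient category $\Rep\Pi_Q$, not for the Serre subcategory; the only property needed of the $\triangle$-locus itself is that $\CM_{\Pi_Q}^{\triangle}\hookrightarrow\CM_{\Pi_Q}$ is a saturated submonoid, so that $\imath_{\triangle}^!$ is strictly monoidal.
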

\begin{proof}
 By Proposition~\ref{proposition:interpretationcusppols} and Equations \eqref{equation:characterSSSN}, it suffices to prove that $\Fn_{\Pi_Q}^{\mathcal{SN},+}$ and $\Fn_{\Pi_Q}^{\mathcal{SSN},+}$ are positive halves of generalised Kac--Moody Lie algebra associated with the monoid $\BoN^{Q_0}$ with the symmetrised Euler form and with cohomologically graded spaces of roots given by $\HO^*\!\!\imath_{\triangle,\dd}^!\SG_{\Pi_Q,\dd}$ for $\triangle\in\{\mathcal{SN},\SSN\}$ respectively. This is precisely what is obtained by applying the strictly monoidal functor $\HO^*\!\!\imath_{\triangle}^!$ to Theorem~\ref{theorem:BPSalgisenvBorcherds}.
\end{proof}

\subsection{The fully nilpotent CoHA and BPS algebras}
Let $Q=(Q_0,Q_1)$ be a quiver. Recall the embedding of monoids $\imath_{\CN}\colon\BoN^{Q_0}\rightarrow\CM_{\Pi_Q}$ sending $\dd\mapsto 0_{\dd}$, where $0_{\dd}$ is the $\dd$-dimensional representation of $\Pi_Q$ on which all arrows of $\overline{Q}$ act by $0$. The \emph{fully nilpotent cohomological Hall algebra} is $\ulrelCoHA_{\Pi_Q}^{\CN}\coloneqq \imath_{\CN}^!\ulrelCoHA_{\Pi_Q}$. This is a (cohomologically graded) mixed Hodge module on $\BoN^{Q_0}$ and it carries the CoHA product given by $\imath_{\CN}^!\mult$. It has sub-mixed Hodge modules given by $\rmBPS^{\CN}_{\Pi_Q}\coloneqq \imath_{\CN}^{!}\ulBPS_{\Pi_Q,\Alg}$ (the fully nilpotent BPS algebra) and $\rmBPS_{\Pi_Q,\Lie}^{\CN}\coloneqq \imath_{\CN}^!\ulBPS_{\Pi_Q,\Lie}$ (the fully nilpotent BPS Lie algebra). As usual, we let $\ulrelCoHA_{\Pi_Q}^{\CN,\psi}$ and $\rmBPS_{\Pi_Q,\Alg}^{\CN,\psi}$ be the corresponding $\psi$-twisted algebras. We have an inclusion of algebras $\rmBPS_{\Pi_Q,\Alg}^{\CN,\psi}\subset \ulrelCoHA_{\Pi_Q}^{\CN,\psi}$ and an inclusion of Lie algebras $\rmBPS_{\Pi_Q,\Lie}^{\CN}\subset \rmBPS_{\Pi_Q,\Alg}^{\CN,\psi}$. We can summarize in the following proposition the properties satisfied by these algebras.

\begin{proposition}
\label{proposition:BPSfullynilpotent}
We have the following:
\begin{enumerate}
 \item $\rmBPS_{\Pi_Q,\Alg}^{\CN,\psi}\cong\UEA(\rmBPS_{\Pi_Q,\Lie}^{\CN})$,
 \item $\ch^{\BoZ}(\rmBPS_{\Pi_Q,\Lie}^{\CN})=\sum_{\dd\in\BoN^{Q_0}}A_{Q,\dd}(q)z^{\dd}$,
 \item $\rmBPS_{\Pi_Q,\Lie}^{\CN}\cong \Fn^{\CN,+}_{\Pi_Q}$ is the GKM Lie algebra associated to the monoid $\BoN^{Q_0}$ with the symmetrised Euler form of $Q$ and the weight function
 \[
 \begin{matrix}
  \Phi_{\Pi_Q}^+&\rightarrow&\BoN[t^{\pm1/2}]&\\
  \dd&\mapsto&\sum_{j\in\BoZ}\dim\HO^j(\BD\SG_{\dd})t^{-j/2}&=C_{Q,\dd}^{\rmabs}(t).
 \end{matrix}
 \]
where $\SG_{\dd}$ is as in \S\ref{subsubsec:ppre}.
\end{enumerate}
\end{proposition}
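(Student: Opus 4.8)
The plan is to deduce all three statements by applying the functor $\imath_{\CN}^!$ to results already available on $\CM_{\Pi_Q}$, using Proposition~\ref{proposition:discreterestriction} to pass from the sheaf-theoretic (generalised) Kac--Moody constructions on $\CM_{\Pi_Q}$ to honest $\BoN^{Q_0}$-graded Lie algebras. First I would check that $(\CM_{\Pi_Q},\imath_{\CN})$ satisfies the hypotheses of Proposition~\ref{proposition:discreterestriction}: the scaling action $\lambda\cdot(x_\alpha)_{\alpha\in\overline{Q}_1}=(\lambda x_\alpha)_\alpha$ on $\BoA_{\overline{Q},\dd}$ preserves $\mu_{Q,\dd}^{-1}(0)$, since the moment map is homogeneous quadratic, and commutes with the $\GL_\dd$-action, so it descends to a $\BoC^*$-action on $\CM_{\Pi_Q,\dd}$ that contracts it onto the point $0_\dd$; the zero representation is the unique closed scaling-invariant orbit, so $0_\dd$ is the unique fixed point. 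In particular each $\CM_{\Pi_Q,\dd}$ is connected, $\pi_0(\CM_{\Pi_Q})=\BoN^{Q_0}$, and $\imath_{\CN}\colon\BoN^{Q_0}\to\CM_{\Pi_Q}$, $\dd\mapsto 0_\dd$, is a saturated inclusion of monoids right inverse to the projection $p\colon\CM_{\Pi_Q}\to\BoN^{Q_0}$. Thus Proposition~\ref{proposition:discreterestriction} applies with bilinear form on $\BoN^{Q_0}$ equal to the symmetrised Euler form $(-,-)_{\SG_2(\BoC Q)}=(-,-)_Q$, and it furnishes in particular the identification $\imath_{\CN}^!\cong p_!$.

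\textbf{Parts (1) and (3).} By Theorem~\ref{theorem:BPSalgisenvBorcherds} we have $\ulBPS^\psi_{\Pi_Q,\Alg}\cong\UEA(\mathfrak{n}^+_{\CM_{\Pi_Q},\ul{\SG}_{\Pi_Q}})$ with $\mathfrak{n}^+_{\CM_{\Pi_Q},\ul{\SG}_{\Pi_Q}}=\ulBPS_{\Pi_Q,\Lie}$ by definition. Applying $\imath_{\CN}^!$, which is strictly monoidal (base change, since $\imath_\CN$ is a saturated submonoid, cf.\ \S\ref{subsection:theCoHA}) and hence commutes with $\Free$, $\UEA$ and the formation of Serre ideals, yields $\rmBPS^{\CN,\psi}_{\Pi_Q,\Alg}\cong\UEA(\imath_{\CN}^!\ulBPS_{\Pi_Q,\Lie})=\UEA(\rmBPS^{\CN}_{\Pi_Q,\Lie})$, which is (1), and, combined with Proposition~\ref{proposition:discreterestriction}(2), the isomorphism $\rmBPS^{\CN}_{\Pi_Q,\Lie}\cong\mathfrak{n}^+_{\BoN^{Q_0},(-,-)_Q,P}$ with $P(\dd)=\sum_{j\in\BoZ}\dim\HO^j(\BD\ul{\SG}_{\Pi_Q,\dd})t^{-j/2}$. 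It remains to identify $P(\dd)=C^{\rmabs}_{Q,\dd}(t)$ for $\dd\in\Phi^+_{\Pi_Q}$: for $\dd\in\Sigma_{\Pi_Q}$ one has $\ul{\SG}_{\Pi_Q,\dd}=\ul{\IC}(\CM_{\Pi_Q,\dd})$, which is Verdier self-dual, so $P(\dd)$ is a reindexing of $\IP(\CM_{\Pi_Q,\dd})$, and the claim follows from Theorem~\ref{theorem:ICNQV} after matching the conventions of \eqref{equation:BPSchar}; for $\dd=l\delta$ with $\delta$ isotropic, $\ul{\SG}_{\Pi_Q,\dd}=(u_\dd)_*\ul{\IC}(\CM_{\Pi_Q,\delta})$ with $\CM_{\Pi_Q,\delta}$ of dimension $2$ (Proposition~\ref{proposition:geometrygoodmodspace}), which matches Proposition~\ref{proposition:isotropiccuspidalpolynomials}.

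\textbf{Part (2).} By Proposition~\ref{proposition:BPS3dcoincide} we have $\HO^*(\ulBPS_{\Pi_Q,\Lie})=\Fn^{\ttBPS,+}_{\Pi_Q}\cong\mathfrak{g}^\psi_{\tilde Q,\tilde W}$, whose character is $\sum_{\dd}A_{Q,\dd}(q^{-1})z^\dd$ by \eqref{equation:charBPS}. Since $\imath_{\CN}^!\cong p_!$, the graded mixed Hodge structure $\rmBPS^{\CN}_{\Pi_Q,\Lie}=\imath_{\CN}^!\ulBPS_{\Pi_Q,\Lie}$ computes, componentwise over $\CM_{\Pi_Q,\dd}$, the compactly supported cohomology of $\ulBPS_{\Pi_Q,\Lie}$; by the Verdier self-duality of $\ulBPS_{\Pi_Q,\Lie}$ established above, this is the graded dual of $\HO^*(\CM_{\Pi_Q,\dd},\ulBPS_{\Pi_Q,\Lie,\dd})$, so $\ch^{\BoZ}(\rmBPS^{\CN}_{\Pi_Q,\Lie})$ is obtained from $\ch^{\BoZ}(\mathfrak{g}^\psi_{\tilde Q,\tilde W})$ by the substitution $q\leftrightarrow q^{-1}$, giving $\ch^{\BoZ}(\rmBPS^{\CN}_{\Pi_Q,\Lie})=\sum_{\dd}A_{Q,\dd}(q)z^\dd$. (Alternatively, (2) follows from (3): once $\rmBPS^{\CN}_{\Pi_Q,\Lie}$ is identified as the positive part of the GKM algebra attached to $(\BoN^{Q_0},(-,-)_Q)$ with weight function $C^{\rmabs}_{Q}$, the Bozec--Schiffmann inversion of the Weyl--Kac character formula, see \eqref{equation:charnQN} and Proposition~\ref{proposition:Weylcharacter}, gives that its character is the generating series of the Kac polynomials $A_{Q,\dd}(q)$.)

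\textbf{Main obstacle.} There is no essentially new ingredient: the proposition is a formal consequence of Theorem~\ref{theorem:BPSalgisenvBorcherds}, Proposition~\ref{proposition:discreterestriction}, Proposition~\ref{proposition:BPS3dcoincide} and Theorem~\ref{theorem:ICNQV}. The only genuinely delicate points are the verification that $\imath_{\CN}$ is covered by Proposition~\ref{proposition:discreterestriction} (the $\BoC^*$-contraction of the zero fibre of the moment map) and the careful tracking of the several variable substitutions --- the $q\leftrightarrow q^{-1}$ flip coming from $\imath_{\CN}^!\cong p_!$ (i.e.\ from passing to compactly supported cohomology, cf.\ the remark after \eqref{equation:characterSSSN}) and the conventions relating $\IP$, $\ch^{\BoZ}$ and $C^{\rmabs}$ --- so that the weight function in (3) and the character in (2) come out with exactly the normalisations claimed.
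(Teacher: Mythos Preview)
Your proposal is correct and follows essentially the same route as the paper: apply the strictly monoidal functor $\imath_{\CN}^!$ to Theorem~\ref{theorem:BPSalgisenvBorcherds}, invoke Proposition~\ref{proposition:discreterestriction} (after verifying the contracting $\BoC^*$-action), use Verdier self-duality of the $\SG_{\dd}$, and identify the weight function via Theorem~\ref{theorem:ICNQV}. The paper's proof is a one-sentence citation of exactly these four ingredients; your write-up simply unpacks them, and your alternative derivation of (2) from (3) via \eqref{equation:charnQN} is in fact closer to what the paper's cited ingredients directly give than your primary route through Proposition~\ref{proposition:BPS3dcoincide}.
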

\begin{proof}
 This follows from the application of the strictly monoidal functor $\imath_{\CN}^!$ to Theorem~\ref{theorem:BPSalgisenvBorcherds}, and the fact that the perverse sheaves $\SG_{\dd}$ are Verdier self-dual, Proposition~\ref{proposition:discreterestriction} and Theorem~\ref{theorem:ICNQV}. In a little more detail, we infer that $\HO(i^*\BD\SG_{\dd})\cong \HO(\SG_{\dd})$ from the Verdier self-duality of $\SG_{\dd}$ and the $\BoC^*$-equivariance under the scaling action that contracts $\CM_{\Pi_Q,\dd}$ to $0_{\dd}$ (using \cite[Prop.3.7.5]{KSsheaves}).  Then we use isomorphisms $\HO(i^!_{\CN}\SG_{\dd})\cong \HO(\BD i^*\BD\SG_{\dd})\cong \BD\HO( i^*\BD\SG_{\dd})\cong \BD\HO(\SG_{\dd})$.  The one remaining instance of the Verdier duality functor is what lies behind the sign in the exponent of $t$ in part (3).
\end{proof}

\section{Nakajima quiver varieties and lowest weight modules}
\label{section:lowestweightmodules}
\subsection{Semistable modules}
Let $Q=(Q_1,Q_0,s,t\colon Q_1\rightarrow Q_0)$ be a quiver. A (King) \textit{stability condition} for $Q$ is a tuple $\zeta\in\BoQ^{Q_0}$. For $\dd\in\BoZ^{Q_0}\setminus\{0\}$ we define the \emph{slope}
\[
\mu_{\zeta}(\dd)=\zeta\cdotsh \dd/\lvert \dd\lvert
\]
and for $\rho$ a nonzero $\BoC Q$-module, we define $\mu_{\zeta}(\rho)=\mu_{\zeta}(\dim_{Q_0}(\rho))$. Then $\rho$ is called $\zeta$-\emph{semistable} if for all nonzero proper submodules $\rho'\subset \rho$ we have $\mu_{\zeta}(\rho')\leq \mu_{\zeta}(\rho)$, and is called $\zeta$-\emph{stable} if the inequality is strict. A semistable module is called \textit{polystable} if it is a direct sum of stable modules.

Fix a quiver $Q$, a potential $W\in\BoC Q/[\BoC Q,\BoC Q]$, a stability condition $\zeta\in\BoQ^{Q_0}$, and a slope $\theta\in \BoQ$ such that for all dimension vectors $\dd,\dd'$ of $\zeta$-slope $\theta$ we have $\chi_Q(\dd,\dd')=\chi_Q(\dd',\dd)$. This condition is called \emph{genericity}, and will be automatic for all the quivers that we encounter, since they will be symmetric. 

We denote by $\FM^{\zeta\sstab}_{Q,\dd}$ the moduli stack of $\dd$-dimensional $\zeta$-semistable $\BoC Q$-modules, and by $\CM^{\zeta\sstab}_{Q,\dd}$ the moduli scheme, constructed via GIT in \cite{king1994moduli}. Points of $\CM^{\zeta\sstab}_{Q,\dd}$ are in bijection with isomorphism classes of $\dd$-dimensional polystable $\BoC Q$-modules. We denote by $\JH^{\zeta}\colon \FM^{\zeta\sstab}_{Q,\dd}\rightarrow \CM^{\zeta\sstab}_{Q,\dd}$ the morphism taking a semistable module to the associated polystable module. The potential $W$ defines a function $\Tr(W)$ on $\FM^{\zeta\sstab}_{Q,\dd}$ defined as in \S \ref{subsection:DMreminder}, factoring through $\CM^{\zeta\sstab}_{Q,\dd}$.

The BPS sheaf is defined in \cite{davison2020cohomological} by setting
\[
\BPS_{Q,W,\dd}^{\zeta}\coloneqq (\ptau^{\leq 1}\!\JH^{\zeta}_*\phip{\Tr(W)}\BoQ_{\FM^{\zeta\sstab}_{Q,\dd}}^{\vir})[1].
\]
It is a perverse sheaf (not just a complex of perverse sheaves). It also admits a canonical upgrade to a monodromic mixed Hodge module, which we will ignore in this section.

Now assume that $Q$ is symmetric, so the stability condition $\zeta'=(0,\ldots,0)$ is generic. Since every $\BoC Q$-module is then semistable, we can identify $\FM^{\zeta'\sstab}_{Q,\dd}=\FM_{Q,\dd}$ and $\CM^{\zeta'\sstab}_{Q,\dd}=\CM_{Q,\dd}$. We denote by 
\[
q_{\zeta,\dd}\colon \CM^{\zeta\sstab}_{Q,\dd}\rightarrow\CM_{Q,\dd}
\]
the affinization/forgetful map, taking a polystable module to its semisimplification. Where there is no ambiguity we omit the subscript $Q$. The following is proved in \cite[Lemma 4.7]{toda2017gopakumar}.
\begin{proposition}
\label{TodaProp}
Let $Q$ be a symmetric quiver with potential $W\in \BoC Q/[\BoC Q,\BoC Q]$, let $\zeta\in\BoQ^{Q_0}$ be a stability condition. Then there is a natural isomorphism 
\[
(q_{\zeta,\dd})_*\BPS_{Q,W,\dd}^{\zeta}\cong \BPS_{Q,W,\dd}.
\]
\end{proposition}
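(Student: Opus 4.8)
\textbf{Proof proposal for Proposition~\ref{TodaProp}.}

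The plan is to compare the two BPS sheaves by exploiting the fact that $q_{\zeta,\dd}$ is the affinization map of the GIT quotient, and that the BPS sheaf is built out of the pushforward of the vanishing cycle complex along the map to the coarse space. First I would reduce to a statement purely about $(-1)$-shifted symplectic / d-critical geometry: both $\BPS_{Q,W,\dd}^{\zeta}$ and $\BPS_{Q,W,\dd}$ are defined by applying $\ptau^{\leq 1}$ (shifted by $[1]$) to $\JH^{\zeta}_*\phip{\Tr(W)}\BoQ^{\vir}$, respectively $\JH_*\phip{\Tr(W)}\BoQ^{\vir}$, where $\JH^{\zeta}\colon\FM^{\zeta\sstab}_{Q,\dd}\to\CM^{\zeta\sstab}_{Q,\dd}$ and $\JH\colon\FM_{Q,\dd}\to\CM_{Q,\dd}$ are the respective good moduli space maps. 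Since the affinization map factors the full good moduli space map, $\JH = q_{\zeta,\dd}\circ\JH^{\zeta}$ (on $\dd$-dimensional semistable loci, using that $\zeta'=0$ makes every module semistable), we have $\JH_*\phip{\Tr(W)}\BoQ^{\vir}\cong (q_{\zeta,\dd})_*\JH^{\zeta}_*\phip{\Tr(W)}\BoQ^{\vir}$. So the content is that pushing forward along the \emph{proper} map $q_{\zeta,\dd}$ commutes with the operation $\ptau^{\leq 1}[1]$ applied to this particular complex.

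The key input is that $\JH^{\zeta}_*\phip{\Tr(W)}\BoQ^{\vir}$ is, after the shift, a complex concentrated in perverse degrees $\geq -1$ whose perverse cohomology in degree $-1$ is the BPS sheaf (by \cite[Theorem C]{davison2020cohomological}), and — crucially — the relevant support/purity statements let one invoke the decomposition theorem so that $(q_{\zeta,\dd})_*$ of a pure (monodromic mixed Hodge module) perverse sheaf is a direct sum of shifted perverse sheaves. The main step is therefore to show: (i) $q_{\zeta,\dd}$ is proper (it is projective, being the map from a projective GIT quotient to its affinization, as both are constructed from the same $\GL_{\dd}$-action on $\mu^{-1}_{Q,\dd}$-type affine data with a linearization), hence $(q_{\zeta,\dd})_*$ preserves the property of being a (bounded) complex of mixed Hodge modules and commutes with Verdier duality; (ii) $(q_{\zeta,\dd})_*$ is \emph{left $t$-exact up to the shift} in a way that is exact on the lowest perverse cohomology — concretely, that $\ph^{-1}\bigl((q_{\zeta,\dd})_*(\ptau^{\leq 1}\JH^\zeta_*\phi\,\BoQ^\vir[1])\bigr)$ equals $(q_{\zeta,\dd})_*$ of the degree $-1$ piece, with no contribution from higher perverse pieces and no new summands in degree $-1$ coming from the truncation. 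Point (ii) is where I expect the real work: one needs to know that the map $q_{\zeta,\dd}$ has the property that, on the vanishing-cycle sheaf in question, $\ph^{-1}(q_{\zeta,\dd})_* = q_{\zeta,\dd}{}_* \ph^{-1}$ and that $q_{\zeta,\dd}{}_*$ of perverse degree $\geq 0$ stuff stays in perverse degree $\geq 0$. The latter would follow from \emph{semismallness} of $q_{\zeta,\dd}$ with respect to the natural stratification, or more robustly from the ``cohomological integrality'' / support lemma machinery of \cite{davison2020cohomological}: the BPS sheaf has full support only along the ``small diagonal''-type loci, and the pushforward along $q_{\zeta,\dd}$ of the higher ($\geq 0$) perverse pieces, which are themselves controlled by lower-dimensional BPS sheaves via the PBW/$\Sym$ decomposition, lands in perverse degree $\geq 0$ by an inductive dimension argument.

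Concretely I would proceed as follows. First, establish $\JH = q_{\zeta,\dd}\circ\JH^\zeta$ and the properness/projectivity of $q_{\zeta,\dd}$; then, using \cite[Theorem C]{davison2020cohomological} on both sides, write $\JH^\zeta_*\phi\,\BoQ^\vir$ and $\JH_*\phi\,\BoQ^\vir$ as (shifted) direct sums of their perverse cohomologies (decomposition theorem, legitimate by purity of the vanishing cycle complex in the d-critical setting). Next, apply $(q_{\zeta,\dd})_*$ to the first decomposition and compare term by term with the second: the degree $-1$ term of the first pushed forward must match the degree $-1$ term of the second (i.e. $(q_{\zeta,\dd})_*\BPS^\zeta_{Q,W,\dd}[-1] \cong \BPS_{Q,W,\dd}[-1]$ plus possibly extra summands), and the content of the proposition is the vanishing of those extra summands. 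I would kill them by the support argument: any extra simple summand of $(q_{\zeta,\dd})_*\BPS^\zeta_{Q,W,\dd}$ sitting in perverse degree $-1$ would have to be supported on a proper closed subset, but by restricting to a general point of each stratum of $\CM_{Q,\dd}$ and using the étale-local / neighbourhood description (the local model being a smaller quiver with potential, where one knows the statement by induction on $\lvert\dd\rvert$), one sees that the stalk of $(q_{\zeta,\dd})_*\BPS^\zeta$ agrees with that of $\BPS$ in the relevant perverse degree, forcing the extra summands to vanish. The base case $\dd$ with $\lvert\dd\rvert$ small (or $\dd$ indivisible and the stability condition generic, where $q_{\zeta,\dd}$ is already close to an isomorphism onto its image over the stable locus) is immediate. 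The main obstacle, as indicated, is controlling the interaction of $(q_{\zeta,\dd})_*$ with the perverse truncation $\ptau^{\leq 1}$ — i.e. proving that no ``BPS contribution'' is lost or gained under the affinization — and this is exactly the point where one leans on the support lemma and the inductive structure of \cite{davison2020cohomological}.
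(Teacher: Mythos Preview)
The paper does not give its own proof of this proposition: it simply cites \cite[Lemma~4.7]{toda2017gopakumar}. So there is no ``paper's argument'' to compare against beyond that reference.

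Your sketch, however, contains a genuine gap at the very first step. You claim a factorisation $\JH = q_{\zeta,\dd}\circ\JH^{\zeta}$ and deduce
\[
\JH_*\phip{\Tr(W)}\BoQ^{\vir}_{\FM_{Q,\dd}}\;\cong\;(q_{\zeta,\dd})_*\JH^{\zeta}_*\phip{\Tr(W)}\BoQ^{\vir}_{\FM^{\zeta\sstab}_{Q,\dd}}.
\]
But $\JH^{\zeta}$ is only defined on the open substack $j\colon\FM^{\zeta\sstab}_{Q,\dd}\hookrightarrow\FM_{Q,\dd}$; what is true is $\JH\circ j = q_{\zeta,\dd}\circ\JH^{\zeta}$. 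The domain of $\JH$ is the \emph{full} stack, and the $\zeta$-unstable locus is nonempty in general, so the displayed isomorphism fails: the left-hand side receives contributions from strictly $\zeta$-unstable modules. Your parenthetical ``using that $\zeta'=0$ makes every module semistable'' explains why $\FM_{Q,\dd}=\FM^{\zeta'\sstab}_{Q,\dd}$, but does nothing for $\zeta\neq\zeta'$. Everything downstream in your plan (the $\ptau^{\leq 1}$ comparison, the support argument) rests on this false identification.

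The missing ingredient is the Harder--Narasimhan stratification. One decomposes $\JH_*\phip{\Tr(W)}\BoQ^{\vir}_{\FM_{Q,\dd}}$ as a $\boxdot$-product over slopes, using that every module has a unique HN filtration with $\zeta$-semistable subquotients of decreasing slope; this expresses the trivial-stability pushforward in terms of the $\zeta$-semistable pushforwards at all slopes. Combining this with the cohomological integrality theorem \cite[Theorem~A]{davison2020cohomological} on both sides (the $\Sym_{\boxdot}$ decompositions you mention) and the finiteness/properness of $q_{\zeta}$, one matches the generating BPS pieces and extracts the isomorphism $(q_{\zeta,\dd})_*\BPS^{\zeta}_{Q,W,\dd}\cong\BPS_{Q,W,\dd}$. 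This is the route taken in Toda's paper; your inductive support argument could perhaps be repaired into a second proof, but only after the HN decomposition is in place to handle the unstable locus.
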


\subsection{From Nakajima quiver varieties to BPS cohomology} \label{NQVs_def} Let $\ff\in\BoN^{Q_0}$ be a dimension vector. We form the framed quiver $Q_{\ff}$ by setting $(Q_{\ff})_0=Q_0\cup\{\infty\}$ and setting $(Q_{\ff})_1=Q_1\cup I$ where $I=\{\alpha_{i,n}\;\lvert\;i\in Q_0,1\leq n\leq f_i\}$. We set $s(\alpha_{i,n})=\infty$ and $t(\alpha_{i,n})=i$. Given a dimension vector $\dd\in \BoN^{Q_0}$ and a number $m\in\BoN$ we denote by $(\dd,m)\in\BoN^{(Q_{\ff})_0}=\BoN^{(\overline{Q_{\ff}})_0}$ the dimension vector extending $\dd$ by assigning $m$ to the vertex $\infty$. We say that a $(\dd,1)$-dimensional $\overline{Q_{\ff}}$-module is \textit{stable} if it does not contain a $(\dd',1)$-dimensional submodule with $\dd'\neq \dd$. This is the same as $\zeta$-stability, for $\zeta=(0,\ldots,0,1)$. We denote by $\BoA_{\overline{Q_{\ff}},(\dd,1)}^{\stab}\subset \BoA_{\overline{Q_{\ff}},(\dd,1)}$ the stable locus, and define $\mu_{Q_{\ff},\dd}^{-1}(0)^{\stab}\subset \mu_{Q_{\ff},\dd}^{-1}(0)$ similarly, where $\mu_{Q_{\ff},\dd}\coloneqq \mu_{Q_{\ff},(\dd,1)}$ is the moment map \eqref{mmap}. The gauge group $\GL_{\dd}$ acts scheme-theoretically freely on $\mu_{Q_{\ff},\dd}^{-1}(0)^{\stab}$ and we define the \textit{Nakajima quiver variety}
\begin{equation*}
\label{equation:smoothNQV}
N(Q,\dd,\ff)\coloneqq \mu_{Q_{\ff},\dd}^{-1}(0)^{\stab}/\GL_{\dd}.
\end{equation*}
We define the singular Nakajima quiver variety by
\begin{equation*}
\label{equation:singNQV}
N'(Q,\dd,\ff)=\mu_{Q_{\ff},\dd}^{-1}(0)\cms\GL_{\dd}=\Spec(\Gamma(\mu_{Q_{\ff},\dd}^{-1}(0))^{\GL_{\dd}})\subset \CM_{\overline{Q_{\ff}},(1,\dd)}.
\end{equation*}
For $\triangle\in\{\mathcal{N},\mathcal{SN},\SSN\}$, we denote by $N^{\triangle}(Q,\dd,\ff)\subset N(Q,\dd,\ff)$ the subvariety corresponding to representations of $\Pi_{Q_{\ff}}$ satisfying $\triangle$.

We set $\SQ=(\SQ_0,\SQ_1)\coloneqq\widetilde{Q_{\ff}}$, the tripled quiver of the framed quiver defined as in \S \ref{subsection:QandR}, and give this quiver the canonical cubic cubic potential 
\[
\tilde{W}=\left(\sum_{i\in (Q_{\ff})_0}\omega_i\right)\left(\sum_{a\in (Q_{\ff})_1}[a,a^*]\right).
\]
We fix the stability condition $(0,\ldots,0,1)\in\BoQ^{\SQ_0}$. In words, a $(\dd,1)$-dimensional $\SQ$-module is semistable, equivalently stable, if it is generated by the one-dimensional vector space assigned to the vertex $\infty$. There is a closed embedding of varieties
\[
l\colon N(Q,\dd,\ff)\times\BoA^1\hookrightarrow \CM_{\SQ,(\dd,1)}^{\zeta\sstab}
\]
which extends a $\overline{Q_{\ff}}$-module to a $\SQ$-module by assigning the operator of multiplication by $z\in\BoA^1$ to all the loops $\omega_i\in\SQ_1\setminus (\overline{Q_{\ff}})_1$.
\begin{proposition}[\cite{davison2020bps} Proposition 6.3]
\label{NakBPSProp}
Let $\SQ$, $\tilde{W}$ and $\zeta$ be as above. Then
\[
\BPS^{\zeta}_{\SQ,\tilde{W},(\dd,1)}\cong l_*\BoQ_{N(Q,\dd,\ff)\times\BoA^1}^{\vir}.
\]
\end{proposition}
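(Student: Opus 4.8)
The statement is \cite[Proposition~6.3]{davison2020bps}, and the plan is to prove it by combining dimensional reduction for the tripling loops of $\SQ=\widetilde{Q_{\ff}}$ with the special behaviour of framed moduli when the framing dimension at $\infty$ equals $1$. First I would record the elementary shape of the potential: writing the representation variety of $(\dd,1)$-dimensional $\BoC\SQ$-modules as $\BoA_{\widetilde{Q_{\ff}},(\dd,1)}=\BoA_{\overline{Q_{\ff}},(\dd,1)}\times\gl_{(\dd,1)}$, with the second factor recording the operators assigned to the loops $\omega_i$ ($i\in(Q_{\ff})_0$), one has $\Tr(\tilde W)(M,\omega)=\langle\omega,\mu_{Q_{\ff},\dd}(M)\rangle$, where $\mu_{Q_{\ff},\dd}$ is the moment map on the $\overline{Q_{\ff}}$-part (\S\ref{subsection:QandR}) and $\langle-,-\rangle$ is the trace pairing. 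In particular $\Tr(\tilde W)$ is $\GL_{(\dd,1)}$-equivariantly linear in $\omega$, and since $\tr\circ\mu_{Q_{\ff},\dd}\equiv 0$ it is independent of the line $\BoC\cdot\Id\subset\gl_{(\dd,1)}$ on which all the $\omega_i$ act by a common scalar $z$.

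The main step is then to apply the dimensional reduction isomorphism \cite[Theorem~A.1]{davison2017critical}, in the equivariant form already used for $\Lambda$ in \S\ref{subsection:2d3d}. The $\BoC\cdot\Id$-direction contributes only a constant $\BoA^1_z$-factor, while dimensional reduction in the remaining loop-directions --- where $\Tr(\tilde W)$ is the function on the total space of a trivial bundle over $\BoA_{\overline{Q_{\ff}},(\dd,1)}$ determined by the section $\mu_{Q_{\ff},\dd}$, whose zero locus is the representation variety of $(\dd,1)$-dimensional $\Pi_{Q_{\ff}}$-modules --- would identify, up to a Tate twist preserving the $\vir$-normalisation and hence the perverse amplitude, the restriction of $\phip{\Tr(\tilde W)}\BoQ^{\vir}_{\FM^{\zeta\sstab}_{\SQ,(\dd,1)}}$ to a neighbourhood of its support with the $\vir$-twisted constant sheaf on $\FM^{\zeta\sstab}_{\Pi_{Q_{\ff}},(\dd,1)}\times\BoA^1_z$. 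Here one must note that on the critical locus the $\omega_i$ act by $z\cdot\Id$, so that there $\zeta$-semistability of the $\SQ$-module coincides with $\zeta$-semistability of its underlying $\overline{Q_{\ff}}$-module; this is exactly what makes the passage to the respective $\zeta$-semistable loci compatible.

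Next I would use that the framing dimension is $1$: for $\zeta=(0,\dots,0,1)$ every $\zeta$-semistable $(\dd,1)$-dimensional $\Pi_{Q_{\ff}}$-module is $\zeta$-stable with automorphism group exactly $\BoC^*$, so $\FM^{\zeta\sstab}_{\Pi_{Q_{\ff}},(\dd,1)}=\mu_{Q_{\ff},\dd}^{-1}(0)^{\stab}/\GL_{(\dd,1)}\cong N(Q,\dd,\ff)\times\B\BoC^*$ is a smooth stack, the $\B\BoC^*$-gerbe being trivialised by the tautological line at $\infty$ and its coarse/good moduli space being the smooth variety $N(Q,\dd,\ff)$. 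Thus $\FM^{\zeta\sstab}_{\Pi_{Q_{\ff}},(\dd,1)}\times\BoA^1_z\cong N(Q,\dd,\ff)\times\BoA^1\times\B\BoC^*$, and on the support of $\phip{\Tr(\tilde W)}\BoQ^{\vir}$ the morphism $\JH^{\zeta}$ is the composite of the projection forgetting the $\B\BoC^*$-factor with the closed embedding $l$. Pushing forward along $\JH^{\zeta}$ would give $\JH^{\zeta}_*\phip{\Tr(\tilde W)}\BoQ^{\vir}_{\FM^{\zeta\sstab}_{\SQ,(\dd,1)}}\cong l_*\bigl(\BoQ^{\vir}_{N(Q,\dd,\ff)\times\BoA^1}\otimes\HO^*(\B\BoC^*,\BoQ)\bigr)$. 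Since $N(Q,\dd,\ff)\times\BoA^1$ is smooth and $l$ is a closed embedding, $l_*\BoQ^{\vir}_{N(Q,\dd,\ff)\times\BoA^1}$ is a perverse sheaf, this complex equals $\bigoplus_{j\geq 0}l_*\BoQ^{\vir}_{N(Q,\dd,\ff)\times\BoA^1}[-2j]$, and because the $\vir$-normalisation of the stack $\FM^{\zeta\sstab}_{\SQ,(\dd,1)}$ differs from that of the scheme $N(Q,\dd,\ff)\times\BoA^1$ by the $(-1)$ contributed by the $\B\BoC^*$-stabiliser, these summands sit in perverse degrees $1,3,5,\dots$. Hence $\ptau^{\leq 1}$ retains exactly the $j=0$ summand $l_*\BoQ^{\vir}_{N(Q,\dd,\ff)\times\BoA^1}[-1]$, and the final shift $[1]$ produces $\BPS^{\zeta}_{\SQ,\tilde W,(\dd,1)}\cong l_*\BoQ^{\vir}_{N(Q,\dd,\ff)\times\BoA^1}$, as claimed.

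The hard part is the dimensional reduction step: one must formulate it precisely in the equivariant (stacky, d-critical) setting appearing here, and keep exact control of the Tate twists and cohomological shifts so that the resulting complex lands in the perverse amplitude for which $\ptau^{\leq 1}[1]$ extracts a single summand; the compatibility of the two $\zeta$-semistable loci near the critical locus is a subsidiary but necessary verification. Everything after that is the standard geometry of framing-one Nakajima quiver varieties, and the answer is consistent with the general formalism of \S\ref{subsection:DMreminder}, Proposition~\ref{TodaProp} and the smoothness of $\CM^{\zeta\sstab}_{\Pi_{Q_{\ff}},(\dd,1)}=N(Q,\dd,\ff)$.
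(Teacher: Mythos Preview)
The paper does not give an independent proof here: it simply observes that \cite[Proposition~6.3]{davison2020bps} treats the quiver $Q^{+}$ obtained from $\SQ=\widetilde{Q_{\ff}}$ by deleting the loop $\omega_{\infty}$, and that the argument there carries over verbatim once $\omega_{\infty}$ is reinstated. Your sketch is, in outline, exactly the argument of the cited reference --- dimensional reduction for the linear-in-$\omega$ potential, smoothness and $\B\BoC^*$-gerbe structure of $\FM^{\zeta\sstab}_{\Pi_{Q_{\ff}},(\dd,1)}$ over $N(Q,\dd,\ff)$, and the resulting identification of $\ptau^{\leq 1}\JH^{\zeta}_*\phip{\Tr(\tilde W)}\BoQ^{\vir}$ after the final shift --- so you have correctly reconstructed what the paper defers to. In particular your verification that on the critical locus $\omega$ acts by a scalar (because $(\omega_i)$ is a $\BoC\SQ$-endomorphism of a $\zeta$-stable module), and hence that $\zeta$-stability for $\SQ$- and $\overline{Q_{\ff}}$-modules coincide there, is the key point making the inclusion of $\omega_{\infty}$ harmless.

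One remark on the ``hard part'' you flag: rather than setting up dimensional reduction in a stacky or d-critical form on the open semistable locus, it is cleaner here to exploit directly that $\CM^{\zeta\sstab}_{\SQ,(\dd,1)}$ is smooth and $\JH^{\zeta}$ is a trivialised $\B\BoC^*$-gerbe. One is then reduced to computing $\phip{\Tr(\tilde W)}\BoQ^{\vir}$ on the smooth \emph{scheme} $\CM^{\zeta\sstab}_{\SQ,(\dd,1)}$, where the critical locus is the smooth image of $l$ and the normal Hessian is nondegenerate; this avoids the delicate formulation you anticipate while giving the same answer.
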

\begin{proof}
The proof in [loc.cit.] concerns the quiver $Q^{+}$ obtained after removing the loop $\omega_{\infty}$ from $\SQ$, but adapts without modification to our setup.
\end{proof}
Combining Propositions \ref{TodaProp} and \ref{NakBPSProp} yields the following:
\begin{proposition}
\label{proposition:NQVtoBPS}
Let $\dd\in\BoN^{Q_0}$. There is a natural isomorphism 
\[
\Fn^{\ttBPS,\triangle,+}_{\Pi_{Q_{\ff}},(\dd,1)}\cong \HO^*\!(N^{\triangle}(Q,\dd,\ff),\imath_{\triangle}^!\BoQ^{\vir}_{N(Q,\dd,\ff)}),
\]
where $\imath_{\triangle}\colon N^{\triangle}(Q,\dd,\ff)\rightarrow N(Q,\dd,\ff)$ is the inclusion.
\end{proposition}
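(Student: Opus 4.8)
The plan is to reduce the statement, via the identification of the $2$CY BPS Lie algebra sheaf of $\Rep\Pi_{Q_\ff}$ with the $3$d BPS Lie algebra sheaf of the tripled framed quiver, to the known descriptions of the latter in terms of Nakajima quiver varieties, and then to restrict the whole chain of isomorphisms along the closed substack cut out by the condition $\triangle$. First I would unwind the left-hand side: by Definition~\ref{def:BPSheafalg} and \eqref{equation:BPS_ALA_defMHM}, $\Fn^{\ttBPS,\triangle,+}_{\Pi_{Q_\ff},(\dd,1)}$ is the class-$(\dd,1)$ summand of $\HO^*(\CM^{\triangle}_{\Pi_{Q_\ff}},\imath^!_\triangle\ulBPS_{\Pi_{Q_\ff},\Lie})$, where $\imath_\triangle\colon\CM^{\triangle}_{\Pi_{Q_\ff}}\hookrightarrow\CM_{\Pi_{Q_\ff}}$ is the inclusion of the closed submonoid corresponding to the Serre subcategory $\triangle$. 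By Proposition~\ref{proposition:BPS3dcoincide} we may replace $\ulBPS_{\Pi_{Q_\ff},\Lie}$ by $\BPS^{\td,\psi}_{\Pi_{Q_\ff},\Lie}$, and by Theorem~\ref{theorem:lessperverse} applied to the quiver $Q_\ff$ — whose tripled quiver is $\SQ=\widetilde{Q_\ff}$ with canonical cubic potential $\tilde W$ — this equals $h_*\BPS_{\SQ,\tilde W}[-1]$, with $h\colon\CM_{\SQ}\to\CM_{\overline{Q_\ff}}$ the map forgetting the loops $\omega_i$. Since $\SQ$ is symmetric and $\zeta=(0,\dots,0,1)$ is a generic stability condition for it, Proposition~\ref{TodaProp} gives $\BPS_{\SQ,\tilde W,(\dd,1)}\cong (q_{\zeta,(\dd,1)})_*\BPS^{\zeta}_{\SQ,\tilde W,(\dd,1)}$ and Proposition~\ref{NakBPSProp} gives $\BPS^{\zeta}_{\SQ,\tilde W,(\dd,1)}\cong l_*\BoQ^{\vir}_{N(Q,\dd,\ff)\times\BoA^1}$. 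Composing, the class-$(\dd,1)$ part of $\ulBPS_{\Pi_{Q_\ff},\Lie}$ is isomorphic to $\pi_*\BoQ^{\vir}_{N(Q,\dd,\ff)\times\BoA^1}[-1]$, where $\pi$ is the composite $N(Q,\dd,\ff)\times\BoA^1\xrightarrow{l}\CM^{\zeta\sstab}_{\SQ,(\dd,1)}\xrightarrow{q_{\zeta,(\dd,1)}}\CM_{\SQ,(\dd,1)}\xrightarrow{h}\CM_{\overline{Q_\ff},(\dd,1)}$; since $\pi$ does not depend on the $\BoA^1$-coordinate (which records the common scalar by which the $\omega_i$ act), it factors through the projection $N(Q,\dd,\ff)\times\BoA^1\to N(Q,\dd,\ff)$ followed by the map $\bar\pi$ sending a stable $\Pi_{Q_\ff}$-module to the point of $\CM_{\Pi_{Q_\ff},(\dd,1)}$ represented by its semisimplification.

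Next I would apply $\imath^!_\triangle$ and take derived global sections. The essential geometric point is that $\bar\pi^{-1}(\CM^{\triangle}_{\Pi_{Q_\ff},(\dd,1)})=N^{\triangle}(Q,\dd,\ff)$, and hence $\pi^{-1}(\CM^{\triangle}_{\Pi_{Q_\ff},(\dd,1)})=N^{\triangle}(Q,\dd,\ff)\times\BoA^1$: indeed, since the subcategory defining $\triangle$ is Serre, a module lies in it if and only if all its Jordan--H\"older factors do, i.e. if and only if its semisimplification lies in $\CM^{\triangle}_{\Pi_{Q_\ff},(\dd,1)}$, and by definition $N^{\triangle}(Q,\dd,\ff)$ is the locus of such modules in $N(Q,\dd,\ff)$. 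Forming the Cartesian square of $\pi$ against $\imath_\triangle$ and applying base change gives $\imath^!_\triangle\pi_*\BoQ^{\vir}_{N\times\BoA^1}\cong \pi^{\triangle}_*\,j^!\BoQ^{\vir}_{N\times\BoA^1}$, where $j=\imath^{N}_\triangle\times\id_{\BoA^1}$ with $\imath^{N}_\triangle\colon N^{\triangle}(Q,\dd,\ff)\hookrightarrow N(Q,\dd,\ff)$ the inclusion. Since $\BoQ^{\vir}_{N\times\BoA^1}=\BoQ^{\vir}_N\boxtimes\BoQ^{\vir}_{\BoA^1}$, we have $j^!\BoQ^{\vir}_{N\times\BoA^1}=\bigl((\imath^{N}_\triangle)^!\BoQ^{\vir}_N\bigr)\boxtimes\BoQ^{\vir}_{\BoA^1}$; taking $\HO^*$ and using the K\"unneth formula together with $\HO^*(\BoA^1,\BoQ^{\vir}_{\BoA^1})\cong\BoQ[1]$, the shift $[-1]$ coming from Theorem~\ref{theorem:lessperverse} is exactly cancelled, leaving $\HO^*\bigl(N^{\triangle}(Q,\dd,\ff),(\imath^{N}_\triangle)^!\BoQ^{\vir}_{N(Q,\dd,\ff)}\bigr)$, which is the right-hand side of the proposition. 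Canonicity is inherited from each of the cited isomorphisms.

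I expect the base-change step to be the main obstacle. Each of Propositions~\ref{proposition:BPS3dcoincide}, \ref{TodaProp}, \ref{NakBPSProp} and Theorem~\ref{theorem:lessperverse} is an isomorphism of (complexes of) perverse sheaves on one of the spaces $\CM_{\SQ,(\dd,1)}$, $\CM_{\overline{Q_\ff},(\dd,1)}$, and to restrict the chain along the closed inclusion $\imath_\triangle$ one needs $\imath^!_\triangle$ to commute past each of the pushforwards $l_*$, $(q_{\zeta,(\dd,1)})_*$, $h_*$. For $l$ (a closed immersion) and $q_{\zeta,(\dd,1)}$ (projective, hence proper) this is automatic, but the forgetful map $h$ is not proper in general; the point to check is that $h$ is proper on the support of $\BPS_{\SQ,\tilde W,(\dd,1)}$ — equivalently, that the dimensional reduction isomorphism of \cite{davison2017critical} underlying Theorem~\ref{theorem:lessperverse} is compatible with $!$-restriction to closed subschemes of $\CM_{\overline{Q_\ff}}$ — so that the base-change isomorphism is valid on the loci in play. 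Once this, together with the routine bookkeeping of the Tate twists hidden in the $\vir$-normalisations, is in place, the proposition follows; the argument is uniform in $\triangle\in\{\mathcal{N},\mathcal{SN},\SSN\}$ (and in fact also covers the case of no condition).
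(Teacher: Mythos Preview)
Your proposal is correct and follows the same route as the paper: identify $\ulBPS_{\Pi_{Q_\ff},\Lie,(\dd,1)}$ with $h_*\BPS_{\SQ,\tilde W,(\dd,1)}[-1]$ via Proposition~\ref{proposition:BPS3dcoincide}, feed in Propositions~\ref{TodaProp} and~\ref{NakBPSProp}, collapse the $\BoA^1$ factor, then base change along $\imath_\triangle$ and take derived global sections. Your worry in the last paragraph is unnecessary: once you have factored $\pi=\bar\pi\circ\mathrm{pr}_1$ (equivalently, the paper's rewriting of the composite as $(q_{\overline{Q_\ff},\zeta,(\dd,1)})_*\BoQ^{\vir}_{N(Q,\dd,\ff)}$), the only pushforward you need to base-change is $\bar\pi$, which is proper as the restriction of the projective GIT map, so the properness of $h$ itself is irrelevant.
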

\begin{proof}
Set $\zeta=(0,\ldots,0,1)\in\BoQ^{\SQ_0}$. Recall that $h\colon \Msp_{\SQ} \to \Msp_{\overline{Q_\vec{f}}}$ is the forgetful map. We compose the isomorphisms
\begin{align*}
 \BPS_{\Pi_{Q_{\ff}},\Lie,(\dd,1)}\cong &h_*\BPS_{\SQ,\tilde{W},(\dd,1)}[-1]& \textrm{Proposition~\ref{proposition:BPS3dcoincide}}\\
\cong &h_*(q_{\SQ,\zeta,(\dd,1)})_{*}\BPS^{\zeta}_{\SQ,\tilde{W},(\dd,1)}[-1] &\textrm{Proposition~\ref{TodaProp}}\\
\cong &h_*(q_{\SQ,\zeta,(\dd,1)})_{*}l_*\BoQ_{N(Q,\dd,\ff)\times\BoA^1}^{\vir}[-1]& \textrm{Proposition~\ref{NakBPSProp}}\\
\cong &(q_{\overline{Q_{\ff}},\zeta,(\dd,1)})_{*}\BoQ_{N(Q,\dd,\ff)}^{\vir}
\end{align*}
and then the result follows, after base change along $\imath_{\triangle}$, by applying the derived global sections functor.
\end{proof}

Set 
\begin{equation}
\label{equation:moduleNQV}
\mathbb{M}^{\triangle}_{\ff}(Q)=\bigoplus_{\dd\in \BoN^{Q_0}}\HO^*(N^{\triangle}(Q,\dd,\ff),\imath^!_{\triangle}\BoQ^{\vir}_{N(Q,\dd,\ff)}).
\end{equation}
Then $\mathbb{M}^{\triangle}_{\ff}(Q)$ is already known to be a module for the algebra $\HO^*(\iota_{\triangle}^!\relCoHA_{\Pi_Q}^{\psi})$ and thus for the subalgebra $\UEA(\Fn^{\ttBPS,\triangle,+}_{\Pi_Q})\cong \aBPS^{\triangle,\psi}_{\Pi_Q,\Alg}$, see \cite{yang2018cohomological, davison2020bps} for details of the construction. We denote the resulting module $\mathbb{M}^{\triangle,\psi}_{\ff}(Q)$.

\subsection{Lowest weight modules}
Recall from \S\ref{subsection:lowestweightmodule} the definition of lowest weight modules over generalised Kac--Moody Lie algebras. We will use the following easy algebraic lemma.
\begin{lemma}
\label{gkm_res_lemma}
Let $M'$ be a monoid, satisfying our standing assumptions that the morphism $+$ has finite fibres, and the morphism $M'\rightarrow \widehat{M'}$ is an embedding in a lattice. Let $(-,-)$ be a bilinear form on the monoid $M=M'\times\BoN$. Let
\[
\begin{matrix}
 P&\colon& \Phi_{M,(-,-)}^+&\rightarrow& \BoN[t^{\pm 1/2}]\\
 &   & m   &\mapsto  & P_m(t^{1/2})
\end{matrix}
\]
be a weight function, with $\Fg_{M,(-,-),P}$ the associated generalised Kac--Moody algebra, so that for each $m\in \Phi_{M,(-,-)}^+$ we may define a cohomologically graded vector space $V_m\subset \Fg_{M,(-,-),P}$ of generators of weight $m$, with characteristic polynomial $\ch^{\BoZ}(V_m)=P_m(t^{1/2})$. We assume that $P(m)=1$ for $m\in \Phi^{+,\real}_{M,(-,-)}$. Restricting $(-,-)$ to $M'=M\times\{0\}$, and restricting $P$ to $\Phi_{M',(-,-)}^+$ we define also $\Fg\coloneqq \Fg_{M',(-,-),P}$. For $\ff\in \widehat{M'}^{\vee}=\Hom(\widehat{M'},\BoZ)$ we denote by $L_{\ff}$ the associated simple lowest weight $\Fg$-module (\S\ref{subsection:lowestweightmodule}). Then there is a direct sum decomposition of $\Fg$-modules
\begin{equation}
\label{mrt_iso}
\bigoplus_{m'\in M'}\Fg_{M,(-,-),P,(m',1)}\cong \bigoplus_{m'\in M'}V_{(m',1)}\otimes L_{\lambda_{m'}}
\end{equation}
where we define the linear map
\begin{align*}
\lambda_{m'}\colon M'&\rightarrow \BoZ\\
n'&\mapsto ((m',1),(n',0)).
\end{align*}
\end{lemma}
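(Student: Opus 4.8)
The plan is to show that $\bigoplus_{m'\in M'}\Fg_{M,(-,-),P,(m',1)}$, viewed as a $\Fg$-module via the adjoint action, is integrable and decomposes as claimed. First I would observe that since the second coordinate of $M = M'\times\BoN$ is additive and the generators $e_{(n',0),j,l}$ of $\Fg$ lie in degree $(n',0)$, the subspace $\bigoplus_{m'\in M'}\Fg_{M,(-,-),P,(m',1)}$ is $\Fg$-stable under the bracket: bracketing with an element of $\Fn^{+}$ of weight $(n',0)$ sends weight $(m',1)$ to weight $(m'+n',1)$, bracketing with $\Fn^{-}$ decreases the $M'$-coordinate, and $\Fh$ acts diagonally. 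So this graded piece is a genuine $\Fg$-representation, and I would call it $\CM$.

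Next I would verify integrability of $\CM$ as a $\Fg$-module. The real simple roots of $M$ lying in $M'\times\{0\}$ generate the Kac--Moody subalgebra $\Fg$; because $P(m)=1$ on real roots and, for a real root $m=(n',0)$, $\ad(e_{m})$ and $\ad(f_{m})$ act locally nilpotently on the whole GKM algebra $\Fg_{M,(-,-),P}$ (this is standard for GKM algebras, cf.\ \cite{borcherds1988generalized} and \cite[Proposition 3.7]{kac1990infinite}), the restriction of these actions to $\CM$ is still locally nilpotent. Hence $\CM$ is an integrable $\Fg$-module. Moreover $\CM$ is a weight module with finite-dimensional weight spaces (using that $+$ has finite fibres on $M'$ and $P$ takes values in $\BoN[t^{\pm 1/2}]$, so each $\Fg_{M,(-,-),P,(m',1)}$ is finite-dimensional in each cohomological degree), and its weights all have second coordinate exactly $1$.

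Then I would identify the lowest-weight vectors. A vector of weight $(m',1)$ is a lowest weight vector for $\Fg$ precisely when it is annihilated by all $f_{(n',0),j,l}$; since lowering the $M'$-coordinate eventually leaves the monoid $M'$ (as $M'\hookrightarrow\widehat{M'}$ and $M'$ is a submonoid of a lattice with the usual positivity), the space of such vectors in weight $(m',1)$ is exactly the ``new'' part not obtained by raising from smaller weight. By the PBW/triangular decomposition of $\Fg_{M,(-,-),P}$, the generators $V_{(m',1)}$ of the GKM algebra in weight $(m',1)$ map isomorphically onto this space of $\Fn^{-}$-lowest vectors, and the $\Fh$-action on such a vector is by the functional $n'\mapsto ((m',1),(n',0))=\lambda_{m'}(n')$, by the defining relations $[h_n,e_{m,j,l}]=a_{n,m}e_{m,j,l}$. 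Since $\CM$ is integrable, it is a direct sum of simple integrable lowest weight modules (this is the general structure theory of integrable modules over the Kac--Moody algebra $\Fg$, again \cite[Ch.~3]{kac1990infinite}, extended to the GKM setting), and each simple summand is generated by its lowest weight line; counting multiplicities by cohomological degree in each weight $V_{(m',1)}$ gives exactly \eqref{mrt_iso}.

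\textbf{Main obstacle.} The delicate point is justifying that $\CM$, although a module over the \emph{sub}algebra $\Fg$ rather than the full GKM algebra $\Fg_{M,(-,-),P}$, is still a \emph{completely reducible} integrable module with the multiplicities read off cleanly from the generating spaces $V_{(m',1)}$ — i.e.\ that no generator in weight $(m',1)$ is a nonzero element of $[\,\Fg,\Fg\,]$ in the $(m',1)$-graded piece. This is where the structure of GKM algebras enters essentially: one needs that the imaginary simple generators $V_{(m',1)}$ (for $(m',1)$ an imaginary simple root of $M$) are not in the image of any adjoint action raising from a lower weight with the same second coordinate, which follows because all other roots with second coordinate $1$ are either simple or obtained from weight-$(\,\cdot\,,1)$ roots by adding weight-$(\,\cdot\,,0)$ roots, and the defining Serre-type presentation of $\Fg_{M,(-,-),P}$ ensures $V_{(m',1)}\cap [\Fn^{+},\Fn^{+}]_{(m',1)}=0$ when $(m',1)$ is a simple root. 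I expect this to require a careful bookkeeping argument using the triangular decomposition and the grading by the second $\BoN$-coordinate, but no genuinely new input beyond \cite{borcherds1988generalized} and \cite{kac1990infinite}.
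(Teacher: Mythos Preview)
Your overall structure is sound and your identification of $V_{(m',1)}$ as lowest-weight vectors (via $[f_{(n',0),j,l},e_{(m',1),j',l'}]=0$ since the weights differ) is correct. However, the key step where you diverge from the paper is also where there is a genuine gap.

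You deduce complete reducibility of $\CM$ from integrability, citing \cite[Ch.~3]{kac1990infinite} ``extended to the GKM setting''. But $\Fg=\Fg_{M',(-,-),P}$ is itself a generalised Kac--Moody algebra (it typically has imaginary simple roots coming from $\Phi^{+,\im}_{M',(-,-)}$), and for GKM algebras local nilpotency of the \emph{real} simple generators is not enough to force complete reducibility of a weight module bounded below. The result you are invoking as a black box is exactly what needs proving, and in the GKM literature its standard proof goes through the contravariant form --- which is precisely the route the paper takes. The paper observes (via \cite[Corollary~2.5]{borcherds1988generalized}) that $\Fg_{M,(-,-),P}$ carries an invariant bilinear form $\langle-,-\rangle$ such that $(-,-)_0\coloneqq\langle-,\omega(-)\rangle$ (with $\omega$ the Cartan involution) is positive definite on the $(m',1)$-graded piece; semisimplicity then follows immediately, and the decomposition \eqref{mrt_iso} drops out once you know $V_{(m',1)}$ are the lowest-weight spaces. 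Your ``main obstacle'' paragraph correctly locates the difficulty but the bookkeeping you propose (showing $V_{(m',1)}\cap[\Fn^+,\Fn^+]_{(m',1)}=0$) only shows that the generators give \emph{some} lowest-weight vectors --- it does not establish that every submodule has a complement, which is what semisimplicity requires. Replacing your integrability appeal with the positive-definite contravariant form argument fixes the proof.
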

\begin{proof}
We denote the left hand side of \eqref{mrt_iso} by $L$. Then it is easy to see that $L$ is a lowest weight module for the action of $\Fg$, and the spaces $V_{(m',1)}$ for $(m',1)\in\Phi^+_{M',(-,-)}$ are lowest weight spaces of weight $\lambda_{m'}$ which generate $L$ a a $\Fg$-module. On the other hand \cite[Corollary 2.5]{borcherds1988generalized} $\Fg_{M,(-,-),P}$ carries an invariant bilinear form $\langle-,-\rangle$ such that $(-,-)_0\coloneqq \langle -,\omega(-)\rangle$ is positive definite on $\bigoplus_{m'\in M'}\Fg_{M,(-,-),P,(m',1)}$, where $\omega$ is the Cartan involution from \S \ref{CI_ssec}. It follows that $\bigoplus_{m'\in M'}\Fg_{M,(-,-),P,(m',1)}$ is semisimple as a $\Fg$-representation, and the result follows.
\end{proof}

For $\ee\in N=\Hom(\BoZ^{Q_0},\BoZ)$ we denote by $L_{\ee}^{\triangle}$ the lowest weight module for $\Fg_{\Pi_Q}^{\ttBPS,\triangle}$ of lowest weight $\ee$. 

The following is now our main theorem on the representation theory of $\Fg_{\Pi_Q}^{\ttBPS}$, and more generally $\Fg_{\Pi_Q}^{\ttBPS,\triangle}$.
\begin{theorem}
\label{theorem:mainRTthm}
The $\Fn_{\Pi_Q}^{\ttBPS,\triangle,+}$-action on $\mathbb{M}_{\ff}^{\triangle,\psi}(Q)$ extends to a $\Fg_{\Pi_Q}^{\ttBPS,\triangle}$-action, for which $\mathbb{M}^{\triangle,\psi}_{\ff}(Q)$ decomposes as a direct sum of cohomologically shifted simple lowest weight representations $L^{\triangle}_{\ee}$.  Precisely, we have
\begin{equation}
\label{equation:lwr}
\mathbb{M}^{\triangle,\psi}_{\ff}(Q)\cong \bigoplus_{(\dd,1)\in\Phi_{\Pi_{Q_{\ff}}}^+}\HO^*\!(\imath_{\triangle}^!\IC(\CM_{\Pi_{Q_{\ff}},(\dd,1)}))\otimes L_{((\dd,1),(-,0))_{Q_{\ff}}}^{\triangle}.
\end{equation}
\end{theorem}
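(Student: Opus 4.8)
The plan is to combine the $2$CY structural results of this paper with the algebraic decomposition principle for integrable modules over generalised Kac--Moody Lie algebras, bootstrapping from the category of $\Pi_{Q_{\ff}}$-representations. First I would apply Theorem~\ref{corollary:absBor} (in its $\triangle$-restricted form) to the $2$CY category $\CA = \Rep\Pi_{Q_{\ff}}$, together with the BPS Lie algebra $\Fg^{\ttBPS,\triangle}_{\Pi_{Q_{\ff}}}$ of Definition~\ref{def:BPSheafalg}: this is by construction the full generalised Kac--Moody Lie algebra $\Fg_{\K(\CA),(-,-)_{\SC},\HO^*(\imath_\triangle^!\SG)}$ attached to the monoid $M = \BoN^{(Q_{\ff})_0} = \BoN^{Q_0}\times\BoN$ (via the $\infty$-vertex component), with generating weight spaces $V_{(\dd,m)} = \HO^*(\imath_\triangle^!\SG_{\Pi_{Q_{\ff}},(\dd,m)})$. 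The $\infty$-vertex carries no loop, so $(1_\infty,1_\infty)=2$ and $1_\infty\in\Phi^{+,\real}$; moreover a dimension vector $(\dd,m)$ with $m\geq 2$ can never lie in $\Sigma_{\Pi_{Q_{\ff}}}$ (expand $(\dd,m)=((\dd,m-1))+(0,1)$ and use that $(0,1)$ is a real root orthogonal considerations force the strict inequality to fail), so the only simple positive roots with nonzero $\infty$-component are exactly those of the form $(\dd,1)$ appearing in \eqref{equation:lwr}. Restricting the bilinear form and weight function to the submonoid $M' = \BoN^{Q_0}\times\{0\}$ recovers precisely $\Fg^{\ttBPS,\triangle}_{\Pi_Q}$ and its Chevalley data.

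Next I would invoke Lemma~\ref{gkm_res_lemma} verbatim with this monoid $M = M'\times\BoN$, bilinear form $(-,-)_{Q_{\ff}}$, and weight function $P_{(\dd,m)} = \ch^{\BoZ}(\HO^*(\imath_\triangle^!\SG_{\Pi_{Q_{\ff}},(\dd,m)}))$. Its conclusion \eqref{mrt_iso} gives an isomorphism of $\Fg^{\ttBPS,\triangle}_{\Pi_Q}$-modules
\[
\bigoplus_{\dd\in\BoN^{Q_0}}\Fg^{\ttBPS,\triangle}_{\Pi_{Q_{\ff}},(\dd,1)}\cong \bigoplus_{(\dd,1)\in\Phi^+_{\Pi_{Q_{\ff}}}}\HO^*(\imath_\triangle^!\IC(\CM_{\Pi_{Q_{\ff}},(\dd,1)}))\otimes L^\triangle_{((\dd,1),(-,0))_{Q_{\ff}}},
\]
where I use that $\SG_{\Pi_{Q_{\ff}},(\dd,1)} = \IC(\CM_{\Pi_{Q_{\ff}},(\dd,1)})$ since an isotropic decomposition $(\dd,1)=l(\dd',1')$ would force $l=1$ (the $\infty$-component is $1$). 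The remaining task is to identify the left-hand side with $\mathbb{M}^{\triangle,\psi}_\ff(Q)$ as a $\Fg^{\ttBPS,\triangle}_{\Pi_Q}$-module. For the underlying graded vector spaces this is exactly Proposition~\ref{proposition:NQVtoBPS}: $\Fn^{\ttBPS,\triangle,+}_{\Pi_{Q_{\ff}},(\dd,1)} \cong \HO^*(N^\triangle(Q,\dd,\ff),\imath_\triangle^!\BoQ^{\vir}_{N(Q,\dd,\ff)})$, and the weight space $\Fg^{\ttBPS,\triangle}_{\Pi_{Q_{\ff}},(\dd,1)}$ coincides with $\Fn^{\ttBPS,\triangle,+}_{\Pi_{Q_{\ff}},(\dd,1)}$ because $(\dd,1)$ has a strictly positive $\infty$-component, so it can only appear in the strictly positive part of the triangular decomposition. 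I would then check that the $\Fn^{\ttBPS,\triangle,+}_{\Pi_Q}$-module structure on $\mathbb{M}^{\triangle,\psi}_\ff(Q)$ constructed via correspondences (the $\mathtt{MO}$/Nakajima-type action of $\HO^*(\imath_\triangle^!\relCoHA_{\Pi_Q}^\psi)$, hence of its subalgebra $\UEA(\Fn^{\ttBPS,\triangle,+}_{\Pi_Q})$, as in \cite{yang2018cohomological,davison2020bps}) agrees with the action of $\Fn^{\ttBPS,\triangle,+}_{\Pi_Q}\subset \Fg^{\ttBPS,\triangle}_{\Pi_{Q_{\ff}}}$ on $\bigoplus_\dd \Fg^{\ttBPS,\triangle}_{\Pi_{Q_{\ff}},(\dd,1)}$ by the bracket inside the ambient Lie algebra.

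The main obstacle I anticipate is precisely this last compatibility: matching the geometric CoHA/correspondence action on $\mathbb{M}^{\triangle,\psi}_\ff(Q)$ with the internal adjoint action of the subalgebra $\Fn^{\ttBPS,\triangle,+}_{\Pi_Q}$ of the BPS Lie algebra of the \emph{framed} preprojective algebra on its $\infty$-degree-one weight spaces. The cleanest route is to observe that both actions are induced from the single algebra $\HO^*(\imath_\triangle^!\relCoHA_{\Pi_{Q_{\ff}}}^\psi)$: the Hall multiplication of $\Rep\Pi_{Q_{\ff}}$, restricted to the grading where one factor has $\infty$-component $0$ and the other has $\infty$-component $1$, is simultaneously the defining module structure on $\mathbb{M}^{\triangle,\psi}_\ff(Q)$ (this is the content of the construction in \cite{davison2020bps}, since $\mathbb{M}^{\triangle,\psi}_\ff(Q) = \bigoplus_\dd \HO^*\!\relCoHA^{\triangle,\psi}_{\Pi_{Q_{\ff}},(\dd,1)}$ as a module over $\HO^*\!\relCoHA^{\triangle,\psi}_{\Pi_Q}$) and the ambient Lie bracket when one restricts to the BPS subalgebra and uses the Lie-algebra embedding $\Fn^{\ttBPS,\triangle,+}_{\Pi_Q}\hookrightarrow \Fg^{\ttBPS,\triangle}_{\Pi_{Q_{\ff}}}$ furnished by Theorem~\ref{corollary:absBor}. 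Spelling out that these two descriptions of the same bilinear map agree — including tracking the $\psi$-twist through the identification, which is harmless since by Theorem~\ref{thm:comp} and Proposition~\ref{proposition:BPS3dcoincide} the BPS Lie algebra is $\psi$-independent up to isomorphism — is the one genuinely technical point; everything else is a formal consequence of Lemma~\ref{gkm_res_lemma}, Proposition~\ref{proposition:NQVtoBPS}, and the already-established structure theorems. Finally, the extension of the $\Fn^{\ttBPS,\triangle,+}_{\Pi_Q}$-action to a full $\Fg^{\ttBPS,\triangle}_{\Pi_Q}$-action and the semisimplicity (hence the lowest-weight decomposition) then come for free from Lemma~\ref{gkm_res_lemma}, whose proof uses the contravariant form and the Cartan involution on the Borcherds algebra $\Fg^{\ttBPS,\triangle}_{\Pi_{Q_{\ff}}}$.
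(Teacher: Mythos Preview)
Your proposal is correct and follows essentially the same route as the paper: identify $\mathbb{M}^{\triangle,\psi}_{\ff}(Q)$ with the $(-,1)$-graded piece of $\Fn^{\ttBPS,\triangle,+}_{\Pi_{Q_{\ff}}}$ via Proposition~\ref{proposition:NQVtoBPS}, recognise $\Fn^{\ttBPS,\triangle,+}_{\Pi_Q}$ as the $(-,0)$-graded piece of $\Fg^{\ttBPS,\triangle}_{\Pi_{Q_{\ff}}}$, extend the action to the full $\Fg^{\ttBPS,\triangle}_{\Pi_Q}$ via the adjoint action of the framed BPS Lie algebra on itself, and then apply Lemma~\ref{gkm_res_lemma} together with Theorem~\ref{corollary:absBor}. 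The paper's proof is precisely this, stated in two sentences. You are more careful than the paper in flagging the compatibility of the CoHA module structure on $\mathbb{M}^{\triangle,\psi}_{\ff}(Q)$ with the adjoint action inside $\Fg^{\ttBPS,\triangle}_{\Pi_{Q_{\ff}}}$; the paper handles this by the phrase ``By Proposition~\ref{proposition:NQVtoBPS} and the remarks following it'' and the citations to \cite{yang2018cohomological,davison2020bps}.

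One small correction: your parenthetical assertion that $(\dd,m)$ with $m\geq 2$ can never lie in $\Sigma_{\Pi_{Q_{\ff}}}$ is false in general (for instance, for $Q$ the Jordan quiver and $\ff=1$, one computes that the decomposition $(\dd,m)=(\dd,m-1)+(0,1)$ fails to obstruct membership in $\Sigma$ once $\ff\cdot\dd>2m-1$). Fortunately this claim is irrelevant to the argument: Lemma~\ref{gkm_res_lemma} only concerns the $(-,1)$-graded piece of $\Fn^+_{\Pi_{Q_{\ff}}}$, which (since all simple positive roots have nonnegative $\infty$-component) is spanned by iterated brackets containing exactly one generator of $\infty$-degree $1$ and the rest of $\infty$-degree $0$, independently of whether simple roots of higher $\infty$-degree exist. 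So you may simply drop that sentence.
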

We can rewrite the weight appearing in the theorem
\[
((\dd,1),(-,0))_{Q_{\ff}}\colon \dd'\mapsto (\dd,\dd')_Q-\ff\cdot \dd'.
\]
\begin{proof}
By Proposition~\ref{proposition:NQVtoBPS} and the remarks following it there is an isomorphism of $\Fn_{\Pi_{Q}}^{\ttBPS,\triangle,+}=\bigoplus_{0\neq m'\in \BoN^{Q_0}}\Fn_{\Pi_{Q_{\ff}},(m',0)}^{\ttBPS,\triangle,+}$-modules
\[
\mathbb{M}^{\triangle,\psi}_{\ff}(Q)\cong \bigoplus_{m'\in \BoN^{Q_0}}\Fn_{\Pi_{Q_{\ff}},(m',1)}^{\ttBPS,\triangle,+},
\]
and so the action of $\Fn_{\Pi_{Q}}^{\ttBPS,\triangle,+}$ extends to a $\Fg_{\Pi_{Q}}^{\ttBPS,\triangle}$-action via the identification
\[
\Fg_{\Pi_{Q}}^{\ttBPS,\triangle}=\bigoplus_{\dd\in\BoZ^{Q_0}}\Fg_{\Pi_{Q_{\ff}},(\dd,0)}^{\ttBPS,\triangle}
\]
and the action of $\Fg_{\Pi_{Q_{\ff}}}^{\ttBPS,\triangle}$ on itself. The theorem then follows from Lemma~\ref{gkm_res_lemma} and the characterisation of simple roots of $\Fg_{\Pi_{Q_{\ff}}}^{\ttBPS,\triangle}$ given in Theorem~\ref{corollary:absBor}.
\end{proof}

\begin{corollary}
Let $\ff\in\BoN^{Q_0}$ have full support. The action of $\Fg_{\Pi_Q}^{\ttBPS,\triangle}$ on $\mathbb{M}^{\triangle}_{\ff}(Q)$ is faithful.
\end{corollary}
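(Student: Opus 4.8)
The plan is to show that the kernel $\mathfrak{k}$ of the $\Fg\coloneqq\Fg_{\Pi_Q}^{\ttBPS,\triangle}$-action on $\mathbb{M}^{\triangle}_{\ff}(Q)$ vanishes, by reducing to a single, sufficiently large, irreducible summand. Since the action is compatible (up to cohomological shift) with the $\BoN^{Q_0}$-grading on $\mathbb{M}^{\triangle}_{\ff}(Q)$, the kernel $\mathfrak{k}$ is a $\BoZ^{Q_0}$-graded Lie ideal of $\Fg$. By Theorem~\ref{theorem:mainRTthm} the summand of $\mathbb{M}^{\triangle}_{\ff}(Q)$ indexed by $\dd=0$ is the simple lowest weight module $L^{\triangle}_{\lambda}$ of lowest weight $\lambda\colon\dd'\mapsto -\ff\cdot\dd'$: indeed $(0,1)\in\Phi^{+,\real}_{\Pi_{Q_{\ff}}}$ is the real simple root at the framing vertex, so $\CM_{\Pi_{Q_{\ff}},(0,1)}=\pt$ and the corresponding multiplicity space is one-dimensional. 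Hence $\mathfrak{k}\subset\operatorname{Ann}_{\Fg}(L^{\triangle}_{\lambda})$, and it suffices to prove $\operatorname{Ann}_{\Fg}(L^{\triangle}_{\lambda})=0$.

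First I would check that no Chevalley generator of $\Fg$ annihilates the lowest weight vector $v_{\lambda}$. By Theorem~\ref{corollary:absBor} the simple roots of $\Fg$ form a subset of $\Phi^+_{\Pi_Q}\subset\BoN^{Q_0}\setminus\{0\}$ which contains $1_i$ for every $i\in Q_0$ (a basis vector has no nontrivial decomposition, so $1_i\in\Sigma_{\Pi_Q}$, with nonzero multiplicity space). Full support of $\ff$ then gives $\langle\lambda,h_m\rangle=-\ff\cdot m<0$ for every simple root $m$. Using $f_{m,j,l}v_{\lambda}=0$ and the relation $[e_{m,j,l},f_{m,j,l}]=h_m$, one computes $f_{m,j,l}(e_{m,j,l}v_{\lambda})=-\langle\lambda,h_m\rangle v_{\lambda}=(\ff\cdot m)\,v_{\lambda}\neq 0$, so $e_{m,j,l}v_{\lambda}\neq 0$ and $\lambda+m$ is a weight of $L^{\triangle}_{\lambda}$, for every simple root $m$ and every generator.

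Next I would show $\operatorname{Ann}_{\Fg}(L^{\triangle}_{\lambda})\cap\Fh=0$. If $h$ lies in this intersection it kills $v_{\lambda}$ and every $e_{m,j,l}v_{\lambda}$, so $\lambda(h)=0$ and $(m,h)_Q=0$ for all simple roots $m$; taking $m=1_i$ puts $h$ in the radical of the symmetrised Euler form, and then $\lambda(h)=-\ff\cdot h=0$ forces $h=0$, because for connected $Q$ this radical is at most one-dimensional and spanned by a dimension vector with strictly positive coordinates (the minimal positive imaginary root, or the generator of $\BoN^{Q_0}$ in the Jordan case). Finally, since the Cartan datum of $\Fg$ is symmetric, hence symmetrisable, and --- for $Q$ connected --- indecomposable, $\Fg$ has no nonzero ideal meeting $\Fh$ trivially: any ideal either contains $[\Fg,\Fg]$, hence meets $\Fh$ in the span of the $h_m$, or is contained in the centre $\subset\Fh$. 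Applying this to the ideal $\operatorname{Ann}_{\Fg}(L^{\triangle}_{\lambda})$ gives $\operatorname{Ann}_{\Fg}(L^{\triangle}_{\lambda})=0$, and a fortiori $\mathfrak{k}=0$.

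The hard part is the step $\operatorname{Ann}\cap\Fh=0$: this is precisely where the full-support hypothesis bites, via the need to control the radical of the symmetrised Euler form and to verify that $-\ff\cdot(-)$ does not vanish on it --- and it is genuinely where connectedness of $Q$ is used, as one sees already for $Q$ a disjoint union of two Jordan quivers with $\ff=(1,1)$, where $\Fg$ is a sum of two Heisenberg algebras and the difference of their central elements acts by zero on $\mathbb{M}_{\ff}(Q)$. A secondary point to pin down is the exact reference for ``a symmetrisable indecomposable generalised Kac--Moody algebra has no nonzero ideal meeting its Cartan subalgebra trivially''; should that be awkward to cite in the required generality, one can instead argue directly with the contravariant form on $L^{\triangle}_{\lambda}$: its nondegeneracy on weight spaces forces $\operatorname{Ann}_{\Fg}(L^{\triangle}_{\lambda})$ to be stable under the Cartan involution, after which its $\Fn^{\pm}$-homogeneous pieces are ruled out using that a nonzero positive-degree element of $\Fn^+$ cannot bracket to zero with all the lowering generators.
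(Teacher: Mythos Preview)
Your route and the paper's diverge at the key step. The paper does not argue via the annihilator ideal at all: it uses the identification $\mathbb{M}^{\triangle,\psi}_{\ff}(Q)=\bigoplus_{m'}\Fn^{\ttBPS,\triangle,+}_{\Pi_{Q_{\ff}},(m',1)}$ to view the lowest weight vector as the Chevalley generator $e_\infty$ for the real simple root $(0,1)$ inside the \emph{larger} algebra $\Fg^{\ttBPS,\triangle}_{\Pi_{Q_{\ff}}}$, and then exploits the $\mathfrak{sl}_2$-triple $e_\infty,f_\infty,h_\infty$. For any nonzero $v\in\Fn^{\ttBPS,\triangle,+}_{\Pi_Q,\dd}$ one has $[f_\infty,v]=0$ and $[h_\infty,v]=(-\ff\cdot\dd)v$ with $-\ff\cdot\dd<0$, so by $\mathfrak{sl}_2$-theory $[v,e_\infty]\neq0$: this disposes of $\Fn^+$ in one stroke, for \emph{every} element, not just generators. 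Faithfulness on $\Fn^-$ is then read off from the positive-definite form $(-,-)_0$ on $\Fg^{\ttBPS,\triangle}_{\Pi_{Q_{\ff}}}$ (the form in the proof of Lemma~\ref{gkm_res_lemma}) via $(e_\infty,[\omega(v),[v,e_\infty]])_0=([v,e_\infty],[v,e_\infty])_0\neq0$. Your fallback suggestion with the contravariant form is in spirit this same argument; the ideal-theoretic primary route is a genuine detour.

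There is also a concrete gap in that primary route. Your claim that for connected $Q$ the radical of the symmetrised Euler form is at most one-dimensional and spanned by a strictly positive vector is false. Take $Q$ to be a $4$-cycle with one loop at each vertex: then $(1_i,1_i)_Q=0$, adjacent vertices pair to $-1$, opposite vertices to $0$, and the resulting matrix has two-dimensional kernel spanned by $(1,0,-1,0)$ and $(0,1,0,-1)$; for $\ff=(1,1,1,1)$ both lie in $\ker(\ff\cdot-)$, so your step ``$\operatorname{Ann}\cap\Fh=0$'' fails. (Your own disconnected example already flagged the issue; this shows connectedness does not rescue it.) Separately, for $\Fg^{\ttBPS}_{\Pi_Q}$ the generalised Cartan matrix is indexed by $\Phi^+_{\Pi_Q}$, not by $Q_0$, and for the Jordan quiver this matrix is identically zero --- so ``indecomposable'' needs care, and the structure theorem you invoke would require a precise formulation and proof in this generalised setting. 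The paper's direct $\mathfrak{sl}_2$ argument avoids all of this for $\Fn^\pm$.
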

\begin{proof}
By Theorem \ref{theorem:mainRTthm} there is a summand $L_{((0,1),(-,0))_{Q_{\ff}}}^{\triangle}\subset \mathbb{M}^{\triangle,\psi}_{\ff}(Q)$, and in particular, a lowest weight vector $e_{\infty}\in \mathbb{M}^{\triangle,\psi}_{\ff}(Q)$ of degree $(0,\ldots,0,1)\in\BoN^{(Q_{\Pi_{\ff}})_0}$, which we identify as one of the Chevalley generators of $\Fg^{\ttBPS,\triangle}_{\Pi_{Q_{\ff}}}$, of degree $(0,\ldots,0,1)$, forming part of the $\mathfrak{sl}_2$-triple $e_{\infty},f_{\infty},h_{\infty}$. Now let $v\in \Fn_{\Pi_Q,\dd}^{\ttBPS,\triangle,+}$. Then $[h_{\infty},v]=(-\ff\cdot\dd) v\neq 0$ (by our assumption on $\ff$), so by the representation theory of $\mathfrak{sl}_2$ we have $[v,e_{\infty}]\neq 0$. We deduce that the $\Fn_{\Pi_Q}^{\ttBPS,\triangle,+}$-action and the $\Fh_{\Pi_Q}^{\ttBPS,\triangle}$-action are faithful.

With $(-,-)_0$ the positive definite form from the proof of Lemma \ref{gkm_res_lemma}, we have $(e_{\infty},[w(v),[v,e_{\infty}]])_0=([v,e_{\infty}],[v,e_{\infty}])_0\neq 0$ and so the $\Fn_{\Pi_Q}^{\ttBPS,\triangle,-}$-action is also faithful. The $\Fn_{\Pi_Q}^{\ttBPS,\triangle,+}$ action acts by raising operators with respect to the decomposition \eqref{equation:moduleNQV} into factors indexed by $\BoN^{Q_0}$, while the $\Fn_{\Pi_Q}^{\ttBPS,\triangle,-}$ action acts by lowering operators, and the $\Fh_{\Pi_Q}^{\ttBPS,\triangle}$ action preserves the decomposition.  So faithfulness of the $\Fg_{\Pi_Q}^{\ttBPS,\triangle}$ action follows from the faithfulness of the three summands in its triangular decomposition.
\end{proof}
\subsection{Actions on cohomology of quiver varieties, revisited}
\label{subsec:GN}
\subsubsection{Hilbert schemes of $\BoA^2$ and Heisenberg algebras}
First we recover the famous description of the cohomology of Hilbert schemes of $\BoA^2$ as an irreducible lowest weight module for an infinite-dimensional Heisenberg algebra, as in \cite{nakajima1999lectures,nakajima1997heisenberg}. 

Let $Q$ be the Jordan quiver, with one vertex and one loop, and let $\ff=1$, so that $\overline{Q_{\ff}}$ is the ADHM quiver (with the help of which the construction of instantons in \cite{atiyah1994construction} can be formulated):
\[
\begin{tikzcd}
\bullet \arrow[r,bend left]\arrow[out=160,in=90,loop]\arrow[out=200,in=270,loop] & \bullet\arrow[l,bend left,swap].
\end{tikzcd}
\]
It is easy to check that in this case $(0,1)$ is the only element of $\Phi^+_{\Pi_{Q_{\ff}}}$ of the form $(n,1)$ (using the characterisation given in Proposition \ref{proposition:rootsneighbourhood} and the definition of the set of roots). In addition, there is a natural isomorphism \cite{nakajima1999lectures}
\[
N(Q,n,1)\cong \Hilb_n(\BoA^2).
\]
By definition $\Fn^{\ttBPS,+}_{\Pi_Q,n}\cong \HO^*\IC(\BoA^2)\cong \BoQ[2]$, and $\Fn^{\ttBPS,+}_{\Pi_Q}$ is the cohomologically graded generalised Kac--Moody Lie algebra determined by the weight function 
\begin{align*}
P\colon \Phi^+_{\Pi_Q}\cong\BoZ_{\geq 1}\rightarrow& \BoN[t^{\pm 1/2}]&\\n\mapsto& t^{-1}.&
\end{align*}
Precisely, $\Fg_{\Pi_Q}^{\aBPS}$ is the generalised Kac--Moody Lie algebra spanned by elements $e_n,f_n,h_n$ of respective cohomological degrees $-2,2,0$ and $\BoZ^{Q_0}$-degree $n,0,-n$ respectively, i.e. the infinite-dimensional Heisenberg algebra. Since all but one of the summands on the right hand side of \eqref{equation:lwr} vanish, Theorem~\ref{theorem:mainRTthm} yields
\[
\bigoplus_{n\geq 0}\HO^*\!(\Hilb_n(\BoA^2),\BoQ^{\vir})\cong L_{1}
\]
where $L_1=\BoQ[e_1,e_2,\ldots]$ is the irreducible lowest weight module for $\Fg_{\Pi_Q}^{\ttBPS}$ of weight $1$.
\subsubsection{Irreducible lowest weight $\Fg_{\Pi_Q}^{\ttBPS,\SSN}$-modules from Nakajima quiver varieties}
In this section we recover Nakajima's original construction \cite{nakajima1998quiver} of lowest weight representations of Kac--Moody Lie algebras by restricting to the zeroth cohomological degree in our main theorem (Theorem~\ref{theorem:mainRTthm}) on the representation theory of $\Fg^{\ttBPS,\triangle}_{\Pi_Q}$ for the choice $\triangle=\mathcal{SSN}$.

Let $Q$ be a quiver. Let $\ff\in \BoN^{Q_0}$ be a dimension vector. Consider the CoHA $\HO^0(\relCoHA_{\Pi_Q}^{\SSN,\psi})$ obtained by taking the zeroth cohomologically graded piece of the CoHA $\HO^*\!\relCoHA_{\Pi_Q}^{\SSN,\psi}$. Recall that by Theorem~\ref{theorem:degree0SSN} this is isomorphic to the enveloping algebra of half of the Borcherds--Bozec algebra $\Fg_Q$. By Theorem~\ref{corollary:absBor}, if $\dd\in \Sigma_{\Pi_{Q_{\ff}}}$ there is an identification between
\[
\HO^0(\iota_{\SSN}^!\IC(\CM_{\Pi_{Q_{\ff}},\dd}))
\]
and a basis of Chevalley generators of weight $\dd$ inside $\Fg_{Q_{\ff}}$: in particular, the above cohomology group vanishes if $\dd$ is supported at more than one vertex. Applying $\HO^0(-)$ to the isomorphism
\[
\mathbb{M}^{\SSN}_{\ff}(Q)\cong \bigoplus_{(\dd,1)\in\Phi_{Q_{\ff}}^+}\HO^*\!(\iota_{\SSN}^!\IC(\CM_{\Pi_{Q_{\ff}},(\dd,1)}))\otimes L_{((\dd,1),(-,0))_{Q_{\ff}}}
\]
we thus find that every summand apart from the one for $(\dd,1)=(0,\ldots,0,1)\in\Phi^+_{\Pi_{Q_{\ff}}}$ vanishes, so that we obtain the following theorem
\begin{theorem}
There is an isomorphism of $\UEA(\Fg_{Q})=\HO^0(\SA_{\Pi_Q}^{\SSN,\psi})$-modules
\[
\bigoplus_{\dd\in \BoN^{Q_0}}\HO^0(N^{\SSN}(Q,\dd,\ff),\iota_{\SSN}^!\BoQ^{\vir})\cong L_{-\ff\cdot}.
\]
\end{theorem}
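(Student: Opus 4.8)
\emph{Approach.} The plan is to obtain the statement by passing to the degree-zero cohomological graded piece (that is, taking the degree-zero part of derived global sections) in the decomposition \eqref{equation:lwr} of Theorem~\ref{theorem:mainRTthm} with $\triangle=\SSN$, and identifying which summands survive. The isomorphism \eqref{equation:lwr} is $\BoZ$-graded cohomologically and compatible with the $\BoZ$-graded action of $\HO^{*}(\iota_{\SSN}^{!}\SA^{\psi}_{\Pi_Q})$ on $\mathbb{M}^{\SSN,\psi}_{\ff}(Q)$, so passing to degree-zero pieces yields an isomorphism of modules over $\HO^{0}(\SA_{\Pi_Q}^{\SSN,\psi})\cong\UEA(\mathfrak{n}^{+}_{Q})$ (Theorem~\ref{theorem:degree0SSN}). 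The degree-zero part of the left-hand side of \eqref{equation:lwr} is by definition $\bigoplus_{\dd\in\BoN^{Q_0}}\HO^{0}\big(N^{\SSN}(Q,\dd,\ff),\iota_{\SSN}^{!}\BoQ^{\vir}\big)$, the left-hand side of the theorem, and the extension of the action to $\Fg^{\ttBPS,\SSN}_{\Pi_Q}$ from Theorem~\ref{theorem:mainRTthm} restricts in degree zero to an action of $\HO^{0}\big(\Fg^{\ttBPS,\SSN}_{\Pi_Q}\big)=\Fg_{Q}$ (using that, by \eqref{equation:characterSSSN}, $\Fn^{\ttBPS,\SSN,+}_{\Pi_Q}$ is concentrated in nonnegative cohomological degrees, so that $\HO^{0}$ of the positive half is $\mathfrak{n}^{+}_{Q}$ and, dually, $\HO^{0}$ of the negative half is $\mathfrak{n}^{-}_{Q}$).

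\emph{Key step.} Next I would compute the degree-zero piece of the right-hand side of \eqref{equation:lwr} summand by summand. Both tensor factors of the $(\dd,1)$-summand are concentrated in nonnegative cohomological degrees: the factor $\HO^{*}\big(\iota_{\SSN}^{!}\IC(\CM_{\Pi_{Q_{\ff}},(\dd,1)})\big)$ because it is a graded piece of the space of Chevalley generators of $\Fn^{\ttBPS,\SSN,+}_{\Pi_{Q_{\ff}}}$, nonnegatively graded by \eqref{equation:characterSSSN}; and $L^{\SSN}_{((\dd,1),(-,0))_{Q_{\ff}}}$ because it is generated over $\UEA(\Fn^{\ttBPS,\SSN,+}_{\Pi_Q})$ by its lowest weight space, which lies in cohomological degree $\geq 0$. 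Hence by the K\"unneth formula the degree-zero piece of the $(\dd,1)$-summand is $\HO^{0}\big(\iota_{\SSN}^{!}\IC(\CM_{\Pi_{Q_{\ff}},(\dd,1)})\big)\otimes\HO^{0}\big(L^{\SSN}_{((\dd,1),(-,0))_{Q_{\ff}}}\big)$. By Theorem~\ref{corollary:absBor} together with Theorem~\ref{theorem:degree0SSN}, $\HO^{0}\big(\iota_{\SSN}^{!}\IC(\CM_{\Pi_{Q_{\ff}},(\dd,1)})\big)$ is identified with the weight-$(\dd,1)$ piece of a basis of Chevalley generators of $\mathfrak{n}^{+}_{Q_{\ff}}$; in the Borcherds--Bozec algebra such a generator exists only for weights supported at a single vertex, so this space is one-dimensional if $(\dd,1)$ is supported at one vertex and zero otherwise. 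Since $(\dd,1)$ has nonzero $\infty$-component, this forces $\dd=0$, and then $\CM_{\Pi_{Q_{\ff}},(0,\ldots,0,1)}$ is a reduced point, on which $\iota_{\SSN}$ is the identity, so $\HO^{0}\big(\iota_{\SSN}^{!}\IC(\pt)\big)=\BoQ$ in degree zero.

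\emph{Conclusion.} It follows that $\HO^{0}\big(\mathbb{M}^{\SSN,\psi}_{\ff}(Q)\big)\cong\HO^{0}\big(L^{\SSN}_{((0,\ldots,0,1),(-,0))_{Q_{\ff}}}\big)$, and since $((\dd,1),(-,0))_{Q_{\ff}}\colon\dd'\mapsto(\dd,\dd')_{Q}-\ff\cdot\dd'$, the weight at $\dd=0$ is $\dd'\mapsto-\ff\cdot\dd'$. It then remains to identify the cohomologically graded simple lowest weight $\Fg^{\ttBPS,\SSN}_{\Pi_Q}$-module $L^{\SSN}_{-\ff\cdot}$, after taking $\HO^{0}$, with the simple lowest weight $\Fg_{Q}$-module $L_{-\ff\cdot}$: its lowest weight vector sits in cohomological degree $0$ and generates $\HO^{0}(L^{\SSN}_{-\ff\cdot})$ over $\HO^{0}\big(\UEA(\Fn^{\ttBPS,\SSN,+}_{\Pi_Q})\big)=\UEA(\mathfrak{n}^{+}_{Q})$, the $\Fg_{Q}$-action is the degree-zero restriction of the $\Fg^{\ttBPS,\SSN}_{\Pi_Q}$-action, and irreducibility of the degree-zero part follows from positive-definiteness of the contravariant form $\langle-,\omega(-)\rangle$ on degree-zero weight spaces, exactly as in the proof of Lemma~\ref{gkm_res_lemma}. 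Specialising Theorem~\ref{theorem:mainRTthm} to cohomological degree zero thus yields the claimed isomorphism.

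\emph{Main difficulty.} I expect the only step needing genuine care is the last one: verifying that passing to $\HO^{0}$ preserves irreducibility of the lowest weight module, so that the degree-zero part of $L^{\SSN}_{-\ff\cdot}$ is the \emph{simple} lowest weight $\Fg_{Q}$-module and not merely some lowest weight module. The amplitude bookkeeping in the K\"unneth step is routine once one records that $\Fn^{\ttBPS,\SSN,+}$ and the relevant lowest weight spaces are nonnegatively graded, and the only genuinely new observation is that strict semi-nilpotency kills every Chevalley generator whose unframed weight is supported at more than one vertex.
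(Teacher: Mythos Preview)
Your proposal is correct and follows essentially the same route as the paper: both take $\HO^{0}$ of the decomposition \eqref{equation:lwr} in Theorem~\ref{theorem:mainRTthm} with $\triangle=\SSN$, invoke Theorem~\ref{corollary:absBor} together with Theorem~\ref{theorem:degree0SSN} to see that $\HO^{0}(\iota_{\SSN}^{!}\IC(\CM_{\Pi_{Q_{\ff}},(\dd,1)}))$ picks out Chevalley generators of $\Fg_{Q_{\ff}}$ and hence vanishes unless $(\dd,1)$ is supported at the single vertex $\infty$, leaving only the $\dd=0$ summand. The paper's argument is terser than yours; the two points you make explicit (the K\"unneth bookkeeping via nonnegative amplitude of both tensor factors, and the identification of $\HO^{0}(L^{\SSN}_{-\ff\cdot})$ with the simple $\Fg_{Q}$-module $L_{-\ff\cdot}$ via the contravariant form) are left implicit in the paper but are the correct justifications.
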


Here, we denote by $L_{-\ff\cdot }$ the simple lowest weight module of $\Fg_Q$ of lowest weight $\dd\mapsto -\ff\cdot\dd$, the usual dot product as in \S \ref{subsection:QandR}.  In the special case in which $Q$ has no loops, this is Nakajima's original theorem \cite[Thm.10.2]{nakajima1998quiver}.  In the case in which there are loops, this is a CoHA analogue of results in \cite{bozec2016quivers} on highest weight crystals for the Borcherds--Bozec algebra.

\section{Equivariant parameters}
\label{section:Equivariantparameters}

\subsection{Deformed BPS algebra}
\label{subsection:deformed}
In this section, we explain how to adapt the main results of this paper when we turn on equivariant parameters. We let $\CA$ and $\JH\colon\FM_{\CA}\rightarrow\CM_{\CA}$ be as in \S\ref{section:modulistack2dcats}. We let $G$ be a connected algebraic group and we assume that $G$ acts on $\JH$ and that this action is compatible with the direct sum morphism: if $G$ acts diagonally on $\FM_{\CA}\times\FM_{\CA}$, the direct sum morphism $\oplus\colon\FM_{\CA}\times\FM_{\CA}\rightarrow \FM_{\CA}$ is $G$-equivariant. To avoid discussing group actions on stacks \cite{romagny2005group}, we assume that each connected component of $\FM_{\CA}$ is a global quotient stack $X/H$ of an algebraic variety $X$ by an algebraic group $H$, and $G$ acts on $X$ giving a $G\times H$-action on $X$ (i.e. the $G$ and $H$ actions commute with each other).

We indicate by the $G$-superscript the objects and maps induced when taking the stack-theoretic quotient by $G$. For example, we let $\FM_{\CA}^G\coloneqq \FM_{\CA}/G$ (for a given connected component $X/H$ of $\FM_{\CA}$, there is a connected component $X/(H\times G)$ of $\FM_{\CA}^G$), $\CM_{\CA}^G\coloneqq \CM_{\CA}/G$, $\JH^G\colon \FM_{\CA}^G\rightarrow\CM_{\CA}^G$, $\oplus^G\colon \CM_{\CA}^G\times_{\mathrm{B}G}\CM_{\CA}^G\rightarrow\CM_{\CA}^G$. Note that the source of the map $\oplus^G$ can be identified with the stack quotient $(\CM_{\CA}\times\CM_{\CA})/G$ by the diagonal action of $G$.

We have the (smooth) quotient map $\pi\colon \CM_{\CA}\rightarrow\CM_{\CA}^G$, and the commutative square
\begin{equation}
\label{equation:dsequivariant}
 \begin{tikzcd}
	{\CM_{\CA}\times\CM_{\CA}} & {\CM_{\CA}} \\
	{\CM_{\CA}^{G}\times_{\mathrm{B}G}\CM_{\CA}^G} & {\CM_{\CA}^G}
	\arrow["\pi\times\pi"',from=1-1, to=2-1]
	\arrow["\oplus^G",from=2-1, to=2-2]
	\arrow["\oplus",from=1-1, to=1-2]
	\arrow["\pi",from=1-2, to=2-2]
	\arrow["\lrcorner"{anchor=center, pos=0.125}, draw=none, from=1-1, to=2-2]
\end{tikzcd}
\end{equation}
is Cartesian.

The categories $\Perv(\CM_{\CA}^G)[\dim G]$, $\MHM(\CM_{\CA}^G)[\dim G]$, $\CD^+_{\rmc}(\CM_{\CA}^G)$ and $\CD^+(\MHM(\CM_{\CA}^G))$ all carry a symmetric monoidal tensor product defined by $\SF\boxdot^G\SG\coloneqq \oplus_*^G(\SF\boxtimes\SG)$.

By smooth base change in the diagram \eqref{equation:dsequivariant}, $\pi^{*}\colon \CD^+(\MHM(\CM_{\CA}^G))\rightarrow \CD^+(\MHM(\CM_{\CA}))$ is a symmetric monoidal functor, mapping objects of $\MHM(\CM_{\CA}^G)[\dim G]$ to objects of $\MHM(\CM_{\CA})$. We have a similar statement for the constructible derived categories and the categories of perverse sheaves.

We define $\underline{\BoQ}_{\FM^G_{\CA}}^{\vir}$ by setting its restriction to the connected component $\FM^G_{\CA,a}$, to be $\underline{\BoQ}_{\FM^G_{\CA,a}}\otimes \BoL^{(a,a)_{\SC}/2}$, as in the non-equivariant case \S \ref{subsection:theCoHA}. We gather in the following proposition the properties of the $G$-equivariant versions of the main objects of this paper. Parts (3) and (4) follow from (2), and the analogous results for $G=\{1\}$.

\begin{proposition}
 \begin{enumerate}
 \item The relative CoHA has the structure of an algebra object in $\CD^+(\MHM(\CM_{\CA}^G))$ defined as in \cite{davison2022BPS} (working over $\mathrm{B}G$). We denote by $\mult^G$ the multiplication map (which, in this general context, we assume to be associative, see Assumption~\ref{ass:associativity}).
  \item We have canonical identifications of algebra objects $\pi^{*}\ulrelCoHA_{\CA}^G=\ulrelCoHA_{\CA}$, $\pi^{*}\uleqrelBPSalg{\CA}{G}=\ulBPS_{\CA,\Alg}$.
 \item The mixed Hodge module complex $\ulrelCoHA_{\CA}^G\coloneqq (\JH_{\CA}^G)_*\underline{\BoQ}_{\FM^G_{\CA}}^{\vir}$ is pure of weight zero, concentrated in cohomological degrees $\geq -\dim G$.
 \item The degree $-\dim G$ cohomology $\ulBPS_{\CA,\Alg}^{G}\coloneqq \CH^{-\dim G}(\ulrelCoHA_{\CA}^G)[\dim G]$ has an induced algebra structure. It is an algebra object in $\MHM(\CM_{\CA}^G)[\dim G]$.
 \end{enumerate}
\end{proposition}

\begin{remark}
 The proof of associativity of the CoHA product at the sheaf level is proven in \cite[Appendices]{davison2022BPS} for categories of geometric (coherent sheaves on surfaces) and algebraic (representations of algebras of homological dimension $2$).
\end{remark}

\subsection{Deformed relative BPS Lie algebra}
\label{subsection:deformedrelative}
For $l\in\BoN$ and $m'\in\K(\CA)$, we let
\[
 u_m^G\colon \CM_{\CA,m'}^G\rightarrow\CM_{\CA,lm'}^G
\]
be the map obtained by composing the diagonal morphism $\CM_{\CA,m'}^{G}\rightarrow\CM_{\CA,m'}^{G}\times_{\mathrm{B}G}\hdots\times_{\mathrm{B}G}\CM_{\CA,m'}^{G}$ with the (iterated) direct sum morphism $\CM_{\CA,m'}^{G}\times_{\mathrm{B}G}\hdots\times_{\mathrm{B}G}\CM_{\CA,m'}^{G}\rightarrow\CM_{\CA,lm'}^{G}$.
We let
\[
 \ul{\SG}^G_{\CA,m}\coloneqq
 \left\{
 \begin{aligned}
 &\ul{\IC}(\CM_{\CA,m}^G)[\dim G]\quad&\text{ if $m\in\Sigma_{\K(\CA)}$}\\
 &(u_m^G)_*\ul{\IC}(\CM_{\CA,m'}^G)[\dim G]\quad&\text{ if $m=lm'\in\Phi^{\iso}_+$ with $m'\in\Sigma_{\K(\CA)}$ and $l\geq 2$}\\
 &0\quad&\text{ otherwise.}
 \end{aligned}
 \right.
\]

The construction of GKM Lie algebras and their enveloping algebras in the symmetric monoidal category of mixed Hodge modules on a monoid in the category of schemes as in \S\ref{subsection:GKMtensorcategories} adapts in a straightforward way to the situation where $\CM$ is replaced by a stack quotient of $\CM$ by an algebraic group $G$ and one works relatively over $\mathrm{B}G$. We let $\CM^G\coloneqq \CM/G$ and the direct sum map is a (finite) morphism $\oplus\colon \CM^G\times_{\mathrm{B}G}\CM^G\rightarrow\CM^G$. The free algebra is defined accordingly and one can make sense of Serre relations in the same way as for schemes, and we have the following:

\begin{theorem}
\label{theorem:relBPSequiv}
 We have a canonical isomorphism of algebra objects in $\MHM(\CM_{\CA}^G)$
 \[
 \UEA(\Fn^+_{\K(\CA),(-,-)_{\SC},\ul{\SG}_{\CA}^G})\cong \uleqrelBPSalg{\CA}{G}.
 \]
\end{theorem}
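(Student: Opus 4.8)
The strategy is to reduce Theorem~\ref{theorem:relBPSequiv} to the non-equivariant Theorem~\ref{theorem:BPSalgisenvBorcherds} by pulling back along the smooth quotient map $\pi\colon \CM_{\CA}\rightarrow\CM_{\CA}^G$, using that $\pi$ is smooth with contractible fibres isomorphic to $G$, so that $\pi^*[\dim G]$ (or rather $\pi^*$ on complexes, with the appropriate shift on the hearts) is a conservative, faithful, symmetric monoidal functor which moreover reflects isomorphisms. The point is that essentially every construction entering both sides of the claimed isomorphism is \emph{defined} relatively over $\mathrm{B}G$ by the same formulas as in the scheme case, and commutes with $\pi^*$.

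\textbf{Step 1: Compatibility of the generators with $\pi^*$.} First I would check that $\pi^*\ul{\SG}_{\CA,m}^G\cong\ul{\SG}_{\CA,m}$ for every $m\in\Phi^+_{\K(\CA)}$, where $\ul{\SG}_{\CA,m}$ is the (non-equivariant) generating mixed Hodge module of \S\ref{subsection:generators}. For $m\in\Sigma_{\K(\CA)}$ this is the statement that $\pi^*\ul{\IC}(\CM_{\CA,m}^G)[\dim G]\cong\ul{\IC}(\CM_{\CA,m})$, which holds because $\pi$ is smooth of relative dimension $\dim G$ and smooth pullback (with shift) preserves $\IC$ sheaves. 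For the isotropic case $m=lm'$ one uses in addition that $\pi$ is compatible with the small-diagonal maps, i.e. that the square
\[
\begin{tikzcd}
\CM_{\CA,m'} \ar[r,"u_m"]\ar[d,"\pi"'] & \CM_{\CA,m} \ar[d,"\pi"]\\
\CM_{\CA,m'}^G \ar[r,"u_m^G"] & \CM_{\CA,m}^G
\end{tikzcd}
\]
is Cartesian (both vertical maps being $G$-torsors over their bases), so that smooth base change gives $\pi^*(u_m^G)_*\ul{\IC}(\CM_{\CA,m'}^G)[\dim G]\cong (u_m)_*\ul{\IC}(\CM_{\CA,m'})$. Hence $\pi^*\ul{\SG}_{\CA}^G\cong\ul{\SG}_{\CA}$.

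\textbf{Step 2: Compatibility of free algebras, Serre ideals, and the CoHA multiplication with $\pi^*$.} Since the square \eqref{equation:dsequivariant} is Cartesian and $\pi$ is smooth, smooth base change makes $\pi^*$ strictly monoidal from $(\CD^+(\MHM(\CM_{\CA}^G)),\boxdot^G)$ to $(\CD^+(\MHM(\CM_{\CA})),\boxdot)$; this is already noted in \S\ref{subsection:deformed}. Strict monoidality of $\pi^*$ implies $\pi^*\Free_{\boxdot\sAlg}(\ul{\SG}_{\CA}^G)\cong\Free_{\boxdot\sAlg}(\ul{\SG}_{\CA})$ and, because the adjunction maps $\ul{\SR}_{m,n}$ defining the Serre relations (\S\ref{subsubsection:Serresrelationsandideal}) are built from $\boxdot$ and the Lie bracket alone, $\pi^*$ carries the equivariant Serre ideal $\CI_{\CA^G,\ul{\SG}_{\CA}^G,\Alg}$ to $\CI_{\CA,\ul{\SG}_{\CA},\Alg}$, hence $\pi^*\UEA(\Fn^+_{\K(\CA),(-,-)_{\SC},\ul{\SG}_{\CA}^G})\cong\UEA(\Fn^+_{\K(\CA),(-,-)_{\SC},\ul{\SG}_{\CA}})$. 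On the other side, by part (4) of the displayed proposition in \S\ref{subsection:deformed} we have $\pi^*\ulrelCoHA_{\CA}^G\cong\ulrelCoHA_{\CA}$ as algebra objects and $\pi^*\uleqrelBPSalg{\CA}{G}\cong\ulBPS_{\CA,\Alg}$ as algebra objects; combined with $t$-exactness of $\pi^*[\dim G]$ (on hearts) this also identifies the equivariant morphism $\Upsilon^G_{\CA}\colon\Free_{\boxdot\sAlg}(\ul{\SG}_{\CA}^G)\rightarrow\uleqrelBPSalg{\CA}{G}$ (built from Lemma~\ref{lemma:inclusionICBPSalg} and Corollary~\ref{corollary:isoprimsummand}, which have evident $G$-equivariant versions obtained by working over $\mathrm{B}G$) with $\pi^*$ applied to it, so $\pi^*\Upsilon^G_{\CA}=\Upsilon_{\CA}$ up to the identifications above.

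\textbf{Step 3: Descent of the isomorphism.} By Theorem~\ref{theorem:BPSalgisenvBorcherds}, the induced morphism $\overline{\Upsilon}_{\CA}\colon\UEA(\Fn^+_{\K(\CA),(-,-)_{\SC},\ul{\SG}_{\CA}})\rightarrow\ulBPS_{\CA,\Alg}^{\psi}$ is an isomorphism. The morphism $\Upsilon_{\CA}^G$ factors through the equivariant Serre ideal by the identical argument (one reruns Proposition~\ref{proposition:factorisationthroughquotient} verbatim over $\mathrm{B}G$, or simply observes that vanishing of a morphism of mixed Hodge modules can be checked after the faithful exact functor $\pi^*$), yielding $\overline{\Upsilon}_{\CA}^G\colon\UEA(\Fn^+_{\K(\CA),(-,-)_{\SC},\ul{\SG}_{\CA}^G})\rightarrow\uleqrelBPSalg{\CA}{G}$ with $\pi^*\overline{\Upsilon}_{\CA}^G\cong\overline{\Upsilon}_{\CA}$. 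Since $\pi^*$ is conservative on $\CD^+(\MHM(\CM_{\CA}^G))$ — a $G$-torsor is a smooth surjection, and on a quotient stack $\CM/G$ with $\CM$ a scheme, $\pi^*$ detects the vanishing of cohomology sheaves — and $\overline{\Upsilon}_{\CA}=\pi^*\overline{\Upsilon}_{\CA}^G$ is an isomorphism, $\overline{\Upsilon}_{\CA}^G$ is an isomorphism. This proves the theorem.

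\textbf{Main obstacle.} The genuinely delicate point is making precise, and compatible, the equivariant versions of the building-block statements — particularly the $G$-equivariant refinement of Lemma~\ref{lemma:inclusionICBPSalg} (existence of the canonical monomorphism $\ul{\IC}(\CM_{\CA,m}^G)[\dim G]\hookrightarrow\uleqrelBPSalg{\CA}{G}$) and of Corollary~\ref{corollary:isoprimsummand} (the isotropic summand), together with the claim that these maps are literally $\pi^*$ of the non-equivariant ones. The underlying geometry is identical because the local neighbourhood theorem (Theorem~\ref{theorem:neighbourhood}) and the decomposition theorem of \cite{davison2021purity} are insensitive to the extra quotient by $G$ (one may work étale-locally where the $G$-action is trivialised, or invoke the version of these results relative to $\mathrm{B}G$), but checking the canonicity and the $\pi^*$-compatibility carefully is where the real work lies. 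Once that is in place, the descent argument of Step~3 is formal, and the inductive argument reducing to preprojective algebras need not be repeated: it is subsumed in the non-equivariant Theorem~\ref{theorem:BPSalgisenvBorcherds} to which we pull back.
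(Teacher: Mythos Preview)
Your proposal is correct and takes essentially the same approach as the paper: reduce to the non-equivariant Theorem~\ref{theorem:BPSalgisenvBorcherds} via the monoidal, conservative functor $\pi^*$. The paper's own proof is a two-line version of exactly this argument, simply invoking that $\pi^*$ is monoidal and conservative without unpacking the compatibility of generators, Serre ideals, and $\Upsilon$ with $\pi^*$ that you spell out in Steps~1--3; your ``main obstacle'' is real but is handled in the paper by the blanket observation that all constructions involved are purely monoidal-categorical and hence automatically commute with the monoidal functor $\pi^*$.
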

\begin{proof}
 The forgetful functor $\pi^{*}\colon \MHM(\CM_{\CA}^G)[\dim G]\rightarrow\MHM(\CM_{\CA})$ is monoidal and conservative. Therefore, Theorem \ref{theorem:relBPSequiv} follows from Theorem \ref{theorem:BPSalgisenvBorcherds}.
\end{proof}

We can therefore define (as in the case $G=\{1\}$) the relative BPS Lie algebra as
\[
 \ulBPS_{\CA,\Lie}^{G}\coloneqq \mathfrak{n}^+_{\K(\CA),(-,-)_{\SC},\ul{\SG}_{\CA}^G}.
\]
As in \S\ref{subsubsection:fullPBW}, we define a PBW map
\begin{equation}
\label{equation:PBWmap}
 \Psi_{\CA}^{\psi,G}\colon\Sym_{\boxdot^G}\left(\ulBPS_{\CA,\Lie}^{G}\otimes\HO^*_{\BoC^*}\right)\rightarrow \ulrelCoHA_{\CA}^{G}.
\end{equation}

\begin{theorem}[PBW isomorphism]
\label{theorem:PBWequiv}
 The map $\Psi_{\CA}^{\psi,G}$ is an isomorphism of mixed Hodge module complexes.
\end{theorem}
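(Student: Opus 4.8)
The plan is to reduce Theorem~\ref{theorem:PBWequiv} to the non-equivariant PBW isomorphism of Theorem~\ref{theorem:relPBW} via the conservative, monoidal forgetful functor $\pi^{*}\colon \CD^+(\MHM(\CM_{\CA}^G))\rightarrow \CD^+(\MHM(\CM_{\CA}))$ associated to the smooth quotient map $\pi\colon \CM_{\CA}\rightarrow\CM_{\CA}^G$. The point is that everything entering the construction of $\Psi_{\CA}^{\psi,G}$ is compatible with $\pi^{*}$: by smooth base change in the Cartesian square \eqref{equation:dsequivariant}, $\pi^{*}$ is symmetric monoidal (intertwining $\boxdot^G$ with $\boxdot$), so it commutes with the $\Sym$ construction; it sends $\ulrelCoHA_{\CA}^G$ to $\ulrelCoHA_{\CA}$ as algebra objects; and it sends each $\ul{\SG}^G_{\CA,m}$ to $\ul{\SG}_{\CA,m}$, because $\pi$ is smooth of relative dimension $\dim G$ so $\pi^{*}\ul{\IC}(\CM^G_{\CA,m'})[\dim G]\cong\ul{\IC}(\CM_{\CA,m'})$, and because $\pi$ commutes with the closed immersions $u_m^G$ (indeed $u_m$ is the base change of $u_m^G$ along $\pi$). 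Hence $\pi^{*}\ulBPS_{\CA,\Lie}^G\cong\ulBPS_{\CA,\Lie}$, and since the choice of positive determinant line bundle $\det\colon\FM_{\CA}\rightarrow\B\BoC^*$ descends through the $G$-quotient (one works over $\B G$ throughout, and the $\BoC^*$-weights are unchanged by the $G$-action), the action of $\HO^*_{\BoC^*}$ is also $\pi^{*}$-compatible.

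First I would record the base-change/monoidality statement precisely: $\pi^{*}$ is strictly monoidal from $(\CD^+(\MHM(\CM_{\CA}^G)),\boxdot^G)$ to $(\CD^+(\MHM(\CM_{\CA})),\boxdot)$, and identifies the algebra object $\ulrelCoHA_{\CA}^G$ with $\ulrelCoHA_{\CA}$ (this is part of the Proposition preceding \S\ref{subsection:deformedrelative}) and the Lie algebra object $\ulBPS_{\CA,\Lie}^G$ with $\ulBPS_{\CA,\Lie}$ (using Theorem~\ref{theorem:relBPSequiv} and the identification of the generating sheaves just described). Second, I would check that the morphism $\Psi_{\CA}^{\psi,G}$ of \eqref{equation:PBWmap} is taken by $\pi^{*}$ to the morphism $\Psi_{\CA}^{\psi}$ of Theorem~\ref{theorem:relPBW}: this is essentially formal once one knows (a) $\pi^{*}$ commutes with $\Sym$, (b) $\pi^{*}$ sends the canonical map $\ulBPS_{\CA,\Lie}^G\otimes\HO^*_{\BoC^*}\rightarrow\ulrelCoHA_{\CA}^G$ to the analogous non-equivariant map (which follows from the compatibility of the adjunction $\tau^{\leq 0}\ulrelCoHA_{\CA}^G\rightarrow\ulrelCoHA_{\CA}^G$ and of the inclusion of Theorem~\ref{theorem:relBPSequiv} with $\pi^{*}$), and (c) $\pi^{*}$ is an algebra homomorphism, so it intertwines the "evaluation of symmetric tensors using the Hall product" on source and target. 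Third, since $\pi^{*}$ is conservative (it is faithful and exact; equivalently, $\pi$ is smooth surjective so $\pi^{*}$ detects whether the cone of $\Psi_{\CA}^{\psi,G}$ vanishes), and $\pi^{*}\Psi_{\CA}^{\psi,G}\cong\Psi_{\CA}^{\psi}$ is an isomorphism by Theorem~\ref{theorem:relPBW}, we conclude that $\Psi_{\CA}^{\psi,G}$ is an isomorphism. This is exactly the argument already used to deduce Theorem~\ref{theorem:relBPSequiv} from Theorem~\ref{theorem:BPSalgisenvBorcherds}.

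I expect the only genuine subtlety — the "hard part", though it is more bookkeeping than a real obstacle — to be (i) making sense of $\pi^{*}$ on the \emph{unbounded} complexes $\ulrelCoHA_{\CA}^G$ (the direct image $(\JH^G_{\CA})_*$ is unbounded because of the $\B G$ and $\B H$ directions), which is handled exactly as in \cite[\S 2.1]{davison2020bps} and \cite{davison2022BPS} — one works component-by-component in $\K(\CA)$ and uses that each $\ulrelCoHA^G_{\CA,a}$ is bounded below; and (ii) verifying that the equivariant Hall product $\mult^G$ really does pull back to $\mult$ under $\pi^{*}$, which is the content of the construction of the equivariant CoHA "working over $\B G$" and requires one to trace through the virtual pullback/pushforward definitions in the $G$-equivariant setting — but this is precisely parallel to the compatibility with restriction to submonoids established in \cite[Corollary 9.10]{davison2022BPS}. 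Once these compatibilities are in hand, no new geometric input beyond Theorem~\ref{theorem:relPBW} is needed, and the proof is the two-line conservativity argument above.

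\begin{proof}
By construction, the smooth quotient morphism $\pi\colon\CM_{\CA}\rightarrow\CM^G_{\CA}$ induces, via smooth base change in the Cartesian square \eqref{equation:dsequivariant}, a strictly symmetric monoidal functor $\pi^{*}\colon(\CD^+(\MHM(\CM^G_{\CA})),\boxdot^G)\rightarrow(\CD^+(\MHM(\CM_{\CA})),\boxdot)$, which is moreover conservative. It carries the algebra object $\ulrelCoHA^G_{\CA}$ to $\ulrelCoHA_{\CA}$ as algebra objects, and (since $\pi$ is smooth of relative dimension $\dim G$, and commutes with the closed immersions $u^G_m$, which have the $u_m$ as base changes) it carries each generating sheaf $\ul{\SG}^G_{\CA,m}$ to $\ul{\SG}_{\CA,m}$; hence, by Theorem~\ref{theorem:relBPSequiv} and the identification of generators, $\pi^{*}\ulBPS^G_{\CA,\Lie}\cong\ulBPS_{\CA,\Lie}$ as Lie algebra objects. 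The chosen positive determinant line bundle descends through the $G$-quotient without changing the $\BoC^*$-weights, so the resulting $\HO^*_{\BoC^*}$-actions are $\pi^{*}$-compatible as well. Since $\pi^{*}$ commutes with $\Sym$ (being monoidal) and is an algebra homomorphism, it intertwines the map $\Psi^{\psi,G}_{\CA}$ of \eqref{equation:PBWmap} with the map $\Psi^{\psi}_{\CA}$ of Theorem~\ref{theorem:relPBW}, i.e. $\pi^{*}\Psi^{\psi,G}_{\CA}\cong\Psi^{\psi}_{\CA}$. By Theorem~\ref{theorem:relPBW} the latter is an isomorphism, so $\cone(\Psi^{\psi,G}_{\CA})$ has vanishing image under $\pi^{*}$; by conservativity of $\pi^{*}$, it vanishes, i.e. $\Psi^{\psi,G}_{\CA}$ is an isomorphism.
\end{proof}
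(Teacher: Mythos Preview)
Your proof is correct and follows exactly the same approach as the paper: the paper's proof is the two-line conservativity argument you give, deducing the equivariant PBW isomorphism from Theorem~\ref{theorem:relPBW} via conservativity of the monoidal forgetful functor $\pi^{*}$. Your version simply spells out the compatibilities (monoidality, identification of the generators $\ul{\SG}^G_{\CA,m}$ and of the algebra $\ulrelCoHA^G_{\CA}$ under $\pi^{*}$, and the $\HO^*_{\BoC^*}$-action) that the paper leaves implicit.
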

\begin{proof}
 The forgetful functor $\pi^*\colon \CD^+(\MHM(\CM_{\CA}^G))\rightarrow \CD^+(\MHM(\CM_{\CA}))$ is conservative. Therefore, Theorem \ref{theorem:PBWequiv} follows from Theorem \ref{theorem:relPBW}.
\end{proof}

\subsection{Deformed absolute BPS Lie algebra}
One of the key facts used to deduce results for the absolute CoHA from results for the relative CoHA (for example the GKM description and PBW isomorphism) is the compatibility of the derived global sections functor
\[
 \CD^+(\MHM(\CM_{\CA}))\rightarrow \CD^+(\MHM(\K(\CA)))
\]
with the respective monoidal tensor products. In the equivariant setting, this compatibility no longer holds in general. The derived global section functor 
\[
 \CD^+(\MHM(\CM_{\CA}^G))\rightarrow \CD^+(\MHM(\K(\CA)/G))
\]
(where $G$ acts trivially on $\K(\CA)$ and $\K(\CA)/G$ is a quotient stack) is not monoidal for the tensor structures induced by the direct sum (as in \S\ref{subsection:deformed}), that is for a given complex variety $X$ and $G$-equivariant mixed Hodge modules $\ul{\SF},\ul{\SG}$ on $X$, the natural map
\begin{equation}
\label{equation:kunnethmap}
 \HO^*(X/G,\ul{\SF})\otimes_{\HO^*(\mathrm{B}G,\BoQ)} \HO^*(X/G,\ul{\SG})\rightarrow \HO^*(X/G\times_{\mathrm{B}G} X/G,\ul{\SF}\boxtimes\ul{\SG})
\end{equation}
may not be an isomorphism. Nevertheless, it is an isomorphism under certain hypotheses on $\ul{\SF},\ul{\SG}$ and $G$.

\begin{lemma}
\label{degen_pure}
 Let $X$ be an algebraic variety acted on by the algebraic group $G$. We assume that $\HO^*(\B G,\BoQ)$ is pure. We let $r\colon X/G\rightarrow \mathrm{B}G$ be the natural map and $\pi\colon X\rightarrow X/G$ be the quotient map. We let $\ul{\SF},\ul{\SG}$ be mixed Hodge modules on $X/G$ such that $r_*\ul{\SF}$ and $r_*\ul{\SG}$ are pure mixed Hodge modules on $\mathrm{B}G$. Then, the K\"unneth map \eqref{equation:kunnethmap} is an isomorphism. More precisely, we have
 \[
 \HO^*(X/G,\ul{\SF})=\HO^*(X,\pi^*\ul{\SF})\otimes\HO^*(\mathrm{B}G,\BoQ)
 \]
 and similarly for $\ul{\SG}$.
\end{lemma}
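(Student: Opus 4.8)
The plan is to reduce the statement to a Leray--Hirsch-type splitting for the fibration $r\colon X/G \to \B G$ and a base-change identity relating cohomology over $X/G$ to equivariant cohomology of $X$. First I would record the standard identity $\HO^*(X/G,\ul{\SF}) = \HO^*_G(X,\pi^*\ul{\SF})$ coming from the Cartesian square
\[
\begin{tikzcd}
X \ar[r,"\pi"] \ar[d] & X/G \ar[d,"r"] \\
\pt \ar[r] & \B G,
\end{tikzcd}
\]
so that the claim becomes a statement purely about $G$-equivariant cohomology. The key input is that $r_*\ul{\SF}$ is \emph{pure}: since $\MHM(\B G)$ with pure objects is equivalent (after forgetting weights) to the category of graded $\HO^*(\B G,\BoQ)$-modules that are free --- because $\HO^*(\B G,\BoQ)$ is itself pure and polynomial, every pure mixed Hodge module on $\B G$ is a direct sum of Tate twists of $\underline{\BoQ}_{\B G}$ --- we get a (non-canonical) isomorphism of $\HO^*(\B G,\BoQ)$-modules
\[
\HO^*(X/G,\ul{\SF}) \cong \HO^*(X,\pi^*\ul{\SF}) \otimes \HO^*(\B G,\BoQ),
\]
and this is exactly the ``more precisely'' assertion. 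Once both $\HO^*(X/G,\ul{\SF})$ and $\HO^*(X/G,\ul{\SG})$ are free $\HO^*(\B G,\BoQ)$-modules, the K\"unneth map \eqref{equation:kunnethmap} is identified, after tensoring down along $\pi^*$, with the ordinary (non-equivariant) K\"unneth isomorphism $\HO^*(X,\pi^*\ul{\SF}) \otimes \HO^*(X,\pi^*\ul{\SG}) \xrightarrow{\cong} \HO^*(X\times X, \pi^*\ul{\SF}\boxtimes\pi^*\ul{\SG})$, tensored with $\HO^*(\B G,\BoQ)$; the point is that $X/G \times_{\B G} X/G = (X\times X)/G$ and $r$ pulled back along the diagonal $\B G \to \B G \times \B G$ keeps the relevant pushforward pure, so the same freeness applies to the target.

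In more detail, the steps I would carry out are: (1) establish the base-change identities $\HO^*(X/G,\ul{\SF}) = \HO^*_G(X,\pi^*\ul{\SF})$ and $\HO^*((X\times X)/G, \ul{\SF}\boxtimes\ul{\SG}) = \HO^*_G(X\times X, \pi^*\ul{\SF}\boxtimes\pi^*\ul{\SG})$, using smooth base change for the relevant squares over $\B G$ and over $\B G$ (via the diagonal) respectively; (2) observe that purity of $r_*\ul{\SF}$, together with purity of $\HO^*(\B G,\BoQ)$, forces a splitting $\HO^*(X/G,\ul{\SF}) \cong M_{\SF} \otimes \HO^*(\B G,\BoQ)$ as graded $\HO^*(\B G,\BoQ)$-modules, where $M_{\SF}$ is identified with $\HO^*(X,\pi^*\ul{\SF})$ by restricting along a section or, more invariantly, by the projection $\HO^*_G(X,-) \to \HO^*(X,-)$ which becomes an isomorphism after $-\otimes_{\HO^*(\B G,\BoQ)}\BoQ$ and whose source is free; (3) deduce the analogous splitting for $\HO^*((X\times X)/G,\ul{\SF}\boxtimes\ul{\SG})$ (here one needs that $(r\times_{\B G} r)_*(\ul{\SF}\boxtimes\ul{\SG})$ is pure, which follows since the external product of pure objects is pure and $\oplus_*$/diagonal pushforwards on $\B G$ preserve purity because $\B G$ has a pure cohomology ring); (4) conclude that \eqref{equation:kunnethmap} is obtained from the ordinary K\"unneth isomorphism on $X$ by applying the exact functor $-\otimes_{\BoQ}\HO^*(\B G,\BoQ)$, hence is an isomorphism.

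I expect the main obstacle to be step (2)--(3): pinning down precisely why purity of $r_*\ul{\SF}$ on $\B G$ yields \emph{freeness} of $\HO^*(X/G,\ul{\SF})$ over $\HO^*(\B G,\BoQ)$ in a way that is compatible with the monoidal structure $\boxdot^G$. The cleanest route is probably to invoke that a pure mixed Hodge module on $\B G$ (when $\HO^*(\B G,\BoQ)$ is pure, e.g. $G$ a torus, or more generally a connected reductive group so that $\HO^*(\B G,\BoQ)$ is a polynomial ring on even generators) is a direct sum of shifted Tate twists of the constant module --- semisimplicity of pure MHM on a point-like base --- and then derived global sections of such a sum is manifestly free. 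One then has to check the identification $M_{\SF}\cong \HO^*(X,\pi^*\ul{\SF})$ is canonical enough to be compatible with K\"unneth; the natural-transformation $\pi^*\circ r^* \Rightarrow \mathrm{id}$ (adjunction) together with the projection formula handles this, and I would verify the diagram chase that the K\"unneth map for $X/G$ tensored down to $X$ is literally the K\"unneth map for $X$. The weight/purity bookkeeping (Tate twists, the cohomological shifts by $\dim G$ implicit elsewhere in the paper) is routine but must be tracked to ensure the iso is one of mixed Hodge structures and not merely of graded vector spaces.
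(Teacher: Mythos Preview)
Your proposal is correct and follows essentially the same approach as the paper: the paper's one-line proof invokes degeneration of the Leray--Serre spectral sequence for $r$ at the $E_2$-page by purity of $r_*\ul{\SF}$ and of $\HO^*(\B G,\BoQ)$, which is exactly your step~(2) (purity forces freeness over $\HO^*(\B G,\BoQ)$) phrased in spectral-sequence language. Your steps (3)--(4) spell out the deduction of the K\"unneth isomorphism from freeness, which the paper leaves as an immediate consequence (noted in the sentence following the proof).
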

\begin{proof}
The Leray--Serre spectral sequence for the fibration $r$ computing $\HO^*(X/G,\ul{\SF})$ degenerates at the second page $E^2_{p,q}=\HO^p(\B G,\CH^q(r_*\ul{\SF}))=\HO^{p+\dim(G)}(\B G,\BoQ)\otimes\HO^{q-\dim(G)}(X,\pi^*\ul{\SF})$ for purity reasons.
\end{proof}

This proof shows more generally that the K\"unneth map is an isomorphism when both $\HO^*(X/G,\SF)$ and $\HO^*(X/G,\SG)$ are free over $\HO^*(\mathrm{B}G,\BoQ)$. In the sequel, we assume that the mixed Hodge modules $\ul{\SG}_{\CA,m}^G$, $m\in\Phi^+_{\K(\CA)}$ defined in \S\ref{subsection:deformedrelative} are such that the K\"unneth map is an isomorphism for any pair $\ul{\SG}_{\CA,m}^{G},\ul{\SG}_{\CA,n}^G$. We say that the mixed Hodge modules $\ul{\SG}^G_{\CA,m}$, $m\in\Phi^+_{\K(\CA)}$ satisfy the \emph{K\"unneth property}. Then it follows formally that there is an  isomorphism of Lie algebras
\begin{equation}
\label{EoScompare}
\HO^*(\Fn^+_{\CM_{\CA}^G,(-,-)_{\SC},\ul{\SG}_{\CA}^G})\cong \Fn^+_{\K(\CA),(-,-)_{\SC},\ul{\SG}_{\CA}}\otimes \HO^*(\B G,\BoQ).
\end{equation}

We immediately deduce the following.

\begin{theorem}
\label{theorem:absoluteCoHA}
Assume that the mixed Hodge modules $\ul{\SG}^G_{\CA,m}$, $m\in\Phi^+_{\K(\CA)}$ satisfy the K\"unneth property. The deformed BPS algebra $\rmBPS_{\CA,\Alg}^{G}$ is isomorphic to the enveloping algebra of the generalised Kac--Moody algebra defined over $\HO^*(\mathrm{B}G,\BoQ)$, associated to the monoid with bilinear form $(\K(\CA),(-,-)_{\SC})$ and the weight function described in Theorem \ref{corollary:absBor} (with $\iota_{\triangle}$ the identity morphism here -- nilpotent versions are considered in Proposition \ref{proposition:equivariantnilpotent}). In particular, there is an isomorphism of algebras
 \[
 \rmBPS_{\CA,\Alg}^{G}\cong \rmBPS_{\CA,\Alg}\otimes \HO^*(\B G,\BoQ).
 \]
 
\end{theorem}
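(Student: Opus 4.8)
The plan is to deduce Theorem~\ref{theorem:absoluteCoHA} from the relative equivariant result Theorem~\ref{theorem:relBPSequiv} by applying the derived global sections functor and carefully tracking what goes wrong (and what does not) in the presence of equivariant parameters. The point is that while the derived global sections functor $\CD^+(\MHM(\CM_{\CA}^G))\rightarrow\CD^+(\MHM(\K(\CA)/G))$ is no longer monoidal in general, the K\"unneth property hypothesis on the generating mixed Hodge modules $\ul{\SG}^G_{\CA,m}$ restores exactly enough monoidality to push the argument through.

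First I would record that by Theorem~\ref{theorem:relBPSequiv} we have $\uleqrelBPSalg{\CA}{G}\cong\UEA(\Fn^+_{\K(\CA),(-,-)_{\SC},\ul{\SG}_{\CA}^G})$ as algebra objects in $\MHM(\CM_{\CA}^G)$, where the right-hand side is built as the quotient of $\Free_{\boxdot^G\sAlg}(\ul{\SG}^G_{\CA})$ by the Serre ideal. Next I would apply $\HO^*(\CM_{\CA}^G,-)$ to this isomorphism. The key step is that, under the K\"unneth property, the natural map \eqref{equation:kunnethmap} is an isomorphism for every pair of tensor factors appearing in $\Free_{\boxdot^G\sAlg}(\ul{\SG}^G_{\CA})$; since this free algebra is a (locally finite) direct sum of iterated $\boxdot^G$-products of the $\ul{\SG}^G_{\CA,m}$, the functor $\HO^*$ is monoidal when restricted to the relevant subcategory generated by these objects under $\boxdot^G$ and direct sums. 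Consequently $\HO^*$ sends $\Free_{\boxdot^G\sAlg}(\ul{\SG}^G_{\CA})$ to $\Free_{\Alg}(\HO^*(\ul{\SG}^G_{\CA}))$ over the base ring $\HO^*(\B G,\BoQ)$, and sends the Serre ideal to the Serre ideal generated by the images of the $\ul{\SR}^G_{m,n}$; the $G$-equivariant analogue of the argument at the end of \S\ref{subsection:GKMmonoids} (and as in the proof of the Proposition in \S\ref{subsection:globsectGKM}) identifies these images with the Serre relations for the weight function of Theorem~\ref{corollary:absBor}, now with coefficients extended to $\HO^*(\B G,\BoQ)$. This yields $\rmBPS_{\CA,\Alg}^{G}\cong\UEA(\Fn^+_{\K(\CA),(-,-)_{\SC},P}\otimes_{\BoQ}\HO^*(\B G,\BoQ))$ for the stated $P$, which is the $\HO^*(\B G,\BoQ)$-linear generalised Kac--Moody enveloping algebra. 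The final displayed isomorphism $\rmBPS_{\CA,\Alg}^{G}\cong\rmBPS_{\CA,\Alg}\otimes\HO^*(\B G,\BoQ)$ then follows from \eqref{EoScompare} together with the PBW isomorphism (and the fact that the character, hence the weight function $P$, is unchanged by the equivariant deformation, by part (4) of the Proposition in \S\ref{subsection:deformed}, i.e. $\pi^*\uleqrelBPSalg{\CA}{G}=\ulBPS_{\CA,\Alg}$ and $\pi^*\ul{\SG}^G_{\CA,m}=\ul{\SG}_{\CA,m}$ up to shift).

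The main obstacle is bookkeeping the monoidality of $\HO^*$: one must check that the K\"unneth property, stated only for pairs $\ul{\SG}^G_{\CA,m},\ul{\SG}^G_{\CA,n}$, propagates to all finite iterated $\boxdot^G$-products occurring in the free algebra and in the generators of the Serre ideal. This follows by induction on the number of tensor factors, using that $\oplus^G$ is finite (so $\oplus^G_*$ is exact and commutes with the relevant operations) and that purity of $r_*\ul{\SG}^G_{\CA,m}$ over $\B G$ is inherited by $\boxdot^G$-products via the projection formula and the degeneration argument in the Lemma preceding \eqref{EoScompare}; concretely, $r_*$ of any such iterated product is again pure, so the Leray--Serre spectral sequence over $\B G$ degenerates and $\HO^*$ of the product is the $\HO^*(\B G,\BoQ)$-module tensor product of the $\HO^*$'s of the factors. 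Once this is in place, everything else is the formal transport of Theorem~\ref{corollary:absBor} along a monoidal conservative functor, exactly as in the proofs of Theorems~\ref{theorem:relBPSequiv} and \ref{theorem:PBWequiv}.
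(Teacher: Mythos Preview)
Your proposal is correct and follows essentially the same approach as the paper: apply derived global sections to the relative equivariant isomorphism of Theorem~\ref{theorem:relBPSequiv}, and use the K\"unneth property to ensure that $\HO^*$ is sufficiently monoidal on the subcategory generated by the $\ul{\SG}^G_{\CA,m}$, so that the free algebra and Serre ideal transport correctly. The paper's proof is terser and leaves the propagation of the K\"unneth property to iterated $\boxdot^G$-products implicit (it asserts that \eqref{EoScompare} follows ``formally''), whereas you have helpfully spelled out the inductive step via purity and degeneration of the Leray--Serre spectral sequence.
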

\begin{proof}
 The fact that the mixed Hodge modules $\ul{\SG}^G_{\CA,m}$ satisfy the K\"unneth property implies that the $\HO^*(\B G,\BoQ)$-algebra obtained by taking derived global sections of $\BPS_{\CA,\Alg}^{G}$ is exactly the $\HO^*(\B G,\BoQ)$-linear enveloping algebra of the GKM Lie algebra described in the statement of the theorem. Theorem \ref{theorem:absoluteCoHA} is then a consequence of Theorem \ref{theorem:relBPSequiv}.
\end{proof}

Given a choice of positive determinant bundle on $\FM_{\CA}^G$ we define a map
\begin{equation}
 \label{equation:absPBWequiv}
 \Sym_{\HO^*(\B G,\BoQ)}\left(\ul{\rmBPS}_{\CA,\Lie}^{G}\otimes\HO^*_{\BoC^*}\right)\rightarrow \HO^*(\ulrelCoHA_{\CA}^{\psi,G})
\end{equation}
as in \S \ref{subsubsection:fullPBW} by using the algebra structure on $\HO^*(\ulrelCoHA_{\CA}^{\psi,G})$, the $\HO^*_{\BoC^*}$-action on the CoHA and the iterated CoHA multiplication.

\begin{theorem}[Deformed PBW theorem]
\label{theorem:absoluteequivPBW}
 The map \eqref{equation:absPBWequiv} is an isomorphism of mixed Hodge module complexes.
\end{theorem}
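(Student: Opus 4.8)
The plan is to mimic the argument for Theorem~\ref{theorem:relPBW} together with the deduction of Corollary~\ref{corollary:PBW} from it, but to do the deduction \emph{relatively over} $\B G$ rather than over a point, so that the Künneth property is exactly the extra hypothesis that makes it go through. The starting point is Theorem~\ref{theorem:PBWequiv}, which already gives an isomorphism of complexes of mixed Hodge modules on $\CM_{\CA}^G$

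<br>

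\[
 \Psi_{\CA}^{\psi,G}\colon\Sym_{\boxdot^G}\left(\ulBPS_{\CA,\Lie}^{G}\otimes\HO^*_{\BoC^*}\right)\xrightarrow{\cong} \ulrelCoHA_{\CA}^{G},
\]

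<br>

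constructed by evaluating symmetric tensors via the (twisted) relative CoHA structure and the $\HO^*_{\BoC^*}$-action coming from the chosen positive determinant bundle on $\FM_{\CA}^G$. First I would apply the composite functor $\HO^*(\CM_{\CA}^G,-)=\HO^*(\K(\CA)/G,(\cl^G)_*(-))$ to both sides. The right-hand side becomes, essentially by definition, $\HO^*(\ulrelCoHA_{\CA}^{\psi,G})$, and by construction the map induced by $\HO^*$ of $\Psi_{\CA}^{\psi,G}$ is compatible with the map \eqref{equation:absPBWequiv} once one checks that the $\HO^*_{\BoC^*}$-actions and the iterated multiplications match up — this is the same bookkeeping as in the proof of Corollary~\ref{corollary:PBW} and involves no new idea.

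<br>

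The one genuinely non-formal point is that $\HO^*$ does not in general turn $\boxdot^G$ into $\otimes_{\HO^*(\B G,\BoQ)}$, so I cannot simply say "$\HO^*$ is monoidal" as in the non-equivariant case. This is precisely where the Künneth property enters. The plan is: the summands of $\Sym_{\boxdot^G}(\ulBPS_{\CA,\Lie}^{G}\otimes\HO^*_{\BoC^*})$ are, after unwinding $\Sym$, iterated $\boxdot^G$-products of the generating objects $\ul{\SG}^G_{\CA,m}$ (tensored with copies of $\HO^*_{\BoC^*}$, which is pure and free over $\HO^*(\B G,\BoQ)$ trivially); by the hypothesis that the $\ul{\SG}^G_{\CA,m}$ satisfy the Künneth property, and by an induction on the number of factors using the Leray--Serre degeneration argument of the lemma preceding \eqref{EoScompare}, the Künneth map is an isomorphism on all of these iterated products. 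Hence $\HO^*$ applied to $\Sym_{\boxdot^G}(\ulBPS_{\CA,\Lie}^{G}\otimes\HO^*_{\BoC^*})$ is identified with $\Sym_{\HO^*(\B G,\BoQ)}\big(\HO^*(\ulBPS_{\CA,\Lie}^{G})\otimes\HO^*_{\BoC^*}\big)=\Sym_{\HO^*(\B G,\BoQ)}\big(\ul{\rmBPS}_{\CA,\Lie}^{G}\otimes\HO^*_{\BoC^*}\big)$, which is the left-hand side of \eqref{equation:absPBWequiv}. (Strictly speaking one also wants to know that $\HO^*$ commutes with the infinite direct sum defining $\Sym$; this is fine since $\cl^G$ has finite fibres and each graded piece is a finite sum, exactly as in the non-equivariant setting.)

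<br>

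Assembling these two observations: $\HO^*(\Psi_{\CA}^{\psi,G})$ is an isomorphism because $\Psi_{\CA}^{\psi,G}$ is (Theorem~\ref{theorem:PBWequiv}), and under the two identifications just described it coincides with the map \eqref{equation:absPBWequiv}. Therefore \eqref{equation:absPBWequiv} is an isomorphism. The main obstacle, as indicated, is verifying that the Künneth property propagates from the generators $\ul{\SG}^G_{\CA,m}$ to their iterated $\boxdot^G$-products; I expect this to follow from the purity/freeness criterion already isolated in the excerpt (if $r_*\SF$, $r_*\SG$ are pure, or more generally if $\HO^*(X/G,\SF)$ and $\HO^*(X/G,\SG)$ are free over $\HO^*(\B G,\BoQ)$, the Künneth map is an isomorphism), combined with the fact that $\boxdot^G$ is the pushforward along the finite morphism $\oplus^G$ so that $\HO^*$ of a $\boxdot^G$-product is computed on $\CM_{\CA}^G\times_{\B G}\CM_{\CA}^G$ and inherits freeness from the factors.
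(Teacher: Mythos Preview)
Your proposal is correct and follows exactly the same approach as the paper's proof: identify the map \eqref{equation:absPBWequiv} with $\HO^*(\Psi_{\CA}^{\psi,G})$ via the K\"unneth property, then invoke Theorem~\ref{theorem:PBWequiv}. The paper compresses this into two sentences, whereas you spell out the propagation of the K\"unneth property from the generators $\ul{\SG}^G_{\CA,m}$ to their iterated $\boxdot^G$-products; that extra detail is helpful and correct, but not a different argument.
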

\begin{proof}
 The K\"unneth property implies that the map \eqref{equation:absPBWequiv} coincides with the map $\HO^*(\Psi_{\CA}^{\psi,G})$. Therefore, Theorem \ref{theorem:absoluteequivPBW} follows from Theorem \ref{theorem:PBWequiv}.
\end{proof}

\subsection{Examples}
\label{subsection_example_tori}
When $\CA$ is the category of representations of the preprojective algebra $\Pi_Q$ of a quiver, there is a natural choice for the group $G$. Namely, we let
\[
 G_{\arr}\coloneqq \prod_{i\in Q_0}\mathrm{Sp}_{2g_i}\times\prod_{i\neq j\in Q_0}\GL_{q_{i,j}},
\]
be the group of arrow symmetries of $\overline{Q}$ preserving the moment map, where $g_i$ is the number of loops at the vertex $i$ in $Q_1$ and $q_{i,j}$ the number of arrows between $i$ and $j$ in $\overline{Q}$ \cite[\S2.1.3]{maulik2019quantum}. The group $G_{\arr}$ acts naturally on the representation spaces $\BoA_{\overline{Q},\dd}$ defined in \S \ref{subsection:QandR}. Indeed, we can write
\[
 \BoA_{\overline{Q},\dd}=\prod_{i\in Q_0}\End(\BoC^{d_i})\otimes \BoC^{2g_i}\times\prod_{i\neq j\in Q_0}\Tan^*(\Hom(\BoC^{d_i},\BoC^{d_j})\otimes \BoC^{q_{i,j}}),
\]
where $q_{i,j}$ is the number of arrows $i\rightarrow j$ in $Q$, $\mathrm{Sp}_{2g_i}$ acts on $\BoC^{2g_i}$ and $\GL_{q_{i,j}}$ acts on $\BoC^{q_{i,j}}$.

We can take for $G$ any (connected) subgroup of $G_{\arr}\times\BoC^*$. The additional $\BoC^*$ action rescales cotangent fibers (i.e. if $(x,x^*)\in\BoA_{\overline{Q},\dd}$ and $u\in\BoC^*$, $u\cdot (x,x^*)=(x,ux^*)$. For example, $G$ can be a maximal torus of $G_{\arr}$.

As shown in the following lemma, equivariant formality holds for the mixed Hodge modules involved in the description of the BPS algebra of $\Pi_Q$.

\begin{lemma}
\label{lemma:formalityquivervarieties}
 Let $Q$ be a quiver, $\Pi_Q$ its preprojective algebra and $\dd\in\BoN^{Q_0}$ a dimension vector. Then, $\mathrm{IH}^n(\CM_{\Pi_Q,\dd})$ vanishes for $n$ odd. Therefore, for any connected linear algebraic group $G$ acting on $\CM_{\Pi_Q,\dd}$, the intersection cohomology complex is naturally equivariant and $\ICA(\CM_{\Pi_Q,\dd}/G)=\mathrm{IH}^*_G(\CM_{\Pi_Q,\dd})\cong\ICA(\CM_{\Pi_Q,\dd})\otimes_{\BoQ}\HO^*(\mathrm{B}G,\BoQ)$.
\end{lemma}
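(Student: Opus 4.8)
The statement has two parts: (1) the vanishing of odd intersection cohomology $\mathrm{IH}^n(\CM_{\Pi_Q,\dd})$, and (2) the consequent equivariant formality. For part (1), the plan is to use the cohomological integrality / PBW description of the CoHA and BPS Lie algebra for $\Pi_Q$ established earlier in the paper (Theorem~\ref{theorem:BPSalgisenvBorcherds} together with Proposition~\ref{proposition:BPS3dcoincide}), which identifies $\ulBPS_{\Pi_Q,\Lie}$ as a GKM-type Lie algebra object whose simple root generators are the $\ul{\IC}(\CM_{\Pi_Q,\dd})$ for $\dd\in\Sigma_{\Pi_Q}$. By Proposition~\ref{proposition:geometrygoodmodspace}, for $\dd\in\Sigma_{\Pi_Q}$ the variety $\CM_{\Pi_Q,\dd}$ is irreducible of dimension $p(\dd)=2-(\dd,\dd)_Q$, which is even; moreover the 3d BPS sheaf $\BPS_{\Pi_Q,\Lie}^{\td,\psi}$ is known (from \cite{davison2020bps}) to be concentrated in even cohomological degrees, since it arises as vanishing cycle cohomology of the tripled quiver with potential and purity holds. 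One would first establish the parity vanishing for $\dd\in\Sigma_{\Pi_Q}$ via the identification $\HO^*\!\BPS_{\Pi_Q,\Lie}^{\td,\psi}\cong\Fg_{\tilde Q,\tilde W}^\psi$ and the known parity of $\Fg_{\tilde Q,\tilde W}$ (indeed it is $\BoN^{Q_0}\times 2\BoN_{\leq 0}$-graded, cf.\ \eqref{equation:charBPS}), noting that $\HO^*\!\ul{\IC}(\CM_{\Pi_Q,\dd})$ appears as a graded summand of this. Then one bootstraps to arbitrary $\dd$: by the canonical decomposition of dimension vectors \cite{crawley2002decomposition} and the induced product decomposition of quiver varieties, $\CM_{\Pi_Q,\dd}$ is a product of symmetric powers of the $\CM_{\Pi_Q,\dd_j}$ with $\dd_j\in\Sigma_{\Pi_Q}$, and the intersection cohomology of a symmetric power of an even-dimensional variety with even $\mathrm{IH}$ is again even (the Künneth formula plus the fact that $S_l$-invariants of an even-degree graded vector space sit in even degrees). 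This gives the parity vanishing in general.

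For part (2), the plan is standard equivariant formality. Given a connected linear algebraic group $G$ acting on $X=\CM_{\Pi_Q,\dd}$, since $\mathrm{IH}^*(X)$ vanishes in odd degrees, the Leray--Hirsch / Leray--Serre spectral sequence $E_2^{p,q}=\HO^p(\mathrm{B}G,\underline{\BoQ})\otimes \mathrm{IH}^q(X)\Rightarrow \mathrm{IH}^{p+q}_G(X)$ degenerates at $E_2$: for $G$ connected $\HO^*(\mathrm{B}G,\BoQ)$ is concentrated in even degrees, so all differentials $d_r$ (which change $p+q$ parity when $r$ is even, and shift by $(r,1-r)$) vanish by the parity argument. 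Here one uses that the IC complex of $X$ is canonically $G$-equivariant because $G$ is connected (a connected group acts trivially on the set of simple perverse sheaves, so $\ul{\IC}(X)$ carries a unique equivariant structure; this is also exactly why $\CM_{\Pi_Q,\dd}/G$ makes sense as a stack quotient with a well-defined IC sheaf). Degeneration then yields the isomorphism $\ICA(\CM_{\Pi_Q,\dd}/G)=\mathrm{IH}^*_G(\CM_{\Pi_Q,\dd})\cong \ICA(\CM_{\Pi_Q,\dd})\otimes_{\BoQ}\HO^*(\mathrm{B}G,\BoQ)$ as graded vector spaces (the module structure over $\HO^*(\mathrm{B}G,\BoQ)$ being free).

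\textbf{Main obstacle.} The delicate point is the parity statement for $\dd\notin\Sigma_{\Pi_Q}$. While the canonical decomposition of \cite{crawley2002decomposition} reduces $\CM_{\Pi_Q,\dd}$ to products of symmetric powers of the $\CM_{\Pi_Q,\dd_j}$ with $\dd_j$ primitive positive roots, one must be careful that intersection cohomology behaves well under these operations: specifically, $\mathrm{IH}^*(\mathrm{Sym}^l Y)\cong (\mathrm{Sym}^l \mathrm{IH}^*(Y))$ in the graded sense (with the appropriate Koszul sign rule) when $Y$ is even-dimensional, so that oddness cannot appear from the symmetric group action. For isotropic $\dd_j$ one additionally invokes Proposition~\ref{proposition:geometrygoodmodspace}\eqref{item:reglocus} identifying $\CM_{\Pi_Q,l\dd_j}$ with $S^l\CM_{\Pi_Q,\dd_j}$ on reduced schemes, and the hyperbolic/real cases are handled by the parity of $\ul{\IC}(\CM_{\Pi_Q,\dd_j})$ already established. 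I expect this combinatorial/sheaf-theoretic bookkeeping, rather than any genuinely new input, to be where the real work lies; everything else is a formal consequence of results already proved in the paper.
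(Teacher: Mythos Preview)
Your approach is correct, but it is more circuitous than the paper's. The paper's proof is a two-line reduction: by Theorem~\ref{theorem:BPSalgisenvBorcherds} (taking derived global sections), $\ICA(\CM_{\Pi_Q,\dd})$ sits as a graded summand inside $\HO^{\BoMo}(\FM_{\Pi_Q},\BoQ^{\vir})$, and the latter is pure with vanishing odd cohomology by \cite[Theorem~A]{davison2016integrality}. Equivariant formality then follows from \emph{purity} (not parity) via the usual weight argument for the Leray--Serre spectral sequence.

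Your route differs in two places. First, you access the parity of $\ICA(\CM_{\Pi_Q,\dd})$ for $\dd\in\Sigma_{\Pi_Q}$ through the 3d comparison (Proposition~\ref{proposition:BPS3dcoincide}) and the known $2\BoN_{\leq 0}$-grading of $\Fg_{\tilde Q,\tilde W}$ from \eqref{equation:charBPS}; this is fine, but the ultimate input is the same purity theorem, just one step removed. Second, for general $\dd\notin\Sigma_{\Pi_Q}$ you invoke the canonical decomposition \cite{crawley2002decomposition} and K\"unneth/symmetric-power compatibility of intersection cohomology. This bookkeeping is valid, and your caveats about it are well-placed, but it is avoidable: the decomposition theorem for $\JH_{\dd}$ already exhibits $\ul{\IC}(\CM_{\Pi_Q,\dd})$ (for \emph{any} $\dd$) as a shifted summand of $(\JH_{\dd})_*\BD\ulBoQ^{\vir}$, so purity and odd-vanishing of the stack Borel--Moore homology transfer directly without any detour through products of symmetric powers. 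The paper's proof is admittedly terse on this point.

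For part (2), your parity-based degeneration is correct (and does use that $\HO^*(\B G,\BoQ)$ is even for connected $G$); the paper instead degenerates by purity, which is marginally cleaner but equivalent in this setting.
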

\begin{proof}
The last statement is known as ``equivariant formality" and follows from the purity of the intersection cohomology. Purity, as well as vanishing of odd intersection cohomology, follow from Theorem \ref{theorem:BPSalgisenvBorcherds} by taking derived global sections, and the analogous purity and vanishing statements for $\HO^{\BoMo}(\FM_{\Pi_Q},\BoQ^{\vir})$: this is \cite[Theorem A]{davison2016integrality}.
\end{proof}

\subsection{3d deformed BPS Lie algebra}
One can define the deformed 3d BPS Lie algebra using the tripled quiver with potential and dimensional reduction. It is defined as in \S\ref{subsection:DMreminder} after upgrading objects to mixed Hodge modules as in \cite{davison2020cohomological}. The group $G_{\arr}\times\BoC^*$ acts on the representation space $\BoA_{\tilde{Q},\dd}\cong\BoA_{\overline{Q},\dd}\times\prod_{i\in Q_0}\End(\BoC^{d_i})$ of the tripled quiver as follows. The action on the first factor is as defined in \S \ref{subsection_example_tori}. The subgroup $G_{\arr}$ acts trivially on the second factor and the additional copy of $\BoC^*$ acts by rescaling the factor $\prod_{i\in Q_0}\End(\BoC^{d_i})$ by $u^{-1}$. For this action, the canonical potential $\tilde{W}$ gives a $G_{\arr}\times\BoC^*$-invariant function on $\BoA_{\tilde{Q},\dd}$. The vanishing cycle cohomology gives the cohomological Hall algebra $\ulrelCoHA_{\tilde{Q},\tilde{W}}^{\psi,G}$, and the perverse filtration together with dimensional reduction provides us with the 3d BPS Lie algebra $\ulBPS_{\Pi_Q,\Lie}^{\td,\psi,G}$. It is an object of the shifted category of mixed Hodge modules $\MHM(\CM_{\Pi_Q}^G)[\dim G]$. All results presented in \S\ref{section:comparison} hold for the $G$-equivariant deformations of the relative and absolute versions of the BPS algebra and Lie algebra:

\begin{proposition}
\label{Gequiv2d3d}
 We have a canonical isomorphism of Lie algebra objects in mixed Hodge modules
 \[
 \ulBPS_{\Pi_Q,\Lie}^{\td,\psi,G}\cong\ulBPS_{\Pi_Q,\Lie}^{G}.
 \]
\end{proposition}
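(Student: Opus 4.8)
The plan is to deduce Proposition~\ref{Gequiv2d3d} from the non-equivariant comparison isomorphism of Proposition~\ref{proposition:BPS3dcoincide} by a conservativity argument, exactly as Theorems \ref{theorem:relBPSequiv}, \ref{theorem:PBWequiv}, \ref{theorem:absoluteCoHA} and \ref{theorem:absoluteequivPBW} are deduced from their $G=\{1\}$ counterparts. The key structural input is that the forgetful (pullback) functor $\pi^{*}\colon \CD^+(\MHM(\CM_{\Pi_Q}^G))\rightarrow \CD^+(\MHM(\CM_{\Pi_Q}))$ is symmetric monoidal (by smooth base change in the Cartesian square \eqref{equation:dsequivariant}) and conservative, and that it sends the $G$-equivariant objects $\ulBPS_{\Pi_Q,\Lie}^{\td,\psi,G}$ and $\ulBPS_{\Pi_Q,\Lie}^{G}$ to their non-equivariant versions $\ulBPS_{\Pi_Q,\Lie}^{\td,\psi}$ and $\ulBPS_{\Pi_Q,\Lie}$ respectively.

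The steps, in order, would be: (1) Recall that $\ulBPS_{\Pi_Q,\Lie}^{\td,\psi,G}$ is defined as $h_*^G\ptau^{\leq 1}(\JH_{\Pi_Q}^G)_*\phip{\Tr(\tilde W)}\ulBoQ_{\FM_{\tilde Q}^G}^{\vir}[-1]$, working relatively over $\B G$, and that dimensional reduction \cite[Theorem A.1]{davison2017critical} upgrades to the $G$-equivariant setting since $\Tr(\tilde W)$ is $G$-invariant; hence $\pi^*\ulBPS_{\Pi_Q,\Lie}^{\td,\psi,G}\cong \ulBPS_{\Pi_Q,\Lie}^{\td,\psi}$ compatibly with the Lie bracket and with the inclusion into $\ulBPS_{\Pi_Q,\Alg}^{\td,\psi,G}$. (2) By Theorem~\ref{theorem:relBPSequiv}, $\ulBPS_{\Pi_Q,\Alg}^{\psi,G}\cong \UEA(\Fn^+_{\K(\CA),(-,-)_{\SC},\ul{\SG}^G_{\Pi_Q}})$, so $\ulBPS_{\Pi_Q,\Lie}^{G}=\mathfrak{n}^+_{\K(\CA),(-,-)_{\SC},\ul{\SG}^G_{\Pi_Q}}$ is the Lie subalgebra of $\ulBPS_{\Pi_Q,\Alg}^{\psi,G}$ generated by the $\ul{\SG}^G_{\Pi_Q,\dd}$. (3) Observe that both $\ulBPS_{\Pi_Q,\Lie}^{G}$ and $\ulBPS_{\Pi_Q,\Lie}^{\td,\psi,G}$ contain, with multiplicity one, the summand $\ul{\SG}^G_{\Pi_Q,\dd}$ for each $\dd\in\Phi^+_{\Pi_Q}$: for the former by construction, for the latter by applying the equivariant version of the description of the $\td$-BPS sheaf in \cite[\S7]{davison2020bps} (the generators of the $\td$-BPS Lie algebra on moduli spaces associated to simple roots are exactly the equivariant IC sheaves, by the same support and neighbourhood considerations). (4) Since both sit inside $\ulBPS_{\Pi_Q,\Alg}^{\psi,G}$ as Lie subalgebra objects for the commutator bracket and both are generated by these common summands, the inclusion $\ulBPS_{\Pi_Q,\Lie}^{G}\hookrightarrow \ulBPS_{\Pi_Q,\Alg}^{\psi,G}$ factors through $\ulBPS_{\Pi_Q,\Lie}^{\td,\psi,G}$, yielding a morphism $\ulBPS_{\Pi_Q,\Lie}^{G}\rightarrow \ulBPS_{\Pi_Q,\Lie}^{\td,\psi,G}$ of Lie algebra objects. (5) Apply $\pi^*$: by step (1) and Proposition~\ref{proposition:BPS3dcoincide} the resulting map $\pi^*\ulBPS_{\Pi_Q,\Lie}^{G}\rightarrow \pi^*\ulBPS_{\Pi_Q,\Lie}^{\td,\psi,G}$ is identified with the isomorphism $\ulBPS_{\Pi_Q,\Lie}\xrightarrow{\sim}\ulBPS_{\Pi_Q,\Lie}^{\td,\psi}$; since $\pi^*$ is conservative, the original map is an isomorphism.

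Alternatively, and perhaps more cleanly, one can run the $\Sym$-argument of the proof of Proposition~\ref{proposition:BPS3dcoincide} verbatim in $\MHM(\CM_{\Pi_Q}^G)$: the composition $\Sym_{\boxdot^G}(\ulBPS_{\Pi_Q,\Lie}^{G})\rightarrow \Sym_{\boxdot^G}(\ulBPS_{\Pi_Q,\Lie}^{\td,\psi,G})\rightarrow \ulBPS_{\Pi_Q,\Alg}^{\psi,G}$ has first-and-total map an isomorphism by Theorem~\ref{theorem:relBPSequiv}, and the second map is the $G$-equivariant PBW isomorphism $\PBW_{\Pi_Q}^G$, which is an isomorphism by the $G$-equivariant lift of Proposition~\ref{prop:3dPBW} (equivalently, by applying $\pi^*$ and conservativity to Proposition~\ref{prop:3dPBW}); hence the first map, and therefore the underlying map of Lie algebra objects, is an isomorphism. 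The statement about derived global sections then follows as in Theorem~\ref{thm:comp}, although here one does not even need it in the relative form. The main obstacle is a bookkeeping one: one must check that the equivariant $\td$-BPS sheaf genuinely contains each equivariant IC generator $\ul{\SG}^G_{\Pi_Q,\dd}$ with multiplicity one and that the factorisation in step (4) is canonical — this requires the equivariant enhancement of the neighbourhood/support arguments of \cite{davison2020bps} and of the loop-equivariance statements (Lemma~\ref{lemma:loopequivariant}), but these go through because all the geometric inputs (the decomposition theorem for 2CY categories, the dimensional reduction isomorphism, and the smallness of the summation maps) are compatible with the $G_{\arr}\times\BoC^*$-action, the latter commuting with the gauge group action throughout.
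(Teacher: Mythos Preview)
Your proposal is correct and follows essentially the same approach as the paper: reduce to the non-equivariant case (Proposition~\ref{proposition:BPS3dcoincide}/Theorem~\ref{thm:comp}) via conservativity of the forgetful functor $\pi^{*}$. The paper's proof is a single sentence to this effect, leaving implicit the construction of the comparison morphism that you spell out carefully in steps (1)--(4); your added detail is sound but not strictly necessary for the paper's purposes.
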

\begin{proof}
 Again, by conservativity of the forgetful functor, it suffices to check this statement when $G$ is the trivial group and in this case, this is Theorem \ref{thm:comp}.
\end{proof}

As in the non-equivariant setting, the 3d BPS Lie algebra $\Fg_{\tilde{Q},\tilde{W}}^{\psi,G}$ is obtained by passing to derived global sections of $\BPS_{\Pi_Q,\Lie}^{\td,G}$. The following is our main result on deformations of BPS Lie algebras for tripled quivers with potential: it says that all deformations obtained by switching on equivariant parameters associated to arrows of the quiver are \emph{trivial}.
\begin{corollary}
\label{triv_ext_BPS}
There is an isomorphism of Lie algebras 
\[
\Fg_{\tilde{Q},\tilde{W}}^{\psi,G}\cong \Fg_{\tilde{Q},\tilde{W}}^{\psi}\otimes\HO^*(\B G,\BoQ).
\]
\end{corollary}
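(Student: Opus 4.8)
The plan is to deduce the statement by taking derived global sections in Proposition~\ref{Gequiv2d3d} and invoking the equivariant formality established in Lemma~\ref{lemma:formalityquivervarieties}, exactly parallel to how the non-equivariant comparison (Theorem~\ref{thm:comp}, equivalently Proposition~\ref{proposition:BPS3dcoincide}) feeds into Theorem~\ref{theorem:absoluteCoHA}. First I would recall that, by Proposition~\ref{Gequiv2d3d}, there is a canonical isomorphism of Lie algebra objects $\ulBPS_{\Pi_Q,\Lie}^{\td,\psi,G}\cong\ulBPS_{\Pi_Q,\Lie}^{G}$ in $\MHM(\CM_{\Pi_Q}^G)[\dim G]$, and that by definition $\Fg_{\tilde{Q},\tilde{W}}^{\psi,G}=\HO^*(\ulBPS_{\Pi_Q,\Lie}^{\td,\psi,G})$ (up to the standard cohomological shift built into the $\td$ definitions of \S\ref{subsection:DMreminder} and \S\ref{subsection:2d3d}) and $\Fn^{\ttBPS,+}_{\Pi_Q}=\HO^*(\ulBPS_{\Pi_Q,\Lie})$. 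Applying $\HO^*$ to the equivariant isomorphism therefore gives $\Fg_{\tilde{Q},\tilde{W}}^{\psi,G}\cong \HO^*(\ulBPS_{\Pi_Q,\Lie}^{G})$ as Lie algebras over $\HO^*(\B G,\BoQ)$.

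Next I would identify $\HO^*(\ulBPS_{\Pi_Q,\Lie}^{G})$ with $\Fn^{\ttBPS,+}_{\Pi_Q}\otimes\HO^*(\B G,\BoQ)$. By Theorem~\ref{theorem:relBPSequiv}, $\ulBPS_{\Pi_Q,\Lie}^{G}=\mathfrak{n}^+_{\K(\Pi_Q),(-,-)_{\SC},\ul{\SG}_{\Pi_Q}^G}$ is the generalised half-Kac--Moody Lie algebra object generated by the mixed Hodge modules $\ul{\SG}_{\Pi_Q,m}^G$ of \S\ref{subsection:deformedrelative}; these are (shifts by $\dim G$ of) intersection cohomology complexes of quotients $\CM_{\Pi_Q,m}^G$ (or their pushforwards along the small-diagonal maps $u_m^G$ in the isotropic case). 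By Lemma~\ref{lemma:formalityquivervarieties}, the intersection cohomology of singular Nakajima quiver varieties is pure with vanishing odd part, so the $\ul{\SG}_{\Pi_Q,m}^G$ satisfy the K\"unneth property in the sense preceding Theorem~\ref{theorem:absoluteCoHA} (for $G$ connected linear algebraic, $\HO^*(\B G,\BoQ)$ being pure is automatic here since $G\subset G_{\arr}\times\BoC^*$ is reductive). Then the isomorphism \eqref{EoScompare} applies verbatim and yields
\[
\HO^*(\mathfrak{n}^+_{\CM_{\Pi_Q}^G,(-,-)_{\SC},\ul{\SG}_{\Pi_Q}^G})\cong \Fn^+_{\K(\Pi_Q),(-,-)_{\SC},\ul{\SG}_{\Pi_Q}}\otimes \HO^*(\B G,\BoQ)=\Fn^{\ttBPS,+}_{\Pi_Q}\otimes\HO^*(\B G,\BoQ).
\]
Combining this with Proposition~\ref{proposition:BPS3dcoincide} (which gives $\Fn^{\ttBPS,+}_{\Pi_Q}\cong\Fg_{\tilde{Q},\tilde{W}}^{\psi}$) and the previous paragraph finishes the proof.

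The only genuinely delicate point is checking that the equivalence \eqref{EoScompare}, stated earlier as a formal consequence of the K\"unneth property, really does upgrade to an isomorphism of \emph{Lie} algebras (rather than just of graded vector spaces or algebras) in this $G$-equivariant setting — i.e. that the bracket on the global-sections side is the $\HO^*(\B G,\BoQ)$-bilinear extension of the bracket on $\Fn^{\ttBPS,+}_{\Pi_Q}$. This is because the derived global sections functor over $\B G$ is lax monoidal and, under the K\"unneth property, strictly monoidal on the relevant subcategory, so it carries the Lie-algebra-object structure of $\ulBPS_{\Pi_Q,\Lie}^{G}$ (constructed as a subobject of the free Lie algebra in $\MHM(\CM_{\Pi_Q}^G)[\dim G]$, via Theorem~\ref{theorem:relBPSequiv}) to a genuine Lie algebra structure on the tensor product, with the Lie ideal of Serre relations mapping to the corresponding ideal; I would spell this out by the same argument used in the proof of Proposition~\ref{proposition:discreterestriction} and Theorem~\ref{theorem:absoluteCoHA}. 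Everything else is formal, so I expect the write-up to be short.
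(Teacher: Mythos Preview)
Your proposal is correct and follows essentially the same route as the paper's proof: both reduce to the identifications $\Fg_{\tilde{Q},\tilde{W}}^{\psi,G}\cong\HO^*(\ulBPS_{\Pi_Q,\Lie}^{G})$ (via Proposition~\ref{Gequiv2d3d}) and $\Fg_{\tilde{Q},\tilde{W}}^{\psi}\cong\Fn_{\Pi_Q}^{\ttBPS,+}$ (via Theorem~\ref{thm:comp}/Proposition~\ref{proposition:BPS3dcoincide}), and then invoke \eqref{EoScompare}. Your write-up is simply more explicit than the paper's two-line proof in that you spell out why the K\"unneth hypothesis underlying \eqref{EoScompare} is satisfied here (via Lemma~\ref{lemma:formalityquivervarieties}); the paper leaves this implicit. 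One minor slip: a connected subgroup of the reductive group $G_{\arr}\times\BoC^*$ need not itself be reductive, so your parenthetical justification for purity of $\HO^*(\B G,\BoQ)$ is not quite right as stated --- but this does not affect the argument, since Lemma~\ref{lemma:formalityquivervarieties} already handles arbitrary connected linear $G$.
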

\begin{proof} 
We have $\Fg_{\tilde{Q},\tilde{W}}^{\psi} \cong \Fn_{\Pi_Q}^{\psi}$ by Theorem~\ref{thm:comp} and $\Fg_{\tilde{Q},\tilde{W}}^{\psi,G} \cong \Fn_{\Pi_Q}^{\psi,G}$ by Proposition~\ref{Gequiv2d3d}. The isomorphism then follows from \eqref{EoScompare}.

\end{proof}

\subsection{Equivariant cohomology of Nakajima quiver varieties}
\label{subsection:equivariantcohNQV}
Our description of the whole cohomology of Nakajima quiver varieties (\S\ref{section:lowestweightmodules}) adapts to the equivariant setting. As in \cite[\S2.1.2]{maulik2019quantum}, the group $G_{\arr}\times\BoC^*$ acts on quiver varieties and since they have pure cohomology, equivariant formality holds for them for any action of a connected linear algebraic group $G\subset G_{\arr}\times\BoC^*$ (as in Lemma \ref{lemma:formalityquivervarieties}). The description of the $G$-equivariant cohomology of quiver varieties as a direct sum of lowest weight modules (Theorem \ref{theorem:mainRTthm}) adapts to the $G$-equivariant cohomology in a straightforward way: this eventually amounts to extending the scalars from $\BoQ$ to $\HO^*(\mathrm{B}G,\BoQ)$.

\subsection{Nilpotent versions}
\label{subsection:nilequivversions}
If $\CM_{\CA}^{\triangle}\subset\CM_{\CA}$ is a saturated submonoid that is stable under the $G$-action on $\CM_{\CA}$, then one can define the corresponding relative and absolute CoHAs $\ulrelCoHA_{\CA}^{\triangle,\psi,G}$ and $\HO^*(\ulrelCoHA_{\CA}^{\triangle,\psi,G})$, and the $\triangle$ BPS algebra and $\triangle$ Lie algebra $\ul{\mathrm{BPS}}_{\CA,\Alg}^{\triangle,\psi,G}$ and $\ul{\mathrm{BPS}}_{\CA,\Lie}^{\triangle,G}$ via $\iota_{\triangle}^!$, as before. In particular, this applies to the case of the preprojective algebras of quivers and $\triangle\in\{\CN,\SemiN,\SSN\}$. The results of \S\ref{section:lowestweightmodules} also translate to the equivariant BPS algebras with various nilpotency conditions. Crucial in the step from relative to absolute CoHAs is again the equivariant formality of the sheaf cohomologies involved. For the nilpotent versions, this equivariant formality follows again from the odd cohomology vanishing.
\begin{proposition}
 The stacks $\mathfrak{M}_{\Pi_Q}^{\triangle}$ for $\triangle\in\{\CN,\SemiN,\SSN\}$ are pure and have no odd cohomology. $\triangle$ Nakajima quiver varieties are pure and have no odd cohomology for $\triangle\in\{\CN,\SemiN,\SSN\}$.
\end{proposition}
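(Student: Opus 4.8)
The plan is to reduce the claimed purity and odd-cohomology-vanishing statements to corresponding facts about the nondeformed cohomology via a dimensional-reduction and PBW argument, mirroring the strategy already used to prove Lemma~\ref{lemma:formalityquivervarieties}. First I would recall that for $\triangle\in\{\CN,\SemiN,\SSN\}$ the Borel--Moore homology of the stack $\FM_{\Pi_Q}^{\triangle}$ is computed by $\HO^*(\ulrelCoHA_{\Pi_Q}^{\triangle,\psi})$, which by Theorem~\ref{theorem:relPBW} (applied to the Serre subcategory corresponding to $\triangle$, via the restriction functor $\imath_{\triangle}^!$) admits the PBW decomposition $\Sym(\ul{\Fn}_{\Pi_Q}^{\triangle,+}\otimes \HO^*_{\BoC^*})$. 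So it suffices to establish that $\ul{\Fn}_{\Pi_Q}^{\triangle,+}=\HO^*(\imath_{\triangle}^!\BPS_{\Pi_Q,\Lie})$ is pure and has no odd cohomology, since $\Sym$ and tensoring with the pure, evenly-graded ring $\HO^*_{\BoC^*}=\BoQ[u]$ preserve both properties, and since $\HO^{\BoMo}$ differs from ordinary cohomology only by a shift.

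Next I would invoke Theorem~\ref{theorem:BPSalgisenvBorcherds} together with the compatibility of generators with local neighbourhoods established in \S\ref{subsection:compatibilityserrerelationslocal}: the relative BPS Lie algebra sheaf $\BPS_{\Pi_Q,\Lie}$ is built as $\mathfrak{n}^+_{\CM_{\Pi_Q},\ul{\SG}_{\Pi_Q}}$ out of the generating mixed Hodge modules $\ul{\SG}_{\Pi_Q,\dd}$, each of which is a (pushforward under a finite map of a) simple intersection complex $\ul{\IC}(\CM_{\Pi_Q,\dd'})$ of a quiver variety. By Lemma~\ref{lemma:formalityquivervarieties} (whose proof shows $\mathrm{IH}^n(\CM_{\Pi_Q,\dd})=0$ for $n$ odd, and purity), the derived global sections $\HO^*(\imath_{\triangle}^!\ul{\SG}_{\Pi_Q,\dd})$ — or rather its nondeformed version, since one needs the vanishing statement for $\imath_{\triangle}^!\ul{\IC}$ rather than for $\ul{\IC}$ itself — should also be concentrated in even degrees; for the real roots this is trivial ($\CM=\pt$), and for the isotropic and hyperbolic cases I would appeal to the explicit computations of $\HO^0(\imath_{\triangle}^!\SG_{\dd})$ carried out in the proof of Corollary~\ref{corollary:degree0SSNvanish} and the structure of the BPS sheaf for affine quivers from \cite[\S7.2]{davison2020bps}. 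Since the free Lie algebra construction $\mathfrak{n}^+$, the passage to derived global sections (which is monoidal by Künneth), and quotients by Serre ideals all manifestly preserve the property of being evenly graded and pure, it follows that $\ul{\Fn}_{\Pi_Q}^{\triangle,+}$ is pure with no odd cohomology.

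The equivariant statement for $\triangle$ Nakajima quiver varieties then follows by the same bootstrapping as in \S\ref{subsection:equivariantcohNQV}: by Proposition~\ref{proposition:NQVtoBPS} the cohomology $\HO^*(N^{\triangle}(Q,\dd,\ff),\imath_{\triangle}^!\BoQ^{\vir})$ is identified with the weight space $\ul{\Fn}^{\ttBPS,\triangle,+}_{\Pi_{Q_{\ff}},(\dd,1)}$ of the BPS Lie algebra for the framed quiver, which we have just shown is evenly graded; the purity and even-vanishing then propagate through the chain of isomorphisms in the proof of Proposition~\ref{proposition:NQVtoBPS}, all of which are isomorphisms of mixed Hodge structures. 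I expect the main obstacle to be the bookkeeping in the isotropic case: when $\dd$ is a multiple of an isotropic root, $\ul{\SG}_{\Pi_Q,\dd}$ is supported on the small diagonal and its behaviour under $\imath_{\triangle}^!$ for $\triangle\in\{\SemiN,\SSN\}$ must be checked against the Jordan-quiver computation (Theorem~\ref{theorem:BPSisotropic} and \cite[\S7.2]{davison2020bps}), and one must verify that the restriction $\imath_{\triangle}^!$ does not introduce odd cohomology — this is exactly where the odd-vanishing is genuinely used rather than merely inherited, and where I would spend the most care.
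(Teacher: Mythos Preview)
Your approach differs substantially from the paper's, and the difference exposes a genuine gap. The paper does \emph{not} try to bootstrap purity and odd-vanishing from the generators $\HO^*(\imath_{\triangle}^!\ul{\SG}_{\Pi_Q,\dd})$. Instead, for $\triangle\in\{\SemiN,\SSN\}$ it simply cites \cite[Theorem~A]{schiffmann2020cohomological} for the stacks, and for the Nakajima quiver varieties it cites the polynomial-count and $\ell$-purity statement \cite[Theorem~1]{hausel2010kac} (or, as an alternative, deduces it from the stack statement via cohomological wall-crossing \cite[Theorem~C]{davison2016integrality}). For $\triangle=\CN$ the paper \emph{does} go through the BPS Lie algebra, but the key step is a Verdier-duality argument: the nilpotent BPS Lie algebra $\Fn_{\Pi_Q}^{\CN,+}$ is Verdier dual to the full BPS Lie algebra $\Fg_{\Pi_Q}=\Fn_{\Pi_Q}^{\ttBPS,+}$, whose purity and odd-vanishing are already known from \cite[Theorem~A]{davison2016integrality}; then cohomological integrality (PBW) transfers this to the stack.

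Your proposal is circular at the crucial point. You want to show that $\HO^*(\imath_{\triangle}^!\ul{\IC}(\CM_{\Pi_Q,\dd}))$ is pure with no odd cohomology, in order to deduce the same for the stack via PBW; but in the paper this is the \emph{Corollary} that is deduced \emph{from} the Proposition, not the other way around. For the hyperbolic generators with $\triangle\in\{\SemiN,\SSN\}$ you invoke Corollary~\ref{corollary:degree0SSNvanish}, but that only computes the degree~$0$ piece and says nothing about higher degrees; the reference \cite[\S7.2]{davison2020bps} covers only the isotropic (affine) case. So for a hyperbolic simple root $\dd$ you have no internal argument for the vanishing of $\HO^{2k+1}(\imath_{\SSN}^!\ul{\IC}(\CM_{\Pi_Q,\dd}))$ or $\HO^{2k+1}(\imath_{\SemiN}^!\ul{\IC}(\CM_{\Pi_Q,\dd}))$. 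Your intuition that the isotropic case is the main obstacle is misplaced: it is precisely the hyperbolic case that requires external input, which the paper supplies by citing \cite{schiffmann2020cohomological}. Your treatment of the second statement via Proposition~\ref{proposition:NQVtoBPS} is essentially the paper's alternative argument and is fine once the first statement is established; but the first statement cannot be closed by your route without either the Verdier-duality trick (for $\CN$) or the external citations (for $\SemiN,\SSN$).
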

\begin{proof}
The first statement is \cite[Theorem A]{schiffmann2020cohomological}, for $\triangle \in\{\SemiN,\SSN\}$. For $\triangle=\CN$ it holds for the BPS Lie algebra since it is Verdier dual to the BPS Lie algebra $\Fg_{\Pi_Q}$, and then cohomological integrality implies it for the cohomology of the whole stack.

The second statement comes from the fact that all nilpotent versions of Nakajima quiver varieties are very $\ell$-pure and polynomial count (when working over a finite field $\BoF_q$ and $\ell$-adic cohomology, for $\ell$ and $q$ coprime), \cite[Theorem 1]{hausel2010kac}. Alternatively, they follow from the first statement and cohomological wall crossing, i.e. \cite[Theorem C]{davison2016integrality}.
\end{proof}

\begin{corollary}
 For $\triangle\in\{\CN,\SemiN,\SSN\}$, the complexes of mixed Hodge modules $\iota_{\triangle}^!\ul{\IC}(\CM_{\Pi_Q,\dd})$ have pure cohomology, and vanishing odd cohomology.
\end{corollary}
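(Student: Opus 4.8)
The plan is to deduce the statement from the purity and odd-vanishing of the cohomology of the stacks $\mathfrak{M}_{\Pi_Q}^{\triangle}$ together with the explicit identification of the $\iota_{\triangle}$-restrictions of the generators $\underline{\SG}_{\dd}$ with IC complexes of (nilpotent versions of) quiver varieties. First I would recall that by Theorem~\ref{theorem:BPSalgisenvBorcherds} the BPS algebra sheaf $\ulBPS_{\Pi_Q,\Alg}^{\psi}$ is a quotient of $\Free_{\boxdot\sAlg}(\underline{\SG}_{\Pi_Q})$, so that each $\underline{\SG}_{\dd}$ appears as a direct summand of $\ulBPS_{\Pi_Q,\Alg,\dd}^{\psi}$, and since the whole of $\ulBPS_{\Pi_Q,\Alg,\dd}^{\psi}$ is a direct summand of $\CH^0(\ulrelCoHA_{\Pi_Q})=\CH^0(\JH_{\dd,*}\BD\ulBoQ_{\mathfrak{M}_{\Pi_Q,\dd}}^{\vir})$, it suffices to prove that $\HO^*(\imath_{\triangle}^!\JH_{\dd,*}\BD\ulBoQ_{\mathfrak{M}_{\Pi_Q,\dd}}^{\vir})$ has pure, odd-vanishing cohomology. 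By base change along the Cartesian square defining $\mathfrak{M}^{\triangle}_{\Pi_Q,\dd}$, this group is the (compactly supported, up to Tate twist and Verdier duality) Borel--Moore homology $\HO^{\BoMo}_*(\mathfrak{M}^{\triangle}_{\Pi_Q,\dd},\BoQ^{\vir})$, which is pure and has no odd part by the Proposition just quoted (the one asserting purity and odd-vanishing for $\mathfrak{M}^{\triangle}_{\Pi_Q}$, $\triangle\in\{\CN,\SemiN,\SSN\}$). Since $\imath_{\triangle}^!\underline{\SG}_{\dd}$ is a direct summand of $\imath_{\triangle}^!\CH^0(\ulrelCoHA_{\Pi_Q,\dd})$, and $\HO^*$ is exact on the semisimple perverse sheaf $\CH^0(\ulrelCoHA_{\Pi_Q,\dd})$ (so commutes with taking direct summands after applying $\imath_{\triangle}^!$, up to the spectral sequence collapsing), purity and odd-vanishing pass to the summand.

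The one subtlety I would need to address carefully is that $\imath_{\triangle}^!$ does not preserve perversity, so $\imath_{\triangle}^!\underline{\SG}_{\dd}$ is only a complex, and I must be sure that ``being a direct summand of a pure complex with no odd cohomology'' is inherited. For this I would argue at the level of derived global sections directly: by the decomposition theorem for $2$CY categories (\cite[Theorem B]{davison2021purity}), $\ulrelCoHA_{\Pi_Q}$ splits as the direct sum of its shifted perverse cohomologies, and $\ulBPS_{\Pi_Q,\Alg}\to\ulrelCoHA_{\Pi_Q}$ admits a retraction; applying $\imath_{\triangle}^!$ and $\HO^*$ (which is exact), the morphism $\HO^*(\imath_{\triangle}^!\underline{\SG}_{\dd})\hookrightarrow\HO^*(\imath_{\triangle}^!\ulBPS_{\Pi_Q,\Alg,\dd})\hookrightarrow\HO^*(\imath_{\triangle}^!\ulrelCoHA_{\Pi_Q,\dd})=\HO^{\BoMo}_*(\mathfrak{M}^{\triangle}_{\Pi_Q,\dd},\BoQ^{\vir})$ is a split injection of mixed Hodge structures. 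A split subobject of a pure mixed Hodge structure concentrated in even degrees is again pure and concentrated in even degrees, which is exactly the claim.

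The main obstacle, then, is not the formal homological algebra but making precise that the retraction furnished by the decomposition theorem descends through $\imath_{\triangle}^!$; concretely, I would need that $\imath_{\triangle}^!$ applied to the splitting $\ulrelCoHA_{\Pi_Q}\cong\bigoplus_i \CH^i(\ulrelCoHA_{\Pi_Q})[-i]$ still yields a (possibly non-perverse, but still direct-sum) decomposition, which is automatic since $\imath_{\triangle}^!$ is additive and the splitting is a genuine isomorphism in the derived category. Given that, $\HO^*(\imath_{\triangle}^!\underline{\SG}_{\dd})$ is a genuine direct summand of $\HO^*(\imath_{\triangle}^!\ulrelCoHA_{\Pi_Q,\dd})$, and we conclude as above. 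This also handles the $\CN$ case uniformly, using that the Proposition cited establishes purity and odd-vanishing for $\mathfrak{M}^{\CN}_{\Pi_Q}$ via Verdier duality with $\Fg_{\Pi_Q}$ and cohomological integrality.
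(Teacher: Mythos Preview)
Your argument is correct and is exactly the approach the paper has in mind: the Corollary is stated without proof because it is meant to follow immediately from the preceding Proposition via the observation you spell out, namely that $\ul{\IC}(\CM_{\Pi_Q,\dd})$ is a direct summand of $\ulBPS_{\Pi_Q,\Alg,\dd}$, hence (by the decomposition theorem for 2CY categories) a direct summand of $\ulrelCoHA_{\Pi_Q,\dd}$, so after applying $\imath_{\triangle}^!$ and $\HO^*$ it sits as a split sub-mixed-Hodge-structure of $\HO^{\BoMo}(\FM^{\triangle}_{\Pi_Q,\dd},\BoQ^{\vir})$, which is pure and concentrated in even degrees. Your remark that additivity of $\imath_{\triangle}^!$ preserves the splitting is the only point that needs saying, and you say it.
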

In summary, we can generalise the results of this section for our chosen three Serre subcategories:
\begin{proposition}
\label{proposition:equivariantnilpotent}
For $\triangle\in\{\CN,\SemiN,\SSN\}$ there are isomorphisms of Lie algebras 
\[
\Fg_{\tilde{Q},\tilde{W}}^{\triangle,\psi,G}\cong \Fg_{\tilde{Q},\tilde{W}}^{\triangle,\psi}\otimes\HO^*(\B G,\BoQ)\cong \Fn^{\triangle,+}_{\Pi_Q}\otimes \HO^*(\B G,\BoQ),
\]
an isomorphism of algebras
\[
\ul{\mathrm{BPS}}_{\CA,\Alg}^{\triangle,\psi,G}\cong \ul{\mathrm{BPS}}_{\CA,\Alg}^{\triangle,\psi}\otimes\HO^*(\B G,\BoQ).
\]
as well as a PBW isomorphism (not respecting the algebra structures on the domain and the target)
\[
 \Sym_{\HO^*(\B G,\BoQ)}\left(\ul{\rmBPS}_{\CA,\Lie}^{\triangle,G}\otimes\HO^*_{\BoC^*}\right)\rightarrow \HO^*(\ulrelCoHA_{\CA}^{\triangle,\psi,G}).
 \]
\end{proposition}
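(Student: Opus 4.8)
The three assertions are the nilpotent analogues of structural results already in hand, and the plan is to run the \emph{restrict--then--globalise} argument of \S\ref{subsection:deformedrelative}--\S\ref{subsection:nilequivversions} one more time, now along the inclusion of the saturated submonoid $\CM_\CA^\triangle\subset\CM_\CA$. Throughout I would use that the forgetful functor $\pi^*$ is monoidal and conservative, so that every \emph{isomorphism} statement reduces at once to the case $G=\{1\}$ treated in \S\ref{section:Cuspidalpolsquivers}--\S\ref{section:lowestweightmodules}, and that $\iota_\triangle^!$ is strictly monoidal by saturatedness of $\CM^\triangle_\CA$ (cf.\ \S\ref{subsection:theCoHA}).

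First I would apply $\iota_\triangle^!$ to the relative equivariant GKM description (Theorem~\ref{theorem:relBPSequiv}), to the relative equivariant PBW isomorphism (Theorem~\ref{theorem:PBWequiv}), and to the equivariant $2$d/$3$d comparison (Proposition~\ref{Gequiv2d3d}). Since $\iota_\triangle^!$ commutes with $\Free_{\boxdot^G\sAlg}$, with $\Sym_{\boxdot^G}$, and with the formation of the Serre ideal, this produces a canonical isomorphism of algebra objects
\[
\ul{\mathrm{BPS}}_{\CA,\Alg}^{\triangle,\psi,G}\;\cong\;\UEA\!\big(\Fn^+_{\K(\CA),(-,-)_{\SC},\,\iota_\triangle^!\ul{\SG}_{\CA}^G}\big),
\]
a relative equivariant PBW isomorphism $\Sym_{\boxdot^G}\!\big(\ul{\mathrm{BPS}}_{\CA,\Lie}^{\triangle,G}\otimes\HO^*_{\BoC^*}\big)\xrightarrow{\cong}\ulrelCoHA_{\CA}^{\triangle,\psi,G}$, and a canonical isomorphism of Lie algebra objects $\iota_\triangle^!\ulBPS_{\Pi_Q,\Lie}^{\td,\psi,G}\cong\iota_\triangle^!\ulBPS_{\Pi_Q,\Lie}^{G}$; conservativity of $\pi^*$ reduces each of these to its non-equivariant counterpart.

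Next I would globalise over $\B G$. The only genuinely new input is that $\HO^*(\CM_\CA^G,-)$ is monoidal for $\boxdot^G$ on the relevant full subcategory, i.e.\ that the generators $\iota_\triangle^!\ul{\SG}^G_{\CA,m}$, $m\in\Phi^+_{\K(\CA)}$, satisfy the Künneth property. By the local neighbourhood theorem (Theorem~\ref{theorem:neighbourhood}) this reduces to $\CA=\Rep\Pi_Q$, where each $\iota_\triangle^!\ul{\SG}^G_{\Pi_Q,\dd}$ is built --- up to a shift and, in the isotropic case $\dd=l\delta$, a pushforward along the finite map $u^G_{l\delta}$ --- from $\iota_\triangle^!\ul{\IC}(\CM_{\Pi_Q,\delta})$, whose cohomology is pure and concentrated in even degrees by the purity and odd-vanishing statement for nilpotent restrictions established just above (itself a consequence of \cite{schiffmann2020cohomological}, \cite{davison2016integrality}, \cite{hausel2010kac}). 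Hence $r_*\big(\iota_\triangle^!\ul{\SG}^G_{\Pi_Q,\dd}\big)$ is pure on $\B G$ and the Künneth lemma applies. Granting this, applying $\HO^*(\CM_\CA^G,-)$ to the three isomorphisms of the previous step and invoking~\eqref{EoScompare} (with the non-equivariant identification $\HO^*(\iota_\triangle^!\ulBPS_{\CA,\Lie})\cong\Fn^{\triangle,+}_{\Pi_Q}$) yields the $\HO^*(\B G,\BoQ)$-linear GKM description of $\ul{\mathrm{BPS}}_{\CA,\Alg}^{\triangle,\psi,G}$ and the splitting $\ul{\mathrm{BPS}}_{\CA,\Alg}^{\triangle,\psi,G}\cong\ul{\mathrm{BPS}}_{\CA,\Alg}^{\triangle,\psi}\otimes\HO^*(\B G,\BoQ)$; the stated PBW isomorphism (arguing, as in Theorem~\ref{theorem:absoluteequivPBW}, that $\HO^*(\iota_\triangle^!\Psi_{\CA}^{\psi,G})$ is the $\triangle$-analogue of~\eqref{equation:absPBWequiv}); and, combining with $\HO^*$ of the $\iota_\triangle^!$-restriction of Proposition~\ref{proposition:BPS3dcoincide} and again with~\eqref{EoScompare}, the chain of Lie algebra isomorphisms $\Fg_{\tilde Q,\tilde W}^{\triangle,\psi,G}\cong\Fg_{\tilde Q,\tilde W}^{\triangle,\psi}\otimes\HO^*(\B G,\BoQ)\cong\Fn^{\triangle,+}_{\Pi_Q}\otimes\HO^*(\B G,\BoQ)$.

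The main obstacle is the verification of the Künneth property for the restricted generators, and specifically for the \emph{isotropic} ones: $\iota_\triangle^!\ul{\SG}^G_{\Pi_Q,l\delta}$ is obtained by pushing $\ul{\IC}(\CM_{\Pi_Q,\delta}^G)$ forward along $u_{l\delta}^G$ and then restricting along $\iota_\triangle^!$, and one must check that neither operation destroys purity of the $\B G$-direct image. Here one uses saturatedness of $\CM_{\Pi_Q}^\triangle$ (so base change is available), the explicit affine/Jordan description of the support of an isotropic $\delta$, and the known purity and odd-vanishing of $\iota_\triangle^!\ul{\IC}(\CM_{\Pi_Q,\delta})$; everything else is routine manipulation of the monoidal functors $\pi^*$, $\iota_\triangle^!$ and $\HO^*$, exactly parallel to the $G=\{1\}$ and the non-nilpotent equivariant cases.
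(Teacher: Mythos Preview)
Your proposal is correct and follows essentially the same route as the paper: apply $\iota_{\triangle}^!$ to the relative equivariant statements (Theorems~\ref{theorem:relBPSequiv}, \ref{theorem:PBWequiv}, Proposition~\ref{Gequiv2d3d}), then globalise using the K\"unneth property, which is supplied by the purity and odd-vanishing corollary for $\iota_{\triangle}^!\ul{\IC}(\CM_{\Pi_Q,\dd})$ recorded just before the proposition. The paper presents the proposition as a summary rather than giving a formal proof, but the ingredients you invoke are exactly the ones set up in the preceding paragraphs of \S\ref{subsection:nilequivversions}. One small remark: your appeal to the local neighbourhood theorem to reduce to $\CA=\Rep\Pi_Q$ is superfluous here, since for $\triangle\in\{\CN,\SemiN,\SSN\}$ the category is already $\Rep\Pi_Q$; this does no harm, but you can drop it.
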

Here, the algebra $\Fg_{\tilde{Q},\tilde{W}}^{\triangle,\psi,G}$ is defined as the derived global sections of $(\imath_{\triangle}^{G})^!\BPS_{\tilde{Q},\tilde{W}}^{\psi,G}$ where $\imath^G_{\triangle}\colon\CM_{\tilde{Q}}^{\triangle}/G\rightarrow \CM_{\tilde{Q}}/G$ is the inclusion of the saturated submonoid of $\CM_{\tilde{Q}}$ of representations of $\tilde{Q}$ such that the underlying representation of $\overline{Q}$ has the property $\triangle$.

\appendix
\section{Table of notations and conventions}
\subsection{Generalised Kac--Moody algebras and their representations}
\begin{enumerate}
 \item $\Fg_{\Gamma}$ is the Kac--Moody Lie algebra associated to a graph $\Gamma$ (without loops) \cite{kac1990infinite}.
 \item $\Fg_Q=\Fn_Q^+\oplus\mathfrak{h}\oplus\Fn_Q^+$, $\mathfrak{h}=\BoC^{Q_0}$, the Borcherds--Bozec Lie algebra of a quiver $Q$, with its triangular decomposition (page \pageref{subsubsection:borcherdsbozec}).
\end{enumerate}

\subsection{CoHAs and BPS (Lie) algebras}
\begin{enumerate}
 \item $\ulrelCoHA_{\CA}\in\CD^+(\MHM(\CM_{\CA}))$ the relative cohomological Hall algebra of $\CA$. This is by definition the complex of mixed Hodge modules $\JH_*\BD\BoQ_{\FM_{\CA}}^{\vir}$ together with the CoHA multiplication; $\SA_{\CA}=\rat(\ulrelCoHA_{\CA})\in\CD^+(\Perv(\CM_{\CA}))\cong\CD^+_{\rmc}(\CM_{\CA})$ (page \pageref{subsection:theCoHA}). $\ulrelCoHA_{\CA}^{\psi}$, $\SA_{\CA}^{\psi}$ the relative CoHAs with the $\psi$-twisted multiplication (page \pageref{subsubsection:twistmultiplication}).
 \item $\ulrelCoHA_{Q,W}\in\MMHM(\CM_{Q})$ the relative CoHA of the quiver with potential $(Q,W)$, defined as the complex of monodromic mixed Hodge modules $\JH_*\phip{\Tr(W)}\BoQ_{\FM_{Q}}^{\vir}$; $\SA_{Q,W}\in\Perv(\CM_{Q})$ the relative CoHA of the quiver with potential $(Q,W)$, defined as the complex of constructible sheaves $\JH_*\phip{\Tr(W)}\BoQ_{\FM_{Q}}^{\vir}$. Alternatively, $\SA_{Q,W}=\rat(\ulrelCoHA_{Q,W})$. $\ulrelCoHA_{Q,W}^{\psi}$, $\SA_{Q,W}^{\psi}$ the $\psi$-twisted versions (page \pageref{subsection:DMreminder}).
 \item $\ulBPS_{\CA,\Alg}\in\MHM(\CM_{\CA})$ the relative BPS algebra of the category $\CA$, defined as the degree zero cohomology of $\ulrelCoHA_{\CA}$, $\BPS_{\CA,\Alg}\in\Perv(\CM_{\CA})$ the relative BPS algebra, defined as the degree zero perverse cohomology of $\relCoHA_{\CA}$ or equivalently as $\rat(\ulBPS_{\CA,\Alg})$ (page \pageref{subsection:BPSalg}).
 \item $\ulBPS_{\CA,\Alg}^{\psi}$, $\BPS_{\CA,\Alg}^{\psi}$, the relative BPS algebras with the $\psi$-twisted multiplications (page \pageref{subsubsection:twistmultiplication}).
 \item $\underline{\rmBPS}_{\CA}=\HO^*(\ulBPS_{\CA,\Alg})\in\MHM(\K(\CA))$ the absolute BPS algebra, a $\K(\CA)$-graded complex of mixed Hodge structures. $\rmBPS_{\CA,\Alg}=\HO^*(\BPS_{\CA,\Alg})$, the absolute BPS algebra, a $\K(\CA)$-graded complex of vector spaces (i.e. a $\K(\CA)\times\BoZ$-graded vector space) (page \pageref{subsection:BPSalg}).
 \item $\underline{\rmBPS}_{\CA,\Alg}^{\psi}$, $\rmBPS_{\CA,\Alg}^{\psi}$ the algebras with the $\psi$-twisted multiplications (page \pageref{subsubsection:twistmultiplication}).
 \item $\Fn_{\CM_{\CA},(-,-)_{\SC},\underline{\SG}_{\CA}}^+=\underline{\BPS}_{\CA,\Lie}\in\MHM(\CM_{\CA})$ the relative BPS Lie algebra, defined as a positive part of a generalised Kac--Moody Lie algebra in the category of mixed Hodge modules, by generators and relations. $\Fn_{\CM_{\CA},(-,-)_{\SC},\SG_{\CA}}^+=\BPS_{\CA,\Lie}\in\Perv(\CM_{\CA})$ the relative BPS Lie algebra, defined as a positive part of a generalised Kac--Moody Lie algebra in the perverse sheaves, by generators and relations. Equivalently, $\BPS_{\CA,\Lie}=\rat(\underline{\BPS}_{\CA,\Alg})$ (page \pageref{def:BPSheafalg}).
 \item $\underline{\rmBPS}_{\CA,\Lie}=\HO^*(\ulBPS_{\CA,\Lie})$ the absolute BPS Lie algebra, a complex of mixed Hodge modules on $\K(\CA)$. By strict monoidality of $\HO^*$, it is isomorphic to $\Fn_{\K(\CA),(-,-)_{\SC},\HO^*(\underline{\SG}_{\CA})}^+$. We also denote it $\underline{\Fn}_{\CA}^{\ttBPS,+}$. $\rmBPS_{\CA,\Lie}=\HO^*(\BPS_{\CA,\Lie})$ the absolute BPS Lie algebra, a complex of vector spaces $\K(\CA)$. By strict monoidality of $\HO^*$, it is isomorphic to $\Fn_{\K(\CA),(-,-)_{\SC},\HO^*(\SG_{\CA})}^+$. Alternatively, it is obtained from $\underline{\rmBPS}_{\CA,\Lie}$ by forgetting the mixed Hodge structure. We also denote it $\Fn_{\CA}^{\ttBPS,+}$ (page \pageref{def:BPSheafalg}).
 \item $\Fg_{\CA}^{\ttBPS}=\Fn_{\CA}^{\ttBPS,-}\oplus\mathfrak{h}_{\CA}\oplus\Fn_{\CA}^{\ttBPS,+}$, $\mathfrak{h}_{\CA}=\widehat{\K(\CA)}\otimes_{\BoZ}\BoC$, the full BPS Lie algebra of $\CA$ (page \pageref{def:BPSheafalg}).
 \item $\ulBPS_{Q,W}\in\MMHM(\CM_{Q})$ the BPS sheaf of the quiver with potential $(Q,W)$, defined as $\tau^{\leq 0}(\ulrelCoHA_{Q,W}[1])$. $\BPS_{Q,W}\in\Perv(\CM_Q)$ the BPS sheaf of the quiver with potential $(Q,W)$, defined as $\ptau^{\leq 0}(\SA_{Q,W}[1])$ (page \pageref{subsection:DMreminder}).
 \item $\underline{\Fg}_{Q,W}=\HO^*(\ulBPS_{Q,W})$, the absolute BPS Lie algebra. It is a complex of $\BoN^{Q_0}$-graded mixed Hodge structures. Its Lie bracket is induced by the injective map $\underline{\Fg}_{Q,W}\rightarrow\HO^*\ulrelCoHA_{Q,W}^{\psi}$. $\Fg_{Q,W}=\HO^*(\ulBPS_{Q,W})$, the absolute BPS Lie algebra. It is a complex of $\BoN^{Q_0}$-graded vector spaces. Its Lie bracket is induced by the injective map $\Fg_{Q,W}\rightarrow\HO^*\SA_{Q,W}^{\psi}$ (page \pageref{theorem:3dBPS_def}).
 \item $\underline{\BPS}_{\Pi_Q,\Lie}^{\td}\in\MHM(\CM_{\Pi_Q})$ the BPS sheaf (without its Lie algebra structure) defined by dimensional reduction applied to the BPS mixed Hodge module of the tripled quiver with its canonical potential $(\tilde{Q},\tilde{W})$. $\BPS_{\Pi_Q,\Lie}^{\td}\in\Perv(\CM_{\Pi_Q})$ the BPS sheaf defined by dimensional reduction applied to the BPS perverse sheaf of the tripled quiver with its canonical potential $(\tilde{Q},\tilde{W})$, or equivalently, $\BPS_{\Pi_Q,\Lie}^{\td}=\rat(\underline{\BPS}_{\Pi_Q,\Lie}^{\td})$ (page \pageref{theorem:lessperverse}).
 \item $\underline{\BPS}_{\Pi_Q,\Lie}^{\td,\psi}\in\MHM(\CM_{\Pi_Q})$ the relative BPS Lie algebra. This is the mixed Hodge module $\underline{\BPS}_{\Pi_Q,\Lie}^{\td}$ on which the Lie algebra structure is induced by the monomorphism $\underline{\BPS}_{\Pi_Q,\Lie}^{\td}\rightarrow\ulBPS_{\Pi_Q,\Alg}^{\psi}$. $\BPS_{\Pi_Q,\Lie}^{\td,\psi}\in\Perv(\CM_{\Pi_Q})$ the relative BPS Lie algebra. This is the perverse sheaf $\underline{\BPS}_{\Pi_Q,\Lie}^{\td}$ on which the Lie algebra structure is induced by the monomorphism $\BPS_{\Pi_Q,\Lie}^{\td}\rightarrow\BPS_{\Pi_Q,\Alg}^{\psi}$ (page \pageref{theorem:lessperverse}).
\end{enumerate}
\subsection{Nilpotent versions}
Let $\imath_{\triangle}\colon\CM_{\CA}^{\triangle}\rightarrow\CM_{\CA}$ be the inclusion of a saturated locally closed submonoid. For example, $\ulrelCoHA_{\CA}^{\triangle}=\imath_{\triangle}^!\ulrelCoHA_{\CA}$, $\SA_{\CA}^{\triangle}=\imath_{\triangle}^!\SA_{\CA}$, $\ulBPS_{\CA,\Alg}^{\triangle}=\imath_{\triangle}^!\ulBPS_{\CA,\Alg}$, $\BPS_{\CA,\Alg}^{\triangle}=\imath_{\triangle}^!\BPS_{\CA,\Alg}$, and we have the $\psi$-twisted versions of these four algebra objects, $\ulrelCoHA_{\CA}^{\triangle,\psi}$, $\SA_{\CA}^{\triangle,\psi}$, $\ulBPS_{\CA,\Alg}^{\triangle,\psi}$ and $\BPS_{\CA,\Alg}^{\triangle,\psi}$ (page \pageref{subsection:theCoHA}).
\subsection{Equivariant versions}
We add a superscript $G$ to indicate the $G$-equivariant versions of the objects considered (page \pageref{section:Equivariantparameters}).

\subsection{Nakajima quiver varieties and their (BPS) cohomology}
\begin{enumerate}
\item $N(Q,\dd,\ff)$ the smooth Nakajima quiver variety constructed from the doubled quiver $\overline{Q}$, with dimension vector $\dd$ and framing dimension vector $\ff$ (page \pageref{equation:smoothNQV}).
 \item $N'(Q,\dd,\ff)$ the singular Nakajima quiver variety, constructed by GIT quotient, from the doubled quiver $\overline{Q}$ with dimension vector $\dd$ and framing dimension vector $\ff$ (page \pageref{equation:singNQV}).
 $N^{\triangle}(Q,\dd,\ff)\subset N(Q,\dd,\ff)$ is the subscheme of stable representations of $\Pi_{Q_{\ff}}$ satisfying the condition $\triangle$. Similarly, we have $N'^{\triangle}(Q,\dd,\ff)\subset N'(Q,\dd,\ff)$ (page \pageref{NQVs_def}).
 \item $\mathbb{M}_{\ff}^{\triangle}(Q)=\bigoplus_{\dd\in\BoN^{Q_0}}\HO^*(N^{\triangle}(Q,\dd,\ff),\imath_{\triangle}^!\BoQ^{\vir}_{N(Q,\dd,\ff)})$ the direct sum of the Borel--Moore homologies of all Nakajima quiver varieties with fixed framing dimension vector (page \pageref{equation:moduleNQV}).
 \item $\mathbb{M}_{\ff}^{\triangle,\psi}(Q)$, the vector space $\mathbb{M}_{\ff}^{\triangle}(Q)$ with its structure of $\Fg_{\Pi_Q}^{\ttBPS,\triangle}$-module or $\Fn_{\Pi_Q}^{\ttBPS,\triangle,+}$-module (page \pageref{equation:moduleNQV}).
\end{enumerate}

\printbibliography
\end{document}